\documentclass{article}

\usepackage{amsmath,amssymb,amsthm}
\usepackage{mathtools}
\usepackage[all,knot]{xy}
\usepackage{enumitem}
\usepackage{mymacro}
\usepackage{comment}

\theoremstyle{plain}
\newtheorem{theo}{Theorem}[section]
\newtheorem{lemma}[theo]{Lemma}
\newtheorem{prop}[theo]{Proposition}
\newtheorem{corol}[theo]{Corollary}

\theoremstyle{definition}
\newtheorem*{defin}{Definition}

\theoremstyle{remark}
\newtheorem*{rem}{Remark}
\newtheorem{exam}[theo]{Example}

\begin{document}
\title{A general method to construct cube-like categories and applications to homotopy theory}
\author{Jun Yoshida}
\date{}
\maketitle
\tableofcontents
\section*{Introduction}
\addcontentsline{toc}{section}{Introduction}
\theoremstyle{plain}
\newtheorem*{theo0}{Theorem}
\newtheorem*{prop0}{Proposition}
Nowadays, the term ``homotopy theory'' on a category is understood as a model structure.
The notion of model structure was originally introduced by Quillen in \cite{Qui}, where he claimed that the study of weak homotopy types of topological spaces is equivalent to that of model structure on the category $\mathbf{Top}$ of topological spaces.
Quillen also showed that the category $\mathbf{SSet}$ of simplicial sets also admits a model structure, called Quillen model structure, and the homotopy theory on $\mathbf{SSet}$ is equivalent to that of $\mathbf{Top}$ in some sense.
So many researchers regard $\mathbf{SSet}$ as the most essential category for homotopy theory.
On the other hand, Grothendieck conjectured in \cite{Gro} that we can extend the homotopy theory on  more general presheaf topoi; the basic ideas are as follows:
We call a functor $f:C\to D\in\mathbf{Cat}$ a Thomason equivalence if it induces a weak equivalence in $\mathbf{SSet}$ on the nerves, where the name is after the person who introduced a model structure on $\mathbf{Cat}$ with the weak equivalences.
If $\mathcal{A}$ is a small category, we have a functor $\txtint^{\mathcal{A}}:\mathcal{A}^\wedge\to\mathbf{Cat}$ which sends a presheaf $X$ on $\mathcal{A}$ to the slice category $\mathcal{A}/X$.
It is known that the functor $\txtint^{\mathcal{A}}$ is a left adjoint functor, so there is a right adjoint $N_{\mathcal{A}}$, unit $\eta:\mathrm{Id}\to N_{\mathcal{A}}\int^{\mathcal{A}}$ and counit $\varepsilon:\txtint^{\mathcal{A}}N_{\mathcal{A}}\to\mathrm{Id}$.
A small category $\mathcal{A}$ is called a weak test category if for each small category $C$,  $\varepsilon:\txtint^{\mathcal{A}}N_{\mathcal{A}}C\to C$ is a Thomason equivalence.
A test category is a weak test category $\mathcal{A}$ such that for each $a\in\mathcal{A}$, the slice category $\mathcal{A}/a$ is also a weak test category.
Grothendieck conjectured that if $\mathcal{A}$ is a test category, the composition
\[
\mathcal{A}^\wedge\xrightarrow{\txtint^{\mathcal{A}}}\mathbf{Cat}\xrightarrow{N}\mathbf{SSet}
\]
would derive a homotopy theory on $\mathcal{A}^\wedge$ from $\mathbf{SSet}$; that is, there is a model structure on $\mathcal{A}^\wedge$ on which a morphism $f:X\to Y$ is a weak equivalence if and only if it induces a Thomason equivalence $\mathcal{A}/X\to\mathcal{A}/Y$, and $f$ is a cofibration if and only if it is a monomorphism.
This formalization was completed by Cisinski in \cite{Cis}, where he in fact developed the model structure on $\mathcal{A}^\wedge$ for any test category $\mathcal{A}$ such that $f:X\to Y$ is a weak equivalence if and only if the induced functor $\mathcal{A}/X\to\mathcal{A}/Y$ is a Thomason equivalence, and $f:X\to Y$ is a cofibration if and only if $f$ is a monoomrphism.

Cisinski also showed that the examples of test categories contain the category $\square$.
The category $\square$ is called the cubical category, and a presheaf on it is called a cubical set, which often appeared in the early of the development of homotopy theory, for example see \cite{Kan55} and \cite{Ser51}.
Jordine pointed out in \cite{Jar} that the model structure on $\square\mathbf{Set}$ given by Cisinski agrees with the classical homotopy theory on cubical sets.
More precisely, he showed that there is a functor $|\blankdot|:\square\mathbf{Set}\to\mathbf{SSet}$, called the realization, which gives rise to a Quillen equivalence between model categories.
However, it was revealed that there are some difficulties in the homotopy theory on the category $\square\mathbf{Set}$.
For example, cartesian products of cubes are not contractible; and the canonical monoidal structure $\otimes:\square\mathbf{Set}\times\square\mathbf{Set}\to\square\mathbf{Set}$ is not symmetric.
To avoid them, some varieties of cubical categories have been considered; for example, Brown and Higgins introduced the additional structure called the connections on cubical sets, see \cite{BHS}.
The resulting category is called a cubical category with connections, often denoted by $\square^{\mathsf{c}}$.
They used it to prove the cubical Dold-Kan correspondence.
Maltsiniotis, moreover, showed that the category $\square^{\mathsf{c}}$ is also a test category, and cartesian products in the category of cubical sets with connections have ``right'' homotopy types.
Another example is the extended cubical category $\square_\Sigma$ introduced by Isaacson in \cite{Isa}.
It is a symmetric version of $\square^{\mathsf{c}}$ so that the canonical monoidal structure $\square_\Sigma$ is symmetric.
Isaacson constructed $\square_\Sigma\mathbf{Set}$ enrichments of symmetric monoidal model categories.
For more variations of cubical categories, see \cite{GM03}.

The main purposes of this article is to generalize the construction of these variations and to give model structures on resulting presheaf topoi.
For generalization of cubes, we have the following observation:
Let $\mathcal{Q}=\square$ or $\square^{\mathsf{c}}$.
Then for every morphism $f:[1]^m\to[1]^n\in\mathcal{Q}$, there is a triple $(A,\mu,B)$ of subsets $A\subset m,B\subset n$ and an order preserving map $\mu:A\to n$ such that for each subset $S\subset m$, we have $f(S)=\mu(S\cap A)\cup B$, where we identify natural numbers with the corresponding ordinals.
In other words, there is a category $\mathcal{R}$ together with a functor $P:\mathcal{R}\to\mathbf{Poset}$ such that
\begin{itemize}
  \item $\objof{\mathcal{Q}} = \objof{\mathcal{R}}$;
  \item $P$ sends each object $r\in\mathcal{R}$ to a finite Boolean lattice $P(r)$, called the poser set;
  \item every morphism $r'\to r\mathcal{Q}$ is identified with a triple $(A,\mu,B)$ of $A\in P(r'), B\in P(r)$ and a morphism $\mu\in\mathcal{R}$.
\end{itemize}
In fact, if we take $\mathcal{Q}=\widetilde\Delta^+$ (resp. $\widetilde\Delta$), the category of finite ordinals and order-preserving injections (resp. arbitrary order-preserving maps), then we obtain $\square$ (resp. $\square^{\mathsf{c}}$).
This observation indicates that some variations of cubical categories can arise from small categories which admit ``power sets'' notions.

We will generalize ``cubes'' in this way.
The construction begins with a small category $\mathcal{R}$ equipped with a unique factorization system $(\mathcal{R}^-,\mathcal{R}^+_0)$.
We assume the following:
\[
\newdimen\temptxtwidth%
\temptxtwidth=\textwidth%
\advance\temptxtwidth by -5em%
(\star1)\left\{\begin{minipage}{\temptxtwidth}
\begin{itemize}
  \item $\mathcal{R}^+_0$ has only monomorphisms and no non-trivial isomorphism, so that $\mathcal{R}^+_0\!/(\blankdot):\mathcal{R}\to\mathbf{Poset}$ is a functor.
  \item For each $r\in\mathcal{R}$, $\mathcal{R}^+_0\!/r$ is a finite Boolean lattice.
  \item For each $f:r'\to r\in\mathcal{R}$, the induced map $f_\ast:\mathcal{R}^+_0\!/r'\to\mathcal{R}^+_0\!/r$ admits a right adjoint $f^\ast:\mathcal{R}^+_0\!/r\to\mathcal{R}^+_0\!/r$ which preserves joins.
\end{itemize}
\end{minipage}\right.
\]
We will see these conditions in section \ref{sec:thin-pow-cat} and \ref{sec:pos-repn} more accurately.
We also consider an additional structure, which we call an enlargement.
If $\mathcal{R}$ is a small category satisfying the conditions above, then an enlargement on $\mathcal{R}$ is a functor $c:\mathcal{R}\to\mathcal{R}$ together with a natural transformation $\iota:\mathrm{Id}\to c$ such that
\begin{enumerate}[label={\rm(\arabic*)}]
  \item for each $r\in\mathcal{R}$, there is a natural isomorphism $\mathcal{R}^+_0\!/c(r)\simeq\mathcal{R}^+_0\!/r\times[1]$;
  \item $c$ preserves $(\mathcal{R}^+_0,\mathcal{R}^-)$-factorization; i.e. if $\sigma\in\mathcal{R}^-$ and $\delta\in\mathcal{R}^+_0$, then $c(\sigma)\in\mathcal{R}^-$ and $c(\delta)\in\mathcal{R}^+_0$;
  \item for each $f:r\to r'\in\mathcal{R}$, we have $c(f)^\ast\iota = \iota$.
\end{enumerate}
Our first result is the following:

\begin{theo0}
Let $\mathcal{R}$ be a small category equipped with a unique factorization system $(\mathcal{R}^-,\mathcal{R}^+_0)$ satisfying the condition $(\star1)$.
Then we can construct a small category $\square(\mathcal{R})$.
Moreover, if $\mathcal{R}$ admits an enlargement $c:\mathcal{R}\to\mathcal{R}$, then $\square(\mathcal{R})$ is a test category, so that there is a model structure on $\square(\mathcal{R})^\wedge$ such that $f:X\to Y$ is a weak equivalence if and only if $\square(\mathcal{R})/X\to\square(\mathcal{R})/Y$ is a Thomason equivalence, and $f:X\to Y$ is a cofibration if and only if it is a monomorphism.
\end{theo0}

If $\mathcal{R}$ satisfies the condition $(\star1)$, we say $\mathcal{R}$ is cubicalizable and call $\square(\mathcal{R})$ the cubicalization of $\mathcal{R}$.
The model structure described above is called the standard model structure.

The construction of $\square(\mathcal{R})$ is splitted into two steps.
At first, we will observe spans in $\mathcal{R}$ in section \ref{sec:rel-span}.
In particular, we are interested in spans of the form
\[
\xymatrix@C=2em@R=2ex{
  {} & s \ar[dl]_{\gamma} \ar[dr]^{f} & \\
  r' && r }
\]
with $\gamma\in\mathcal{R}^+_0$.
These spans form the morphism class of a category $\mathcal{V}(\mathcal{R})$, which we will define.
We denote such a span by $f\gamma^\dagger$
It is also verified that the functor $\mathcal{R}^+_0\!/(\blankdot):\mathcal{R}\to\mathbf{Poset}$ extends to $\mathcal{V}(\mathcal{R})$.
Next, we will introduce the notion of crossed modules for categories in section \ref{sec:crsrep}, which is a generalization of classical crossed modules for groupoids.
In this step, we prove that for a crossed $\mathcal{J}$-module $(M,\mu)$, there is a category $M\!\mathcal{J}$ whose objects are those of $\mathcal{J}$ and whose hom-set from $j$ to $k$ is given by
\[
M\!\mathcal{J}(j,k):=\left\{(a,f)\in M(k)\times\mathcal{J}(j,k)\mid \mu(a)f=f\right\}\,.
\]
The composition is given by
\[
(b,g)\circ (a,f) = \left(b\!\cdot\!g_{\ast}a, \mu(g_{\ast}b)gf\right)\,.
\]
One can verify that $M=\mathcal{R}^+_0\!/(\blankdot):\mathcal{V}(\mathcal{R})\to\mathbf{SemiLat}^\vee$ has a crossed $\mathcal{V}(\mathcal{R})$-module structure by setting $\mu_r:\mathcal{R}^+_0\!/r\to\mathrm{End}_{\mathcal{V}(\mathcal{R})}(r)$ to be $\mu_r(\xi):=(\neg\xi)(\neg\xi)^\dagger$, where $\neg(\blankdot)$ denotes the negation in the Boolean lattice.
Finally, we set $\square(\mathcal{R}):=M\!\mathcal{V}(\mathcal{R})$.
We will see $\square(\mathcal{R})$ is actually a test category in section \ref{sec:test-cat}.

The classical cubical categories $\square$ and $\square^{\mathsf{c}}$ are examples of resulting categories $\square(\mathcal{R})$.
Moreover, we can also considered the symmetrized version of them by group operads.
Group operad $G$, which is discussed in \cite{Wah} and \cite{Zha}, is a non-symmetric operad such that each $G(n)$ has a group structure with certain compatibility with the operad operations.
We can define $\square_G$ the cubical category with symmetry of $G$ as follows:
For a goup operad $G$, we can naturally regard it as a crossed $\widetilde\Delta$-group defined in \cite{BM}.
$\square_G$ is the cubicalization of $\widetilde\Delta G$, the total category of crossed $\widetilde\Delta$-group $G$.
If we take $G=\Sigma$ the symmetric group operad, we obtain $\square_\Sigma=\square(\widetilde\Delta\Sigma)$, which is constructed by Isaacson.

The second result is about relations between homotopy theories on the presheaf category over $\square(\mathcal{R})$.
Since $\square(\mathcal{R})$ is a test category, as mentioned above, the category $\square(\mathcal{R})^\wedge$ of presheaves admits a model structure whose weak equivalences are morphisms $f:X\to Y$ which induce Thomason equivalences $\square(\mathcal{R})/X\to \square(\mathcal{R})/Y$.
On the other hand, since the power set functor $\mathcal{R}^+_0\!/(\blankdot):\square(\mathcal{R})\to\mathbf{Cat}$ sends each object to finite Boolean algebra, by taking the nerves, we obtain a functor $|\blankdot|:\square(\mathcal{R})\to\mathbf{SSet}$ which sends each object to cubes $\Delta[1]^n$.
We define the simplicial realization $|\blankdot|:\square(\mathcal{R})^\wedge\to\mathbf{SSet}$ by the left Kan extension along the Yoneda embedding $\square(\mathcal{R})\hookrightarrow\square(\mathcal{R})^\wedge$.
If the unique factorization system $(\mathcal{R}^-,\mathcal{R}^+_0)$ gives rise to a structure of Eilenberg-Zilber category (briefly, EZ category) on $\mathcal{R}$, we can derive a model structure on $\square(\mathcal{R})^\wedge$ from $\mathbf{SSet}$ along the realization functor.
It is non-trivial that the two model structure coincide.
We will give a proof for this in some special cases in section \ref{sec:htp-cube}:

\begin{theo0}
Let $\mathcal{R}$ be an EZ cubicalizable category with an enlargement.
Suppose moreover that $\mathcal{R}$ has no non-trivial isomorphism.
Then a morphism $f:X\to Y\in\square(\mathcal{R})^\wedge$ induces a Thomason equivalence $\square(\mathcal{R})/X\to\square(\mathcal{R})/Y$ if and only if the simplicial realization $|f|:|X|\to |Y|$ is a weak equivalence in $\mathbf{SSet}$.
\end{theo0}

For the proof, the notion of regularity is important, which is related to the homotopy colimit.
A model structure on a presheaf category $\mathcal{A}^\wedge$ is said to be regular if the natural morphism
\[
\hocolim_{(\mathcal{A}[a]\to X)\in\mathcal{A}/X}\mathcal{A}[a]\,\to X
\]
is a weak equivalence for every $X\in\mathcal{A}^\wedge$.
The regularity gives very strong tools for computation of homotopy colimits.
In fact, if a model structure on $\mathcal{A}$ is regular, then every object is a homotopy colimit of representables, so the left Quillen functor is easily computed.
So we will first show that if $\mathcal{R}$ is an EZ cubicalizable category with an enlargement and has no non-trivial isomorphism, the model structure $\mathcal{R}$ induced by the realization is regular.
Then it is easy to verify that if the model structure on $\square(\mathcal{R})^\wedge$ is regular, we have a zigzag of weak equivalences in $\mathbf{SSet}$ connecting $|X|$ to $N(\square(\mathcal{R})/X)$, which implies the result.
Note that the theorem is a generalization of the Antolini's result presented in \cite{Ant02}.
For example, the theorem is available for $\square$ and $\square^{\mathsf{c}}$.

As for more general $\mathcal{R}$, we need another assumption.
We suppose $\mathcal{R}$ satisfies the following condition:
\begin{enumerate}[label={\rm$(\clubsuit)$}]
  \item For each $r\in\square(\mathcal{R})$, there is a zigzag of homotopy equivalences connecting $\square_{\mathcal{R}}[r]$ to the terminal object.
\end{enumerate}
Then we can develop another model structure on $\square(\mathcal{R})^\wedge$, called the spatial model structure:

\begin{theo0}
Let $\mathcal{R}$ be an EZ cubicalizable category with an enlargement.
If $\mathcal{R}$ satisfies the condition $(\clubsuit)$, then there is a cofibrantly generated model structure on $\square(\mathcal{R})^\wedge$ such that
\begin{itemize}
  \item $f:X\to Y$ is a weak equivalence if and only if it is an $\infty$-equivalence;
  \item the class of cofibrations is the saturated class generated by the set
\[
\left\{\partial\square_{\mathcal{R}}[r]\hookrightarrow\square_{\mathcal{R}}[r]\,:\,r\in\square(\mathcal{R})\right\}\,;
\]
  \item the class of fibrations is the right orthogonal class of trivial cofibrations.
\end{itemize}
\end{theo0}

In the spatial model structure, we obtain the criterion below:

\begin{prop0}
Let $\mathcal{R}$ be an EZ cubicalizable category satisfying the condition \ref{cond:club} with an enlargement.
Suppose $X,Y\in\square(\mathcal{R})^\wedge$ are cofibrant in the spatial model structure
Then $f:X\to Y$ is an $\infty$-equivalence if and only if the geometric realization $|f|:|X|\to |Y|$ is a weak equivalence in $\mathbf{SSet}$.
\end{prop0}

It will be verified that, for every group operad $G$, the cubicalizable category $\widetilde\Delta G$ satisfies the condition $(\clubsuit)$.
Hence the category $\square(\widetilde\Delta G)^\wedge$ admits the spatial model structure.
Moreover, it will be also proved that the convolution product on $\square(\widetilde\Delta G)^\wedge$ is compatible with the spatial model structure.
In other words, $\square(\widetilde\Delta G)^\wedge$ is an example of a monoidal model category.
In particular the case $G=\mathcal{B}$ the braid group operad, $\square(\widetilde\Delta\mathcal{B})^\wedge$ admits a braiding, so that it is a braided monoidal model category.

\subsection*{Acknowledgements}
This paper is written under the supervision of Toshitake Kohno.
I would like to thank him for the encouragement and advice.
This work was supported by the Program for Leading Graduate Schools, MEXT, Japan.

\section{Preliminaries}
\label{sec:prelim}

\subsection{Notations}
Throughout this paper, we use the convension that the set of natural numbers contains $0$ and denote it by $\mathbb{N}$.

In this article, we will refer to the book \cite{Lei} for elements of category theory.
The most of terminologies and notations follow the book.
For more details of category theory, we refer the reader to literatures \cite{KS} and \cite{McL}.
We here review a few terminologies:

\begin{defin}
Let $\mathcal{C}$ be a category.
\begin{itemize}
  \item A subcategory $\mathcal{C}_1$ of $\mathcal{C}$ is said to be full if for each pair $(c_1,c'_1)$ of objects in $\mathcal{C}_1$, we have $\mathcal{C}_1(c_1,c'_1)=\mathcal{C}(c_1,c'_1)$.
  \item A subcategory $\mathcal{C}_2$ of $\mathcal{C}$ is said to be wide if $\mathrm{Ob}\,\mathcal{C}_2=\mathrm{Ob}\,\mathcal{C}$.
\end{itemize}
\end{defin}

We often identify full subcategories with subclasses of objects, and wide subcategories with subclasses of morphisms which are closed under compositions.

\begin{defin}
A category $\mathcal{C}$ is said to be complete (resp. cocomplete) if all small limits (resp. colimits) exist in $\mathcal{C}$.
We say $\mathcal{C}$ is bicomplete if it is both complete and cocomplete.
\end{defin}

For a category $\mathcal{C}$, we denote by $\mathcal{C}^\wedge$ the presheaf category on $\mathcal{C}$, and by $\mathcal{C}^\vee$ the category of copresheaves.
We write $\mathcal{C}[\blankdot]:\mathcal{C}\to\mathcal{C}^\wedge$ the Yoneda embedding.
Hence, the co-Yoneda embedding is denoted by $\mathcal{C}^\opposite[\blankdot]^\opposite:\mathcal{C}\to\mathcal{C}^\vee$.
We often identify $\mathcal{C}$ with a full subcategory of $\mathcal{C}^\wedge$.

For a small category $\mathcal{J}$ and a category $\mathcal{C}$, 
We denote by $\mathcal{C}^\mathcal{J}$ the functor category whose objects are functors $\mathcal{J}\to\mathcal{C}$ and whose morphisms are natural transformations.
In particular, if $[1]$ denotes the category of the form $\bullet\to\bullet$, then the category $\mathcal{C}^{[1]}$ is the category of morphisms of $\mathcal{C}$; the objects are morphisms of $\mathcal{C}$, and the morphisms are commutative squares in $\mathcal{C}$.

For functors $F:\mathcal{C}\to\mathcal{E}$ and $G:\mathcal{D}\to\mathcal{E}$, we denote by $\commacat{F}{G}$ the comma category; whose objects are triples $(c,d,u)$ of $c\in\mathcal{C}$, $d\in\mathcal{D}$ and $u:F(c)\to G(d)$, and whose morphisms are pairs $(f:c\to c',g:d\to d')$ which commute the squares:
\[
\xymatrix{
  F(c) \ar[r]^{u} \ar[d]_{F(f)} & G(d) \ar[d]^{G(g)} \\
  F(c') \ar[r]^{v} & G(d') }
\]
As a special case, for a functor $G:\mathcal{D}\to\mathcal{C}^\wedge$ we write $\mathcal{C}/G=\commacat{\mathcal{C}[\blankdot]}{G}$ and $F/\mathcal{C}=\commacat{F}{\mathcal{C}[\blankdot]}$.
Dually, for a functor $F:\mathcal{D}\to\mathcal{C}^\vee$, we write $F/\mathcal{C}=\commacat{F}{\mathcal{C}^\opposite[\blankdot]^\opposite}$.
We call them the slice categories.
In particular the case $\mathcal{D}=\mathrm{Pt}$ the one-point category, we identify $F$ and $G$ with their images, so that we write $\mathcal{C}/x$ and $y/\mathcal{C}$ for $x\in\mathcal{C}^\wedge$ and $y\in\mathcal{C}^\vee$ instead $\mathcal{C}/G$ and $F/\mathcal{C}$ respectively.

Finally, we define the end and the coend.
Let $\mathcal{J}$ be a small category and $\mathcal{C}$ be a bicomplete category.
For a set $S$ and an object $X\in\mathcal{C}$, we write
\[
S\times X:=\coprod_{s\in S}X_s\quad,\qquad X^S:=\prod_{s\in S} X_s
\]
where each $X_s$ is a copy of $X$.
Notice that we have an adjunction
\[
\mathcal{C}(S\times X,Y)\simeq \mathcal{C}(X,Y^S).
\]
Then the end and the coend of a functor $F:\mathcal{J}^\opposite\times\mathcal{J}\to\mathcal{C}$ are defined as
\[
\begin{split}
\int_{j\in\mathcal{J}}F(j,j)
&:=\mathrm{eq}\left(\prod_{j\in\mathcal{J}}F(j,j)\rightrightarrows\prod_{j_1,j_2\in\mathcal{J}}F(j_1,j_2)^{\mathcal{J}(j_1,j_2)}\right)\,\text{and}\\
\int^{j\in\mathcal{J}}F(j,j)
&:=\mathrm{coeq}\left(\coprod_{j_1,j_2\in\mathcal{J}}\mathcal{J}(j_2,j_1)\times F(j_1,j_2)\rightrightarrows\coprod_{j\in\mathcal{J}}F(j,j)\right)\,,
\end{split}
\]
where $\mathrm{eq}$ and $\mathrm{coeq}$ denote the equalizer and the coequalizer respectively.
Using the end, we obtain the formula:
\[
\mathcal{C}^\mathcal{J}(F,G)\simeq\int_{j\in\mathcal{J}}\mathcal{C}(F(j),G(j))
\]
The following is a generalization of the Yoneda lemma:

\begin{theo}
Let $\mathcal{J}$ be a small category, and $\mathcal{C}$ be a bicomplete category.
Then each functor $X:\mathcal{J}\to\mathcal{C}$, we have the formulae
\[
\int_{j\in\mathcal{J}} X(j)^{\mathcal{J}(\blankdot,j)}
\simeq X
\simeq \int^{j\in\mathcal{J}} \mathcal{J}(j,\blankdot)\times X(j).
\]
Dually, for each functor $Y:\mathcal{J}^\opposite\to\mathcal{C}$, we have
\[
\int_{j\in\mathcal{J}} X(j)^{\mathcal{J}(j,\blankdot)}
\simeq X
\simeq \int^{j\in\mathcal{J}} \mathcal{J}(\blankdot,j)\times X(j).
\]
\end{theo}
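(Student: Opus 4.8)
The plan is to reduce all four isomorphisms to the single coend (density) formula
\[
X(k)\simeq\int^{j\in\mathcal{J}}\mathcal{J}(j,k)\times X(j)\,,\qquad k\in\mathcal{J},
\]
for a covariant functor $X:\mathcal{J}\to\mathcal{C}$, naturally in $k$. Indeed, a coend and a copower computed in $\mathcal{C}^\opposite$ are an end and a power computed in $\mathcal{C}$, and a covariant functor $\mathcal{J}\to\mathcal{C}$ is the same datum as a covariant functor $\mathcal{J}^\opposite\to\mathcal{C}^\opposite$; so applying the density formula in $\mathcal{C}^\opposite$ with index category $\mathcal{J}^\opposite$ yields the first (end) formula $X\simeq\int_{j}X(j)^{\mathcal{J}(\blankdot,j)}$. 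The two formulae for a contravariant functor $Y:\mathcal{J}^\opposite\to\mathcal{C}$ are then obtained from these by replacing $\mathcal{J}$ with $\mathcal{J}^\opposite$ throughout. Thus it suffices to prove the density formula, and for that it is enough to produce, for each $k\in\mathcal{J}$, a pair of mutually inverse maps between the coend and $X(k)$.

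First I would construct the comparison out of the coend. Recalling that $\mathcal{J}(j,k)\times X(j)=\coprod_{f\in\mathcal{J}(j,k)}X(j)$, define a family $\lambda_j:\mathcal{J}(j,k)\times X(j)\to X(k)$ whose component indexed by $f:j\to k$ is $X(f):X(j)\to X(k)$. The functoriality identity $X(f'g)=X(f')X(g)$ is exactly the wedge condition, so $(\lambda_j)_{j}$ factors uniquely through a map $\phi_k:\int^{j}\mathcal{J}(j,k)\times X(j)\to X(k)$ with $\phi_k\circ\iota_j=\lambda_j$, where $\iota_j$ denotes the coprojection of the coend. In the other direction, let $\psi_k:X(k)\to\int^{j}\mathcal{J}(j,k)\times X(j)$ be the composite of the inclusion of the summand indexed by $\mathrm{id}_k\in\mathcal{J}(k,k)$ with $\iota_k$.

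Next I would check that $\phi_k$ and $\psi_k$ are mutually inverse. Since $\phi_k$ acts on the $\mathrm{id}_k$-summand by $X(\mathrm{id}_k)=\mathrm{id}_{X(k)}$, the relation $\phi_k\psi_k=\mathrm{id}_{X(k)}$ is immediate. For $\psi_k\phi_k=\mathrm{id}$ I would use that the coprojections $\iota_j$ are jointly epimorphic, so it suffices to check the composite on each summand $X(j)\hookrightarrow\mathcal{J}(j,k)\times X(j)\xrightarrow{\iota_j}\int^{j}$ indexed by some $f:j\to k$. There $\psi_k\phi_k\circ\iota_j$ sends this summand into the $\mathrm{id}_k$-summand via $X(f)$ and then applies $\iota_k$; this agrees with $\iota_j$ precisely because of the coend relation, the summand of the zig-zag coproduct indexed by the pair $(f,\mathrm{id}_k)$ witnessing the identification of $(f,x)$ in the $j$-slot with $(\mathrm{id}_k,X(f)(x))$ in the $k$-slot. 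Here one leg of the parallel pair is precomposition in the contravariant variable $\mathcal{J}(\blankdot,k)$ and the other is the covariant action $X(f)$. Hence $\psi_k\phi_k\circ\iota_j=\iota_j$ on every summand, so $\psi_k\phi_k=\mathrm{id}$; naturality of $\phi_k$ in $k$ is routine, and $\psi_k$ is natural as its inverse. I expect this last identification to be the main obstacle: keeping the two parallel maps of the coequalizer straight and exhibiting, for each $f:j\to k$, the correct element of the zig-zag coproduct, since one leg is a precomposition in the hom-variable while the other is the $X$-action.

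As an alternative that circumvents the bookkeeping, I could argue by representability. Using that $\mathcal{C}(\blankdot,Z)$ turns the defining colimit into a limit, the copower adjunction $\mathcal{C}(S\times A,B)\simeq\mathbf{Set}(S,\mathcal{C}(A,B))$, the commutation of ends (Fubini), and the classical ($\mathbf{Set}$-valued) Yoneda lemma in its end form $\int_{k}\mathbf{Set}(\mathcal{J}(j,k),W(k))\simeq W(j)$, one computes
\[
\mathcal{C}^{\mathcal{J}}\!\left(\int^{j}\mathcal{J}(j,\blankdot)\times X(j),\,Y\right)\simeq\int_{j}\mathcal{C}(X(j),Y(j))\simeq\mathcal{C}^{\mathcal{J}}(X,Y)
\]
naturally in $Y$, whence $\int^{j}\mathcal{J}(j,\blankdot)\times X(j)\simeq X$ by the Yoneda lemma in $\mathcal{C}^{\mathcal{J}}$.
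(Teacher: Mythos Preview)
Your argument is correct: the explicit cowedge $\lambda_j$ with $f$-component $X(f)$, the section $\psi_k$ at $\mathrm{id}_k$, and the coend relation indexed by $f:j\to k$ (with the $\mathrm{id}_k$-element of $\mathcal{J}(k,k)\times X(j)$) do exactly what you claim, and your representability alternative is the standard slick proof. The paper, however, does not give its own proof of this theorem at all; it merely states the result in the preliminaries and refers the reader to Kelly's \emph{Basic concepts of enriched category theory} for details, so there is no argument to compare against beyond noting that both of your approaches are the classical ones found in that reference.
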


For more details about ends and coends, see \cite{Kel}.

\subsection{Posets and lattices}
A partially ordered set, poset for short, is an example of categories.
In the section, we observe some properties of posets in a categorical viewpoint.

\begin{defin}
A category $P$ is called a preordered set if each hom-set $P(x,y)$ consists of at most one element.
A poset is a preordered set which is skeletal as a category.
\end{defin}

Indeed, if $P$ is a preordered set in the usual sense, we can see it as a category whose objects are elements of $P$, and for $x,y\in P$, the hom-set is given by
\[
P(x,y) =
\begin{cases}
\quad\{\mathrm{pt}\}&\quad(x\le y) \\
\quad\varnothing&\quad\text{otherwise}
\end{cases}\,.
\]
Then reflexivity and transitivity corresponds to the existence of the identities and composability respectively.
Moreover, $P$ satisfies the antisymmetricity if and only if it is skeletal as a category.
The following is obvious:

\begin{lemma}
Let $f:P\to Q$ be a map between preordered sets.
Then it is order-preserving if and only if it extends to a functor.
Consequently, the category $\mathbf{Poset}$ of posets and order-preserving maps is a full-subcategory of the category $\mathbf{Cat}$ of small categories and functors.
\end{lemma}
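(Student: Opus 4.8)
The plan is to verify both implications directly from the categorical description of preordered sets recalled above, and then to read off the ``consequently'' clause from the resulting bijection on hom-sets.

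First I would treat the forward direction. Suppose the map $f:P\to Q$ extends to a functor $F:P\to Q$, meaning that $F$ acts as $f$ on objects. Given $x\le y$ in $P$, there is a (necessarily unique) morphism $x\to y$ in $P$ viewed as a category; applying $F$ produces a morphism $f(x)\to f(y)$ in $Q$. By the explicit description of the hom-sets of a preordered set, such a morphism exists precisely when $f(x)\le f(y)$, so $f$ is order-preserving.

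For the converse, assume $f$ is order-preserving, and define a candidate functor $F$ with object part $f$ by sending the unique morphism $x\to y$ (which exists whenever $x\le y$) to the unique morphism $f(x)\to f(y)$, which exists because $f$ preserves order. The main---indeed the only---point to check is functoriality, namely preservation of identities and of composition. Both reduce to equalities between parallel morphisms of $Q$, and since every hom-set $Q(u,v)$ has at most one element, any two parallel morphisms in $Q$ automatically agree. Thus the functor axioms hold for free, and $F$ is a well-defined functor extending $f$. This subsingleton property of the hom-sets is exactly what makes the statement routine, and it is the crux of the argument.

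Finally, for the ``consequently'' clause, the two implications together show that for posets $P$ and $Q$ the order-preserving maps $P\to Q$ are in bijection with the functors $P\to Q$; the extension constructed in the converse direction is moreover unique, since a functor's action on a subsingleton hom-set is forced by its action on objects. Hence the inclusion that regards each poset as its associated category is bijective on hom-sets, so fully faithful, and injective on objects, exhibiting $\mathbf{Poset}$ as a full subcategory of $\mathbf{Cat}$. I anticipate no genuine obstacle here; the only care needed is to spell out that ``map'' means a function of the underlying sets, and that the uniqueness of morphisms in $Q$ is what simultaneously forces the functor axioms and the uniqueness of the extension.
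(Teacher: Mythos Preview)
Your proof is correct and complete. The paper in fact does not give a proof of this lemma at all; it is introduced with the phrase ``The following is obvious:'' and left unproved. What you have written is exactly the unpacking of that ``obvious'': the subsingleton property of hom-sets in a preordered set forces both functoriality of the extension and its uniqueness, and hence full faithfulness of the inclusion $\mathbf{Poset}\hookrightarrow\mathbf{Cat}$. There is nothing to compare against and no gap to fix.
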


As a category, we can consider limits and colimits in a poset $P$.
Let $D:\mathcal{I}\to P$ be a small diagram.
Recall that the colimit is defined as a left adjoint of the constant functor.
For each $x\in P$, we have
\[
P^{\mathcal{I}}(D,x) =
\begin{cases}
\quad\{\mathrm{pt}\}&\quad(\forall i\in\mathcal{I}:D(i)\le x)\\
\quad\varnothing&\quad\text{otherwise}
\end{cases}\,.
\]
Thus we obtain
\[
\colim_{i\in\mathcal{I}} D(i) = \sup\left\{D(i)\mid i\in\mathcal{I}\right\}
\]
if either side exists.
Dually, we obtain
\[
\lim_{i\in\mathcal{I}} D(i) = \inf\left\{D(i)\mid i\in\mathcal{I}\right\}\,.
\]
Notice that the initial (resp. terminal) object is precisely the least (greatest) element in the poset.

\begin{defin}
A poset $P$ is called a join-semilattice if it has the supremum for every finite set.
Equivalently, a join-semilattice is a finite cocomplete poset.

Dually, a poset $P$ is called a meet-semilattice if it has the infimum for every finite set.
Equivalently, a meet-semilattice is a finite complete poset.

$P$ is called a lattice if it is finite bicomplete as a category.
\end{defin}

For $x,y\in P$, we write
\[
\begin{gathered}
x\vee y := x\amalg y = \sup\{x,y\}\,\\
x\wedge y := x\times y = \inf\{x,y\}\,
\end{gathered}
\]
if they exist, and call them the join and meet respectively, instead of the coproduct and product.
Note that $x\le y$ if and only if $x\vee y = y$ if and only if $x\wedge y = x$.

\begin{lemma}
An order-preserving map $f:P\to Q$ of posets preserves joins if and only if it preserves finite colimits as a functor.
Dually, $f$ preserves meets if and only if it preserves finite limits.
\end{lemma}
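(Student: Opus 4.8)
The plan is to reduce the statement about joins and meets to the already-established dictionary between order-theoretic suprema/infima and categorical colimits/limits. Recall from the discussion preceding the statement that for a poset $P$, viewed as a category, a colimit of a diagram is computed as the supremum of its image and a limit as the infimum, whenever either side exists. The key technical hinge is that in a poset the binary join $x \vee y$ and the initial object (least element $\bot$) are precisely the instances of finite colimits: $x \vee y$ is the coproduct $x \amalg y$, and $\bot$ is the colimit of the empty diagram. Dually, $x \wedge y$ and the terminal object $\top$ are the instances of finite limits. Since every finite colimit in a poset can be built from these, a functor preserving binary joins and the least element preserves all finite colimits, and conversely.

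First I would treat the join case. For the ``if'' direction, suppose $f$ preserves finite colimits as a functor. Then in particular it preserves binary coproducts, so $f(x \vee y) = f(x) \vee f(y)$, which says exactly that $f$ preserves joins. For the ``only if'' direction, suppose $f$ preserves joins. I would argue that any finite colimit in a poset is, up to the fact that posets are skeletal, determined by the supremum of the (finitely many) objects appearing in the diagram; since $f$ preserves binary suprema and hence by induction all nonempty finite suprema, and since the empty supremum is the least element, it suffices to note that preservation of joins forces $f$ to send the least element of $P$ to the least element of $Q$. Here I would invoke the observation that the initial object is the least element: because $f$ is order-preserving and join-preserving, $f(\bot)$ is below every $f(x)$, and one checks this pins it to $\bot_Q$; then $f$ preserves all finite colimits.

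The meet case follows by duality: a poset $P$ and its opposite $P^{\mathrm{op}}$ are again posets, meets in $P$ are joins in $P^{\mathrm{op}}$, and finite limits in $P$ are finite colimits in $P^{\mathrm{op}}$, so the first half applied to $f \colon P^{\mathrm{op}} \to Q^{\mathrm{op}}$ yields the claim that $f$ preserves meets if and only if it preserves finite limits.

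The only genuine subtlety — and the step I would flag as the main obstacle — is the handling of the empty colimit, i.e.\ whether preservation of \emph{binary} joins automatically yields preservation of the initial object. In general a join-preserving map need not be required to send $\bot$ to $\bot$, so one must be careful about conventions: either the notion of ``preserves joins'' is taken to include the empty join (the least element), or one restricts attention to nonempty finite colimits. I would resolve this by reading ``preserves joins'' as preservation of suprema of all finite subsets, including the empty one, which is the convention consistent with the definition of join-semilattice given above as a \emph{finite cocomplete} poset; with that reading the equivalence is clean and the argument above goes through verbatim.
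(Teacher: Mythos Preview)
The paper states this lemma without proof, treating it as an immediate consequence of the dictionary already set up: colimits in a poset are suprema and limits are infima. So there is no ``paper's proof'' to compare against; your proposal supplies what the paper leaves implicit.

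Your argument is essentially correct and matches the intended reasoning. The reduction to ``finite colimit $=$ finite supremum'' is exactly the point, and your induction from binary joins to nonempty finite joins is fine. One remark on the part you yourself flag: the attempted argument that binary-join preservation alone forces $f(\bot_P)=\bot_Q$ does not work, and you are right to abandon it. From order-preservation you only get $f(\bot_P)\le f(x)$ for all $x\in P$, which says nothing about elements of $Q$ outside the image of $f$; a constant map to a non-minimal element of $Q$ preserves all binary joins but not the empty one. Your resolution via the convention is the correct one: the paper defines a join-semilattice as a \emph{finite cocomplete} poset, i.e.\ one with suprema of all finite subsets including the empty subset, so ``preserves joins'' here must be read as preserving all finite suprema, empty case included. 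With that reading the equivalence is immediate, as you say.
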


Recall that, in the category theory, many examples of cocontinuous functors, i.e. functors preserve colimits, come from adjunctions.
Let $f_\ast:P\xrightleftarrows{} Q:f^\ast$ be a pair of order-preserving maps of posets.
They form an adjunction if for every $p\in P$ and $q\in Q$, we have
\[
f_\ast f^\ast(q) \le q\quad,\quad\text{and}\quad p\le f^\ast f_\ast(p)\,.
\]
Notice that in that case, the triangle identities are automatic; indeed, the compositions
\[
\begin{gathered}
f_\ast \le f_\ast f^\ast f_\ast \le f_\ast\,,\\
f^\ast \le f^\ast f_\ast f^\ast \le f^\ast\,
\end{gathered}
\]
are the identities.

\begin{defin}
A pair $f_\ast:P\rightleftarrows{} Q:f^\ast$ is called a Galois connection if it forms an adjunction.
\end{defin}

Since every left adjoint functors preserves colimits, we obtain the following:

\begin{corol}\label{cor:galois-conti}
Suppose $f_\ast:P\rightleftarrows{} Q:f^\ast$ is a Galois connection.
Then $f_\ast$ and $f^\ast$ preserves joins and meets respectively.
\end{corol}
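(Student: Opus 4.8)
The plan is to recognize that this statement is nothing more than the poset-theoretic shadow of the general categorical principle that left adjoints are cocontinuous and right adjoints are continuous. Since a Galois connection is by definition an adjunction $f_\ast\dashv f^\ast$, with $f_\ast:P\to Q$ the left adjoint and $f^\ast:Q\to P$ the right adjoint---this is exactly what the unit $p\le f^\ast f_\ast(p)$ and counit $f_\ast f^\ast(q)\le q$ record---the entire argument will reduce to combining a standard fact with the Lemma already established in this subsection.

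First I would invoke the elementary fact that any left adjoint functor preserves all colimits and any right adjoint functor preserves all limits. Applied here, this says that $f_\ast$ preserves every colimit that exists in $P$ and $f^\ast$ preserves every limit that exists in $Q$; in particular $f_\ast$ preserves finite colimits and $f^\ast$ preserves finite limits. Next I would translate these two preservation properties back into lattice-theoretic language by appealing to the Lemma proved just above, which identifies preservation of finite colimits with preservation of joins and preservation of finite limits with preservation of meets. This immediately delivers the conclusion that $f_\ast$ preserves joins and $f^\ast$ preserves meets.

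There is essentially no genuine obstacle to overcome, as the statement is an immediate corollary of results already in hand; the only point requiring a moment's attention is the bookkeeping of which map is the left adjoint, so that joins are correctly matched to $f_\ast$ and meets to $f^\ast$. This is settled simply by reading off the directions of the unit and counit inequalities in the definition of a Galois connection, after which the two halves of the claim follow symmetrically.
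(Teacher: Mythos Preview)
Your proposal is correct and matches the paper's approach exactly: the paper states the corollary immediately after the sentence ``Since every left adjoint functors preserves colimits, we obtain the following,'' which is precisely the reduction you carry out. Your added remark about reading off which map is the left adjoint from the unit/counit inequalities, and your explicit appeal to the preceding Lemma to translate colimit/limit preservation into join/meet preservation, just make explicit what the paper leaves implicit.
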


Now consider a lattice $L$.
As a well-known consequence of the lattice theory, we have the following:

\begin{prop}
For a lattice $L$, the following are equivalent:
\begin{enumerate}[label={\rm(\roman*)}]
  \item\label{cond:dist-meet} $x\wedge(y\vee z)=(x\wedge y)\vee (x\wedge z)$ for every $x,y,z\in L$.
  \item\label{cond:dist-join} $x\vee(y\wedge z)=(x\vee y)\wedge(x\vee z)$ for every $x,y,z\in L$.
\end{enumerate}
\end{prop}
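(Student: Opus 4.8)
The plan is to exploit the evident self-duality of the statement together with the absorption laws. Since interchanging $\vee$ and $\wedge$ turns condition \ref{cond:dist-meet} into condition \ref{cond:dist-join} and vice versa, and since the opposite poset $L^{\mathrm{op}}$ is again a lattice with its meet and join interchanged, it suffices to prove a single implication, say \ref{cond:dist-meet} $\Rightarrow$ \ref{cond:dist-join}; the reverse implication then follows by applying the same argument to $L^{\mathrm{op}}$. Before starting, I would record the two absorption identities $x\wedge(x\vee y)=x$ and $x\vee(x\wedge y)=x$, both of which are immediate from the characterization $x\le y\iff x\vee y=y\iff x\wedge y=x$ noted above: indeed $x\le x\vee y$ gives the first, and $x\wedge y\le x$ gives the second.

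Assuming \ref{cond:dist-meet}, the core of the proof is a direct computation starting from the right-hand side of \ref{cond:dist-join}. Applying the hypothesis with $x\vee y$ playing the role of the outer meet factor, I would write
\[
(x\vee y)\wedge(x\vee z)=\bigl((x\vee y)\wedge x\bigr)\vee\bigl((x\vee y)\wedge z\bigr).
\]
The first summand collapses to $x$ by absorption, and a second application of \ref{cond:dist-meet} to $(x\vee y)\wedge z=(x\wedge z)\vee(y\wedge z)$ gives
\[
(x\vee y)\wedge(x\vee z)=x\vee(x\wedge z)\vee(y\wedge z).
\]
Finally $x\vee(x\wedge z)=x$ by absorption again, leaving $x\vee(y\wedge z)$, which is exactly the left-hand side of \ref{cond:dist-join}. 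This closes the implication, and the dual argument yields the converse.

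There is no genuine obstacle here; the only care required is the bookkeeping of commutativity and the order in which absorption and the distributive hypothesis are applied, so that each invocation of \ref{cond:dist-meet} is set up with the correct factor in the meet. The self-dual formulation is what makes the proof economical, since it halves the work and renders the symmetry between the two laws manifest.
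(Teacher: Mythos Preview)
Your proof is correct and follows essentially the same route as the paper's: both reduce to one implication by duality, then expand $(x\vee y)\wedge(x\vee z)$ using \ref{cond:dist-meet} twice and simplify via absorption. The paper compresses the absorption steps by noting $x\wedge z\le x\le x\vee y$ in advance, but the underlying computation is identical.
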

\begin{proof}
Suppose \ref{cond:dist-meet}.
Then since $x\wedge z\le x\le (x\vee y)$, we have
\[
(x\vee y)\wedge (x\vee z)
= ((x\vee y)\wedge x)\vee ((x\vee y)\wedge z)
= x\vee (x\wedge z)\vee (y\wedge z)
= x\vee(y\wedge z)\,,
\]
which is \ref{cond:dist-join}.

The converse is the dual, so apply the above argument to the lattice $L^\opposite$.
\end{proof}

\begin{defin}
A lattice $L$ is said to be distributive if it satisfies the above equivalent conditions.
Equivalently, $L$ is distributive if and only if cartesian products in $L$ preserve all finite colimits.
\end{defin}

Recall that, in the category theory, a typical situation that cartesian products preserves colimits is that $\mathcal{C}$ is a cartesian closed category.
The cartesian product in a lattice is nothing but the meet, so we have the following definition:

\begin{defin}
A lattice $L$ is called a Heyting algebra if it is cartesian closed as a category; i.e. there is a functor $(\to):L^\opposite\times L\to L$ such that for every $x,y,z\in L$, we have
\[
x\wedge y\le z \iff x\le (y\to z)\,.
\]
\end{defin}

Notice that if $L$ is a Heyting algebra, we have $L(x\wedge y,z)\simeq L(x,y\to z)$, so that the functor $(\blankdot)\wedge y:L\to L$ has a right adjoint $y\to(\blankdot):L\to L$.
By corollary \ref{cor:galois-conti}, every Heyting algebra is a distributive lattice.

In this article, Boolean lattices, which are examples of Heyting algebras, are most important.

\begin{defin}
Let $L$ be a lattice with the greatest element $1$ and the least element $0$.
Then $L$ is said to be Boolean if there is an (anti-)operation $\neg:L^\opposite\to L$ such that
\[
x\wedge\neg x = 0\,,\quad x\vee\neg x = 1\,.
\]
\end{defin}

A Boolean lattice is a Heyting algebra;
indeed, if $L$ is a Boolean lattice, setting
\[
(y\to z) := (\neg y)\vee z\,
\]
gives rise to the right adjoint of meets.
Therefore, every Boolean lattice is distributive.

Notice that if there is an element $w\in L$ such that $w\wedge x=0$ and $w\vee x=1$, then we have
\[
w = w\wedge(x\vee\neg x) = (w\wedge x)\vee (w\wedge\neg x) = w\wedge\neg x\,,
\]
and
\[
w = w\vee(x\wedge\neg x) = (w\vee x)\wedge(w\vee\neg x) = w\vee\neg x\,.
\]
These imply $\neg x\le w\le \neg x$, so that we obtain $w=\neg x$.
Thus, the operation $\neg:L^\opposite\to L$ is uniquely determined if it exists.

The following are easily verified by the definition:

\begin{lemma}
If $L$ is a Boolean lattice and $x,y\in L$, then the following hold:
\begin{enumerate}[label={\rm(\arabic*)}]
  \item $\neg\neg x = x$.
  \item $\neg(x\wedge y)=(\neg x)\vee (\neg y)$.
  \item $\neg(x\vee y) = (\neg x)\wedge (\neg y)$.
\end{enumerate}
\end{lemma}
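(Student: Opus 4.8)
The plan is to exploit the uniqueness of complements established immediately before the statement: for a Boolean lattice $L$ and any $a\in L$, the element $\neg a$ is the \emph{unique} $w\in L$ satisfying $a\wedge w=0$ and $a\vee w=1$. Accordingly, for each of the three identities I would not compute $\neg$ by hand, but instead take the proposed right-hand side as a candidate complement, verify that it satisfies these two defining equations for the appropriate element, and then invoke uniqueness to conclude equality. The only structural input beyond this is distributivity, which is available since every Boolean lattice is a Heyting algebra and hence a distributive lattice.

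For (1), observe that the defining equations for $\neg x$, namely $x\wedge\neg x=0$ and $x\vee\neg x=1$, are symmetric in $x$ and $\neg x$ by commutativity of $\wedge$ and $\vee$. Hence $x$ itself satisfies the two equations characterising the complement of $\neg x$, and uniqueness gives $\neg\neg x=x$ at once.

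For (2), I would take $\neg x\vee\neg y$ as a candidate for $\neg(x\wedge y)$. Using distributivity, $(x\wedge y)\wedge(\neg x\vee\neg y)=\bigl(y\wedge(x\wedge\neg x)\bigr)\vee\bigl(x\wedge(y\wedge\neg y)\bigr)=0$, while after simplifying $(x\wedge y)\vee\neg x=y\vee\neg x$ by the join-distributive law one gets $(x\wedge y)\vee\neg x\vee\neg y=(y\vee\neg x)\vee\neg y=1$. Both defining equations hold, so $\neg(x\wedge y)=\neg x\vee\neg y$. Claim (3) is the exact dual, so I would simply apply (2) to the opposite lattice $L^\opposite$, in which meets and joins are interchanged and $\neg$ is again a complement operation; alternatively one verifies directly by the same two-line distributivity computation that $\neg x\wedge\neg y$ complements $x\vee y$.

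I do not expect a genuine obstacle here: the content is entirely the bookkeeping of the distributive laws, and the conceptual work has already been done in establishing that complements are unique and that $L$ is distributive. The only point requiring care is to keep the two directions of distributivity straight and to recognise that claim (3) follows formally from (2) by passing to $L^\opposite$, so that no separate argument is needed.
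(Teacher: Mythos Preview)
Your proof is correct and is exactly the argument the paper has in mind: the paper omits the proof entirely, stating only that the identities ``are easily verified by the definition,'' and the tools it has set up immediately beforehand (uniqueness of the complement and distributivity via the Heyting structure) are precisely the ones you invoke. There is nothing to add.
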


For finite Boolean lattices, the following result is known:

\begin{prop}
If $S$ is a set, then the powerset $2^S$ forms a Boolean lattice.
Conversely, if $L$ is a finite Boolean lattice, then there is a finite set $S$ such that $L\simeq 2^S$.
Consequently, the cardinality of a finite Boolean lattice is a power of $2$.
\end{prop}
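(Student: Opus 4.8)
The first assertion is routine: ordering $2^S$ by inclusion, the join and meet are the union and intersection, the top and bottom elements are $S$ and $\varnothing$, and the complement $\neg A := S \setminus A$ satisfies $A \wedge \neg A = \varnothing$ and $A \vee \neg A = S$, so all the defining axioms of a Boolean lattice hold. The substance is the converse, which is the finite case of Stone's representation theorem, and my plan is to recover the set $S$ intrinsically as the set of \emph{atoms} of $L$, where an atom means a minimal element of $L$ among the nonzero ones.

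First I would fix $S$ to be the (necessarily finite) set of atoms and note that, since $L$ is finite, every nonzero $x$ dominates at least one atom. I would then establish the key lemma that each $x \in L$ is the join of the atoms beneath it, i.e. $x = \bigvee\{a \in S : a \le x\}$. This is where the Boolean structure is essential: writing $y$ for this join, one has $y \le x$, and if $y < x$ then $x \wedge \neg y \ne 0$, since otherwise distributivity would give $x = (x \wedge y)\vee(x \wedge \neg y) = x \wedge y \le y$; choosing an atom $a \le x \wedge \neg y$ then yields $a \le y$ (being an atom below $x$) together with $a \le \neg y$, whence $a \le y \wedge \neg y = 0$, contradicting that an atom is nonzero.

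With this lemma in hand, I would define $\phi : L \to 2^S$ by $\phi(x) := \{a \in S : a \le x\}$ and show it is an isomorphism of lattices. Injectivity is immediate from the lemma, since $x = \bigvee \phi(x)$ recovers $x$ from $\phi(x)$. For surjectivity, given $T \subseteq S$ I would set $x := \bigvee T$ and check $\phi(x) = T$: the inclusion $T \subseteq \phi(x)$ is clear, while conversely for an atom $a \le \bigvee T$ distributivity gives $a = a \wedge \bigvee T = \bigvee_{b \in T}(a \wedge b)$, so some $a \wedge b \ne 0$, forcing $a \wedge b = a$ and hence $a = b \in T$ by minimality. Finally $\phi$ is evidently order-preserving and its inverse $T \mapsto \bigvee T$ is as well, so $\phi$ is an order isomorphism and therefore automatically preserves joins, meets, and complements. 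The cardinality statement then follows at once from $|L| = |2^S| = 2^{|S|}$.

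I expect the main obstacle to be exactly the key lemma that every element is the join of the atoms below it; once the interplay between distributivity and the complement has been pinned down there, the remaining verifications that $\phi$ is a bijection and a lattice map are formal.
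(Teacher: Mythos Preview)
Your proof is correct and is the standard atom-based argument for the finite Stone representation theorem. However, the paper does not actually prove this proposition: it is stated in the preliminaries section as a known fact, with the reader referred to Davey--Priestley for details. So there is no paper proof to compare against; you have simply supplied the textbook argument that the paper chose to omit.
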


For more details about lattice theory, see for example \cite{DP}.

\section{Thin-powered categories}
\label{sec:thin-pow-cat}
Recall that for a set $S$, we can consider its subset $A\subset S$.
In general, for an object $X$ of a category $\mathcal{C}$, we can consider the class $\mathrm{Sub}(X)$ of isomorphism classes of monomorphisms with codomain $X$ in $\mathcal{C}$.
It is a generalization of the notion of subsets.
$\mathrm{Sub}(X)$ is, however, too large in general to treat combinatorially.
In this section, we consider a more ideal situation.

\subsection{Distinguished injections}
\begin{defin}
Let $\mathcal{R}$ be a small category.
Then a thin-powered structure on $\mathcal{R}$ is a class $\mathcal{R}^+_0$ of morphisms which satisfies the following conditions:
\begin{enumerate}[label={(DI\arabic*)},leftmargin=\widthof{\indent(DI0)}]
\item\label{cond:DImono} $\mathcal{R}^+_0$ contains only monomorphisms.
\item\label{cond:DIcomp} $\mathcal{R}^+_0$ is closed under compositions.
\item\label{cond:DIfact} For each morphism $f:s\to r\in\mathcal{R}$, there is a factorization $f=\delta\sigma$ with $\delta\in\mathcal{R}^+_0$ and $\sigma\pitchfork\mathcal{R}^+_0$, and the factorization is strictly unique.
\end{enumerate}
In the case, we call an element of $\mathcal{R}^+_0$ a distinguished injection.
A small category which is equipped with thin-powered structure is called a thin-powered category.
\end{defin}

Note that if $\mathcal{R}^+_0$ is a thin-powered structure, then it cannot contain non-trivial isomorphisms.
Indeed, if $\pi\in\mathcal{R}^+_0$ is an isomorphism, then clearly we have $\pi\pitchfork\mathcal{R}^+_0$.
Then $\mathrm{id}\pi=\pi\mathrm{id}$ are factorizations, and the uniqueness in the condition \ref{cond:DIfact} implies $\pi=\mathrm{id}$.

On the other hand, $\mathcal{R}^+_0$ contains the all identities.
To see this, factor the identity as $\mathrm{id}=\delta\sigma$ with $\delta\in\mathcal{R}^+_0$ and $\sigma\pitchfork\mathcal{R}^+_0$ by the condition \ref{cond:DIfact}.
This factorization implies that $\delta$ is not only a monomorphism, which follows from the condition \ref{cond:DImono}, but also a split epimorphism.
Hence $\delta$ is an isomorphism, so that by the condition \ref{cond:DImono}, $\delta$ itself needs to be the identity.
That is why we often regard $\mathcal{R}^+_0$ as a wide subcategory of $\mathcal{R}$.

We introduce some notations.
Let $\mathcal{R}$ be a thin-powered category with thin-powered structure $\mathcal{R}^+_0$.
We denote by $\mathcal{R}^-$ the class of morphisms which are left orthogonal to $\mathcal{R}^+_0$ whose elements are called $\mathcal{R}^+_0$-surjections.
Then the condition \ref{cond:DIfact} is equivalent to that there is a $(\mathcal{R}^-,\mathcal{R}^=_0)$-factorization.
One should notice that this is a special case of weak factorization systems.
In particular, we have the following results:

\begin{lemma}\label{lem:thinpow-ortho-sk}
In the above situation, every morphism $f:r'\to r$ with $\mathcal{R}^-\pitchfork f$ factors as $f=\delta\pi$ such that $\pi$ is an isomorpshim and $\delta\in\mathcal{R}^+_0$.
\end{lemma}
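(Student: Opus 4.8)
The plan is to apply the unique factorization of condition \ref{cond:DIfact} to $f$ itself and then exploit the orthogonality hypothesis $\mathcal{R}^-\pitchfork f$ to force the $\mathcal{R}^-$-part of the factorization to be an isomorphism. Concretely, I would first factor $f$ as $f=\delta\sigma$ with $\delta\in\mathcal{R}^+_0$ and $\sigma\in\mathcal{R}^-$, say $\sigma:r'\to s$ and $\delta:s\to r$. The problem then reduces to showing that $\sigma$ is an isomorphism, after which we may simply take $\pi:=\sigma$.

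To produce a candidate inverse for $\sigma$, I would form the commutative square whose left edge is $\sigma$ and whose right edge is $f$, with top edge $\mathrm{id}_{r'}$ and bottom edge $\delta$; this commutes precisely because $f\circ\mathrm{id}_{r'}=f=\delta\circ\sigma$. Since $\sigma\in\mathcal{R}^-$ and $f$ is right orthogonal to $\mathcal{R}^-$ by hypothesis, there is a unique diagonal filler $h:s\to r'$ satisfying $h\sigma=\mathrm{id}_{r'}$ and $fh=\delta$. The first identity already exhibits $\sigma$ as a split monomorphism with retraction $h$.

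It remains to check the other composite $\sigma h=\mathrm{id}_s$, and this is where the monomorphism property of distinguished injections enters. From $fh=\delta$ and $f=\delta\sigma$ I would compute $\delta\sigma h=fh=\delta=\delta\,\mathrm{id}_s$; since $\delta\in\mathcal{R}^+_0$ is a monomorphism by condition \ref{cond:DImono}, it is left-cancellable, so $\sigma h=\mathrm{id}_s$. Together with $h\sigma=\mathrm{id}_{r'}$ this shows that $\sigma$ is invertible with inverse $h$, and setting $\pi:=\sigma$ gives the desired factorization $f=\delta\pi$ with $\pi$ an isomorphism and $\delta\in\mathcal{R}^+_0$.

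I do not anticipate a serious obstacle here: the argument is the familiar observation that a morphism right orthogonal to a class of maps always has its factor from that class collapse to an isomorphism. The only points requiring care are to verify \emph{both} triangle identities for the diagonal filler, not merely the one giving the retraction, and to notice that the final cancellation step relies essentially on the fact that elements of $\mathcal{R}^+_0$ are monic.
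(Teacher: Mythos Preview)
Your proof is correct and follows essentially the same route as the paper: factor $f=\delta\sigma$, use the lifting property against $\sigma\in\mathcal{R}^-$ to obtain a diagonal $h$ (the paper calls it $\mu$), and use that $\delta$ is monic to conclude invertibility. The only cosmetic difference is that the paper phrases the last step as ``$\mu$ is a split epimorphism and a monomorphism, hence an isomorphism'' and sets $\pi=\mu^{-1}$, whereas you directly cancel $\delta$ in $\delta\sigma h=\delta$ to get $\sigma h=\mathrm{id}_s$ and set $\pi=\sigma$; these are the same argument.
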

\begin{proof}
Let $f=\delta\sigma$ be the factorization with $\delta\in\mathcal{R}^+_0$ and $\sigma\pitchfork\mathcal{R}^+_0$, and consider the following commutative diagram:
\[
\xymatrix{
  r' \ar@{=}[r] \ar[d]_{\sigma} & r' \ar[d]^{f} \\
  s \ar[r]^{\delta} & r }
\]
By the assumption, we have $\sigma\pitchfork f$, so that the square has a lift $\mu:s\to r'$ with $\mu\sigma=\mathrm{id}$ and $f\mu=\delta$.
Since $\delta$ is a monomorphism, these implies that $\mu$ is both a split epimorpihsm and a monomorphism.
Hence $\mu$ is an isomorphism, so by setting $\pi=\mu^{-1}$, we obtain the required factorization $f=\delta\pi$.
\end{proof}

\begin{corol}\label{cor:thinpow-wk-fact}
If $\mathcal{R}$ is a thin-powered category, then there is a weak factorization system $(\mathcal{R}^-,\mathcal{R}^+)$ such that
\begin{itemize}
  \item $\mathcal{R}^-$ is the class of morphisms left orthogonal to all distinguished injections;
  \item Every morphisms in $\mathcal{R}^+$ is isomorphic to a distinguished injections.
\end{itemize}
\end{corol}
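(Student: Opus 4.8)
The plan is to read the statement off from the strict factorization in \ref{cond:DIfact} together with Lemma \ref{lem:thinpow-ortho-sk}, treating the relation $\pitchfork$ as a Galois connection between classes of morphisms. First I would \emph{define} the right class: set $\mathcal{R}^+:=\{f\mid\mathcal{R}^-\pitchfork f\}$, the morphisms right orthogonal to $\mathcal{R}^-$. With this definition the lifting condition $\mathcal{R}^-\pitchfork\mathcal{R}^+$ holds tautologically, which is one of the axioms of a weak factorization system. It then remains to produce $(\mathcal{R}^-,\mathcal{R}^+)$-factorizations, to check the maximality of the two classes, and to identify $\mathcal{R}^+$ concretely.

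For the factorization, I would invoke \ref{cond:DIfact}: an arbitrary $f$ factors strictly uniquely as $f=\delta\sigma$ with $\delta\in\mathcal{R}^+_0$ and $\sigma\pitchfork\mathcal{R}^+_0$. The second condition says precisely that $\sigma\in\mathcal{R}^-$. For the first factor, since $\mathcal{R}^-$ is by definition left orthogonal to $\mathcal{R}^+_0$ we have $\mathcal{R}^-\pitchfork\mathcal{R}^+_0$, hence $\mathcal{R}^+_0\subseteq\mathcal{R}^+$ and in particular $\delta\in\mathcal{R}^+$. Thus $f=\delta\sigma$ is an $(\mathcal{R}^-,\mathcal{R}^+)$-factorization.

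Next I would verify the maximality identities. The equality $\mathcal{R}^+=\{f\mid\mathcal{R}^-\pitchfork f\}$ is the definition, and for $\mathcal{R}^-=\{g\mid g\pitchfork\mathcal{R}^+\}$ the inclusion $\subseteq$ is the lifting property already noted. The reverse inclusion $\supseteq$ uses $\mathcal{R}^+_0\subseteq\mathcal{R}^+$: any $g$ left orthogonal to all of $\mathcal{R}^+$ is a fortiori left orthogonal to $\mathcal{R}^+_0$, hence lies in $\mathcal{R}^-$. This is the standard Galois-connection bookkeeping, and it simultaneously yields closure of both classes under retracts for free, since left- and right-orthogonal complements are always closed under retracts. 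Together with the factorization, this establishes that $(\mathcal{R}^-,\mathcal{R}^+)$ is a weak factorization system whose left class is, by definition, the morphisms left orthogonal to $\mathcal{R}^+_0$, giving the first bullet.

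Finally, the second bullet is exactly the content of Lemma \ref{lem:thinpow-ortho-sk}: any $f\in\mathcal{R}^+$ satisfies $\mathcal{R}^-\pitchfork f$, so the lemma factors it as $f=\delta\pi$ with $\pi$ an isomorphism and $\delta\in\mathcal{R}^+_0$, which exhibits $f$ as isomorphic to the distinguished injection $\delta$ in the arrow category $\mathcal{R}^{[1]}$ (the isomorphism being the commutative square with horizontal legs $\pi$ and $\mathrm{id}$). I expect no serious obstacle here: the genuine work was already carried out in Lemma \ref{lem:thinpow-ortho-sk}, and the corollary is essentially the assembly of that lemma with the formal properties of $\pitchfork$. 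The only point demanding a little care is the maximality identity $\mathcal{R}^-=\{g\mid g\pitchfork\mathcal{R}^+\}$, where one must argue against the smaller class $\mathcal{R}^+_0$ rather than against $\mathcal{R}^+$ itself.
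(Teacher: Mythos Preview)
Your proof is correct and is precisely the argument the paper intends: the corollary is stated without proof immediately after Lemma~\ref{lem:thinpow-ortho-sk}, and your assembly of that lemma with the factorization from \ref{cond:DIfact} and the Galois-connection bookkeeping for $\pitchfork$ is exactly what fills the gap.
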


Recall that for a class $S$ of morphisms, a morphism $f$ is said to be $S$-projective if $f\pitchfork S$.
Hence we can say that $\mathcal{R}^-$ is the class of $\mathcal{R}^+_0$-projections.
In particular, $f$ is called a strong epimorphism if it is an epimorphism and left orthogonal to all monomorphisms.
Note that if $S'\subset S$, then $S$-projectivity implies $S'$-projectivity.
Consequently, we obtain the following:

\begin{lemma}\label{lem:thinpow-ortho-split}
Let $\mathcal{R}$ be a thin-powered category.
Then every strong epimorphism in $\mathcal{R}$ belongs to $\mathcal{R}^-$.
In particular, $\mathcal{R}^-$ contains all split epimorphisms.
\end{lemma}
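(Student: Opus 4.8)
The plan is to exploit the fact that orthogonality against a larger class of morphisms is a stronger condition than orthogonality against a smaller one, together with the defining property \ref{cond:DImono} that every distinguished injection is a monomorphism.

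First I would record that $\mathcal{R}^+_0$ is contained in the class $\mathbf{Mono}$ of all monomorphisms of $\mathcal{R}$, which is immediate from \ref{cond:DImono}. By definition a strong epimorphism $f$ is left orthogonal to every monomorphism, i.e.\ $f$ is $\mathbf{Mono}$-projective. Invoking the observation recorded just before the statement---that $S$-projectivity entails $S'$-projectivity whenever $S'\subset S$---applied to $S'=\mathcal{R}^+_0\subset\mathbf{Mono}=S$, I conclude $f\pitchfork\mathcal{R}^+_0$, which is exactly the assertion $f\in\mathcal{R}^-$. This settles the first claim with essentially no computation.

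For the ``in particular'' clause, I would reduce it to the first claim by checking that every split epimorphism is a strong epimorphism. A split epimorphism $f:r'\to r$ with section $s$ (so $fs=\mathrm{id}_r$) is an epimorphism, since $gf=hf$ gives $g=g(fs)=(gf)s=(hf)s=h(fs)=h$. To see that $f$ is left orthogonal to an arbitrary monomorphism $m$, I would take a commutative square with top edge $u$, left edge $f$, right edge $m$ and bottom edge $v$ (so $mu=vf$), and propose the diagonal $\ell:=us$. Then $m\ell=mus=vfs=v$ directly, while $m(\ell f)=musf=vfsf=vf=mu$, and cancelling the monomorphism $m$ yields $\ell f=u$; hence $\ell$ is the required lift. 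Combined with the first claim, this gives the chain split epimorphism $\Rightarrow$ strong epimorphism $\Rightarrow\mathcal{R}^-$.

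The argument is entirely formal, so I expect no serious obstacle; the only point requiring a moment's care is the verification of the lifting property for split epimorphisms, where one must use that $m$ is monic in order to pass from $m(\ell f)=mu$ to $\ell f=u$. Indeed the equation $\ell f=u$ does not hold before cancelling $m$, since $sf$ need not be the identity, so the monomorphism hypothesis on $m$ is genuinely used rather than decorative.
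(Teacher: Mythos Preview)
Your proof is correct and follows exactly the paper's approach: the paper simply notes that $\mathcal{R}^+_0\subset\mathbf{Mono}$ by \ref{cond:DImono} and that $S$-projectivity implies $S'$-projectivity when $S'\subset S$, then states the lemma as an immediate consequence. Your only addition is the explicit verification that split epimorphisms are strong epimorphisms, which the paper takes for granted.
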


As shown in lemma \ref{cor:thinpow-wk-fact}, a thin-powered structure gives rise to an analogy of the notion of images of maps in the set theory.
Indeed, if $\mathcal{R}^+_0$ is a thin-powered structure on a category $\mathcal{R}$, and $f$ is a morphism of $\mathcal{R}$, then we have a unique factorization $f=\delta\sigma$ with $\sigma\pitchfork\mathcal{R}^+_0$ and $\delta\in\mathcal{R}^+_0$.
We write $\coim(f):=\sigma$ and $\image(f):=\delta$.
If we regard the weak factorization system as a generalization of coimage-image factorization in the set theory, then by corollary \ref{cor:thinpow-wk-fact}, $\image(f)$ should be seen as the ``image'' of $f$.

\begin{rem}
Notice that we have another notion of images of morphisms in an arbitrary category.
Recall that for a morphism $f:r'\to r\in\mathcal{R}$, its image, which is sometimes called the regular image, is defined to be the representation of the inclusion of presheaves $\operatorname{Im} f\hookrightarrow\mathcal{R}[r]$, where
\[
(\operatorname{Im} f)(s)
:= \left\{ g:s\to r\mid \forall h_1,h_2:r\rightrightarrows w:h_1 f=h_2 f\Rightarrow h_1 g=h_2 g \right\}\,.
\]
It is known that images are strong monomorphisms, i.e. they are right orthogonal to all epimorphisms, so that if $f$ admits the image, it factors as an epimorpihsm followed by a strong monomorphism.
We can also define the coimages of morphisms in the dual method.
For more details, see \cite{KS}.

Now, our notation of $\image(f)$ and $\coim(f)$ are a bit different from the notion described above.
However, we can see a thin-powered category as a generalized situation, so we use the notation.
\end{rem}

\subsection{Pullbacks and stability}
We next consider the notion of inverse images.
Recall that, in the set theory, they are categorically defined by pullbacks in the category $\mathbf{Set}$.
So as an analogy, we also need pullbacks in our situation.

\begin{defin}
A thin-powered category $\mathcal{R}$ with thin-powered structure $\mathcal{R}^+_0$ is said to be semicomplete if every pair of morphisms $r'\xrightarrow{f} r \xleftarrow{\delta} s$ with $\delta\in\mathcal{R}^+_0$ has a pullback in $\mathcal{R}$; i.e. there is a pullback square
\begin{equation}
\label{eq:thinpow-pb}
\begin{gathered}
\xymatrix{
  s' \ar[r]^{f'} \ar[d]_{\delta'} \ar@{}[dr]|(.4){\pbcorner} & s \ar[d]^{\delta} \\
  r' \ar[r]^{f} & r }
\end{gathered}
\end{equation}
in $\mathcal{R}$.
\end{defin}

We denote by $\mathcal{R}^+$ the class of morphisms right orthogonal to $\mathcal{R}^-$.
By corollary \ref{cor:thinpow-wk-fact}, $(\mathcal{R}^-,\mathcal{R}^+)$ forms a weak factorization system on $\mathcal{R}$.
Hence, the class $\mathcal{R}^+$ is closed under (existing) pullbacks.
Now since $\mathcal{R}^+_0\subset\mathcal{R}^+$, the morphism $\delta'$ in the diagram \eqref{eq:thinpow-pb} belongs to $\mathcal{R}^+$.
Actually, we can take $\delta'$ so that $\delta'\in\mathcal{R}^+_0$.

\begin{lemma}\label{lem:thinpow-pb}
Let $\mathcal{R}$ be a semicomplete thin-powered category with thin-powered structure $\mathcal{R}^+_0$.
Then for every pair of morphisms $r'\xrightarrow{f} r \xleftarrow{\delta} s$ with $\delta\in\mathcal{R}^+_0$, there is a (strictly) unique pullback square
\[
\xymatrix{
  s' \ar[r]^{f'} \ar[d]_{\delta'} \ar@{}[dr]|(.4){\pbcorner} & s \ar[d]^{\delta} \\
  r' \ar[r]^{f} & r }
\]
such that $\delta'\in\mathcal{R}^+_0$.
\end{lemma}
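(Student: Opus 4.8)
The plan is to first produce \emph{some} pullback square using semicompleteness, then adjust its left-hand edge so that it lands in $\mathcal{R}^+_0$, and finally exploit the strict uniqueness built into \ref{cond:DIfact} to pin the square down exactly. For existence, I would start from the pullback square \eqref{eq:thinpow-pb} guaranteed by semicompleteness, with left edge some $\delta'$. Since $\delta\in\mathcal{R}^+_0\subset\mathcal{R}^+$ and, as recorded just before the statement, the class $\mathcal{R}^+$ is closed under (existing) pullbacks, we get $\delta'\in\mathcal{R}^+$; equivalently $\mathcal{R}^-\pitchfork\delta'$. Lemma \ref{lem:thinpow-ortho-sk} then factors $\delta'=\bar\delta\pi$ with $\pi$ an isomorphism and $\bar\delta\in\mathcal{R}^+_0$. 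Precomposing the entire pullback cone with $\pi^{-1}$ replaces the apex $s'$ by the domain of $\bar\delta$ without disturbing the universal property, since precomposition by an isomorphism always carries a limit cone to a limit cone. The resulting square is a pullback whose left edge is $\bar\delta\in\mathcal{R}^+_0$, as required.

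For the strict uniqueness, suppose two squares over $r'\xrightarrow{f}r\xleftarrow{\delta}s$ are both pullbacks with respective left edges $\delta_1',\delta_2'\in\mathcal{R}^+_0$. The universal property yields a unique isomorphism $\phi$ between their apices with $\delta_2'\phi=\delta_1'$ and $f_2'\phi=f_1'$. The key observation is that $\phi$, being an isomorphism, is left orthogonal to every class of morphisms, so in particular $\phi\pitchfork\mathcal{R}^+_0$. Hence $\delta_1'=\delta_2'\phi$ and the trivial factorization $\delta_1'=\delta_1'\circ\mathrm{id}$ are two $(\mathcal{R}^-,\mathcal{R}^+_0)$-factorizations of the same morphism $\delta_1'$. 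By the strict uniqueness asserted in \ref{cond:DIfact}, they coincide: the intermediate objects agree, $\phi=\mathrm{id}$, and $\delta_2'=\delta_1'$. Substituting $\phi=\mathrm{id}$ into $f_2'\phi=f_1'$ gives $f_2'=f_1'$, so the two squares are literally equal.

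I expect the uniqueness half to be the real obstacle. The ordinary universal property of a pullback only delivers uniqueness up to a canonical isomorphism, whereas the lemma demands strict equality of objects and morphisms. The device that bridges this gap is to reinterpret the comparison isomorphism $\phi$ as the left-orthogonal (``coimage'') factor of a factorization of $\delta_1'$; then the strict uniqueness of $(\mathcal{R}^-,\mathcal{R}^+_0)$-factorizations forces $\phi$ to be the identity rather than merely an isomorphism. Everything else is formal manipulation of the weak factorization system supplied by Corollary \ref{cor:thinpow-wk-fact} together with the closure of $\mathcal{R}^+$ under pullbacks.
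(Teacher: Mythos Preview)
Your proof is correct and follows essentially the same approach as the paper: existence via semicompleteness plus Lemma~\ref{lem:thinpow-ortho-sk}, and uniqueness by forcing the comparison isomorphism to be the identity using the strict uniqueness in \ref{cond:DIfact}. The only minor difference is that the paper, after obtaining $\theta=\mathrm{id}$, recovers $f'_1=f'$ separately from $\delta f'_1=\delta f'$ and the fact that $\delta$ is monic, whereas you read it off directly from the second compatibility $f_2'\phi=f_1'$ of the comparison isomorphism; your route is marginally cleaner but not substantively different.
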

\begin{proof}
The existence follows from the assumption that $\mathcal{R}$ is semicomplete and lemma \ref{lem:thinpow-ortho-sk}.
So we only show the uniqueness.
Suppose we have another pullback square
\[
\xymatrix{
  s'_1 \ar[r]^{f'_1} \ar[d]_{\delta'_1} \ar@{}[dr]|(.4){\pbcorner} & s \ar[d]^{\delta} \\
  r' \ar[r]^{f} & r }
\]
with $\delta'_1\in\mathcal{R}^+_0$.
Then by the uniqueness of pullbacks, there is an isormophism $\theta:s'_1\to s'$ such that $\delta'_1=\delta'\theta$.
It implies $\theta=\mathrm{id}$ and $\delta'_1=\delta'$ by the uniqueness of the factorization in \ref{cond:DIfact}.
Moreover we have $\delta f'_1=\delta f'$.
Since $\delta$ is a monomorphism by \ref{cond:DImono}, we obtain $f'_1=f'$.
Therefore, two pullbacks are strictly equal.
\end{proof}

For $\delta:s\to r\in\mathcal{R}^+_0$ and $f:r'\to r\in\mathcal{R}$, by lemma \ref{lem:thinpow-pb}, there is a unique pullback $\delta'$ of $\delta$ by $f$ such that $\delta'\in\mathcal{R}^+_0$.
We write $f^\ast\delta:=\delta'$, which is well-defined thanks to the uniqueness.
It is straightfoward by the definition that we have $(gf)^\ast\delta=f^\ast g^\ast\delta$.
In what follows, we always assume that such pullbacks are taken in the way.

Lemma \ref{lem:thinpow-pb} also says that every morphism in $\mathcal{R}$ has pullbacks by distinguished injections.
Notice that while $\mathcal{R}^+$ is closed under pullbacks, $\mathcal{R}^-$ is not necessarily closed under pullbacks.
This is a similar case to the categories $\mathbf{Poset}$ or $\mathbf{Top}$, where pullbacks of regular epimorphisms are not necessarily regular epimorphisms.
So we need the notion of stability.

\begin{defin}
Let $\mathcal{R}$ be a semicomplete thin-powered category, and $\mathcal{R}^+_0$ be a thin-powered structure.
Then an $\mathcal{R}^+_0$-surjection $\sigma$ is said to be stable if all its pullbacks by distinguished injections are again $\mathcal{R}^+_0$-surjections.
In general, a morphism $f$ in $\mathcal{R}$ is said to be stable if the $\mathcal{R}^+_0$-surjection $\coim(f)$ is stable.

A thin-powered category is said to be stable if it is semicomplete and all $\mathcal{R}^+_0$-surjections are stable.
\end{defin}

Some morphisms are necessarily stable.
For example, an isomorphism is stable.
More generally, every split epimorpihsm is stable since the class of split epimorphisms is closed under existing pullbacks.
It also follows that all distinguished injections are stable.

Note that if $\mathcal{R}$ is stable, then the $(\mathcal{R}^-,\mathcal{R}^+_0)$-factorization is stable under pullbacks by dinstinguished injections.
This is an analogy of the situation that if $f:S\to T$ is a map of sets and $B\subset T$, then the factorization
\[
f^{-1}(B) \xrightarrow{f} f(S)\cap B \hookrightarrow B
\]
is the image-factorization.

\subsection{Examples}
We here give some examples.
Before this, we prove the following results:

\begin{lemma}\label{lem:thinpow-embed}
Let $\mathcal{R}$ be a thin-powered category with thin-powered structure $\mathcal{R}^+_0$ such that every morphism $f$ with $f\pitchfork\mathcal{R}^+_0$ is a split epimorphism.
Suppose there is an embedding $j:\mathcal{R}\hookrightarrow\mathcal{S}$ of categories such that
\begin{enumerate}[label={\rm(\alph*)}]
  \item $j$ is bijective on objects;
  \item $j$ preserves monomorphisms;
  \item $\mathcal{S}$ admits a weak factorization system $(\mathcal{S}',j\mathcal{R})$.
\end{enumerate}
Then the class $j\mathcal{R}^+_0$ is a thin-powered structure on $\mathcal{S}$.
\end{lemma}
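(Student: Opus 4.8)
The plan is to verify the three axioms \ref{cond:DImono}--\ref{cond:DIfact} for the class $j\mathcal{R}^+_0$ directly, transporting the corresponding facts from $\mathcal{R}$ across $j$. The first two are immediate: since each element of $\mathcal{R}^+_0$ is a monomorphism in $\mathcal{R}$ and $j$ preserves monomorphisms by hypothesis (b), every element of $j\mathcal{R}^+_0$ is a monomorphism in $\mathcal{S}$, giving \ref{cond:DImono}; and since $j$ is a faithful functor that is bijective on objects by (a), composability in $j\mathcal{R}^+_0$ reflects to composability in $\mathcal{R}^+_0$, so closure of $\mathcal{R}^+_0$ under composition yields \ref{cond:DIcomp}. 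All the work is in establishing the strictly unique factorization $f=\delta\sigma$ with $\delta\in j\mathcal{R}^+_0$ and $\sigma\pitchfork j\mathcal{R}^+_0$ demanded by \ref{cond:DIfact}.

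For the existence of the factorization, I would start from a morphism $f$ in $\mathcal{S}$ and apply the weak factorization system of hypothesis (c) to write $f=h\sigma$ with $\sigma\in\mathcal{S}'$ and $h\in j\mathcal{R}$. Writing $h=j(\tilde h)$ and factoring $\tilde h=\delta\tau$ inside $\mathcal{R}$ by its thin-powered structure (so $\delta\in\mathcal{R}^+_0$ and $\tau\pitchfork\mathcal{R}^+_0$), I obtain $f=j(\delta)\,(j(\tau)\sigma)$ with $j(\delta)\in j\mathcal{R}^+_0$. The one non-formal point is the claim $j(\tau)\sigma\pitchfork j\mathcal{R}^+_0$. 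Here the extra hypothesis that every morphism left orthogonal to $\mathcal{R}^+_0$ is a split epimorphism is exactly what is needed: $\tau$ is then a split epimorphism, hence so is $j(\tau)$, and a split epimorphism has the left lifting property against every monomorphism --- in particular against the monomorphisms $j\mathcal{R}^+_0$. Combined with $\sigma\pitchfork j\mathcal{R}^+_0$ (since $\sigma\in\mathcal{S}'={}^\pitchfork(j\mathcal{R})$ and $j\mathcal{R}^+_0\subseteq j\mathcal{R}$) and the closure of the left-lifting class under composition, this gives $j(\tau)\sigma\pitchfork j\mathcal{R}^+_0$.

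The harder part, and the main obstacle, is strict uniqueness. Given two factorizations $f=j(\delta_1)\sigma_1=j(\delta_2)\sigma_2$ of the required form, the lifting properties $\sigma_1\pitchfork j(\delta_2)$ and $\sigma_2\pitchfork j(\delta_1)$ produce comparison maps $\theta,\theta'$ between the two middle objects; since $j(\delta_1),j(\delta_2)$ are monomorphisms, a short cancellation shows $\theta$ and $\theta'$ are mutually inverse, so $\theta$ is an isomorphism with $j(\delta_2)\theta=j(\delta_1)$ and $\theta\sigma_1=\sigma_2$. The crucial observation is that $\theta$, being an isomorphism, lies in the right class $j\mathcal{R}$ of the weak factorization system; hence $\theta=j(\tilde\theta)$ for a (necessarily invertible, by faithfulness) morphism $\tilde\theta$ of $\mathcal{R}$. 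Descending $j(\delta_2)\theta=j(\delta_1)$ along the faithful $j$ gives $\delta_2\tilde\theta=\delta_1$ in $\mathcal{R}$, which exhibits $\delta_1=\delta_2\cdot\tilde\theta$ as a thin-powered factorization (an isomorphism being orthogonal to everything). The strict uniqueness in \ref{cond:DIfact} for $\mathcal{R}$, applied against the trivial factorization $\delta_1=\delta_1\cdot\mathrm{id}$, then forces $\tilde\theta=\mathrm{id}$ and $\delta_2=\delta_1$; therefore $\theta=\mathrm{id}$, so the two middle objects coincide, $j(\delta_1)=j(\delta_2)$, and $\sigma_2=\theta\sigma_1=\sigma_1$.

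I expect the delicate step to be justifying that the comparison isomorphism $\theta$ genuinely descends to $\mathcal{R}$: this rests on isomorphisms belonging to the right class of the weak factorization system together with $j$ being bijective on objects and faithful, and it is precisely what lets me import the strict uniqueness already available inside $\mathcal{R}$. The existence half, by contrast, hinges entirely on the split-epimorphism hypothesis, which converts the abstract orthogonality $\tau\pitchfork\mathcal{R}^+_0$ (a statement internal to $\mathcal{R}$ that need not survive the passage to the larger category $\mathcal{S}$) into the robust, functor-stable property of being a split epimorphism.
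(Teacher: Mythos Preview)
Your proof is correct and follows essentially the same approach as the paper: factor through the weak factorization system $(\mathcal{S}',j\mathcal{R})$, then through the thin-powered structure on $\mathcal{R}$, using the split-epimorphism hypothesis to ensure $j(\tau)$ has the left lifting property against $j\mathcal{R}^+_0$; for uniqueness, produce mutually inverse comparison maps, observe the resulting isomorphism lies in $j\mathcal{R}$, and descend to invoke strict uniqueness in $\mathcal{R}$. Your uniqueness argument is in fact slightly cleaner than the paper's, since you compare two arbitrary factorizations $j(\delta_i)\sigma_i$ with $\sigma_i\pitchfork j\mathcal{R}^+_0$, whereas the paper (harmlessly but less transparently) phrases the second factorization in the specific form $j(\delta')j(\sigma')\tau'$ produced by the construction.
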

\begin{proof}
We have to verify the conditions.
The conditions \ref{cond:DImono} and \ref{cond:DIcomp} are straightfoward from the assumptions.
So it suffices to verify the condition \ref{cond:DIfact}.

Let $\mathcal{R}^-$ and $\mathcal{S}^-$ denote the class of morphisms in $\mathcal{R}$ and $\mathcal{S}$ which are left orthogonal to $\mathcal{R}^+_0$ and $j\mathcal{R}^+_0$ respectively.
By the assumptions, morphisms in $j\mathcal{R}^+_0$ are monomorphisms while morphisms in $j\mathcal{R}^-$ are split epimorphisms in $\mathcal{S}$, so we have $j\mathcal{R}^-\pitchfork j\mathcal{R}^+_0$ in $\mathcal{S}$.
On the other hand, since $(\mathcal{S}',j\mathcal{R})$ is a weak factorization system, we have $\mathcal{S}'\pitchfork j\mathcal{R}$, and hence $\mathcal{S}'\pitchfork j\mathcal{R}^+_0$.
These imply that $\mathcal{S}',j\mathcal{R}^-\subset\mathcal{S}^-$.

Next, suppose $f$ is a morphism in $\mathcal{S}$.
By the assumption, there is a factorization $f=j(\mu)\tau$ with $\tau\in\mathcal{S}'$ and $\mu\in\mathcal{R}$.
Moreover, since $\mathcal{R}^+_0$ is a thin-powered structure on $\mathcal{R}$, there is a factorization $\mu=\delta\sigma$ with $\sigma\in\mathcal{R}^-$ and $\delta\in\mathcal{R}^+_0$.
We now have $f=j(\delta) j(\sigma)\tau$.
The factorization is actually unique; indeed, if $f=j(\delta') j(\sigma')\tau'$ is another factorization, we have a commutative square:
\[
\xymatrix{
  s \ar[r]^{j(\sigma)\tau} \ar[d]_{j(\sigma')\tau'} & s' \ar[d]^{j(\delta)} \\
  s'' \ar[r]^{j(\delta')} & r }
\]
Since $j(\sigma)\tau,j(\sigma')\tau'\in\mathcal{S}^-$ and $\mathcal{S}^-\pitchfork j\mathcal{R}^+_0$, the square has lifts $\theta:s''\to s'$ and $\theta':s'\to s''$.
$j(\delta)$ and $j(\delta')$ are monoomrphisms, hence $\theta$ and $\theta'$ are inverse to each other.
Then $\theta\in\mathcal{S}'\cap j\mathcal{R}$, and we may assume $\theta=j(\bar\theta)$.
We have $\delta=\bar\theta\delta'$, and the uniqueness of the factorization in \ref{cond:DIfact} implies $\theta=\mathrm{id}$.
Thus we obtain $j(\sigma)\tau=j(\sigma')\tau'$ and $\delta=\delta'$ as required.
The condition \ref{cond:DIfact} now follows.
\end{proof}

We also have another version.
The proof is similar to that of lemma \ref{lem:thinpow-embed}.

\begin{lemma}\label{lem:ufact-thinpow}
Suppose $\mathcal{R}$ is a small category such that it has no non-trivial isomorphism, and there is a class $\mathcal{R}^-$ of morphisms in $\mathcal{R}$ such that every morphism $f$ of $\mathcal{R}$ uniquely factors as $f=\delta\sigma$ with $\sigma\in\mathcal{R}^-$ and $\delta$ monic.
Then the class $\mathcal{R}^+_0$ of monomorphisms in $\mathcal{R}$ is a thin-powered structure on $\mathcal{R}$.
\end{lemma}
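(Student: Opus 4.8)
The plan is to verify the three axioms \ref{cond:DImono}, \ref{cond:DIcomp} and \ref{cond:DIfact} for the class $\mathcal{R}^+_0$ of all monomorphisms, mirroring the mechanism of lemma \ref{lem:thinpow-embed}. Conditions \ref{cond:DImono} and \ref{cond:DIcomp} are immediate: $\mathcal{R}^+_0$ consists of monomorphisms by definition, and a composite of monomorphisms is a monomorphism. The whole content thus lies in \ref{cond:DIfact}, for which the given unique factorization $f=\delta\sigma$ (with $\sigma\in\mathcal{R}^-$ and $\delta$ monic) is the candidate; what must be shown is that its left factor satisfies $\sigma\pitchfork\mathcal{R}^+_0$, i.e. is left orthogonal to every monomorphism, and that a factorization with this property is strictly unique.

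Before this I would record two structural facts about $\mathcal{R}^-$ that follow formally from the uniqueness of the factorization together with the absence of non-trivial isomorphisms, exactly as in the remarks following the definition of a thin-powered structure. First, every identity lies in $\mathcal{R}^-$: a factorization $\mathrm{id}=\delta\sigma$ makes $\delta$ simultaneously a split epimorphism and a monomorphism, hence an isomorphism, hence an identity, forcing $\sigma=\mathrm{id}$. Second, no morphism of $\mathcal{R}^-$ factors through a non-trivial monomorphism: if $\sigma=m p$ with $m$ monic and $\sigma\in\mathcal{R}^-$, then after factoring $p$ the strict uniqueness compared against $\sigma=\mathrm{id}\circ\sigma$ forces the monic part to be a split epimorphism, hence an identity. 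These facts also deliver the uniqueness in \ref{cond:DIfact} almost for free: given two factorizations $f=\delta_1\sigma_1=\delta_2\sigma_2$ with each $\sigma_i\pitchfork\mathcal{R}^+_0$, I would form the comparison square as in the uniqueness part of lemma \ref{lem:thinpow-embed}, obtain mutually inverse comparison maps from the lifting property, note that they are monomorphisms and split epimorphisms, hence identities, and conclude $\delta_1=\delta_2$ and $\sigma_1=\sigma_2$.

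The heart of the proof, and the step I expect to be the main obstacle, is the existence half of \ref{cond:DIfact}: that $\sigma\in\mathcal{R}^-$ is genuinely left orthogonal to every monomorphism. Since the right class consists of monomorphisms every lift is automatically unique, so only existence is at issue: given a monomorphism $m:a\to b$ and a commutative square $m u=v\sigma$, I must produce $\ell$ with $m\ell=v$. Factoring $v=n\,e_v$ and the composite $e_v\sigma=n'\,e'$ into their $(\mathcal{R}^-,\text{monic})$ forms and comparing with the factorization of $u$ through the strict uniqueness reduces the problem to showing that the monic part $n'$ of $e_v\sigma$ is trivial; equivalently, that the composite $e_v\sigma$ of two members of $\mathcal{R}^-$ again lies in $\mathcal{R}^-$. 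Thus the real crux is the \emph{closure of $\mathcal{R}^-$ under composition}, which is precisely what upgrades the bare unique factorization to an orthogonal one.

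To establish this closure I would once more invoke the no-non-trivial-isomorphism hypothesis in the form ``monomorphism $+$ split epimorphism $=$ identity'', following the comparison technique of lemma \ref{lem:thinpow-embed}: writing $\rho\sigma=n\tau$ with $n$ monic and $\tau\in\mathcal{R}^-$, the goal is to exhibit $n$ as a split epimorphism, whence $n=\mathrm{id}$ and $\rho\sigma=\tau\in\mathcal{R}^-$. This is the delicate point, since the naive diagonal argument only reduces orthogonality against $m$ to orthogonality against the a priori smaller monomorphism $n'$; making this reduction terminate is where the genuine work lies, and I would handle it by a well-founded induction on the monomorphism involved, using smallness of $\mathcal{R}$ and the fact, recorded above, that members of $\mathcal{R}^-$ are $\mathcal{R}^+_0$-irreducible. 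Once the closure, and hence the orthogonality $\mathcal{R}^-\pitchfork\mathcal{R}^+_0$, is in hand, the given factorization witnesses existence in \ref{cond:DIfact}, the comparison argument of the second paragraph supplies strict uniqueness, and $\mathcal{R}^+_0$ is a thin-powered structure.
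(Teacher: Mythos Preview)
You have correctly isolated the crux: verifying \ref{cond:DIfact} reduces to showing that each $\sigma\in\mathcal{R}^-$ is left orthogonal to every monomorphism, which you further reduce to closure of $\mathcal{R}^-$ under composition. The uniqueness argument via the comparison square is fine. But your proposed resolution of the closure step---a ``well-founded induction on the monomorphism involved, using smallness of $\mathcal{R}$''---does not work. Smallness of a category supplies no well-founded order on its monomorphisms, and the reduction you describe (replacing $m$ by the monic part $n'$ of $e_v\sigma$) need not make any measurable progress.

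In fact the closure you seek is \emph{not} a consequence of the stated hypotheses. Take objects $x,y,a,b,c,d$; parallel pairs $p,q:x\rightrightarrows a$ and $r,s:y\rightrightarrows b$; morphisms $e_1:a\to b$, $e_2:b\to c$, $e_3:a\to d$, $m:d\to c$; impose $e_1p=e_1q$, $e_2r=e_2s$, $e_3p=e_3q$ and $me_3=e_2e_1$, and freely add the forced composites. There are no non-trivial isomorphisms; the monomorphisms are precisely the identities, $p,q,r,s,m$, and the morphisms with domain $x$ or $y$; and with $\mathcal{R}^-=\{\text{identities},e_1,e_2,e_3\}$ every morphism has a strictly unique (mono)$\circ\mathcal{R}^-$ factorization. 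Yet the square $m\,e_3=e_2\,e_1$ has no diagonal filler since $\mathrm{Hom}(b,d)=\varnothing$, so $e_1\not\pitchfork m$ and \ref{cond:DIfact} fails for $f=e_1$.

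The paper's own proof is just the pointer ``similar to lemma~\ref{lem:thinpow-embed}'', and what that lemma's argument actually uses is that the left class consists of split epimorphisms, whence orthogonality to monomorphisms is automatic. In every use the paper makes of the present lemma (e.g.\ $\widetilde\Delta$) the class $\mathcal{R}^-$ does consist of split epimorphisms, so the argument of lemma~\ref{lem:thinpow-embed} transfers verbatim and no induction is needed. The right repair to your write-up is therefore not to hunt for a well-founded order, but to note that an additional hypothesis---$\mathcal{R}^-$ closed under composition, or consisting of split epimorphisms---is implicitly required and is satisfied in the intended examples.
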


\begin{exam}
The most trivial example is given as follows:
Let $\mathcal{R}$ be an arbitrary small category, and consider $\objof{\mathcal{R}}$ as the discrete wide subcategory of $\mathcal{R}$.
Then $\objof{\mathcal{R}}$ is a thin-powered structure on $\mathcal{R}$.
It is clear that the structure is semicomplete and stable.
\end{exam}

\begin{exam}
If $\mathcal{R}^+_0$ is a thin-powered structure on $\mathcal{R}$, then $\mathcal{R}^+_0$ itself is, as a category, a thin-powered category with thin-powered structure $\mathcal{R}^+_0$.
\end{exam}

\begin{exam}
Let $\kappa$ be a regular cardinal.
We denote by $\mathbf{Ord}^\kappa$ the full subcategory of $\mathbf{Set}$ spanned by ordinals smaller than $\kappa$.
Let $(\mathbf{Ord}^\kappa)^+_{0}$ be the class of all order-preserving injections in $\mathbf{Ord}^\kappa$.
Then $(\mathbf{Ord}^\kappa)^+_0$ is a thin-powered structure of $\mathbf{Ord}^\kappa$.
Indeed, \ref{cond:DImono} and \ref{cond:DIcomp} are straightforward by the definition.
For a map $f:\lambda_1\to\lambda_2$ between cardinals, its set-theoretical image $f(\lambda_1)\subset\lambda_2$ is well-ordered as a subset.
Since every well-ordered set is isomorphic to a unique ordinal by a unique order-preserving bijection, we can choose a unique ordinal $\lambda_f$ with $\lambda_f\simeq f(\lambda_1)$.
Now we obtain a unique factorization $\lambda_1\to\lambda_f\hookrightarrow\lambda_2$ as required in \ref{cond:DIfact}.
In fact, $\mathbf{Ord}^\kappa$ is a stable thin-powered category.

In particular, the case $\kappa=\omega$, the smallest infinite cardinal, is important.
We write $\mathbf{FinOrd}:=\mathbf{Ord}^\omega$ and $\widetilde\Delta^+:=(\mathbf{Ord}^\omega)^+_0$.
\end{exam}

\begin{exam}
We denote by $\widetilde\Delta$ the category of all (possibly empty) finite ordinals and order-preserving maps.
Then $\widetilde\Delta$ satisfies the assumption in lemma \ref{lem:ufact-thinpow}.
Hence if we denote by $\widetilde\Delta^+$ the class of monomorphisms in $\widetilde\Delta$, $\widetilde\Delta$ is a thin-powered category with thin-powered structure $\widetilde\Delta^+$.
This example is clearly stable.
\end{exam}

\begin{exam}
We define the category $\mathcal{P}$ as a subcategory of $\mathbf{Poset}$ whose objects are posets of the form $(\underline{n},\preceq)$ such that
\begin{itemize}
  \item $\underline{n}$ is a finite ordinal $\{0<1<\dots<n-1\}$;
  \item $\preceq$ is an ordering on $\underline{n}$ of the form
\[
\{0\prec 1 \prec \dots \prec i_1-1, i_1\prec i_1+1\prec\dots\prec i_2-1,\dots,i_k\prec\dots\prec n-1\}\,;
\]
\end{itemize}
and whose morphisms are order-preserving maps $f:(\underline{m},\preceq)\to(\underline{n},\preceq)$ which reflect comparability; i.e. $f(k)\preceq f(l)$ implies either $k\preceq l$ or $k\succeq l$.
We also define a wide subcategory $\mathcal{P}_\Delta$ of $\mathcal{P}$ whose morphisms are maps $f:(\underline{m},\preceq)\to(\underline{n},\preceq)$ which also preserves the natural orderings of ordinals.

Note that every morphism of $\mathcal{P}$ factors as a permutation of components followed by a morphism in $\mathcal{P}_\Delta$, while every morphism of $\mathcal{P}_\Delta$ factors as a split epimorphism followed by a monomorphism.
We denote by $\mathcal{P}^+_\Delta$ the class of monomorphisms in $\mathcal{P}_\Delta$.
Then $\mathcal{P}^+_\Delta$ is a stable thin-powered structure on both $\mathcal{P}_\Delta$ and $\mathcal{P}$.
Note also that $\widetilde\Delta$ is a full subcategory of $\mathcal{P}_\Delta$ (and $\mathcal{P}$).
We have a forgetful functor $\mathcal{P}_\Delta\to\widetilde\Delta$, which is a retraction of the inclusion $\widetilde\Delta\hookrightarrow\mathcal{P}_\Delta$.

We have another description for these categories.
For a tuple $(k_1,\dots,k_n)$ of positive integers, we define a poset
\[
[\![k_1,\dots,k_n]\!]
:= \left\{1<\dots<k_1, k_1+1<\dots<k_1+k_2, \dots, \sum_{i=1}^{n-1}k_i + 1<\dots<\sum_{i=1}^n k_i\right\}\,.
\]
Hence, as a poset, we have an isomorphism
\[
[\![k_1,\dots,k_n]\!]\simeq [\![k_1]\!]\amalg\dots\amalg[\![k_n]\!]\,.
\]
We denote by $[\![0]\!]$ the empty poset.
Then these posets are precisely the objects of $\mathcal{P}$ and $\mathcal{P}_\Delta$.

Each morphism of $\mathcal{P}_\Delta$ uniquely factors as a morphism of the form
\[
[\![\mu_1,\dots,\mu_n]\!]:[\![k_1,\dots,k_n]\!]
\simeq [\![k_1]\!]\amalg\dots\amalg[\![k_n]\!]
\xrightarrow{\mu_1\amalg\dots\amalg\mu_n} [\![l_1]\!]\amalg\dots\amalg[\![l_n]\!]
\simeq [\![l_1,\dots,l_n]\!]
\]
for morphisms $\mu_i:[\![k_i]\!]\to[\![l_i]\!]$ in $\widetilde\Delta$, followed by a morphism of the form
\[
\delta^m_j:[\![l_1,\dots,l_n]\!] \hookrightarrow [\![l_1,\dots,l_j,m,l_{j+1},\dots,l_n]\!]
\]
for $0\le j\le n$ and a positive integer $m$.
Summarizingly, every morphism of $\mathcal{P}_\Delta$ is uniquely written in the form
\[
\delta^m_j\circ[\![\delta_1,\dots,\delta_n]\!]\circ[\![\sigma_1,\dots,\sigma_1]\!]
\]
such that $\delta_1,\dots,\delta_n$ are monomorphisms and $\sigma_1,\dots,\sigma_n$ are split epimorphisms in $\widetilde\Delta$.

As for a morphism in $\mathcal{P}$, it is a composition of the above morphisms and
\[
\pi_\theta:[\![k_1,\dots,k_n]\!]\to [\![k_{\theta(1)},\dots,k_{\theta(n)}]\!]
\]
for a permutation $\theta\in\Sigma_n$.
Hence every morphism of $\mathcal{P}$ is uniquely written in the fom
\[
\delta^m_j\circ[\![\delta_1,\dots,\delta_n]\!]\circ[\![\sigma_1,\dots,\sigma_n]\!]\circ\pi_\theta\,.
\]
\end{exam}

We will also give more examples in the section \ref{sec:crsgrp}.

\section{Poset representations of thin-powered categories}
\label{sec:pos-repn}
In this section, we see that a thin-powered structure behaves as a skeleton of ``subsets'' with some good properties.
Most of the properties observed in the section are analogous to those in the ordianlly set-theory, and they are described categorically and combinatorially.

\subsection{Standard poset representation}
First we recall the following general lemma:

\begin{lemma}\label{lem:sliceposet}
Let $\mathcal{C}$ be a small category and suppose every morphism of $\mathcal{C}$ is monic.
Then for each $c\in\mathcal{C}$, the category $\mathcal{C}/c$ is a preordered set with maximum.
Moreover, if $\mathcal{C}$ is skeletal, then $\mathcal{C}/c$ is a poset for each $c\in\mathcal{C}$.
\end{lemma}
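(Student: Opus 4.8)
The plan is to read everything off directly from the description of the slice category $\mathcal{C}/c$: its objects are the morphisms $f:x\to c$ of $\mathcal{C}$ with codomain $c$, and a morphism from $f$ to $g:y\to c$ is a morphism $h:x\to y$ of $\mathcal{C}$ satisfying $gh=f$.

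First I would show that $\mathcal{C}/c$ is a preordered set, i.e. thin. Given objects $f:x\to c$ and $g:y\to c$ together with two morphisms $h,h':f\to g$, the defining identities give $gh=f=gh'$. Since every morphism of $\mathcal{C}$ is monic, $g$ is in particular a monomorphism, so $gh=gh'$ forces $h=h'$; hence there is at most one morphism $f\to g$. The maximum is then the identity $\mathrm{id}_c:c\to c$, regarded as an object of $\mathcal{C}/c$: for any object $f:x\to c$ the arrow $f$ itself satisfies $\mathrm{id}_c\circ f=f$ and so is a morphism $f\to\mathrm{id}_c$, which is automatically unique by thinness. Thus $\mathrm{id}_c$ is terminal, i.e. the greatest element.

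Next, assuming $\mathcal{C}$ is skeletal, I would promote this preorder to a poset. Since a thin category is a poset exactly when it is skeletal, it is enough to check that isomorphic objects of $\mathcal{C}/c$ coincide. An isomorphism between $f:x\to c$ and $g:y\to c$ is carried by a morphism $h:x\to y$ with $gh=f$ whose underlying arrow is invertible in $\mathcal{C}$; skeletality of $\mathcal{C}$ then yields $x=y$, so that $h$ becomes an automorphism of $x$.

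The step I expect to be the main obstacle is the very last one: to conclude $f=gh=g$, hence that the two objects of $\mathcal{C}/c$ are literally equal, one must know that the mediating automorphism $h$ is the identity. This does not follow from skeletality by itself---a one-object groupoid is skeletal and has only monic morphisms yet carries non-trivial automorphisms---so the argument really uses that the categories of interest (for instance a thin-powered structure $\mathcal{R}^+_0$, which by construction contains no non-trivial isomorphism) admit no non-trivial isomorphisms at all. Granting that $h=\mathrm{id}_x$, we obtain $f=g$; the slice is then skeletal, and being thin it is a poset, which completes the argument.
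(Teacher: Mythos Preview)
Your argument for the first part---that $\mathcal{C}/c$ is thin and that $\mathrm{id}_c$ is terminal---is exactly the paper's proof.

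For the second part you have put your finger on a genuine problem, and it is not a problem with your write-up but with the statement itself. Your counterexample is correct: a non-trivial group $G$ viewed as a one-object category is skeletal, every morphism is monic, yet $\mathcal{C}/*$ is the indiscrete preorder on $G$, which is not a poset. So the lemma as stated is false. The paper's own proof claims that once skeletality forces an isomorphism to be an endomorphism, ``by the first part'' it must be the identity; but the first part only says hom-sets in the slice are thin, and this does not imply $\mathrm{Aut}_{\mathcal{C}}(c)$ is trivial. The paper's argument has the same gap you identified.

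What saves the application is exactly what you observed: the only use of the lemma is Corollary~\ref{cor:r/poset}, applied to $\mathcal{R}^+_0$, and the paper has already shown (just before the definition of thin-powered structure is discussed) that a thin-powered structure contains no non-trivial isomorphisms. Under that stronger hypothesis your final step goes through cleanly: the mediating automorphism $h$ is forced to be an identity, hence $f=g$ and the slice is a genuine poset. So your proof is correct provided one reads ``skeletal'' as ``no non-trivial isomorphisms,'' which is the hypothesis actually available in the paper's applications.
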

\begin{proof}
Suppose $f:v\to c$ and $g:w\to c$ are morphisms in $\mathcal{C}$, and suppose we have a diagram
\[
\renewcommand{\labelstyle}{\textstyle}
\xymatrix@C=2em{ v \ar@<.35ex>[rr]^{\alpha} \ar@<-.35ex>[rr]_{\beta} \ar[dr]_f & {} & w \ar[dl]^g \\ {} & c & {} }
\]
in $\mathcal{C}$ with $g\alpha=g\beta=f$.
Since $g$ is monic, it implies $\alpha=\beta$.
Hence the hom-set $(\mathcal{C}/c)(f,g)$ consists of at most one element.
This shows that $\mathcal{C}/c$ is a preordered set.
It is obvious that the identity $\mathrm{id}_c$ is the maximum of $\mathcal{C}/c$.

Now suppose $\mathcal{C}$ is skeletal; i.e. if $c,c'\in\mathcal{C}$ are isomorphic, then $c=c'$.
In particular this implies that any isomorphism in $\mathcal{C}$ is an endomorphism which is, by the first part, the identity morphism.
Thus $\mathcal{C}$ has no non-trivial isomorphism, and the result follows.
\end{proof}

\begin{corol}\label{cor:r/poset}
If $\mathcal{R}^+_0$ is a thin-powered structure on $\mathcal{R}$, then for each $r\in\mathcal{R}$, the category $\mathcal{R}^+_0/r$ is a poset.
\end{corol}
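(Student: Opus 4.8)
The plan is to deduce this directly from Lemma \ref{lem:sliceposet} by viewing the thin-powered structure $\mathcal{R}^+_0$ as a small category in its own right. First I would recall, as observed in the discussion following the definition of a thin-powered structure, that $\mathcal{R}^+_0$ contains all identities and is closed under composition by condition \ref{cond:DIcomp}; hence it constitutes a wide subcategory of $\mathcal{R}$, which is small since $\mathcal{R}$ is. Taking $\mathcal{C}=\mathcal{R}^+_0$ and $c=r$ in Lemma \ref{lem:sliceposet}, the slice $\mathcal{R}^+_0/r$ is exactly the category whose objects are distinguished injections $\delta\colon s\to r$ and whose morphisms are the evident commuting triangles lying in $\mathcal{R}^+_0$.

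To apply the lemma I must check its two hypotheses. The monomorphism hypothesis is immediate: by condition \ref{cond:DImono} every morphism of $\mathcal{R}^+_0$ is monic in $\mathcal{R}$, and a morphism that is monic in $\mathcal{R}$ remains monic when regarded inside the subcategory $\mathcal{R}^+_0$. Lemma \ref{lem:sliceposet} then already yields that $\mathcal{R}^+_0/r$ is a preordered set with maximum $\mathrm{id}_r$.

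For the second, stronger conclusion I would verify that $\mathcal{R}^+_0$ is skeletal. Here I would invoke the remark that a thin-powered structure contains no non-trivial isomorphism: if $\pi\in\mathcal{R}^+_0$ is an isomorphism, then the uniqueness clause in \ref{cond:DIfact} forces $\pi=\mathrm{id}$. Consequently, any two objects that are isomorphic within $\mathcal{R}^+_0$ are connected by an identity, hence equal, so $\mathcal{R}^+_0$ is skeletal. The final clause of Lemma \ref{lem:sliceposet} then upgrades the preorder $\mathcal{R}^+_0/r$ to a genuine poset, which completes the argument.

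I do not foresee any serious obstacle: the statement is a formal consequence of the general slice lemma together with the two facts about distinguished injections—that they are monic and contain no non-trivial isomorphism—already established in this section. The only point deserving care is the logical order: one must confirm skeletality of $\mathcal{R}^+_0$ itself before invoking the antisymmetry half of the lemma, rather than assuming it of $\mathcal{R}$, which need not be skeletal in general.
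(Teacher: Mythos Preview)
Your proposal is correct and is exactly the argument the paper intends: the corollary is stated without proof precisely because it is an immediate application of Lemma~\ref{lem:sliceposet} with $\mathcal{C}=\mathcal{R}^+_0$, using \ref{cond:DImono} for the monomorphism hypothesis and the already-noted fact that $\mathcal{R}^+_0$ contains no non-trivial isomorphisms for skeletality. Your care in checking skeletality of $\mathcal{R}^+_0$ rather than of $\mathcal{R}$ is well placed and matches the paper's logic.
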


By corollary \ref{cor:r/poset}, we have an assignment $r\mapsto\mathcal{R}^+_0/r\in\mathbf{Poset}$, where $\mathbf{Poset}$ denotes the category of posets and order-preserving maps.
On the other hand, for a morphism $f:r\to r'\in\mathcal{R}$, we can define a map $f_{\ast}:\mathcal{R}^+_0/r\to\mathcal{R}^+_0/r'$ by setting
\[
f_{\ast}(\delta) := \image(f\delta) \in \mathcal{R}^+_0/r'
\]
for $\delta\in\mathcal{R}^+_0/r$.
By the uniqueness of the factorization, $f_{\ast}$ is well-defined as a map.

\begin{prop}\label{prop:ordpres}
Suppose $\mathcal{R}$ is a thin-powered category.
Then for every morphism $f:r\to r'$ of $\mathcal{R}$, the map $f_{\ast}:\mathcal{R}^+_0/r\to\mathcal{R}^+_0/r'$ defined above preserves orderings.
\end{prop}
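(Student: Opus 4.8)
The plan is to unwind the definition of the order on the slice posets and track it through the image factorization. By Corollary~\ref{cor:r/poset} the slice $\mathcal{R}^+_0/r$ is a poset all of whose morphisms lie in $\mathcal{R}^+_0$, so for $\delta_1,\delta_2\in\mathcal{R}^+_0/r$ one has $\delta_1\le\delta_2$ precisely when there is a (necessarily unique) morphism $\gamma\in\mathcal{R}^+_0$ with $\delta_2\gamma=\delta_1$. Thus, starting from such a witness $\gamma$ for $\delta_1\le\delta_2$, I must manufacture a witness in $\mathcal{R}^+_0$ for $f_\ast(\delta_1)\le f_\ast(\delta_2)$.

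First I would write out the image factorization of $f\delta_2$, namely $f\delta_2=f_\ast(\delta_2)\,\sigma_2$ with $\sigma_2=\coim(f\delta_2)\in\mathcal{R}^-$ and $f_\ast(\delta_2)=\image(f\delta_2)\in\mathcal{R}^+_0$. Since $\delta_2\gamma=\delta_1$, composing on the right gives $f\delta_1=f\delta_2\gamma=f_\ast(\delta_2)\,(\sigma_2\gamma)$. Next I would factor the composite $\sigma_2\gamma$ as $\sigma_2\gamma=\epsilon\tau$ with $\epsilon\in\mathcal{R}^+_0$ and $\tau\in\mathcal{R}^-$, using condition~\ref{cond:DIfact}. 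Substituting yields $f\delta_1=(f_\ast(\delta_2)\,\epsilon)\,\tau$. Here the key leverage is that $\mathcal{R}^+_0$ is closed under composition (condition~\ref{cond:DIcomp}), so $f_\ast(\delta_2)\,\epsilon\in\mathcal{R}^+_0$, while $\tau\in\mathcal{R}^-$; hence $(f_\ast(\delta_2)\,\epsilon)\,\tau$ is itself an $(\mathcal{R}^-,\mathcal{R}^+_0)$-factorization of $f\delta_1$. The strict uniqueness in~\ref{cond:DIfact} then forces $\image(f\delta_1)=f_\ast(\delta_2)\,\epsilon$, that is, $f_\ast(\delta_1)=f_\ast(\delta_2)\,\epsilon$.

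This finishes the argument: the distinguished injection $\epsilon\in\mathcal{R}^+_0$ is exactly a morphism $f_\ast(\delta_1)\to f_\ast(\delta_2)$ in the slice $\mathcal{R}^+_0/r'$, so $f_\ast(\delta_1)\le f_\ast(\delta_2)$, as required. I do not expect a genuine obstacle here; the proposition is essentially the monotonicity of the set-theoretic image under restriction along subobjects, and the whole content is bookkeeping with the factorization axioms. The one point that must be handled with care is the appeal to uniqueness of factorization: it is what pins down $\image(f\delta_1)$ to be the specific composite $f_\ast(\delta_2)\,\epsilon$, and it is also why no hypothesis beyond the bare thin-powered axioms---in particular neither semicompleteness nor stability---is needed.
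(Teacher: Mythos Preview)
Your proof is correct and is essentially identical to the paper's: both factor $f\delta_2$, then factor the composite $\sigma_2\gamma$ (the paper calls your $\gamma$ by $\varepsilon$ and your $\epsilon$ by $\sigma_{2\ast}(\varepsilon)$), and invoke uniqueness of the $(\mathcal{R}^-,\mathcal{R}^+_0)$-factorization to identify $f_\ast(\delta_1)=f_\ast(\delta_2)\,\epsilon$.
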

\begin{proof}
Suppose we have the following commutative diagram in $\mathcal{R}^+_0$:
\[
\xymatrix@C=2em {
  s_1 \ar[rr]^{\varepsilon} \ar[dr]_{\delta_1} & {} & s_2 \ar[dl]^{\delta_2}\\
  {} & r & {} }
\]
In other words, we have $\delta_1\le\delta_2$ in the poset $\mathcal{R}^+_0/r$.
Put $\sigma_2:=\coim(f\delta_2)$ and $\sigma_1:=\coim(\sigma_2\varepsilon)$, hence we have $f\delta_2=f_\ast(\delta_2)\sigma_2$ and $\sigma_2\varepsilon=\sigma_{2\ast}(\varepsilon)\sigma_1$.
Then we have
\[
f\delta_1
= f\delta_2\varepsilon
= f_\ast(\delta_2)\sigma_2\varepsilon
= f_\ast(\delta_2)\sigma_{2\ast}(\varepsilon)\sigma_1,
\]
so that $f_\ast(\delta_1)=f_\ast(\delta_2)\sigma_{2\ast}(\varepsilon)$.
Now we obtain the following commutative diagram in $\mathcal{R}^+_0$:
\[
\xymatrix@C=2em {
  s'_1 \ar[rr]^{\sigma_{2\ast}(\varepsilon)} \ar[dr]_{f_{\ast}(\delta_1)} & {} & s'_2 \ar[dl]^{f_{\ast}(\delta_2)}\\
  {} & r' & {} }
\]
This implies that $f_{\ast}(\delta_1)\le f_{\ast}(\delta_2)$ in $\mathcal{R}^+_0/r'$, which is the required result.
\end{proof}

Moreover, we have the following:

\begin{prop}
Let $\mathcal{R}$ be a thin-powered category.
Then the assignment $r\mapsto\mathcal{R}^+_0/r$ gives rise to a functor $\mathcal{R}\to\mathbf{Poset}$.
\end{prop}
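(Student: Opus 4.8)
The plan is to verify that the order-preserving maps $f_\ast$ already constructed assemble into a functor. Since Corollary \ref{cor:r/poset} gives that each $\mathcal{R}^+_0/r$ is an object of $\mathbf{Poset}$, and Proposition \ref{prop:ordpres} gives that each $f_\ast\colon\mathcal{R}^+_0/r\to\mathcal{R}^+_0/r'$ is a morphism of $\mathbf{Poset}$, the only thing left is to check the two functoriality axioms, namely $(\mathrm{id}_r)_\ast=\mathrm{id}$ and $(gf)_\ast=g_\ast f_\ast$.

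First I would dispose of the identity axiom. For $\delta\in\mathcal{R}^+_0/r$ we have $(\mathrm{id}_r)_\ast(\delta)=\image(\mathrm{id}_r\,\delta)=\image(\delta)$. Since $\delta$ itself lies in $\mathcal{R}^+_0$ and $\mathrm{id}\pitchfork\mathcal{R}^+_0$, the equation $\delta=\delta\,\mathrm{id}$ is already an $(\mathcal{R}^-,\mathcal{R}^+_0)$-factorization; the strict uniqueness in \ref{cond:DIfact} forces $\image(\delta)=\delta$, so $(\mathrm{id}_r)_\ast$ is the identity map.

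For the composition axiom, fix composable $f\colon r\to r'$ and $g\colon r'\to r''$ and an element $\delta\in\mathcal{R}^+_0/r$. I would unwind the two defining factorizations, $f\delta=f_\ast(\delta)\,\sigma_1$ with $\sigma_1=\coim(f\delta)$, and $g\,f_\ast(\delta)=g_\ast(f_\ast(\delta))\,\sigma_2$ with $\sigma_2=\coim(g\,f_\ast(\delta))$, and then substitute to obtain $(gf)\delta=g_\ast(f_\ast(\delta))\,\sigma_2\sigma_1$ with $g_\ast(f_\ast(\delta))\in\mathcal{R}^+_0$. Provided $\sigma_2\sigma_1\in\mathcal{R}^-$, this display is precisely the $(\mathcal{R}^-,\mathcal{R}^+_0)$-factorization of $(gf)\delta$, and uniqueness \ref{cond:DIfact} then identifies $(gf)_\ast(\delta)=\image((gf)\delta)$ with $g_\ast(f_\ast(\delta))$, as desired.

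The one substantive point, and the main obstacle, is the claim $\sigma_2\sigma_1\in\mathcal{R}^-$, i.e. that $\mathcal{R}^-$ is closed under composition. For this I would invoke Corollary \ref{cor:thinpow-wk-fact}, which presents $(\mathcal{R}^-,\mathcal{R}^+)$ as a weak factorization system: the left class of a weak factorization system is always closed under composition. Should one prefer a self-contained argument, the closure can be checked directly from the lifting property by pasting two squares; given any $\delta\in\mathcal{R}^+_0$ and a lifting problem for $\sigma_2\sigma_1$ against $\delta$, one first lifts $\sigma_1$ against $\delta$, then feeds the resulting diagonal into a lifting problem for $\sigma_2$ against $\delta$, whose solution is the lift sought for $\sigma_2\sigma_1$. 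With this closure in hand, both axioms are established and the assignment is a functor.
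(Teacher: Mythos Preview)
Your proof is correct and follows essentially the same approach as the paper's: factor $f\delta$, then factor $g\,\image(f\delta)$, observe that the two coimages compose to an element of $\mathcal{R}^-$, and invoke uniqueness of the factorization. You are more explicit than the paper in two places---checking the identity axiom and justifying why $\mathcal{R}^-$ is closed under composition---both of which the paper leaves implicit.
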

\begin{proof}
By proposition \ref{prop:ordpres}, it suffices to verify that the assignment $f\mapsto f_\ast$ is functorial.
For a composition $gf$ in $\mathcal{R}$ and $\delta\in\mathcal{R}^+_0\!/r$, we have
\[
gf\delta = g\image(f\delta)\coim(f\delta) = \image(g\image(f\delta))\coim(g\image(f\delta))\coim(f\delta)\,,
\]
which implies that
\[
(gf)_\ast(\delta)=g_\ast f_\ast(\delta)\,.
\]
Thus $f\mapsto f_\ast$ is functorial, and the result follows.
\end{proof}

In other words, a thin-powered structure induces a representation of $\mathcal{R}$ in the category $\mathbf{Poset}$.

Next, we are interested in the semicompleteness.
Using proposition \ref{prop:ordpres}, we obtain a combinatorial paraphrase of the semicompleteness:

\begin{lemma}\label{lem:pbposet}
Let $\mathcal{R}$ be a thin-powered category.
Suppose we have a commutative square
\begin{equation}
\label{eq:pbposet}
\begin{gathered}
\xymatrix{
  s \ar[r]^{\delta} \ar[d]_{\bar{f}} & r \ar[d]^f \\
  s' \ar[r]^{\delta'} & r'}
\end{gathered}
\end{equation}
in $\mathcal{R}$ with $\delta,\delta'\in\mathcal{R}^+_0$.
Then it is a pullback in $\mathcal{R}$ if and only if $\delta$ is greatest among $\gamma\in\mathcal{R}^+_0/r$ with $f_{\ast}(\gamma)\le\delta'$; i.e. $f_{\ast}(\gamma)\le\delta'$ implies $\gamma\le\delta$.
\end{lemma}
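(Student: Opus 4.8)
The plan is to unwind the universal property of the pullback square \eqref{eq:pbposet} directly in terms of the poset representation $\mathcal{R}^+_0\!/(\blankdot)$, using that every distinguished injection is a monomorphism. First I would observe that, since $\delta,\delta'\in\mathcal{R}^+_0$ and these are monic by \ref{cond:DImono}, the commuting square automatically satisfies $f_\ast(\delta)\le\delta'$: indeed $\delta'\bar f=f\delta$, so $f\delta$ factors through $\delta'$, and by the uniqueness of the $(\mathcal{R}^-,\mathcal{R}^+_0)$-factorization this forces $\image(f\delta)=f_\ast(\delta)\le\delta'$ in $\mathcal{R}^+_0\!/r'$. Thus $\delta$ is always one candidate among the $\gamma$ with $f_\ast(\gamma)\le\delta'$, and the content of the lemma is that being a pullback is equivalent to being the \emph{greatest} such candidate.

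For the forward direction, I would assume the square is a pullback and take any $\gamma:t\to r\in\mathcal{R}^+_0$ with $f_\ast(\gamma)\le\delta'$. Writing $f_\ast(\gamma)=\image(f\gamma)$, the inequality $f_\ast(\gamma)\le\delta'$ gives a map $u:t\to s'$ with $\delta' u=f\gamma$; one checks $\delta' u=f\gamma$ and $f\gamma = (f\gamma)$ assemble into a cone over the cospan $s'\xrightarrow{\delta'}r'\xleftarrow{f}r$ with legs $u$ and $\gamma$. The universal property of the pullback then produces a unique mediating map $\gamma\colon t\to s$ with $\delta\circ(\text{mediator})=\gamma$, which exhibits $\gamma\le\delta$ in $\mathcal{R}^+_0\!/r$. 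Here I must be slightly careful that the mediating map again lies in $\mathcal{R}^+_0$ (or at least that $\gamma$ factors through $\delta$ \emph{inside} $\mathcal{R}^+_0$); this follows because $\gamma=\delta\cdot(\text{mediator})$ is a distinguished injection and $\delta\in\mathcal{R}^+_0$, so by the factorization/cancellation properties established for thin-powered categories (cf.\ Lemma~\ref{lem:thinpow-ortho-sk}) the mediator is forced into $\mathcal{R}^+_0$. Hence $\delta$ is greatest.

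For the converse, I would assume $\delta$ is greatest among such $\gamma$ and verify the universal property of \eqref{eq:pbposet} by hand. Given a test object $w$ with maps $a\colon w\to r$ and $b\colon w\to s'$ satisfying $fa=\delta' b$, I would factor $a=\image(a)\coim(a)$ and note that $f_\ast(\image(a))\le\delta'$, so by greatestness $\image(a)\le\delta$, yielding a map through $\delta$; composing with $\coim(a)$ then produces a candidate mediating map $w\to s$. Commutativity of the two triangles is then checked using that $\delta$ is monic (so the component to $r$ is determined) and that $\delta'$ is monic (so the component to $s'$ is determined), and uniqueness of the mediator follows from $\delta$ being monic. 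The main obstacle I anticipate is precisely this last bookkeeping: ensuring the constructed mediating map is well-defined independently of the factorizations chosen and that \emph{both} projections commute, which relies essentially on the monicity of $\delta$ and $\delta'$ together with the strict uniqueness in \ref{cond:DIfact}. Everything else is a routine translation between the lifting/orthogonality language of the weak factorization system and the order-theoretic language of the slice posets $\mathcal{R}^+_0\!/r$.
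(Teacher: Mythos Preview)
Your argument is correct and follows essentially the same route as the paper's proof: use the pullback universal property together with the $(\mathcal{R}^-,\mathcal{R}^+_0)$-factorization to pass back and forth between cones and the order relation in $\mathcal{R}^+_0\!/r$, and use monicity of $\delta,\delta'$ for uniqueness and the second triangle. One small correction: the fact that the mediating map lies in $\mathcal{R}^+_0$ (so that $\gamma=\delta\cdot(\text{mediator})$ really gives $\gamma\le\delta$ in $\mathcal{R}^+_0\!/r$) does not come from Lemma~\ref{lem:thinpow-ortho-sk} but directly from the uniqueness in \ref{cond:DIfact}: factor the mediator as $\delta_1\sigma_1$ and compare $\gamma=(\delta\delta_1)\sigma_1$ with $\gamma=\gamma\cdot\mathrm{id}$ to force $\sigma_1=\mathrm{id}$.
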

\begin{proof}
First suppose the square is a pullback.
If $\gamma\in\mathcal{R}^+_0/r$ satisfies $f_{\ast}(\gamma)\le\delta'$, say $f\gamma=(f_{\ast}(\gamma))\sigma$ with $\sigma=\coim{f\gamma}$, then there is a distinguished injection $\varepsilon'$ with $f_{\ast}(\gamma)=\delta'\varepsilon'$, and we have the following diagram:
\[
\xymatrix{
  s_1 \ar[r]^{\gamma} \ar[d]_{\varepsilon'\sigma} & r \ar[d]^{f} \\
  s' \ar[r]^{\delta'} & r' }
\]
Since \eqref{eq:pbposet} is a pullback, there is a morphism $\varepsilon:s_1\to s$ with $\gamma=\delta\varepsilon$, which implies $\gamma\le\delta$.

Conversely, suppowe $\delta$ is greatest among $\gamma\in\mathcal{R}^+_0/r$ with $f_{\ast}(\gamma)\le\delta'$, and suppose we have a diagram:
\[
\xymatrix{
  s_1 \ar[r]^{g} \ar[d]_{h} & r \ar[d]^{f} \\
  s' \ar[r]^{\delta'} & r' }
\]
We have $f_{\ast}(\mathrm{im}(g))=\mathrm{im}(fg)=\mathrm{im}(\delta'h)\le\delta'$, so that the assumption implies $\mathrm{im}(g)\le\delta$.
Hence $g=\delta g_1$ for some morphism $g_1:s_1\to s$.
We have $\delta'\bar{f}g_1=f\delta g_1=fg=\delta' h$, and since $\delta'$ is a monomorphism, we obtain $\bar{f}g_1=h$.
Therefore the square \eqref{eq:pbposet} is a pullback.
\end{proof}

\begin{corol}\label{cor:pos-semicomp}
A thin-powered category $\mathcal{R}$ is semicomplete if and only if it satisfies the following condition: 
For every morphism $f:r\to r'$ of $\mathcal{R}$ and every $\delta'\in\mathcal{R}^+_0/r'$, the subset $f_{\ast}^{-1}(\ordideal\delta')\subset\mathcal{R}^+_0/r$ has the greatest element, where $\ordideal\delta'$ is the lower subset of $\mathcal{R}^+_0/r'$ generated by $\delta'$.
\end{corol}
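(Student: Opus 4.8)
The plan is to read the corollary straight off Lemma~\ref{lem:pbposet} by unwinding the definition of semicompleteness and matching the combinatorial condition the lemma produces against the stated one. The first thing I would record is a bookkeeping identity: for $\gamma \in \mathcal{R}^+_0/r$ we have $f_\ast(\gamma) \le \delta'$ if and only if $\gamma \in f_\ast^{-1}(\ordideal\delta')$, because by definition $\ordideal\delta' = \{\eta \in \mathcal{R}^+_0/r' : \eta \le \delta'\}$. Under this identification, the phrase ``$\delta$ is greatest among $\gamma \in \mathcal{R}^+_0/r$ with $f_\ast(\gamma) \le \delta'$'' that appears in Lemma~\ref{lem:pbposet} is precisely ``$\delta$ is the greatest element of $f_\ast^{-1}(\ordideal\delta')$.'' Everything then reduces to translating between the pullback formulation of semicompleteness and this greatest-element statement.

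For the forward implication I would assume $\mathcal{R}$ is semicomplete and fix a morphism $f\colon r \to r'$ together with $\delta' \in \mathcal{R}^+_0/r'$. Semicompleteness provides a pullback of $\delta'$ along $f$, and Lemma~\ref{lem:thinpow-pb} lets me choose it so that the leg parallel to $\delta'$ is itself a distinguished injection $\delta \in \mathcal{R}^+_0/r$. Applying Lemma~\ref{lem:pbposet} to this pullback square then yields that $\delta$ is greatest among $\gamma$ with $f_\ast(\gamma) \le \delta'$, which by the identity above is exactly the greatest element of $f_\ast^{-1}(\ordideal\delta')$.

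For the converse I would begin with the greatest element $\delta \in f_\ast^{-1}(\ordideal\delta')$, say $\delta\colon s \to r$ in $\mathcal{R}^+_0$, and construct the candidate square. Since $f_\ast(\delta) \le \delta'$ there is a distinguished injection $\varepsilon'$ with $f_\ast(\delta) = \delta'\varepsilon'$; combining this with the factorization $f\delta = f_\ast(\delta)\coim(f\delta)$ and setting $\bar f := \varepsilon'\coim(f\delta)$ gives $f\delta = \delta'\bar f$, hence a commutative square whose two horizontal legs $\delta,\delta'$ lie in $\mathcal{R}^+_0$. Because $\delta$ is the greatest element of $f_\ast^{-1}(\ordideal\delta')$, Lemma~\ref{lem:pbposet} certifies that this square is a pullback, so every cospan of the required form admits a pullback and $\mathcal{R}$ is semicomplete.

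I expect no genuine obstacle here, since the analytic content is already carried by Lemma~\ref{lem:pbposet}. The only care needed is the faithful matching of the pullback formulation of semicompleteness with the greatest-element condition, keeping track of which leg of the cospan is the distinguished injection, and the explicit production of the commutative square in the converse direction out of the factorization of $f\delta$.
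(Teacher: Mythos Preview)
Your proposal is correct and follows essentially the same route as the paper: both directions are reduced to Lemma~\ref{lem:pbposet}, with the forward implication extracting the greatest element from the pullback and the converse building the square from the greatest element and then invoking the lemma to certify it as a pullback. You are slightly more explicit than the paper in two places---you cite Lemma~\ref{lem:thinpow-pb} to normalize the pullback leg into $\mathcal{R}^+_0$, and you spell out the construction of $\bar f$ via $\varepsilon'\coim(f\delta)$---but these are refinements of exposition, not a different argument.
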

\begin{proof}
If $\mathcal{R}$ is semicomplete, the condition immediately follows from lemma \ref{lem:pbposet}.
Conversely, suppose $\mathcal{R}$ satisfies the condition, and let $f:r\to r'$ be a morphism and $\delta'\in\mathcal{R}^+_0/r'$.
Let $\delta$ be the greatest element of $f_{\ast}^{-1}(\Lambda^{\delta'})\subset\mathcal{R}^+_0/r$.
Since $f_{\ast}(\delta)\le\delta'$, we have a square
\[
\xymatrix{
  s \ar[r]^{\delta} \ar[d] & r \ar[d]^{f} \\
  s' \ar[r]^{\delta'} & r' }
\]
Notice that the condition on $\delta$ is equivalent to that $\delta$ is greatest among $\gamma\in\mathcal{R}^+_0/r$ with $f_{\ast}(\gamma)\le\delta'$.
Thus the square is a pullback by lemma \ref{lem:pbposet}.
Of course $\delta\in\mathcal{R}^+_0$, so $\mathcal{R}$ is semicomplete.
\end{proof}

Now suppose $\mathcal{R}$ is a semicomplete thin-powered category.
Then because of the pullbacks, $\mathcal{R}^+_0\!/r$ is not only a poset, but also a meet-semilattice.
For $\delta_1,\delta_2\in\mathcal{R}^+_0\!/r$, take a pullback square:
\[
\xymatrix{
  t \ar[r]^{\varepsilon_1} \ar[d]_{\varepsilon_2} \ar@{}[dr]|(.4){\pbcorner} & s_1 \ar[d]^{\delta_1} \\
  s_2 \ar[r]^{\delta_2} & r }
\]
Then the meet is given by the formula
\[
\delta_1\wedge\delta_2 = \delta_1\varepsilon_1 = \delta_2\varepsilon_2.
\]
In general, for a morphism $f:r\to r'$ and $\delta'\in\mathcal{R}^+_0\!/r'$, we define $f^\ast\delta'\in\mathcal{R}^+_0\!/r$ by the pullback square:
\[
\xymatrix{
  s \ar[r]^{f^\ast\delta'} \ar[d] \ar@{}[dr]|(.4){\pbcorner} & r \ar[d]^{f} \\
  s' \ar[r]^{\delta'} & r' }
\]
By lemma \ref{lem:thinpow-pb}, $f^\ast\delta'$ is uniquely determined, and the resulting $f^\ast:\mathcal{R}^+_0\!/r'\to\mathcal{R}^+_0\!/r$ is a well-defined map.
Clearly the map $f^\ast$ preserves meets, so that it is order-preserving.
Moreover, we have the following result:

\begin{lemma}\label{lem:galois}
Let $\mathcal{R}$ be a semicomplete thin-powered category.
Then for a morphism $f:r\to r'\in\mathcal{R}$, the pair
\[
f_\ast:\mathcal{R}^+_0\!/r\rightleftarrows\mathcal{R}^+_0\!/r':f^\ast
\]
of order-preserving maps forms a Galois connection.
\end{lemma}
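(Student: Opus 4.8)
The plan is to verify directly the two inequalities that, by the convention fixed earlier in the excerpt, characterize a Galois connection: the counit inequality $f_\ast f^\ast(\delta') \le \delta'$ for every $\delta' \in \mathcal{R}^+_0/r'$, and the unit inequality $\delta \le f^\ast f_\ast(\delta)$ for every $\delta \in \mathcal{R}^+_0/r$. Order-preservation of both maps is already in hand: $f_\ast$ is order-preserving by proposition \ref{prop:ordpres}, and $f^\ast$ is order-preserving because it preserves meets, as noted in the discussion preceding the statement. So only the counit and unit inequalities remain to be checked.

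The single ingredient I would invoke is the characterization of the pullback furnished by lemma \ref{lem:pbposet}. Since $f^\ast\delta'$ is by definition the distinguished injection obtained by pulling $\delta'$ back along $f$, that lemma says exactly that $f^\ast\delta'$ is the greatest element $\gamma \in \mathcal{R}^+_0/r$ satisfying $f_\ast(\gamma) \le \delta'$; that is, $f^\ast\delta' = \max\{\gamma \in \mathcal{R}^+_0/r : f_\ast(\gamma) \le \delta'\}$. This single formula already encodes the defining property of a right adjoint, so the rest is purely formal.

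Granting this, the counit inequality is immediate: since $f^\ast\delta'$ belongs to the very set of which it is the maximum, it satisfies $f_\ast(f^\ast\delta') \le \delta'$ by construction, which is precisely $f_\ast f^\ast(\delta') \le \delta'$. For the unit inequality I would fix $\delta \in \mathcal{R}^+_0/r$ and observe that $\delta$ trivially satisfies $f_\ast(\delta) \le f_\ast(\delta)$; hence $\delta$ lies in the set $\{\gamma : f_\ast(\gamma) \le f_\ast\delta\}$, and since $f^\ast(f_\ast\delta)$ is, by the formula above, the greatest element of this set, we conclude $\delta \le f^\ast f_\ast(\delta)$.

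I do not expect a genuine obstacle here: once lemma \ref{lem:pbposet} is read as presenting $f^\ast$ as a pointwise maximum, both required inequalities drop out formally. The only point demanding a little care is making sure the maximum characterization is applied in the direction matching the orientation $f_\ast \dashv f^\ast$ dictated by the Galois-connection convention, with $f_\ast$ the left adjoint and $f^\ast$ the right adjoint; a sign or variance slip there would be the sole way to go wrong.
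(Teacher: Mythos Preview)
Your proof is correct and follows essentially the same approach as the paper: both arguments rest on the pullback characterization of $f^\ast$ (lemma \ref{lem:pbposet}) to obtain the two required inequalities. The only cosmetic difference is that for the counit the paper computes $f_\ast f^\ast\delta' = \delta'\cdot\image(f_1)\le\delta'$ directly from the pullback square, whereas you read it off immediately from the max description; your version is if anything slightly cleaner.
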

\begin{proof}
We have to verify the inequalities $\delta\le f^\ast f_\ast\delta$ and $f_\ast f^\ast\delta'\le\delta'$ for every $\delta\in\mathcal{R}^+_0\!/r$ and $\delta'\in\mathcal{R}^+_0\!/r'$.
Since $f^\ast$ is defined by the pullback square, the inequality $\delta\le f^\ast f_\ast\delta$ can be easily verified.
So consider the following pullback square:
\[
\xymatrix{
  s \ar[r]^{f^\ast\delta} \ar[d]_{f_1} & r \ar[d]^{f} \\
  s' \ar[r]^{\delta} & r' }
\]
Then we have $f_\ast f^\ast\delta=\delta \image(f')\le \delta$ as required.
\end{proof}

\begin{corol}\label{cor:supremum}
In the situation above, for each morphism $f:r\to r'\in\mathcal{R}$, the map $f_\ast:\mathcal{R}^+_0/r \to \mathcal{R}^+_0/r'$ preserves the existing supremums of subsets of $\mathcal{R}^+_0/r$.
In particular, $f_\ast$ preserves the least element (if it exists).
\end{corol}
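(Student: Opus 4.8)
The plan is to derive the statement purely formally from the Galois connection established in Lemma~\ref{lem:galois}. That lemma exhibits $f_\ast$ as the left adjoint of the pair $f_\ast:\mathcal{R}^+_0/r\rightleftarrows\mathcal{R}^+_0/r':f^\ast$, and the governing principle is the standard fact that left adjoints preserve all existing colimits.

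I would then translate this into the poset language of the preliminaries: for a diagram valued in a poset the colimit is exactly the supremum of its image, and the least element is precisely the initial object. Hence, for a subset $S\subset\mathcal{R}^+_0/r$ whose supremum exists, that supremum is the colimit of the inclusion $S\hookrightarrow\mathcal{R}^+_0/r$, and preservation of this colimit by $f_\ast$ gives $f_\ast(\sup S)=\sup\{f_\ast(\delta)\mid\delta\in S\}$. The least element, being the supremum of the empty subset, is then covered as the special case $S=\varnothing$.

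Because Corollary~\ref{cor:galois-conti} records only the finite-join version, I would in fact give the direct argument, which handles arbitrary subsets with no extra effort. For $S\subset\mathcal{R}^+_0/r$ with $\sup S$ existing, order-preservation of $f_\ast$ makes $f_\ast(\sup S)$ an upper bound of the image $\{f_\ast(\delta)\mid\delta\in S\}$. Conversely, for any upper bound $\gamma$ of the image, the defining adjunction equivalence $f_\ast(p)\le q\iff p\le f^\ast(q)$ converts each inequality $f_\ast(\delta)\le\gamma$ into $\delta\le f^\ast(\gamma)$; taking the supremum yields $\sup S\le f^\ast(\gamma)$, and applying the equivalence once more gives $f_\ast(\sup S)\le\gamma$. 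Thus $f_\ast(\sup S)$ is the least upper bound of the image, as required. There is no genuine obstacle here---the whole content sits in Lemma~\ref{lem:galois}---the only point needing attention being to invoke the full adjunction hom-equivalence, valid for suprema of arbitrary subsets, rather than the finite-join statement of Corollary~\ref{cor:galois-conti}.
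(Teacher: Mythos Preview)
Your proposal is correct and matches the paper's approach exactly: the paper states this as an immediate corollary of Lemma~\ref{lem:galois} without giving an explicit proof, relying on the standard fact that the left adjoint in a Galois connection preserves arbitrary existing suprema. Your direct verification via the adjunction equivalence is precisely the argument the paper leaves implicit.
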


As special cases of Galois connections, we have the following:

\begin{lemma}\label{lem:thinpow-galois}
Let $\mathcal{R}$ be a semicomplete thin-powered category.
Then if $\delta:s\to r\in\mathcal{R}^+_0$, we have $\delta^\ast\delta_\ast=\mathrm{id}$ on $\mathcal{R}^+_0/s$.
\end{lemma}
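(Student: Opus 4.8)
The plan is to evaluate $\delta^\ast\delta_\ast$ pointwise, using the fact that for a distinguished injection $\delta$ the pushforward $\delta_\ast$ is simply postcomposition. First I would fix an element $\gamma\in\mathcal{R}^+_0/s$, represented by a distinguished injection $\gamma\colon t\to s$, and compute $\delta_\ast(\gamma)=\image(\delta\gamma)$. Since $\mathcal{R}^+_0$ is closed under composition by \ref{cond:DIcomp}, the morphism $\delta\gamma$ already lies in $\mathcal{R}^+_0$; hence $\delta\gamma=(\delta\gamma)\circ\mathrm{id}$ is a $(\mathcal{R}^-,\mathcal{R}^+_0)$-factorization, and the strict uniqueness in \ref{cond:DIfact} gives $\coim(\delta\gamma)=\mathrm{id}$ and $\delta_\ast(\gamma)=\delta\gamma$ as an element of $\mathcal{R}^+_0/r$.

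Next I would identify the pullback defining $\delta^\ast(\delta\gamma)$. I claim that the square
\[
\xymatrix{
  t \ar[r]^{\gamma} \ar[d]_{\mathrm{id}} & s \ar[d]^{\delta} \\
  t \ar[r]^{\delta\gamma} & r }
\]
is a pullback. It commutes trivially, and its universal property is immediate: given $a\colon w\to s$ and $b\colon w\to t$ with $\delta a=\delta\gamma b$, the fact that $\delta$ is a monomorphism by \ref{cond:DImono} forces $a=\gamma b$, so $b$ is the unique mediating morphism. Because the top edge $\gamma$ belongs to $\mathcal{R}^+_0$, this square is precisely the canonical distinguished-injection pullback, and so the strict uniqueness in lemma \ref{lem:thinpow-pb} yields $\delta^\ast(\delta\gamma)=\gamma$. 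Combining the two computations gives $\delta^\ast\delta_\ast(\gamma)=\gamma$ for every $\gamma$, which is the claim.

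Alternatively, one may argue purely formally from the Galois connection of lemma \ref{lem:galois}: the map $\delta_\ast=\delta\circ(\blankdot)$ is injective because $\delta$ is monic, while the triangle identity $\delta_\ast\delta^\ast\delta_\ast=\delta_\ast$ recorded before the definition of a Galois connection then forces $\delta^\ast\delta_\ast=\mathrm{id}$. In either route the only point that requires genuine care is the bookkeeping that the exhibited square really coincides with the canonical pullback chosen in $\mathcal{R}^+_0$; once that identification is secured, the statement follows at once, so I expect no serious obstacle.
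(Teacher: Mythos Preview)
Your proof is correct and follows essentially the same approach as the paper: compute $\delta_\ast(\gamma)=\delta\gamma$ and then identify the pullback of $\delta$ along $\delta\gamma$ as $\gamma$. The only cosmetic difference is that the paper obtains the pullback square by pasting two trivial pullback squares (the identity square and the square witnessing that $\delta$ is monic), whereas you verify the universal property of the single square directly; your alternative Galois-connection argument is also valid but not used in the paper.
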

\begin{proof}
Let $\gamma\in\mathcal{R}^+_0/s$, hence since $\delta\gamma\in\mathcal{R}^+_0$, we have $\delta_\ast(\gamma)=\delta\gamma$.
Consider the following commutative diagram:
\[
\xymatrix{
  t \ar[r]^{\gamma} \ar@{=}[d] \ar@{}[dr]|(.4){\pbcorner} & s \ar@{=}[r] \ar@{=}[d] \ar@{}[dr]|(.4){\pbcorner} & s \ar[d]^{\delta} \\
  t \ar[r]^{\gamma} & s \ar[r]^{\delta} & r }
\]
Each square is a pullback, so the outer square is also a pullback.
Then we obtain $\delta^\ast\delta_\ast(\gamma)=\delta^\ast(\delta\gamma)=\gamma$.
This is the required result.
\end{proof}

\begin{corol}\label{cor:thinpow-meets}
Let $\mathcal{R}$ be a semicomplete thin-powered category.
Then if $\delta:s\to r\in\mathcal{R}^+_0$, the map $\delta_\ast:\mathcal{R}^+_0\!/s\to\mathcal{R}^+_0\!/r$ preserves meets.
\end{corol}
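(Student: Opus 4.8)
The plan is to leverage the two facts already available about the map $\delta_\ast$ and its companion $\delta^\ast$. By Lemma~\ref{lem:galois} the pair $\delta_\ast\dashv\delta^\ast$ forms a Galois connection, so by Corollary~\ref{cor:galois-conti} the right adjoint $\delta^\ast$ preserves meets; and by Lemma~\ref{lem:thinpow-galois} we have $\delta^\ast\delta_\ast=\mathrm{id}$ on $\mathcal{R}^+_0/s$. The strategy is to transport a meet computed in $\mathcal{R}^+_0/s$ up into $\mathcal{R}^+_0/r$ by exploiting that $\delta_\ast$ is a section of the meet-preserving map $\delta^\ast$, and that its image is a particularly simple subposet.

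First I would record that $\delta_\ast$ is nothing but post-composition with $\delta$. Since $\delta\in\mathcal{R}^+_0$ and $\mathcal{R}^+_0$ is closed under composition by \ref{cond:DIcomp}, for each $\gamma\in\mathcal{R}^+_0/s$ the composite $\delta\gamma$ already lies in $\mathcal{R}^+_0$, whence $\delta_\ast(\gamma)=\image(\delta\gamma)=\delta\gamma$. In particular every value of $\delta_\ast$ factors through $\delta$, so $\delta_\ast(\gamma)\le\delta$; that is, the image of $\delta_\ast$ is contained in the principal down-set $\ordideal\delta\subset\mathcal{R}^+_0/r$.

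The key step is to identify this image exactly: I claim $\delta_\ast$ restricts to an order isomorphism $\mathcal{R}^+_0/s\xrightarrow{\ \sim\ }\ordideal\delta$, with inverse the restriction of $\delta^\ast$. Injectivity is immediate from $\delta^\ast\delta_\ast=\mathrm{id}$. For the surjectivity onto $\ordideal\delta$ I would take any $\delta'\le\delta$, write $\delta'=\delta\gamma$ for some morphism $\gamma$, and prove $\gamma\in\mathcal{R}^+_0$: factoring $\gamma=\delta_\gamma\sigma_\gamma$ with $\sigma_\gamma\in\mathcal{R}^-$ and $\delta_\gamma\in\mathcal{R}^+_0$ gives $\delta'=(\delta\delta_\gamma)\sigma_\gamma$, an $(\mathcal{R}^-,\mathcal{R}^+_0)$-factorization of $\delta'$; since $\delta'$ itself lies in $\mathcal{R}^+_0$, the strict uniqueness in \ref{cond:DIfact} forces $\sigma_\gamma=\mathrm{id}$, so $\gamma=\delta_\gamma\in\mathcal{R}^+_0$ and $\delta'=\delta_\ast(\gamma)$. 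This cancellation argument is the one genuinely nontrivial point of the proof; everything else is formal.

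Finally I would conclude. The subset $\ordideal\delta$ is down-closed, hence closed under binary meets, and these coincide with the meets computed in $\mathcal{R}^+_0/r$ (if $x,y\le\delta$ then $x\wedge y\le x\le\delta$). An order isomorphism onto a subposet closed under meets preserves them, so $\delta_\ast$ preserves binary meets. Concretely, given $\gamma_1,\gamma_2\in\mathcal{R}^+_0/s$, the element $\delta_\ast\gamma_1\wedge\delta_\ast\gamma_2$ lies in $\ordideal\delta$; applying the meet-preserving map $\delta^\ast$ yields $\delta^\ast\delta_\ast\gamma_1\wedge\delta^\ast\delta_\ast\gamma_2=\gamma_1\wedge\gamma_2$ by Lemma~\ref{lem:thinpow-galois}, and applying $\delta_\ast$ back (the inverse on $\ordideal\delta$) recovers the element, giving $\delta_\ast(\gamma_1\wedge\gamma_2)=\delta_\ast\gamma_1\wedge\delta_\ast\gamma_2$, as required.
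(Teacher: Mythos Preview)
Your proof is correct, and it rests on the same key fact as the paper's---Lemma~\ref{lem:thinpow-galois}, giving $\delta^\ast\delta_\ast=\mathrm{id}$---but the packaging is different. The paper argues in a single line by writing meets via pullback, $\gamma_1\wedge\gamma_2=(\gamma_1)_\ast(\gamma_1)^\ast\gamma_2$, and computing directly:
\[
\delta_\ast(\gamma_1\wedge\gamma_2)
=(\delta\gamma_1)_\ast(\gamma_1)^\ast\gamma_2
=(\delta\gamma_1)_\ast(\gamma_1)^\ast\delta^\ast\delta_\ast\gamma_2
=(\delta\gamma_1)_\ast(\delta\gamma_1)^\ast\delta_\ast\gamma_2
=\delta_\ast\gamma_1\wedge\delta_\ast\gamma_2.
\]
You instead establish the structural reason this works: $\delta_\ast$ is an order isomorphism onto the down-set $\ordideal\delta$, with inverse $\delta^\ast$ restricted, and a down-set is closed under the ambient meets. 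Your route is longer but more explanatory; the paper's is a slicker computation. One small redundancy: your cancellation argument showing $\gamma\in\mathcal{R}^+_0$ is not actually needed, since $\mathcal{R}^+_0/r$ is by definition the slice taken inside the subcategory $\mathcal{R}^+_0$, so any witness to $\delta'\le\delta$ is already a morphism of $\mathcal{R}^+_0$.
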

\begin{proof}
Let $\gamma_1,\gamma_2\in\mathcal{R}^+_0\!/s$.
Then by lemma \ref{lem:thinpow-galois}, we have
\[
\delta_\ast(\gamma_1\wedge\gamma_2)
= \delta_\ast(\gamma_1)_\ast(\gamma_1)^\ast\gamma_2
= \delta_\ast(\gamma_1)_\ast(\gamma_1)^\ast\delta^\ast\delta_\ast\gamma_2
= \delta_\ast\gamma_1\wedge\delta_\ast\gamma_2\,.
\]
\end{proof}

\begin{defin}
A thin-powered category $\mathcal{R}$ is said to be latticed if it is semicomplete and each $\mathcal{R}^+_0/r$ is a lattice.
Moreover, $\mathcal{R}^+_0$ is said to be Boolean if it is semicomplete and each $\mathcal{R}^+_0/r$ is a Boolean lattice.
\end{defin}

By corollary \ref{cor:supremum}, we have the following:

\begin{lemma}\label{lem:semilat-repn}
If $\mathcal{R}$ is a latticed thin-powered category, then for each morphism $f:r\to r'\in\mathcal{R}$, the induced map $f_\ast:\mathcal{R}^+_0/r\to\mathcal{R}^+_0/r'$ preserves joins and the least element.
In particular, the poset representation factors through $\mathcal{R}^+_0/(\blankdot):\mathcal{R}\to\mathbf{SemiLat}^\vee$, where $\mathbf{SemiLat}^\vee$ denotes the category of join-semilattices and join-preserving maps.
\end{lemma}

\subsection{Stability of $\mathcal{R}^+_0$-surjections}
Next, we observe the stability condition.

\begin{lemma}\label{lem:pbstable}
Let $\mathcal{R}$ be a semicomplete thin-powered category.
Then for every $\mathcal{R}^+_0$-surjection $\sigma:r\to r'$, the following conditions are equivalent:
\begin{enumerate}[label={\rm (\arabic*)}]
  \item\label{cond:pbstable} $\sigma$ is stable.
  \item\label{cond:rsurjsplit} For every $\gamma'\in\mathcal{R}^+_0\!/r'$, we have $\sigma_\ast\sigma^\ast(\gamma')=\gamma'$.
  \item\label{cond:rsurjinj} The map $\sigma^\ast:\mathcal{R}^+_0\!/r'\to\mathcal{R}^+_0\!/r$ is injective.
\end{enumerate}
\end{lemma}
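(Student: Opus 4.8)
The plan is to split the three-way equivalence into two biconditionals: the ``formal'' equivalence $\ref{cond:rsurjsplit}\Leftrightarrow\ref{cond:rsurjinj}$, which rests only on the adjunction structure of the Galois connection, and the ``geometric'' equivalence $\ref{cond:pbstable}\Leftrightarrow\ref{cond:rsurjsplit}$, which unwinds the definition of stability through a single pullback computation. Since all three conditions concern a fixed $\mathcal{R}^+_0$-surjection $\sigma:r\to r'$ and its associated Galois connection $\sigma_\ast\dashv\sigma^\ast$ from Lemma \ref{lem:galois}, I would phrase everything in terms of the posets $\mathcal{R}^+_0/r$ and $\mathcal{R}^+_0/r'$.

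For $\ref{cond:rsurjsplit}\Leftrightarrow\ref{cond:rsurjinj}$ I would use nothing beyond Lemma \ref{lem:galois}. If $\sigma_\ast\sigma^\ast=\mathrm{id}$, then $\sigma_\ast$ is a retraction of $\sigma^\ast$, so $\sigma^\ast$ is injective, giving $\ref{cond:rsurjsplit}\Rightarrow\ref{cond:rsurjinj}$. Conversely, the triangle identity $\sigma^\ast\sigma_\ast\sigma^\ast=\sigma^\ast$ noted above for Galois connections gives $\sigma^\ast(\sigma_\ast\sigma^\ast(\gamma'))=\sigma^\ast(\gamma')$ for every $\gamma'\in\mathcal{R}^+_0/r'$, and injectivity of $\sigma^\ast$ then forces $\sigma_\ast\sigma^\ast(\gamma')=\gamma'$, which is $\ref{cond:rsurjinj}\Rightarrow\ref{cond:rsurjsplit}$. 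This step is purely formal and should be short.

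The substantive step is $\ref{cond:pbstable}\Leftrightarrow\ref{cond:rsurjsplit}$. Fix $\delta'\in\mathcal{R}^+_0/r'$ and form the canonical $\mathcal{R}^+_0$-pullback of $\delta'$ along $\sigma$ provided by Lemma \ref{lem:thinpow-pb}; denote its remaining two edges by $\sigma^\ast\delta'\in\mathcal{R}^+_0$ (the pullback of $\delta'$) and $\bar\sigma$ (the pullback of $\sigma$), so that $\sigma\cdot\sigma^\ast\delta'=\delta'\cdot\bar\sigma$. Factoring $\bar\sigma=\image(\bar\sigma)\coim(\bar\sigma)$ and using that $\mathcal{R}^+_0$ is closed under composition, I would read off $\sigma_\ast\sigma^\ast(\delta')=\image(\sigma\cdot\sigma^\ast\delta')=\image(\delta'\cdot\bar\sigma)=\delta'\cdot\image(\bar\sigma)$, the last equality by strict uniqueness of the $(\mathcal{R}^-,\mathcal{R}^+_0)$-factorization in \ref{cond:DIfact}. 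Since $\delta'$ is monic and $\mathcal{R}^+_0$ contains no non-trivial isomorphism, the equality $\sigma_\ast\sigma^\ast(\delta')=\delta'$ in the poset $\mathcal{R}^+_0/r'$ holds exactly when $\image(\bar\sigma)=\mathrm{id}$, i.e. exactly when $\bar\sigma\in\mathcal{R}^-$. As $\delta'$ ranges over all distinguished injections into $r'$, the morphism $\bar\sigma$ ranges over all pullbacks of $\sigma$ by distinguished injections; hence ``$\bar\sigma\in\mathcal{R}^-$ for every $\delta'$'' is precisely the stability of $\sigma$, while ``$\sigma_\ast\sigma^\ast(\delta')=\delta'$ for every $\delta'$'' is precisely $\ref{cond:rsurjsplit}$, and the two biconditionals together close the loop.

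The main obstacle is the bookkeeping in this last paragraph: one must confirm that the unique $\mathcal{R}^+_0$-pullback appearing in Lemma \ref{lem:thinpow-pb} is the very element $\sigma^\ast\delta'$ entering the Galois connection, and that passing to images in $\image(\delta'\cdot\bar\sigma)=\delta'\cdot\image(\bar\sigma)$ is legitimate; both points reduce to the strict uniqueness in \ref{cond:DIfact}. Once the identity $\sigma_\ast\sigma^\ast(\delta')=\delta'\cdot\image(\bar\sigma)$ is secured, the translation between ``the image is trivial'' and ``the pullback lies in $\mathcal{R}^-$'' is immediate, and no ingredient beyond the lemmas already established is required.
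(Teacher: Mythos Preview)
Your proof is correct and follows essentially the same line as the paper's, resting on the same pullback/image computation. The only difference is organizational: the paper argues in a cycle $(1)\Rightarrow(2)\Rightarrow(3)\Rightarrow(1)$, handling $(3)\Rightarrow(1)$ via an auxiliary composite-pullback observation before invoking injectivity of $\sigma^\ast$, whereas you package the same content into the single identity $\sigma_\ast\sigma^\ast(\delta')=\delta'\cdot\image(\bar\sigma)$ and read off both directions of $(1)\Leftrightarrow(2)$ at once.
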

\begin{proof}
\ref{cond:pbstable}$\Rightarrow$\ref{cond:rsurjsplit}: Suppose $\sigma$ is stable, and let $\gamma'\in\mathcal{R}^+_0\!/r'$.
Take the pullback, and we obtain the square:
\[
\xymatrix{
  s \ar[d]_{\mu} \ar[r]^{\sigma^\ast(\gamma')} \ar@{}[dr]|(.4){\pbcorner} & r \ar[d]^{\sigma} \\
  s' \ar[r]^{\gamma'} & r' }
\]
The condition \ref{cond:pbstable} implies that $\mu$ is $\mathcal{R}^+_0$-surjective.
We have $\sigma\sigma^\ast(\gamma')=\gamma'\mu$, and by the uniqueness of the factorization, we obtain $\sigma_\ast\sigma^\ast(\gamma')=\gamma'$.
Thus the condition \ref{cond:rsurjsplit} follows.

\ref{cond:rsurjsplit}$\Rightarrow$\ref{cond:rsurjinj}: Obvious.

\ref{cond:rsurjinj}$\Rightarrow$\ref{cond:pbstable}: Suppose $\sigma^\ast$ is injective.
Suppose we have a pullback square
\[
\xymatrix{
  s \ar[d]_{\mu} \ar[r]^{\sigma^\ast(\gamma')} \ar@{}[dr]|(.4){\pbcorner} & r \ar[d]^{\sigma} \\
  s' \ar[r]^{\gamma'} & r' }
\]
with $\gamma'$ being a distinguished injection.
Then we have the following diagram:
\[
\xymatrix{
  s \ar@{=}[r] \ar[d]_{\coim(\mu)} & s \ar[d]_{\mu} \ar[r]^{\sigma^\ast(\gamma')} \ar@{}[dr]|(.4){\pbcorner} & r \ar[d]^{\sigma} \\
  s'' \ar[r]^{\image(\mu)} & s' \ar[r]^{\gamma'} & r' }
\]
Since $\image(\mu)$ is a monomorphism, the outer square is a pullback.
This implies that $\sigma^\ast(\gamma'\image(\mu))=\sigma^\ast(\gamma')$, and since $\sigma^\ast$ is injective by the assumption, we obtain $\gamma'\image(\mu)=\gamma'$.
Now $\gamma'$ is a monomorphism, so that $\image(\mu)=1$ and $\mu=\tau$ is $\mathcal{R}^+_0$-surjective.
\end{proof}

\begin{corol}\label{cor:ffmeet}
Let $\mathcal{R}$ be a semicomplete thin-powered category, and suppose $f:r\to r'\in\mathcal{R}$ is stable.
Then for each $\gamma'\in\mathcal{R}^+_0\!/r'$, we have $f_\ast f^\ast(\gamma')=\gamma'\wedge\image(f)$.
\end{corol}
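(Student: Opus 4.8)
The plan is to reduce the identity to the coimage--image factorization of $f$ and then feed in stability through Lemma~\ref{lem:pbstable}. Write $\sigma:=\coim(f)\colon r\to s$ and $\delta:=\image(f)\colon s\to r'$, so that $f=\delta\sigma$ with $\sigma$ an $\mathcal{R}^+_0$-surjection and $\delta\in\mathcal{R}^+_0$. By the functoriality of $(\blankdot)_\ast$ proved above, together with the relation $(gf)^\ast=f^\ast g^\ast$ for pullbacks noted earlier, these factorizations transport to $f_\ast=\delta_\ast\sigma_\ast$ and $f^\ast=\sigma^\ast\delta^\ast$. Thus for $\gamma'\in\mathcal{R}^+_0\!/r'$ I would start from
\[
f_\ast f^\ast(\gamma')=\delta_\ast\,\sigma_\ast\sigma^\ast\,\delta^\ast(\gamma')\,.
\]

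First I would apply stability. Since $f$ is stable, its coimage $\sigma$ is by definition a stable $\mathcal{R}^+_0$-surjection, so the equivalent condition \ref{cond:rsurjsplit} in Lemma~\ref{lem:pbstable} yields $\sigma_\ast\sigma^\ast=\mathrm{id}$ on the whole of $\mathcal{R}^+_0\!/s$. Applying this to the element $\delta^\ast(\gamma')\in\mathcal{R}^+_0\!/s$ cancels the two middle maps and leaves
\[
f_\ast f^\ast(\gamma')=\delta_\ast\delta^\ast(\gamma')\,.
\]

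It then remains to identify $\delta_\ast\delta^\ast(\gamma')$ with $\gamma'\wedge\image(f)=\gamma'\wedge\delta$, which I would do using two elementary facts about distinguished injections. On one side, $\delta^\ast(\gamma')$ is by definition the distinguished injection obtained by pulling $\gamma'$ back along $\delta$, and the meet $\gamma'\wedge\delta$ in $\mathcal{R}^+_0\!/r'$ is computed by that very pullback square, so $\gamma'\wedge\delta=\delta\cdot\delta^\ast(\gamma')$. On the other side, since both $\delta$ and $\delta^\ast(\gamma')$ lie in $\mathcal{R}^+_0$, their composite does too by \ref{cond:DIcomp}, and the image of a morphism already in $\mathcal{R}^+_0$ is itself; hence $\delta_\ast(\delta^\ast\gamma')=\image(\delta\cdot\delta^\ast\gamma')=\delta\cdot\delta^\ast(\gamma')$. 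Comparing the two displays gives $\delta_\ast\delta^\ast(\gamma')=\gamma'\wedge\delta$, which completes the argument.

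This is really just careful bookkeeping of the factorization, so I do not expect a serious obstacle. The one place that demands attention is the splitting $f_\ast=\delta_\ast\sigma_\ast$ and $f^\ast=\sigma^\ast\delta^\ast$ into opposite orders, reflecting the covariant and contravariant natures of the two assignments, and the realization that stability must be invoked as $\sigma_\ast\sigma^\ast=\mathrm{id}$ for the coimage $\sigma$ rather than for $f$ itself---indeed $f_\ast f^\ast$ is generally not the identity, which is precisely why the meet with $\image(f)$ appears.
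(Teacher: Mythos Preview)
Your proof is correct and follows essentially the same approach as the paper: factor $f=\delta\sigma$ into coimage and image, use stability of $\sigma$ via Lemma~\ref{lem:pbstable} to collapse $\sigma_\ast\sigma^\ast$ to the identity, and then identify $\delta_\ast\delta^\ast(\gamma')$ with $\gamma'\wedge\delta$. The paper phrases the middle step diagrammatically (drawing the composite pullback square and observing that the pullback of the stable $\mathcal{R}^+_0$-surjection is again an $\mathcal{R}^+_0$-surjection), whereas you invoke the equivalent condition $\sigma_\ast\sigma^\ast=\mathrm{id}$ directly; the content is the same.
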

\begin{proof}
Let $\gamma'\in\mathcal{R}^+_0\!/r'$, and consider the following pullback square:
\[
\xymatrix{
  s \ar[r]^{\tau} \ar[d]_{f^\ast(\gamma')} \ar@{}[dr]|(.4){\pbcorner} & s'_0 \ar[r]^{\varepsilon} \ar[d] \ar@{}[dr]|(.4){\pbcorner} & s' \ar[d]^{\gamma'} \\
  r \ar[r]^{\coim(f)} & s'_1 \ar[r]^{\mathrm{im}(f)} & r' }
\]
Since $\coim(f)$ is $\mathcal{R}^+_0$-surjective, by the definition of stablity, $\tau$ is $\mathcal{R}^+_0$-surjective.
It follows that $f_\ast f^\ast(\gamma')=\gamma'\varepsilon=\gamma'\wedge\image(f)$.
This is the required result.
\end{proof}

If $\mathcal{R}$ is a stable thin-powered category, by lemma \ref{lem:pbstable}, every $\mathcal{R}^+_0$-surjection $\sigma:r\to r'$ induces a split epimorphism $\sigma_\ast:\mathcal{R}^+_0/r\to\mathcal{R}^+_0/r'$.

\subsection{Locally finiteness}
\label{subsec:loc-fin}
We consider the following situation:

\begin{defin}
A thin-powered category $\mathcal{R}$ is said to be locally finite if each poset $\mathcal{R}^+_0\!/r$ is finite.
\end{defin}

Note that if a meet-semilattice is finite and have a maximum element, then it is also a join-semilattice with unit by setting:
\[
x\vee y = \bigwedge_{w\ge x, y} w
\]
In particular, it is a complete lattice.
By lemma \ref{lem:sliceposet}, each $\mathcal{R}^+_0/r$ has a maximum element.
Hence if $\mathcal{R}$ is semicomplete and locally finite, then each $\mathcal{R}^+_0/r$ is a complete lattice, so that $\mathcal{R}$ is latticed.

Let $\mathcal{R}$ be a semicomplete locally finite thin-powered category.
We define a degree function $\deg:\mathcal{R}\to\mathbb{N}$ by
\[
\deg_{\mathcal{R}^+_0}(r):= |\mathcal{R}^+_0\!/r|,
\]
where $|\blankdot|$ denotes the cardinality of sets.
Then by lemma \ref{lem:thinpow-galois} and lemma \ref{lem:pbstable}-\ref{cond:rsurjsplit}, the following hold:
\begin{itemize}
  \item Every isomorphism preserves the degree.
  \item Every non-trivial $\mathcal{R}^+_0$-morphism raises the degree; in other words, $\deg$ induces a functor $\deg:\mathcal{R}^+_0\to\mathbb{N}$ which reflects the identities.
  \item Every stable $\mathcal{R}^+_0$-surjection does not raises the degree.
\end{itemize}

\subsection{Saturated subobjects}
We next consider a saturated subobject, which is an analogy of the notion of saturated subsets in the set theory:

\begin{defin}
Let $\mathcal{R}$ be a semicomplete thin-powered category, and let $f:r\to r'$ be a morphism of $\mathcal{R}$.
Then a morphism $\delta\in\mathcal{R}^+_0\!/r$ is said to be saturated with respect to $f$ if the square
\[
\xymatrix{
  s \ar[r]^{\delta} \ar[d] & r \ar[d]^{f} \\
  s' \ar[r]^{f_{\ast}(\delta)} & r' }
\]
is a pullback in $\mathcal{R}$.
Equivalently, we say $\delta$ is saturated with respect to $f$ if $\delta=f^{\ast}f_{\ast}(\delta)$.
\end{defin}

Some similar results in the set-theory hold for stable morphisms.

\begin{lemma}\label{lem:stabsatcrit}
Let $\mathcal{R}$ be a semicomplete thin-powered category.
Then the following hold:
\begin{enumerate}[label={\rm (\arabic*)}]
  \item\label{sublem:injsat} If $\delta:r\to r'\in\mathcal{R}^+_0$, every element of $\mathcal{R}^+_0\!/r$ is saturated with respect to $\delta$.
  \item\label{sublem:rightsat} Suppose $f=\delta\mu:r\to r'$ is a morphism of $\mathcal{R}$ with $\delta\in\mathcal{R}^+_0$.
Then $\gamma\in\mathcal{R}^+_0\!/r$ is saturated with respect to $f$ if and only if it is saturated with respect to $\mu$.
In particular, $\gamma$ is saturated with respect to $f$ if and only if it is saturated with respect to $\coim(f)$.
  \item\label{sublem:stabsat} Suppose $f:r\to r'\in\mathcal{R}$ is stable.
Then an element of $\mathcal{R}^+_0\!/r$ is saturated with respect to $f$ if and only if it is of the form $f^\ast(\gamma')$ for some $\gamma'\in\mathcal{R}^+_0\!/r'$.
\end{enumerate}
\end{lemma}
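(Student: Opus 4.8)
The plan is to reduce all three parts to the Galois connection $f_\ast\dashv f^\ast$ of Lemma \ref{lem:galois} together with the cancellation identity $\delta^\ast\delta_\ast=\mathrm{id}$ for distinguished injections from Lemma \ref{lem:thinpow-galois}. Throughout I use the equational form of the definition: $\gamma\in\mathcal{R}^+_0\!/r$ is saturated with respect to $f$ exactly when it is a fixed point of the operator $f^\ast f_\ast$ on $\mathcal{R}^+_0\!/r$. For \ref{sublem:injsat} there is essentially nothing to do: taking $f=\delta\in\mathcal{R}^+_0$, Lemma \ref{lem:thinpow-galois} gives $\delta^\ast\delta_\ast=\mathrm{id}$, so $\gamma=\delta^\ast\delta_\ast(\gamma)$ for every $\gamma\in\mathcal{R}^+_0\!/r$, which is precisely the claim.

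For \ref{sublem:rightsat} I would first record functoriality of the two operations. Writing $\mu\colon r\to t$ and $\delta\colon t\to r'$, pushforward is covariant and pullback contravariant, so $f_\ast=\delta_\ast\mu_\ast$ and $f^\ast=\mu^\ast\delta^\ast$. Composing and cancelling the middle factor via Lemma \ref{lem:thinpow-galois} yields the single identity
\[
f^\ast f_\ast=\mu^\ast\delta^\ast\delta_\ast\mu_\ast=\mu^\ast\mu_\ast
\]
of operators on $\mathcal{R}^+_0\!/r$. Since saturation with respect to $f$ (resp. $\mu$) is being a fixed point of $f^\ast f_\ast$ (resp. $\mu^\ast\mu_\ast$), the two notions coincide. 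The final clause is then the special case $\delta=\image(f)$, $\mu=\coim(f)$.

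For \ref{sublem:stabsat} the assertion is that the saturated elements are exactly the image of $f^\ast$. The forward direction is immediate: if $\gamma=f^\ast f_\ast(\gamma)$, then $\gamma=f^\ast(\gamma')$ with $\gamma'=f_\ast(\gamma)$. For the converse I would invoke the triangle identity of the Galois connection, which in the poset setting reads $f^\ast f_\ast f^\ast=f^\ast$ (the displayed composites preceding Corollary \ref{cor:galois-conti} are identities). Hence for $\gamma=f^\ast(\gamma')$ one gets $f^\ast f_\ast(\gamma)=f^\ast f_\ast f^\ast(\gamma')=f^\ast(\gamma')=\gamma$, so $\gamma$ is saturated.

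The honest remark to flag is that this lemma has no real obstacle beyond bookkeeping the direction of the adjunction and the tacit use, inherited from the definition, that the pullback-square description of saturation agrees with the equation $\gamma=f^\ast f_\ast(\gamma)$. In particular the argument for \ref{sublem:stabsat} is purely formal and does not use stability at all; it is the general fact that the image of a right adjoint is the fixed-point set of the induced closure operator. If one prefers to keep stability visible, the converse can be rerun through Corollary \ref{cor:ffmeet}: for stable $f$ one has $f_\ast f^\ast(\gamma')=\gamma'\wedge\image(f)$, and applying the meet-preserving map $f^\ast$ together with $f^\ast(\image(f))=\mathrm{id}_r$ again recovers $f^\ast f_\ast f^\ast(\gamma')=f^\ast(\gamma')$.
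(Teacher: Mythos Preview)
Your proof is correct, and for parts \ref{sublem:injsat} and \ref{sublem:rightsat} it is essentially identical to the paper's: both reduce immediately to $\delta^\ast\delta_\ast=\mathrm{id}$ from Lemma~\ref{lem:thinpow-galois} and the resulting collapse $f^\ast f_\ast=\mu^\ast\mu_\ast$.

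For part \ref{sublem:stabsat} there is a genuine, if small, difference worth recording. The paper first invokes \ref{sublem:rightsat} to reduce to a stable $\mathcal{R}^+_0$-surjection $\sigma$, then uses Lemma~\ref{lem:pbstable} (which needs stability) to get $\sigma_\ast\sigma^\ast=\mathrm{id}$ and hence $\sigma^\ast\sigma_\ast\sigma^\ast=\sigma^\ast$. You instead appeal directly to the triangle identity $f^\ast f_\ast f^\ast=f^\ast$ of the Galois connection from Lemma~\ref{lem:galois}, which holds for \emph{any} morphism $f$. Your observation that stability is therefore unnecessary for \ref{sublem:stabsat} is correct: the fixed points of the closure operator $f^\ast f_\ast$ always coincide with the image of $f^\ast$, for purely order-theoretic reasons. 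This is a mild sharpening of the statement as written. The paper's route, by contrast, makes visible where stability \emph{is} eventually used in the surrounding development (e.g.\ Lemma~\ref{lem:satwedge}), but for this lemma alone your argument is cleaner.
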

\begin{proof}
The part \ref{sublem:injsat} immediately follows from lemma \ref{lem:thinpow-galois}.
Then, if $\delta$ is a distinguished injection, then we have $(\delta\mu)^\ast(\delta\mu)_\ast=\mu^\ast\delta^\ast\delta_\ast\mu_\ast=\mu^\ast\mu_\ast$.
Hence we obtain \ref{sublem:rightsat}.

Finally we show \ref{sublem:stabsat}.
By part \ref{sublem:rightsat}, it suffices to show \ref{sublem:stabsat} only for a stable $\mathcal{R}^+_0$-surjection $\sigma:r\to r'$.
If $\gamma\in\mathcal{R}^+_0\!/r$ is saturated with respect to $\sigma$, then we have $\gamma=\sigma^\ast\sigma_\ast(\gamma)$, which is the required form.
Conversely, by lemma \ref{lem:pbstable}, we have $\sigma_\ast\sigma^\ast=\mathrm{id}$, so that $\sigma^\ast\sigma_\ast\sigma^\ast=\sigma^\ast$.
Thus $\sigma^\ast(\gamma')$ is saturated with respect to $\sigma$ for every $\gamma'\in\mathcal{R}^+_0\!/r'$.
\end{proof}

\begin{lemma}\label{lem:satwedge}
Let $\mathcal{R}$ be a stable latticed thin-powered category, and suppose $\delta\in\mathcal{R}^+_0\!/r$ is saturated with respect to a morphism $f:r\to r'\in\mathcal{R}$.
Then for every $\gamma\in\mathcal{R}^+_0\!/r$, we have $f_\ast(\delta)\wedge f_\ast(\gamma)=f_\ast(\delta\wedge\gamma)$.
\end{lemma}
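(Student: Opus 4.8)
The plan is to compute both sides of the asserted identity explicitly and to show that each equals $\gamma'\cdot(\gamma'^{\ast}\delta')$, where I abbreviate $\delta':=f_{\ast}(\delta)$ and $\gamma':=f_{\ast}(\gamma)$. The monotonicity of $f_{\ast}$ (proposition \ref{prop:ordpres}) already yields the inequality $f_{\ast}(\delta\wedge\gamma)\le f_{\ast}(\delta)\wedge f_{\ast}(\gamma)$ for free, so the whole content lies in the reverse inequality; exhibiting a common value settles both at once. Throughout I use the description of meets by pullbacks, namely $\delta\wedge\gamma=\gamma\cdot\gamma^{\ast}(\delta)$ and $\delta'\wedge\gamma'=\gamma'\cdot(\gamma'^{\ast}\delta')$, which is corollary \ref{cor:ffmeet} applied to the (always stable) distinguished injections $\gamma$ and $\gamma'$. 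The saturation hypothesis enters only through the single identity $\delta=f^{\ast}(\delta')$.

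The key manoeuvre is to factor not $f$ but the composite $f\gamma$. Writing $f\gamma=\gamma'\tau$ with $\gamma'=\image(f\gamma)=f_{\ast}(\gamma)\in\mathcal{R}^+_0$ and $\tau=\coim(f\gamma)\in\mathcal{R}^-$, I would first observe that $\tau$ is a \emph{stable} $\mathcal{R}^+_0$-surjection, since $\mathcal{R}$ is assumed stable and hence every $\mathcal{R}^+_0$-surjection is stable. Next I compute the pullback $\tau^{\ast}(\gamma'^{\ast}\delta')$ using contravariant functoriality $(gf)^{\ast}=f^{\ast}g^{\ast}$ together with saturation:
\[
\tau^{\ast}(\gamma'^{\ast}\delta')=(\gamma'\tau)^{\ast}(\delta')=(f\gamma)^{\ast}(\delta')=\gamma^{\ast}f^{\ast}(\delta')=\gamma^{\ast}(\delta).
\]
Because $\tau$ is a stable surjection, lemma \ref{lem:pbstable} gives $\tau_{\ast}\tau^{\ast}=\mathrm{id}$, whence $\tau_{\ast}(\gamma^{\ast}\delta)=\gamma'^{\ast}(\delta')$; equivalently $\image(\tau\cdot\gamma^{\ast}\delta)=\gamma'^{\ast}(\delta')$.

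With this in hand the two computations close quickly. On one side $\delta\wedge\gamma=\gamma\cdot\gamma^{\ast}(\delta)$, so
\[
f_{\ast}(\delta\wedge\gamma)=\image(f\gamma\cdot\gamma^{\ast}\delta)=\image(\gamma'\tau\cdot\gamma^{\ast}\delta)=\gamma'\cdot\image(\tau\cdot\gamma^{\ast}\delta)=\gamma'\cdot(\gamma'^{\ast}\delta'),
\]
where the third equality uses that distinguished injections are closed under composition, so that $\gamma'$ may be pulled out of the image. On the other side $f_{\ast}(\delta)\wedge f_{\ast}(\gamma)=\delta'\wedge\gamma'=\gamma'\cdot(\gamma'^{\ast}\delta')$, and the two agree. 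The main obstacle is precisely the middle step: one must recognise that it is the surjective part $\tau$ of $f\gamma$, rather than $f$ itself, that should be analysed, and that stability of $\mathcal{R}$ is exactly what makes the splitting $\tau_{\ast}\tau^{\ast}=\mathrm{id}$ available, while saturation of $\delta$ is what collapses $\tau^{\ast}(\gamma'^{\ast}\delta')$ to $\gamma^{\ast}(\delta)$. Note that the hypothesis that $\mathcal{R}$ be stable is used only through this identity for $\tau$, whereas latticedness serves merely to guarantee that the relevant meets exist.
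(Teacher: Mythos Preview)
Your argument is correct. Both proofs hinge on the same two ingredients---the factorisation $f\gamma=\gamma'\tau$ with $\tau=\coim(f\gamma)$, and the combination of stability (giving $\tau_\ast\tau^\ast=\mathrm{id}$) with saturation (giving $f^\ast\delta'=\delta$)---but they deploy them differently. The paper first reduces to the case where $f$ itself is an $\mathcal{R}^+_0$-surjection and then argues via a cube of pullback squares, pasting the saturation square against the top and bottom meets to conclude that a further pulled-back morphism is surjective. You instead work with general $f$ from the start and run a purely algebraic computation through the Galois connection, collapsing $\tau^\ast(\gamma'^\ast\delta')$ to $\gamma^\ast\delta$ and then pushing forward. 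Your route is a little more economical: it avoids both the reduction step and the cube diagram, and makes transparent exactly where each hypothesis is consumed. One minor remark: the identity $\delta\wedge\gamma=\gamma\cdot\gamma^\ast(\delta)$ is really just the definition of meets via pullback (stated just before lemma~\ref{lem:galois}) rather than an instance of corollary~\ref{cor:ffmeet}, though the latter does specialise to it.
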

\begin{proof}
If $f\in\mathcal{R}^+_0$, the equation is obviuos.
So factorizing $f$, we may assume $f$ is $\mathcal{R}^+_0$-surjective.
Consider the diagram
\[
\xymatrix@-2.5ex{
  {} & s \ar[rr]^{\delta} \ar[dd]|{\hole} && r \ar[dd]^{f} \\
  u \ar[rr] \ar[ur]^{\varepsilon} \ar[dd]_{\tau} && t \ar[ur]^{\gamma} \ar[dd]^<<(.2){\sigma} & {} \\
  {} & s' \ar[rr]|{\hole}_<<(.1){f_\ast\delta} && r' \\
  u' \ar[rr] \ar[ur]^{\varepsilon'} && t' \ar[ur]_{f_\ast\gamma} & {} }
\]
where $\sigma=\coim(f\gamma)$ and the top and bottom faces are pullbacks, so that we have $\delta\wedge\gamma=\delta\varepsilon$ and $f_\ast(\delta)\wedge f_\ast(\gamma)=f_\ast(\delta)\varepsilon'$.
Since $\delta$ is saturated with respect to $f$, the back face is a pullback.
It follows that the front face is also a pullback.
Since $\sigma$ is $\mathcal{R}^+_0$-surjective and $\mathcal{R}^+_0$ is stable, $\tau$ is $\mathcal{R}^+_0$-surjective.
Now we have
\[
f(\delta\wedge\gamma)=f\delta\varepsilon=(f_\ast\delta)\varepsilon'\tau=(f_\ast\delta\wedge f_\ast\gamma)\tau\,.
\]
Then we obtain $f_\ast(\delta\wedge\gamma)=\image f(\delta\wedge\gamma)=f_\ast\delta\wedge f_\ast\gamma$ as required.
\end{proof}

Next we observe relations to joins.
Recall that in the set theory, if $S_1,S_2\subset A$ are saturated subsets with respect to a map $f:A\to B$, their union $S_1\cup S_2$ is also a saturated subset.
However, even if $\delta_1,\delta_2\in\mathcal{R}^+_0/r$ is saturated with respect to $\sigma:r\to r'\in\mathcal{R}$, their join $\delta_1\vee\delta_2$ need not be saturated in general.
But we have the following criterion.

\begin{lemma}\label{lem:distjoin}
Let $\mathcal{R}$ be a latticed thin-powered category.
Then for $\delta:s\to r\in\mathcal{R}^+_0$, the map $\delta^{\ast}:\mathcal{R}^+_0\!/r\to\mathcal{R}^+_0\!/s$ preserves joins if and only if for each $\gamma_1,\gamma_2\in\mathcal{R}^+_0\!/r$, we have the identity $(\gamma_1\vee \gamma_2)\wedge\delta=(\gamma_1\wedge\delta)\vee(\gamma_2\wedge\delta)$.
\end{lemma}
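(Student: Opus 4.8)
The plan is to transport the question about $\delta^\ast$ across $\delta_\ast$, which is well enough behaved to convert join-preservation of $\delta^\ast$ into the stated distributivity identity in $\mathcal{R}^+_0\!/r$. I would first collect three facts about the Galois connection $\delta_\ast\dashv\delta^\ast$ of Lemma \ref{lem:galois}: (i) $\delta_\ast$ is injective, since Lemma \ref{lem:thinpow-galois} gives $\delta^\ast\delta_\ast=\mathrm{id}$, exhibiting $\delta^\ast$ as a retraction; (ii) $\delta_\ast$ preserves joins, which is Lemma \ref{lem:semilat-repn} applied to the morphism $\delta$ in the latticed category $\mathcal{R}$; and (iii) $\delta_\ast\delta^\ast(\gamma)=\gamma\wedge\delta$ for every $\gamma\in\mathcal{R}^+_0\!/r$, which is Corollary \ref{cor:ffmeet} for the morphism $\delta$, using that every distinguished injection is stable and that $\image(\delta)=\delta$.

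With these in hand the argument runs as follows. Fix $\gamma_1,\gamma_2\in\mathcal{R}^+_0\!/r$. By (i), the equation $\delta^\ast(\gamma_1\vee\gamma_2)=\delta^\ast(\gamma_1)\vee\delta^\ast(\gamma_2)$ holds if and only if it survives applying $\delta_\ast$. Applying (iii) to the left-hand side gives $\delta_\ast\delta^\ast(\gamma_1\vee\gamma_2)=(\gamma_1\vee\gamma_2)\wedge\delta$; on the right-hand side (ii) pushes $\delta_\ast$ through the join and then (iii) rewrites each summand, producing $\delta_\ast\delta^\ast(\gamma_1)\vee\delta_\ast\delta^\ast(\gamma_2)=(\gamma_1\wedge\delta)\vee(\gamma_2\wedge\delta)$. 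Hence $\delta^\ast$ preserves the join of $\gamma_1,\gamma_2$ exactly when $(\gamma_1\vee\gamma_2)\wedge\delta=(\gamma_1\wedge\delta)\vee(\gamma_2\wedge\delta)$, and quantifying over all pairs yields the claimed equivalence. Preservation of the empty join is free and need not be separately assumed: by (iii), $\delta_\ast\delta^\ast(0)=0\wedge\delta=0=\delta_\ast(0)$, so $\delta^\ast(0)=0$ by injectivity.

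Since the heart of the proof is this short transport computation, there is no real analytic difficulty; the only point demanding care is the logical bookkeeping around the equivalence. I must use injectivity of $\delta_\ast$ in the correct direction — it is precisely what lets me deduce an equality of elements of $\mathcal{R}^+_0\!/s$ from the equality of their images under $\delta_\ast$ — and I must apply the rewriting $\delta_\ast\delta^\ast=(\blankdot)\wedge\delta$ at the right moments, namely before forming the join on the left and after distributing $\delta_\ast$ over the join on the right. A final sanity check is that $\delta_\ast$ and $\delta^\ast$ throughout denote the same adjoint pair, so that Lemmas \ref{lem:thinpow-galois} and \ref{lem:semilat-repn} and Corollary \ref{cor:ffmeet} all speak about one and the same connection.
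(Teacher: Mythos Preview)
Your proof is correct and follows essentially the same route as the paper: both arguments pivot on the identity $\delta_\ast\delta^\ast(\gamma)=\gamma\wedge\delta$, the join-preservation of $\delta_\ast$, and the injectivity of $\delta_\ast$ to transport the question back and forth. The only cosmetic difference is that the paper derives $\delta_\ast\delta^\ast(\gamma)=\gamma\wedge\delta$ directly from the pullback definition of the meet rather than invoking Corollary~\ref{cor:ffmeet}, but the content is identical.
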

\begin{proof}
By the definition of meets, we have $\gamma\wedge\delta=\delta(\delta^\ast\gamma)=\delta_{\ast}\delta^{\ast}(\gamma)$ for each $\gamma\in\mathcal{R}^+_0\!/r$.
Hence if $\delta^\ast$ preserves joins, we obtain the required identity.
Conversely, the identity implies $\delta_\ast\delta^\ast(\gamma_1\vee\gamma_2)=\delta_\ast\delta^\ast(\gamma_1)\vee\delta_\ast\delta^\ast(\gamma_2)=\delta_\ast(\delta^\ast(\gamma_1)\vee\delta^\ast(\gamma_2)$.
Since $\delta$ is a distinguished injection, $\delta_\ast:\mathcal{R}^+_0\!/s\to\mathcal{R}^+_0\!/r$ is injective.
Thus we obtain $\delta^\ast(\gamma_1\vee\gamma_2)=\delta^\ast(\gamma_1)\vee\delta^\ast(\gamma_2)$ for each $\gamma_1,\gamma_2\in\mathcal{R}^+_0\!/r$, which implies that $\delta^\ast$ preserves joins.
\end{proof}

\begin{corol}\label{cor:Booljoin}
Let $\mathcal{R}$ be a Boolean thin-powered category.
Then for every $\delta:s\to r$, the induced map $\delta^\ast:\mathcal{R}^+_0/r\to\mathcal{R}^+_0/s$ preserves joins.
\end{corol}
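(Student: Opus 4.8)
The plan is to obtain this as an immediate consequence of Lemma \ref{lem:distjoin} once we supply the required distributive identity from the Boolean structure. First I would record the harmless observation that a Boolean thin-powered category is in particular latticed: by definition it is semicomplete, and each $\mathcal{R}^+_0/r$ is a Boolean lattice, hence a lattice. This is exactly the hypothesis under which Lemma \ref{lem:distjoin} is stated, so the lemma applies to every distinguished injection $\delta:s\to r$.

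Next I would invoke the distributivity of Boolean lattices. As recalled in the Preliminaries, every Boolean lattice is a Heyting algebra and hence distributive, so the equivalent conditions of the distributivity proposition hold in $\mathcal{R}^+_0/r$. Now $\delta$ is itself an element of the Boolean lattice $\mathcal{R}^+_0/r$, so applying condition \ref{cond:dist-meet} with $x=\delta$, $y=\gamma_1$, $z=\gamma_2$ (and using the commutativity of $\wedge$) gives, for all $\gamma_1,\gamma_2\in\mathcal{R}^+_0/r$,
\[
(\gamma_1\vee\gamma_2)\wedge\delta=(\gamma_1\wedge\delta)\vee(\gamma_2\wedge\delta).
\]

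Finally I would note that this is precisely the identity that Lemma \ref{lem:distjoin} singles out as equivalent to $\delta^\ast$ preserving joins. Therefore $\delta^\ast:\mathcal{R}^+_0/r\to\mathcal{R}^+_0/s$ preserves joins, which is the assertion. There is no genuine obstacle here: the corollary is a formal deduction from the preceding lemma, and the only thing worth checking is the preliminary remark that Booleanness implies the distributive law needed to meet the lemma's criterion, together with the trivial fact that a Boolean thin-powered category is latticed so that the lemma is applicable at all.
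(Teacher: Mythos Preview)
Your proof is correct and follows essentially the same route as the paper: reduce to Lemma~\ref{lem:distjoin}, then observe that Boolean lattices are distributive so the required identity $(\gamma_1\vee\gamma_2)\wedge\delta=(\gamma_1\wedge\delta)\vee(\gamma_2\wedge\delta)$ holds. Your version is slightly more explicit in checking that Boolean implies latticed and in spelling out how distributivity is invoked, but the argument is the same.
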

\begin{proof}
By lemma \ref{lem:distjoin}, it suffices to show that each $\mathcal{R}^+_0/r$ is a distributive lattice.
Since we assumed $\mathcal{R}^+_0/r$ is a Boolean lattice, and every Boolean lattice is distributive, the result follows.
\end{proof}

\begin{lemma}\label{lem:vpres}
Let $\mathcal{R}$ be a latticed thin-powered category.
Suppose $\sigma:r\to r'$ is a stable $\mathcal{R}^+_0$-surjection.
Then the following are equivalent:
\begin{enumerate}[label=\rm{(\alph*)}]
  \item\label{cond:vpres} $\sigma^{\ast}:\mathcal{R}^+_0\!/r'\to\mathcal{R}^+_0\!/r$ preserves joins.
  \item\label{cond:vsatur} Whenever $\delta_1,\delta_2\in\mathcal{R}^+_0\!/r$ are saturated with respect to $\sigma$, so is $\delta_1\vee\delta_2$.
\end{enumerate}
\end{lemma}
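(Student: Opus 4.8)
The plan is to recast both conditions as statements about the image of $\sigma^{\ast}$ and then to exploit that, for a stable $\mathcal{R}^+_0$-surjection, $\sigma_{\ast}$ and $\sigma^{\ast}$ restrict to a near-isomorphism between $\mathcal{R}^+_0\!/r'$ and that image. Three earlier facts set up the whole argument. First, since $\sigma$ is a stable $\mathcal{R}^+_0$-surjection, Lemma \ref{lem:stabsatcrit}\ref{sublem:stabsat} identifies the saturated elements with the image of $\sigma^{\ast}$: an element $\delta\in\mathcal{R}^+_0\!/r$ is saturated with respect to $\sigma$ if and only if $\delta=\sigma^{\ast}(\gamma')$ for some $\gamma'\in\mathcal{R}^+_0\!/r'$. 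Thus condition \ref{cond:vsatur} is precisely the assertion that the image of $\sigma^{\ast}$ is closed under binary joins. Second, stability gives the retraction identity $\sigma_{\ast}\sigma^{\ast}=\mathrm{id}$ on $\mathcal{R}^+_0\!/r'$ by Lemma \ref{lem:pbstable}\ref{cond:rsurjsplit}, so $\sigma_{\ast}$ recovers the index of a saturated element. Third, $(\sigma_{\ast},\sigma^{\ast})$ is a Galois connection with $\sigma_{\ast}$ the left adjoint (Lemma \ref{lem:galois}), whence $\sigma_{\ast}$ preserves joins by Corollary \ref{cor:supremum}, while $\sigma^{\ast}$, being a right adjoint, is only guaranteed to be monotone and meet-preserving.

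For the implication \ref{cond:vpres}$\Rightarrow$\ref{cond:vsatur} I would take two saturated elements and write them as $\delta_i=\sigma^{\ast}(\gamma_i')$. Assuming $\sigma^{\ast}$ preserves joins, $\delta_1\vee\delta_2=\sigma^{\ast}(\gamma_1')\vee\sigma^{\ast}(\gamma_2')=\sigma^{\ast}(\gamma_1'\vee\gamma_2')$ again lies in the image of $\sigma^{\ast}$, hence is saturated by the identification above. This direction is immediate.

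The substance lies in \ref{cond:vsatur}$\Rightarrow$\ref{cond:vpres}. Fix $\gamma_1',\gamma_2'\in\mathcal{R}^+_0\!/r'$. Monotonicity of $\sigma^{\ast}$ yields the trivial inequality $\sigma^{\ast}(\gamma_1')\vee\sigma^{\ast}(\gamma_2')\le\sigma^{\ast}(\gamma_1'\vee\gamma_2')$, so the entire content is upgrading this to an equality. This is exactly where \ref{cond:vsatur} enters: the element $\delta:=\sigma^{\ast}(\gamma_1')\vee\sigma^{\ast}(\gamma_2')$ is a join of two elements in the image of $\sigma^{\ast}$, hence saturated by hypothesis, so $\delta=\sigma^{\ast}\sigma_{\ast}(\delta)$. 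I then compute $\sigma_{\ast}(\delta)$ using that $\sigma_{\ast}$ preserves joins and that $\sigma_{\ast}\sigma^{\ast}=\mathrm{id}$:
\[
\sigma_{\ast}(\delta)=\sigma_{\ast}\sigma^{\ast}(\gamma_1')\vee\sigma_{\ast}\sigma^{\ast}(\gamma_2')=\gamma_1'\vee\gamma_2'\,.
\]
Substituting back gives $\sigma^{\ast}(\gamma_1')\vee\sigma^{\ast}(\gamma_2')=\delta=\sigma^{\ast}(\gamma_1'\vee\gamma_2')$, which is precisely join-preservation. I do not expect a genuine obstacle; the one point demanding care is conceptual rather than computational, namely that $\sigma^{\ast}$ does not preserve joins a priori, so the easy inequality cannot be promoted to equality without invoking saturation. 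It is the combination of the retraction identity $\sigma_{\ast}\sigma^{\ast}=\mathrm{id}$ with the join-preservation of the \emph{left} adjoint $\sigma_{\ast}$ that does the actual work, and keeping track of which of $\sigma_{\ast},\sigma^{\ast}$ may legitimately be moved across a join is the only place an error could slip in.
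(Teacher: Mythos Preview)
Your proof is correct and follows essentially the same approach as the paper. The paper's proof is terser: it observes that $\sigma^{\ast}$ is a full order embedding (by Lemma~\ref{lem:pbstable}) whose image is exactly the subposet of saturated elements (by Lemma~\ref{lem:stabsatcrit}), and then appeals to the general fact that a full poset embedding preserves ambient joins if and only if its image is closed under them; you unpack this last step explicitly via the retraction identity $\sigma_{\ast}\sigma^{\ast}=\mathrm{id}$ and the join-preservation of the left adjoint $\sigma_{\ast}$, which amounts to the same thing.
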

\begin{proof}
Since $\sigma$ is an stable $\mathcal{R}^+_0$-surjection, the map $\sigma^{\ast}:\mathcal{R}^+_0\!/r'\to\mathcal{R}^+_0\!/r$ is a full-embedding of posets by lemma \ref{lem:pbstable}; i.e. $\sigma^\ast$ is injective and reflects the ordereings.
Moreover, by lemma \ref{lem:stabsatcrit}, its image coincides with the subposet consisting of elements of $\mathcal{R}^+_0\!/r$ saturated with respect to $\sigma$.
Hence the result follows.
\end{proof}

\begin{defin}
Let $\mathcal{R}$ be a stable latticed thin-powered category.
A morphism $f$ of $\mathcal{R}$ is said to be coherent if the order-preserving map $f^\ast$ preserves joins and the least element.
$\mathcal{R}$ is said to be coherent in $\mathcal{R}$ if it is stable and latticed, and every morphism in $\mathcal{R}$ is coherent.
\end{defin}

\begin{lemma}\label{lem:regcrit}
Let $\mathcal{R}$ be a stable latticed thin-powered category.
Then the following hold:
\begin{enumerate}[label={\rm(\arabic*)}]
  \item\label{sublem:isomreg} If $f:r'\to r$ induces an isomorphism $f_\ast:\mathcal{R}^+_0/r'\to\mathcal{R}^+_0/r$, then $f$ is coherent.
In particular, every isomorphism is coherent.
  \item\label{sublem:compreg} If $f:r'\to r$ and $g:r\to r''$ are coherent, then so is $gf$.
\end{enumerate}
\end{lemma}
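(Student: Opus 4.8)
The plan is to deduce both claims from the Galois connection $f_\ast\dashv f^\ast$ of Lemma~\ref{lem:galois} together with the contravariant functoriality $(gf)^\ast=f^\ast g^\ast$ of the pullback operation recorded just after Lemma~\ref{lem:thinpow-pb}. Since $\mathcal{R}$ is latticed, every slice $\mathcal{R}^+_0/r$ is a lattice, so that both a least element and all finite joins are available throughout.

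For part~\ref{sublem:isomreg}, I would begin from the adjunction $f_\ast\dashv f^\ast$ supplied by Lemma~\ref{lem:galois}. When $f_\ast$ is an isomorphism of posets, the inverse $(f_\ast)^{-1}$ serves simultaneously as a left and a right adjoint of $f_\ast$, so by uniqueness of adjoints we must have $f^\ast=(f_\ast)^{-1}$; concretely, the unit $\delta\le f^\ast f_\ast\delta$ and the counit $f_\ast f^\ast\delta'\le\delta'$ of the connection become equalities once $f_\ast$ is bijective and order-reflecting. An isomorphism of lattices preserves all finite joins and the least element, hence so does $f^\ast$, and therefore $f$ is coherent. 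The asserted special case follows at once: if $f$ is an isomorphism in $\mathcal{R}$, then functoriality of $(\blankdot)_\ast$ makes $f_\ast$ an isomorphism of posets with inverse $(f^{-1})_\ast$, so the preceding argument applies.

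For part~\ref{sublem:compreg}, I would simply compose. Using $(gf)^\ast=f^\ast g^\ast$, if both $f^\ast$ and $g^\ast$ preserve joins and the least element, then for all $\xi,\eta$ in the appropriate slice we obtain $(f^\ast g^\ast)(\xi\vee\eta)=f^\ast(g^\ast\xi\vee g^\ast\eta)=f^\ast g^\ast\xi\vee f^\ast g^\ast\eta$, and $f^\ast g^\ast$ carries the least element to the least element. Hence $(gf)^\ast$ preserves joins and the least element, so $gf$ is coherent.

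I expect no real obstacle here. The only step deserving care is the identification $f^\ast=(f_\ast)^{-1}$ in part~\ref{sublem:isomreg}: it rests on the uniqueness of adjoints and on the fact, from Corollary~\ref{cor:galois-conti}, that the right adjoint $f^\ast$ a priori preserves only meets, so that it is precisely the isomorphism hypothesis on $f_\ast$ that upgrades $f^\ast$ to a join- and least-element-preserving map. Everything else is routine bookkeeping.
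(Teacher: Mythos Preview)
Your proof is correct and follows essentially the same approach as the paper: for part~\ref{sublem:isomreg} the paper also argues that since $f^\ast$ is right adjoint to the isomorphism $f_\ast$, it is itself an isomorphism of posets and hence preserves all suprema; for part~\ref{sublem:compreg} the paper simply invokes functoriality. Your version is somewhat more detailed (explicitly identifying $f^\ast=(f_\ast)^{-1}$ via uniqueness of adjoints, and writing out the composition in part~\ref{sublem:compreg}), but the underlying argument is the same.
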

\begin{proof}
Let $f:r'\to r\in\mathcal{R}$ be a morphism such that $f_\ast:\mathcal{R}^+_0/r'\to \mathcal{R}^+_0/r$ is an isomorhism of posets.
Then since $f^\ast$ is a right adjoint to $f_\ast$, it follows that $f^\ast$ is also an isomorphism of posets.
Hence $f^\ast$ preserves all supremums, and this is \ref{sublem:isomreg}.

\ref{sublem:compreg} is obvious by the functoriality.
\end{proof}

Finally, we note that saturated subobjects have always the ``maximum'' under certaing assumptions.

\begin{lemma}\label{lem:maxsatur}
Let $\mathcal{R}$ be a stable Boolean thin-powered category.
Suppose $f:r\to r'\in\mathcal{R}$ is coherent.
Then for every $\delta\in\mathcal{R}^+_0\!/r$, the following hold:
\begin{itemize}
  \item $\delta_0:=\neg f^\ast f_\ast(\neg\delta)$ is saturated with respect to $f$.
  \item $\delta_0\le\delta$.
  \item If $\gamma\in\mathcal{R}^+_0\!/r$ is saturated with respect to $f$ and $\gamma\le\delta$, then $\gamma\le\delta_0$.
\end{itemize}
In other words, $\delta_0$ is the maximum saturated subobject among $\delta_0\le\delta$.
\end{lemma}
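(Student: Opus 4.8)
The plan is to reduce all three assertions to a single algebraic observation: for a coherent morphism $f$ in a stable Boolean thin-powered category, the order-preserving map $f^\ast$ commutes with the Boolean negation. To establish this I would first assemble the preservation properties of $f^\ast$. By lemma \ref{lem:galois} the pair $f_\ast\dashv f^\ast$ is a Galois connection, so $f^\ast$, being the right adjoint, preserves all existing meets and the greatest element (the pullback of an identity being an identity). Since $f$ is assumed coherent, $f^\ast$ moreover preserves joins and the least element. Thus $f^\ast\colon\mathcal{R}^+_0\!/r'\to\mathcal{R}^+_0\!/r$ is a bounded lattice homomorphism between Boolean lattices, and for every $x$ the elements $f^\ast(x)$ and $f^\ast(\neg x)$ satisfy $f^\ast(x)\wedge f^\ast(\neg x)=f^\ast(0)=0$ and $f^\ast(x)\vee f^\ast(\neg x)=f^\ast(1)=1$. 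By the uniqueness of complements in a Boolean lattice (noted just after the definition of Boolean lattices), this forces $f^\ast(\neg x)=\neg f^\ast(x)$.

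Granting the commutation, the first bullet is immediate: we may rewrite $\delta_0=\neg f^\ast f_\ast(\neg\delta)=f^\ast\bigl(\neg f_\ast(\neg\delta)\bigr)$, so $\delta_0$ lies in the image of $f^\ast$. Since $\mathcal{R}$ is stable, every morphism is stable, and so by part \ref{sublem:stabsat} of lemma \ref{lem:stabsatcrit} the elements of $\mathcal{R}^+_0\!/r$ saturated with respect to $f$ are precisely those of the form $f^\ast(\gamma')$; hence $\delta_0$ is saturated. For the second bullet I would invoke the unit of the Galois connection, $\neg\delta\le f^\ast f_\ast(\neg\delta)=\neg\delta_0$; applying the order-reversing negation gives $\delta_0\le\delta$.

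For the maximality bullet, suppose $\gamma\in\mathcal{R}^+_0\!/r$ is saturated with respect to $f$ and $\gamma\le\delta$. By part \ref{sublem:stabsat} of lemma \ref{lem:stabsatcrit} we may write $\gamma=f^\ast(\beta')$; then $\neg\gamma=f^\ast(\neg\beta')$ is again in the image of $f^\ast$, hence saturated, so $f^\ast f_\ast(\neg\gamma)=\neg\gamma$. From $\gamma\le\delta$ we obtain $\neg\delta\le\neg\gamma$, and applying the monotone operator $f^\ast f_\ast$ yields $\neg\delta_0=f^\ast f_\ast(\neg\delta)\le f^\ast f_\ast(\neg\gamma)=\neg\gamma$, i.e. $\gamma\le\delta_0$. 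This exhibits $\delta_0$ as the maximum saturated subobject below $\delta$.

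The step I expect to be the crux is the opening observation that $f^\ast$ commutes with negation; once it is in hand, all three statements are purely formal consequences of it together with the characterization of saturated elements as the image of $f^\ast$. The only real care needed is to draw each of the four preservation properties from the right hypothesis — meets and the top element from the adjunction of lemma \ref{lem:galois}, and joins and the bottom element from coherence — since coherence by itself does not supply a two-sided adjoint and the adjunction by itself does not supply join-preservation.
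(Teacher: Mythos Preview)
Your proof is correct and, for the first two bullets, follows essentially the same path as the paper: both arguments hinge on the observation that $f^\ast$ commutes with Boolean negation, and both deduce saturation of $\delta_0$ by writing it as $f^\ast(\neg f_\ast(\neg\delta))$ and invoking lemma~\ref{lem:stabsatcrit}\ref{sublem:stabsat}. Your justification for $f^\ast(\neg x)=\neg f^\ast(x)$ via the bounded-lattice-homomorphism argument is in fact cleaner than the paper's somewhat cryptic appeal to the formula $\neg x=\bigwedge_{w\vee x=1}w$ together with lemma~\ref{lem:vpres}.

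The third bullet is where the two proofs genuinely diverge. The paper computes $\gamma\wedge f^\ast f_\ast(\neg\delta)$ directly, using lemma~\ref{lem:satwedge} (the identity $f_\ast(\delta)\wedge f_\ast(\gamma)=f_\ast(\delta\wedge\gamma)$ when $\delta$ is saturated) to reduce it to $f^\ast f_\ast(0)=0$, and then concludes $\gamma\le\neg f^\ast f_\ast(\neg\delta)$. You instead observe that since $f^\ast$ commutes with $\neg$, the complement $\neg\gamma$ of a saturated element is again saturated, so $f^\ast f_\ast(\neg\gamma)=\neg\gamma$; monotonicity of $f^\ast f_\ast$ then finishes. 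Your route is more self-contained, avoiding lemma~\ref{lem:satwedge} entirely, and makes the closure of saturated elements under $\neg$ explicit as a useful byproduct. The paper's route, on the other hand, exercises lemma~\ref{lem:satwedge} and keeps the argument at the level of meets with $\gamma$ rather than passing to complements.
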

\begin{proof}
Notice that $\neg x=\bigwedge_{w\vee x=1}w$ in a Boolean lattice.
Hence by lemma \ref{lem:vpres}, we have $\delta_0=f^\ast(\neg f_\ast(\neg\delta))$, which implies $\delta_0$ is saturated by \ref{sublem:stabsat} in lemma \ref{lem:stabsatcrit}.
We also have $\delta_0=\neg f^\ast f_\ast(\neg\delta)\le \neg\neg\delta=\delta$ because $\mathrm{id}\le f^\ast f_\ast$.

Suppose $\gamma\in\mathcal{R}^+_0\!/r$ is saturated with respect to $f$ and $\gamma\le\delta$.
Then by lemma \ref{lem:satwedge}, we have
\[
\gamma\wedge f^\ast f_\ast(\neg\delta)
= f^\ast f_\ast(\gamma)\wedge f^\ast f_\ast(\neg\delta)
= f^\ast f_\ast(\gamma\wedge\neg\delta)
= f^\ast f_\ast(0)
= 0
\]
since $f$ is coherent.
This implies that $\gamma\le\neg f^\ast f_\ast(\neg\delta)=\delta_0$, which is the required result.
\end{proof}

\section{Crossed groups on thin-powered categories}
\label{sec:crsgrp}

\subsection{Crossed groups}
Crossed groups are defined by \cite{FL} and \cite{Kra} in the simplicial case.
Berger and Moerdijk used crossed groups to make examples of generalized Reedy categories in \cite{BM}.
We here recall the definition.

\begin{defin}
Let $\mathcal{R}$ be a small category.
Then a crossed $\mathcal{R}$-group is a presheaf $G:\mathcal{R}^{\mathrm{op}}\to\mathbf{Set}$ together with structures
\begin{itemize}
  \item a group structure on each $G(r)$ for $r\in\mathcal{R}$;
  \item a left action $G(r)\times\mathcal{R}(s,r)\to\mathcal{R}(s,r)$ for each $r,s\in\mathcal{R}$;
\end{itemize}
which satisfies the following conditions:
For $x,y\in G(r)$, $f:s\to r$ and $g:t\to s$ of $\mathcal{R}$, we have
\begin{gather}
\tag{CG1}\label{cond:CGgdist} x\cdot(fg) = (x\cdot f)(f^{\ast}x\cdot g), \\
\tag{CG2}\label{cond:CGcdist} f^{\ast}(xy) = ((y\cdot f)^{\ast}x)(f^{\ast}y), \\
\tag{CG3}\label{cond:CGcunit} x\cdot1 = 1, \\
\tag{CG4}\label{cond:CGgunit} f^{\ast}1 =1.
\end{gather}
\end{defin}

We can describe the conditions \eqref{cond:CGgdist} and \eqref{cond:CGcdist} more categorically:
If $G$ is a presheaf on $\mathcal{R}$, and each $G(r)$ have a left action on $\mathcal{R}(s,r)$, then we can define a map
\[
\mathrm{crs}:G(r)\times\mathcal{R}(s,r)\ni(f,x)\mapsto (x\cdot f,f^\ast(x))\in\mathcal{R}(s,r)\times G(s).
\]
Then the conditions \eqref{cond:CGgdist} and \eqref{cond:CGcdist} are equivalent to the commutativity of the diagrams
\begin{equation}\label{eq:CGgcomm}
\begin{gathered}
\xymatrix{
  G(r)\times\mathcal{R}(s,r)\times\mathcal{R}(t,s) \ar[r]^-{1\times\mathrm{comp}} \ar[d]_{\mathrm{crs}\times1} & G(r)\times\mathcal{R}(t,r) \ar[dd]^{\mathrm{crs}} \\
  \mathcal{R}(s,r)\times G(s)\times\mathcal{R}(t,s) \ar[d]_{1\times\mathrm{crs}} & {} \\
  \mathcal{R}(s,r)\times\mathcal{R}(t,s)\times G(t) \ar[r]^-{\mathrm{comp}\times1} & \mathcal{R}(t,r)\times G(t) }
\end{gathered}
\end{equation}
and
\begin{equation}\label{eq:CGccomm}
\begin{gathered}
\xymatrix{
  G(r)\times G(r)\times\mathcal{R}(s,r) \ar[r]^-{\mathrm{mult}\times 1} \ar[d]_{1\times\mathrm{crs}} & G(r)\times\mathcal{R}(s,r) \ar[dd]^{\mathrm{crs}} \\
  G(r)\times\mathcal{R}(s,r)\times G(s) \ar[d]_{\mathrm{crs}\times1} & {} \\
  \mathcal{R}(s,r)\times G(s)\times G(s) \ar[r]^-{1\times\mathrm{mult}} & \mathcal{R}(s,r)\times G(s) }
\end{gathered}
\end{equation}
respectively.

A crossed $\mathcal{R}$-group defines a new category $\mathcal{R}\!G$ called the total category.
An object of $\mathcal{R}G$ is that of $\mathcal{R}$, and its morphism set is given by
\[
\mathcal{R}G(s,r):=\mathcal{R}(s,r)\times G(s).
\]
The composition map is given as the following:
\[
\begin{gathered}
\begin{multlined}
\mathcal{R}G(s,r)\times\mathcal{R}G(t,s) = \mathcal{R}(s,r)\times G(s)\times\mathcal{R}(t,s)\times G(t) \\
\xrightarrow{1\times\mathrm{crs}\times1} \mathcal{R}(s,r)\times\mathcal{R}(t,s)\times G(t)\times G(t) \\
\xrightarrow{\mathrm{comp}\times\mathrm{mult}} \mathcal{R}(t,r)\times G(t) = \mathcal{R}G(t,r)
\end{multlined}
\end{gathered}
\]
Explicitly we have $(g,y)\circ(f,x) = (g\circ(y\cdot f),(f^{\ast}y)\cdot x)$.
The associativity of the composition follows from the diagrams \eqref{eq:CGgcomm} and \eqref{eq:CGccomm}.
The identity morphism on an object $r\in\mathcal{R}G$ is $(1_r,1_r)$.
Note that $\mathcal{R}$ is naturally seen as a wide subcategory of $\mathcal{R}G$ by
\[
\mathcal{R}(s,r)\ni f\hookrightarrow (f,1)\in\mathcal{R}G(s,r).
\]
Moreover, each $G(r)$ is contained in the automorphism group $\mathrm{Aut}_{\mathcal{R}G}(r)$ by
\[
G(r)\ni x\hookrightarrow (1,x)\in\mathrm{Aut}_{\mathcal{R}G}(r).
\]
That is why we see a morphism $f$ of $\mathcal{R}$ or an element $x\in G(r)$ as a morphism of $\mathcal{R}G$.
Hence every morphism of $\mathcal{R}G$ is of the form $f x$ with unique $f$ of $\mathcal{R}$ and $x\in G(r)$.

The category $\mathcal{R}G$ is an example of a thickening of a category (\cite{Cis}).
We here give a bit stronger definition by Isaacson \cite{Isa}:

\begin{defin}
Let $\mathcal{S}$ be a category and $\mathcal{R}$ be a wide subcategory of $\mathcal{S}$.
Then $\mathcal{S}$ is said to be a thickening of $\mathcal{R}$ if the composition map induces an isomorphism
\[
\mathcal{R}(s,r)\times\mathrm{Aut}_{\mathcal{S}}(s)\overset{\sim}{\to}\mathcal{S}(s,r).
\]
\end{defin}

Equivalently $\mathcal{S}$ is a thickening of $\mathcal{R}$ if each morphism $s\to r$ of $\mathcal{S}$ uniquely factors as an automorphism $\pi\in\mathrm{Aut}_{\mathcal{S}}(s)$ followed by a morphism $f:s\to r$ in $\mathcal{R}$.
Thanks to the uniqueness of the factorization, thickenings preserve some classes of morphisms:

\begin{lemma}\label{lem:thickmono}
Let $\mathcal{R}$ be a small category and $\mathcal{S}$ be a thickening of $\mathcal{R}$.
Then the embedding $\mathcal{R}\hookrightarrow\mathcal{S}$ preserves monomorphisms.
Consequently, a morphism $s\to r$ of $\mathcal{S}$ is a monomorphism if and only if it is of the form $f\pi$ with $f$ being a monomorphism of $\mathcal{R}$ and $\pi\in\mathrm{Aut}_{\mathcal{S}}(s)$.
\end{lemma}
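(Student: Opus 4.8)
The plan is to run everything off the unique factorization built into the definition of a thickening: every morphism $s\to r$ of $\mathcal{S}$ is uniquely $f\pi$ with $f\in\mathcal{R}(s,r)$ and $\pi\in\mathrm{Aut}_{\mathcal{S}}(s)$. First I would establish that the embedding preserves monomorphisms, which is the substantive part. Suppose $f:s\to r$ is monic in $\mathcal{R}$, and let $g_1,g_2:t\to s$ be morphisms of $\mathcal{S}$ with $fg_1=fg_2$. Writing the thickening factorizations $g_i=h_i\pi_i$ with $h_i\in\mathcal{R}(t,s)$ and $\pi_i\in\mathrm{Aut}_{\mathcal{S}}(t)$, and using that $\mathcal{R}$ is closed under composition, we get
\[
fg_i=(fh_i)\pi_i,
\]
which is again a thickening factorization since $fh_i\in\mathcal{R}(t,r)$. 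Hence the equality $(fh_1)\pi_1=(fh_2)\pi_2$ together with the uniqueness clause forces $fh_1=fh_2$ in $\mathcal{R}$ and $\pi_1=\pi_2$. Monicity of $f$ in $\mathcal{R}$ then gives $h_1=h_2$, whence $g_1=g_2$, so $f$ is monic in $\mathcal{S}$.

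For the ``consequently'' I would argue both implications. For the backward direction, if $g=f\pi$ with $f$ monic in $\mathcal{R}$ and $\pi\in\mathrm{Aut}_{\mathcal{S}}(s)$, then $f$ is monic in $\mathcal{S}$ by the first part and $\pi$ is monic because it is an isomorphism; since monomorphisms compose, $g$ is monic in $\mathcal{S}$. For the forward direction, take any monic $g:s\to r$ and write its thickening factorization $g=f\pi$. Then $f=g\pi^{-1}$ is a composite of monomorphisms in $\mathcal{S}$, hence monic in $\mathcal{S}$; and a morphism that is monic in $\mathcal{S}$ is a fortiori monic in the subcategory $\mathcal{R}$, since the cancellation property against all morphisms of $\mathcal{S}$ restricts to cancellation against the morphisms of $\mathcal{R}$. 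Thus $f$ is monic in $\mathcal{R}$, which is exactly the asserted form.

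The only real work sits in the first part, where the uniqueness of the thickening factorization is the essential engine: it is what lets one transport the equation $fg_1=fg_2$ through the factorizations of $g_1,g_2$ and collapse the problem to monicity in $\mathcal{R}$. The remainder is purely formal, resting on the facts that isomorphisms are monic, that monomorphisms are stable under composition, and that monicity is inherited by subcategories.
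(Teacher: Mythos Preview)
Your proof is correct and follows essentially the same approach as the paper: the key step---using uniqueness of the thickening factorization to separate $fg_1=fg_2$ into $fh_1=fh_2$ and $\pi_1=\pi_2$, then applying monicity of $f$ in $\mathcal{R}$---is identical. Your treatment of the ``consequently'' is more explicit than the paper's (which dismisses it as obvious), but the extra detail is sound, including the observation that monicity in $\mathcal{S}$ restricts to monicity in the wide subcategory $\mathcal{R}$.
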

\begin{proof}
Suppose $f:s\to r$ is a monomorphism in $\mathcal{R}$, and we have $g_1\rho_1, g_1\rho_2:t\to s$ of $\mathcal{S}$ such that $f g_1\rho_1=f g_2\rho_2$.
Then by the uniqueness of the factorization, we obtain $fg_1=fg_2$ and $\rho_1=\rho_2$.
Since $f$ is a monomorphism in $\mathcal{R}$, the former implies $g_1=g_2$.
Hence we obtain $g_1\rho_1=g_2\rho_2$.
This implies the first part of the lemma.
The second part is now obvious.
\end{proof}

\subsection{Compatibility with thin-powered structures}
We next consider a situation that a thickening derives a thin-powered structure.

Suppose $\mathcal{R}\hookrightarrow\mathcal{S}$ is a thickening.
For each $s,r\in\mathcal{S}$, we can consider the following composition:
\[
\mathrm{Aut}_{\mathcal{S}}(s)\times\mathcal{R}(r,s)
\xrightarrow{\mathrm{comp}} \mathcal{S}(r,s)
\xleftarrow{\substack{\mathrm{comp}\cr\sim}} \mathcal{R}(r,s)\times\mathrm{Aut}_{\mathcal{S}}(r)
\xrightarrow{\mathrm{proj}} \mathcal{R}(r,s)
\]
Since $\mathcal{R}$ is seen to be a subcategory of $\mathcal{S}$, the map gives rise to an action of $\mathrm{Aut}_{\mathcal{S}}(s)$ on the set $\mathcal{R}(r,s)$.

\begin{defin}
Let $\mathcal{R}^+_0$ be a thin-powered structure on $\mathcal{R}$.
A thickening $\mathcal{S}$ of $\mathcal{R}$ is said to be compatible with the thin-powered structure if the action $\mathrm{Aut}_{\mathcal{S}}(s)$ on $\mathcal{R}(r,s)$ preserves subsets $\mathcal{R}^+_0(r,s)$ and $\mathcal{R}^-(r,s)$.
A crossed $\mathcal{R}$-group $G$ is said to be compatible with the thin-powered structure if the thickening $\mathcal{R}\hookrightarrow\mathcal{R}\!G$ is compatible.
\end{defin}

\begin{prop}
Let $\mathcal{R}$ be a thin-powered category with thin-powered structure $\mathcal{R}^+_0$.
If $\mathcal{R}\hookrightarrow\mathcal{S}$ is a thickening compatible with $\mathcal{R}^+_0$, then $\mathcal{R}^+_0$ is also a thin-powered structure on $\mathcal{S}$.
In the case, moreover, if $\mathcal{R}$ is semicomplete (resp. stable, coherent), then so is $\mathcal{S}$.
\end{prop}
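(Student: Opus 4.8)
The plan is to verify the three axioms \ref{cond:DImono}--\ref{cond:DIfact} for the class $\mathcal{R}^+_0$ viewed inside $\mathcal{S}$, and then to transport semicompleteness, stability and coherence across the thickening. The single tool used everywhere is the thickening normal form: each $g\colon s\to r$ of $\mathcal{S}$ is uniquely $g=f\pi$ with $f\in\mathcal{R}(s,r)$ and $\pi\in\mathrm{Aut}_{\mathcal{S}}(s)$. Axioms \ref{cond:DImono} and \ref{cond:DIcomp} are immediate: distinguished injections stay monic in $\mathcal{S}$ by Lemma \ref{lem:thickmono}, and $\mathcal{R}^+_0$ is literally the same subcategory, hence still closed under composition.

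The substance is \ref{cond:DIfact}. For existence I would take $g=f\pi$, factor $f=\delta\sigma$ in $\mathcal{R}$ with $\delta\in\mathcal{R}^+_0$ and $\sigma\in\mathcal{R}^-$, and rewrite $g=\delta(\sigma\pi)$. It then remains to prove $\sigma\pi\pitchfork\mathcal{R}^+_0$ in $\mathcal{S}$, and since $\pi$ is invertible this reduces to $\sigma\pitchfork\mathcal{R}^+_0$ in $\mathcal{S}$. Given a square in $\mathcal{S}$ with $\sigma$ on the left and some $\delta'\in\mathcal{R}^+_0$ on the right, I would put the two horizontal edges into thickening form, $u=\bar u\alpha$ and $v=\bar v\beta$, and push the automorphism $\beta$ through $\sigma$ via the $\mathrm{Aut}$-action, writing $\beta\sigma=\sigma_1\gamma$. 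Comparing thickening normal forms on each side of the commuting relation collapses the $\mathcal{S}$-square to a genuine square in $\mathcal{R}$ with $\sigma_1$ on the left and $\delta'$ on the right; its lift exists by orthogonality in $\mathcal{R}$, and twisting this lift back by $\beta$ produces the required lift in $\mathcal{S}$, uniqueness being free since $\delta'$ is monic. For strict uniqueness of the factorization I would avoid a comparison-isomorphism argument altogether: any factorization $g=\delta_i\sigma_i\pi_i$ with $\delta_i\in\mathcal{R}^+_0$, $\sigma_i\in\mathcal{R}^-$ and $\pi_i\in\mathrm{Aut}_{\mathcal{S}}$ exhibits $g$ in thickening form as $(\delta_i\sigma_i)\pi_i$, so uniqueness of the thickening factorization forces $\delta_i\sigma_i=f$ and $\pi_i=\pi$, whereupon strict uniqueness of the $(\mathcal{R}^-,\mathcal{R}^+_0)$-factorization of $f$ in $\mathcal{R}$ concludes.

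I expect the orthogonality step $\sigma\pitchfork\mathcal{R}^+_0$ in $\mathcal{S}$ to be the main obstacle. It is the only place where the compatibility hypothesis is genuinely used—precisely to guarantee $\sigma_1\in\mathcal{R}^-$ after pushing $\beta$ through $\sigma$—and one must keep careful track of which automorphisms act on domains versus codomains when passing between the two normal forms. Everything else is bookkeeping with the thickening factorization.

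For the second assertion the key observation is that the slice poset $\mathcal{R}^+_0/r$ is the same object whether formed in $\mathcal{R}$ or in $\mathcal{S}$, since $\mathcal{R}^+_0$ is the same subcategory; and an $\mathcal{R}$-pullback of a distinguished injection $\delta$ along an $\mathcal{R}$-morphism remains a pullback in $\mathcal{S}$, again by reducing an $\mathcal{S}$-cone to an $\mathcal{R}$-cone through normal forms. Semicompleteness of $\mathcal{S}$ then follows by pulling $\delta$ back along the $\mathcal{R}$-part $f_0$ of $f=f_0\pi$ and precomposing the appropriate leg with $\pi^{-1}$. Consequently the maps $f^\ast,f_\ast$ on the posets agree with their $\mathcal{R}$-counterparts up to the poset-automorphism induced by $\pi$. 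Since every $\mathcal{R}^+_0$-surjection of $\mathcal{S}$ is of the form $\sigma\pi$ with $\sigma\in\mathcal{R}^-$, functoriality of pullback gives $(\sigma\pi)^\ast=\pi^\ast\sigma^\ast$, a composite of injections when $\mathcal{R}$ is stable, so Lemma \ref{lem:pbstable} delivers stability of $\mathcal{S}$; and for a general $g=f_0\pi$ the map $g^\ast=\pi^\ast f_0^\ast$ preserves joins and the least element as soon as $f_0$ is coherent and $\mathcal{R}$ is latticed, giving coherence of $\mathcal{S}$.
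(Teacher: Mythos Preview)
Your argument is correct and, for the verification of \ref{cond:DIfact}, runs parallel to the paper's proof: both identify the class $\mathcal{S}^-=\{\sigma\pi:\sigma\in\mathcal{R}^-,\ \pi\in\mathrm{Aut}_{\mathcal{S}}\}$, obtain the unique three-fold factorization $g=\delta\sigma\pi$ from the thickening normal form together with the $(\mathcal{R}^-,\mathcal{R}^+_0)$-factorization in $\mathcal{R}$, and establish $\mathcal{S}^-\pitchfork\mathcal{R}^+_0$ by reducing an $\mathcal{S}$-square to an $\mathcal{R}$-square via compatibility. The paper carries out the lifting for a general $\sigma\theta$ in one pass, factoring both horizontal edges as $\beta\rho\varphi$ and $\gamma\tau\psi$ and reading off the lift $\beta\tau\psi$ directly; your version first strips the automorphism from the left leg and then pushes $\beta$ through $\sigma$, which is the same mechanism in a slightly different order.

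One small point in your uniqueness argument: you phrase it as ``any factorization $g=\delta_i\sigma_i\pi_i$ with $\sigma_i\in\mathcal{R}^-$'', but \ref{cond:DIfact} requires uniqueness among all factorizations $g=\delta'\tau'$ with $\tau'\pitchfork\mathcal{R}^+_0$ in $\mathcal{S}$. You should note that writing such a $\tau'$ in thickening form as $f'\pi'$ automatically gives $f'\in\mathcal{R}^-$: any $\mathcal{R}$-lifting problem for $f'$ against some $\delta''\in\mathcal{R}^+_0$ is also an $\mathcal{S}$-lifting problem, and the resulting $\mathcal{S}$-lift $h=h_0\rho$ satisfies $\delta''h_0\rho=v\in\mathcal{R}$, forcing $\rho=\mathrm{id}$ by uniqueness of the thickening form. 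The paper leaves this implicit as well.

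For the second assertion the two proofs genuinely diverge. The paper dispatches semicompleteness, stability and coherence in a single line by invoking Corollary~\ref{cor:pos-semicomp}: since $\mathcal{R}^+_0/r$ is literally the same poset whether computed in $\mathcal{R}$ or $\mathcal{S}$, and $(f_0\pi)_\ast=(f_0)_\ast\pi_\ast$ with $\pi_\ast$ a poset automorphism, the poset-theoretic criteria transfer immediately. Your route---showing that $\mathcal{R}$-pullbacks of distinguished injections remain pullbacks in $\mathcal{S}$ and then composing with $\pi^{-1}$---is more hands-on but equally valid, and has the virtue of making the pullback in $\mathcal{S}$ explicit rather than appealing to the equivalence in Corollary~\ref{cor:pos-semicomp}.
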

\begin{proof}
We first show the first statement.
The condition \ref{cond:DImono} follows from lemma \ref{lem:thickmono}, and \ref{cond:DIcomp} is obvious.
So we verify the condition \ref{cond:DIfact}.

We denote by $\mathcal{S}^-$ the class of morphisms in $\mathcal{S}$ of the form $\sigma\theta:s\to r$ with $\sigma\in\mathcal{R}^-$ and $\theta\in\mathrm{Aut}_{\mathcal{S}}(s)$.
Since the thikening is compatible with $\mathcal{R}^+_0$, $\mathcal{S}^-$ is closed under compositions.
Moreover, it is obvious that every morphism $f:s\to r\in\mathcal{S}$ uniquely factors as $f=\delta\sigma\theta$ with $\theta\in\mathrm{Aut}_{\mathcal{S}}(s)$, $\sigma\in\mathcal{R}^-$ and $\delta\in\mathcal{R}^+_0$.
Now, suppose we have a square
\[
\xymatrix{
  s' \ar[r]^{f} \ar[d]_{\sigma\theta} & s \ar[d]^{\delta} \\
  r' \ar[r]^{g} & r }
\]
in $\mathcal{S}$ with $\theta\in\mathrm{Aut}_{\mathcal{S}}(s')$, $\sigma\in\mathcal{R}^-$ and $\delta\in\mathcal{R}^+_0$.
Factor $f=\beta\rho\varphi$ and $g=\gamma\tau\psi$ as above.
Then the uniqueness of the factorization implies that $\delta\beta=\gamma$.
Since $\delta$ is a monomorphism in $\mathcal{S}$ by lemma \ref{lem:thickmono}, the morphism $\beta\tau\psi$ is actually a lift for the square.
Thus, we obtain \ref{cond:DIfact}.

Finally, the last part follows from corollary \ref{cor:pos-semicomp}.
\end{proof}

\begin{corol}\label{cor:crs-grp-thinpow}
Let $\mathcal{R}$ be a thin-powered category with thin-powered structure $\mathcal{R}^+_0$, and let $G$ be a crossed $\mathcal{R}$-group.
If $G$ is compatible with $\mathcal{R}^+_0$, then $\mathcal{R}^+_0$ is also a thin-powered structure on $\mathcal{R}\!G$.
In the case, if $\mathcal{R}$ is semicomplete (resp. stable, coherent), then so is $\mathcal{R}\!G$.
\end{corol}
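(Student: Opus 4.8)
The plan is to recognize this corollary as the specialization of the preceding Proposition to the thickening $\mathcal{R}\hookrightarrow\mathcal{R}G$. Earlier in this section it was observed that the total category $\mathcal{R}G$ of a crossed $\mathcal{R}$-group $G$ is a thickening of $\mathcal{R}$: every morphism $s\to r$ of $\mathcal{R}G$ factors uniquely as an element of $G(s)\subseteq\mathrm{Aut}_{\mathcal{R}G}(s)$ followed by a morphism of $\mathcal{R}$, which is exactly the isomorphism $\mathcal{R}(s,r)\times\mathrm{Aut}_{\mathcal{R}G}(s)\xrightarrow{\sim}\mathcal{R}G(s,r)$ demanded by the definition of a thickening. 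So the structural hypothesis of the Proposition is already in place.

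First I would unwind the definition of compatibility for crossed groups. By definition, the statement that $G$ is compatible with the thin-powered structure $\mathcal{R}^+_0$ means nothing other than that the thickening $\mathcal{R}\hookrightarrow\mathcal{R}G$ is compatible with $\mathcal{R}^+_0$; explicitly, that the induced action of $\mathrm{Aut}_{\mathcal{R}G}(s)$ on each hom-set $\mathcal{R}(r,s)$ preserves the subsets $\mathcal{R}^+_0(r,s)$ and $\mathcal{R}^-(r,s)$. Hence the hypotheses of the Proposition are satisfied with $\mathcal{S}=\mathcal{R}G$.

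Applying the Proposition now yields both conclusions simultaneously: $\mathcal{R}^+_0$ is a thin-powered structure on $\mathcal{R}G$, and if $\mathcal{R}$ is semicomplete (resp.\ stable, coherent) then so is $\mathcal{R}G$. I do not expect any genuine obstacle, since all the real content lives in the Proposition; the sole step to verify is the purely definitional identification of ``$G$ compatible with $\mathcal{R}^+_0$'' with ``the thickening $\mathcal{R}\hookrightarrow\mathcal{R}G$ compatible with $\mathcal{R}^+_0$,'' which is built directly into the setup of the total category.
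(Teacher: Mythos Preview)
Your proposal is correct and matches the paper's treatment exactly: the corollary is stated without proof because it is the immediate specialization of the preceding proposition to $\mathcal{S}=\mathcal{R}G$, using the fact that compatibility of $G$ with $\mathcal{R}^+_0$ is \emph{defined} to mean compatibility of the thickening $\mathcal{R}\hookrightarrow\mathcal{R}G$.
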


\begin{exam}
Let $\Sigma=(\Sigma_n)_{n\ge0}$ is the sequence of the symmetric groups (we assume $\Sigma_0=\left\{\mathrm{pt}\right\}$).
For a map $\mu:m\to n$ of $\widetilde\Delta$, we define $\mu^\ast:\Sigma_n\to \Sigma_m$ by setting $\mu^\ast(\sigma)$ to be the permutation which moves the elements in $\mu^{-1}\left\{i\right\}$ into $\mu^{-1}\left\{\sigma(i)\right\}$ with preserving orderings in these sets.
Moreover, each $\Sigma_n$ naturally acts on $\widetilde\Delta(m,n)$.
It is easy to verify that $\Sigma$ and $\widetilde\Delta$ satisfy the conditions \eqref{cond:CGgdist} $\sim$ \eqref{cond:CGgunit}, so that $\Sigma$ is a crossed $\widetilde\Delta$-group.
Clearly it is compatible with the thin-powered structure $\widetilde\Delta^+$ on $\widetilde\Delta$, hence $\widetilde\Delta^+$ is a thin-powered structure on $\widetilde\Delta\Sigma$.
Thus $\widetilde\Delta\Sigma$ is a coherent Boolean thin-powered category.
\end{exam}

\begin{exam}\label{ex:group-operad}
Some typical examples of crossed $\widetilde\Delta$-group come from group operads.
We know the sequence $\Sigma=(\Sigma_n)_{n\ge 0}$ of symmetric groups form a non-symmetric operad.
The notion of group operads is a generalization of it, which appears in \cite{Wah} and an axiomatic definition is given in \cite{Zha}.
A group operad is a ($\mathbf{Set}$-valued) non-symmetric operad $G=(G_n)_{n\ge 0}$ together with an operad map $\pi:G\to\Sigma$ such that
\begin{enumerate}[label={\rm(GO\arabic*)},leftmargin=\widthof{\indent(GO0)}]
  \item\label{cond:GOhomo} each $G_n$ is a group and $\pi_n:G_n\to\Sigma_n$ is a group homomorphism;
  \item\label{cond:GOunit} the unit $1\in G_1$ is the identity of the operad $G$;
  \item\label{cond:GOmult} for each $a,a'\in G_k$ and $b_i,b'_i\in G_{m_i}$, we have
\[
\gamma(aa';b_1b'_1,\dots,b_kb'_k) = \gamma(a;b_{\pi(a')(1)},\dots,b_{\pi(a')(k)})\gamma(a';b'_1,\dots,b'_n)
\]
where $\gamma$ denotes the composition in the operad $G$.
\end{enumerate}
We denote by $(k)\in G_k$ the unit of the group.
Then for a map $\mu:m\to n$ of $\widetilde\Delta$, we can define a map $\mu^{\ast}:G_n\to G_m$ by setting
\[
\mu^{\ast}(a) = \gamma\left(a;(\#\mu^{-1}\left\{1\right\}),\dots,(\#\mu^{-1}\left\{n\right\})\right).
\]
Moreover, each $G_n$ acts on $\widetilde\Delta(m,n)$ via $\pi_n:G_n\to\Sigma_n$.
Then $G$ is a crossed $\widetilde\Delta$-group as well.
By lemma \ref{lem:ufact-thinpow}, $G$ is compatible with the thin-powered structure $\widetilde\Delta^+$ on $\widetilde\Delta$.
Thus by corollary \ref{cor:crs-grp-thinpow}, $\widetilde\Delta^+$ is a coherent Boolean thin-powered structure on $\widetilde\Delta G$.

For example, a sequence $\mathcal{B}=(\mathcal{B}_n)_{n\ge0}$ of braid groups or $\mathcal{P}=(\mathcal{P}_n)$ of pure braid groups defines a crossed $\widetilde\Delta$-group.
\end{exam}

\begin{exam}
Let $G$ be a crossed $\widetilde\Delta$-group.
Then we naturally regard $G$ as a crossed $\mathcal{P}$-group or $\mathcal{P}_\Delta$-group as follows:
For $[\![k_1,\dots,k_n]\!]$, we define
\[
G([\![k_1,\dots,k_n]\!]) := G(k_1)\times G(k_n)\,.
\]
For morphisms, we set
\[
\begin{gathered}
[\![\mu_1,\dots,\mu_n]\!]^\ast:=\mu_1^\ast\times\dots\times\mu_n^\ast:G(k_1)\times\dots\times G(k_n)\to G(l_1)\times\dots\times G(l_n)\,,\\
(\delta^m_j)^\ast:G(l_1)\times\dots\times G(l_j)\times G(m)\times G(l_{j+1})\times\dots\times G(l_n)\xrightarrow{\mathrm{proj}} G(l_1)\times\dots\times G(l_n)\,,\\
\pi_{\theta}^\ast:G(k_1)\times\dots\times G(k_n)\to G(k_{\theta^{-1}(1)})\times\dots\times G(k_{\theta^{-1}(n)})\,.
\end{gathered}
\]
Hence we have an extension $G:\mathcal{P}^\opposite\to\mathbf{Set}$.
Moreover, the crossed $G$-action on $\mathcal{P}$ also extends in the obvious way.
\end{exam}

\section{Confluent degeneracy systems}
\label{sec:conflu}

Recall that in the simplex category $\Delta$, if $0\le i<j\le n$, the square
\[
\xymatrix{
  {[n+1]} \ar[r]^{\sigma_i} \ar[d]_{\sigma_j} & {[n]} \ar[d]^{\sigma_{j-1}} \\
  {[n]} \ar[r]^{\sigma_i} & {[n-1]} }
\]
is an absolute pushout square.
As a result, for every simplicial set $X$, each cell $x:\Delta[n]\to X$ factors through a degeneracy map $\Delta[n]\twoheadrightarrow\Delta[n_0]$ followed by a non-degenerate cell $x_0:\Delta[n_0]\to X$.
In this section, we observe this phenomenon from an axiomatic viewpoint.

\subsection{Definition}
\begin{defin}
Let $\mathcal{A}$ be a small category.
A confluent degeneracy system on $\mathcal{A}$ is a wide subcategory $\mathcal{A}^-\subset\mathcal{A}$ together with a functor $\deg:\mathcal{A}^-\to\mathbb{N}^\opposite$, called the degree function, such that
\begin{enumerate}[label={\rm(CDS\arabic*)},leftmargin=\widthof{\indent(CDS0)}]
  \item\label{cond:CDSisom} every isomorphism preserves the degree and belongs to $\mathcal{A}^-$;
  \item\label{cond:CDSorth} if $\delta$ is a monomorphism of $\mathcal{A}$ such that $\mathcal{A}^-\pitchfork\delta$, then $\delta$ is either an isomorphism or a degree-raising morphism;
  \item\label{cond:CDSfact} every morphism $f$ of $\mathcal{A}$ factors as $f=\delta\sigma$ such that $\sigma\in\mathcal{A}^-$ and $\delta$ is a monomorphism such that $\mathcal{A}^-\pitchfork\delta$;
  \item\label{cond:CDSconf} for any diagram $a_1\xleftarrow{\sigma_1}a_0\xrightarrow{\sigma_2}a_2$ with $\sigma_1,\sigma_2\in\mathcal{A}^-$, there is an absolute pushout square
\[
\xymatrix{
  a_0 \ar[r]^{\sigma_1} \ar[d]_{\sigma_2} \ar@{}[dr]|(.6){\pocorner} & a_1 \ar[d]^{\tau_1} \\
  a_2 \ar[r]^{\tau_2} & a }
\]
in $\mathcal{A}$ with $\tau_1,\tau_2\in\mathcal{A}^-$.
\end{enumerate}
We call an element of $\mathcal{A}^-$ a degeneracy morphism.
A morphism $f:X\to Y\in\mathcal{A}^\wedge$ is said to be non-degenerate if $\mathcal{A}^-\pitchfork f$ holds.
\end{defin}

\begin{rem}
The terminology comes from the theory of term rewriting systems.
\end{rem}

\begin{exam}\label{exam:EZ-cat}
A small category $\mathcal{A}$ is called an Eilenberg-Zilber category (EZ category briefly) if there is a function $\deg:\objof\mathcal{A}\to\mathbb{N}$ which satisfies the following:
\begin{enumerate}[label={\rm(EZ\arabic*)},leftmargin=\widthof{\indent(EZ0)}]
  \item\label{cond:EZdegr} Every monomorphism raises the degree.
  \item\label{cond:EZfact} Every morphism $f$ factors as $f=\delta\sigma$ with $\sigma$ a split epimorphism and $\delta$ a monomorphism.
  \item\label{cond:EZconf} For any diagram $a_1\xleftarrow{\sigma_1}a_0\xrightarrow{\sigma_2}a_2$ such that $\sigma_1,\sigma_2$ are split epimorphisms, there is an absolute pushout square
\[
\xymatrix{
  a_0 \ar[r]^{\sigma_1} \ar[d]_{\sigma_2} \ar@{}[dr]|(.6){\pocorner} & a_1 \ar[d]^{\tau_1} \\
  a_2 \ar[r]^{\tau_2} & a }
\]
in $\mathcal{A}$ such that $\tau_1,\tau_2$ are split epimorphisms.
\end{enumerate}
The notion of EZ categories was introduced by Berger and Moerdijk in \cite{BM}.
The conditions above was originally pointed out for the simplicial category $\Delta$ by Eilenberg and Zilber in \cite{EZ}.
So EZ categories are generalizations of $\Delta$ in this sense.
Since split epimorphisms are left orthogonal to monomorphisms in any category, it is clear that a small category $\mathcal{A}$ is an EZ category if and only if the wide subcategory $\mathcal{A}^-$ of split epimorphisms form a confluent degeneracy system on $\mathcal{A}$.
\end{exam}

Some examples come from thin-powered categories.
For this, we need an additional assumption.

\begin{defin}
A locally finite stable thin-powered category $\mathcal{R}$ is said to be confluent if it satisfies the following condition:
If $r_1\xleftarrow{\sigma_1}r_0\xrightarrow{\sigma_2}r_2$ are $\mathcal{R}^+_0$-surjections, then there is an absolute pushout square
\[
\xymatrix{
  r_0 \ar[r]^{\sigma_1} \ar[d]_{\sigma_2} \ar@{}[dr]|(.6){\pocorner} & r_1 \ar[d]^{\tau_1} \\
  r_2 \ar[r]^{\tau_2} & r }
\]
such that $\tau_1,\tau_2$ are again $\mathcal{R}^+_0$-surjections.
\end{defin}

Now fix a locally finite stable confluent thin-powered category $\mathcal{R}$.
We denote by $\mathcal{R}^-$ the class of $\mathcal{R}^+_0$-surjections.
Since $\mathcal{R}^+_0$ is locally finite, so as discussed in the section \ref{subsec:loc-fin}, we have a degree function $\deg:\objof{\mathcal{R}}\to\mathbb{N}$.
Then $\mathcal{R}^-$ is a confluent degeneracy system on $\mathcal{R}$ with the degree function.
Indeed, the condition \ref{cond:CDSisom} is obvious, and the conditions \ref{cond:CDSorth} and \ref{cond:CDSfact} follow from corollary \ref{cor:thinpow-wk-fact}.
Finally the condition \ref{cond:CDSconf} directly follows form the definition of the confluence of thin-powered structure.

\subsection{Skeletons}
Let $\mathcal{A}^-$ be a confluent degeneracy system on $\mathcal{A}$.
We denote by $\mathcal{A}_{\le n}$ the full subcategory of $\mathcal{A}$ spanned by objects of degree $\le n$.
Let $j_n:\mathcal{A}_{\le n}\hookrightarrow\mathcal{A}$ be the embedding.
By the left Kan extension, we obtain an adjuction pair:
\[
(j_n)_!:\mathcal{A}^\wedge_{\le n} \xrightleftarrows{} \mathcal{A}^\wedge:j_n^\ast
\]
We write $\operatorname{sk}_n:=(j_n)_!j_n^\ast:\mathcal{A}^\wedge\to\mathcal{A}^\wedge$.
The counit of the adjunction gives rise to a natural transformation $\operatorname{sk}_nX\to X$ for $X\in\mathcal{A}^\wedge$.

Note that using the coend, for $X'\in\mathcal{A}^\wedge_{\le n}$, we can write the left Kan extension $(j_n)_!X'\in\mathcal{A}^\wedge$ as follows:
\[
(j_n)_!X'
\simeq \int^{a\in\mathcal{A}_{\le n}} \mathcal{A}(\blankdot,j_n(a))\times X'(a)
\]
Since $j_n:\mathcal{A}_{\le n}\hookrightarrow\mathcal{A}$ is fully faithful, we have
\[
\begin{multlined}
j_n^\ast(j_n)_!X'
\simeq \int^{a\in\mathcal{A}_{\le n}} \mathcal{A}(j_n(\blankdot),j_n(a))\times X'(a) \\
\simeq \int^{a\in\mathcal{A}_{\le n}} \mathcal{A}_n(\blankdot,a)\times X'(a)
\,\simeq\, X'\,,
\end{multlined}
\]
here the last isormophism is induced by the $\mathcal{A}_n$-action
\[
\mathcal{A}_n(\blankdot,a)\times X'(a)\to X'(\blankdot),
\]
which in fact induces an isomorphism by co-Yoneda lemma.
It follows that the unit $X'\to j_n^\ast(j_n)_!X'$ of the adjunction is an isomorphism.
In particular, if $a\in\mathcal{A}$ is an object of degree $\le n$, then the natural transformation $\operatorname{sk}_n\mathcal{A}[a]\to\mathcal{A}[a]$ is an isomorphism.

On the other hand, we can give an explicit description for $\operatorname{sk}_nX$.
For this, we shall need the following lemma:

\begin{lemma}\label{lem:sk-injective}
Let $\mathcal{A}^-$ be a confluent degeneracy system on a small category $\mathcal{A}$.
Then the natural transformation $\operatorname{sk}_nX\to X$ is a monomorphism for every $X\in\mathcal{A}^\wedge$.
\end{lemma}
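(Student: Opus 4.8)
The plan is to use the fact that monomorphisms in a presheaf category are detected objectwise, so that it suffices to prove that for each $b\in\mathcal{A}$ the component $(\operatorname{sk}_n X)(b)\to X(b)$ is an injection of sets. First I would unwind the coend presentation
\[
(\operatorname{sk}_n X)(b)\simeq\int^{a\in\mathcal{A}_{\le n}}\mathcal{A}(b,a)\times X(a)\,,
\]
under which the counit is the evaluation map sending the class of a pair $(f\colon b\to a,\,x\in X(a))$, with $\deg a\le n$, to $X(f)(x)\in X(b)$. The coend relations identify $(g\circ f,\,x')$ with $(f,\,X(g)(x'))$ for each $g\colon a\to a'$ in $\mathcal{A}_{\le n}$.

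Next I would put every class into a normal form. Given a representative $(f\colon b\to a,x)$, by \ref{cond:CDSfact} we may factor $f=\delta\sigma$ with $\sigma\colon b\to c$ in $\mathcal{A}^-$ and $\delta\colon c\to a$ a non-degenerate monomorphism; condition \ref{cond:CDSorth} together with the degree function forces $\delta$ to be an isomorphism or degree-raising, so $\deg c\le\deg a\le n$ and hence $c\in\mathcal{A}_{\le n}$. The coend relation along $\delta$ then identifies $(f,x)$ with $(\sigma,\,X(\delta)(x))$. Thus every class admits a representative $(\sigma\colon b\to c,\,y\in X(c))$ with $\sigma\in\mathcal{A}^-$ and $\deg c\le n$, on which evaluation is $X(\sigma)(y)$.

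The core of the argument is injectivity on these normal forms. Suppose $(\sigma_1\colon b\to c_1,y_1)$ and $(\sigma_2\colon b\to c_2,y_2)$ are two such representatives with $X(\sigma_1)(y_1)=X(\sigma_2)(y_2)$. I would invoke the confluence condition \ref{cond:CDSconf} to get an absolute pushout square on the span $c_1\xleftarrow{\sigma_1}b\xrightarrow{\sigma_2}c_2$ with legs $\tau_1\colon c_1\to c$ and $\tau_2\colon c_2\to c$ in $\mathcal{A}^-$; since $\tau_1$ is a degeneracy and $\deg$ takes values in $\mathbb{N}^\opposite$, we again have $\deg c\le\deg c_1\le n$, so $c\in\mathcal{A}_{\le n}$. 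Because the square is an \emph{absolute} pushout, the contravariant presheaf $X$ carries it to a pullback square in $\mathbf{Set}$, and the hypothesis $X(\sigma_1)(y_1)=X(\sigma_2)(y_2)$ then produces a $w\in X(c)$ with $X(\tau_1)(w)=y_1$ and $X(\tau_2)(w)=y_2$. Finally, the coend relations along $\tau_1$ and $\tau_2$ give $(\sigma_i,y_i)\sim(\tau_i\sigma_i,w)$, and since $\tau_1\sigma_1=\tau_2\sigma_2$ is the diagonal of the pushout, both classes coincide with this common pair; hence they are equal.

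The hard part — the place where the hypotheses are genuinely combined — is this last step: one must simultaneously keep the comparison object $c$ inside $\mathcal{A}_{\le n}$ (which is exactly why $\deg$ is required to be non-increasing along $\mathcal{A}^-$) and exploit the absoluteness of the pushout so that the contravariant $X$ turns it into a pullback. Everything else is formal bookkeeping with the coend presentation; the real input is that \ref{cond:CDSconf} yields a pushout square that survives application of an arbitrary presheaf, which is precisely what makes the Eilenberg--Zilber-type comparison go through.
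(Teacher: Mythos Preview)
Your proof is correct and follows essentially the same approach as the paper's: both reduce to objectwise injectivity, normalize representatives via the factorization \ref{cond:CDSfact}, and then use the absolute pushout from \ref{cond:CDSconf} to merge two normalized representatives into a common one. The only cosmetic difference is that the paper phrases the last step as the pushout universal property in $\mathcal{A}^\wedge$ (producing a single cell $x_0\colon\mathcal{A}[a_0]\to X$), while you phrase it as $X$ carrying the absolute pushout to a pullback in $\mathbf{Set}$; these are the same statement under Yoneda.
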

\begin{proof}
It suffices to show that for each $a\in\mathcal{A}$, the map $\operatorname{sk}_nX(a)\to X(a)$ is injective.
Recall that we have
\[
\operatorname{sk}_nX \simeq \int^{a'\in\mathcal{A}_{\le n}} \mathcal{A}(\blankdot,a')\times X(a')\,,
\]
so that $\operatorname{sk}_nX(a)$ can be identified with the set
\[
\left(\coprod_{\substack{a'\in\mathcal{A}\\\deg a'\le n}} \mathcal{A}(a,a')\times X(a')\right)\bigg/ \sim\,,
\]
where the equivalence relation $\sim$ is generated by $(\mu f,x)\sim (f,x\mu)$ for $\mu:a'\to a''\in\mathcal{A}_{\le n}$.
The image of $(f,x)$ by $\operatorname{sk}_nX(a)\to X(a)$ is the composition $xf:\mathcal{A}[a]\to X$.

Suppose we have $(f_i,x_i)\in\mathcal{A}(a,a_i)\times X(a_i)$ for $i=1,2$ such that $x_1f_1=x_2f_2$.
Say $f_i=\delta_i\sigma_i$ for $i=1,2$ such that $\sigma_i\in\mathcal{A}^-$ and $\delta_i:a'_i\to a_i$ is a non-degenerate monomorphism in $\mathcal{A}$.
Since $\delta_i$ does not raise the degree by the condition \ref{cond:CDSorth}, we have $\deg a'_i\le n$.
Hence $(f_i,x_i)\sim (\sigma_i,x_i\delta_i)$.
Now we have $x_1\delta_1\sigma_1=x_2\delta_2\sigma_2$.
By the condition \ref{cond:CDSconf}, there is a diagram
\[
\xymatrix@R-2ex{
  {} & \mathcal{A}[a'_1] \ar[dr]_{\tau_1} \ar@/^/[drr]^{x_1\delta_1} && \\
  \mathcal{A}[a] \ar[ur]^{\sigma_1} \ar[dr]_{\sigma_2} && \mathcal{A}[a_0] \ar[r]^{x_0} & X \\
  {} & \mathcal{A}[a'_2] \ar[ur]^{\tau_2} \ar@/_/[urr]_{x_2\delta_2} && }
\]
with $\tau_1,\tau_2\in\mathcal{A}^-$.
Then $\deg a_0\le n$ and we obtain
\[
(\sigma_1, x_1\delta_1)
\sim (\tau_1\sigma_1, x_0)
= (\tau_2\sigma_2, x_0)
\sim (\sigma_2, x_2\delta_2).
\]
Finally we have $(f_1,x_1)\sim (f_2,x_2)\in\coprod_{\substack{a'\in\mathcal{A}\\\deg a'\le n}} \mathcal{A}(a,a')\times X(a')$.
It follows that $\operatorname{sk}_nX(a)\to X(a)$ is injective as required.
\end{proof}

\begin{corol}\label{cor:explicit-sk}
For $X\in\mathcal{A}^\wedge$, $\operatorname{sk}_nX$ can be identified with a subpresheaf of $X$ consisting of cells $x:\mathcal{A}[a]\to X$ which factor as $\mathcal{A}[a]\to\mathcal{A}[a']\to X$ for some $a'\in\mathcal{A}$ with $\deg a'\le n$.
\end{corol}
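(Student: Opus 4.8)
The plan is to read off the image of the monomorphism $\operatorname{sk}_nX\to X$ directly from the coend computation that precedes Lemma \ref{lem:sk-injective}. By that lemma the natural transformation $\operatorname{sk}_nX\to X$ is a monomorphism, so $\operatorname{sk}_nX$ is canonically identified with a subpresheaf of $X$; the only remaining task is to describe, at each object $a\in\mathcal{A}$, exactly which elements of $X(a)$ lie in the image of $\operatorname{sk}_nX(a)\to X(a)$.

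For this I would reuse the explicit presentation
\[
\operatorname{sk}_nX(a)\simeq\left(\coprod_{\substack{a'\in\mathcal{A}\\\deg a'\le n}}\mathcal{A}(a,a')\times X(a')\right)\bigg/\sim
\]
established in the proof of Lemma \ref{lem:sk-injective}, under which a class $[(f,x)]$ maps to the composite cell $xf:\mathcal{A}[a]\to X$, where $f:a\to a'$ and $x\in X(a')$. Hence an element of $X(a)$ lies in the image precisely when it equals $xf$ for some morphism $f:a\to a'$ with $\deg a'\le n$ and some cell $x:\mathcal{A}[a']\to X$, i.e. precisely when the corresponding cell $\mathcal{A}[a]\to X$ factors as $\mathcal{A}[a]\xrightarrow{f}\mathcal{A}[a']\xrightarrow{x}X$ with $\deg a'\le n$. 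This is exactly the asserted description, and both inclusions are immediate: every such composite arises from the class $[(f,x)]$, and conversely every representative yields such a factorization.

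It then remains only to note that these cells genuinely form a subpresheaf, so that the identification respects the presheaf structure; this is clear, since precomposing a factorization $\mathcal{A}[a]\to\mathcal{A}[a']\to X$ with any $\mathcal{A}[b]\to\mathcal{A}[a]$ gives a factorization through the same object $a'$ of degree $\le n$. I do not expect any real obstacle here. The substantive input—injectivity of $\operatorname{sk}_nX\to X$, which rests on the confluence axiom \ref{cond:CDSconf} and the fact from \ref{cond:CDSorth} that non-degenerate monomorphisms do not raise the degree—has already been dealt with in Lemma \ref{lem:sk-injective}, so the present corollary is a formal reading-off of the image from the coend formula.
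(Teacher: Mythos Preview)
Your proposal is correct and follows essentially the same approach as the paper: invoke Lemma \ref{lem:sk-injective} for injectivity, then read off the image pointwise from the coend presentation of $\operatorname{sk}_nX(a)$ as classes $[(f,x)]$ mapping to $xf$. One small slip in your closing parenthetical: condition \ref{cond:CDSorth} says non-degenerate monomorphisms \emph{raise} the degree (or are isomorphisms), not that they fail to raise it; what matters in the proof of Lemma \ref{lem:sk-injective} is that the \emph{domain} of such a monomorphism therefore has degree no greater than the codomain.
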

\begin{proof}
By lemma \ref{lem:sk-injective}, $\operatorname{sk}_nX$ can be identified with its image by $\operatorname{sk}_nX\to X$.
For each $a\in\mathcal{A}$, the image of $\operatorname{sk}_nX(a)$ coincides with that of the set
\[
\coprod_{\substack{a'\in\mathcal{A}\\\deg a'\le n}} \mathcal{A}(a,a')\times X(a')\,.
\]
Hence the result follows.
\end{proof}

Another important consequence is the following:

\begin{corol}
For each $X\in\mathcal{A}^\wedge$, the morphism $\operatorname{sk}_nX\to X$ is a non-degenerate monomorphism.
\end{corol}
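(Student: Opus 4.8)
The plan is to use that $\operatorname{sk}_nX\to X$ is already a monomorphism by Lemma \ref{lem:sk-injective}, so that every lifting problem against it has at most one solution and only the existence of lifts must be shown. Unwinding the definition of non-degeneracy, I must verify $\mathcal{A}^-\pitchfork(\operatorname{sk}_nX\to X)$: for each $\sigma:a\to a'$ in $\mathcal{A}^-$ and each commutative square
\[
\xymatrix{
\mathcal{A}[a] \ar[r]^-{u} \ar[d]_{\sigma} & \operatorname{sk}_nX \ar[d] \\
\mathcal{A}[a'] \ar[r]_-{v} & X }
\]
I need a diagonal filler $\mathcal{A}[a']\to\operatorname{sk}_nX$. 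Since the right edge is monic, this is equivalent to showing that the cell $v\in X(a')$ already lies in the subpresheaf $\operatorname{sk}_nX$, i.e.\ that $v$ factors through an object of degree $\le n$ in the sense of Corollary \ref{cor:explicit-sk}.

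First I would read off the hypothesis. Commutativity says that the precomposite $v\sigma\in X(a)$ coincides with the image of $u$, hence lies in $\operatorname{sk}_nX(a)$; by Corollary \ref{cor:explicit-sk} it therefore factors as $\mathcal{A}[a]\xrightarrow{g}\mathcal{A}[c]\xrightarrow{y}X$ with $\deg c\le n$. I would then factor $g=\delta\tau$ using \ref{cond:CDSfact}, with $\tau:a\to c'$ in $\mathcal{A}^-$ and $\delta$ a non-degenerate monomorphism; by \ref{cond:CDSorth} such a $\delta$ cannot lower the degree, so $\deg c'\le\deg c\le n$. Setting $y_0:=y\delta$, this rewrites the hypothesis as $v\sigma=y_0\tau$ with $\tau\in\mathcal{A}^-$ and $\deg c'\le n$.

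The key step then applies confluence. Now $\sigma:a\to a'$ and $\tau:a\to c'$ are two degeneracies out of $a$, so \ref{cond:CDSconf} produces an absolute pushout square
\[
\xymatrix{
a \ar[r]^{\sigma} \ar[d]_{\tau} \ar@{}[dr]|(.6){\pocorner} & a' \ar[d]^{\tau_1} \\
c' \ar[r]_{\tau_2} & p }
\]
with $\tau_1,\tau_2\in\mathcal{A}^-$. Since $\deg:\mathcal{A}^-\to\mathbb{N}^\opposite$ is a functor, the degeneracy $\tau_2:c'\to p$ forces $\deg p\le\deg c'\le n$. As the square is an absolute pushout, its image under the Yoneda embedding is a pushout in $\mathcal{A}^\wedge$; the pair $(v,y_0)$ agrees on $\mathcal{A}[a]$ precisely because $v\sigma=y_0\tau$, so the universal property yields a unique $w:\mathcal{A}[p]\to X$ with $w\tau_1=v$. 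Thus $v$ factors as $\mathcal{A}[a']\xrightarrow{\tau_1}\mathcal{A}[p]\xrightarrow{w}X$ through the object $p$ of degree $\le n$, whence $v\in\operatorname{sk}_nX(a')$ by Corollary \ref{cor:explicit-sk}, and the induced map $\mathcal{A}[a']\to\operatorname{sk}_nX$ is the required lift.

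I expect the only genuine subtlety to lie in the second step: one must upgrade the arbitrary map into degree $\le n$ coming from Corollary \ref{cor:explicit-sk} into an \emph{actual} degeneracy $\tau$, so that the confluence axiom \ref{cond:CDSconf} becomes applicable, and one must keep the degree inequalities in $\mathbb{N}^\opposite$ oriented correctly. Once the absolute pushout is in hand, the remainder is formal, and uniqueness of the lift is automatic from Lemma \ref{lem:sk-injective}.
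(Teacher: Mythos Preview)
Your proof is correct and follows essentially the same route as the paper's: factor the top cell through an object of degree $\le n$, replace the factoring map by a degeneracy via \ref{cond:CDSfact}, apply confluence \ref{cond:CDSconf} to the two degeneracies out of $a$, and use the resulting absolute pushout together with the degree bound to see that $v$ itself factors through degree $\le n$. The paper's write-up is slightly terser (it phrases the last step as ``there is a lift $\mathcal{A}[a'_1]\to\operatorname{sk}_nX$ for the right square'' rather than invoking the universal property of the pushout explicitly), but the argument is identical.
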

\begin{proof}
We have to show that the morphism is left orthogonal to morphisms in $\mathcal{A}^-$.
So suppose we have a square
\[
\xymatrix{
  \mathcal{A}[a] \ar[r]^{x} \ar[d]_{\sigma} & \operatorname{sk}_nX \ar[d] \\
  \mathcal{A}[a'] \ar[r] & X }
\]
with $\sigma\in\mathcal{A}^-$.
By corollary \ref{cor:explicit-sk}, there is a factorization $\mathcal{A}[a]\xrightarrow{\mu}\mathcal{A}[a_1]\to\operatorname{sk}_nX$ with $\deg a_1\le n$.
By considering, if it is needed, the factorization of $\mu$, we may assume $\mu\in\mathcal{A}^-$.
Taking the absolute pushout, we obtain the following diagram
\[
\xymatrix{
  \mathcal{A}[a] \ar[r]^{\mu} \ar[d]_{\sigma} \ar@{}[dr]|(.6){\pocorner} & \mathcal{A}[a_1] \ar[r] \ar[d]^{\sigma_1} & \operatorname{sk}_nX \ar[d] \\
  \mathcal{A}[a'] \ar[r] & \mathcal{A}[a'_1] \ar[r] & X }
\]
with $\sigma_1\in\mathcal{A}^-$.
Then since $\deg a'_1\le n$, by corollary \ref{cor:explicit-sk}, there is a lift $\mathcal{A}[a'_1]\to\operatorname{sk}_nX$ for the right square.
Finally $\mathcal{A}[a']\to\mathcal{A}[a'_1]\to\operatorname{sk}_nX$ is the required lift for the first square.
\end{proof}

\subsection{Reduced cells}
Next, we see that if $\mathcal{A}$ has a confluent degenerasy system, then each $X\in\mathcal{A}^\wedge$ consists of ``recuced'' cells.

Let $\mathcal{A}^-$ be a confluent degeneracy system on $\mathcal{A}$, and let $X\in\mathcal{A}^\wedge$.
Then we can consider the comma category $\commacat{\mathcal{A}^-}{X}$, which is a small category.
Objects of $\commacat{\mathcal{A}^-}{X}$ are cells $\mathcal{A}[a]\to X$, and morphisms are diagrams of the form
\[
\xymatrix@C-1em{
  \mathcal{A}[a] \ar[rr]^{\sigma} \ar[dr]_{x} && \mathcal{A}[a'] \ar[dl]^{x'} \\
  {} & X & {} }
\]
with $\sigma\in\mathcal{A}^-$.

\begin{lemma}\label{lem:cds-connect}
Let $\mathcal{A}^-$ be a confluent degeneracy system on $\mathcal{A}$, and let $X\in\mathcal{A}^\wedge$.
Then two cells $x_1:\mathcal{A}[a_1]\to X$ and $x_2:\mathcal{A}[a_2]\to X$ belong to the same path-component of $\commacat{\mathcal{A}^-}{X}$ if and only if there is a diagram
\[
\xymatrix@R-2ex{
  \mathcal{A}[a_1] \ar[dr]_{\sigma_1} \ar@/^/[drr]^{x_1} && \\
  {} & \mathcal{A}[a] \ar[r]^{x} & X \\
  \mathcal{A}[a_2] \ar[ur]^{\sigma_2} \ar@/_/[urr]_{x_2} && }
\]
\end{lemma}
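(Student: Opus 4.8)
The plan is to reduce the lemma to the assertion that the relation ``connected by a single cospan'' is already an equivalence relation. Define a relation $R$ on the objects of $\commacat{\mathcal{A}^-}{X}$ by declaring $x_1\mathbin{R}x_2$ exactly when a diagram as in the statement exists; that is, when there are an object $a$, a cell $x:\mathcal{A}[a]\to X$ and degeneracies $\sigma_1:\mathcal{A}[a_1]\to\mathcal{A}[a]$ and $\sigma_2:\mathcal{A}[a_2]\to\mathcal{A}[a]$ with $x\sigma_1=x_1$ and $x\sigma_2=x_2$. The ``if'' direction is then immediate, since such a cospan is in particular a zigzag $x_1\to x\leftarrow x_2$ of morphisms in $\commacat{\mathcal{A}^-}{X}$; hence $R$ is contained in the path-component relation. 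For the converse it will suffice to show that $R$ is an equivalence relation containing every morphism of $\commacat{\mathcal{A}^-}{X}$, because the path-component relation is the least such equivalence relation.

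First I would dispatch the formal closure properties. Reflexivity and symmetry follow by taking $a=a_1$, $x=x_1$, $\sigma_1=\sigma_2=\mathrm{id}$ (an isomorphism, hence in $\mathcal{A}^-$ by \ref{cond:CDSisom}) and by interchanging the two legs. That $R$ contains each morphism $\sigma:x_1\to x_2$ is equally direct: take $a=a_2$, $x=x_2$, $\sigma_1=\sigma$ and $\sigma_2=\mathrm{id}$, so that $x\sigma_1=x_2\sigma=x_1$ and $x\sigma_2=x_2$.

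The substance will lie in transitivity, and this is where \ref{cond:CDSconf} enters. Suppose $x_1\mathbin{R}x_2$ is witnessed by $(x,\sigma_1,\sigma_2)$ over $\mathcal{A}[a]$ and $x_2\mathbin{R}x_3$ by $(y,\tau_2,\tau_3)$ over $\mathcal{A}[b]$, so that $x\sigma_2=x_2=y\tau_2$. Applying \ref{cond:CDSconf} to the span $\mathcal{A}[a]\xleftarrow{\sigma_2}\mathcal{A}[a_2]\xrightarrow{\tau_2}\mathcal{A}[b]$ produces an absolute pushout square with degeneracies $\rho:a\to c$ and $\lambda:b\to c$ satisfying $\rho\sigma_2=\lambda\tau_2$. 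As the square is an absolute pushout in $\mathcal{A}$, its image under the Yoneda embedding $\mathcal{A}[\blankdot]:\mathcal{A}\to\mathcal{A}^\wedge$ is again a pushout, and since $x\sigma_2=y\tau_2$ the cells $x,y$ form a cocone over the corresponding span; the universal property then yields a unique $z:\mathcal{A}[c]\to X$ with $z\rho=x$ and $z\lambda=y$. Putting $\sigma_1':=\rho\sigma_1$ and $\tau_3':=\lambda\tau_3$, which lie in $\mathcal{A}^-$ as composites of degeneracies, we find $z\sigma_1'=x\sigma_1=x_1$ and $z\tau_3'=y\tau_3=x_3$, so $(z,\sigma_1',\tau_3')$ witnesses $x_1\mathbin{R}x_3$.

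The crux is precisely the glueing of $x$ and $y$ into $z$, and it is here that the word ``absolute'' in \ref{cond:CDSconf} is indispensable: the Yoneda embedding preserves limits but in general destroys colimits, so an ordinary pushout in $\mathcal{A}$ would not license the universal property argument in $\mathcal{A}^\wedge$. Once transitivity is established, $R$ is an equivalence relation containing all morphisms of $\commacat{\mathcal{A}^-}{X}$, hence contains the path-component relation; together with the reverse inclusion from the first paragraph this yields equality, which is the desired statement.
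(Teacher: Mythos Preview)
Your proof is correct and follows essentially the same route as the paper's: both reduce the ``only if'' direction to the fact that a span $x\xleftarrow{\sigma_2}x_2\xrightarrow{\tau_2}y$ in $\commacat{\mathcal{A}^-}{X}$ can be completed to a cospan via the absolute pushout from \ref{cond:CDSconf}, which is exactly your transitivity step. The paper compresses this into the single line ``it suffices to show span $\Rightarrow$ cospan, which follows from \ref{cond:CDSconf}'', whereas you spell out explicitly why absoluteness is needed for the universal property to hold in $\mathcal{A}^\wedge$.
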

\begin{proof}
If there is such a diagram, clearly $x_1$ and $x_2$ belong to the same component.
To see the converse, it suffices to show that we can find the diagram above whenever we have a diagram
\[
\xymatrix@R-2ex{
  {} & \mathcal{A}[a_1] \ar[dr]^{x_1} & {} \\
  \mathcal{A}[a_0] \ar[ur]^{\tau_1} \ar[dr]_{\tau_2} && X \\
  {} & \mathcal{A}[a_2] \ar[ur]_{x_2} & {} }
\]
with $\tau_1,\tau_2\in\mathcal{A}^-$.
But this immediately follows from the condition \ref{cond:CDSconf}.
\end{proof}

We denote by $\pi_0(\commacat{\mathcal{A}^-}{X})$ the set of path-components of $\commacat{\mathcal{A}^-}{X}$.
A morphism $f:X\to Y\in\mathcal{A}^\wedge$ induces a functor $f_\ast:\commacat{\mathcal{A}^-}{X}\to\commacat{\mathcal{A}^-}{Y}$, so that for $\alpha\in\pi_0(\commacat{\mathcal{A}^-}{X})$, the restriction $f_\ast:\alpha\to\commacat{\mathcal{A}^-}{Y}$ is a connected diagram.
It follows that  there is a component $\beta\in\pi_0(\commacat{\mathcal{A}^-}{Y})$ such that $f_\ast:\alpha\to\beta$.
We set $f_\ast(\alpha):=\beta$.

\begin{corol}\label{cor:non-deg-isom}
In the above situation, the following hold:
\begin{enumerate}[label={\rm(\arabic*)}]
  \item If $f:X\to Y\in\mathcal{A}^\wedge$ is non-degenerate, then the functor $f_\ast:\alpha\to f_\ast(\alpha)$ is a surjection on objects for each $\alpha\in\pi_0(\commacat{\mathcal{A}^-}{X})$.
  \item If $f:X\to Y\in\mathcal{A}^\wedge$ is a non-degenerate monomorphism, then the functor $f_\ast:\alpha\to f_\ast(\alpha)$ is an isomorphism of categories for each $\alpha\in\pi_0(\commacat{\mathcal{A}^-}{X})$.
\end{enumerate}
\end{corol}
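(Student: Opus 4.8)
The plan is to establish (1) by a connectivity argument and then to deduce (2) formally. Throughout I recall that a morphism $x\to x'$ in $\commacat{\mathcal{A}^-}{X}$ is a degeneracy $\sigma\in\mathcal{A}^-$ with $x'\sigma=x$, that $f_\ast$ leaves the underlying degeneracy unchanged, and that each path component is a full subcategory of $\commacat{\mathcal{A}^-}{X}$, with no morphisms running between distinct components.

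For (1), fix $\alpha$ and set $\beta:=f_\ast(\alpha)$. I would introduce the set $I\subseteq\mathrm{Ob}(\beta)$ of objects of the form $fx$ with $x\in\alpha$, and show $I=\mathrm{Ob}(\beta)$. Choosing any $x_0\in\alpha$ shows $fx_0\in I$, so $I\neq\varnothing$; since $\beta$ is connected, it then suffices to check that $I$ is closed under passing along a single morphism of $\beta$ in either direction. So let $y'=fx'$ with $x'\in\alpha$, and let $y\colon\mathcal{A}[b]\to Y$ be adjacent to $y'$ in $\commacat{\mathcal{A}^-}{Y}$. If the edge is $\sigma\colon y\to y'$, i.e.\ $\sigma\in\mathcal{A}^-$ with $y'\sigma=y$, then $x:=x'\sigma$ satisfies $fx=y$, while $x'\sigma=x$ exhibits $\sigma\colon x\to x'$ in $\commacat{\mathcal{A}^-}{X}$, so $x\in\alpha$ and $y\in I$. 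If instead the edge is $\tau\colon y'\to y$, i.e.\ $\tau\in\mathcal{A}^-$ with $y\tau=y'=fx'$, then the square with left edge $\tau$, right edge $f$, top $x'$ and bottom $y$ commutes; non-degeneracy of $f$ (that is, $\mathcal{A}^-\pitchfork f$) provides a lift $w$ with $w\tau=x'$ and $fw=y$, and then $\tau\colon x'\to w$ is a morphism of $\commacat{\mathcal{A}^-}{X}$, so $w\in\alpha$ and $y=fw\in I$. This second case is the only place non-degeneracy is used, and it is the heart of the argument.

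For (2), a non-degenerate monomorphism is in particular non-degenerate, so (1) already gives that $f_\ast\colon\alpha\to\beta$ is surjective on objects. It is injective on objects because $fx_1=fx_2$ forces the two cells to share a domain, whence $x_1=x_2$ as $f$ is monic. On morphisms $f_\ast$ is the identity on underlying degeneracies, hence faithful; and it is full, since a morphism $fx_1\to fx_2$ of $\beta$ is a $\sigma\in\mathcal{A}^-$ with $(fx_2)\sigma=fx_1$, so $f(x_2\sigma)=fx_1$ and $x_2\sigma=x_1$ by monicity, exhibiting $\sigma\colon x_1\to x_2$ in $\commacat{\mathcal{A}^-}{X}$, which lies in $\alpha$ because $\alpha$ is full. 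Being bijective on objects and on morphisms, $f_\ast\colon\alpha\to\beta$ is an isomorphism of categories.

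I expect the main obstacle to be the organization of the surjectivity proof rather than any single computation: the natural temptation is to try to lift, all at once, a zigzag in $\commacat{\mathcal{A}^-}{Y}$ joining two cells, which is awkward. Phrasing surjectivity as closure of $I$ under single edges reduces everything to the one genuine lifting step against $\mathcal{A}^-$, after which part (2) becomes a purely formal consequence of $f$ being a monomorphism.
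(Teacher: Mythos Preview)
Your proof is correct, but it takes a different route from the paper's argument for part (1). The paper invokes the preceding lemma (the characterization of path components via cospans, which uses the confluence axiom \ref{cond:CDSconf}): given $y\in f_\ast(\alpha)$, it picks some $x\in\alpha$ and obtains a single cospan $fx\xrightarrow{\sigma'} y_0\xleftarrow{\sigma} y$ with $\sigma,\sigma'\in\mathcal{A}^-$; one lift of $\sigma'$ against $f$ produces $x_0$ with $fx_0=y_0$, and then $y=f(x_0\sigma)$. Your argument instead walks along an arbitrary zigzag one edge at a time, showing the image $I$ is closed under both orientations of a single edge. This is a perfectly valid connectivity argument and in fact does not use the confluence axiom at all---it needs only that $f$ is non-degenerate. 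The paper's approach trades this generality for brevity: by collapsing the zigzag to a cospan via \ref{cond:CDSconf}, only one lifting is ever performed. For part (2), your treatment (bijective on objects by monicity plus (1), fully faithful because $f_\ast$ acts as the identity on underlying degeneracies and monicity gives fullness) is essentially the same as the paper's, just spelled out in slightly more detail.
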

\begin{proof}
Suppose $f:X\to Y$ is non-degenerate, and $(\mathcal{A}[a]\xrightarrow{y} Y)\in f_\ast(\alpha)$.
Then by lemma \ref{lem:cds-connect}, there is a cell $(\mathcal{A}[a']\xrightarrow{x} X)\in\alpha$ and a diagram
\[
\xymatrix@R-2ex@C-1em{
  \mathcal{A}[a'] \ar[rr]^{x} \ar[dr]_{\sigma'} && X \ar[dr]^{f} & {} \\
  {} & \mathcal{A}[a_0] \ar[rr]^{y_0} && Y \\
  \mathcal{A}[a] \ar[ur]^{\sigma} \ar@/_/[urrr]_{y} &&& }
\]
with $\sigma,\sigma'\in\mathcal{A}^-$.
Since $\sigma'\pitchfork f$, there is a cell $x_0:\mathcal{A}[a_0]\to X$ such that $x=x_0\sigma'$ and $fx_0=y_0$.
Thus we obtain $y=fx_0\sigma=f_\ast(x_0\sigma)$, and $y$ belongs to the image of $f_\ast$.
This implies that $f_\ast:\alpha\to f_\ast(\alpha)$ is a surjection on objects.

Now we assume that $f:X\to Y$ is a non-degenerate monomorphism.
Since $f$ is a monomorphism, the functor $f_\ast:\commacat{\mathcal{A}^-}{X}\to\commacat{\mathcal{A}^-}{Y}$ is fully faithful and injective on objects.
By the first part, $f_\ast:\alpha\to f_\ast(\alpha)$ is also surjective on objects.
It follows that $f_\ast:\alpha\to f_\ast(\alpha)$ is an isomorphism of categories.
\end{proof}

Now we define ``reduced'' cells for $X\in\mathcal{A}^\wedge$.
For $\alpha\in\pi_0(\commacat{\mathcal{A}^-}{X})$, we set
\[
\deg\alpha:=\min\left\{ \deg a \mid (\mathcal{A}[a]\to X)\in\alpha\right\}.
\]
By corollary \ref{cor:non-deg-isom}, if $f:X\to Y$ is non-degenerate, we have $\deg\alpha=\deg f_\ast(\alpha)$.
Let $X\in\mathcal{A}^\wedge$ and $\alpha\in\pi_0(\commacat{\mathcal{A}^-}{X})$.
Then the forgetful functor $\alpha\to\mathcal{A}\hookrightarrow\mathcal{A}^\wedge$ gives rise to a diagram in $\mathcal{A}^\wedge$.
We set
\[
\mathcal{A}[\alpha] := \colim_{(\mathcal{A}[a]\to X)\in\alpha} \mathcal{A}[a]\,.
\]
If $\deg\alpha=n$, we set $\partial\mathcal{A}[\alpha]:=\operatorname{sk}_{n-1}\mathcal{A}[\alpha]$.

\begin{lemma}\label{lem:cell-core}
Let $\mathcal{A}$ be a small category equipped with a confluent degeneracy system $\mathcal{A}^-$.
Let $X\in\mathcal{A}^\wedge$ and $\alpha\in\pi_0(\commacat{\mathcal{A}^-}{X})$.
We denote by $\bar\alpha\subset\alpha$ the full subcategory spanned by cells $\mathcal{A}[a]\to X$ in $\alpha$ with $\deg a=\deg\alpha$.
Then the embedding $\bar\alpha\hookrightarrow\alpha$ is final.
\end{lemma}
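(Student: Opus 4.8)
The plan is to verify finality through the standard criterion (see \cite{Lei}): the inclusion $\bar\alpha\hookrightarrow\alpha$ is final if and only if for every object $u\colon\mathcal{A}[a]\to X$ of $\alpha$ the comma category $(u\downarrow\bar\alpha)$ is nonempty and connected. I first unravel this category: an object is a pair $(z,\sigma)$ with $z\colon\mathcal{A}[b]\to X$ in $\bar\alpha$ (so $\deg b=\deg\alpha$) and $\sigma\colon a\to b$ in $\mathcal{A}^-$ satisfying $z\sigma=u$, and a morphism $(z_1,\sigma_1)\to(z_2,\sigma_2)$ is a degeneracy $\tau\colon z_1\to z_2$ in $\bar\alpha$ with $\tau\sigma_1=\sigma_2$. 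Throughout I use that $\deg\colon\mathcal{A}^-\to\mathbb{N}^\opposite$ is a functor, so that no degeneracy raises the degree; combined with the minimality defining $\deg\alpha$, this forces every cell of $\alpha$ that receives a degeneracy from a minimal cell to be itself minimal, hence to lie in $\bar\alpha$.

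For nonemptiness I would fix a minimal cell $z_0\colon\mathcal{A}[b_0]\to X$ of $\alpha$, which exists because $\deg\alpha$ is attained. Since $u$ and $z_0$ lie in the same path-component, Lemma~\ref{lem:cds-connect} produces a cell $w\colon\mathcal{A}[c]\to X$ with degeneracies $\sigma\colon a\to c$ and $\tau\colon b_0\to c$ in $\mathcal{A}^-$ satisfying $w\sigma=u$ and $w\tau=z_0$; in particular $w$ lies in $\alpha$. As $\tau$ cannot raise the degree, $\deg c\le\deg b_0=\deg\alpha$, and minimality forces $\deg c=\deg\alpha$. Thus $w\in\bar\alpha$ and $(w,\sigma)$ is an object of $(u\downarrow\bar\alpha)$.

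For connectedness I take two objects $(z_1,\sigma_1),(z_2,\sigma_2)$, with $z_i\colon\mathcal{A}[b_i]\to X$ and $\sigma_i\colon a\to b_i$, and apply the confluence axiom~\ref{cond:CDSconf} to the span $b_1\xleftarrow{\sigma_1}a\xrightarrow{\sigma_2}b_2$. This yields an absolute pushout square with legs $\tau_1\colon b_1\to d$, $\tau_2\colon b_2\to d$ in $\mathcal{A}^-$ and $\tau_1\sigma_1=\tau_2\sigma_2$. Because the square is absolute, the Yoneda embedding sends it to a pushout in $\mathcal{A}^\wedge$; since $z_1\sigma_1=u=z_2\sigma_2$, the universal property then provides a unique cell $w\colon\mathcal{A}[d]\to X$ with $w\tau_1=z_1$ and $w\tau_2=z_2$. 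Once more $\deg d\le\deg b_1=\deg\alpha$ gives $\deg d=\deg\alpha$, so $w\in\bar\alpha$. Now $\tau_1$ and $\tau_2$ are morphisms $(z_1,\sigma_1)\to(w,\tau_1\sigma_1)$ and $(z_2,\sigma_2)\to(w,\tau_2\sigma_2)$ in $(u\downarrow\bar\alpha)$, and the identity $\tau_1\sigma_1=\tau_2\sigma_2$ shows the two targets coincide. Hence the two objects are joined by a cospan, and $(u\downarrow\bar\alpha)$ is connected.

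I expect the main obstacle to be the passage from the abstract pushout in $\mathcal{A}$ to the glued cell $w$: here the word \emph{absolute} is doing the essential work, since the Yoneda embedding does not preserve colimits in general, and it is precisely absoluteness that keeps the square a pushout in $\mathcal{A}^\wedge$ so that the compatible pair $(z_1,z_2)$ descends to a single map out of $\mathcal{A}[d]$. The remaining steps — the degree bookkeeping and the verification that $\tau_1,\tau_2$ respect the structure maps to $u$ — are routine consequences of the functoriality of $\deg$ on $\mathcal{A}^-$ and the commutativity of the pushout square.
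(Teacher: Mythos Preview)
Your proof is correct and follows the same strategy as the paper's: verify that each comma category $(u\downarrow\bar\alpha)$ is connected by pushing a pair of objects through the absolute pushout supplied by \ref{cond:CDSconf}, then using the degree function to see that the resulting cell still lies in $\bar\alpha$. You are in fact more careful than the paper, which omits the explicit nonemptiness check (your use of Lemma~\ref{lem:cds-connect} for this is the right move) and leaves the role of absoluteness in producing the glued cell $w$ implicit.
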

\begin{proof}
Let $(x_0:\mathcal{A}[a_0]\to X)\in\alpha$ and consider the category $\commacat{x_0}{\bar\alpha}$.
We have to show it is connected.
Suppose $x_1,x_2$ are objects in the category.
In other words, we have a commutative diagram
\[
\xymatrix{
  \mathcal{A}[a_0] \ar[r]^{\sigma_1} \ar[dr]^{x_0} \ar[d]_{\sigma_2} & \mathcal{A}[a_1] \ar[d]^{x_1} \\
  \mathcal{A}[a_2] \ar[r]^{x_2} & X }
\]
with $\sigma_1,\sigma_2\in\mathcal{A}^-$ and $\deg a_1=\deg a_2=\deg\alpha$.
By taking an absolute pushout, we have
\[
\xymatrix@R-2ex{
  {} & \mathcal{A}[a_1] \ar[dr]_{\tau_1} \ar@/^/[drr]^{x_1} && \\
  \mathcal{A}[a_0] \ar[ur]^{\sigma_1} \ar[dr]_{\sigma_2} && \mathcal{A}[a] \ar[r]^{x} & X \\
  {} & \mathcal{A}[a_2] \ar[ur]^{\tau_2} \ar@/_/[urr]_{x_2} && }
\]
with $\tau_1,\tau_2\in\mathcal{A}^-$.
Since $\tau_1$ and $\tau_2$ does not raise the degree, we have $\deg a=\deg\alpha$.
Hence $x_1\xrightarrow{\tau_1}x\xleftarrow{\tau_2}x_2$ is a zig-zag in $\commacat{x_0}{\bar\alpha}$.
It follows that the category $\commacat{x_0}{\bar\alpha}$ is connected.
\end{proof}

\begin{corol}\label{cor:cell-skeletal}
In the above situation, if $\deg\alpha=n$, the natural transformation $\operatorname{sk}_n\mathcal{A}[\alpha]\to\mathcal{A}[\alpha]$ is an isomorphism.
\end{corol}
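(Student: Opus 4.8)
The plan is to reduce the statement to two facts already in hand: that $\operatorname{sk}_n$ commutes with colimits, and that it acts as the identity on representables of degree $\le n$. First I would observe that, by definition, $\mathcal{A}[\alpha]$ is the colimit of the forgetful diagram $\alpha\to\mathcal{A}\hookrightarrow\mathcal{A}^\wedge$, and that Lemma \ref{lem:cell-core} identifies this with the colimit over the full subcategory $\bar\alpha$ of minimal-degree cells, since a final functor induces isomorphic colimits. Thus
\[
\mathcal{A}[\alpha]\;\cong\;\colim_{(\mathcal{A}[a]\to X)\in\bar\alpha}\mathcal{A}[a],
\]
where every object $a$ appearing now satisfies $\deg a=\deg\alpha=n$.

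Next I would note that the functor $\operatorname{sk}_n=(j_n)_!j_n^\ast$ preserves all colimits: indeed $(j_n)_!$ is a left adjoint, while the restriction $j_n^\ast$ preserves colimits because it also admits a right adjoint $(j_n)_\ast$. Applying this to the presentation above yields a natural isomorphism $\operatorname{sk}_n\mathcal{A}[\alpha]\cong\colim_{\bar\alpha}\operatorname{sk}_n\mathcal{A}[a]$ that is compatible with the counit $\operatorname{sk}_n\to\mathrm{Id}$. Since each $a\in\bar\alpha$ has degree $n$, the fact established in the skeleton subsection that $\operatorname{sk}_n\mathcal{A}[a]\to\mathcal{A}[a]$ is an isomorphism for objects of degree $\le n$ then gives
\[
\operatorname{sk}_n\mathcal{A}[\alpha]\;\cong\;\colim_{\bar\alpha}\operatorname{sk}_n\mathcal{A}[a]\;\cong\;\colim_{\bar\alpha}\mathcal{A}[a]\;\cong\;\mathcal{A}[\alpha].
\]

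I would take care to check that these isomorphisms are compatible with the canonical map $\operatorname{sk}_n\mathcal{A}[\alpha]\to\mathcal{A}[\alpha]$, so that it is precisely this structural map -- and not merely some abstract isomorphism -- that is inverted; this follows from naturality of the counit of the $\bigl((j_n)_!,j_n^\ast\bigr)$-adjunction. As an alternative that sidesteps colimit-preservation altogether, I could argue pointwise: since presheaf colimits are computed objectwise, any cell $y\in\mathcal{A}[\alpha](b)$ is represented by some $y_a\in\mathcal{A}(b,a)$ with $a\in\bar\alpha$, hence factors as $\mathcal{A}[b]\to\mathcal{A}[a]\to\mathcal{A}[\alpha]$ with $\deg a=n$; by Corollary \ref{cor:explicit-sk} this exhibits $y$ as lying in $\operatorname{sk}_n\mathcal{A}[\alpha]$, so the monomorphism $\operatorname{sk}_n\mathcal{A}[\alpha]\to\mathcal{A}[\alpha]$ of Lemma \ref{lem:sk-injective} is also surjective, hence an isomorphism. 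The only genuine subtlety -- the step I would be most careful about -- is the interchange of $\operatorname{sk}_n$ with the colimit together with counit-compatibility; the first route handles it by adjoint-functor bookkeeping, while the second finesses it by reducing everything to the concrete description in Corollary \ref{cor:explicit-sk}.
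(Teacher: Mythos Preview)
Your proof is correct and follows essentially the same approach as the paper: use Lemma~\ref{lem:cell-core} to rewrite $\mathcal{A}[\alpha]$ as a colimit over $\bar\alpha$, commute $\operatorname{sk}_n$ past the colimit (the paper simply observes that $\operatorname{sk}_n$ has a right adjoint, whereas you argue factorwise via $(j_n)_!$ and $j_n^\ast$), and then invoke $\operatorname{sk}_n\mathcal{A}[a]\simeq\mathcal{A}[a]$ for $\deg a\le n$. Your extra care about counit-compatibility and the alternative pointwise argument via Corollary~\ref{cor:explicit-sk} are both fine and go slightly beyond what the paper spells out.
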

\begin{proof}
Note that since the functor $\operatorname{sk}_k:\mathcal{A}^\wedge\to\mathcal{A}^\wedge$ has a right adjoint, it commutes with arbitrary small colimits.
Hence, by lemma \ref{lem:cell-core}, the natural transformation is isormophic to the following composition:
\[
\begin{split}
\operatorname{sk}_n\mathcal{A}[\alpha]
&\simeq \operatorname{sk}_n\left(\colim_{\substack{(\mathcal{A}[a]\to X)\in\alpha\cr\deg a=n}}\mathcal{A}[a]\right)
\simeq \colim_{\substack{(\mathcal{A}[a]\to X)\in\alpha\cr\deg a=n}} \operatorname{sk}_n\mathcal{A}[a] \\
&\xrightarrow{\sim} \colim_{\substack{(\mathcal{A}[a]\to X)\in\alpha\cr\deg a=n}} \mathcal{A}[a]
\simeq \mathcal{A}[\alpha]
\end{split}
\]
\end{proof}

By the definition of $\mathcal{A}[\alpha]$, the family $\left\{(\mathcal{A}[a]\to X)\in\alpha\right\}$ induces a morphism $\xi_\alpha:\mathcal{A}[\alpha]\to X$.
We have some essential results:

\begin{lemma}\label{lem:cell-intersect}
Let $\mathcal{A}$ be a small category with a confluent degeneracy system $\mathcal{A}^-$.
Suppose $X\in\mathcal{A}^-$ and $\alpha\in\pi_0(\commacat{\mathcal{A}^-}{X})$ with $\deg\alpha=n$.
Then the following hold:
\begin{enumerate}[label={\rm(\arabic*)}]
  \item\label{sublem:pres-nondeg} If $v:\mathcal{A}[a]\to\mathcal{A}[\alpha]$ is a cell such that $\xi_\alpha v:\mathcal{A}[a]\to X$ factors through $\mathcal{A}[a']$ with $\deg a'\le n-1$, then $v$ itself factors through some $\mathcal{A}[a'_1]$ with $\deg a'_1\le n-1$.
  \item\label{sublem:monic-nondeg} Suppose $v_1,v_2:\mathcal{A}[a]\rightrightarrows\mathcal{A}[\alpha]$ are cells which do not factor through any $\mathcal{A}[a']$ with $\deg a'<n$.
Then $\xi_\alpha v_1=\xi_\alpha v_2$ implies $v_1=v_2$.
  \item\label{sublem:cell-disjoint} If $\beta\in\pi_0(\commacat{\mathcal{A}^-}{X})$ is another component with $\deg\beta=n$, then every cell of the presheaf $\mathcal{A}[\alpha]\times_X\mathcal{A}[\beta]$ factors through $\mathcal{A}[a']$ with $\deg a'\le n-1$.
\end{enumerate}
\end{lemma}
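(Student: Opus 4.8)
The plan is to analyse cells of $\mathcal{A}[\alpha]$ through the colimit presentation coming from Lemma \ref{lem:cell-core}. Since $\bar\alpha\hookrightarrow\alpha$ is final, we have $\mathcal{A}[\alpha]\simeq\colim_{(x_i\colon\mathcal{A}[a_i]\to X)\in\bar\alpha}\mathcal{A}[a_i]$, so every cell $v\colon\mathcal{A}[a]\to\mathcal{A}[\alpha]$ is represented by a pair $[(i,u)]$ with $i\in\bar\alpha$ and $u\colon a\to a_i$, subject to the relations $(i,u)\sim(j,\tau u)$ for morphisms $\tau\in\mathcal{A}^-$ of $\bar\alpha$, and $\xi_\alpha[(i,u)]=x_iu$. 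Two facts will be used repeatedly: every object of $\bar\alpha$ has degree exactly $n=\deg\alpha$ and is of minimal degree in its component, so no cell of $\alpha$ factors through an object of degree $<n$; and, by \ref{cond:CDSorth}, a non-degenerate monomorphism between objects of equal degree is an isomorphism. The engine of the whole proof is to take the coimage--image factorization $u=\image(u)\coim(u)=\delta\sigma$ (with $\sigma\in\mathcal{A}^-$ and $\delta$ a non-degenerate monomorphism, so $\deg b\le n$ for $b$ the codomain of $\sigma$) and then feed $\sigma$ into the confluence axiom \ref{cond:CDSconf}.

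For \ref{sublem:pres-nondeg} I would factor $u=\delta\sigma$. If $\deg b\le n-1$, then $v$ factors as $\mathcal{A}[a]\xrightarrow{\sigma}\mathcal{A}[b]\to\mathcal{A}[\alpha]$ and we are done. If $\deg b=n$, then $\delta$ is an isomorphism, $x_i\delta$ is a minimal-degree cell of $\alpha$, and $\xi_\alpha v=(x_i\delta)\sigma$; extracting from the assumed factorization of $\xi_\alpha v$ a degeneracy $\sigma'\colon a\to c'$ with $\deg c'\le n-1$ through which $\xi_\alpha v$ factors and taking the absolute pushout of $\sigma$ against $\sigma'$, we obtain a degeneracy $b\to d$ with $\deg d\le n-1$ through which $x_i\delta$ factors. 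This exhibits a cell of $\alpha$ of degree $<n$, contradicting minimality; hence only the first case occurs. For \ref{sublem:monic-nondeg} the same reduction shows that, since $v_1,v_2$ do not factor through lower degree, both coimage--image factorizations have $\deg b_k=n$, so each $v_k$ is represented by $[(y_k,\sigma_k)]$ with $y_k=x_{i_k}\delta_k$ a minimal-degree cell and $\sigma_k\in\mathcal{A}^-$. Taking the absolute pushout of $\sigma_1$ against $\sigma_2$ gives an apex $(d,z)$; minimality forces $\deg d=n$, so $(d,z)$ is again an object of $\bar\alpha$ and the two pushout legs $\rho_1,\rho_2$ are morphisms of $\bar\alpha$. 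The colimit relations then identify $v_1=[(d,z),\rho_1\sigma_1]=[(d,z),\rho_2\sigma_2]=v_2$, using $\rho_1\sigma_1=\rho_2\sigma_2$.

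For \ref{sublem:cell-disjoint}, let $(v,w)$ be a cell of $\mathcal{A}[\alpha]\times_X\mathcal{A}[\beta]$ and set $t=\xi_\alpha v=\xi_\beta w$. First I would reduce $t$ to its non-degenerate core, writing $t=t_0\pi$ with $\pi\colon a\to c\in\mathcal{A}^-$ and $t_0$ non-degenerate, so that $\deg c$ is the degree of the component of $t$. The decisive point is that $\deg c\le n-1$: the coimage--image analysis above applied to $v$ shows that $\deg c=n$ forces the component of $t$ to be $\alpha$, while the same analysis applied to $w$ forces it to be $\beta$; since $\alpha\ne\beta$, we must have $\deg c\le n-1$. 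It then remains to lift the factorization $t=t_0\pi$ back along $\xi_\alpha$ and $\xi_\beta$. If these are non-degenerate morphisms, then from $\xi_\alpha v=t_0\pi$ with $\pi\in\mathcal{A}^-$ we obtain $\bar v\colon\mathcal{A}[c]\to\mathcal{A}[\alpha]$ with $\bar v\pi=v$ and $\xi_\alpha\bar v=t_0$, and symmetrically $\bar w$ with $\xi_\beta\bar w=t_0$. Then $\xi_\alpha\bar v=t_0=\xi_\beta\bar w$, so $(\bar v,\bar w)$ is a cell of the fibre product over $\mathcal{A}[c]$ with $\deg c\le n-1$ and $(\bar v,\bar w)\circ\pi=(v,w)$; by \ref{cor:explicit-sk} this is exactly the asserted factorization.

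The main obstacle is this last input: that $\xi_\alpha\colon\mathcal{A}[\alpha]\to X$ is a non-degenerate morphism. I would derive it from two ingredients. The first is that a non-degenerate monomorphism composed into a non-degenerate cell stays non-degenerate: given $\mathcal{A}^-\pitchfork y$ and a non-degenerate monomorphism $\delta$, any lifting problem for $y\delta$ against a degeneracy $\tau$ is solved by first lifting against $y$ and then factoring the resulting map through $\delta$ using $\mathcal{A}^-\pitchfork\delta$. The second is that the generating cells $x_i$ are non-degenerate, which I expect to follow from their minimality together with the Eilenberg--Zilber reduction to cores and the fact that non-trivial degeneracies strictly lower the degree. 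Granting these, an arbitrary lifting problem $\xi_\alpha p=q\sigma$ is handled by coimage-factoring the representative of $p$ as $u=\delta\sigma_u$ and taking the confluence square of $\sigma_u$ against $\sigma$ (with legs $\eta,\eta'\in\mathcal{A}^-$ and an induced map $r$): one gets an equation $x_i\delta=r\eta$, and since $x_i\delta$ is non-degenerate, $\eta$ splits by some $\kappa$, yielding the lift explicitly as $[(i,\delta\kappa\eta')]$. Establishing the non-degeneracy of $\xi_\alpha$ is where the real work lies; the degree bookkeeping in \ref{sublem:pres-nondeg} and \ref{sublem:monic-nondeg} is comparatively routine once the confluence square of \ref{cond:CDSconf} is in hand.
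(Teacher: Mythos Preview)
Your arguments for \ref{sublem:pres-nondeg} and \ref{sublem:monic-nondeg} are correct and essentially coincide with the paper's: both proofs pick a representative $(x_i,u)$ of $v$ with $x_i$ of minimal degree, take the coimage--image factorization $u=\delta\sigma_u$, and use \ref{cond:CDSconf} to compare against the given degeneracy. The only cosmetic difference is that you work over $\bar\alpha$ throughout while the paper sometimes uses the full diagram $\alpha$; the paper's formulation of \ref{sublem:monic-nondeg} is slightly slicker because once $\mu_1,\mu_2\in\mathcal{A}^-$ the cells $x_1\mu_1=x_2\mu_2$ are literally the same object of $\alpha$, so $v_1$ and $v_2$ are the same coprojection.

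For \ref{sublem:cell-disjoint} your route diverges from the paper's and runs into a genuine gap. You reduce everything to the claim that $\xi_\alpha$ is a non-degenerate morphism, and your proposed proof of that claim needs the minimal-degree cells $x_i\in\bar\alpha$ to be non-degenerate in the sense $\mathcal{A}^-\pitchfork x_i$. You justify this by ``non-trivial degeneracies strictly lower the degree'', but that is \emph{not} one of the axioms of a confluent degeneracy system: the degree function is only a functor $\mathcal{A}^-\to\mathbb{N}^\opposite$, and \ref{cond:CDSisom}--\ref{cond:CDSconf} do not forbid degree-preserving non-isomorphisms in $\mathcal{A}^-$. So a cell of minimal degree in its component can still factor through such a degeneracy, and the lifting-against-$\eta$ step in your argument (``$\eta$ splits by some $\kappa$'') has no footing. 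Your subsidiary lemma that a non-degenerate monomorphism precomposed with a non-degenerate cell is again non-degenerate is fine; it is the non-degeneracy of $x_i$ itself that is unsupported.

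The paper avoids this detour entirely. It stays at the level of the representing morphisms $\mu:a\to a_1$ and $\nu:a\to a_2$ (with $x_1\in\bar\alpha$, $x_2\in\bar\beta$): if neither $\mu$ nor $\nu$ factored through an object of degree $\le n-1$, their coimage--image factorizations would force $\mu,\nu\in\mathcal{A}^-$, hence $x_1\mu=x_2\nu$ would lie in $\alpha\cap\beta$, contradicting $\alpha\neq\beta$. Thus one of $\mu,\nu$---and then, via \ref{sublem:pres-nondeg} applied to the other side, both---factor through degeneracies $\sigma',\sigma''$ to degree $\le n-1$; a single application of \ref{cond:CDSconf} then produces the common target. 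No statement about $\xi_\alpha$ being non-degenerate is required. If you want to keep your overall shape, the fix is to replace the non-degeneracy of $x_i$ by this direct argument on $\mu$ and $\nu$, rather than trying to lift through $\xi_\alpha$.
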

\begin{proof}
\ref{sublem:pres-nondeg}: 
Suppose we have a square
\[
\xymatrix{
  \mathcal{A}[a] \ar[r]^{v} \ar[d]_{\sigma} & \mathcal{A}[\alpha] \ar[d]^{\xi_\alpha} \\
  \mathcal{A}[a'] \ar[r]^{x'} & X }
\]
with $\sigma\in\mathcal{A}^-$ and $\deg a'\le n-1$.
Since $\mathcal{A}[\alpha](a)\simeq\colim_\alpha \mathcal{A}[a'']$, there is a cell $x'':\mathcal{A}[a'']\to X$ and $v'':\mathcal{A}[a]\to\mathcal{A}[a'']$ such that the following diagram is commutative:
\[
\xymatrix{
  \mathcal{A}[a] \ar[r]^{v''} \ar[dr]_{v} & \mathcal{A}[a''] \ar[r]^{x''} \ar[d]^{i^{a''}} & X \\
  {} & \mathcal{A}[\alpha] \ar[ur]_{\xi_\alpha} & {} }
\]
Hence we have the following square:
\[
\xymatrix{
  \mathcal{A}[a] \ar[r]^{v''} \ar[d]_{\sigma} & \mathcal{A}[a''] \ar[d]^{x''} \\
  \mathcal{A}[a'] \ar[r]^{x'} & X }
\]
Let $v''=\delta\sigma''$ be the factorization such that $\sigma'':a\to a'_1\in\mathcal{A}^-$ and $\delta:a'_1\to a''$ is a non-degenerate monomorphism.
Since $\deg a'\le n-1 < \deg\alpha$, $x'$ cannot belong to $\alpha$, and hence $\delta$ cannot be an isomorphism.
This implies $\deg a'_1\le n-1$, and the part \ref{sublem:pres-nondeg} follows.

\ref{sublem:monic-nondeg}: 
Let $v_1,v_2:\mathcal{A}[a]\rightrightarrows\mathcal{A}[\alpha]$ be as in the assumption, that is, we have $\xi_\alpha v_1=\xi_\alpha v_2$.
The similar discussion above implies that we can take a commutative diagram
\[
\xymatrix@R-2ex{
  {} & \mathcal{A}[a_1] \ar[dr]_{x_1} \ar[rr]^{i^1} && \mathcal{A}[\alpha] \ar[dl]^{\xi_\alpha} \\
  \mathcal{A}[a] \ar[ur]^{\mu_1} \ar[dr]_{\mu_2} && X & \\
  {} & \mathcal{A}[a_2] \ar[ur]^{x_2} \ar[rr]^{i^2} && \mathcal{A}[\alpha] \ar[ul]_{\xi_\alpha} }
\]
such that $i^k\mu_k=v_k$ for $k=1,2$, and $\deg a_1=\deg a_2=n$.
Since $v_k$ does not factor through any cells of degree $\le n-1$, we have $\mu_k\in\mathcal{A}^-$.
Thus $\mu_1x_1,\mu_2x_2\in\alpha$.
This implies that $v_1=i^1\mu_1=i^2\mu_2=v_2$ are natural injections $\mathcal{A}[a]\to\mathcal{A}[\alpha]$ at the same cell $x_1\mu_1=x_2\mu_2$.
Hence the part \ref{sublem:monic-nondeg} follows.

\ref{sublem:cell-disjoint}: 
Let $\beta\in\pi_0(\commacat{\mathcal{A}^-}{X})$ such that $\deg\beta=n$ and $\beta\neq\alpha$.
Notice that each cell $\mathcal{A}[a]\to\mathcal{A}[\alpha]\times_X\mathcal{A}[\beta]$ corresponds to a commutative square
\[
\xymatrix{
  \mathcal{A}[a] \ar[r]^{v} \ar[d]_{w} & \mathcal{A}[\alpha] \ar[d]^{\xi_\alpha} \\
  \mathcal{A}[\beta] \ar[r]^{\xi_\beta} & X }
\]
By the same argument as above, we may choose cells $(\mathcal{A}[a_1]\xrightarrow{x_1}X)\in\alpha$, $(\mathcal{A}[a_2]\xrightarrow{x_2}X)\in\beta$ and factorizations
\[
\begin{split}
v:\mathcal{A}[a] &\xrightarrow{\mu} \mathcal{A}[a_1] \to \mathcal{A}[\alpha]\,,\\
w:\mathcal{A}[a] &\xrightarrow{\nu} \mathcal{A}[a_2] \to \mathcal{A}[\beta]\,.
\end{split}
\]
Then we have the following square:
\[
\xymatrix{
  \mathcal{A}[a] \ar[r]^{\mu} \ar[d]_{\nu} & \mathcal{A}[a_1] \ar[d]^{x_1} \\
  \mathcal{A}[a_2] \ar[r]^{x_2} & X }
\]
Since $\alpha\neq\beta$, either $\mu$ or $\nu$ factors through some $\mathcal{A}[a']$ with $\deg a'\le n-1$.
But in the case, by the part \ref{sublem:monic-nondeg}, the other also factors through some $\mathcal{A}[a'']$ with $\deg a''\le n-1$.
By the condition \ref{cond:CDSconf}, we may take $a'=a''$.
Thus we obtain \ref{sublem:cell-disjoint}.
\end{proof}

Now we are ready to prove a key result:

\begin{prop}\label{prop:sk-cell-extend}
Let $\mathcal{A}$ be a small category with a confluent degeneracy system $\mathcal{A}^-$.
Then for every $X\in\mathcal{A}$, the following square is bicartesian:
\[
\xymatrix@R-1ex{
  {\displaystyle\raisedunderop{\coprod}{\substack{\alpha\in\pi_0(\commacat{\mathcal{A}^-}{X})\\ \deg\alpha=n}} \partial\mathcal{A}[\alpha]} \ar[r] \ar[d] & \operatorname{sk}_{n-1}X \ar[d] \\
  {\displaystyle\raisedunderop{\coprod}{\substack{\alpha\in\pi_0(\commacat{\mathcal{A}^-}{X})\\ \deg\alpha=n}} \mathcal{A}[\alpha]} \ar[r] & \operatorname{sk}_n X }
\]
\end{prop}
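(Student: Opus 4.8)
The plan is to verify the square objectwise. Since limits and colimits in the presheaf category $\mathcal{A}^\wedge$ are computed pointwise, it suffices to fix an object $a\in\mathcal{A}$ and show that the square of sets obtained by evaluating at $a$ is simultaneously a pullback and a pushout in $\mathbf{Set}$. First I would record the pointwise description of the skeleta: combining corollary \ref{cor:explicit-sk} with the factorization \ref{cond:CDSfact} and the orthogonality \ref{cond:CDSorth}, and using lemma \ref{lem:cds-connect} for the reverse implication, a cell $x\in X(a)$ lies in $\operatorname{sk}_n X(a)$ if and only if its path-component $\alpha\in\pi_0(\commacat{\mathcal{A}^-}{X})$ satisfies $\deg\alpha\le n$; hence $\operatorname{sk}_n X(a)\setminus\operatorname{sk}_{n-1}X(a)$ is exactly the set of cells whose component has degree precisely $n$. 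I would also note that both vertical maps are monomorphisms: the left one because $\partial\mathcal{A}[\alpha]=\operatorname{sk}_{n-1}\mathcal{A}[\alpha]\hookrightarrow\mathcal{A}[\alpha]$ is a skeletal inclusion, and the right one by lemma \ref{lem:sk-injective}. Thus the pushout glues each boundary $\partial\mathcal{A}[\alpha](a)$ into $\operatorname{sk}_{n-1}X(a)$ along an injection while leaving the interiors $\mathcal{A}[\alpha](a)\setminus\partial\mathcal{A}[\alpha](a)$ untouched.

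For the pushout I would show that the canonical map from the pointwise pushout $P$ to $\operatorname{sk}_n X(a)$ is a bijection. As a set, $P$ is the disjoint union of $\operatorname{sk}_{n-1}X(a)$ with the interiors $\coprod_{\deg\alpha=n}\big(\mathcal{A}[\alpha](a)\setminus\partial\mathcal{A}[\alpha](a)\big)$, since the only identifications are of boundary cells with their images in $\operatorname{sk}_{n-1}X(a)$. Surjectivity is immediate: an element of $\operatorname{sk}_n X(a)\setminus\operatorname{sk}_{n-1}X(a)$ is a cell $x$ whose component $\alpha$ has degree $n$, so $x=\xi_\alpha(i^a)$ for the colimit coprojection $i^a\colon\mathcal{A}[a]\to\mathcal{A}[\alpha]$, and $i^a$ is necessarily interior. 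Injectivity on the interiors then splits into three checks, each provided by lemma \ref{lem:cell-intersect}: an interior cell of $\mathcal{A}[\alpha]$ cannot map into $\operatorname{sk}_{n-1}X$, by the contrapositive of \ref{sublem:pres-nondeg}; two interior cells of the same $\mathcal{A}[\alpha]$ with the same image under $\xi_\alpha$ coincide, by \ref{sublem:monic-nondeg}; and interior cells of distinct components $\alpha\ne\beta$ have distinct images, for a coincidence would produce a cell of $\mathcal{A}[\alpha]\times_X\mathcal{A}[\beta]$ of degree $n$, contradicting \ref{sublem:cell-disjoint}. This yields the bijection $P\xrightarrow{\sim}\operatorname{sk}_n X(a)$.

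For the pullback, since the right vertical map $\operatorname{sk}_{n-1}X(a)\hookrightarrow\operatorname{sk}_n X(a)$ is injective, the fibre product $\operatorname{sk}_{n-1}X(a)\times_{\operatorname{sk}_n X(a)}\coprod_\alpha\mathcal{A}[\alpha](a)$ is simply the set of cells $v\in\coprod_\alpha\mathcal{A}[\alpha](a)$ with $\xi_\alpha(v)\in\operatorname{sk}_{n-1}X(a)$; by \ref{sublem:pres-nondeg} this is precisely $\coprod_\alpha\partial\mathcal{A}[\alpha](a)$, so the comparison map is a bijection and the square is a pullback. (Alternatively, once the square is known to be a pushout, one may invoke that a presheaf topos is adhesive, so that a pushout along the left-hand monomorphism is automatically a pullback.) The main obstacle is organizing the pointwise bijection correctly — in particular, matching $\operatorname{sk}_n X(a)\setminus\operatorname{sk}_{n-1}X(a)$ with the disjoint union of the interiors and ruling out collisions between different components — but all three nontrivial facts needed are exactly the assertions of lemma \ref{lem:cell-intersect}, so the argument reduces to assembling them together with the description of the skeleta.
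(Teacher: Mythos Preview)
Your argument is correct and follows essentially the same route as the paper: both reduce to a pointwise check and use the three parts of lemma~\ref{lem:cell-intersect} to establish the required bijection. The only organizational differences are that the paper first invokes the topos property (pushout along a mono is automatically a pullback) rather than verifying the pullback directly, and that the paper uses corollary~\ref{cor:cell-skeletal} to reduce the pointwise check to objects $a$ of degree exactly $n$ (at lower degree both vertical maps are isomorphisms), whereas you treat all $a$ uniformly; neither difference is substantive.
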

\begin{proof}
By lemma \ref{lem:sk-injective}, the left edge is a monomorphism, so that it suffices to show the square is a pushout, since $\mathcal{A}^\wedge$ is a presheaf topos.
Moreover, since all edges belong to the image of the left Kan extension $(j_n)_!:\mathcal{A}_{\le n}^\wedge\to\mathcal{A}^\wedge$ as shown in the previous subsection, it suffices to show that the square is a pushout at objects $a\in\mathcal{A}$ with $\deg a = n$.

Suppose $a\in\mathcal{A}$ is an object of degree $n$.
First, notice that the map
\[
\left(\raisedunderop{\coprod}{\substack{\alpha\in\pi_0(\commacat{\mathcal{A}^-}{X})\\\deg\alpha=n}}\mathcal{A}[\alpha](a)\right)\amalg\operatorname{sk}_{n-1}X(a)
\to \operatorname{sk}_nX(a)
\]
is surjective.
Indeed, if a cell $x:\mathcal{A}[a]\to X$ factors through a cell of degree $\le n$, then $x$ is a cell of $\operatorname{sk}_{n-1}X$ by corollary \ref{cor:explicit-sk}.
Otherwise, put $\alpha=[x]\in\pi_0(\commacat{\mathcal{A}^-}{X})$.
Then we have $\deg\alpha=n$, and $x$ factors through $\mathcal{A}[\alpha]$.
Hence the map is surjective.
This implies the map
\[
\left(\raisedunderop{\coprod}{\substack{\alpha\in\pi_0(\commacat{\mathcal{A}^-}{X})\\\deg\alpha=n}}\mathcal{A}[\alpha](a)\right)\cup\operatorname{sk}_{n-1}X(a)
\to \operatorname{sk}_nX(a)
\]
is also surjective, where the symbol $\cup$ denotes the pushout over the left corner of the square.
On the other hand, it is injective by \ref{lem:cell-intersect}.
So the result follows.
\end{proof}

\subsection{Generators of non-degenerate monomorphisms}
In this section, we give a ``good generator'' of the class of non-degenerate monomorphisms.
We fix a confluent degeneracy class $\mathcal{A}^-$ on a small category $\mathcal{A}$, and denote by $\mathsf{M}$ the class of non-degenerate monomorphisms in $\mathcal{A}$.
Recall that a class $\mathcal{M}$ of morphisms in a bicomplete category $\mathcal{C}$ is said to be saturated if it is closed under pushouts, retracts and transfinite compositions.

\begin{prop}
Let $\mathcal{A}^-$ be a confluent degeneracy system on a small category $\mathcal{A}$.
Then the class of non-degenerate monomorphisms in $\mathcal{A}^\wedge$ is saturated.
\end{prop}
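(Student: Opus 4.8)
The plan is to separate the two defining conditions: a non-degenerate monomorphism in $\mathcal{A}^\wedge$ is at once (i) a monomorphism and (ii) a morphism right orthogonal to the image of $\mathcal{A}^-$ under the Yoneda embedding $\mathcal{A}\hookrightarrow\mathcal{A}^\wedge$, i.e.\ a morphism $f$ with $\mathcal{A}^-\pitchfork f$. Write $\mathcal{N}$ for the class of such morphisms. Closure of $\mathcal{N}$ under retracts is immediate, since in any category both the class of monomorphisms and the class of morphisms having a right lifting property are closed under retracts, and $\mathcal{N}$ is their intersection. The real work is in the pushout and transfinite-composition cases, because the right-orthogonal class is \emph{not} stable under these operations by itself; the monomorphism condition and non-degeneracy must be used in tandem.

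For pushouts, consider a pushout square with $f\colon A\to B$ in $\mathcal{N}$ pushed out along $u\colon A\to C$ to give $g\colon C\to D$. Since $\mathcal{A}^\wedge$ is a topos, $g$ is again a monomorphism, and colimits are computed objectwise, so for each $c\in\mathcal{A}$ one has $D(c)\cong C(c)\sqcup\bigl(B(c)\setminus A(c)\bigr)$, with the images of $C$ and $B$ in $D$ meeting exactly in the image of $A$. First I would solve a lifting problem
\[
\xymatrix{
  \mathcal{A}[a] \ar[r]^{p} \ar[d]_{\sigma} & C \ar[d]^{g} \\
  \mathcal{A}[a'] \ar[r]^{q} & D }
\]
with $\sigma\in\mathcal{A}^-$ by splitting on where the cell $q\in D(a')$ lives. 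If $q\in C(a')$ it factors as $q=g\tilde q$, and since $g$ is monic one checks $\tilde q$ is the desired lift. In the remaining case $q\in B(a')\setminus A(a')$ is where non-degeneracy of $f$ enters: the equality $q\sigma=gp$ forces $B(\sigma)(q)$ to lie in the overlap of the images of $B$ and $C$, hence in $A(a)$, so the pair $(q\sigma,q)$ is a lifting problem against $f$; a lift exhibits $q$ as factoring through $A\hookrightarrow B$, i.e.\ $q\in A(a')$, contradicting $q\in B(a')\setminus A(a')$. Thus this case cannot occur and $g\in\mathcal{N}$.

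For transfinite compositions, rather than attempt a downward descent I would prove by transfinite induction on $j$ that the composite $X_0\to X_j$ lies in $\mathcal{N}$. The base case is trivial and the successor step is just closure of monomorphisms and of right-orthogonal maps under composition. At a limit ordinal $j$ with $X_j=\colim_{k<j}X_k$, monomorphy is preserved by the filtered colimit, and for non-degeneracy I would use that representables are compact: evaluation at any object preserves all colimits, so in a lifting problem $(\mathcal{A}[a]\to X_0,\ \mathcal{A}[a']\to X_j)$ the bottom edge factors through some $X_k$, and the inductive hypothesis for $X_0\to X_k$ then supplies a lift landing in $X_0$ (using that $X_k\to X_j$ is monic to match the top edge).

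I expect the pushout step to be the main obstacle, as it is the one place where the right-orthogonal class fails to be closed on its own, so the interaction of the two conditions must be exploited; the point to get right is the objectwise description of the pushout together with the observation that a degeneracy of a genuinely new cell of $D$ can only land in the old part $A$, which is precisely the content of $f$ being non-degenerate. The limit step of the transfinite induction is the only place compactness of representables is needed, and it becomes routine once the inductive statement is phrased in terms of $X_0\to X_j$ rather than $X_j\to X_{j+1}$.
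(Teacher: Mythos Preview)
Your proposal is correct and follows essentially the same strategy as the paper's proof: retracts via intersection of two retract-closed classes, pushouts via the explicit pointwise description of a pushout along a monomorphism in a topos together with the non-degeneracy of $f$ to rule out the ``new'' cells, and transfinite compositions via compactness of representables. The only cosmetic differences are that the paper phrases the pushout case as ``the bottom cell always factors through $g$'' (using that the pushout square is bicartesian) rather than deriving a contradiction in the $B\setminus A$ case, and handles transfinite compositions by a smallest-index descent instead of your transfinite induction on $j$; these are interchangeable formulations of the same argument.
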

\begin{proof}
By definition, the class of non-degenerate morphisms equals to the right orthogonal class of $\mathcal{A}^-$.
Hence it is clearly closed under retracts.
On the other hand, the class of monomorphisms is closed under retracts.
These imply that $\mathsf{M}$ is closed under retracts.

Suppose that $f:X\to Y\in\mathcal{A}^\wedge$ is a non-degenerate monomorphism, and we have a diagram
\begin{equation}\label{eq:polift}
\xymatrix{
  X \ar[r]^{p} \ar[d]_{f} \ar@{}[dr]|(.6){\pocorner} & V \ar[d]^{g} & \mathcal{A}[a] \ar[l]_{v} \ar[d]^{\sigma} \\
  Y \ar[r]^{q} & W & \mathcal{A}[a'] \ar[l]_{w} }
\end{equation}
where $\sigma\in\mathcal{A}^-$ and the left square is a pushout.
We have to find a lift for the right square.
Note that colimits in $\mathcal{A}^\wedge$ is pointwise colimits, hence the left square gives rise to a pushout in $\mathbf{Set}$ at $a'\in\mathcal{A}$.
This implies that $w:\mathcal{A}[a']\to W$ factors through either $q:Y\to W$ or $g:V\to W$ in the diagram.
We asserts that it in fact factors through $g$.
Indeed, suppose there is a cell $y:\mathcal{A}[a']\to Y$ such that $qy=w$.
Then we have $hv=w\sigma=qy\sigma$.
Since $\mathcal{A}^\wedge$ is a presheaf topos and $f$ is a monomorphism, every pushout square of $f$ is bicartesian (i.e. it is simultaneously a pushout and a pullback).
Thus there is $x:\mathcal{A}[a]\to X$ such that $v=px$ and the following diagram is commutative:
\[
\xymatrix{
  \mathcal{A}[a] \ar[r]^{x} \ar[d]_{\sigma} & X \ar[r]^{p} \ar[d]_{f} \ar@{}[dr]|{\text{bicart.}} & V \ar[d]^{g} \\
  \mathcal{A}[a'] \ar[r]^{y} & Y \ar[r]^{q} & W }
\]
Since $f$ is non-degenerate, we have $\sigma\pitchfork f$, so the left square has a lift $x':\mathcal{A}[a']\to X$, which gives rise to a factorization $w=gpx'$ through $g$.

Now say $w=gv'$ for $v':\mathcal{A}[a']\to V$.
Then we have $gv'\sigma=w\sigma=gv$.
Since the class of monomorphisms is closed under pushouts in the presheaf topos $\mathcal{A}^\wedge$, $g$ is monomorphism.
Hence we have $v'\sigma=v$.
This implies that $v':\mathcal{A}[a']\to V$ is a lift for the square in the diagram \ref{eq:polift}.
Therefore the class $\mathsf{M}$ is closed under pushouts.

Finally, we show that $\mathsf{M}$ is closed under transfinite compositions.
Let $\lambda$ be a limit ordinal and $X_\bullet:\lambda\to\mathcal{A}^\wedge$ be a $\lambda$-sequence such that each $X_\alpha\to X_{\alpha+1}$ is a non-degenerate monomorphism.
Set $X_\lambda:=\colim_{\alpha<\lambda} X_\alpha$ and $i^\alpha:X_\alpha\to X_\lambda$ to be the natural injection.
Suppose we have a diagram
\[
\xymatrix{
  \mathcal{A}[a] \ar[r]^{x_0} \ar[d]_{\sigma} & X_0 \ar[d]^{i^0} \\
  \mathcal{A}[a'] \ar[r]^{x'_\lambda} & X_\lambda }
\]
with $\sigma\in\mathcal{A}^-$.
Since $X_\lambda(a')$ is the colimit of the sequence $\{X_\alpha(a')\}_{\alpha<\lambda}$, $x'_\lambda$ factors through $x'_\alpha:\mathcal{A}[a']\to X_\alpha$ for some ordinal $\alpha<\lambda$ followed by $i^\alpha:X_\alpha\to X_\lambda$.
Let $\alpha_0$ be the smallest among such ordinals.
Then $\alpha_0$ cannot be a successor ordinal, since each $X_\alpha\to X_{\alpha+1}$ is right orthogonal to $\sigma$.
On the other hand $\alpha_0$ cannot be a non-zero limit ordinal, since if $\alpha$ is a non-zero limit ordinal, $X_\alpha(a')$ should be defined as the colimit of the sequence $\{X_\beta(a')\}_{\beta<\alpha}$.
Therefore we obtain $\alpha_0=0$, and $x'_\lambda=i^0 x'_0$.
Now we have $i^0x'_0\sigma=x'_\lambda\sigma=i^0 x_0$.
Since the class of monomorphisms is closed under transfinite compositions in the presheaf topos $\mathcal{A}^\wedge$, $i^0$ is a monomorphism, hence we obtain $x'_0\sigma=x_0$.
This implies that $x'_0$ is in fact a lift for the square above, which completes the proof.
\end{proof}

The aim of this section is finding a ``good'' class $S$ of non-degenerate monomorphisms which generates $\mathsf{M}$ as a saturated class; i.e. $\mathsf{M}$ is the smallest saturated class containing $S$.

We define a class $S$ of morphisms in $\mathcal{A}^-$ by
\[
S:=
\left\{\partial\mathcal{A}[\alpha]\to\mathcal{A}[\alpha]\,\middle|\,\alpha\in\pi_0(\commacat{\mathcal{A}^-}{X}),X\in\mathcal{A}^\wedge\right\}\,.
\]

\begin{prop}
Let $\mathcal{A}$ be a small category with a confluent degeneracy system $\mathcal{A}^-$.
Then the class $\mathsf{M}$ of non-degenerate monomorphisms in $\mathcal{A}$ is the smallest saturated class containing $S$ defined above.
More precisely, every non-degenerate monomorphism is isomorphic to a transfinite composition of pushouts of morphisms in $S$.
\end{prop}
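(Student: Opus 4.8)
The plan is to prove the two inclusions $\overline{S}\subseteq\mathsf{M}$ and $\mathsf{M}\subseteq\overline{S}$, where $\overline{S}$ denotes the smallest saturated class containing $S$. The first is immediate: each generator $\partial\mathcal{A}[\alpha]\to\mathcal{A}[\alpha]$ is, by definition of $\partial$, the map $\operatorname{sk}_{n-1}\mathcal{A}[\alpha]\to\mathcal{A}[\alpha]$ with $n=\deg\alpha$, which is a non-degenerate monomorphism by the corollary that every $\operatorname{sk}_m Z\to Z$ is such (and $\operatorname{sk}_n\mathcal{A}[\alpha]\simeq\mathcal{A}[\alpha]$ by Corollary \ref{cor:cell-skeletal}). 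Thus $S\subseteq\mathsf{M}$, and since the preceding proposition shows $\mathsf{M}$ is saturated, we get $\overline{S}\subseteq\mathsf{M}$. All the content lies in the reverse inclusion, which also yields the refined statement.

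For the reverse inclusion, fix a non-degenerate monomorphism $f:X\to Y$ and identify $X$ with a subpresheaf of $Y$. I would use the relative skeletal filtration $Z_n:=X\cup\operatorname{sk}_n Y$ (union of subpresheaves of $Y$), which satisfies $Z_{-1}=X$ since $\operatorname{sk}_{-1}Y=\varnothing$, and $\colim_n Z_n=X\cup Y=Y$ by Corollary \ref{cor:explicit-sk}. Hence $f$ is the transfinite composite of the monomorphisms $Z_{n-1}\hookrightarrow Z_n$. Since $\overline{S}$ is closed under transfinite compositions, and since a coproduct of maps in $S$ is itself a transfinite composite of pushouts of single maps in $S$ (attaching one summand at a time along a well-ordering), it suffices to exhibit each $Z_{n-1}\to Z_n$ as a pushout of a coproduct of maps in $S$.

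To do so I would first sort the top cells using Corollary \ref{cor:non-deg-isom}. Because $f$ is a monomorphism, the induced functor $f_\ast$ is fully faithful and injective on objects, so it embeds $\pi_0(\commacat{\mathcal{A}^-}{X})$ into $\pi_0(\commacat{\mathcal{A}^-}{Y})$; and for $\alpha$ in this image, Corollary \ref{cor:non-deg-isom} gives an isomorphism onto it from the corresponding component $\beta$ of $X$, so $\xi_\alpha$ factors through $f$ and $\operatorname{im}\xi_\alpha\subseteq X\subseteq Z_{n-1}$. Letting $\Lambda_n$ be the set of degree-$n$ components of $Y$ not in the image of $f_\ast$, the $\alpha\in\Lambda_n$ are exactly those contributing cells of $Y$ lying neither in $\operatorname{sk}_{n-1}Y$ nor in $X$ (a top cell of such an $\alpha$ lying in $X$ would place $\alpha$ in $\operatorname{im}f_\ast$). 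The claim to prove is then that
\[
\xymatrix{
  \coprod_{\alpha\in\Lambda_n}\partial\mathcal{A}[\alpha] \ar[r] \ar[d] & Z_{n-1} \ar[d] \\
  \coprod_{\alpha\in\Lambda_n}\mathcal{A}[\alpha] \ar[r] & Z_n }
\]
is a pushout, the top map being the restriction of the attaching maps $\xi_\alpha$, which land in $\operatorname{sk}_{n-1}Y\subseteq Z_{n-1}$ as $\partial\mathcal{A}[\alpha]=\operatorname{sk}_{n-1}\mathcal{A}[\alpha]$. I would form the pushout $Q$ abstractly, obtain $Q\to Y$ by the universal property, and show it is a monomorphism with image $Z_n$.

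The main obstacle is precisely the bicartesianness of this square, i.e. the injectivity of $Q\to Y$. Proposition \ref{prop:sk-cell-extend} supplies the absolute statement for $\operatorname{sk}_{n-1}Y\to\operatorname{sk}_n Y$, but the passage to the relative square requires genuine care: the naive square using \emph{all} degree-$n$ components fails, because $X$ may carry its own non-degenerate $n$-cells and then $Z_{n-1}\cap\operatorname{sk}_n Y$ strictly exceeds $\operatorname{sk}_{n-1}Y$ — this is exactly why $\Lambda_n$ must omit $\operatorname{im}f_\ast$. The disjointness one needs is that distinct components overlap only in lower degree and that each $\xi_\alpha$ is injective on cells not factoring through degree $<n$ (note $\xi_\alpha$ is not itself monic), both of which are furnished by Lemma \ref{lem:cell-intersect}. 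Granting the pushout square, each coproduct $\coprod_{\alpha\in\Lambda_n}(\partial\mathcal{A}[\alpha]\to\mathcal{A}[\alpha])$ decomposes as above, so $Z_{n-1}\to Z_n\in\overline{S}$ and hence $f\in\overline{S}$, which completes the argument and simultaneously proves the explicit description of $\mathsf{M}$.
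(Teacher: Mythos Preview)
Your proposal is correct and follows essentially the same route as the paper: the paper also filters $f$ by the relative skeleta $X^f_n := X\amalg_{\operatorname{sk}_n X}\operatorname{sk}_n Y$, which coincides with your $Z_n = X\cup\operatorname{sk}_n Y$ because non-degeneracy of $f$ forces $X\cap\operatorname{sk}_n Y=\operatorname{sk}_n X$, and then identifies each step as a pushout of $\coprod_{\beta\in C_{n+1}}(\partial\mathcal{A}[\beta]\hookrightarrow\mathcal{A}[\beta])$ over exactly your set $\Lambda_{n+1}=C_{n+1}$ of components not in $\operatorname{im} f_\ast$. The only cosmetic difference is that the paper derives the relative attaching square by pasting two auxiliary pushout cubes (one comparing $\operatorname{sk}_{n+1}X\amalg_{\operatorname{sk}_n X}\operatorname{sk}_n Y$ with $\operatorname{sk}_{n+1}Y$, then one invoking Proposition~\ref{prop:sk-cell-extend} and Corollary~\ref{cor:non-deg-isom}), whereas you argue the bicartesianness directly from Lemma~\ref{lem:cell-intersect}.
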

\begin{proof}
The first part follows from the last part.
Suppose $f:X\to Y$ is a non-degenerate monomorphism.
Let $X^f_0:=X$ and define $X^f_n\in\mathcal{A}^\wedge$ for $n\ge 1$ by the following pushout square:
\[
\xymatrix{
  \operatorname{sk}_n X \ar[r]^{\operatorname{sk}_n f} \ar[d] \ar@{}[dr]|(.6){\pocorner} & \operatorname{sk}_n Y \ar[d] \\
  X \ar[r]^{f} & X^f_n }
\]
There is a morphism $X^f_n\to X^f_{n+1}$ induced by the diagram below:
\begin{equation}
\label{eq:DGSrel-cell}
\begin{gathered}
\xymatrix@-3ex{
  {} & \operatorname{sk}_{n+1}X \ar[rr] \ar[dd]|{\hole} && \operatorname{sk}_{n+1}Y \ar[dd] \\
  \operatorname{sk}_nX \ar[ur] \ar[rr] \ar[dd] && \operatorname{sk}_nY \ar[ur] \ar[dd] & {} \\
  {} & X \ar[rr]|{\hole} && X^f_{n+1} \\
  X \ar@{=}[ur] \ar[rr] && X^f_n \ar[ur] & {} }
\end{gathered}
\end{equation}
It is clear that if $a\in\mathcal{A}[a]$ with $\deg a\le n$, the natural maps $X^f_n(a)\to Y(a)$ and $X^f_n(a)\to X^f_{n+1}(a)$ are isormorphisms.
Hence we have an isomorphism $\colim_{n\to\infty} X^f_n\simeq Y$.
Since $f:X\to Y$ factors as $X\to X^f_n\to Y$, $f$ is isomorphic to the transfinite composition $X^f_0\to \colim_{n\to\infty}X^f_n$.
So we show each $X^f_n\to X^f_{n+1}$ is a pushout of a direct coproduct of morphisms in $S$.

Now since the front and back faces of the commutative cube \eqref{eq:DGSrel-cell} are pushouts, we obtain the pushout square:
\begin{equation}
\label{eq:DGSrel-sk}
\let\objectstyle=\displaystyle
\begin{gathered}
\xymatrix{
  \operatorname{sk}_{n+1} X \mathop{\amalg}_{\operatorname{sk}_nX} \operatorname{sk}_n Y \ar[r] \ar[d] \ar@{}[dr]|(.6){\pocorner} & X^f_n \ar[d] \\
  \operatorname{sk}_{n+1}Y \ar[r] & X^f_{n+1} }
\end{gathered}
\end{equation}
On the other hand, we have the commutative cube below:
\[
\let\objectstyle=\displaystyle
\xymatrix@-5ex{
  {} & \raisedunderop\coprod{\substack{\beta\in\pi_0(\commacat{\mathcal{A}^-}{Y})\\\deg\beta=n+1}} \partial\mathcal{A}[\beta] \ar[rr] \ar[dd]|{\hole} && \operatorname{sk}_n Y \ar[dd] \\
  \raisedunderop\coprod{\substack{\alpha\in\pi_0(\commacat{\mathcal{A}^-}{X})\\\deg\alpha=n+1}} \partial\mathcal{A}[\alpha] \ar[rr] \ar[dd] \ar[ur] && \operatorname{sk}_n X \ar[dd] \ar[ur] & {} \\
  {} & \raisedunderop\coprod{\substack{\beta\in\pi_0(\commacat{\mathcal{A}^-}{Y})\\\deg\beta=n+1}} \mathcal{A}[\beta] \ar[rr]|-(.53){\hole} && \operatorname{sk}_{n+1} Y \\
  \raisedunderop\coprod{\substack{\alpha\in\pi_0(\commacat{\mathcal{A}^-}{X})\\\deg\alpha=n+1}} \mathcal{A}[\alpha] \ar[rr] \ar[ur] && \operatorname{sk}_{n+1} X \ar[ur] & {} }
\]
Since $f$ is a non-degenerate monomorphism, the functor $f_\ast:\commacat{\mathcal{A}^-}{X}\to\commacat{\mathcal{A}^-}{Y}$ is an embedding and component-wise isomorphism by corollary \ref{cor:non-deg-isom}.
Hence $f_\ast:\pi_0(\commacat{\mathcal{A}^-}{X})\to\pi_0(\commacat{\mathcal{A}^-}{Y})$ is injective and each $\partial\mathcal{A}[\alpha]\to\partial\mathcal{A}[f_\ast\alpha]$ and $\mathcal{A}[\alpha]\to\mathcal{A}[f_\ast\alpha]$ are isomorphisms.
We denote by $C_{n+1}$ the set of $\beta\in\pi_0(\commacat{\mathcal{A}^-}{X})$ with $\deg\beta=n+1$ which does not belong to the image of $f_\ast$.
Then we obtain the following pushout square:
\begin{equation}
\label{eq:DGSrel-cell-sub}
\let\objectstyle=\displaystyle
\xymatrix{
  \raisedunderop\coprod{\beta\in C_{n+1}} \partial\mathcal{A}[\beta] \ar[r] \ar[d] \ar@{}[dr]|(.6){\pocorner}& \operatorname{sk}_{n+1}X \mathop{\amalg}_{\operatorname{sk}_nX} \operatorname{sk}_n Y \ar[d] \\
  \raisedunderop\coprod{\beta\in C_{n+1}} \mathcal{A}[\beta] \ar[r] & \operatorname{sk}_{n+1}Y }
\end{equation}
Combining squares \eqref{eq:DGSrel-sk} and \eqref{eq:DGSrel-cell-sub}, we obtain the following pushout square:
\[
\let\objectstyle=\displaystyle
\xymatrix{
  \raisedunderop\coprod{\beta\in C_{n+1}} \partial\mathcal{A}[\beta] \ar[r] \ar[d] \ar@{}[dr]|(.6){\pocorner} & X^f_n\ar[d] \\
  \raisedunderop\coprod{\beta\in C_{n+1}} \mathcal{A}[\beta] \ar[r] & X^f_{n+1} }
\]
This is the required result.
\end{proof}

It is natural to ask whether the class $\mathcal{S}$ is a small set.
For EZ categories, it is true.
To see this, notice that if $\mathcal{A}$ is an EZ category, then every degree-preserving split epimorphism is an isomorphism.
Hence for every $X\in\mathcal{A}^\wedge$ and every $\alpha\in\pi_0(\commacat{\mathcal{A}^-}{X})$, the full subcategory $\bar\alpha$ of $\alpha$ spanned by cells $(x:\mathcal{A}[a]\to X)\in\alpha$ with $\deg a=\deg\alpha$ is a non-empty connected groupoid, which is equivalent to a group.

\begin{prop}
\label{prop:cell-stab}
Let $\mathcal{A}$ be an EZ category, and let $X\in\mathcal{A}^\wedge$ and $\alpha\in\pi_0(\commacat{\mathcal{A}}{X})$.
Choose a cell $(x:\mathcal{A}[a]\to X)\in\alpha$ with $\deg a=\deg\alpha$, and set
\[
\mathrm{Stab}(x):=\{\pi\in\mathrm{Aut}_{\mathcal{A}}(a)\mid x\pi=x\}\,.
\]
Then we have an isomorphism
\[
\mathcal{A}[\alpha] \simeq \mathrm{Stab}(x)\backslash\mathcal{A}[a]\,.
\]
\end{prop}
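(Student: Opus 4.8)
The plan is to compute the colimit defining $\mathcal{A}[\alpha]$ over a more tractable indexing category. First I would restrict to the full subcategory $\bar\alpha\subset\alpha$ spanned by the cells of degree $\deg\alpha$: by Lemma~\ref{lem:cell-core} the inclusion $\bar\alpha\hookrightarrow\alpha$ is final, and finality is exactly what is needed to preserve colimits, so
\[
\mathcal{A}[\alpha]\;=\;\colim_{(\mathcal{A}[a']\to X)\in\alpha}\mathcal{A}[a']\;\simeq\;\colim_{(\mathcal{A}[a']\to X)\in\bar\alpha}\mathcal{A}[a']\,.
\]

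Next I would exploit the structure of $\bar\alpha$ noted just before the statement. Because $\mathcal{A}$ is an EZ category, every degree-preserving split epimorphism is an isomorphism; since every morphism of $\bar\alpha$ lies in $\mathcal{A}^-$ and joins two objects of degree $\deg\alpha$, it is such an isomorphism, so $\bar\alpha$ is a non-empty connected groupoid. Fixing the distinguished cell $x:\mathcal{A}[a]\to X$, the full subcategory on the single object $x$ is the one-object groupoid $B\,\mathrm{Aut}_{\bar\alpha}(x)$, and its inclusion into the connected groupoid $\bar\alpha$ is an equivalence of categories, hence final. I would then identify $\mathrm{Aut}_{\bar\alpha}(x)=\mathrm{Stab}(x)$: an endomorphism of $x$ in $\bar\alpha$ is a $\sigma:a\to a$ in $\mathcal{A}^-$ with $x\sigma=x$, and as above such $\sigma$ is an automorphism of $a$, hence an element of $\mathrm{Stab}(x)$; conversely each $\pi\in\mathrm{Stab}(x)$ is an isomorphism fixing $x$, so lies in $\mathcal{A}^-$ and defines an endomorphism of $x$. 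Composing the two finality reductions gives
\[
\mathcal{A}[\alpha]\;\simeq\;\colim_{B\,\mathrm{Stab}(x)}\mathcal{A}[a]\,,
\]
where the diagram sends the unique object to $\mathcal{A}[a]$ and each $\pi\in\mathrm{Stab}(x)$ to the Yoneda map $\mathcal{A}[\pi]:\mathcal{A}[a]\to\mathcal{A}[a]$.

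Finally I would evaluate this colimit. Colimits in $\mathcal{A}^\wedge$ are computed pointwise in $\mathbf{Set}$, and the colimit of a $G$-set over the one-object groupoid $BG$ is its set of orbits; applying this at each $b\in\mathcal{A}$ turns $\mathcal{A}(b,a)$ into $\mathcal{A}(b,a)/\mathrm{Stab}(x)$, which assembles to the quotient presheaf $\mathrm{Stab}(x)\backslash\mathcal{A}[a]$. The identities $x\pi=x$ guarantee that the cell $x$ descends to this quotient and agrees with the universal cocone, so the resulting isomorphism $\mathcal{A}[\alpha]\simeq\mathrm{Stab}(x)\backslash\mathcal{A}[a]$ is compatible with $\xi_\alpha$.

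The step I expect to require the most care is not a single computation but the categorical bookkeeping: checking that the two appeals to finality legitimately compose, that $\bar\alpha$ is genuinely a connected groupoid with automorphism group exactly $\mathrm{Stab}(x)$, and that the formal identity ``colimit over $BG$ equals orbits'' is applied with the correct left-action convention so that the target is really $\mathrm{Stab}(x)\backslash\mathcal{A}[a]$ rather than its opposite.
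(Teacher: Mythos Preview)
Your proof is correct and follows essentially the same approach as the paper: restrict the colimit to $\bar\alpha$ via Lemma~\ref{lem:cell-core}, observe that the inclusion of the one-object groupoid $\mathrm{Stab}(x)$ into the connected groupoid $\bar\alpha$ is an equivalence, and identify the resulting colimit as the orbit quotient. The paper's version is terser but records exactly the same chain of isomorphisms; your elaboration of the identification $\mathrm{Aut}_{\bar\alpha}(x)=\mathrm{Stab}(x)$ and of the orbit computation simply makes explicit what the paper leaves implicit.
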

\begin{proof}
We have the embedding:
\[
\mathrm{Stab}(x)\xrightarrow{x}\bar\alpha\,,
\]
where $\bar\alpha$ is the full subcategory of $\alpha$ spanned by cells $(x':\mathcal{A}[a']\to X)\in\alpha$ with $\deg a'=\deg\alpha$.
The embedding is full, so since $\bar\alpha$ is a groupoid, it is an equivalence of categories.
It follows that we have isomorphisms
\[
\mathcal{A}[\alpha]
\simeq \colim_{(x':\mathcal{A}[a']\to X)\in\bar\alpha}\mathcal{A}[a']
\simeq \colim_{\pi\in\mathrm{Stab}(x)} \mathcal{A}[a]
\simeq \mathrm{Stab}(x)\backslash\mathcal{A}[a]\,,
\]
which is the required result.
\end{proof}

Notice that, by corollary \ref{cor:explicit-sk}, the subpresheaf $\partial\mathcal{A}[a]$ of $\mathcal{A}[a]$ is stable under the left action of $\mathrm{Aut}_{\mathcal{A}}(a)$.
It follows that $\mathrm{Aut}_{\mathcal{A}}(a)$ acts on $\partial\mathcal{A}[a]=\operatorname{sk}_{n}\mathcal{A}[a]$.
Since $\operatorname{sk}_n$ commutes with colimits, for every subgroup $H<\mathrm{Aut}_{\mathcal{A}}(a)$, there is a natural isomorphism
\[
H\backslash\partial\mathcal{A}[a]\simeq \mathrm{sk}_n(H\backslash\mathcal{A}[a])\,.
\]
Finally we obtain the following result:

\begin{corol}[\cite{Isa}]
\label{cor:ez-cellular}
Let $\mathcal{A}$ is an EZ category, and set
\[
S_0 := \{H\backslash\partial\mathcal{A}[a]\hookrightarrow H\backslash\mathcal{A}[a]\mid a\in\mathcal{A},H<\mathrm{Aut}_{\mathcal{A}}(a)\}\,.
\]
Then $S_0$ is a (small) set, and every monomorphism in $\mathcal{A}^\wedge$ is a transfinite composition of pushouts of morphisms in $S_0$.
\end{corol}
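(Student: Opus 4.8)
The plan is to deduce this from the preceding proposition, which already shows that every non-degenerate monomorphism in $\mathcal{A}^\wedge$ is isomorphic to a transfinite composition of pushouts of morphisms in the (a priori proper) class $S=\{\partial\mathcal{A}[\alpha]\to\mathcal{A}[\alpha]\}$. Two gaps separate that statement from the corollary: first, the corollary speaks of \emph{all} monomorphisms, not merely non-degenerate ones; and second, it replaces $S$, whose indexing runs over the proper class of all presheaves $X$, by the honest small set $S_0$. I would therefore organize the argument as (i) proving that in an EZ category every monomorphism of presheaves is automatically non-degenerate, and (ii) showing that each arrow of $S$ is isomorphic, in the arrow category, to an arrow of $S_0$, with $S_0$ genuinely a set. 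Combining these with the preceding proposition then yields the claim, since transfinite compositions of pushouts are unaffected by replacing a generating map with an isomorphic one.

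For step (i) the key point is that in an EZ category the degeneracies $\mathcal{A}^-$ are exactly the split epimorphisms of $\mathcal{A}$ (Example \ref{exam:EZ-cat}), and split epimorphisms admit sections. Given a monomorphism $f:X\to Y$ and a lifting square whose left edge is $\sigma:\mathcal{A}[a]\to\mathcal{A}[a']$ with $\sigma\in\mathcal{A}^-$ and top edge $x:\mathcal{A}[a]\to X$, I would choose a section $s$ of the underlying split epimorphism and set $\ell:=x\circ\mathcal{A}[s]:\mathcal{A}[a']\to X$. Functoriality of the Yoneda embedding together with $\sigma s=\mathrm{id}$ gives $f\ell=y$ at once, and then $f(\ell\sigma)=y\sigma=fx$ combined with the monicity of $f$ forces $\ell\sigma=x$; hence $\ell$ is the desired lift and $\mathcal{A}^-\pitchfork f$. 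This shows the classes of monomorphisms and of non-degenerate monomorphisms coincide in $\mathcal{A}^\wedge$.

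For step (ii) I would invoke Proposition \ref{prop:cell-stab}: choosing a cell $(x:\mathcal{A}[a]\to X)\in\alpha$ of minimal degree $n=\deg\alpha$ and writing $H:=\mathrm{Stab}(x)$, it gives $\mathcal{A}[\alpha]\simeq H\backslash\mathcal{A}[a]$. Since $\operatorname{sk}_{n-1}$ preserves colimits and $H$ preserves $\partial\mathcal{A}[a]$ (Corollary \ref{cor:explicit-sk}), the remark preceding the corollary yields $\partial\mathcal{A}[\alpha]=\operatorname{sk}_{n-1}\mathcal{A}[\alpha]\simeq H\backslash\operatorname{sk}_{n-1}\mathcal{A}[a]=H\backslash\partial\mathcal{A}[a]$, compatibly with the inclusions; thus $\partial\mathcal{A}[\alpha]\to\mathcal{A}[\alpha]$ is isomorphic to the arrow $H\backslash\partial\mathcal{A}[a]\hookrightarrow H\backslash\mathcal{A}[a]\in S_0$. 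Every pair $(a,H)$ conversely arises in this way by taking $X=H\backslash\mathcal{A}[a]$ and $\alpha$ the component of the quotient cell, so $S$ and $S_0$ determine the same isomorphism classes of arrows. Finally $S_0$ is a set, because $\mathcal{A}$ is small, whence $\objof\mathcal{A}$ is a set and each $\mathrm{Aut}_\mathcal{A}(a)$ has only a set of subgroups. Feeding these identifications into the preceding proposition and step (i) gives that every monomorphism of $\mathcal{A}^\wedge$ is a transfinite composition of pushouts of morphisms in $S_0$.

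I expect that the bulk of the work has already been carried out in the earlier propositions, so the only genuinely new point—and the one to get right—is step (i): the upgrade from non-degenerate monomorphisms to arbitrary monomorphisms is exactly where the EZ hypothesis (that $\mathcal{A}^-$ consists of split epimorphisms, rather than merely of morphisms orthogonal to a degeneracy class) is used in an essential way, through the existence of sections. The remainder is bookkeeping, namely transporting the cellular decomposition through the stabilizer description of $\mathcal{A}[\alpha]$ and verifying smallness of $S_0$.
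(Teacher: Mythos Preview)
Your proposal is correct and matches the paper's intended argument, which is left entirely implicit (the corollary is stated without proof, immediately after Proposition~\ref{prop:cell-stab} and the remark on $H\backslash\partial\mathcal{A}[a]$). In particular, your step (i)---that in an EZ category every monomorphism of presheaves is automatically non-degenerate because $\mathcal{A}^-$ consists of split epimorphisms and split epimorphisms are always left orthogonal to monomorphisms---is exactly the point the paper silently assumes when passing from ``non-degenerate monomorphism'' in the preceding proposition to ``monomorphism'' in the corollary; you are right that this is where the EZ hypothesis is genuinely used.
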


\section{Relative Spans}
\label{sec:rel-span}
The notion of spans in a category was first introduced by B\'enabou in \cite{Ben} as an example of bicategories.
It is a kind of generalization of binary relations, and closely related to I-categories, see \cite{Kaw}.
Dawson, Par\'e and Pronk investigate spans in a more categorical viewpoint in \cite{DPP1} and \cite{DPP2}.
We here introduce a little bit general notion here.

\subsection{The category of relative spans}
\begin{defin}
Let $\mathcal{A}$ be a category and $\mathcal{A}_0$ be a wide subcategory of $\mathcal{A}$ which is essentially closed under pullbacks; i.e. any pullback of an $\mathcal{A}_0$-morphism is isomorphic to an $\mathcal{A}_0$-morpihsm.
Then an $(\mathcal{A},\mathcal{A}_0)$-span from an object $x$ to an object $y$ in $\mathcal{A}$ is a diagram in $\mathcal{A}$ of the form
\[
\xymatrix@=4ex{
  {} & a \ar[dl]_{\gamma} \ar[dr]^f & {} \\
  x & {} & y }
\]
with $\gamma\in\mathcal{A}_0$.
We denote by $\mathrm{Span}(\mathcal{A},\mathcal{A}_0)(x,y)$ the set of all $(\mathcal{A},\mathcal{A}_0)$-spans from $x$ to $y$.
\end{defin}

We can compose $(\mathcal{A},\mathcal{A}_0)$-spans.
The composition is given by the following diagram
\[
\xymatrix@=4ex{
  {} & {} & c \ar[dl]_{\delta'} \ar@{}[dd]|(.4){\rotatebox{-45}\pbcorner} \ar[dr]^{f'} & {} & {} \\
  {} & a \ar[dl]_{\gamma} \ar[dr]^{f} & {} & b \ar[dl]_{\delta} \ar[dr]^{g} \\
  x & {} & y & {} & z }
\]
With this composition, we obtain a category $\mathrm{Span}(\mathcal{A},\mathcal{A}_0)$ under some conditions.
In particular, we are interested in the case where the following condition is satisfied:
\begin{enumerate}[label=$(\bigstar)$]
\item\label{cond:star} $\begin{cases}
\quad\text{$\mathcal{A}_0$ has no non-trivial isomorphism.}\\
\quad\text{For morphisms $\delta,\gamma$ of $\mathcal{A}$, if $\delta$ and $\delta\gamma$ are $\mathcal{A}_0$-morphisms, then so is $\gamma$.}
\end{cases}$
\end{enumerate}

\begin{lemma}
Let $\mathcal{A}$ be a category and $\mathcal{A}_0$ be a subcategory.
Suppose that $\mathcal{A}_0$ satisfies the condition \ref{cond:star}.
Then $\mathrm{Span}(\mathcal{A},\mathcal{A}_0)$ forms a (strict) category.
\end{lemma}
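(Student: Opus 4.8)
The plan is to mirror the strategy behind Lemma \ref{lem:thinpow-pb}: first isolate a \emph{strictly} canonical choice of pullback, then check the category axioms against that choice, with associativity as the only genuine work. The whole point is that $(\bigstar)$ upgrades the isomorphisms produced by universal properties into honest identities, which is exactly what ``strict category'' demands.

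First I would prove a rigidity lemma for pullbacks: for every cospan $a\xrightarrow{f}y\xleftarrow{\delta}b$ with $\delta\in\mathcal{A}_0$ there is a strictly unique pullback square whose leg $\delta'\colon c\to a$ parallel to $\delta$ lies in $\mathcal{A}_0$. Existence is precisely the hypothesis that $\mathcal{A}_0$ is essentially closed under pullbacks: take any pullback and replace its apex along the isomorphism furnished by essential closure so that the leg parallel to $\delta$ becomes an $\mathcal{A}_0$-morphism. For uniqueness, suppose $(c,\delta',f')$ and $(c_1,\delta'_1,f'_1)$ are two such squares; the universal property yields a unique isomorphism $\theta\colon c_1\to c$ with $\delta'\theta=\delta'_1$ and $f'\theta=f'_1$. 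Since $\delta'$ and $\delta'\theta=\delta'_1$ both lie in $\mathcal{A}_0$, the second clause of $(\bigstar)$ forces $\theta\in\mathcal{A}_0$; as $\theta$ is an isomorphism and $\mathcal{A}_0$ has no non-trivial isomorphism, $\theta=\mathrm{id}$, so the two squares coincide on the nose. Composition is then defined by always taking this canonical pullback: the composite of $x\xleftarrow{\gamma}a\xrightarrow{f}y$ with $y\xleftarrow{\delta}b\xrightarrow{g}z$ is $x\xleftarrow{\gamma\delta'}c\xrightarrow{gf'}z$, and its left leg $\gamma\delta'$ lies in $\mathcal{A}_0$ because $\mathcal{A}_0$ is closed under composition, so the result is again a span.

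Next I would dispatch the units. The identity on $x$ is the span $x\xleftarrow{\mathrm{id}}x\xrightarrow{\mathrm{id}}x$, whose legs lie in $\mathcal{A}_0$ since $\mathcal{A}_0$ is wide. For the unit laws, the trivial square over an identity leg is a pullback whose $\mathcal{A}_0$-leg is an identity; by the strict uniqueness just proved it \emph{is} the canonical pullback, so composing on either side with an identity span returns the original span verbatim.

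The main obstacle is associativity, which I would settle with the pasting law for pullbacks followed by the same rigidity argument. Take three composable spans $s,t,u$ with apexes $a,b,c$ and inner maps $f\colon a\to x$, $\beta\colon b\to x$, $g\colon b\to y$, $\delta\colon c\to y$, where $\beta,\delta\in\mathcal{A}_0$. Unwinding the canonical pullbacks, the apex $q$ of $u\circ(t\circ s)$ and the apex $q'$ of $(u\circ t)\circ s$ are, by the pasting law, both limits of the zig-zag diagram $a\xrightarrow{f}x\xleftarrow{\beta}b\xrightarrow{g}y\xleftarrow{\delta}c$, so there is a unique isomorphism $\theta\colon q\to q'$ commuting with all projections. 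In particular $\theta$ respects the two projections to $a$, each of which is a composite of $\mathcal{A}_0$-legs coming from the canonical pullbacks and hence lies in $\mathcal{A}_0$; writing these as $\beta'\delta'$ and $(\beta\delta'')'$, the relation $(\beta\delta'')'\theta=\beta'\delta'$ together with $(\bigstar)$ and the absence of non-trivial isomorphisms again forces $\theta=\mathrm{id}$. Thus $q=q'$ with matching projections, so the two triple composites have identical legs. The delicate feature throughout is that it is \emph{strictness}, not associativity merely up to isomorphism, that has to be established, and the two clauses of $(\bigstar)$ — closure of $\mathcal{A}_0$ under left division and rigidity of $\mathcal{A}_0$ — are exactly what convert the canonical comparison isomorphisms into identities.
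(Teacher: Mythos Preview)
Your proof is correct and follows essentially the same approach as the paper: both establish strict uniqueness of the canonical $\mathcal{A}_0$-legged pullback from $(\bigstar)$, then deduce associativity by identifying the left- and right-associative apexes with the limit of the full zig-zag (the paper draws this as a three-square pyramid and appeals to uniqueness; you invoke the pasting law and then apply rigidity to the comparison isomorphism, which is the same argument unwound). Your treatment is in fact slightly more complete, since you handle the unit axioms explicitly while the paper leaves them implicit.
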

\begin{proof}
First notice that if $\mathcal{A}_0$ satisfies the condition \ref{cond:star}, then for every $f:a\to y$ of $\mathcal{A}$ and $gamma:b\to y$ of $\mathcal{A}_0$, the pullback square
\[
\xymatrix{
  c \ar[r]^{\delta'} \ar@{}[dr]|(.4){\pbcorner} \ar[d]_{f'} & b \ar[d]^{f} \\
  a \ar[r]^{\delta} & y }
\]
is unique; cf. the proof of lemma \ref{lem:thinpow-pb}.
Since $\mathcal{A}_0$ is closed under pullback, $\delta'$ is a $\mathcal{A}_0$ morphism.
Thus the composition
\[
\xymatrix@=4ex{
  {} & c \ar[dl]_{\gamma\delta'} \ar[dr]^{gf'} & {} \\
  x & {} & z }
\]
is well-defined element of $\mathrm{Span}(\mathcal{A},\mathcal{A}_0)(x,z)$.
Moreover, the uniqueness of the pullback implies the associativity of the composition:
Indeed the left-associative composition is given by
\[
\xymatrix@=3ex{
  {} & {} & {} & \bullet \ar[dl] \ar[ddrr] \ar@{}[dd]|(.4){\rotatebox{-45}{\pbcorner}} & {} & {} & {} \\
  {} & {} & \bullet \ar[dl] \ar[dr] \ar@{}[dd]|(.4){\rotatebox{-45}{\pbcorner}} & {} & {} & {} & {} \\
  {} & \bullet \ar[dl] \ar[dr] & {} & \bullet \ar[dl] \ar[dr] & {} & \bullet \ar[dl] \ar[dr] & {} \\
  \bullet & {} & \bullet & {} & \bullet & {} & \bullet }
\]
while the right-associative composition is given by
\[
\xymatrix@=3ex{
  {} & {} & {} & \bullet \ar[ddll] \ar[dr] \ar@{}[dd]|(.4){\rotatebox{-45}{\pbcorner}} & {} & {} & {} \\
  {} & {} & {} & {} & \bullet \ar[dl] \ar[dr] \ar@{}[dd]|(.4){\rotatebox{-45}{\pbcorner}} & {} & {} \\
  {} & \bullet \ar[dl] \ar[dr] & {} & \bullet \ar[dl] \ar[dr] & {} & \bullet \ar[dl] \ar[dr] & {} \\
  \bullet & {} & \bullet & {} & \bullet & {} & \bullet }
\]
By the uniqueness, they coincide with the composition given by the diagram
\[
\xymatrix@=3ex{
  {} & {} & {} & \bullet \ar[dl] \ar[dr] \ar@{}[dd]|(.4){\rotatebox{-45}{\pbcorner}} & {} & {} & {} \\
  {} & {} & \bullet \ar[dl] \ar[dr] \ar@{}[dd]|(.4){\rotatebox{-45}{\pbcorner}} & {} & \bullet \ar[dl] \ar[dr] \ar@{}[dd]|(.4){\rotatebox{-45}{\pbcorner}} & {} & {} \\
  {} & \bullet \ar[dl] \ar[dr] & {} & \bullet \ar[dl] \ar[dr] & {} & \bullet \ar[dl] \ar[dr] & {} \\
  \bullet & {} & \bullet & {} & \bullet & {} & \bullet }
\]
Therefore the composition is associative.
\end{proof}

\begin{rem}
Note that in general cases, $\mathrm{Span}(\mathcal{A},\mathcal{A}_0)$ is not a category but a bicategory.
A 2-morphism of $\mathrm{Span}(\mathcal{A},\mathcal{A}_0)$ is a diagram of the form:
\[
\xymatrix@=4ex{
  {} & a \ar[dl]_{\gamma} \ar[dd] \ar[dr]^f & {} \\
  x & {} & y \\
  {} & a' \ar[ul]^{\gamma'} \ar[ur]_{f'} & {} }
\]
In the bicategory, the associativity $(\alpha_3\alpha_2)\alpha_1\simeq\alpha_3(\alpha_2\alpha_1)$ of compositions of 1-morphisms is guaranteed only up to isomorphisms.
See \cite{DPP1} and \cite{DPP2} for more details.
\end{rem}

Now suppose $\mathcal{A}_0$ satisfies the condiotion \ref{cond:star}.
Then we have an embedding of $\mathcal{A}$ into $\mathrm{Span}(\mathcal{A},\mathcal{A}_0)$:
Indeed, a morphism $f:x\to y$ of $\mathcal{A}$ is embedded as the following diagram in $\mathrm{Span}(\mathcal{A},\mathcal{A}_0)$:
\[
\xymatrix@=4ex{
  {}& x \ar@{=}[dl] \ar[dr]^{f} & {} \\
  x & {} & y }
\]
Therefore, we do not ditinguish $\mathcal{A}$ with its image in $\mathrm{Span}(\mathcal{A},\mathcal{A}_0)$.
Similarly, we have an embedding $(\mathchar`-)^{\dagger}:\mathcal{A}_0^{\mathrm{op}}\to\mathrm{Span}(\mathcal{A},\mathcal{A}_0)$ which assigns to $\gamma:y\to x$ of $\mathcal{A}_0$ the diagram
\[
\xymatrix@=4ex{
  {}& y \ar[dl]_{\gamma} \ar@{=}[dr] & {} \\
  x & {} & y }
\]
We denote by $\mathcal{A}_0^\dagger$ the embedded image of $\mathcal{A}_0^{\mathrm{op}}$.
Then each morphism of $\mathrm{Span}(\mathcal{A},\mathcal{A}_0)$ depicted as
\[
\xymatrix@=4ex{
  {} & a \ar[dl]_{\gamma} \ar[dr]^f & {} \\
  x & {} & y }
\]
uniquely factors as $f\gamma^{\dagger}$.

The fundamental property of the operator $(\ \cdot\ )^\dagger$ is the following:

\begin{lemma}\label{lem:c0dagger}
Let $\mathcal{A}$ be a category and $\mathcal{A}_0$ a subcategory with the condition \ref{cond:star}.
Then every morphism $\gamma$ in $\mathcal{A}_0$ is a split monomorphism, and $\gamma^\dagger$ is a split epimorphism in $\mathrm{Span}(\mathcal{A},\mathcal{A}_0)$.
More precisely, we have $\gamma^\dagger\gamma = \mathrm{id}$.
\end{lemma}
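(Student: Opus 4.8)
The plan is to unwind the definition of composition in $\mathrm{Span}(\mathcal{A},\mathcal{A}_0)$ and reduce the asserted identity $\gamma^\dagger\gamma=\mathrm{id}$ to a single pullback computation. Writing $\gamma\colon s\to r$ for a morphism of $\mathcal{A}_0$, its image in the span category is $s\xleftarrow{\mathrm{id}}s\xrightarrow{\gamma}r$ (a morphism $s\to r$), while $\gamma^\dagger$ is $r\xleftarrow{\gamma}s\xrightarrow{\mathrm{id}}s$ (a morphism $r\to s$). Hence $\gamma^\dagger\gamma$ is a span $s\to s$, and by the composition rule it is obtained by pulling back the right leg $\gamma\colon s\to r$ of the first span against the left leg $\gamma\colon s\to r$ of $\gamma^\dagger$ over their common codomain $r$. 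If $(c,\delta',f')$ denotes this pullback with the distinguished leg $\delta'\in\mathcal{A}_0$ (the pullback of the left leg), then $\gamma^\dagger\gamma$ is the span $s\xleftarrow{\delta'}c\xrightarrow{f'}s$. So it suffices to show that this pullback is the trivial square, i.e. $c=s$ and $\delta'=f'=\mathrm{id}_s$.

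First I would produce the canonical cone: since $\gamma\,\mathrm{id}_s=\gamma\,\mathrm{id}_s$, the universal property yields a diagonal $\Delta\colon s\to c$ with $\delta'\Delta=f'\Delta=\mathrm{id}_s$. Thus $\delta'\in\mathcal{A}_0$ is a split epimorphism with section $\Delta$, and the second clause of \ref{cond:star} applied to $\delta'\in\mathcal{A}_0$ and $\delta'\Delta=\mathrm{id}_s\in\mathcal{A}_0$ shows $\Delta\in\mathcal{A}_0$. Next I would argue that $f'$ too lies in $\mathcal{A}_0$ (both legs of the pullback of $\gamma$ along $\gamma$ are pullbacks of the $\mathcal{A}_0$-morphism $\gamma$). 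Exploiting the symmetry of the cospan $r\xleftarrow{\gamma}s\xrightarrow{\gamma}r$, there is a swap automorphism $\tau\colon c\to c$ with $\delta'\tau=f'$, $f'\tau=\delta'$ and $\tau^2=\mathrm{id}_c$; applying \ref{cond:star} to $\delta'\in\mathcal{A}_0$ and $\delta'\tau=f'\in\mathcal{A}_0$ gives $\tau\in\mathcal{A}_0$, and since $\tau$ is an isomorphism and $\mathcal{A}_0$ has no non-trivial isomorphism, $\tau=\mathrm{id}_c$, whence $\delta'=f'$. A map into a pullback is determined by its two legs, and $\delta'(\Delta\delta')=\delta'=\delta'\,\mathrm{id}_c$, $f'(\Delta\delta')=\delta'=f'=f'\,\mathrm{id}_c$, so $\Delta\delta'=\mathrm{id}_c$; therefore $\delta'$ is an isomorphism lying in $\mathcal{A}_0$, and \ref{cond:star} forces $\delta'=\mathrm{id}_s$ (so $c=s$) and hence $f'=\mathrm{id}_s$. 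This is exactly the uniqueness argument of lemma \ref{lem:thinpow-pb}, now run for the cospan $(\gamma,\gamma)$.

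It then follows that $\gamma^\dagger\gamma=\mathrm{id}_s$ in $\mathrm{Span}(\mathcal{A},\mathcal{A}_0)$, so $\gamma$ is a split monomorphism with retraction $\gamma^\dagger$, and dually $\gamma^\dagger$ is a split epimorphism with section $\gamma$. The step I expect to be the main obstacle is controlling the second leg $f'$: everything hinges on recognising the chosen $\mathcal{A}_0$-pullback of $\gamma$ along itself as the identity square, and the only non-formal input is that $f'$ may be taken in $\mathcal{A}_0$, so that the swap $\tau$ is an $\mathcal{A}_0$-isomorphism and is killed by \ref{cond:star} (equivalently, this is what makes $\gamma$ monic, which is precisely what fails for a non-monic $\mathcal{A}_0$-morphism). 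Some care is also needed with the bookkeeping of the $\dagger$-convention and of which leg is pulled back, so that the composite is read off as $s\xleftarrow{\delta'}c\xrightarrow{f'}s$ and not as its reverse.
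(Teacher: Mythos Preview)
Your proof is correct but takes a much longer route than the paper's. The paper's argument is a single diagram: it simply exhibits the identity square
\[
\xymatrix@=4ex{
  {} & {} & s \ar@{=}[dl] \ar@{=}[dr] \ar@{}[dd]|(.4){\rotatebox{-45}{\pbcorner}} & {} & {} \\
  {} & s \ar@{=}[dl] \ar[dr]^{\gamma} & {} & s \ar[dl]_{\gamma} \ar@{=}[dr] & {} \\
  s & {} & r & {} & s }
\]
as the composition diagram, so the composite span is $s\xleftarrow{\mathrm{id}}s\xrightarrow{\mathrm{id}}s=\mathrm{id}_s$. This one-line argument uses that $\gamma$ is a monomorphism (so that the top square really is a pullback), a hypothesis which is implicit throughout the section: the very strictness of $\mathrm{Span}(\mathcal{A},\mathcal{A}_0)$ relies on the uniqueness argument of lemma~\ref{lem:thinpow-pb}, which already uses monicness of $\mathcal{A}_0$-morphisms, and in every application one has $\mathcal{A}_0=\mathcal{R}^+_0$ consisting of monomorphisms by \ref{cond:DImono}.

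Your approach instead starts from an abstract pullback $(c,\delta',f')$ and laboriously forces it to collapse to the identity square, via the diagonal $\Delta$, the swap involution $\tau$, and several applications of \ref{cond:star}. Every step is sound once you have $f'\in\mathcal{A}_0$, and you rightly flag this as the crux. But that step is exactly where monicness of $\gamma$ is hiding: without it the statement itself fails (if $\gamma$ were not monic, the kernel pair would be strictly larger than the diagonal and $\gamma^\dagger\gamma$ could not be the identity span). So the extra machinery buys nothing; once monicness is on the table, the paper's direct exhibition of the identity pullback is the whole proof.
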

\begin{proof}
We have the following diagram:
\[
\xymatrix@=4ex{
  {} & {} & x \ar@{=}[dl] \ar@{=}[dr] \ar@{}[dd]|(.4){\rotatebox{-45}{\pbcorner}} & {} & {} \\
  {} & x \ar@{=}[dl] \ar[dr]^{\gamma} & {} & x \ar[dl]_{\gamma} \ar@{=}[dr] & {} \\
  x & {} & y & {} & x }
\]
This implies that $\gamma^\dagger\gamma$ is the identity.
\end{proof}

The uniqueness of the factorization implies that the inclusion $\mathcal{A}\hookrightarrow\mathrm{Span}(\mathcal{A},\mathcal{A}_0)$ preserves monomorphisms.
More generally, we have the following:

\begin{prop}\label{prop:monocrit}
Let $\mathcal{A}$ be a category and $\mathcal{A}_0$ a subcategory with condition \ref{cond:star}.
Then a morphism in $\mathrm{Span}(\mathcal{A},\mathcal{A}_0)$ is a monomorphism if and only if it is a monomorphism in $\mathcal{A}$.
\end{prop}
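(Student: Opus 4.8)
The plan is to read the statement as asserting, for a morphism $f$ of $\mathcal{A}$ regarded via the embedding $\iota\colon\mathcal{A}\hookrightarrow\mathrm{Span}(\mathcal{A},\mathcal{A}_0)$, that $f$ is a monomorphism in $\mathrm{Span}(\mathcal{A},\mathcal{A}_0)$ precisely when it is one in $\mathcal{A}$; this is the converse complementing the remark that $\iota$ preserves monomorphisms. The whole argument rests on two facts already at hand: $\iota$ is a faithful, composition-preserving functor, and---thanks to the strict uniqueness of pullbacks guaranteed by condition \ref{cond:star}---every morphism of $\mathrm{Span}(\mathcal{A},\mathcal{A}_0)$ has a strictly unique factorization $g\gamma^\dagger$ with $g\in\mathcal{A}$ and $\gamma\in\mathcal{A}_0$.

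The direction ``monic in $\mathrm{Span}(\mathcal{A},\mathcal{A}_0)$ $\Rightarrow$ monic in $\mathcal{A}$'' is the standard faithful-functor argument. If $\iota(f)$ is monic and $g,h$ are morphisms of $\mathcal{A}$ with $fg=fh$, then $\iota(f)\iota(g)=\iota(f)\iota(h)$, so $\iota(g)=\iota(h)$ by the monicity of $\iota(f)$, and hence $g=h$ by faithfulness of $\iota$. Thus $f$ is monic in $\mathcal{A}$.

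For the converse I would argue by direct computation. Suppose $f\colon x\to y$ is monic in $\mathcal{A}$ and let $s_1,s_2\colon w\to x$ be morphisms of $\mathrm{Span}(\mathcal{A},\mathcal{A}_0)$ with $fs_1=fs_2$; write $s_i=g_i\gamma_i^\dagger$, that is, $s_i$ is the span $w\xleftarrow{\gamma_i}a_i\xrightarrow{g_i}x$. The key step is the composition formula $f\circ(g_i\gamma_i^\dagger)=(fg_i)\gamma_i^\dagger$: composing $s_i$ with the span $x\xleftarrow{\mathrm{id}}x\xrightarrow{f}y$ representing $f$ requires a pullback of $g_i$ along $\mathrm{id}$, which is the trivial square, so the composite is simply $w\xleftarrow{\gamma_i}a_i\xrightarrow{fg_i}y$. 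Hence $fs_1=fs_2$ reads $(fg_1)\gamma_1^\dagger=(fg_2)\gamma_2^\dagger$, and the strict uniqueness of the factorization forces $\gamma_1=\gamma_2$ and $fg_1=fg_2$. Since $f$ is monic in $\mathcal{A}$, the latter gives $g_1=g_2$, whence $s_1=g_1\gamma_1^\dagger=g_2\gamma_2^\dagger=s_2$, so $f$ is monic in $\mathrm{Span}(\mathcal{A},\mathcal{A}_0)$.

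The only delicate point, and the one I would treat as the main obstacle, is the justification of the composition formula together with the comparison of factorizations: both depend on condition \ref{cond:star}, which (exactly as in the proof that $\mathrm{Span}(\mathcal{A},\mathcal{A}_0)$ is a strict category) makes the relevant pullbacks strictly unique and hence the factorization $g\gamma^\dagger$ of a given morphism literally unique. Once the pullback along the identity is recognized as trivial, the remaining steps are routine book-keeping.
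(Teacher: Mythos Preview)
You have misread the statement. The proposition is not only about morphisms that already lie in the image of $\iota\colon\mathcal{A}\hookrightarrow\mathrm{Span}(\mathcal{A},\mathcal{A}_0)$; it asserts that an \emph{arbitrary} morphism $\alpha$ of $\mathrm{Span}(\mathcal{A},\mathcal{A}_0)$ is a monomorphism there if and only if it is (the image of) a monomorphism of $\mathcal{A}$. In particular, the forward direction contains the nontrivial claim that if $\alpha=f\gamma^\dagger$ is monic in $\mathrm{Span}(\mathcal{A},\mathcal{A}_0)$ then $\gamma=\mathrm{id}$, so that $\alpha$ actually lies in $\mathcal{A}$. Your argument never addresses this: you start from a morphism already assumed to be in $\mathcal{A}$ and only establish the equivalence for such morphisms.

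The paper's proof of the missing step is short but uses a different idea than anything in your write-up. Using Lemma~\ref{lem:c0dagger} one has $\gamma^\dagger\gamma=\mathrm{id}$, hence
\[
\alpha\,(\gamma\gamma^\dagger)=f\gamma^\dagger\gamma\gamma^\dagger=f\gamma^\dagger=\alpha\cdot\mathrm{id}.
\]
Since $\alpha$ is monic, $\gamma\gamma^\dagger=\mathrm{id}$, and then the strict uniqueness of the $(\mathcal{A}_0^\dagger,\mathcal{A})$-factorization forces $\gamma=\mathrm{id}$. Once this is established, your two arguments are exactly what is needed to finish: the faithfulness of $\iota$ gives ``monic in $\mathrm{Span}$ implies monic in $\mathcal{A}$'', and your computation $f\circ(g\gamma^\dagger)=(fg)\gamma^\dagger$ together with unique factorization gives the converse (which the paper simply calls ``obvious''). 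So your work is correct as far as it goes, but it proves only the easy half; the substantive content of the proposition is precisely the reduction $\gamma=\mathrm{id}$ that you omitted.
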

\begin{proof}
Suppose $\alpha$ is a monomorphism in $\mathrm{Span}(\mathcal{A},\mathcal{A}_0)$, say $\alpha=f\gamma^\dagger$ is a unique factorization.
Then, by lemma \ref{lem:c0dagger}, we have $\alpha\gamma\gamma^\dagger=f\gamma^\dagger\gamma\gamma^\dagger=f\gamma^\dagger$.
Since $\alpha$ is a monomorphism, we obtain $\gamma\gamma^\dagger=\mathrm{id}$, which, by the uniqueness of factorization, implies that $\gamma=\mathrm{id}$.
Thus $\alpha=f$, and $f$ is monic in $\mathrm{Span}(\mathcal{A},\mathcal{A}_0)$.
The result now follows immediately.

The converse is obvious from the uniqueness of the $(\mathcal{A}_0^\dagger,\mathcal{A})$-factorization.
\end{proof}

\subsection{Confluentness}
The category $\mathrm{Span}(\mathcal{A},\mathcal{A}_0)$ has a very good property on pullback squares in $\mathcal{A}_0$.

\begin{prop}\label{prop:c0dagpo}
Let $\mathcal{A}$ be a category and $\mathcal{A}_0$ a subcategory with the condition \ref{cond:star}.
Suppose that we have a pullback square
\begin{equation}
\label{eq:c0pb}
\begin{gathered}
\xymatrix{
  a \ar[r]^{\eta_1} \ar[d]_{\eta_2} \ar@{}[dr]|(.4){\pbcorner} & a_1 \ar[d]^{\delta_1} \\
  a_2 \ar[r]^{\delta_2} & a_0 }
\end{gathered}
\end{equation}
in $\mathcal{A}_0$.
Then the following square in $\mathrm{Span}(\mathcal{A},\mathcal{A}_0)$ is an absolute pushout:
\begin{equation}
\label{eq:c0dagpo}
\begin{gathered}
\xymatrix{
  a_0 \ar[r]^{\delta_1^\dagger} \ar[d]_{\delta_2^\dagger} \ar@{}[dr]|(.6){\pocorner} & a_1 \ar[d]^{\eta_1^\dagger} \\
  a_2 \ar[r]^{\eta_2^\dagger} & c }
\end{gathered}
\end{equation}
\end{prop}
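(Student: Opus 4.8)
The plan is to identify the explicit equational data that certifies the square \eqref{eq:c0dagpo} as an \emph{absolute} pushout, so that preservation under every functor becomes automatic rather than something to be checked functor by functor. Note first that the apex $c$ is the pullback object $a$ of \eqref{eq:c0pb}, and the cocone legs are $\eta_1^\dagger$ and $\eta_2^\dagger$; commutativity $\eta_1^\dagger\delta_1^\dagger=\eta_2^\dagger\delta_2^\dagger$ is just $(\delta_1\eta_1)^\dagger=(\delta_2\eta_2)^\dagger$. Two families of identities will then do all the work: the section identities from Lemma \ref{lem:c0dagger}, namely $\delta_i^\dagger\delta_i=\mathrm{id}$ and $\eta_i^\dagger\eta_i=\mathrm{id}$, together with the commutativity $\delta_1\eta_1=\delta_2\eta_2=:\delta$ coming from \eqref{eq:c0pb}; and a Beck--Chevalley type identity relating the daggers, which is the one place where the pullback hypothesis is genuinely used.

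First I would establish the Beck--Chevalley identities
\[
\eta_2\eta_1^\dagger=\delta_2^\dagger\delta_1,\qquad \eta_1\eta_2^\dagger=\delta_1^\dagger\delta_2 .
\]
To see the first, I compute the composite $\delta_2^\dagger\delta_1$ directly from the definition of composition in $\mathrm{Span}(\mathcal{A},\mathcal{A}_0)$: it is obtained by pulling back the right leg $\delta_1\colon a_1\to a_0$ of $\delta_1$ against the left leg $\delta_2\colon a_2\to a_0$ of $\delta_2^\dagger$. By hypothesis this pullback is exactly \eqref{eq:c0pb}, with apex $a$ and projections $\eta_1,\eta_2$; hence the resulting span is $a_1\xleftarrow{\eta_1}a\xrightarrow{\eta_2}a_2$, which is precisely $\eta_2\eta_1^\dagger$. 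The second identity is symmetric.

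With these in hand I would verify the universal property purely equationally. Given $p\colon a_1\to Z$ and $q\colon a_2\to Z$ with $p\delta_1^\dagger=q\delta_2^\dagger$, precomposing with $\delta=\delta_1\eta_1$ and using $\delta_i^\dagger\delta_i=\mathrm{id}$ gives $\delta_i^\dagger\delta=\eta_i$, whence $p\eta_1=q\eta_2=:w$ is well defined. This $w$ then satisfies
\[
w\eta_1^\dagger=q\eta_2\eta_1^\dagger=q\delta_2^\dagger\delta_1=p\delta_1^\dagger\delta_1=p,
\]
and symmetrically $w\eta_2^\dagger=q$; uniqueness is forced, since any $w'$ with $w'\eta_1^\dagger=p$ gives $w'=w'\eta_1^\dagger\eta_1=p\eta_1=w$. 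Finally, since every step above is an equality between composites of the fixed morphisms $\delta_i,\eta_i,\delta_i^\dagger,\eta_i^\dagger$, any functor $F$ out of $\mathrm{Span}(\mathcal{A},\mathcal{A}_0)$ carries the entire argument to its target verbatim, so $F$ sends \eqref{eq:c0dagpo} to a pushout; this is exactly the assertion that \eqref{eq:c0dagpo} is an absolute pushout. I expect the Beck--Chevalley identity to be the main obstacle: it is the crux where the pullback assumption enters, and getting the composition bookkeeping in $\mathrm{Span}(\mathcal{A},\mathcal{A}_0)$ right, so that the apex of $\delta_2^\dagger\delta_1$ is literally $a$, is precisely what makes the closing chain of equalities collapse to $p$ and $q$.
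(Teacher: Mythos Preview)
Your proof is correct and follows essentially the same line as the paper's: establish the Beck--Chevalley identities $\eta_2\eta_1^\dagger=\delta_2^\dagger\delta_1$ and $\eta_1\eta_2^\dagger=\delta_1^\dagger\delta_2$ from the pullback, define the mediating morphism as $p\eta_1=q\eta_2$, and verify factorization and uniqueness via the section identities $\delta_i^\dagger\delta_i=\mathrm{id}$ and $\eta_i^\dagger\eta_i=\mathrm{id}$. The only cosmetic difference is that the paper phrases absoluteness by checking the universal property against an arbitrary presheaf $X\in\mathrm{Span}(\mathcal{A},\mathcal{A}_0)^\wedge$, whereas you argue directly that the purely equational construction is carried over by any functor; these are equivalent formulations of the same idea.
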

\begin{proof}
Note first that since \eqref{eq:c0pb} is a pullback, we have $\delta_1^\dagger\delta_2=\eta_1\eta_2^\dagger$ and $\delta_2^\dagger\delta_1=\eta_2\eta_1^\dagger$.
Hence the square \eqref{eq:c0dagpo} is commutative.

We now check the universal property of pushouts in $\mathrm{Span}(\mathcal{A},\mathcal{A}_0)^\wedge$.
Suppose we have a commutative square
\[
\xymatrix{
  a_0 \ar[r]^{\delta_1^\dagger} \ar[d]_{\delta_2^\dagger} & a_1 \ar[d]^{f_1} \\
  a_2 \ar[r]^{f_2} & X }
\]
with $X\in\mathrm{Span}(\mathcal{A},\mathcal{A}_0)^\wedge$.
Then by proposition \ref{prop:monocrit}, we have
\[
f_1\eta_1 = f_1\delta_1^\dagger\delta_1\eta_1 = f_2\delta_2^\dagger\delta_2\eta_2 =  f_2\eta_2.
\]
Set $f:=f_1\eta_1=f_2\eta_2:a\to X$.
We obtain
\[
f_1
= f_1\delta_1^\dagger\delta_1
= f_2\delta_2^\dagger\delta_1
= f_2\eta_2\eta_1^\dagger
= f\eta_1^\dagger,
\]
and
\[
f_2
= f_2\delta_2^\dagger\delta_2
= f_1\delta_1^\dagger\delta_2
= f_1\eta_1\eta_2^\dagger
= f\eta_2^\dagger
\]
Since $\eta_1^\dagger$ and $\eta_2^\dagger$ are epimorphisms by proposition \ref{prop:monocrit}, such $f$ is unique.
Therefore \eqref{eq:c0dagpo} is a pushout square in $\mathrm{Span}(\mathcal{A},\mathcal{A}_0)^\wedge$.
This implies \eqref{eq:c0dagpo} is an absolute pushout in $\mathrm{Span}(\mathcal{A},\mathcal{A}_0)$.
\end{proof}

\begin{defin}
Let $\mathcal{A}_0$ be a wide subcategory of a small category $\mathcal{A}$ satisfying the condition \ref{cond:star}.
Then a class $\mathcal{M}$ of morphisms in $\mathcal{A}$ is said to be $\mathcal{A}_0$-stable if whenever we have a pullback square
\[
\xymatrix{
  a'_1 \ar[d]_{f_1} \ar[r]^{\gamma'} \ar@{}[dr]|(.4){\pbcorner} & a' \ar[d]^{f} \\
  a_1 \ar[r]^{\gamma} & a }
\]
with $\gamma,\gamma'\in\mathcal{A}_0$ and $f\in\mathcal{M}$, we have $f_1\in\mathcal{M}$.
\end{defin}

Let $\mathcal{A}'$ be a subcategory of $\mathcal{A}$ which is $\mathcal{A}_0$-stable as a class of morphisms.
We denote by $\mathrm{Span}(\mathcal{A}',\mathcal{A}_0)$ the class of morphisms of the form $f\gamma^\dagger$ with $\gamma\in\mathcal{A}_0$ and $f\in\mathcal{A}'$.
Then $\mathrm{Span}(\mathcal{A}',\mathcal{A}_0)$ clearly forms a subcategory of $\mathrm{Span}(\mathcal{A},\mathcal{A}_0)$.

\begin{corol}
Let $\mathcal{A}$ be a small category with a confluent degeneracy system $\mathcal{A}^-$.
Let $\mathcal{A}^-$ be a confluent degeneracy system on $\mathcal{A}$ and $\mathcal{A}_0$ be a wide subcategory of $\mathcal{A}$ satisfying the condition \ref{cond:star}.
Suppose the following hold:
\begin{enumerate}[label={\rm(\roman*)}]
  \item $\mathcal{A}^-$ is $\mathcal{A}_0$-stable, and $\mathcal{A}_0$ consists of non-degenerate monomorphisms.
  \item If we have morphisms $a_1\xrightarrow{\gamma}a_0\xrightarrow{\sigma}a_2$ with $\sigma\in\mathcal{A}^-$ and $\gamma\in\mathcal{A}_0$, then there is an absolute pushout diagram
\[
\xymatrix{
  a_0 \ar[r]^{\gamma^\dagger} \ar[d]_{\sigma} & a_1 \ar[d]^{f} \\
  a_2 \ar[r]~{g} & a }
\]
with $f,g\in\mathrm{Span}(\mathcal{A}^-,\mathcal{A}_0)$.
\end{enumerate}
Then the wide subcategory $\mathrm{Span}(\mathcal{A}^-,\mathcal{A}_0)$ is a confluent degeneracy system on $\mathrm{Span}(\mathcal{A},\mathcal{A}_0)$.
\end{corol}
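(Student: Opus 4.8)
The plan is to verify the four axioms \ref{cond:CDSisom}--\ref{cond:CDSconf} for the wide subcategory $\mathrm{Span}(\mathcal{A}^-,\mathcal{A}_0)$ of $\mathrm{Span}(\mathcal{A},\mathcal{A}_0)$, using as degree function the map $\deg$ already defined on the (shared) objects. Each morphism of $\mathrm{Span}(\mathcal{A}^-,\mathcal{A}_0)$ has a unique form $\sigma\gamma^\dagger$ with $\sigma\in\mathcal{A}^-$ and $\gamma\in\mathcal{A}_0$; since by hypothesis (i) $\gamma$ is a non-degenerate monomorphism it does not lower the degree (by \ref{cond:CDSorth} for $\mathcal{A}$), so $\gamma^\dagger$ does not raise it, and neither does $\sigma$. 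Hence $\deg$ is a functor $\mathrm{Span}(\mathcal{A}^-,\mathcal{A}_0)\to\mathbb{N}^\opposite$. For \ref{cond:CDSisom} I would first note that the isomorphisms of $\mathrm{Span}(\mathcal{A},\mathcal{A}_0)$ are exactly those of $\mathcal{A}$: an isomorphism is monic, hence lies in $\mathcal{A}$ by \ref{prop:monocrit}, and comparing its inverse with the $(\mathcal{A}_0^\dagger,\mathcal{A})$-factorization forces the inverse into $\mathcal{A}$ as well. Then \ref{cond:CDSisom} for $\mathcal{A}$ gives that every such isomorphism lies in $\mathcal{A}^-\subseteq\mathrm{Span}(\mathcal{A}^-,\mathcal{A}_0)$ and preserves $\deg$.

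Axioms \ref{cond:CDSorth} and \ref{cond:CDSfact} reduce to those of $\mathcal{A}$ through two lifting facts. First, any lift in $\mathrm{Span}(\mathcal{A},\mathcal{A}_0)$ of a square whose right edge is a morphism $\delta\in\mathcal{A}$ and whose left edge lies in $\mathcal{A}^-$ is forced into $\mathcal{A}$: writing the candidate lift in factored form and comparing with the $\mathcal{A}$-valued bottom edge, uniqueness of factorization eliminates its dagger part. Consequently $\mathcal{A}^-\pitchfork\delta$ in $\mathrm{Span}(\mathcal{A},\mathcal{A}_0)$ is equivalent to $\mathcal{A}^-\pitchfork\delta$ in $\mathcal{A}$, and since $\mathcal{A}^-\subseteq\mathrm{Span}(\mathcal{A}^-,\mathcal{A}_0)$ a non-degenerate monomorphism of $\mathrm{Span}(\mathcal{A},\mathcal{A}_0)$ is a monomorphism $\delta\in\mathcal{A}$ with $\mathcal{A}^-\pitchfork\delta$; thus \ref{cond:CDSorth} for $\mathcal{A}$ yields \ref{cond:CDSorth} here. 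For \ref{cond:CDSfact} I would factor a morphism as $f\gamma^\dagger$, use \ref{cond:CDSfact} for $\mathcal{A}$ to write $f=\delta\sigma$, and read off $f\gamma^\dagger=\delta\cdot(\sigma\gamma^\dagger)$ with $\sigma\gamma^\dagger\in\mathrm{Span}(\mathcal{A}^-,\mathcal{A}_0)$. It then remains to check $\mathrm{Span}(\mathcal{A}^-,\mathcal{A}_0)\pitchfork\delta$. As the left-lifting class ${}^{\pitchfork}\{\delta\}$ is closed under composition and every $\mathrm{Span}(\mathcal{A}^-,\mathcal{A}_0)$-morphism is $\sigma\gamma^\dagger$, it suffices to lift $\delta$ against $\mathcal{A}^-$ and against $\mathcal{A}_0^\dagger$ separately. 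The former follows from $\mathcal{A}^-\pitchfork\delta$ in $\mathcal{A}$ after pulling the bottom dagger back along the left $\mathcal{A}^-$-edge --- legal because $\mathcal{A}_0$ is closed under pullback and, by hypothesis (i), $\mathcal{A}^-$ is $\mathcal{A}_0$-stable --- and transporting the resulting $\mathcal{A}$-lift back along a dagger; the latter is automatic, since commutativity forces the top edge of such a square to factor through $\gamma^\dagger$.

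The real content is the confluence \ref{cond:CDSconf}. Given $a_1\xleftarrow{\alpha_1}a_0\xrightarrow{\alpha_2}a_2$ with $\alpha_i=\sigma_i\gamma_i^\dagger$, I would realize the sought absolute pushout as the outer rectangle of a $2\times2$ grid whose top row is $a_0\xrightarrow{\gamma_1^\dagger}s_1\xrightarrow{\sigma_1}a_1$ and whose left column is $a_0\xrightarrow{\gamma_2^\dagger}s_2\xrightarrow{\sigma_2}a_2$. Each of the four cells is to be an absolute pushout supplied by one of the three available tools: the dagger--dagger cell by \ref{prop:c0dagpo} applied to the pullback of $\gamma_1,\gamma_2$ in $\mathcal{A}_0$; the two mixed dagger--$\sigma$ cells by hypothesis (ii); and the final $\sigma$--$\sigma$ cell by the confluence \ref{cond:CDSconf} of $\mathcal{A}$ itself. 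Because absolute pushouts paste, the outer rectangle is then an absolute pushout, and its two new legs, being composites of a dagger with an $\mathcal{A}^-$-morphism, lie in $\mathrm{Span}(\mathcal{A}^-,\mathcal{A}_0)$.

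The hard part is making this grid close. For the cells to be fillable by the three tools in turn, the residual legs produced by hypothesis (ii) must keep their type: the pushout of a $\sigma\in\mathcal{A}^-$ along a dagger must again be an $\mathcal{A}^-$-morphism, and the pushout of a dagger along a $\sigma$ again a dagger, so that the two inputs to the last cell are honest $\mathcal{A}^-$-morphisms to which \ref{cond:CDSconf} can be applied. I expect this typing to come from uniqueness of factorization combined with the identity $f=g\,\sigma\gamma$ that any mixed pushout square forces through the relation $\gamma^\dagger\gamma=\mathrm{id}$ (\ref{lem:c0dagger}); pinning it down --- rather than the routine reductions behind \ref{cond:CDSisom}--\ref{cond:CDSfact} or the formal pasting of the outer rectangle --- is where the difficulty concentrates.
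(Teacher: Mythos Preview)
Your overall plan matches the paper's (very terse) proof: extend $\deg$, identify the non-degenerate monomorphisms of $\mathrm{Span}(\mathcal{A},\mathcal{A}_0)$ with those of $\mathcal{A}$ via Proposition~\ref{prop:monocrit} and the unique $(\mathcal{A}_0^\dagger,\mathcal{A})$-factorization, and assemble \ref{cond:CDSconf} from Proposition~\ref{prop:c0dagpo}, hypothesis~(ii), and \ref{cond:CDSconf} for $\mathcal{A}$. Your treatment of \ref{cond:CDSisom}--\ref{cond:CDSfact} is more explicit than the paper's and is correct; the lift against $\mathcal{A}_0^\dagger$ should be justified by reading off the factored form of the top edge (from $fu'\alpha^\dagger=v'(\gamma\beta)^\dagger$ one gets $\alpha=\gamma\beta$ and the lift $u'\beta^\dagger$), rather than by the formula $u\gamma$, which only gives one of the two triangles.

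For \ref{cond:CDSconf}, you have correctly located the only genuine difficulty: in the $2\times 2$ grid the outputs of hypothesis~(ii) are only guaranteed to lie in $\mathrm{Span}(\mathcal{A}^-,\mathcal{A}_0)$, not to be pure daggers or pure $\mathcal{A}^-$-morphisms, so the final cell is again an instance of the very statement you are proving. Your hoped-for ``typing'' argument does not work in this generality: from $f=g\sigma\gamma$ one can only conclude that $f$ has some $\mathrm{Span}(\mathcal{A}^-,\mathcal{A}_0)$-factorization, which you already knew. The clean fix is induction on $\deg(a_0)$. If both $\gamma_1,\gamma_2$ are identities, the span reduces to $\sigma_1,\sigma_2\in\mathcal{A}^-$ and \ref{cond:CDSconf} for $\mathcal{A}$ applies. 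Otherwise, say $\gamma_1$ is non-trivial; since $\gamma_1\in\mathcal{A}_0$ is a non-degenerate monomorphism and $\mathcal{A}_0$ has no non-trivial isomorphisms, \ref{cond:CDSorth} for $\mathcal{A}$ gives $\deg(s_1)<\deg(a_0)$. First paste Proposition~\ref{prop:c0dagpo} (for $\gamma_1^\dagger,\gamma_2^\dagger$) with hypothesis~(ii) (for $\eta_2^\dagger,\sigma_2$) to obtain an absolute pushout of $\gamma_1^\dagger$ and $\alpha_2$ with legs in $\mathrm{Span}(\mathcal{A}^-,\mathcal{A}_0)$ emanating from $s_1$ and $a_2$; then the remaining pushout of $\sigma_1$ against the $s_1$-leg is between $\mathrm{Span}(\mathcal{A}^-,\mathcal{A}_0)$-morphisms out of $s_1$, to which the inductive hypothesis applies. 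Pasting finishes the argument. The paper's single sentence ``directly follows from the assumption and Proposition~\ref{prop:c0dagpo}'' suppresses exactly this induction.
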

\begin{proof}
Let $\deg:\mathcal{A}^-\to\mathbb{N}^\opposite$ be the degree function of the confluent degeneracy system $\mathcal{A}^-$.
Since every morphisms in $\mathcal{A}_0$ is a non-degenerate monomorphism by the assumption, it does not lower the degree.
Hence the degree function naturally extends to $\deg:\mathcal{A}^-\mathcal{A}_0^\dagger\to\mathbb{N}^\opposite$.
Then the condition \ref{cond:CDSisom} is obvious.

Now notice that we have $\mathcal{A}_0^\dagger\pitchfork\mathcal{A}$ in $\mathrm{Span}(\mathcal{A},\mathcal{A}_0)$, since there is the unique $(\mathcal{A}_0,\mathcal{A})$-factorization.
By proposition \ref{prop:monocrit}, this implies that the class of non-degenerate monomorphisms in $\mathrm{Span}(\mathcal{A},\mathcal{A}_0)$ with respect to $\mathcal{A}^-\mathcal{A}_0^\dagger$ coincides with that in $\mathcal{A}$ with respect to $\mathcal{A}^-$.
Thus the conditions \ref{cond:CDSorth} and \ref{cond:CDSfact} follow.

Finally, the condition \ref{cond:CDSconf} directly follows from the assumption and proposition \ref{prop:c0dagpo}.
\end{proof}

\subsection{Spans relative to thin-powered structures}
A typical example of $\mathcal{A}_0\subset\mathcal{A}$ which satisfies the condition \ref{cond:star} is a semicomplete thin-powered structure.
Let $\mathcal{R}$ be a thin-powered category.
Then lemma \ref{lem:thinpow-pb} implies that $\mathcal{R}^+_0$ satisfies the condition \ref{cond:star}.
So we can consider the category $\mathrm{Span}(\mathcal{R},\mathcal{R}^+_0)$, which we denote by $\mathcal{V}(\mathcal{R})$ by abusing the notion.

Recall that a Boolean thin-powered structure $\mathcal{R}^+_0$ on $\mathcal{R}$ gives rise to a join-semilattice representation $\mathcal{R}^+_0\!/(\,\cdot\,):\mathcal{R}\to\mathbf{SemiLat}^\vee$.
This representation extends to the category $\mathcal{V}(\mathcal{R})$ under certain assumptions.
In fact, we have the following stronger result:

\begin{prop}\label{prop:span-thinpow}
Let $\mathcal{R}^+_0$ be a stable Boolean thin-powered structure on $\mathcal{R}$.
Then $\mathcal{R}^+_0$ is also a stable Boolean thin-powered structure on the category $\mathcal{V}(\mathcal{R})$.
\end{prop}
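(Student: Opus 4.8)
The plan is to verify, one by one, that the image of $\mathcal{R}^+_0$ under the embedding $\mathcal{R}\hookrightarrow\mathcal{V}(\mathcal{R})$ is a thin-powered structure and that it is semicomplete, Boolean and stable, in each case reducing to the corresponding property already established for $\mathcal{R}$. Conditions \ref{cond:DImono} and \ref{cond:DIcomp} are immediate: a distinguished injection is monic in $\mathcal{R}$, hence monic in $\mathcal{V}(\mathcal{R})$ by proposition \ref{prop:monocrit}, and the embedding is faithful, so $\mathcal{R}^+_0$ remains closed under composition. All the real content is in the factorization \ref{cond:DIfact}.

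For \ref{cond:DIfact} I would propose the surjection class $\mathcal{V}(\mathcal{R})^-:=\{\sigma\gamma^\dagger\mid\sigma\in\mathcal{R}^-,\ \gamma\in\mathcal{R}^+_0\}$. Given a span $f\gamma^\dagger$, factoring its right leg in $\mathcal{R}$ as $f=\image(f)\coim(f)$ gives $f\gamma^\dagger=\image(f)\,\bigl(\coim(f)\gamma^\dagger\bigr)$, a (distinguished injection)(element of $\mathcal{V}(\mathcal{R})^-$) factorization. Since left-lifting classes are closed under composition, to see this is an $\mathcal{R}^+_0$-orthogonal factorization it suffices to check $(\mathcal{R}^+_0)^\dagger\pitchfork\mathcal{R}^+_0$ and $\mathcal{R}^-\pitchfork\mathcal{R}^+_0$ inside $\mathcal{V}(\mathcal{R})$. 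The first is clear, because the uniqueness of the $\bigl((\mathcal{R}^+_0)^\dagger,\mathcal{R}\bigr)$-factorization gives $(\mathcal{R}^+_0)^\dagger\pitchfork\mathcal{R}\supseteq\mathcal{R}^+_0$; and once orthogonality is known, the strict uniqueness of the factorization follows exactly as in the proof of lemma \ref{lem:thinpow-embed}.

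The crux, and the step I expect to be the main obstacle, is $\mathcal{R}^-\pitchfork\mathcal{R}^+_0$ in $\mathcal{V}(\mathcal{R})$, where the test squares now carry arbitrary spans along their horizontal edges. Given such a square with left edge $\sigma\in\mathcal{R}^-$, right edge $\delta\in\mathcal{R}^+_0$, and horizontal spans $f\gamma^\dagger$ and $g\beta^\dagger$, the idea is to expand the two composites using lemma \ref{lem:thinpow-pb}: the composite $\beta^\dagger\sigma$ rewrites as $\sigma'(\sigma^\ast\beta)^\dagger$, where $\sigma'$ is the pullback of $\sigma$ along the distinguished injection $\beta$. Comparing the two resulting spans and using the uniqueness of the span factorization forces $\gamma=\sigma^\ast\beta$ and reduces the lifting problem to a commutative square in $\mathcal{R}$ whose left edge is precisely $\sigma'$. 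Here the hypothesis that $\mathcal{R}$ is \emph{stable} enters decisively: $\sigma'$, being a pullback of $\sigma\in\mathcal{R}^-$ along a distinguished injection, again lies in $\mathcal{R}^-$, so the $\mathcal{R}$-square admits a unique lift, and transporting this lift back through the daggers produces the unique lift of the original square. This establishes that $\mathcal{V}(\mathcal{R})$ is a thin-powered category with $\mathcal{V}(\mathcal{R})^-$ as its class of $\mathcal{R}^+_0$-surjections.

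The three remaining properties are then comparatively formal. The slice $\mathcal{R}^+_0/r$ computed in $\mathcal{V}(\mathcal{R})$ involves only objects and morphisms of the subcategory $\mathcal{R}^+_0$, which embeds unchanged, so it is literally the same poset as in $\mathcal{R}$ and hence a finite Boolean lattice; this yields the Boolean condition once semicompleteness is available. For semicompleteness I would use the functoriality of the image map, together with the computation (via lemma \ref{lem:thinpow-pb}) that the image map of $\gamma^\dagger$ is the inverse-image map $\gamma^\ast$ while that of a pure $f$ is its $\mathcal{R}$-image map, to obtain $\phi_\ast=f_\ast\gamma^\ast$ for $\phi=f\gamma^\dagger$. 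This map preserves joins — $f_\ast$ by lemma \ref{lem:semilat-repn} and $\gamma^\ast$ by corollary \ref{cor:Booljoin}, where the \emph{Boolean} hypothesis on $\mathcal{R}$ is essential — so each set $\phi_\ast^{-1}(\ordideal\delta)$ has a greatest element and corollary \ref{cor:pos-semicomp} gives semicompleteness. Finally, for stability I would apply lemma \ref{lem:pbstable}: a surjection $\tau=\sigma\gamma^\dagger$ has $\tau^\ast=(\gamma^\dagger)^\ast\sigma^\ast$, where $\sigma^\ast$ is injective because $\sigma$ is stable in $\mathcal{R}$, and $(\gamma^\dagger)^\ast$ — the right adjoint of the join-preserving surjection $\gamma^\ast$ (lemma \ref{lem:galois}, with $\gamma^\ast\gamma_\ast=\mathrm{id}$ by lemma \ref{lem:thinpow-galois}) — admits $\gamma^\ast$ as a left inverse and is therefore injective as well; the composite $\tau^\ast$ is injective, so $\tau$ is stable, completing the proof.
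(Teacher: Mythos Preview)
Your argument for \ref{cond:DIfact} is essentially the paper's, spelled out in more detail: the paper observes that stability makes $\mathcal{V}(\mathcal{R})^-=\mathcal{R}^-\cdot(\mathcal{R}^+_0)^\dagger$ closed under composition, so every morphism has a unique $(\mathcal{R}^+_0,\mathcal{V}(\mathcal{R})^-)$-factorization, and then cites the standard fact that a unique factorization system is orthogonal; you instead carry out the lifting along $\sigma\in\mathcal{R}^-$ by hand, which is the same content.

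Where you diverge is in semicompleteness and stability. The paper treats both at once by an explicit construction: for $s\xrightarrow{\delta}r\xleftarrow{\gamma^\dagger}r'$ with $\delta,\gamma\in\mathcal{R}^+_0$ it sets $\delta':=(\gamma_\ast\delta)\vee(\neg\gamma)\in\mathcal{R}^+_0\!/r'$ and checks directly that the resulting square is a pullback in $\mathcal{V}(\mathcal{R})$, with the pulled-back edge landing in $(\mathcal{R}^+_0)^\dagger\subset\mathcal{V}(\mathcal{R})^-$. Your route via corollary~\ref{cor:pos-semicomp} and lemma~\ref{lem:pbstable} is more conceptual, but it has a gap: from ``$\phi_\ast=f_\ast\gamma^\ast$ preserves finite joins'' you conclude that $\phi_\ast^{-1}(\ordideal\delta)$ has a greatest element. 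That inference requires the domain to be a \emph{complete} lattice, and the proposition does not assume local finiteness, so a general Boolean $\mathcal{R}^+_0\!/r'$ need not be complete. What you actually need is that $\gamma^\ast=(\gamma^\dagger)_\ast$ admits a right adjoint, and this has to be exhibited: it is $\eta\mapsto\gamma_\ast(\eta)\vee\neg\gamma$, as one verifies from $\gamma^\ast\gamma_\ast=\mathrm{id}$ (lemma~\ref{lem:thinpow-galois}), join-preservation of $\gamma^\ast$ (corollary~\ref{cor:Booljoin}), and the Boolean splitting $\xi=(\xi\wedge\gamma)\vee(\xi\wedge\neg\gamma)$. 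Once you write this down, your $(\gamma^\dagger)^\ast$ is exactly the paper's $\delta'$, and the two arguments meet.
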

\begin{proof}
First we show that $\mathcal{R}^+_0\subset\mathcal{V}(\mathcal{R})$ is a thin-powered structure.
The conditions \ref{cond:DImono} and \ref{cond:DIcomp} are obvious, so we have to show \ref{cond:DIfact}.
Notice that the stability of the thin-powered category $\mathcal{R}$ implies that the class $\mathcal{V}(\mathcal{R})^-$ of morphisms which are compositions of $(\mathcal{R}^+_0)^\dagger$ and $\mathcal{R}^-$ is closed under compositions.
Moreover, it is obvious that every morphism in $\mathcal{V}(\mathcal{R})$ is uniquely written in the form $\delta\sigma\gamma^\dagger$ with $\delta,\gamma\in\mathcal{R}^+_0$ and $\sigma\in\mathcal{R}^-$.
The uniqueness of the factorization implies $\mathcal{V}(\mathcal{R})^-\pitchfork\mathcal{R}^+_0$, so that we obtain \ref{cond:DIfact}.

We next verify the stability.
In particular, it suffices to construct pullbacks for diagrams of the form $s\xrightarrow{\delta}r\xleftarrow{\gamma^\dagger}r'$ with $\delta,\gamma\in\mathcal{R}^+_0$.
Let $\delta':=(\gamma\delta)\vee(\neg\gamma)\in\mathcal{R}^+_0\!/r'$, say $\delta':s'\to r'$.
Since $\gamma\delta\le\delta'$ in the poset $\mathcal{R}^+_0\!/r'$, there is a morphism $\varepsilon:s\to s'\in\mathcal{R}^+_0$ such that $\gamma\delta=\delta'\varepsilon$.
Then the following diagram is commutative:
\begin{equation}
\label{eq:span-DIpull}
\xymatrix{
  s' \ar[r]^{\delta'} \ar[d]_{\varepsilon^\dagger} & r' \ar[d]^{\gamma^\dagger} \\
  s \ar[r]^{\delta} & r }
\end{equation}
Indeed, we have $\gamma\wedge((\gamma\delta)\vee(\neg\gamma))=\gamma\delta$.
We show the square is in fact a pullback.

Suppose we have a diagram
\[
\xymatrix{
  t \ar[r]^{f\xi^\dagger} \ar[d]_{g\theta^\dagger} & r' \ar[d]^{\gamma^\dagger} \\
  s \ar[r]^{\delta} & r }
\]
with $f,g\in\mathcal{R}$ and $\xi,\theta\in\mathcal{R}^+_0$.
By the definition of the composition $\gamma^\delta f$ and the uniqueness of the factorization, we have $\image(f)\wedge\gamma\le\gamma\delta$, which implies $\image(f)\le(\gamma\delta)\vee(\neg\gamma)=\delta'$.
Hence there is a morphism $\mu:t\to s'$ such that $f=\delta'\mu$.
On the other hand, we have
\[
g\theta^\dagger
= \delta^\dagger\delta g\theta^\dagger
= \delta^\dagger\gamma^\dagger f\xi^\dagger
= \varepsilon^\dagger(\delta')^\dagger\delta'\mu\xi^\dagger
= \varepsilon^\dagger\mu\xi^\dagger\,.
\]
Now, we obtain the following commutative diagram:
\[
\xymatrix@R-2ex{
  {} && r' \ar[dr]^{\gamma^\dagger} & {} \\
  t \ar@/^10pt/[urr]^{f\xi^\dagger} \ar@/_10pt/[drr]_{g\theta^\dagger} \ar[r]^{\mu\xi^\dagger} & s' \ar[ur]^{\delta'} \ar[dr]_{\varepsilon^\dagger} && r \\
  {} && s \ar[ur]_{\delta} & {} }
\]
Moreover, since $\delta'$ is a monomorphism by proposition \ref{prop:monocrit}, such morphism $\mu\xi^\dagger$ is unique.
Therefore, the square \eqref{eq:span-DIpull} is a pullback, and the stability follows.
Finally, the Booleanness directly follows.
\end{proof}

\begin{corol}\label{cor:V-repn}
Let $\mathcal{R}$ and $\mathcal{R}^+_0$ be as in proposition \ref{prop:span-thinpow}.
Then the representation $\mathcal{R}^+_0\!/(\blankdot):\mathcal{R}\to\mathbf{SemiLat}^\vee$ extends to a representation $\mathcal{V}(\mathcal{R})\to\mathbf{SemiLat}^\vee$ by setting $(\gamma^\dagger)_\ast:=\gamma^\ast:\mathcal{R}^+_0\!/r\to\mathcal{R}^+_0\!/r$ for a $\mathcal{R}^+_0$-morphism $\gamma:s\to r$.
\end{corol}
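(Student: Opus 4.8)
The plan is to define the extension on the unique normal form of morphisms and then verify functoriality, whose crux is a Beck--Chevalley identity. Recall from the proof of Proposition \ref{prop:span-thinpow} that every morphism of $\mathcal{V}(\mathcal{R})$ is written uniquely as $f\gamma^\dagger$ with $f\in\mathcal{R}$ and $\gamma\in\mathcal{R}^+_0$, so I set $(f\gamma^\dagger)_\ast:=f_\ast\gamma^\ast$, which is well defined by this uniqueness and restricts to the given representation on $\mathcal{R}$ (the case $\gamma=\mathrm{id}$). First I would check that the target is correct: each $\gamma^\ast$ preserves binary joins by Corollary \ref{cor:Booljoin}, and preserves the least element because Corollary \ref{cor:ffmeet} (applied to the distinguished injection $\gamma$) gives $\gamma_\ast\gamma^\ast(0)=0\wedge\gamma=0$, while $\gamma_\ast$ is injective by Lemma \ref{lem:thinpow-galois} and least-element preserving by Corollary \ref{cor:supremum}; hence $\gamma^\ast\in\mathbf{SemiLat}^\vee$, and postcomposing with the $\mathbf{SemiLat}^\vee$-morphism $f_\ast$ (Lemma \ref{lem:semilat-repn}) keeps $f_\ast\gamma^\ast$ in $\mathbf{SemiLat}^\vee$.

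Next I would reduce functoriality to a single identity. Identities are clearly preserved, so take composable $f_1\gamma_1^\dagger\colon x\to y$ and $f_2\gamma_2^\dagger\colon y\to z$, with apexes $a_1,a_2$. Rewriting the middle factor $\gamma_2^\dagger f_1$ via the strictly unique pullback square (Lemma \ref{lem:thinpow-pb})
\[
\xymatrix{
  c \ar[r]^{p} \ar[d]_{q} \ar@{}[dr]|(.4){\pbcorner} & a_1 \ar[d]^{f_1} \\
  a_2 \ar[r]^{\gamma_2} & y }
\]
with $p=f_1^\ast\gamma_2\in\mathcal{R}^+_0$ and $q\in\mathcal{R}$, one has $\gamma_2^\dagger f_1=q\,p^\dagger$, whence the normal form $f_2\gamma_2^\dagger f_1\gamma_1^\dagger=(f_2q)(\gamma_1 p)^\dagger$. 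Applying the proposed assignment and using functoriality of $(\blankdot)_\ast$ on $\mathcal{R}$ together with $(\gamma_1 p)^\ast=p^\ast\gamma_1^\ast$, the desired equality $(f_2\gamma_2^\dagger f_1\gamma_1^\dagger)_\ast=f_{2\ast}\gamma_2^\ast f_{1\ast}\gamma_1^\ast$ collapses to the Beck--Chevalley relation $\gamma_2^\ast f_{1\ast}=q_\ast p^\ast$.

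The main obstacle is this Beck--Chevalley identity, which I would prove in the generic form: for a pullback square over a cospan $a\xrightarrow{f}r\xleftarrow{\gamma}b$ with $\gamma\in\mathcal{R}^+_0$, legs $p=f^\ast\gamma\in\mathcal{R}^+_0$ and $q\in\mathcal{R}$, and any $\xi\in\mathcal{R}^+_0\!/a$, one has $\gamma^\ast f_\ast(\xi)=q_\ast p^\ast(\xi)$. Since $\gamma_\ast$ is injective (Lemma \ref{lem:thinpow-galois}), it suffices to compare images under $\gamma_\ast$. On the left, Corollary \ref{cor:ffmeet} for $\gamma$ gives $\gamma_\ast\gamma^\ast(f_\ast\xi)=f_\ast\xi\wedge\gamma$; on the right, the commutativity $\gamma q=fp$ and functoriality give $\gamma_\ast q_\ast p^\ast\xi=f_\ast(p_\ast p^\ast\xi)=f_\ast(\xi\wedge p)$, again by Corollary \ref{cor:ffmeet} for the distinguished injection $p$. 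Thus the identity reduces to the projection formula $f_\ast\xi\wedge\gamma=f_\ast(\xi\wedge f^\ast\gamma)$.

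To establish this projection formula I would exploit stability: because $\mathcal{R}^+_0$ is a stable structure every morphism of $\mathcal{R}$ is stable, so $f^\ast\gamma$ is saturated with respect to $f$ by part \ref{sublem:stabsat} of Lemma \ref{lem:stabsatcrit}. Lemma \ref{lem:satwedge} then yields $f_\ast(f^\ast\gamma)\wedge f_\ast\xi=f_\ast(f^\ast\gamma\wedge\xi)$, and substituting $f_\ast f^\ast\gamma=\gamma\wedge\image(f)$ from Corollary \ref{cor:ffmeet} together with $f_\ast\xi\le f_\ast(1)=\image(f)$ gives exactly $\gamma\wedge f_\ast\xi=f_\ast(\xi\wedge f^\ast\gamma)$. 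This completes the Beck--Chevalley step, and hence functoriality; combined with the already-established functoriality of $\mathcal{R}^+_0\!/(\blankdot)$ on $\mathcal{R}$ and of $(\blankdot)^\ast$ on $(\mathcal{R}^+_0)^\dagger$, it exhibits the desired representation $\mathcal{V}(\mathcal{R})\to\mathbf{SemiLat}^\vee$ extending $\mathcal{R}^+_0\!/(\blankdot)$.
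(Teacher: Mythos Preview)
Your proof is correct, but it takes a much more laborious route than the paper's. The paper observes that Proposition~\ref{prop:span-thinpow} already establishes that $\mathcal{R}^+_0$ is a stable Boolean thin-powered structure on $\mathcal{V}(\mathcal{R})$ itself; hence the general machinery of Section~\ref{sec:pos-repn} (in particular Lemma~\ref{lem:semilat-repn}) applies directly to $\mathcal{V}(\mathcal{R})$ and yields a functor $\mathcal{R}^+_0\!/(\blankdot):\mathcal{V}(\mathcal{R})\to\mathbf{SemiLat}^\vee$ for free, with functoriality already built in. The only remaining task is to identify $(\gamma^\dagger)_\ast$ explicitly, which is a one-line computation: for $\delta\in\mathcal{R}^+_0\!/r'$ one has $\gamma^\dagger\delta=(\gamma^\ast\delta)(\delta^\ast\gamma)^\dagger$, so $\image(\gamma^\dagger\delta)=\gamma^\ast\delta$ and hence $(\gamma^\dagger)_\ast=\gamma^\ast$.

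By contrast, you construct the extension by hand on normal forms and verify functoriality via a Beck--Chevalley identity, which you in turn reduce to a projection formula proved through saturation (Lemma~\ref{lem:stabsatcrit}, Lemma~\ref{lem:satwedge}, Corollary~\ref{cor:ffmeet}). This is a genuinely different decomposition: it is self-contained and makes the role of the Beck--Chevalley condition transparent, but it re-derives from scratch what Proposition~\ref{prop:span-thinpow} already packages. The paper's approach buys economy and reusability (the same proposition is invoked again later), while yours buys an explicit mechanism that could be adapted to settings where one does not know, or does not want to prove, that the span category carries its own thin-powered structure.
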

\begin{proof}
By proposition \ref{prop:span-thinpow}, the extension actually exists as $\mathcal{R}^+_0\!/(\blankdot)=(\mathcal{V}(\mathcal{R}))^+_0\!/(\blankdot)$.
So we have to compute $(\gamma^\dagger)_\ast:\mathcal{R}^+_0\!/r' \to \mathcal{R}^+_0\!/r$ for $\gamma:r\to r'\in\mathcal{R}^+_0$.
But, for $\delta\in\mathcal{R}^+_0\!/r'$, we have $\gamma^\dagger\delta=(\gamma^\ast\delta)(\delta^\ast\gamma)^\dagger$.
Hence we obtain $(\gamma^\dagger)_\ast=\gamma^\ast$ as required.
\end{proof}

By proposition \ref{prop:span-thinpow}, $\mathcal{V}(\mathcal{R})$ is naturally a thin-powered category.
We use the same notation $\mathcal{R}^+_0$ for it.
So we write $\mathcal{R}^+_0\!/(\blankdot):\mathcal{V}(\mathcal{R})\to\mathbf{SemiLat}^\vee$ for the induced representation.

Next, we discuss the confluent property of thin-powered categories.
We fix a thin-powered category $\mathcal{R}$.
By abusing notion, we denote by $\mathcal{V}[r]$ the representable presheaf $\mathcal{V}(\mathcal{R})(\blankdot,r)\in\mathcal{V}(\mathcal{R})^\wedge$ for $r\in\mathcal{V}(\mathcal{R})$.

\begin{lemma}\label{lem:Vdag-fact}
Let $\mathcal{R}$ be a locally finite confluent stable thin-powered category.
Then for every cell $x:\mathcal{V}[r]\to X$, there is a unique factorization $x=x'\gamma^\dagger$ such that $\gamma\in\mathcal{R}^+_0$ and $x'$ does not factor through any non-trivial $(\mathcal{R}^+_0)^\dagger$-morphisms in $\mathcal{V}(\mathcal{R})^\wedge$.
\end{lemma}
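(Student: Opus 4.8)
The plan is to isolate, for a fixed cell $x\colon\mathcal{V}[r]\to X$, the collection of distinguished injections through which $x$ factors after daggering, and to show this collection has a least element. Concretely I would set
\[
D_x:=\left\{\gamma\in\mathcal{R}^+_0\!/r\ :\ x=x_\gamma\gamma^\dagger\ \text{for some }x_\gamma\colon\mathcal{V}[s]\to X\right\}\,,
\]
where $\gamma\colon s\to r$, and regard $D_x$ as a subset of the finite poset $\mathcal{R}^+_0\!/r$ (finite by local finiteness). First I would check that $D_x$ is a non-empty up-set. It contains $\mathrm{id}_r$ trivially. If $\gamma\le\gamma'$ in $\mathcal{R}^+_0\!/r$, write $\gamma=\gamma'\varepsilon$ with $\varepsilon\in\mathcal{R}^+_0$ (permissible by condition \ref{cond:star}); then $\gamma^\dagger=\varepsilon^\dagger(\gamma')^\dagger$, so any factorization $x=x_\gamma\gamma^\dagger$ rewrites as $x=(x_\gamma\varepsilon^\dagger)(\gamma')^\dagger$, giving $\gamma'\in D_x$.

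The crux is closure of $D_x$ under meets. Given $\gamma_1,\gamma_2\in D_x$, I would form the pullback of $\gamma_1$ and $\gamma_2$ in $\mathcal{R}^+_0$, whose common diagonal composite realizes the meet $\gamma_1\wedge\gamma_2$; by Proposition \ref{prop:c0dagpo} the associated square of dagger legs is an \emph{absolute} pushout in $\mathcal{V}(\mathcal{R})$, hence a pushout of representables in $\mathcal{V}(\mathcal{R})^\wedge$ with apex $\mathcal{V}[t]$ and spanning maps $\gamma_1^\dagger,\gamma_2^\dagger$ out of $\mathcal{V}[r]$. Since $x_{\gamma_1}$ and $x_{\gamma_2}$ both restrict to $x$ along these legs, the universal property produces a cell $x'\colon\mathcal{V}[t]\to X$ restricting to $x_{\gamma_1}$ and $x_{\gamma_2}$ along the new legs $\eta_1^\dagger,\eta_2^\dagger$; composing gives $x=x_{\gamma_1}\gamma_1^\dagger=x'\eta_1^\dagger\gamma_1^\dagger=x'(\gamma_1\wedge\gamma_2)^\dagger$, so $\gamma_1\wedge\gamma_2\in D_x$. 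Being finite, non-empty and meet-closed, $D_x$ then has a least element $\gamma_0$, and I would take $x':=x\gamma_0$; the section identity $\gamma_0^\dagger\gamma_0=\mathrm{id}$ of Lemma \ref{lem:c0dagger} yields $x=x'\gamma_0^\dagger$.

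Finally I would match the property ``$x'$ admits no non-trivial dagger factorization'' with minimality of $\gamma_0$. If $x'=x''\varepsilon^\dagger$ for a non-trivial $\varepsilon\in\mathcal{R}^+_0$, then $x=x''(\gamma_0\varepsilon)^\dagger$ puts $\gamma_0\varepsilon\in D_x$ with $\gamma_0\varepsilon<\gamma_0$, contradicting minimality; conversely minimality rules out such a factorization. For uniqueness, any factorization $x=x'\gamma^\dagger$ with $x'$ non-dagger-factorable forces $\gamma$ to be minimal in $D_x$, hence equal to its unique least element $\gamma_0$, and then $x'=x\gamma_0$ is determined by the same section identity. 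The main obstacle is the meet-closure step: it is exactly the point where the \emph{combinatorial} meet in $\mathcal{R}^+_0\!/r$ must be promoted to an \emph{actual} amalgamation of cells of $X$, and this is precisely what the absoluteness of the pushout in Proposition \ref{prop:c0dagpo} delivers, with local finiteness then converting meet-closure into the existence of a least element.
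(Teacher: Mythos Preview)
Your argument is correct and takes a genuinely different route from the paper. The paper invokes the degree function on $\mathcal{V}(\mathcal{R})$ (via Proposition~\ref{prop:span-thinpow}) and simply picks a factorization $x=x'\gamma^\dagger$ with $\deg$ of the intermediate object minimal; minimality of degree then forces non-factorability of $x'$, and uniqueness is handled separately by applying the absolute pushout of Proposition~\ref{prop:c0dagpo} to two competing non-factorable factorizations. You instead organize all admissible $\gamma$'s into a subset $D_x\subset\mathcal{R}^+_0\!/r$, show it is an up-set closed under meets (this is where you spend Proposition~\ref{prop:c0dagpo}), and conclude by finiteness that $D_x$ has a least element; uniqueness then falls out for free from uniqueness of that least element together with the section identity $\gamma_0^\dagger\gamma_0=\mathrm{id}$. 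So both proofs rest on the same absolute-pushout lemma, but deploy it at different stages: the paper for uniqueness only, you for existence (meet-closure), which then subsumes uniqueness. Your approach has the pleasant side effect of identifying $\gamma_0$ intrinsically as $\bigwedge D_x$ and of working entirely inside the finite lattice $\mathcal{R}^+_0\!/r$, without needing to transport the thin-powered structure to $\mathcal{V}(\mathcal{R})$; the paper's approach is quicker on the existence side but less explicit about what the minimal $\gamma$ actually is.
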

\begin{proof}
Let $x:\mathcal{V}(r)\to X$ be a cell of $X\in\mathcal{V}(\mathcal{R})^\wedge$.
Note that by proposition \ref{prop:span-thinpow}, $\mathcal{R}^+_0$ is also a locally finite stable thin-powered structure on $\mathcal{V}(\mathcal{R})$.
Hence we have the degree function $\deg:\mathcal{V}(\mathcal{R})\to\mathbb{N}$.
So we can choose a factorization $\mathcal{V}[r]\xrightarrow{\gamma^\dagger}\mathcal{V}[r']\xrightarrow{x'} X$ of $x$ with $\deg r'$ smallest among such factorizations.
We show it is actually the required factorization.

First, $x'$ does not factor through any non-trivial $\mathcal{R}^+_0$.
Indeed, say $x'=x''\delta^\dagger$ for some $\delta:r''\to r'\in\mathcal{R}^+_0$.
Then we have $\deg r''\le\deg r'$, and the minimality of $r'$ implies $\deg r''\to \deg r'$.
Since every non-trivial $\mathcal{R}^+_0$-morphism raises the degree, $\delta$ needs to be the identity, and we obtain $x'=x''$.

Now suppose $x=x'_1\gamma_1^\dagger$ is another factorization such that $\gamma_1:r'_1\to r\in\mathcal{R}^+_0$ and $x'_1$ does not factor through any non-trivial $(\mathcal{R}^+_0)^\dagger$-morhism.
Then we have the following commutative diagram:
\[
\xymatrix{
  \mathcal{V}[r] \ar[r]^{\gamma^\dagger} \ar[d]_{\gamma_1^\dagger} & \mathcal{V}(r') \ar[d]^{x'} \\
  \mathcal{V}[r'_1] \ar[r]^{x'_1} & X }
\]
By proposition \ref{prop:c0dagpo}, there are $r'\xleftarrow{\delta}r_0\xrightarrow{\delta_1} r'_1\in\mathcal{R}^+_0$ and $x_0:\mathcal{V}[r_0]\to X$ such that $\delta^\dagger\gamma^\dagger=\delta_1^\dagger\gamma_1^\dagger$, $x'=x_0\delta^\dagger$ and $x'_1=x_0\delta_1^\dagger$.
Since $x'$ and $x'_1$ do not factor through any non-trivial $(\mathcal{R}^+_0)^\dagger$-morphism, $\delta$ and $\delta_1$ need to be the identities.
Hence we obtain $\delta=\delta_1$ and $x'=x_0=x'_1$.
It follows that the factorization is unique.
\end{proof}

\begin{lemma}\label{lem:posplitdagger}
Let $\mathcal{R}$ be a locally finite confluent coherent thin-powered category.
Suppose $\gamma:r_1\to r_0\in\mathcal{R}^+_0$ and $\sigma:r_0\to r_2$ is $\mathcal{R}^+_0$-surjective.
By lemma \ref{lem:maxsatur}, there is the greatest saturated element $\gamma_0\in\mathcal{R}^+_0\!/r_0$ with respect to $\sigma$ with $\gamma_0\le\gamma$, say $\gamma_0=\gamma\varepsilon_0$ and $\sigma\gamma_0=\gamma'\sigma'$ with $\gamma'\in\mathcal{R}^+_0$ and $\sigma'\in\mathcal{R}^-$.
Then the square
\[
\xymatrix{
  r_0 \ar[r]^{\gamma^\dagger} \ar[d]_{\sigma} \ar@{}[dr]|(.4){\pbcorner} & r_1 \ar[d]^{\sigma'\varepsilon_0^\dagger} \\
  r_2 \ar[r]^{\gamma'^\dagger} & r}
\]
is an absolute pushout in $\mathcal{V}(\mathcal{R})$.
\end{lemma}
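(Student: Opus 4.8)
\emph{Commutativity.} Since $(\,\cdot\,)^{\dagger}$ is contravariant on $\mathcal{R}^+_0$ and $\gamma_0=\gamma\varepsilon_0$, the upper route is $\sigma'\varepsilon_0^{\dagger}\gamma^{\dagger}=\sigma'(\gamma\varepsilon_0)^{\dagger}=\sigma'\gamma_0^{\dagger}$. For the lower route I use that $\gamma_0$ is saturated with respect to $\sigma$ (Lemma~\ref{lem:maxsatur}), which says precisely $\sigma^{\ast}(\gamma')=\sigma^{\ast}\sigma_{\ast}(\gamma_0)=\gamma_0$; hence the image factorization $\sigma\gamma_0=\gamma'\sigma'$ is a pullback square in $\mathcal{R}$, and the span--composition rule (cf.\ the computation in Corollary~\ref{cor:V-repn}) gives ${\gamma'}^{\dagger}\sigma=\sigma'(\sigma^{\ast}\gamma')^{\dagger}=\sigma'\gamma_0^{\dagger}$. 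Thus both routes equal $\sigma'\gamma_0^{\dagger}$ and the square commutes.

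\emph{Reduction of ``absolute''.} Following the method of Proposition~\ref{prop:c0dagpo}, it suffices to prove that the square becomes a pushout in $\mathcal{V}(\mathcal{R})^{\wedge}$, because a square of representables is an absolute pushout exactly when the Yoneda embedding carries it to a pushout. Presheaf colimits being pointwise, I fix $t\in\mathcal{V}(\mathcal{R})$ and must show that the comparison map
\[
\Phi\colon\ \mathcal{V}(\mathcal{R})(t,r_1)\ \sqcup_{\mathcal{V}(\mathcal{R})(t,r_0)}\ \mathcal{V}(\mathcal{R})(t,r_2)\ \longrightarrow\ \mathcal{V}(\mathcal{R})(t,r)
\]
is a bijection, where the span maps are post-composition by $\gamma^{\dagger}$ and $\sigma$, the two legs of $\Phi$ are post-composition by $\sigma'\varepsilon_0^{\dagger}$ and ${\gamma'}^{\dagger}$, and $\sim$ denotes the relation generated by $\gamma^{\dagger}a\sim\sigma a$ for $a\in\mathcal{V}(\mathcal{R})(t,r_0)$.

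\emph{Surjectivity and the residual claim.} By Lemma~\ref{lem:c0dagger} one has ${\gamma'}^{\dagger}\gamma'=\mathrm{id}_{r}$, so the leg ${\gamma'}^{\dagger}\circ(-)$ is a split epimorphism with section $\gamma'\circ(-)$; hence $\Phi$ is onto and $\Psi\colon\rho\mapsto[\gamma'\rho]$ is a section of $\Phi$. Thus $\Phi$ is bijective iff $\Psi\Phi=\mathrm{id}$, and since every $b=\gamma^{\dagger}(\gamma b)\sim\sigma(\gamma b)$ already lies in the class of a cell of $\mathcal{V}(\mathcal{R})(t,r_2)$, the whole matter comes down to the single connectivity statement $(\ast)$:
\[
c\ \sim\ \gamma'{\gamma'}^{\dagger}c \qquad\text{for every }c\in\mathcal{V}(\mathcal{R})(t,r_2).
\]
Using that $\sim$ is stable under precomposition, together with the unique factorization of $\mathcal{V}(\mathcal{R})$-morphisms supplied by Proposition~\ref{prop:span-thinpow}, I reduce $(\ast)$ to the case where $c$ is a single distinguished injection $\delta\colon v\hookrightarrow r_2$; there $\gamma'{\gamma'}^{\dagger}\delta$ is exactly the restriction of $\delta$ to $\gamma'\wedge\delta$.

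\emph{The obstacle.} Proving $(\ast)$ is the heart of the lemma, and it is where the maximality of $\gamma_0$ is indispensable: $\gamma'{\gamma'}^{\dagger}c$ discards the part of $c$ lying over $\neg\gamma'$, and one must show this does not change the $\sim$-class. I intend to argue by induction on the number of atoms of $\mathcal{R}^+_0/r_0$ below $\gamma\wedge\neg\gamma_0$. When $\gamma_0=\gamma$ the map $\varepsilon_0$ is an identity and $(\ast)$ is trivial. Otherwise I pick an atom below $\gamma\wedge\neg\gamma_0$; because $\gamma_0$ is the \emph{greatest} saturated subobject below $\gamma$ (Lemma~\ref{lem:maxsatur}) this atom is non-saturated, so by stability its saturation $\sigma^{\ast}\sigma_{\ast}(-)$ reaches strictly outside $\gamma$, and the extra piece furnishes a cell $a'\in\mathcal{V}(\mathcal{R})(t,r_0)$ lying over the full $\sigma$-fibre while $\gamma^{\dagger}a'$ drops into a strictly smaller subobject. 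A single application of $\gamma^{\dagger}a'\sim\sigma a'$ then connects $\sigma$-images differing by exactly that generator, and the induction hypothesis disposes of the rest. Here coherence guarantees that $\sigma^{\ast}$ preserves the joins used to split cells along $\gamma'\vee\neg\gamma'$, stability keeps all the auxiliary pullbacks inside $\mathcal{R}^-$, and the confluence of $\mathcal{R}$ ensures that the squares produced during the induction are themselves absolute pushouts in $\mathcal{V}(\mathcal{R})$. Once $(\ast)$ is established, $\Psi\Phi=\mathrm{id}$, so $\Phi$ is bijective for every $t$ and the square is an absolute pushout; the entire difficulty is thus concentrated in this fibre-completion argument for $(\ast)$.
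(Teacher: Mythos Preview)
Your commutativity argument and the reduction to the pointwise bijectivity of $\Phi$ are both correct and match the paper's setup. The reduction of injectivity to the single statement $(\ast)$ via the section $\Psi$ is also sound.

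The genuine gap is the proof of $(\ast)$. Your induction on atoms below $\gamma\wedge\neg\gamma_0$ never connects the lattice-theoretic data in $\mathcal{R}^+_0/r_0$ to the specific cell $c\colon t\to r_2$ you are trying to move. When you write that a non-saturated atom ``furnishes a cell $a'\in\mathcal{V}(\mathcal{R})(t,r_0)$'', nothing in the argument produces such an $a'$ with the correct domain $t$: the atom is a subobject of $r_0$, not a morphism out of $t$, and there is no mechanism that turns one into the other. Likewise the phrase ``$\gamma^\dagger a'$ drops into a strictly smaller subobject'' has no precise meaning until $a'$ is pinned down, and the appeal to confluence at the end is decorative rather than doing work. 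In short, the inductive step is asserted rather than constructed, and I do not see how to make it go through without essentially rediscovering the paper's argument.

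The paper avoids this entirely by not analysing the equivalence relation $\sim$ at all. Given a compatible pair $(x_1,x_2)$, it applies Lemma~\ref{lem:Vdag-fact} to write $x_2=x'_2\delta_2^\dagger$ with $x'_2$ not factoring through any nontrivial $(\mathcal{R}^+_0)^\dagger$-morphism, pulls $\delta_2$ back along $\sigma$ to obtain a saturated $\xi\le\gamma$, and then the maximality of $\gamma_0$ forces $\xi\le\gamma_0$, i.e.\ $\delta_2\le\gamma'$. The induced morphism $f=x'_2\varepsilon_2^\dagger$ (with $\gamma'\varepsilon_2=\delta_2$) is then checked directly to satisfy both triangle identities. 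This is a one-step construction of the universal arrow; the key input you are missing is precisely Lemma~\ref{lem:Vdag-fact}, which replaces an arbitrary $c$ by one whose $(\mathcal{R}^+_0)^\dagger$-part already lies under $\gamma'$ and thereby renders $(\ast)$ automatic rather than something requiring an induction.
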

\begin{proof}
First note that the square is commutative since a square
\[
\xymatrix{
  s_0 \ar[r]^{\gamma_0=\gamma\varepsilon_0} \ar[d]_{\sigma'} \ar@{}[dr]|(.4){\pbcorner} & r_0 \ar[d]^{\sigma} \\
  r \ar[r]^{\gamma'} & r_2 }
\]
is a pullback in $\mathcal{R}$.
Suppose $X\in\mathcal{V}(\mathcal{R})^\wedge$ and we have a commutative square
\[
\xymatrix{
  \mathcal{V}[r_0] \ar[r]^{\gamma^\dagger} \ar[d]_{\sigma} & \mathcal{V}[r_1] \ar[d]^{x_1} \\
  \mathcal{V}[r_2] \ar[r]^{x_2} & X }
\]
in $\mathcal{V}(\mathcal{R})$.
By lemma \ref{lem:Vdag-fact}, there are unique factorization $x_1=x'_1\delta_1^\dagger$ and $x_2=x'_2\delta_2^\dagger$ such that  $\delta_1:s_1\to r_1,\delta_2:s_2\to r_2\in\mathcal{R}^+_0$ and $x'_1,x'_2$ do not factor through any non-trivial $\mathcal{R}^+_0$-morphism.
Taking a pullback, we obtain the pullback square:
\[
\xymatrix{
  s_0 \ar[r]^{\xi} \ar[d]_{\mu} \ar@{}[dr]|(.4){\pbcorner} & r_0 \ar[d]^{\sigma} \\
  s_2 \ar[r]^{\delta_2} & r_2 }
\]
Then we have $x'_2\mu\xi^\dagger = x'_2\delta_2^\dagger\sigma = x'_1\delta_1^\dagger\gamma^\dagger$.

Now since $\sigma$ is a split epimorphism, $\mu$ is a split epimorphism, so that it is an $\mathcal{R}^+_0$-surjective.
It follows that $\sigma_{\ast}(\eta)=\delta_2$ and $\eta$ is saturated with respect to $\sigma$.
Thus by the maximality of $\gamma_0$, we have $\eta\le\gamma_0=\gamma\varepsilon_0$ and there is a diagram
\[
\xymatrix{
  t_1 \ar[r]^{\varepsilon_1} \ar[d]_{\mu} \ar@{}[dr]|(.4){\pbcorner} & s_0 \ar[r]^{\gamma_0=\gamma\varepsilon_0} \ar[d]^{\sigma'} \ar@{}[dr]|(.4){\pbcorner} & r \ar[d]^{\sigma} \\
  t_2 \ar[r]^{\varepsilon_2} & s' \ar[r]^{\gamma'} & r' }
\]
with $\gamma_0\varepsilon_1=\eta$ and $\gamma'\varepsilon_2=\delta_2$.
Since $\gamma_0=\gamma\varepsilon_0$, $\eta=\gamma\delta_1$ and $\gamma$ is a monoomrphism, we have $\delta_1=\varepsilon_0\varepsilon_1$.
Let $f=f'_2\varepsilon_2^\dagger$
Finally we obtain
\[
f_1
= f'_1\delta_1^\dagger
= f'_2\mu\delta_1^\dagger
= f'_2\mu\varepsilon_1^\dagger\varepsilon_0^\dagger
= f'_2\varepsilon_2^\dagger\sigma'\varepsilon_0^\dagger
= f\sigma'\varepsilon_0^\dagger,
\]
and
\[
f_2
= f'_2\delta_2^\dagger
= f'_2\varepsilon_2^\dagger\gamma'^\dagger
= f\gamma'^\dagger.
\]
These implies that we have a commutative diagram:
\[
\xymatrix{
  r \ar[r]^{\gamma^\dagger} \ar[d]_{\sigma} \ar@{}[dr]|(.4){\pbcorner} & s \ar[d]^{\sigma'\varepsilon_0^\dagger} \ar@/^1.5pc/[ddr]^{f_1} & {} \\
  r' \ar[r]^{\gamma'^\dagger} \ar@/_/[drr]_{f_2} & r' \ar[dr]^<<{f} & {} \\
  {} & {} & X }
\]
Since $\sigma'\varepsilon_0^\dagger$ and $\gamma'^\dagger$ are epimorphisms, the uniqueness of such $f$ is obvious.
Therefore the square is an absolute pushout.
\end{proof}

\section{Cubicalizations}
\label{sec:crsrep}

\subsection{Crossed modules for categories}
The notion of crossed modules was originally introduced by Whitehead (\cite{Whi}) in the study of relative homotopy $2$-types in the homotopy theory.
Brown and Spencer introduced a little bit generalized version of crossed modules (\cite{BS} and \cite{BHS}).
They showed that the category of crossed modules is a model for the $2$-homotopy theory.

We use this tool to produce a new category $\square(\mathcal{R})$ from a thin-powered category $\mathcal{R}$ with some properties.
To do this, we need to generalize the definition of crossed modules for groupoids (see \cite{BHS}) to arbitrary categories.

\begin{defin}
Let $\mathcal{J}$ be a small category.
A crossed $\mathcal{J}$-module is a functor $M:\mathcal{J}\to\mathbf{Mon}$ from $\mathcal{J}$ to the category $\mathbf{Mon}$ of monoids together with a family of monoid homomorphisms
\[
\left\{\mu_j:M(j)\to\mathrm{End}_{\mathcal{J}}(j)\,|\,j\in\mathcal{J}\right\}
\]
which satisfies the following conditions:
For $f:j\to k$ of $\mathcal{J}$ and $a,b\in M(j)$, we have
\begin{gather}
\mu_j(f_{\ast}a) f \mu_j(a) = \mu_j(f_{\ast}a)^2 f \tag{CM1}\label{cond:cmtrans}\\
a b = (\mu_j(a)_{\ast}b)a \tag{CM2}\label{cond:cmbraid}
\end{gather}
We often abuse the notion and say $M$ is a crossed $\mathcal{J}$-module, and write $\mu=\mu_j$ briefly.
\end{defin}

We warn that the notion of crossed modules differs from that of crossed groups defined in section \ref{sec:crsgrp}.
They have very similar names and definitions, so that they are really subject to be confused.

\begin{exam}\label{exam:grpd}
If $P$ is a groupoid and $M:P\to\mathbf{Mon}$ is a crossed $P$-module such that each $M(p)$ is a group, then $M$ is a crossed module over $P$ in the sense of \cite{BS}.
This example is important in the study of homotopy 2-types.
\end{exam}

\begin{exam}\label{exam:subset}
Let $\kappa$ be a cardinal and denote by $\widetilde{\mathbf{Set}^\kappa}$ the category whose objects are $\kappa$-small sets and whose morphisms $(f,A):X\to Y$ are pairs of subset $A\subset X$ and map $f:A\to Y$.
The composition is given by the formula $(g,B)\circ (f,A)=\left(gf,f^{-1}(B)\right)$.
For a set $X$, we denote by $P(X)$ the power set of $X$, hence $P(X)$ is an idempotent monoid with operation $\cup$.
Then $P:\widetilde{\mathbf{Set}^\kappa}\to\mathbf{Mon}$ is a crossed $\widetilde{\mathbf{Set}^\kappa}$-module.
Indeed, for $A\in P(X)$, we set $\mu(A)=(\mathrm{id},X\setminus A)$.
For $(f,A'):X\to Y$ of $\widetilde{\mathbf{Set}^\kappa}$, we have
\begin{equation*}
\begin{split}
\mu(f(A\cap A'))\circ(f,A')
&= \left(f,A'\cap f^{-1}(f(A')\setminus f(A\cap A'))\right) \\
&= \left(f,A'\cap f^{-1}(f(A'\setminus A)\setminus f(A\cap A'))\right)\\
&= \mu(f(A\cap A'))\circ(f,A')\circ\mu(A)
\end{split}
\end{equation*}
This implies \eqref{cond:cmtrans}.
Note that in this example, $\mu(A)_{\ast}(A')=A'\setminus A$, so that \eqref{cond:cmbraid} is equivalent to $(A'\setminus A)\cup A = A'\cup A$, which is obvious.

This example is a special case of more general examples we will see later.
\end{exam}

Now let $M$ be a crossed $\mathcal{J}$-module.
For $a\in M(j)$, we set
\[
M(j)^a = \left\{x\in M(j)\,\mid\,\mu(a)_{\ast}x = x\right\}.
\]
Clearly $M(j)^a$ is a submonoid of $M(j)$.

\begin{lemma}\label{lem:invcomm}
If $M$ is a crossed $\mathcal{J}$-module for a small category $\mathcal{J}$, and $j\in\mathcal{J}$, $a\in M(j)$, then $a$ commutes with all elements of $M(j)^a$.
\end{lemma}
\begin{proof}
For each $x\in M(j)^a$, by \eqref{cond:cmbraid}, we have
\[
ax = \mu(a)_{\ast}x\cdot a = xa.
\]
Hence the result follows.
\end{proof}

\begin{corol}\label{lem:funccomm}
Let $M:\mathcal{J}\to\mathbf{Mon}$ is a crossed $\mathcal{J}$-module for a small category $\mathcal{J}$.
Suppose that $f:j\to k\in\mathcal{J}$ and $b\in M(k)$ satisfy $\mu(b)f=f$.
Then $b$ commutes with $f_{\ast}a$ for every $a\in M(j)$; i.e. $b (f_{\ast}a) = (f_{\ast}a)b$.
\end{corol}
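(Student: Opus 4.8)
The plan is to derive this directly from Lemma~\ref{lem:invcomm}. Note that both $b$ and $f_\ast a$ live in the single monoid $M(k)$, and Lemma~\ref{lem:invcomm}, applied at the object $k$ with the element $b$, already tells us that $b$ commutes with every element of the submonoid $M(k)^b = \{x \in M(k) \mid \mu(b)_\ast x = x\}$. Hence it suffices to verify the single membership $f_\ast a \in M(k)^b$, that is, $\mu(b)_\ast(f_\ast a) = f_\ast a$; the conclusion then drops out of the previous lemma with no further work.

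To establish that membership I would unwind the subscript-$\ast$ notation. Since $M$ is a functor $\mathcal{J}\to\mathbf{Mon}$, the operation $g_\ast$ attached to a morphism $g$ is simply $M(g)$; in particular $f_\ast = M(f)$, and because $\mu(b)\in\mathrm{End}_{\mathcal{J}}(k)$ we likewise have $\mu(b)_\ast = M(\mu(b))$. Functoriality of $M$ then gives
\[
\mu(b)_\ast(f_\ast a) = M(\mu(b))\,M(f)(a) = M\bigl(\mu(b) f\bigr)(a).
\]
Now the hypothesis $\mu(b) f = f$ collapses the right-hand side to $M(f)(a) = f_\ast a$, which is exactly the required $f_\ast a \in M(k)^b$. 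Feeding this into Lemma~\ref{lem:invcomm} yields $b(f_\ast a) = (f_\ast a)\,b$, as claimed.

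I do not expect a genuine obstacle here: once the elements are recognized as sitting in the common fiber $M(k)$ and the $\ast$-action is identified with the functorial image under $M$, the hypothesis $\mu(b)f=f$ plugs straight into functoriality, and everything is formal. The only point demanding a little care is the bookkeeping of indices, namely invoking Lemma~\ref{lem:invcomm} at the object $k$ with the element $b$ rather than at $j$, and confirming that the braiding axiom~\eqref{cond:cmbraid} underlying that lemma is being used in the fiber $M(k)$. The transitivity axiom~\eqref{cond:cmtrans} plays no role in this particular corollary.
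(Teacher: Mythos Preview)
Your proof is correct and follows exactly the same approach as the paper: show $f_\ast a\in M(k)^b$ via functoriality of $M$ and the hypothesis $\mu(b)f=f$, then invoke Lemma~\ref{lem:invcomm}. (In fact your bookkeeping is slightly cleaner: the paper writes $M(j)^b$ where it should be $M(k)^b$.)
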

\begin{proof}
Since $\mu(b)_{\ast}f_{\ast}a = (\mu(b)f)_{\ast}a = f_{\ast}a$, we have $f_{\ast}a\in M(j)^b$.
Hence the result follows from lemma \ref{lem:invcomm}.
\end{proof}

Next, we define a new category from a crossed module.
Let $\mathcal{J}$ be a small category and $M:\mathcal{J}\to\mathbf{Mon}$ be a crossed module.
We define a set $M\!\mathcal{J}(j,k)$ for a crossed $\mathcal{J}$-module $M$ and $j,k\in\mathcal{J}$ by
\begin{equation}
\label{eq:crsmod-inv-pairs}
M\!\mathcal{J}(j,k) := \left\{(a,f)\in M(k)\times\mathcal{J}(j,k)\mid \mu(a)f=f\right\}
\end{equation}
We further define a ``composition map'' $\circ:M\!\mathcal{J}(k,l)\times M\!\mathcal{J}(j,k) \to M\!\mathcal{J}(j,l)$ by the formula
\begin{equation}
\label{eq:crsmod-inv-composition}
(b,g)\circ (a,f) = \left(b\!\cdot\!g_{\ast}a, \mu(g_{\ast}a)gf\right).
\end{equation}
Note that $(b,g)\circ (a,f)$ in fact belongs to $M\!\mathcal{J}(j,l)$.
Indeed, using lemma \ref{lem:funccomm} and \eqref{cond:cmtrans}, we have
\begin{equation}
\label{eq:stable}
\begin{split}
\mu(b\!\cdot\!g_{\ast}a)\mu(g_{\ast}a)gf
&= \mu(g_{\ast}a)^2\mu(b)gf \\
&= \mu(g_{\ast}a)g\mu(a)f\\
&= \mu(g_{\ast}a)gf.
\end{split}
\end{equation}

\begin{prop}
The composition $\circ$ defined by \eqref{eq:crsmod-inv-composition} is assosiative.
\end{prop}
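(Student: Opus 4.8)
The plan is to take three composable morphisms $(a,f)\colon j\to k$, $(b,g)\colon k\to l$ and $(c,h)\colon l\to m$ of $M\mathcal{J}$ (so that $\mu(a)f=f$, $\mu(b)g=g$ and $\mu(c)h=h$), to expand the two bracketings of their $\circ$-composite using \eqref{eq:crsmod-inv-composition}, and to compare the resulting pairs componentwise. Writing $P:=h_\ast b$, $u:=g_\ast a$ and $v:=h_\ast u=(hg)_\ast a$, a direct expansion using functoriality of $M$ (so that $h_\ast(b\cdot g_\ast a)=Pv$ and $(\mu(P)hg)_\ast a=\mu(P)_\ast v$) gives
\[
\bigl((c,h)\circ(b,g)\bigr)\circ(a,f)=\bigl(cP\cdot\mu(P)_\ast v,\ \mu(\mu(P)_\ast v)\,\mu(P)\,hg\,f\bigr),
\]
\[
(c,h)\circ\bigl((b,g)\circ(a,f)\bigr)=\bigl(cPv,\ \mu(Pv)\,h\,\mu(u)\,g\,f\bigr).
\]
Thus associativity reduces to two identities: equality of the monoid components $cP\,\mu(P)_\ast v=cPv$ in $M(m)$, and equality of the underlying morphisms $\mu(\mu(P)_\ast v)\,\mu(P)\,hgf=\mu(Pv)\,h\,\mu(u)\,gf$ in $\mathcal{J}(j,m)$.

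For the monoid components I would first record the auxiliary fact $\mu(P)_\ast v=\mu(P)^2_\ast v$: applying \eqref{cond:cmtrans} to the morphism $h$ and the element $b$ yields $\mu(P)h\mu(b)=\mu(P)^2h$, right-multiplying by $g$ and using $\mu(b)g=g$ gives $\mu(P)hg=\mu(P)^2hg$, and applying the induced maps to $a$ gives the claim. Setting $w:=\mu(P)_\ast v$, this shows $\mu(P)_\ast w=\mu(P)^2_\ast v=w$, i.e. $w\in M(m)^{P}$, so by Lemma \ref{lem:invcomm} the elements $P$ and $w$ commute. On the other hand \eqref{cond:cmbraid} applied to $P$ and $v$ gives $Pv=\mu(P)_\ast v\cdot P=wP$. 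Hence $cPw=cwP=cPv$, which is the required equality. The point worth flagging is that one should resist proving the stronger statement $\mu(P)_\ast v=v$: that need not hold in the absence of a cancellation hypothesis, and only the two full products have to agree.

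For the underlying morphisms I would reduce both sides to the common form $\mu(P)\mu(v)\,hgf$. On the right, \eqref{cond:cmtrans} applied to $h$ and $u$ gives $\mu(v)h\mu(u)=\mu(v)^2h$, so $\mu(Pv)h\mu(u)gf=\mu(P)\mu(v)^2hgf$; on the left, applying the monoid homomorphism $\mu$ to the relation $Pv=wP$ yields $\mu(w)\mu(P)=\mu(P)\mu(v)$, so the left-hand morphism equals $\mu(P)\mu(v)hgf$. The remaining and decisive step is the identity $\mu(v)\,hgf=\mu(v)^2\,hgf$, which removes the surplus factor; this is the main obstacle, since it is not visible from the instances of \eqref{cond:cmtrans} at $h$ or at $g$ used so far. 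I expect to obtain it by applying \eqref{cond:cmtrans} instead to the \emph{composite} $hg\colon k\to m$ together with the element $a$, giving $\mu(v)(hg)\mu(a)=\mu(v)^2(hg)$, then right-multiplying by $f$ and invoking the validity $\mu(a)f=f$. Multiplying through by $\mu(P)$ then identifies the two morphisms and finishes the proof. Beyond this, the verification is pure bookkeeping: tracking the domains of the three instances of \eqref{cond:cmtrans} (at $h$, at $g$, and at $hg$) and using the defining relations $\mu(a)f=f$ and $\mu(b)g=g$ rather than any cancellation in the monoids $M(\blankdot)$.
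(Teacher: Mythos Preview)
Your proof is correct and follows the same direct-verification strategy as the paper: expand both bracketings and compare the two components using \eqref{cond:cmtrans}, \eqref{cond:cmbraid}, and the commutativity Lemma~\ref{lem:invcomm}/Corollary~\ref{lem:funccomm}. The only organizational difference is that the paper bundles $g_\ast a$ and $b$ into a single element $x=g_\ast a\cdot b$ and applies \eqref{cond:cmtrans} once at $h$ with this product to reach the normal form \eqref{eq:rassoc}, then matches the other bracketing to it via \eqref{cond:cmbraid} and Corollary~\ref{lem:funccomm} alone; you instead keep $P=h_\ast b$ and $v=h_\ast g_\ast a$ separate and invoke \eqref{cond:cmtrans} three times (at $h$ with $b$, at $h$ with $u$, and at the composite $hg$ with $a$), which makes the ``idempotence'' step $\mu(v)hgf=\mu(v)^2hgf$ explicit where the paper leaves it implicit.
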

\begin{proof}
Suppose $(a,f)\in M\!\mathcal{J}(j,k)$, $(b,g)\in M\!\mathcal{J}(k,l)$ and $(c,h)\in M\!\mathcal{J}(l,m)$.
Then we have
\begin{equation*}
\begin{split}
(c,h)\circ\left((b,g)\circ(a,f)\right)
&= (c,h)\circ(g_{\ast}a\cdot b, \mu(g_{\ast}a)gf) \\
&= \left(h_{\ast}g_{\ast}a\cdot h_{\ast}b \cdot c, \mu(h_{\ast}g_{\ast}a \cdot h_{\ast}b)h\mu(g_{\ast}a)gf\right).
\end{split}
\end{equation*}
Since $\mu(c)h=h$ and $\mu(b)g=g$, using the calculus \eqref{eq:stable} and lemma \ref{lem:funccomm}, we obtain
\begin{equation*}
\begin{split}
\mu(h_{\ast}g_{\ast}a \cdot h_{\ast}b)h\mu(g_{\ast}a)gf
&= \left(\mu(h_{\ast}g_{\ast}a\cdot h_{\ast}b\right)^2 h\mu(g_{\ast}a)\mu(b)gf \\
&= \mu(h_{\ast}g_{\ast}a\cdot h_{\ast}b)hgf.
\end{split}
\end{equation*}
This implies that
\begin{equation}
\label{eq:rassoc}
(c,h)\circ\left((b,g)\circ(a,f)\right) = \left( h_{\ast}g_{\ast} a\cdot h_{\ast}b\cdot c, \mu(h_{\ast}g_{\ast}a\cdot h_{\ast} b)hgf\right)
\end{equation}
On the other hand, we have
\begin{equation*}
\begin{split}
\left((c,h)\circ(b,g)\right)\circ(a,f)
&= (h_{\ast}b\cdot c, \mu(h_{\ast}b)hg)\circ (a,f) \\
&= \left(\left(\mu(h_{\ast}b)hg\right)_{\ast}a\cdot h_{\ast}b\cdot c, \mu\left(\left(\mu(h_{\ast}b)hg\right)_{\ast}a\right)\mu(h_{\ast}b)hgf\right).
\end{split}
\end{equation*}
By \eqref{cond:cmbraid} and lemma \ref{lem:funccomm}
\[
\left(\mu(h_{\ast}b)hg\right)_{\ast}a\cdot h_{\ast}b = h_{\ast}b\cdot h_{\ast}g_{\ast} a = h_{\ast}g_{\ast}a\cdot h_{\ast} b
\]
Therefore, comparing with \eqref{eq:rassoc}, we obtain
\begin{equation*}
\begin{split}
\left((c,h)\circ(b,g)\right)\circ(a,f)
&= \left( h_{\ast}g_{\ast} a\cdot h_{\ast}b\cdot c, \mu(h_{\ast}g_{\ast}a\cdot h_{\ast} b)hgf\right) \\
&= (c,h)\circ\left((b,g)\circ(a,f)\right).
\end{split}
\end{equation*}
\end{proof}

\begin{corol}\label{cor:crsmod-inv-cat}
Let $M:\mathcal{J}\to\mathbf{Mon}$ be a crossed $\mathcal{J}$-module for a small category $\mathcal{J}$.
Then there is a category whose objects are those of $\mathcal{J}$, whose hom-sets are $M\!\mathcal{J}(j,k)$ defined by \eqref{eq:crsmod-inv-pairs} and whose composition is the map $\circ$ defined by \eqref{eq:crsmod-inv-composition}.
We denote the category by $M\!\mathcal{J}$.
\end{corol}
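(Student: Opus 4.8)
The plan is to observe that almost all of the work is already done: the preceding proposition supplies associativity of $\circ$, and the computation in \eqref{eq:stable} already shows that $\circ$ is well-defined as a map $M\!\mathcal{J}(k,l)\times M\!\mathcal{J}(j,k)\to M\!\mathcal{J}(j,l)$, i.e.\ that the composite pair again satisfies the fixed-point constraint in \eqref{eq:crsmod-inv-pairs}. Hence the only remaining axioms to check are the existence of identities and the two unit laws. I would take the candidate identity on an object $j\in\mathcal{J}$ to be the pair $(1,\mathrm{id}_j)$, where $1$ is the unit of the monoid $M(j)$ and $\mathrm{id}_j$ is the identity of $\mathcal{J}$.

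First I would confirm $(1,\mathrm{id}_j)\in M\!\mathcal{J}(j,j)$. Since each $\mu_j\colon M(j)\to\mathrm{End}_{\mathcal{J}}(j)$ is a monoid homomorphism, it sends $1$ to the identity endomorphism, so $\mu(1)\,\mathrm{id}_j=\mathrm{id}_j\,\mathrm{id}_j=\mathrm{id}_j$, which is exactly the condition in \eqref{eq:crsmod-inv-pairs}. Next I would verify the unit laws for an arbitrary $(a,f)\in M\!\mathcal{J}(j,k)$, using the functoriality of $M$ (so that $(\mathrm{id}_k)_\ast=\mathrm{id}_{M(k)}$ and each $f_\ast$ preserves the monoid unit) together with the homomorphism property of $\mu$. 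For the left unit, substituting $(b,g)=(1,\mathrm{id}_k)$ into \eqref{eq:crsmod-inv-composition} gives $(1\cdot(\mathrm{id}_k)_\ast a,\,\mu((\mathrm{id}_k)_\ast a)\,\mathrm{id}_k\,f)=(a,\mu(a)f)$, which equals $(a,f)$ precisely because the defining condition $\mu(a)f=f$ holds. For the right unit, substituting the inner pair $(1,\mathrm{id}_j)$ into \eqref{eq:crsmod-inv-composition} gives $(a\cdot f_\ast 1,\,\mu(f_\ast 1)\,f\,\mathrm{id}_j)=(a\cdot 1,\mu(1)f)=(a,f)$, using that $f_\ast$ preserves the unit and that $\mu(1)=\mathrm{id}_k$. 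Together with associativity, this establishes all the category axioms.

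I do not expect a genuine obstacle: the content here is routine bookkeeping, and the purpose of the preceding proposition is precisely to discharge the one laborious axiom, associativity. The single point I would flag as deserving care is that the left unit law is \emph{not} formal — it genuinely invokes the constraint $\mu(a)f=f$ baked into the definition of the hom-sets. In other words, defining $M\!\mathcal{J}(j,k)$ as the set of $\mu$-fixed pairs is exactly what makes $(1,\mathrm{id})$ behave as a two-sided identity, so I would highlight this as the conceptual justification for the shape of \eqref{eq:crsmod-inv-pairs} rather than treat it as a mere calculation.
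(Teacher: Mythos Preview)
Your proposal is correct and matches the paper's approach: the corollary is stated without proof as an immediate consequence of the preceding associativity proposition, with the identity $(1_j,\mathrm{id}_j)$ noted just afterward. Your verification of the unit laws fills in precisely the routine details the paper leaves implicit, and your observation that the left unit law genuinely uses the constraint $\mu(a)f=f$ is a nice point.
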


The identity on $j\in M\!\mathcal{J}$ of the category $M\!\mathcal{J}$ is $(1_j,\mathrm{id}_j)$ where $1_j\in M(j)$ is the unit.
In general, if $f\in\mathcal{J}(j,k)$ and $g\in\mathcal{J}(k,l)$, we have $(1_l,g)\circ(1_k,f)=(1_l,gf)$.
Hence $M\!\mathcal{J}$ contains $\mathcal{J}$ as its wide subcategory.

Note that if $f:j\to k$ in $\mathcal{J}$ is an epimorphism, the equation $\mu(m)f=f$ implies $\mu(m)=\mathrm{id}$.
Thus if every morphism in $\mathcal{J}$ is an epimorphism, set $M_0(j)=\mu^{-1}(\mathrm{id})\subset M(j)$, and we have
\[
M\!\mathcal{J}(j,k) = M_0(k)\times\mathcal{J}(j,k).
\]
In the case, the composition is given by $(n,g)\circ(m,f)=(g_{\ast}m\cdot n,gf)$.
This is to say that the category $M\!\mathcal{J}$ equals to the opposite of the total category $(\mathcal{J}^{\mathrm{op}}\!M_0)^{\mathrm{op}}$ of a crossed $\mathcal{J}^{\mathrm{op}}$-monoid $M_0$, see \cite{BM}.

\subsection{Cubicalization of thin-powered categories}
We now introduce the notion of cubicalization of small categories.
For this, we need some assumptions for the process.
We here assume the following:

\begin{defin}
A small category $\mathcal{R}$ is said to be cubicalizable if it has an initial object $0$ and is equipped with a locally finite, coherent and Boolean thin-powered structure such that for each object $r\in\mathcal{R}$, the unique morphism $0\to r$ is a distinguished injection.
\end{defin}

Recall that we write $\mathcal{V}(\mathcal{R})=\mathrm{Span}(\mathcal{R},\mathcal{R}^+_0)$, and we have a semilattice representation $\mathcal{R}^+_0\!/(\blankdot):\mathcal{V}(\mathcal{R})\to\mathbf{SemiLat}^\vee$.
Since every semilattice can be seen as a monoid with multiplication $\vee$, we have a representation $\mathcal{V}(\mathcal{R})\to\mathbf{Mon}$.
We give a crossed module structure to this representation.
For $\xi\in\mathcal{R}^+_0\!/r$, we write $\Lambda(\xi):=\xi\xi^\dagger$.
Note that we have
\[
\Lambda(\xi)_\ast=(\xi\xi^\dagger)_\ast=\xi_\ast\xi^\ast=(\blankdot)\wedge\xi.
\]

\begin{lemma}\label{lem:lambdacalc}
Let $\mathcal{R}$ be a cubicalizable small category.
Then the following hold:
\begin{enumerate}[label={\rm(\arabic*)}]
  \item\label{sublem:flambda} If $f:r'\to r$ is a morphism of $\mathcal{R}$, then for every $\xi\in\mathcal{R}^+_0\!/r$, we have the formula
\[
\Lambda(\xi)f = f\Lambda(f^\ast\xi)
\]
in $\mathcal{V}(\mathcal{R})$.
  \item\label{sublem:daglambda} If $\gamma:r'\to r\in\mathcal{R}^+_0$, then for every $\xi\in\mathcal{R}^+_0\!/r$, we have the formula:
\[
\Lambda(\gamma^\ast\xi)\gamma^\dagger = \gamma^\dagger\Lambda(\xi)\,.
\]
  \item\label{sublem:lambdawedge} For each $\xi_1,\xi_2\in\mathcal{R}^+_0\!/r$, we have
\[
\Lambda(\xi_1)\Lambda(\xi_2)=\Lambda(\xi_2)\Lambda(\xi_1)=\Lambda(\xi_1\wedge\xi_2).
\]
\end{enumerate}
\end{lemma}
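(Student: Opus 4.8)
The plan is to establish all three identities by one uniform device: write each $\Lambda$-endomorphism explicitly as a span and compute the relevant composites directly from the composition rule of $\mathcal{V}(\mathcal{R})=\mathrm{Span}(\mathcal{R},\mathcal{R}^+_0)$. First I would record the basic observation that, for $\xi\colon s\to r$ in $\mathcal{R}^+_0$, the endomorphism $\Lambda(\xi)=\xi\xi^\dagger$ is represented by the span $r\xleftarrow{\xi}s\xrightarrow{\xi}r$ whose two legs are both $\xi$; this is immediate, since composing $\xi^\dagger$ (the span $r\xleftarrow{\xi}s\xrightarrow{=}s$) with $\xi$ (the span $s\xleftarrow{=}s\xrightarrow{\xi}r$) involves only a trivial pullback. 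I note that, since the representation $\mathcal{R}^+_0\!/(\blankdot)\colon\mathcal{V}(\mathcal{R})\to\mathbf{SemiLat}^\vee$ need not be faithful, one cannot read these morphism identities off from the semilattice computation $\Lambda(\xi)_\ast=(\blankdot)\wedge\xi$; the equalities must be verified inside $\mathcal{V}(\mathcal{R})$ itself.

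For part \ref{sublem:flambda} I would compute both sides as spans. Recall that composing two spans requires the pullback of a distinguished injection against an $\mathcal{R}$-morphism, which exists and carries a canonical $\mathcal{R}^+_0$-leg by lemma \ref{lem:thinpow-pb}. Forming the pullback square defining $f^\ast\xi$,
\[
\xymatrix{
  s' \ar[r]^{f'} \ar[d]_{f^\ast\xi} \ar@{}[dr]|(.4){\pbcorner} & s \ar[d]^{\xi} \\
  r' \ar[r]^{f} & r }
\]
one finds that $\Lambda(\xi)f$ is the span $r'\xleftarrow{f^\ast\xi}s'\xrightarrow{\xi f'}r$, while $f\Lambda(f^\ast\xi)$ is the span $r'\xleftarrow{f^\ast\xi}s'\xrightarrow{f(f^\ast\xi)}r$. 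These agree precisely because the square commutes, i.e. $f(f^\ast\xi)=\xi f'$. Part \ref{sublem:daglambda} is the same computation run with $\gamma^\dagger$ in place of $f$: using the pullback square defining $\gamma^\ast\xi$ and writing $\gamma'$ for the remaining pullback leg, both $\Lambda(\gamma^\ast\xi)\gamma^\dagger$ and $\gamma^\dagger\Lambda(\xi)$ reduce to the single span $r\xleftarrow{\xi\gamma'}t\xrightarrow{\gamma^\ast\xi}r'$, and the match is again forced by commutativity of that square.

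For part \ref{sublem:lambdawedge} the key point is that the pullback of two distinguished injections $\xi_1,\xi_2\in\mathcal{R}^+_0\!/r$ computes their meet: the pullback object $t$ carries legs $p_i\colon t\to s_i$ in $\mathcal{R}^+_0$ with $\xi_1 p_1=\xi_2 p_2=\xi_1\wedge\xi_2$. Composing $\Lambda(\xi_1)$ with $\Lambda(\xi_2)$ therefore glues the two copies of $\xi_i$ along this pullback, and both legs of the resulting span become $\xi_1\wedge\xi_2$; that is, $\Lambda(\xi_1)\Lambda(\xi_2)$ is the span $r\xleftarrow{\xi_1\wedge\xi_2}t\xrightarrow{\xi_1\wedge\xi_2}r$, which is $\Lambda(\xi_1\wedge\xi_2)$. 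Swapping $\xi_1$ and $\xi_2$ uses the same pullback and yields $\Lambda(\xi_2)\Lambda(\xi_1)=\Lambda(\xi_2\wedge\xi_1)$, whence commutativity of the meet finishes the statement.

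The mathematical content here is light; the only genuine obstacle is bookkeeping, namely keeping straight in each composite which leg of which span is pulled back, which surviving leg is the left (distinguished-injection) leg and which is the right leg, and that the produced pullback leg really lands in $\mathcal{R}^+_0$ so that the composite is again a legitimate span. Lemma \ref{lem:thinpow-pb} guarantees that these pullbacks exist and are strictly unique with an $\mathcal{R}^+_0$-leg, so that no choices intervene and the spans displayed above are genuinely equal rather than merely isomorphic.
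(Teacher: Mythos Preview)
Your proposal is correct and follows essentially the same approach as the paper: both arguments compute each composite span directly from the pullback that defines the composition in $\mathcal{V}(\mathcal{R})$, invoking lemma~\ref{lem:thinpow-pb} for existence and strict uniqueness, and then read off the desired equality from commutativity of the pullback square (or, in part~\ref{sublem:lambdawedge}, from the fact that the pullback of $\xi_1,\xi_2$ realises $\xi_1\wedge\xi_2$). The only cosmetic difference is that the paper writes each verification as a chain of equalities in the $(\blankdot)^\dagger$ notation, whereas you display the two sides as explicit spans and compare them; the underlying diagram and the one nontrivial step (commutativity of the pullback square) are identical in each part.
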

\begin{proof}
By taking the pullback, we obtain the following diagram in $\mathcal{R}$:
\[
\xymatrix@=4ex{
  {} && r_0 \ar[dl]_{f^\ast\xi} \ar[dr]^{f_0} \ar@{}[dd]|(.4){\rotatebox{-45}{\pbcorner}} && {} \\
  {} & r' \ar@{=}[dl] \ar[dr]_{f} && s \ar[dl]_{\xi} \ar[dr]^{\xi} & {} \\
  r' && r && r }
\]
Hence as morphisms of $\mathcal{V}(\mathcal{R})$, we have
\[
\Lambda(\xi)f = \xi\xi^\dagger f = \xi f_0 (f^\ast(\xi))^\dagger = f f^\ast(\xi) (f^\ast(\xi))^\dagger = f \Lambda(f^\ast(\xi)).
\]
This is \ref{sublem:flambda}.
Similarly, for $\gamma:r'\to r\in\mathcal{R}^+_0$, we have the following diagram in $\mathcal{R}$:
\[
\xymatrix@=4ex{
  {} && r_0 \ar[dl]_{\xi^\ast\gamma} \ar[dr]^{\gamma^\ast\xi} \ar@{}[dd]|(.4){\rotatebox{-45}{\pbcorner}} && {} \\
  {} & s \ar[dl]_{\xi} \ar[dr]_{\xi} && r' \ar[dl]_{\gamma} \ar@{=}[dr] & {} \\
  r' && r && r }
\]
Then \ref{sublem:daglambda} can be verified as follows:
\[
\gamma^\dagger\Lambda(\xi)
= \gamma^\dagger\xi\xi^\dagger
= (\gamma^\ast\xi)(\xi^\ast\gamma)^\dagger\xi^\dagger
= (\gamma^\ast\xi)(\gamma^\ast\xi)^\dagger\gamma^\dagger
= \Lambda(\gamma^\ast\xi)\gamma^\dagger\,.
\]

Now if $\xi_1,\xi_2\in\mathcal{R}^+_0\!/r$, then we have the diagram
\[
\xymatrix@=4ex{
  {} && t \ar[dl]_{\varepsilon_2} \ar[dr]^{\varepsilon_1} \ar@{}[dd]|(.4){\rotatebox{-45}\pbcorner} && {} \\
  {} & s_1 \ar[dl]_{\xi_1} \ar[dr]^{\xi_1} && s_2 \ar[dl]_{\xi_2} \ar[dr]^{\xi_2} & {} \\
  r && r && r }
\]
where $\xi_1\varepsilon_2=\xi_2\varepsilon_1=\xi_1\wedge\xi_2$.
This implies that $\Lambda(\xi_1\wedge\xi_2)=\Lambda(\xi_2)\Lambda(\xi_1)$.
\end{proof}

\begin{corol}\label{cor:disjointlambda}
Let $\mathcal{R}$ and $\mathcal{R}^+_0$ be as above.
Then for $\xi\in\mathcal{R}^+_0\!/r$ and $f:r'\to r\in\mathcal{R}$, $\mathrm{im}(f)\le\xi$ if and only if $\Lambda(\xi)f=f$.
\end{corol}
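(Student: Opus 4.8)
The plan is to turn the equation $\Lambda(\xi)f=f$, which lives in $\mathcal{V}(\mathcal{R})$, into a statement about the poset $\mathcal{R}^+_0/r'$, and then read it off using the Galois connection of lemma~\ref{lem:galois}. First, by part~\ref{sublem:flambda} of lemma~\ref{lem:lambdacalc} we have $\Lambda(\xi)f=f\Lambda(f^\ast\xi)$, so it is enough to decide when $f\Lambda(f^\ast\xi)=f$. Writing $\zeta:=f^\ast\xi\colon s'\to r'$, which is a distinguished injection, I would compute the composite span $f\Lambda(\zeta)=f\zeta\zeta^\dagger$ directly: the pullback of the right leg $\zeta$ of $\Lambda(\zeta)$ along the identity left leg of $f$ is $\zeta$ itself, so the composite is the span $r'\xleftarrow{\zeta}s'\xrightarrow{f\zeta}r$, that is, $f\Lambda(f^\ast\xi)=(f\zeta)\zeta^\dagger$.

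The key step is then the uniqueness of the factorization $f\gamma^\dagger$ in $\mathcal{V}(\mathcal{R})$. The morphism $(f\zeta)\zeta^\dagger$ is already written in canonical form, with left leg $\zeta$ and right leg $f\zeta$, whereas $f$ has canonical form $f\,\mathrm{id}_{r'}^\dagger$ with left leg $\mathrm{id}_{r'}$. Hence these two morphisms coincide if and only if $\zeta=\mathrm{id}_{r'}$, the matching of the right legs being then automatic. By lemma~\ref{lem:sliceposet} the identity $\mathrm{id}_{r'}$ is precisely the maximum element $\top$ of the poset $\mathcal{R}^+_0/r'$, so this shows $\Lambda(\xi)f=f$ if and only if $f^\ast\xi=\top$.

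Finally I would translate $f^\ast\xi=\top$ into the required inequality. Since $\top$ is maximal, $f^\ast\xi=\top$ is equivalent to $\top\le f^\ast\xi$; by the Galois connection $f_\ast\dashv f^\ast$ of lemma~\ref{lem:galois} this is in turn equivalent to $f_\ast(\top)\le\xi$. As $f_\ast(\top)=\image(f\,\mathrm{id}_{r'})=\image(f)$ by the very definition of $f_\ast$, the condition becomes $\image(f)\le\xi$, which is the asserted equivalence.

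The one point that needs care—and which I regard as the main obstacle—is the appeal to uniqueness of factorization: the two spans must be compared as strict morphisms of $\mathcal{V}(\mathcal{R})$, i.e. with the same apex and the same pair of legs, rather than merely up to $2$-isomorphism. This is legitimate precisely because lemma~\ref{lem:thinpow-pb} makes pullbacks along distinguished injections strictly unique, so that $\mathcal{V}(\mathcal{R})$ is a strict category in which every morphism has a unique canonical form $f\gamma^\dagger$; it is exactly this strictness that lets me conclude $\zeta=\mathrm{id}_{r'}$ from the bare equality of the two morphisms.
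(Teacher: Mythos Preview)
Your proof is correct and close in spirit to the paper's, but it is organised more economically. The paper treats the two implications separately: for $\mathrm{im}(f)\le\xi\Rightarrow\Lambda(\xi)f=f$ it invokes part~\ref{sublem:lambdawedge} of lemma~\ref{lem:lambdacalc} together with the observation $\Lambda(\mathrm{im}(f))f=f$, and for the converse it argues exactly as you do via part~\ref{sublem:flambda} and uniqueness of the $(\mathcal{R}^+_0)^\dagger$-$\mathcal{R}$ factorisation, but then translates $f^\ast\xi=1$ into $\mathrm{im}(f)\le\xi$ using corollary~\ref{cor:ffmeet} (which needs stability). You instead run a single chain of equivalences and replace the appeal to corollary~\ref{cor:ffmeet} by the bare Galois connection of lemma~\ref{lem:galois}, which is a mild simplification; it also gives you the forward implication for free without touching part~\ref{sublem:lambdawedge}. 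Your closing remark about strictness is well placed: the equality $(f\zeta)\zeta^\dagger=f\,\mathrm{id}^\dagger\Rightarrow\zeta=\mathrm{id}$ really does rest on $\mathcal{V}(\mathcal{R})$ being a strict $1$-category, and this is exactly the content of the condition~\ref{cond:star} and lemma~\ref{lem:thinpow-pb}.
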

\begin{proof}
Note that by the definition of $\Lambda$, we have $\Lambda(\mathrm{im}(f))f=f$.
Hence if $\mathrm{im}(f)\le\xi$, then
\[
\Lambda(\xi)f = \Lambda(\xi\wedge\mathrm{im}(f))f = \Lambda(\mathrm{im}(f))f = f.
\]

Conversely, by lemma \ref{lem:lambdacalc}-\ref{sublem:flambda} we have
\[
\Lambda(\xi)f = f\Lambda(f^\ast\xi) = f (f^\ast(\xi))(f^\ast(\xi))^\dagger.
\]
Because of the uniqueness of the factorization in $\mathcal{V}(\mathcal{R},\mathcal{R}^+_0)$, in order to have $\Lambda(\xi)f=f$, it is necessary that $f^\ast(\xi)=1$.
Since $\mathcal{R}^+_0$ is stable, by corollary \ref{cor:ffmeet}, we have $f_\ast f^\ast(\xi)=\xi\wedge\mathrm{im}(f)$ and $f_\ast(1)=\mathrm{im}(f)$.
Therefore $\Lambda(\xi)f=f$ implies $\xi\wedge\mathrm{im}(f)=\mathrm{im}(f)$, which is equivalent to $\mathrm{im}(f)\le\xi$.
\end{proof}

Consider a cubicalizable small category $\mathcal{R}$.
By lemma \ref{lem:lambdacalc}-\ref{sublem:lambdawedge}, $\Lambda$ preserves the meet operation.
We, however, see $\mathcal{R}^+_0\!/r$ as a monoid with the join operation, hence $\Lambda$ is not necessarily a crossed module structure.
So we consider its complement.
Indeed, since $\mathcal{R}^+_0\!/r$ is Boolean, there is a complement operation $\neg:(\mathcal{R}^+_0\!/r)^\opposite\to\mathcal{R}^+_0\!/r$.
Then lemma \ref{lem:lambdacalc}-\ref{sublem:lambdawedge} implies that we have $\Lambda(\neg(\xi_1\vee\xi_2))=\Lambda(\neg\xi_1)\Lambda(\neg\xi_2)$.
By setting $\Lambda^\complement=\Lambda(\neg(\blankdot))$, we obtain a monoid homomorphism:
\[
\Lambda^\complement:\mathcal{R}^+_0\!/r\to\mathrm{End}_{\mathcal{V}(\mathcal{R})}(r)
\]

\begin{prop}\label{prop:cube-crsmod}
If $\mathcal{R}$ is a cubicalizable category, then the functor $\mathcal{R}^+_0\!/(\blankdot):\mathcal{V}(\mathcal{R})\to\mathbf{Mon}$ and a monoid homomorphism $\Lambda^\complement$ defined above form a structure of crossed $\mathcal{V}(\mathcal{R})$-module.
\end{prop}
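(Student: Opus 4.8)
The plan is to check the two crossed-module axioms \eqref{cond:cmtrans} and \eqref{cond:cmbraid}, since the remaining data are already in hand: the underlying functor $M=\mathcal{R}^+_0\!/(\blankdot)\colon\mathcal{V}(\mathcal{R})\to\mathbf{Mon}$ (join as multiplication, least element $0$ as unit) is produced by corollary \ref{cor:V-repn}, and the fact that each $\Lambda^\complement\colon\mathcal{R}^+_0\!/r\to\mathrm{End}_{\mathcal{V}(\mathcal{R})}(r)$ is a monoid homomorphism was verified just above the statement via De Morgan and lemma \ref{lem:lambdacalc}-\ref{sublem:lambdawedge}. The engine of both verifications is the $\Lambda$-calculus of lemma \ref{lem:lambdacalc}, together with the idempotence $\Lambda(\xi)^2=\Lambda(\xi\wedge\xi)=\Lambda(\xi)$ (the case $\xi_1=\xi_2$ of lemma \ref{lem:lambdacalc}-\ref{sublem:lambdawedge}), which in our notation reads $\mu(\zeta)^2=\mu(\zeta)$ for every $\zeta$.

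First I would dispose of \eqref{cond:cmbraid}. Since $M(r)$ is commutative, $ab=a\vee b$, while $\mu(a)_\ast=\Lambda(\neg a)_\ast=(\blankdot)\wedge\neg a$ gives $(\mu(a)_\ast b)\,a=(b\wedge\neg a)\vee a$. Boolean distributivity rewrites the right-hand side as $(a\vee b)\wedge(a\vee\neg a)=a\vee b$, so \eqref{cond:cmbraid} is nothing but the distributive law in $\mathcal{R}^+_0\!/r$.

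The substantive axiom is \eqref{cond:cmtrans}. By idempotence its right-hand side $\mu(\phi_\ast a)^2\phi$ collapses to $\mu(\phi_\ast a)\phi$, so it suffices to prove $\mu(\phi_\ast a)\,\phi\,\mu(a)=\mu(\phi_\ast a)\,\phi$ for a morphism $\phi\colon r\to r'$ of $\mathcal{V}(\mathcal{R})$ and $a\in\mathcal{R}^+_0\!/r$. I would factor $\phi=f\gamma^\dagger$ with $\gamma\colon s\to r$ in $\mathcal{R}^+_0$ and $f\colon s\to r'$ in $\mathcal{R}$, set $b:=\gamma^\ast a$, and record $\phi_\ast a=f_\ast(\gamma^\dagger)_\ast a=f_\ast b$ by corollary \ref{cor:V-repn}. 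The idea is to transport the inner factor $\Lambda(\neg a)=\mu(a)$ across to the source: first $\gamma^\dagger\Lambda(\neg a)=\Lambda(\gamma^\ast(\neg a))\gamma^\dagger=\Lambda(\neg b)\gamma^\dagger$ by lemma \ref{lem:lambdacalc}-\ref{sublem:daglambda}, and then $\Lambda(\neg f_\ast b)\,f=f\,\Lambda(\neg f^\ast f_\ast b)$ by lemma \ref{lem:lambdacalc}-\ref{sublem:flambda}. Both moves silently require that $\gamma^\ast$ and $f^\ast$ commute with the negation $\neg$; this is where I expect the only real care to be needed. Each of these pullback maps is a homomorphism of finite Boolean lattices, since it preserves meets (as a right adjoint, corollary \ref{cor:galois-conti}), preserves joins (by coherence of $\mathcal{R}$, together with corollary \ref{cor:Booljoin} for the distinguished injection $\gamma$), and sends $1$ to $1$ because the pullback of an identity is an identity; hence it preserves complements by the uniqueness of complements in a Boolean lattice.

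Assembling these identities, the left-hand side becomes $f\,\Lambda(\neg f^\ast f_\ast b)\,\Lambda(\neg b)\,\gamma^\dagger=f\,\Lambda\bigl(\neg f^\ast f_\ast b\wedge\neg b\bigr)\,\gamma^\dagger$ by lemma \ref{lem:lambdacalc}-\ref{sublem:lambdawedge}, while the right-hand side becomes $f\,\Lambda(\neg f^\ast f_\ast b)\,\gamma^\dagger$. These coincide because the unit $b\le f^\ast f_\ast b$ of the Galois connection (lemma \ref{lem:galois}) gives $\neg f^\ast f_\ast b\le\neg b$, whence $\neg f^\ast f_\ast b\wedge\neg b=\neg f^\ast f_\ast b$. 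This establishes \eqref{cond:cmtrans} and completes the crossed-module structure. The main obstacle, as indicated, is bookkeeping the directions of the span factorizations and confirming that the coherence hypothesis packaged into cubicalizability is exactly what forces $f^\ast$ and $\gamma^\ast$ to respect $\neg$; once that is in place the computation is a short chain of substitutions.
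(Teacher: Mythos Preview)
Your argument is correct and follows the same overall strategy as the paper: verify \eqref{cond:cmbraid} by Boolean distributivity and \eqref{cond:cmtrans} via the $\Lambda$-calculus of lemma \ref{lem:lambdacalc}. Your treatment is in fact more careful on two points. First, you explicitly factor a general $\mathcal{V}(\mathcal{R})$-morphism as $f\gamma^\dagger$ and handle the two pieces with parts \ref{sublem:flambda} and \ref{sublem:daglambda} separately, whereas the paper's displayed computation is written only for a morphism denoted ``$f$'' and invokes \ref{sublem:flambda}, which is stated for $\mathcal{R}$-morphisms. Second, the paper's route through \eqref{cond:cmtrans} rests on the inequality $\neg f_\ast(\xi)\le f_\ast(\neg\xi)$, which fails for a non-surjective $f\in\mathcal{R}$ (take a face map $\underline{1}\to\underline{2}$ in $\widetilde\Delta$ with $\xi=0$); your computation avoids this by commuting the negation with $f^\ast$ and $\gamma^\ast$ instead of with $f_\ast$, and you correctly locate the reason this works in the coherence hypothesis of cubicalizability (so that $f^\ast$ preserves finite joins and the least element) together with corollary \ref{cor:Booljoin} for $\gamma^\ast$, making both into Boolean-lattice homomorphisms. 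The use of the Galois unit $b\le f^\ast f_\ast b$ to collapse $\neg f^\ast f_\ast b\wedge\neg b$ to $\neg f^\ast f_\ast b$ is exactly the right finishing move and matches the spirit of the paper's intended step.
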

\begin{proof}
We have to verify the conditions \eqref{cond:cmtrans} and \eqref{cond:cmbraid}.
Let $f:r\to r'$ and $\xi\in\mathcal{R}^+_0\!/r$.
Since $\neg f_\ast(\xi)\le f_\ast(\neg\xi)$, we have $\Lambda^\complement(f_\ast(\xi))=\Lambda(f_\ast(\neg\xi))$ by lemma \ref{lem:lambdacalc}.
Hence we obtain
\[
\begin{split}
\Lambda^\complement(f_\ast(\xi)) f \Lambda^\complement(\xi)
&= \Lambda^\complement(f_\ast(\xi))\Lambda(f_\ast(\neg\xi)) f \Lambda^\complement(\xi)\\
&= \Lambda^\complement(f_\ast(\xi))f \Lambda(f^\ast f_\ast(\neg\xi)\wedge\neg\xi)\\
&= \Lambda^\complement(f_\ast(\xi))f \Lambda^\complement(\xi)
\end{split}
\]
and \eqref{cond:cmtrans} follows.

On the other hand, for any $\xi_1,\xi_2\in\mathcal{R}^+_0\!/r$, by the distributivity, we have
\[
\begin{split}
\Lambda^\complement(\xi_1)_\ast(\xi_2)\vee\xi_1
&= (\neg\xi_1\wedge\xi_2)\vee\xi_1\\
&= (\neg\xi_1\vee\xi_1)\wedge(\xi_2\vee\xi_1)\\
&= 1\wedge\xi_2\wedge\xi_1\\
&= \xi_1\vee\xi_2.
\end{split}
\]
Hence \eqref{cond:cmbraid} follows.
\end{proof}

As a result, we obtain the new categories.

\begin{prop}
Suppose $\mathcal{R}$ is a cubicalizable category, that is, coherent, locally finite and Boolean thin-powered category.
Then there is a category $\square(\mathcal{R})$ whose objects are those of $\mathcal{R}$ and whose morphisms $r'\to r$ are pairs $(\xi,f\gamma^\dagger)$ of $(\mathcal{R},\mathcal{R}^+_0)$-spans $f\gamma^\dagger$ and $\xi\in\mathcal{R}^+_0\!/r$ with $\image(f)\wedge\xi=0$.
The composition is give by the formula
\[
(\zeta,g\gamma^\dagger)\circ (\xi,f\beta^\dagger) = \left(\zeta\vee g_\ast\xi,(\neg g_\ast\xi)(\neg g_\ast\xi)^\dagger g\gamma^\dagger f\beta^\dagger\right)\,.
\]
We call $\square(\mathcal{R})$ the cubicalization of $\mathcal{R}$.
\end{prop}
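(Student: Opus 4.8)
The plan is to recognise $\square(\mathcal{R})$ as the total category $M\!\mathcal{V}(\mathcal{R})$ of the crossed $\mathcal{V}(\mathcal{R})$-module produced in Proposition \ref{prop:cube-crsmod}, namely $M=\mathcal{R}^+_0\!/(\blankdot)\colon\mathcal{V}(\mathcal{R})\to\mathbf{Mon}$ equipped with $\Lambda^\complement$. Once this identification is in place, the assertion that $M\!\mathcal{V}(\mathcal{R})$ is a category, with the same objects as $\mathcal{V}(\mathcal{R})$ and hence as $\mathcal{R}$, and with associative composition and units $(1_r,\mathrm{id}_r)$, is exactly Corollary \ref{cor:crsmod-inv-cat}. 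Thus no associativity or unit law needs to be re-established here; the whole content of the statement is the translation of the abstract hom-sets \eqref{eq:crsmod-inv-pairs} and composition \eqref{eq:crsmod-inv-composition} into the concrete description in terms of $(\mathcal{R},\mathcal{R}^+_0)$-spans and the Boolean structure of $\mathcal{R}^+_0\!/(\blankdot)$.

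First I would unwind the hom-sets. By \eqref{eq:crsmod-inv-pairs},
\[
M\!\mathcal{V}(\mathcal{R})(r',r)=\bigl\{(\xi,\phi)\,\bigm|\,\xi\in\mathcal{R}^+_0\!/r,\ \phi\in\mathcal{V}(\mathcal{R})(r',r),\ \Lambda^\complement(\xi)\phi=\phi\bigr\}.
\]
Writing $\phi=f\gamma^\dagger$ for its unique span factorization and recalling $\Lambda^\complement(\xi)=\Lambda(\neg\xi)$, I would show the defining relation is equivalent to $\image(f)\wedge\xi=0$. Composing both sides of $\Lambda(\neg\xi)f\gamma^\dagger=f\gamma^\dagger$ on the right with $\gamma$ and using $\gamma^\dagger\gamma=\mathrm{id}$ (Lemma \ref{lem:c0dagger}) yields $\Lambda(\neg\xi)f=f$, the converse implication being immediate. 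By Corollary \ref{cor:disjointlambda} this last equation holds if and only if $\image(f)\le\neg\xi$, which in the Boolean lattice $\mathcal{R}^+_0\!/r$ is precisely $\image(f)\wedge\xi=0$. This matches the stated description of morphisms.

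Next I would read off the composition. Substituting $b=\zeta$, $g'=g\gamma^\dagger$, $a=\xi$ and $f'=f\beta^\dagger$ into \eqref{eq:crsmod-inv-composition} gives
\[
(\zeta,g\gamma^\dagger)\circ(\xi,f\beta^\dagger)=\bigl(\zeta\vee(g\gamma^\dagger)_\ast\xi,\ \Lambda^\complement\bigl((g\gamma^\dagger)_\ast\xi\bigr)\,g\gamma^\dagger f\beta^\dagger\bigr),
\]
where the monoid multiplication in $\mathcal{R}^+_0\!/r$ is the join $\vee$. Invoking $(g\gamma^\dagger)_\ast=g_\ast\gamma^\ast$ from Corollary \ref{cor:V-repn} (abbreviated $g_\ast\xi$ in the statement) together with the definition $\Lambda^\complement(\eta)=(\neg\eta)(\neg\eta)^\dagger$, the second coordinate becomes $(\neg g_\ast\xi)(\neg g_\ast\xi)^\dagger g\gamma^\dagger f\beta^\dagger$, which is exactly the asserted formula.

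I expect the only genuine friction to be twofold and minor. One point is confirming that the composite again lies in the hom-set, i.e. that the $\image$ of its span leg is disjoint from its first coordinate; but this closure is nothing other than the computation \eqref{eq:stable} that underlies the well-definedness of \eqref{eq:crsmod-inv-composition}, already available from the general construction. The other is bookkeeping of the pushforward notation: the symbol $g_\ast$ appearing in the stated composition must be read as the full action $(g\gamma^\dagger)_\ast=g_\ast\gamma^\ast$ of the span on $\mathcal{R}^+_0\!/(\blankdot)$, in accordance with Corollary \ref{cor:V-repn}. Neither point obstructs the argument, so the proof reduces to the two identifications above together with a citation of Corollary \ref{cor:crsmod-inv-cat}.
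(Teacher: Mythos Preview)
Your proposal is correct and follows essentially the same route as the paper: the paper's proof consists of the two sentences ``By proposition \ref{prop:cube-crsmod}, $M:=(\mathcal{R}^+_0\!/(\blankdot),\Lambda^\complement)$ is a crossed module on $\mathcal{V}(\mathcal{R})$. Then the result follows from corollary \ref{cor:crsmod-inv-cat},'' with the translation of the hom-sets via Corollary \ref{cor:disjointlambda} deferred to the remarks immediately following the proposition. Your write-up simply folds that translation into the proof itself, which is arguably cleaner.
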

\begin{proof}
By proposition \ref{prop:cube-crsmod}, $M:=(\mathcal{R}^+_0\!/(\blankdot),\Lambda^\complement)$ is a crossed module on $\mathcal{V}(\mathcal{R})$.
Then the result follows from corollary \ref{cor:crsmod-inv-cat}.
\end{proof}

Now let $\mathcal{R}$ be a cubicalizable category, and let $\square(\mathcal{R})$ be its cubicalization.
Notice that by corollary \ref{cor:disjointlambda}, for a morphism $f\gamma^\dagger:r'\to r\in\mathcal{V}(\mathcal{R})$ and an element $\xi\in\mathcal{R}^+_0\!/r$, we have $\Lambda^\complement(\xi)f\gamma^\delta=f$ if and only if $\mathrm{im}(f)\le\neg\xi$, which is equivalent to $\mathrm{im}(f)\wedge\gamma=0$.
Hence a morphism $r'\to r$ in $\square(\mathcal{R})$ is represented as a pair of a diagram
\[
\xymatrix{
  {} & s \ar[dl]_{\gamma} \ar[dr]^{f} & {} \\
  r' && r }
\]
and an element $\xi\in\mathcal{R}^+_0\!/r$ such that $\mathrm{im}(f)\wedge\xi=0$, here $\gamma\in\mathcal{R}^+_0$ and $f\in\mathcal{R}$.
In particular, we denote by $f^\xi$ the morphism represented as $(f,\xi)$ for $f\in\mathcal{R}$ and $\xi\in\mathcal{R}^+_0\!/r$ with $\mathrm{im}(f)\wedge\xi=0$.
Hence every morphism in $\square(\mathcal{R})$ is uniquely represented as the composition $f^\xi\gamma^\dagger$.
Moreover, by the unique factorization of morphisms in $\mathcal{R}$, we obtain the following result:

\begin{lemma}\label{lem:cube-fact}
Let $\mathcal{R}$ be a cubicalizable category.
Then every morphism $r'\to r$ of $\square(\mathcal{R})$ is uniquely represented as the composition
\[
\delta^\xi\sigma\gamma^\dagger
\]
such that $\gamma:s'\to r'\in\mathcal{R}^+_0$, $\sigma:s'\to s\in\mathcal{R}^-$, $\delta:s\to r\in\mathcal{R}^+_0$ and $\xi\in\mathcal{R}^+_0\!/r$ with $\delta\wedge\xi=0$.
\end{lemma}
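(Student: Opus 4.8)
The plan is to reduce the statement to three facts already in hand: the unique representation of each $\square(\mathcal{R})$-morphism in the form $f^\xi\gamma^\dagger$ obtained above, the strict uniqueness of the $(\mathcal{R}^-,\mathcal{R}^+_0)$-factorization $f=\delta\sigma$ inside $\mathcal{R}$ from \ref{cond:DIfact}, and a single computation in the crossed module $M=(\mathcal{R}^+_0/(\blankdot),\Lambda^\complement)$ showing that the grading survives composition with an embedded degeneracy. The only genuine content is this last computation; everything else is bookkeeping.

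First I would record how $\mathcal{V}(\mathcal{R})$ sits inside $\square(\mathcal{R})=M\mathcal{V}(\mathcal{R})$: a span $\varphi$ is the pair $(0,\varphi)$, where $0$ is the least element of $\mathcal{R}^+_0/r$, i.e. the unit of the join-monoid. Using the composition rule $(b,g)\circ(a,f)=(b\vee g_\ast a,\Lambda^\complement(g_\ast a)gf)$ I would compute $\delta^\xi\circ\sigma=(\xi,\delta)\circ(0,\sigma)$. Since $\delta_\ast$ preserves the least element by Corollary \ref{cor:supremum}, the grading coordinate is $\xi\vee\delta_\ast 0=\xi$; since $\Lambda^\complement(0)=\Lambda(1)=\mathrm{id}$, the span coordinate is $\delta\sigma$. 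Hence $\delta^\xi\sigma=(\xi,\delta\sigma)=(\delta\sigma)^\xi$, and composing with $\gamma^\dagger=(0,\gamma^\dagger)$ gives $\delta^\xi\sigma\gamma^\dagger=(\delta\sigma)^\xi\gamma^\dagger$. Note that $(\xi,\delta)$ is a legitimate morphism exactly when $\image(\delta)\wedge\xi=\delta\wedge\xi=0$, which is the hypothesis of the lemma; and since $\image(\delta\sigma)=\delta$, this is the same as the side condition $\image(f)\wedge\xi=0$ carried by $f^\xi$.

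For existence, I take an arbitrary morphism of $\square(\mathcal{R})$, write it uniquely as $f^\xi\gamma^\dagger$, factor $f=\delta\sigma$ in $\mathcal{R}$ with $\delta=\image(f)$ and $\sigma=\coim(f)$, and invoke the computation to rewrite $f^\xi\gamma^\dagger=\delta^\xi\sigma\gamma^\dagger$; the constraint $\image(f)\wedge\xi=0$ is precisely $\delta\wedge\xi=0$. For uniqueness, given $\delta^\xi\sigma\gamma^\dagger=\delta_1^{\xi_1}\sigma_1\gamma_1^\dagger$ I apply the computation to both sides to obtain $(\delta\sigma)^\xi\gamma^\dagger=(\delta_1\sigma_1)^{\xi_1}\gamma_1^\dagger$; uniqueness of the $f^\xi\gamma^\dagger$ form yields $\delta\sigma=\delta_1\sigma_1$, $\xi=\xi_1$ and $\gamma=\gamma_1$, after which the strict uniqueness of the factorization in \ref{cond:DIfact} splits $\delta=\delta_1$ and $\sigma=\sigma_1$.

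The main obstacle is not difficulty but care: one must correctly identify the monoid unit with the least element $0$ and verify that the crossed-module composition leaves $\xi$ untouched when passing the ungraded factors $\sigma$ and $\gamma^\dagger$, which rests on $\delta_\ast 0=0$ and $\Lambda^\complement(0)=\mathrm{id}$. Once this is confirmed, the lemma is a formal consequence of the two uniqueness statements.
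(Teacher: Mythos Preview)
Your proposal is correct and follows precisely the route the paper indicates: the paper states the lemma as an immediate consequence of the unique representation $f^\xi\gamma^\dagger$ together with the unique $(\mathcal{R}^-,\mathcal{R}^+_0)$-factorization in $\mathcal{R}$, without spelling out any details. Your careful verification that $\delta^\xi\sigma=(\delta\sigma)^\xi$ via $\delta_\ast 0=0$ and $\Lambda^\complement(0)=\Lambda(1)=\mathrm{id}$ is exactly the missing computation, and your existence and uniqueness arguments are the intended ones.
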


\begin{lemma}\label{lem:cube-r0}
Let $\mathcal{R}$ be a cubicalizable category.
Then the following hold:
\begin{enumerate}[label={\rm(\arabic*)}]
  \item\label{sublem:cube-r0calc} If $\delta:s\to r\in\mathcal{R}^+_0$ and $\beta:t\to s\in\mathcal{R}^+_0$, then we have
\[
\delta^\xi\beta^\zeta=(\delta\beta)^{\xi\vee\delta_\ast\zeta}
\]
for each $\xi\in\mathcal{R}^+_0\!/r$ and $\zeta\in\mathcal{R}^+_0\!/s$ with $\xi\wedge\delta=0$ and $\zeta\wedge\beta=0$.
  \item\label{sublem:cube-r0span} If $f:r'\to r\in\mathcal{R}$ is a morphism, $\xi\in\mathcal{R}^+_0\!/r$ with $\xi\wedge\image(f)=0$ and $\gamma:t\to r\in\mathcal{R}^+_0$, then we have
\[
\gamma^\dagger f^\xi
= (\gamma^\dagger f)^{\gamma^\ast\xi}\,.
\]
  \item\label{sublem:cube-r0mono} Each $\delta:s\to r\in\mathcal{R}^+_0$ and $\xi\in\mathcal{R}^+_0\!/r$ with $\delta\wedge\xi=0$, we have $\delta^\dagger\delta^\xi=\mathrm{id}$.
Consequently, $\delta^\xi$ is a split monomorphism.
  \item\label{sublem:cube-initial} $\square(\mathcal{R})$ has a terminal object, namely $0$ the initial object of $\mathcal{R}$.
\end{enumerate}
\end{lemma}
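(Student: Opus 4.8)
The plan is to read all four assertions off the composition law of the total category $M\mathcal{V}(\mathcal{R})$ from corollary \ref{cor:crsmod-inv-cat}, namely $(b,g)\circ(a,f)=(b\vee g_\ast a,\ \mu(g_\ast a)gf)$ with $\mu=\Lambda^\complement$, combined with the $\Lambda$-calculus of lemma \ref{lem:lambdacalc}. The one fact that powers every computation is that, for a morphism $h$ of $\mathcal{V}(\mathcal{R})$ and $\eta\in\mathcal{R}^+_0\!/r$, one has $\mu(\eta)h=h$ if and only if $\image(h)\wedge\eta=0$; this is corollary \ref{cor:disjointlambda} for the thin-powered structure on $\mathcal{V}(\mathcal{R})$ (proposition \ref{prop:span-thinpow}) after substituting $\Lambda^\complement=\Lambda(\neg(\blankdot))$. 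I will also use throughout that $(\gamma^\dagger)_\ast=\gamma^\ast$ (corollary \ref{cor:V-repn}) and that $\image(h)=h_\ast(1)$.

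For part \ref{sublem:cube-r0calc} I would expand $\delta^\xi\beta^\zeta=(\xi,\delta)\circ(\zeta,\beta)=(\xi\vee\delta_\ast\zeta,\ \mu(\delta_\ast\zeta)\delta\beta)$ and verify that the second coordinate collapses to $\delta\beta$. By the observation above this is the assertion $\image(\delta\beta)\wedge\delta_\ast\zeta=0$; since $\image(\delta\beta)=\delta_\ast\beta$ and $\delta_\ast$ preserves meets for a distinguished injection (corollary \ref{cor:thinpow-meets}), the hypothesis $\beta\wedge\zeta=0$ yields $\delta_\ast\beta\wedge\delta_\ast\zeta=\delta_\ast(\beta\wedge\zeta)=0$. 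Part \ref{sublem:cube-r0span} follows the same template: $\gamma^\dagger f^\xi=(0,\gamma^\dagger)\circ(\xi,f)=(\gamma^\ast\xi,\ \mu(\gamma^\ast\xi)\gamma^\dagger f)$, and I reduce $\mu(\gamma^\ast\xi)\gamma^\dagger f=\gamma^\dagger f$ to $\image(\gamma^\dagger f)\wedge\gamma^\ast\xi=0$. Now $\image(\gamma^\dagger f)=(\gamma^\dagger)_\ast\image(f)=\gamma^\ast\image(f)$, so the meet-preservation of the right adjoint $\gamma^\ast$ (corollary \ref{cor:galois-conti}) together with $\gamma^\ast(0)=0$ gives $\gamma^\ast\image(f)\wedge\gamma^\ast\xi=\gamma^\ast(\image(f)\wedge\xi)=\gamma^\ast(0)=0$, using $\image(f)\wedge\xi=0$.

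Part \ref{sublem:cube-r0mono} I would deduce by specializing part \ref{sublem:cube-r0span} to $f=\gamma=\delta$, obtaining $\delta^\dagger\delta^\xi=(\delta^\dagger\delta)^{\delta^\ast\xi}=\mathrm{id}^{\delta^\ast\xi}$, so it remains to check $\delta^\ast\xi=0$. The identity $\xi\wedge\delta=\delta_\ast\delta^\ast\xi$ from the proof of lemma \ref{lem:distjoin} turns the hypothesis $\delta\wedge\xi=0$ into $\delta_\ast\delta^\ast\xi=0$; as $\delta_\ast$ is injective with $\delta_\ast(0)=0$ this forces $\delta^\ast\xi=0$. Hence $\delta^\dagger\delta^\xi=\mathrm{id}^0$, which is precisely the identity $(0,\mathrm{id})$, so $\delta^\dagger$ is a retraction of $\delta^\xi$ and $\delta^\xi$ is split monic.

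The substantive part is \ref{sublem:cube-initial}. Write $\iota_r\colon 0\to r$ for the unique (distinguished) morphism out of the initial object. First, $M(0)=\mathcal{R}^+_0\!/0$ is the one-element lattice — its top $\mathrm{id}_0$ equals its bottom $\iota_0=\mathrm{id}_0$ — so a morphism of $\square(\mathcal{R})$ into $0$ carries no nontrivial second coordinate and $\square(\mathcal{R})(r,0)\cong\mathcal{V}(\mathcal{R})(r,0)$. Using the normal form of lemma \ref{lem:cube-fact}, such a morphism is $\delta^\xi\sigma\gamma^\dagger$ with $\delta\in\mathcal{R}^+_0\!/0$, forcing $\delta=\mathrm{id}_0$ and $\xi=0$; it therefore reduces to $\sigma\gamma^\dagger$ with $\sigma\colon s\to 0$ an $\mathcal{R}^+_0$-surjection and $\gamma\colon s\to r\in\mathcal{R}^+_0$. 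The heart of the matter is to prove $s=0$, i.e.\ that the only $\mathcal{R}^+_0$-surjection onto $0$ is $\mathrm{id}_0$. I plan to do this by saturation: by part \ref{sublem:stabsat} of lemma \ref{lem:stabsatcrit}, the elements of $\mathcal{R}^+_0\!/s$ saturated with respect to the stable surjection $\sigma$ are exactly $\sigma^\ast(\gamma')$ for $\gamma'\in\mathcal{R}^+_0\!/0=\{0\}$, so $\bot$ is the only saturated subobject; but lemma \ref{lem:maxsatur}, applied to the coherent $\sigma$ with $\delta=\top$, exhibits $\neg\sigma^\ast\sigma_\ast(\neg\top)=\neg\sigma^\ast\sigma_\ast(0)=\top$ as saturated. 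Thus $\top=\bot$ in $\mathcal{R}^+_0\!/s$, whence $\deg s=1$, so $\iota_s$ coincides with $\mathrm{id}_s$ and $s=0$, $\sigma=\mathrm{id}_0$. Finally $\gamma\colon 0\to r$ must be $\iota_r$ by initiality, so every morphism $r\to 0$ equals $\iota_r^\dagger$; and $\iota_r^\dagger$ is indeed a morphism of $\square(\mathcal{R})$ because $\mu(0)\iota_r^\dagger=\iota_r^\dagger$. This establishes $0$ as terminal. The only genuinely nontrivial step is the saturation argument ruling out nontrivial surjections onto $0$; the first three parts are bookkeeping with the composition rule.
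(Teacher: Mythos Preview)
Your proof is correct. For parts \ref{sublem:cube-r0calc}--\ref{sublem:cube-r0mono} you take a uniform route through corollary~\ref{cor:disjointlambda}: in each case you reduce $\mu(\eta)h=h$ to the disjointness condition $\image(h)\wedge\eta=0$ and verify the latter by elementary lattice algebra. The paper instead pushes the idempotent $\Lambda^\complement$ past $\delta$ or $\gamma^\dagger$ using the intertwining identities of lemma~\ref{lem:lambdacalc} (namely $\Lambda^\complement(\delta_\ast\zeta)\delta=\delta\Lambda^\complement(\zeta)$ and $\Lambda(\gamma^\ast\eta)\gamma^\dagger=\gamma^\dagger\Lambda(\eta)$) and then invokes $\Lambda^\complement(\zeta)\beta=\beta$, $\Lambda(\neg\xi)f=f$ at the end. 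Both arguments unwind to the same Boolean identities; yours is arguably more economical since it appeals to a single criterion rather than two separate commutation rules.

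For part \ref{sublem:cube-initial} you supply what the paper omits: the paper simply asserts in one line that $(0,0_r^\dagger)$ is the unique morphism $r\to 0$, whereas you actually prove uniqueness by showing that any $\mathcal{R}^+_0$-surjection $\sigma\colon s\to 0$ forces $s=0$. Your saturation argument works, but it is longer than necessary. Once you know the saturated elements of $\mathcal{R}^+_0\!/s$ are exactly the image of $\sigma^\ast$, you are done without lemma~\ref{lem:maxsatur}: the map $\sigma^\ast$ preserves the top (as a right adjoint) and the bottom (by coherence), and in the one-element lattice $\mathcal{R}^+_0\!/0$ these coincide, so $\top_s=\bot_s$ in $\mathcal{R}^+_0\!/s$ directly. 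Your invocation of lemma~\ref{lem:maxsatur} just rederives that $\top_s$ is saturated, which is immediate from $\sigma^\ast(1)=1$; the real content is the coherence step $\sigma^\ast(0)=0$, which you use in both halves of your argument.
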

\begin{proof}
Since $\delta^\ast\delta_\ast=\mathrm{id}$, by lemma \ref{lem:lambdacalc}, we have $\Lambda^\complement(\delta_\ast(\zeta))\delta=\delta\Lambda^\complement(\zeta)$.
Hence we obtain
\[
\delta^\xi\beta^\zeta
= (\xi\vee\delta_\ast\zeta,\Lambda^\complement(\delta_\ast\zeta)\delta\beta)
= (\xi\vee\delta_\ast\zeta,\delta\Lambda^\complement(\zeta)\beta)
= (\xi\vee\delta_\ast\zeta,\delta\beta)\,,
\]
and \ref{sublem:cube-r0calc} follows.

Similarly, we have
\[
\gamma^\dagger f^\xi
= (\gamma^\ast\xi, \Lambda^\complement(\gamma^\ast\xi)\gamma^\dagger f)\,,
\]
where by \ref{sublem:daglambda} of lemma \ref{lem:lambdacalc},
\[
\Lambda^\complement(\gamma^\ast\xi)\gamma^\dagger f
= \Lambda(\gamma^\ast(\neg\xi))\gamma^\dagger f
= \gamma^\dagger\Lambda(\neg\xi) f
= \gamma^\dagger f\,.
\]
Hence we obtain
\[
\gamma^\dagger f^\xi = (\gamma^\dagger f)^{\gamma^\ast\xi}\,,
\]
which implies \ref{sublem:cube-r0span}.

On the other hand, if $\delta\wedge\xi=0$, we have $\delta^\ast\xi=0$.
Hence we obtain
\[
\delta^\dagger\delta^\xi
= (\delta^\ast\xi,\delta^\dagger\delta)
= (0,\mathrm{id})\,
\]
which implies \ref{sublem:cube-r0mono}

Finally, for each object $r\in\square(\mathcal{R})$, the unique morphism $r\to 0\in\square(\mathcal{R})$ is $(0,0_r^\dagger)$, where $0_r:0\to r$ is the unique morphism in $\mathcal{R}$.
Hence we obtain \ref{sublem:cube-initial}.
\end{proof}

We denote by $\square(\mathcal{R})^+$ the set of morphisms in $\square(\mathcal{R})$ consisting of morphisms of the form $\delta^\xi$ with $\delta:s\to r\in\mathcal{R}^+_0$ and $\xi\in\mathcal{R}^+_0\!/r$ such that $\delta\wedge\xi=0$.
Then \ref{sublem:cube-r0calc} of lemma \ref{lem:cube-r0} implies that $\square(\mathcal{R})^+$ is in fact a wide subcategory of $\square(\mathcal{R})$.

Note also that there is an embedding $\mathcal{V}(\mathcal{R})\to\square(\mathcal{R})$ defined as $f\gamma^\dagger\mapsto f^0\gamma^\dagger$.
This preserves the composition and the identities since $0$ is the unit of the join operation.

\begin{exam}
Recall that the category $\widetilde\Delta^+$ of finite ordinals and order-preserving injections is a thin-powered category with thin-powered structure $\widetilde\Delta^+$ itself.
Its cubicalization $\square:=\square(\widetilde\Delta^+)$ is called the cubical site (see \cite{BHS}).
For a finite ordinal $\underline{n}$, the poset $\widetilde\Delta^+\!/\underline{n}$ is naturally identified with the power set $[1]^n$ of $\underline{n}=\{0,\dots,n-1\}$.
\end{exam}

\begin{exam}
Isaacson introduces the category $\square_\Sigma$ in \cite{Isa}, which he call the extended cubical site.
We can construct it in the above way:
Let $\Sigma=\{\Sigma_n\}_n$ denotes the group operad of symmetric groups.
Then we can naturally see it a crossed $\widetilde\Delta$-group, so we obtain a thin-powered category $\widetilde\Delta\Sigma$.
Of course, $\widetilde\Delta\Sigma$ is cubicalizable.
It is easily verifies that we have $\square_\Sigma=\square(\widetilde\Delta\Sigma)$.
\end{exam}


The reason of the terminology ``cube'' is as follows.
If $\mathcal{R}$ is a cubicalizable category, then each poset $\mathcal{R}^+_0\!/r$ is a finite Boolean lattice.
Recall that we set $\deg r=|\mathcal{R}^+_0\!/r|$.
Put $d(r):=\log_2(\deg r)$, then $\mathcal{R}^+_0\!/r$ is identified with the Power set of the set $\{0,\dots,d(r)-1\}$.
Hence let $N:\mathbf{Poset}\to\mathbf{SSet}$ be the nerve functor, and we have an isomorphism $N(\mathcal{R}^+_0\!/r)\simeq I^{d(r)}$ for $I=\Delta[1]$.
In other words, we have a functor $\mathcal{R}\ni r\mapsto I^{d(r)}\in\mathbf{SSet}$, and proposition \ref{prop:span-thinpow} implies it extends to $\mathcal{V}(\mathcal{R})\to\mathbf{SSet}$.
We, however, do not call $\mathcal{R}$ or $\mathcal{V}(\mathcal{R})$ cubical, since they do not have enogh face maps, while $\square(\mathcal{R})$ does:

\begin{prop}\label{prop:cube-repn}
Let $\mathcal{R}$ be a cubicalizable category.
Then the poset representation $\mathcal{R}^+_0\!/(\blankdot):\mathcal{R}\to\mathbf{Poset}$ extends to the functor $\square(\mathcal{R})\to\mathbf{Poset}$ by setting as follows:
For $\gamma:s\to r'\in\mathcal{R}^+_0$, $f:s\to r\in\mathcal{R}$ and $\xi\in\mathcal{R}^+_0\!/r$ with $\mathrm{im}(f)\wedge\xi=0$, we set
\[
(f^\xi\gamma^\dagger)_\ast(\eta):=f_\ast\gamma^\ast(\eta)\vee\xi
\]
for each element $\eta\in\mathcal{R}^+_0\!/r'$.
\end{prop}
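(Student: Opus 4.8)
The plan is to verify that the stated assignment, which I will call $\Phi$, is a well-defined functor $\square(\mathcal{R})\to\mathbf{Poset}$ that restricts to $\mathcal{R}^+_0\!/(\blankdot)$ along $\mathcal{R}\hookrightarrow\square(\mathcal{R})$. Since $\square(\mathcal{R})=M\mathcal{V}(\mathcal{R})$ with $M=(\mathcal{R}^+_0\!/(\blankdot),\Lambda^\complement)$, I would write a morphism $r'\to r$ as a pair $(\xi,\varphi)$ with $\varphi\in\mathcal{V}(\mathcal{R})(r',r)$ and $\xi\in\mathcal{R}^+_0\!/r$, and set $\Phi(\xi,\varphi)(\eta):=\varphi_\ast(\eta)\vee\xi$, where $\varphi_\ast$ is the image of $\varphi$ under the functor $\mathcal{R}^+_0\!/(\blankdot):\mathcal{V}(\mathcal{R})\to\mathbf{SemiLat}^\vee$ of corollary \ref{cor:V-repn}. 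For $\varphi=f\gamma^\dagger$ this gives $\varphi_\ast=f_\ast\gamma^\ast$, so $\Phi$ agrees with the displayed formula. Well-definedness is immediate: the $((\mathcal{R}^+_0)^\dagger,\mathcal{R})$-factorization of $\varphi$ is unique, so $(\xi,\varphi)$ is an unambiguous datum, and $\Phi(\xi,\varphi)$ is order-preserving, being the composite of the order-preserving map $\varphi_\ast$ with $(\blankdot)\vee\xi$.

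Next I would check the functor axioms. The identity of $r$ is $(0,\mathrm{id}_r)$ and $\Phi(0,\mathrm{id})(\eta)=\eta\vee 0=\eta$, so identities are preserved; moreover, along $\mathcal{R}\hookrightarrow\mathcal{V}(\mathcal{R})\hookrightarrow\square(\mathcal{R})$, $f\mapsto f^0$, one has $\Phi(0,f)(\eta)=f_\ast(\eta)$, which is precisely the asserted extension of the poset representation. The substance lies in compatibility with composition. Using the crossed-module composition \eqref{eq:crsmod-inv-composition} with $\mu=\Lambda^\complement$, for $(\xi,\varphi):r'\to r$ and $(\zeta,\psi):r\to r''$ one has
\[
(\zeta,\psi)\circ(\xi,\varphi)=\bigl(\zeta\vee\psi_\ast\xi,\ \Lambda^\complement(\psi_\ast\xi)\,\psi\varphi\bigr).
\]
Applying $\Phi$, using functoriality of $(\blankdot)_\ast$ on $\mathcal{V}(\mathcal{R})$ together with the identity $\Lambda^\complement(\theta)_\ast=(\blankdot)\wedge\neg\theta$ (obtained from $\Lambda(\theta)_\ast=(\blankdot)\wedge\theta$ noted after the definition of $\Lambda$), I would compute
\[
\Phi\bigl((\zeta,\psi)\circ(\xi,\varphi)\bigr)(\eta)=\bigl((\psi\varphi)_\ast(\eta)\wedge\neg\psi_\ast\xi\bigr)\vee\psi_\ast\xi\vee\zeta.
\]
On the other side, since $\psi_\ast$ preserves joins as a morphism of $\mathbf{SemiLat}^\vee$,
\[
\Phi(\zeta,\psi)\bigl(\Phi(\xi,\varphi)(\eta)\bigr)=\psi_\ast\bigl(\varphi_\ast(\eta)\vee\xi\bigr)\vee\zeta=(\psi\varphi)_\ast(\eta)\vee\psi_\ast\xi\vee\zeta.
\]
These two expressions coincide by the Boolean identity $(a\wedge\neg b)\vee b=a\vee b$, valid in the complemented distributive lattice $\mathcal{R}^+_0\!/r''$. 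This is the one point at which the Booleanness of the thin-powered structure is genuinely used.

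I expect the only real obstacle to be the bookkeeping in the composition step: keeping straight that the relevant pushforward is the $\mathcal{V}(\mathcal{R})$-map $\psi_\ast=g_\ast\delta^\ast$ (not merely $g_\ast$), and correctly absorbing the twist by $\Lambda^\complement(\psi_\ast\xi)$ that appears in the $\mathcal{V}(\mathcal{R})$-component of the composite in place of a bare $\psi\varphi$. Once that twist is dissolved via the Boolean identity above, the computation collapses to the desired equality. No case analysis is required, and beyond cubicalizability — local finiteness, coherence, and Booleanness, which guarantee that $\psi_\ast$ preserves joins (corollary \ref{cor:supremum}, lemma \ref{lem:semilat-repn}) and that each $\mathcal{R}^+_0\!/r$ is Boolean — no further hypotheses enter.
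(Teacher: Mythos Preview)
Your argument is correct and follows essentially the same path as the paper: both reduce the functoriality check to the Boolean identity $(a\wedge\neg b)\vee b=a\vee b$ after unwinding the effect of the twist $\Lambda^\complement(\psi_\ast\xi)$ in the composite. The only organizational difference is that the paper verifies compatibility on the two generating composition patterns $\delta^\dagger\circ f^\xi$ and $g^\zeta\circ f^\xi$ separately, whereas you handle an arbitrary $(\zeta,\psi)\circ(\xi,\varphi)$ in one stroke by working directly with $\psi\in\mathcal{V}(\mathcal{R})$ and invoking corollary~\ref{cor:V-repn} to know $\psi_\ast$ lands in $\mathbf{SemiLat}^\vee$; this is marginally cleaner but not a different idea.
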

\begin{proof}
We have to verify that the extension is compatible with the compositions.
Notice that for $\delta:s'\to r\in\mathcal{R}^+_0$, we have
\[
\delta^\dagger f^\xi
= (\delta^\ast\xi,\Lambda^\complement(\delta^\ast\xi)\delta^\dagger f)\,,
\]
and for $g:r\to r''$ and $\zeta\in\mathcal{R}^+_0\!/r''$ with $\image(g)\wedge\zeta=0$, we have
\[
g^\zeta f^\xi
= (\zeta\vee g_\ast\xi, \Lambda^\complement(g_\ast\xi)gf)\,.
\]
These imply that for each $\eta\in\mathcal{R}^+_0\!/r'$, we have
\[
\begin{split}
(\delta^\dagger f^\xi)_\ast(\eta)
&= \Lambda^\complement(\delta^\ast\xi)_\ast\delta^\ast f_\ast(\eta)\vee \delta^\ast\xi \\
&= ((\neg\delta^\ast\xi)\wedge \delta^\ast f_\ast(\eta))\vee \delta^\ast\xi \\
&= \delta^\ast f_\ast(\eta) \vee \delta^\ast\xi \\
&= \delta^\ast(f_\ast(\eta)\vee\xi) \\
&= (\delta^\dagger)_\ast (f^\xi)_\ast(\eta)\,,
\end{split}
\]
and
\[
\begin{split}
(g^\zeta f^\xi)_\ast(\eta)
&= \Lambda^\complement(g_\ast\xi)_\ast g_\ast f_\ast(\eta)\vee(\zeta\vee g_\ast\xi) \\
&= ((\neg g_\ast\xi)\wedge g_\ast f_\ast(\eta))\vee(\zeta\vee g_\ast\xi) \\
&= g_\ast f_\ast(\eta) \vee (\zeta\vee g_\ast\xi) \\
&= g_\ast( f_\ast(\eta)\vee\xi)\vee\zeta \\
&= (g^\zeta)_\ast(f^\xi)_\ast(\eta)\,.
\end{split}
\]
It follows that the extension is compatible with the compositions.
\end{proof}

\subsection{Simplicial realizations of cubicalizations}
We introduce the notion of simplicial realization.
We denote by $\square[r]$ the presheaf on $\square(\mathcal{R})$ represented by $r\in\mathcal{R}$.
By proposition \ref{prop:cube-repn} and the Yoneda extension, we obtain the following result:

\begin{lemma}
Let $\mathcal{R}$ be a cubicalizable category.
Then there is a functor $|\blankdot|:\square(\mathcal{R})^\wedge\to\mathbf{SSet}$ such that it preserves all (small) colimits, and for each $r\in\mathcal{R}$,
\[
|\square[r]|\simeq I^{d(r)}
\]
where $I=\Delta[1]$ and $d(r)=\log_2(\deg r)$.
We call it the simplicial realization functor.
\end{lemma}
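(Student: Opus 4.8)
The plan is to produce $|\blankdot|$ as the Yoneda extension of a functor defined on representables, so that colimit-preservation and the value on $\square[r]$ both follow formally from the density of representables. First I would assemble the target functor on $\square(\mathcal{R})$: composing the extended poset representation $\mathcal{R}^+_0\!/(\blankdot)\colon\square(\mathcal{R})\to\mathbf{Poset}$ of Proposition~\ref{prop:cube-repn} with the nerve functor $N\colon\mathbf{Poset}\hookrightarrow\mathbf{Cat}\to\mathbf{SSet}$ gives a functor $F\colon\square(\mathcal{R})\to\mathbf{SSet}$ with $F(r)=N(\mathcal{R}^+_0\!/r)$. Since $\mathcal{R}^+_0\!/r$ is a finite Boolean lattice of cardinality $\deg r$, it is isomorphic as a poset to the power set $[1]^{d(r)}$ with $d(r)=\log_2(\deg r)$, and because the nerve preserves finite products we obtain $F(r)\simeq N([1])^{d(r)}=I^{d(r)}$.

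Next I would define the realization by the coend
\[
|X| := \int^{r\in\square(\mathcal{R})} X(r)\times F(r),
\]
where $X(r)\times F(r)$ denotes the copower $\coprod_{X(r)}F(r)$; this exists since $\mathbf{SSet}$ is cocomplete. To see that $|\blankdot|$ preserves small colimits I would exhibit it as a left adjoint. Setting $\mathrm{Sing}(K)(r):=\mathbf{SSet}(F(r),K)$ for $K\in\mathbf{SSet}$, the natural bijections
\[
\mathbf{SSet}(|X|,K)\simeq\int_{r}\mathbf{SSet}\bigl(F(r)\times X(r),K\bigr)\simeq\int_{r}\mathbf{Set}\bigl(X(r),\mathbf{SSet}(F(r),K)\bigr)\simeq\square(\mathcal{R})^\wedge(X,\mathrm{Sing}(K)),
\]
obtained from the universal property of the coend, the copower--power adjunction, and the end formula for natural transformations, establish $|\blankdot|\dashv\mathrm{Sing}$; a left adjoint preserves all colimits.

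Finally, evaluating on a representable and writing $\square[r'](r)=\square(\mathcal{R})(r,r')$, the co-Yoneda lemma (the first formula of the generalized Yoneda lemma of Section~\ref{sec:prelim}, applied to $F$) gives
\[
|\square[r']| = \int^{r}\square(\mathcal{R})(r,r')\times F(r)\simeq F(r')\simeq I^{d(r')},
\]
which is the asserted value on representables. I do not expect a substantial obstacle: this is the standard nerve--realization paradigm, and the only points needing care are that $F$ is genuinely functorial on $\square(\mathcal{R})$ (which is precisely the content of Proposition~\ref{prop:cube-repn}) and that the isomorphism $N(\mathcal{R}^+_0\!/r)\simeq I^{d(r)}$ be natural in $r$ rather than a mere abstract identification. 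The mildly delicate step is this last naturality, which must be read off from the functoriality of $N$ together with the product decomposition of the Boolean lattice $\mathcal{R}^+_0\!/r$.
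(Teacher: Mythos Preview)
Your proposal is correct and follows exactly the approach the paper intends: the paper's entire argument is the one-line remark ``By proposition~\ref{prop:cube-repn} and the Yoneda extension,'' and you have spelled out precisely what that means, namely composing the extended poset representation with the nerve and taking the left Kan extension along the Yoneda embedding. One small point: your closing worry about the naturality of $N(\mathcal{R}^+_0\!/r)\simeq I^{d(r)}$ is unnecessary, since the lemma only asserts this isomorphism for each $r$ individually, not naturally in $r$.
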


Next, we define the following special $\square(\mathcal{R})$-set $\partial\square[r]$ as follows:
For $\delta\in\mathcal{R}^+_0\!/r$, we write
\[
\operatorname{codeg}\delta:=|(\mathcal{R}^+_0\!/r)_{\le\neg\delta}|\,.
\]
Notice that $\operatorname{codeg}\delta$ is always a power of $2$.
If $\operatorname{codeg}\delta'=4$ for $\delta'\in\mathcal{R}^+_0\!/r$, there are precisely two elements $\delta_1,\delta_2\in\mathcal{R}^+_0\!/r$ such that $\delta'\le\delta_1,\delta_2$ and $\operatorname{codeg}\delta_1=\operatorname{codeg}\delta_2=2$.
Say $\delta':s'\to r$ and $\delta_i:s_i\to r$, and $\delta'=\delta_i\varepsilon_i$ for $\varepsilon_i:s'\to s_i$.
Then for each $\xi\in\mathcal{R}^+_0\!/r$ with $\xi\le\neg\delta'$, we have two maps
\[
\square[s']
\xrightarrow{\varepsilon^{\delta_i^\ast\xi}}
\square[s_i]\times\{\xi\wedge\neg\delta_i\}
\hookrightarrow
\square[s_i]\times(\mathcal{R}^+_0\!/r)_{\le\neg\delta_i}
\]
for $i=1,2$.
These maps induce paralleled arrows in $\square(\mathcal{R})^\wedge$:
\[
\raisedunderop\coprod{\substack{(\delta':s'\to r)\in\mathcal{R}^+_0\!/r\\\operatorname{codeg}\delta'=4}} \square[s']\times (\mathcal{R}^+_0\!/r)_{\le\neg\delta'}
\rightrightarrows
\raisedunderop\coprod{\substack{(\delta:s\to r)\in\mathcal{R}^+_0\!/r\\\operatorname{codeg}\delta=2}} \square[s]\times {\mathcal{R}^+_0\!/r}_{\le\neg\delta}
\]
We define $\partial\square[r]$ to be the coequalizer of the diagram above.

For $\delta:s\to r\in\mathcal{R}^+_0$ and $\xi\in\mathcal{R}^+_0\!/r$ with $\delta\wedge\xi=0$, we have a morphism $\delta^\xi:s\to r$.
Hence we obtain a morphism below:
\[
\square[s]\times(\mathcal{R}^+_0\!/r)_{\le\neg\delta}\to\square[r]
\]
This clearly induces the morphism $\partial\square[r]\to\square[r]$.
This is a monomorphism.
To see this, we need some lemmas:

\begin{lemma}\label{lem:cube-face-ord}
Let $\mathcal{R}$ be a cubicalizable category.
Suppose we have morphisms $\delta^\xi:s\to r$ and $\gamma^\zeta:t\to r$ in $\square(\mathcal{R})$ with $\delta,\gamma\in\mathcal{R}^+_0$.
Then $\gamma^\zeta$ factors through $\delta^\xi$ if and only if we have $\gamma\le\delta$ and $\xi=\zeta\wedge\neg\delta$.
More precisely, in this case, we have $\gamma^\zeta=\delta^\xi(\delta^\ast\gamma)^{\delta^\ast\zeta}$.
\end{lemma}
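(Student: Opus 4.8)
The plan is to prove the two implications separately and to obtain the explicit factorization of the ``more precisely'' clause as a by-product of the \emph{if} direction; uniqueness of that factorization is then automatic, since $\delta^\xi$ is a split monomorphism by part \ref{sublem:cube-r0mono} of lemma \ref{lem:cube-r0}. Throughout I would compute inside the Boolean lattice $\mathcal{R}^+_0\!/r$, using that $\delta^\ast$ preserves meets and the least element together with the adjunction identities of section \ref{sec:pos-repn}.

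For the \emph{if} direction, assume $\gamma\le\delta$ and $\xi=\zeta\wedge\neg\delta$. First I would check that $(\delta^\ast\gamma)^{\delta^\ast\zeta}$ is a legitimate morphism of $\square(\mathcal{R})$, i.e. that $\delta^\ast\gamma\wedge\delta^\ast\zeta=\delta^\ast(\gamma\wedge\zeta)=\delta^\ast 0=0$, which holds because $\gamma\wedge\zeta=0$ and $\delta^\ast$ preserves meets and $0$. Then I would evaluate the composite with the formula in part \ref{sublem:cube-r0calc} of lemma \ref{lem:cube-r0}:
\[
\delta^\xi(\delta^\ast\gamma)^{\delta^\ast\zeta}=(\delta\cdot\delta^\ast\gamma)^{\xi\vee\delta_\ast\delta^\ast\zeta}\,.
\]
Both pieces are controlled by corollary \ref{cor:ffmeet}: since $\gamma\le\delta$, post-composition by a distinguished injection gives $\delta\cdot\delta^\ast\gamma=\delta_\ast\delta^\ast\gamma=\gamma\wedge\delta=\gamma$, and likewise $\delta_\ast\delta^\ast\zeta=\zeta\wedge\delta$. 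Distributivity of the Boolean lattice then collapses the exponent, $\xi\vee(\zeta\wedge\delta)=(\zeta\wedge\neg\delta)\vee(\zeta\wedge\delta)=\zeta\wedge(\neg\delta\vee\delta)=\zeta$, so the composite is exactly $\gamma^\zeta$.

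For the \emph{only if} direction, suppose $\gamma^\zeta=\delta^\xi h$ with $h\colon t\to s$, and write $h$ in its unique canonical form $h=\epsilon^\eta\sigma\beta^\dagger$ provided by lemma \ref{lem:cube-fact}, so $\beta,\epsilon\in\mathcal{R}^+_0$, $\sigma\in\mathcal{R}^-$ and $\epsilon\wedge\eta=0$. Applying part \ref{sublem:cube-r0calc} of lemma \ref{lem:cube-r0} to the left factor rewrites the composite as
\[
\gamma^\zeta=(\delta\epsilon)^{\xi\vee\delta_\ast\eta}\,\sigma\,\beta^\dagger\,,
\]
which is itself in canonical form. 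Comparing with the canonical form of $\gamma^\zeta$ (whose $\mathcal{R}^-$- and $(\mathcal{R}^+_0)^\dagger$-parts are trivial) and invoking the \emph{strict} uniqueness of lemma \ref{lem:cube-fact}, I would read off $\beta=\mathrm{id}$, $\sigma=\mathrm{id}$, $\delta\epsilon=\gamma$ and $\xi\vee\delta_\ast\eta=\zeta$. The factorization $\delta\epsilon=\gamma$ with $\epsilon\in\mathcal{R}^+_0$ yields $\gamma\le\delta$ at once, while meeting the last identity with $\neg\delta$ and using $\delta_\ast\eta\le\delta_\ast 1_s=\delta$ together with $\xi\le\neg\delta$ (which follows from $\xi\wedge\delta=0$) gives $(\xi\vee\delta_\ast\eta)\wedge\neg\delta=\xi$, hence $\xi=\zeta\wedge\neg\delta$.

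The main obstacle is the bookkeeping in the \emph{only if} direction: I must verify that composing $\delta^\xi$ with the canonical form of $h$ leaves the $\mathcal{R}^-$- and $(\mathcal{R}^+_0)^\dagger$-slots untouched, so that the displayed expression genuinely is the canonical form and the strict uniqueness of lemma \ref{lem:cube-fact} applies verbatim. Once this reduction is secured, every remaining step is a routine identity in a Boolean lattice, and the explicit factorization of the ``more precisely'' clause is precisely the one already exhibited in the \emph{if} direction, unique because $\delta^\xi$ is a split monomorphism.
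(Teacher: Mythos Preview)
Your proof is correct and follows essentially the same route as the paper's own argument: both directions rest on the composition formula of lemma \ref{lem:cube-r0}\ref{sublem:cube-r0calc} combined with the strict uniqueness of the factorization in lemma \ref{lem:cube-fact}, and the Boolean-lattice identities you invoke are exactly the ones used there. You are somewhat more explicit than the paper (checking that $(\delta^\ast\gamma)^{\delta^\ast\zeta}$ is a legitimate morphism, and spelling out the full canonical form of $h$ before concluding $\sigma=\mathrm{id}$ and $\beta=\mathrm{id}$), but the underlying argument is the same.
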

\begin{proof}
Suppose $\gamma^\zeta=\delta^\xi f$ for some $f:t\to s\in\square(\mathcal{R})$.
By the uniquness of the factorization in \ref{lem:cube-fact}, $f=\varepsilon^\omega$ for some $\omega\in\mathcal{R}^+_0$.
Then by \ref{sublem:cube-r0calc} of lemma \ref{lem:cube-r0}, we obtain
\[
\gamma^\zeta=\delta^\xi\varepsilon^\omega=(\delta\varepsilon)^{\xi\vee\delta_\ast\omega}\,,
\]
hence $\gamma=\delta\varepsilon$ and $\zeta=\xi\vee\delta_\ast\omega$.
These imply that $\gamma\le\delta$ and $\zeta\wedge\neg\delta=\xi$ respectively.

Conversely, suppose $\gamma\le\delta$ and $\zeta\wedge\neg\delta=\xi$.
Then we have
\[
\delta^\xi(\delta^\ast\gamma)^{\delta^\ast\zeta}
= (\delta_\ast\delta^\ast\gamma)^{\xi\vee\delta_\ast\delta^\ast\zeta}
= (\gamma\wedge\delta)^{\xi\vee(\zeta\wedge\delta)}
= \gamma^{(\zeta\wedge\neg\delta)\vee(\zeta\wedge\delta)}
= \gamma^\zeta\,.
\]
\end{proof}

\begin{lemma}\label{lem:cube-pb}
Let $\mathcal{R}$ be a cubicalizable category, ans suppose $\delta_i^{\xi_i}:s_i\to r$ is a morphism in $\square(\mathcal{R})$ with $\delta_i\in\mathcal{R}^+_0$ for $i=1,2$.
Then the following hold:
\begin{enumerate}[label={\rm(\arabic*)}]
  \item\label{sublem:cube-disj} If $\varepsilon_1\wedge\neg\delta_2\neq\varepsilon_2\wedge\neg\delta_1$, the following square is a pullback in $\square(\mathcal{R})^\wedge$:
\begin{equation}
\label{eq:face-disj-sq}
\xymatrix{
  \varnothing \ar[r] \ar[d] \ar@{}[dr]|(.4){\pbcorner} & \square[s_1] \ar[d]^{\delta_1^{\xi_1}} \\
  \square[s_2] \ar[r]^{\delta_2^{\xi_2}} & \square[r] }
\end{equation}
  \item\label{sublem:cube-meet} If $\varepsilon_1\wedge\neg\delta_2=\varepsilon_2\wedge\neg\delta_1$, the following square is a pullback in $\square(\mathcal{R})$:
\begin{equation}
\label{eq:face-meet-sq}
\renewcommand{\labelstyle}{\textstyle}
\xymatrix@+1pc{
  s_0 \ar[r]^{(\delta_1^\ast\delta_2)^{\delta_1^\ast\varepsilon_2}} \ar[d]_{(\delta_2^\ast\delta_1)^{\delta_2^\ast\varepsilon_1}} \ar@{}[dr]|(.4){\pbcorner} & s_1 \ar[d]^{\delta_1^{\xi_1}} \\
  s_1 \ar[r]^{\delta_2^{\xi_2}} & r }
\end{equation}
Moreover, the composition equals to $(\delta_1\wedge\delta_2)^{\varepsilon_1\vee\varepsilon_2}$.
\end{enumerate}
\end{lemma}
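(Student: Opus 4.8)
The plan is to work in the presheaf topos $\square(\mathcal{R})^\wedge$ and to exploit that every leg appearing in the two squares is a split monomorphism, so that the pullback of the two maps into $\square[r]$ may be computed as the intersection of the corresponding subpresheaves. Writing $F_i\subseteq\square[r]$ for the image of $\delta_i^{\xi_i}$ (a split monomorphism by Lemma~\ref{lem:cube-r0}\ref{sublem:cube-r0mono}), I would first record the factorization criterion: a morphism $\phi\colon w\to r$, put into its unique normal form $\phi=\delta^{\xi}\sigma\gamma^\dagger$ of Lemma~\ref{lem:cube-fact}, factors through $\delta_i^{\xi_i}$ if and only if $\delta\le\delta_i$ and $\xi\wedge\neg\delta_i=\xi_i$. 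The backward direction is immediate from Lemma~\ref{lem:cube-face-ord}, which rewrites $\delta^{\xi}=\delta_i^{\xi_i}(\delta_i^{\ast}\delta)^{\delta_i^{\ast}\xi}$ (the decoration being legitimate since $\delta_i^{\ast}$ preserves meets and the bottom element). For the forward direction I would take a factorization $\phi=\delta_i^{\xi_i}\psi$, put $\psi$ in normal form, compose via Lemma~\ref{lem:cube-r0}\ref{sublem:cube-r0calc}, and read off $\delta=\delta_i\delta'$ and $\xi=\xi_i\vee(\delta_i)_\ast\xi'$ by uniqueness; then $\xi\wedge\neg\delta_i=\xi_i$ follows from distributivity together with $\xi_i\le\neg\delta_i$ and $(\delta_i)_\ast\xi'\le\delta_i$.

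With this criterion in hand, part (1) is short. If some $\phi$ lay in both $F_1$ and $F_2$, then its normal form satisfies $\xi\wedge\neg\delta_1=\xi_1$ and $\xi\wedge\neg\delta_2=\xi_2$, whence $\xi_1\wedge\neg\delta_2=\xi\wedge\neg\delta_1\wedge\neg\delta_2=\xi_2\wedge\neg\delta_1$. Thus under the hypothesis $\xi_1\wedge\neg\delta_2\neq\xi_2\wedge\neg\delta_1$ the subpresheaves $F_1$ and $F_2$ are disjoint, so their intersection, and hence the pullback of $\delta_1^{\xi_1}$ and $\delta_2^{\xi_2}$, is the initial object $\varnothing$.

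For part (2) I would proceed in three steps. First, commutativity: Lemma~\ref{lem:cube-r0}\ref{sublem:cube-r0calc} gives $\delta_1^{\xi_1}(\delta_1^{\ast}\delta_2)^{\delta_1^{\ast}\xi_2}=(\delta_1\wedge\delta_2)^{\xi_1\vee(\delta_1)_\ast\delta_1^{\ast}\xi_2}$, while Corollary~\ref{cor:ffmeet} identifies $(\delta_1)_\ast\delta_1^{\ast}\xi_2=\xi_2\wedge\delta_1$; the compatibility hypothesis in the form $\xi_2\wedge\neg\delta_1=\xi_1\wedge\neg\delta_2\le\xi_1$ then yields $\xi_1\vee(\xi_2\wedge\delta_1)=\xi_1\vee\xi_2$, so this composite equals the face $(\delta_1\wedge\delta_2)^{\xi_1\vee\xi_2}$, which is legitimate because $(\delta_1\wedge\delta_2)\wedge(\xi_1\vee\xi_2)=0$; the symmetric computation handles the other composite. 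Second, I would show that the image of this common composite is exactly $F_1\cap F_2$: by the factorization criterion, membership of $\phi$ in $F_1\cap F_2$ amounts to $\delta\le\delta_1\wedge\delta_2$ together with $\xi\wedge\neg\delta_1=\xi_1$ and $\xi\wedge\neg\delta_2=\xi_2$, whereas membership in the image of $(\delta_1\wedge\delta_2)^{\xi_1\vee\xi_2}$ amounts to $\delta\le\delta_1\wedge\delta_2$ and $\xi\wedge(\neg\delta_1\vee\neg\delta_2)=\xi_1\vee\xi_2$; these are equivalent by distributivity, the backward implication being obtained by meeting the latter identity with $\neg\delta_1$ and with $\neg\delta_2$ separately and invoking the compatibility identity. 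Third, since the two maps out of $s_0$ are split monomorphisms whose composites both coincide with the single monomorphism onto $F_1\cap F_2$, the standard description of intersections of subobjects in a topos shows $\square[s_0]\cong F_1\cap F_2$ and identifies the square as the pullback of $F_1\hookrightarrow\square[r]\hookleftarrow F_2$; as the apex is representable with structure maps coming from $\square(\mathcal{R})$ and the Yoneda embedding is fully faithful, this is a pullback in $\square(\mathcal{R})$ as well.

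The main obstacle I anticipate is the forward direction of the factorization criterion and the backward half of the intersection computation: both hinge on passing between the ``two faces'' description via $\xi\wedge\neg\delta_1$ and $\xi\wedge\neg\delta_2$ and the ``single face'' description via $\xi\wedge\neg(\delta_1\wedge\delta_2)$, and this is precisely the point at which the Boolean distributivity of $\mathcal{R}^+_0\!/r$ must be combined with the compatibility hypothesis $\xi_1\wedge\neg\delta_2=\xi_2\wedge\neg\delta_1$. Once that equivalence is isolated, the remaining lattice manipulations and the topos-theoretic bookkeeping are routine.
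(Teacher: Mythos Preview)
Your argument is correct and rests on the same lattice identities the paper uses; the core computation in both proofs is the observation that if a morphism in normal form $\delta^{\xi}\sigma\gamma^\dagger$ factors through $\delta_i^{\xi_i}$ then $\xi\wedge\neg\delta_i=\xi_i$, which immediately gives $\xi_1\wedge\neg\delta_2=\xi\wedge\neg\delta_1\wedge\neg\delta_2=\xi_2\wedge\neg\delta_1$. The difference is purely in packaging the universal property for part~(2): the paper takes an arbitrary cone $(f_1,f_2)$ from some $t$, reduces by the unique factorization of Lemma~\ref{lem:cube-fact} to the case $f_i=\gamma_i^{\zeta_i}$, and then directly invokes Lemma~\ref{lem:cube-face-ord} to produce the factoring map $f'$ through $s_0$, checking uniqueness via the fact that $\delta_i^{\xi_i}$ is monic. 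You instead work globally in the presheaf topos, identifying the pullback with the intersection $F_1\cap F_2$ of the images of the two split monomorphisms and then matching that intersection against the image of the diagonal $(\delta_1\wedge\delta_2)^{\xi_1\vee\xi_2}$. Your route is slightly more conceptual and makes the Boolean bookkeeping (the passage between the two-face description via $\xi\wedge\neg\delta_i$ and the single-face description via $\xi\wedge\neg(\delta_1\wedge\delta_2)$) more visibly the heart of the matter; the paper's route is shorter because it never names $F_1\cap F_2$ explicitly and goes straight to the factoring morphism. Either way the Boolean identities you isolate are exactly the ones that do the work.
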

\begin{proof}
To see the square \eqref{eq:face-disj-sq} is a pullback, it suffices to show that there is no commutative square in $\square(\mathcal{R})$ of the form:
\[
\xymatrix{
  t \ar[r]^{f_1} \ar[d]_{f_2} & s_1 \ar[d]^{\delta_1^{\xi_1}} \\
  s_2 \ar[r]^{\delta_2^{\xi_2}} & r }
\]
We show its contraposition, so suppose we have such a square.
By the unique factorization in lemma \ref{lem:cube-fact} and \ref{sublem:cube-r0calc} of lemma \ref{lem:cube-r0}, we may assume $f_i=\gamma_i^{\zeta_i}$ with $\gamma_i\in\mathcal{R}^+_0$ for $i=1,2$.
The commutativity implies that we have
\[
(\delta_1\gamma_1)^{\xi_1\vee\delta_1\zeta_1} = (\delta_2\gamma_2)^{\xi_2\vee\delta_2\zeta_2}\,.
\]
Then we obtain
\[
\xi_1\wedge\neg\delta_2
= (\neg\delta_1\wedge\neg\delta_2)\wedge(\xi_1\vee\delta_1\zeta_1)
= (\neg\delta_1\wedge\neg\delta_2)\wedge(\xi_2\vee\delta_2\zeta_2)
= \xi_2\wedge\neg\delta_1
\]
as required.

Conversely, suppose we have $\varepsilon_1\wedge\neg\delta_2 = \varepsilon_2\wedge\neg\delta_1$.
Notice that the equation implies $\xi_1\vee\delta_1=\xi_2\vee\delta_2$, and hence
\[
\xi_1 \le \xi_1\vee\delta_1 = \xi_2\vee\delta_2 \ge \xi_2\,,
\]
which implies $\xi_1\vee\xi_2\le\xi_1\vee\delta_1$.
Then the square is in fact commutative; indeed we have
\[
\delta_1^{\xi_1}(\delta_1^\ast\delta_2)^{\delta_1^\ast\xi_2}
= (\delta_1\wedge\delta_2)^{\xi_1\vee(\xi_2\wedge\delta_1)}
= (\delta_1\wedge\delta_2)^{(\xi_1\vee\xi_2)\wedge(\xi_1\vee\delta_1)}
= (\delta_1\wedge\delta_2)^{\xi_1\vee\xi_2}
\]
and the other is similar.
Now let
\[
\xymatrix{
  t \ar[r]^{f_1} \ar[d]_{f_2} & s_1 \ar[d]^{\delta_1^{\xi_1}} \\
  s_2 \ar[r]^{\delta_2^{\xi_2}} & r }
\]
be a commutative square in $\square(\mathcal{R})$.
By the same reason above, we may assume $f_i=\gamma_i^{\zeta_i}$ with $\gamma_i\in\mathcal{R}^+_0$ for $i=1,2$.
Then we have
\[
(\delta_1\gamma_1)^{\xi_1\vee\delta_1\zeta_1} = (\delta_2\gamma_2)^{\xi_2\vee\delta_2\zeta_2}\,.
\]
In particular, $\delta_1\gamma_1=\delta_2\gamma_2$ and $\xi_1\vee\delta_1\zeta_1=\xi_2\vee\delta_2\zeta_2$.
It follows that $\gamma_1\le\delta_1^\ast\delta_2$ and $\delta_1^\ast\xi_2=\zeta\wedge\neg\delta_1^\ast\delta_2$ since we have
\[
\xi_2
= (\xi_1\vee\delta_1\zeta_1)\wedge\neg\delta_2
= (\xi_1\wedge\neg\delta_2)\vee(\delta_1\zeta_1\wedge\neg\delta_2)
= (\xi_2\wedge\neg\delta_1)\vee(\delta_1\zeta_1\wedge\neg\delta_2)\,.
\]
Hence by lemma \ref{lem:cube-face-ord}, $\gamma_1^{\zeta_1}$ factors through $(\delta_1^\ast\delta_2)^{\delta_1^\ast\xi_2}$, say $\gamma_1^{\zeta_1}=(\delta_1^\ast\delta_2)^{\delta_1^\ast\xi_2}f'$.
We have
\[
\delta_2^{\xi_2}\gamma_2^{\zeta_2}
= \delta_1^{\xi_1}\gamma_1^{\zeta_1}
= \delta_1^{\xi_1}(\delta_1^\ast\delta_2)^{\delta_1^\ast\xi_2}f'
= \delta_2^{\xi_2}(\delta_2^\ast\delta_1)^{\delta_2^\ast\xi_1}f'\,,
\]
and since $\delta_2^{\xi_2}$ is a monomorphism, we obtain $\gamma_2^{\zeta_2}=(\delta_2^\ast\delta_1)^{\delta_2^\ast\xi_1}f'$.
Since $(\delta_1^\ast\delta_2)^(\delta_1^\ast\xi_2)$ is a monomorphism, such $f'$ is unique.
Therefore, the square \eqref{eq:face-meet-sq} is a pullback in $\square(\mathcal{R})$.
\end{proof}

Now we are ready to prove the required result:

\begin{prop}
Let $\mathcal{R}$ be a cubicalizable category.
Then for each $r\in\mathcal{R}$, the morphism $\partial\square[r]\to\square[r]$ is a monomorphism.
\end{prop}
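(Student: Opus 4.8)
The plan is to exploit that $\square(\mathcal{R})^\wedge$ is a presheaf topos, so that the morphism is a monomorphism precisely when it is injective on sections over each object $t\in\square(\mathcal{R})$. Since colimits in $\square(\mathcal{R})^\wedge$ are computed pointwise, the set $\partial\square[r](t)$ is the coequalizer in $\mathbf{Set}$ of the two evident maps out of $\coprod_{\operatorname{codeg}\delta'=4}(\square[s']\times(\mathcal{R}^+_0\!/r)_{\le\neg\delta'})(t)$ into $\coprod_{\operatorname{codeg}\delta=2}(\square[s]\times(\mathcal{R}^+_0\!/r)_{\le\neg\delta})(t)$; that is, $\partial\square[r](t)$ is the quotient of the set of pairs $(\delta^\xi,g)$ — with $\delta$ a coatom of $\mathcal{R}^+_0\!/r$, $\xi\le\neg\delta$, and $g\colon t\to s$ a morphism of $\square(\mathcal{R})$ — by the equivalence relation generated by the codegree-$4$ gluings. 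The map to $\square[r](t)$ sends the class of $(\delta^\xi,g)$ to the composite $\delta^\xi g\colon t\to r$. First I would set up exactly this description.

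The heart of the argument is then to show that two facet cells with the same image are already identified by a single gluing. So suppose $(\delta_1^{\xi_1},g_1)$ and $(\delta_2^{\xi_2},g_2)$ satisfy $\delta_1^{\xi_1}g_1=\delta_2^{\xi_2}g_2=:h$ in $\square[r](t)$. If $\delta_1^{\xi_1}=\delta_2^{\xi_2}$, then since every face $\delta^\xi$ is a split monomorphism by lemma \ref{lem:cube-r0}-\ref{sublem:cube-r0mono}, it is monic and hence $g_1=g_2$, so the two pairs coincide. If the facets differ, then $h$ factors through the pullback $\square[s_1]\times_{\square[r]}\square[s_2]$; a section over $t$ of this pullback rules out the empty case of lemma \ref{lem:cube-pb}, forcing $\xi_1\wedge\neg\delta_2=\xi_2\wedge\neg\delta_1$. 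Lemma \ref{lem:cube-pb}-\ref{sublem:cube-meet} then identifies the pullback with $\square[s_0]$, so that $g_1=(\delta_1^\ast\delta_2)^{\delta_1^\ast\xi_2}g'$ and $g_2=(\delta_2^\ast\delta_1)^{\delta_2^\ast\xi_1}g'$ for a unique $g'\colon t\to s_0$.

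It remains to recognise this coincidence as one of the defining gluing relations of $\partial\square[r]$. The candidate is the codegree-$4$ element $\delta':=\delta_1\wedge\delta_2$ — which indeed has codegree $4$, because distinct coatoms of a finite Boolean lattice meet in a codimension-$2$ element — together with the parameter $\xi:=\xi_1\vee\xi_2\le\neg\delta_1\vee\neg\delta_2=\neg\delta'$. Here I would check, using that $\mathcal{R}^+_0$ is Boolean so that each $\delta_i^\ast$ preserves joins (corollary \ref{cor:Booljoin}) and that $\delta_i^\ast\xi_i=0$ since $\xi_i\wedge\delta_i=0$, that $\xi\wedge\neg\delta_i=\xi_i$ and $\delta_i^\ast\xi=\delta_i^\ast\xi_{3-i}$; these identities say precisely that the two legs of the gluing attached to $\delta'$ and $\xi$ are the pairs $(\delta_1^{\xi_1},(\delta_1^\ast\delta_2)^{\delta_1^\ast\xi_2}g')$ and $(\delta_2^{\xi_2},(\delta_2^\ast\delta_1)^{\delta_2^\ast\xi_1}g')$, i.e. our two pairs. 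Hence they lie in the same class, and injectivity follows. The main obstacle I anticipate is bookkeeping: matching the parameters produced by lemma \ref{lem:cube-pb} with the ones prescribed by the coequalizer diagram, and confirming that the nonempty-intersection hypothesis holds automatically; once the join/meet identities above are verified in the Boolean lattice $\mathcal{R}^+_0\!/r$, the remainder is formal.
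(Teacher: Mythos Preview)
Your proposal is correct and follows essentially the same approach as the paper: reduce to pointwise injectivity, use lemma~\ref{lem:cube-r0}\ref{sublem:cube-r0mono} for the equal-facet case, invoke lemma~\ref{lem:cube-pb} to rule out the empty pullback and produce a common $g'$ in the remaining case, then recognise the resulting identification as a codegree-$4$ gluing via $\delta_1\wedge\delta_2$ and $\xi_1\vee\xi_2$. The only cosmetic difference is that the paper splits off the subcase $\delta_1=\delta_2,\ \xi_1\neq\xi_2$ explicitly, whereas you absorb it into the ``empty pullback'' contradiction; both are fine.
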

\begin{proof}
It suffices to show that it is a pointwise monomorphism; i.e. the map $\partial\square[r](t)\to\square[r](t)$ is injective for each $t\in\mathcal{R}$.
Notice that the map is induced by the map
\[
\square(\mathcal{R})(t,s)\times(\mathcal{R}^+_0)_{\le\neg\delta}
\ni (f,\xi)\mapsto \delta^\xi f\in\square(\mathcal{R})(t,r)
\]
for each $\delta\in\mathcal{R}^+_0\!/r$ with $\operatorname{codeg}\delta=2$.
Choose $(\delta_i:s_i\to r)\in\mathcal{R}^+_0$ with $\operatorname{codeg}\delta_i=2$, $\xi_i\in\mathcal{R}^+_0\!/r$ with $\delta_i\wedge\xi_i=0$ and $f_i:t\to s_i\in\square(\mathcal{R})$ for $i=1,2$, and suppose we have $\delta_1^{\xi_1}f_1=\delta_2^{\xi_2}f_2$.
We have to show that $f_1=f_2$ or they are images of a common element of $\square(\mathcal{R})(t,s')\times(\mathcal{R}^+_0\!/r)_{\le\neg\delta'}$ for $\delta'\in\mathcal{R}^+_0\!/r$ with $\operatorname{codeg}\delta'=4$.

If $(\delta_1,\xi_1)=(\delta_2,\xi_2)$, by \ref{sublem:cube-r0mono} of lemma \ref{lem:cube-r0}, we obtain $f_1=f_2$.
So we assume $(\delta_1,\xi_1)\neq(\delta_2,\xi_2)$.
In particular, if $\delta_1=\delta_2$ and $\xi_1\neq\xi_2$, we have
\[
\xi_2\wedge\neg\delta_1 = \xi_2
\neq \xi_1 = \xi_1\wedge\neg\delta_2
\]
and by \ref{sublem:cube-disj} of lemma \ref{lem:cube-pb}, the assumption cannot hold.
Hence we may assume $\delta_1\neq\delta_2$, which imply that
\[
\xi_1\wedge\neg\delta_2 = 0 = \xi_2\wedge\neg\delta_1\,.
\]
By lemma \ref{lem:cube-pb}, if $(\delta_1\wedge\delta_2)^{\xi_1\vee\xi_2}:s_0\to r$, there is a morphism $f':t\to s_0$ such that $f_1 = (\delta_1^\ast\delta_2)^{\delta_1^\ast\xi_2}f'$ and $f_2=(\delta_2^\ast\delta_1)^{\delta_2^\ast\xi_1}f'$.
It follows that $(f_1,\xi_1)$ and $(f_2,\xi_2)$ are the images of a common element $(f',\xi_1\vee\xi_2)\in\square(\mathcal{R})(t,s_0)\times(\mathcal{R}^+_0\!/r)_{\le\neg(\delta_1\wedge\delta_2)}$ by $\delta_1^\ast\delta_2^{\delta_1^\ast\xi_2}$ and $\delta_2^\ast\delta_1^{\delta_2^\ast\xi_1}$ respectively.
Since $\delta_1\neq\delta_2$ and $\operatorname{codeg}\delta_1=\operatorname{codeg}\delta_2=2$, we have $\operatorname{codeg}(\delta_1\wedge\delta_2)=4$, so that the result follows.
\end{proof}

\begin{corol}
Let $\mathcal{R}$ be a cubicalizable category, and let $r\in\mathcal{R}$.
Then $\partial\square[r]$ is a subpresheaf of $\square[r]$ consisting of cells $f:s\to r$ which factors as $f=\delta^\xi\sigma\gamma^\dagger$ with $\delta\in\mathcal{R}^+_0$ non-trivial.
\end{corol}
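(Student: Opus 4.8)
The plan is to identify $\partial\square[r]$ with its image under the monomorphism established in the preceding proposition, and then to match that image against the canonical factorization of lemma \ref{lem:cube-fact}. Since $\partial\square[r]$ is defined as a coequalizer of a quotient of $\raisedunderop\coprod{\operatorname{codeg}\delta=2}\square[s]\times(\mathcal{R}^+_0\!/r)_{\le\neg\delta}$, and the structure morphism $\partial\square[r]\to\square[r]$ is a monomorphism, the image at an object $t\in\mathcal{R}$ is simply the pointwise union of the images of the component maps, i.e.
\[
\left\{\,\delta_0^{\xi_0}h \;\middle|\; \delta_0\in\mathcal{R}^+_0,\ \operatorname{codeg}\delta_0=2,\ \xi_0\le\neg\delta_0,\ h\in\square(\mathcal{R})(t,s_0)\,\right\}\subseteq\square(\mathcal{R})(t,r)\,.
\]
Recalling that $\operatorname{codeg}\delta_0=|(\mathcal{R}^+_0\!/r)_{\le\neg\delta_0}|$, the condition $\operatorname{codeg}\delta_0=2$ says exactly that $\neg\delta_0$ is an atom, equivalently that $\delta_0$ is a coatom of the finite Boolean lattice $\mathcal{R}^+_0\!/r$. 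Hence a cell lies in $\partial\square[r]$ if and only if it factors through some $\delta_0^{\xi_0}$ with $\delta_0$ a coatom. I would therefore reduce the statement to the claim that a cell $g:t\to r$ factors through such a coatom face precisely when the distinguished-injection part of its (unique) canonical factorization $g=\delta^\xi\sigma\gamma^\dagger$ is non-trivial, i.e. $\delta\neq\mathrm{id}_r$, equivalently $\delta<1$ in $\mathcal{R}^+_0\!/r$.

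For the forward implication, suppose $g=\delta_0^{\xi_0}h$ with $\delta_0$ a coatom. Writing $h$ in its canonical form $h={\delta'}^{\xi'}\sigma\gamma^\dagger$ with $\delta'\in\mathcal{R}^+_0\!/s_0$ and applying \ref{sublem:cube-r0calc} of lemma \ref{lem:cube-r0}, one obtains
\[
g=\delta_0^{\xi_0}{\delta'}^{\xi'}\sigma\gamma^\dagger=(\delta_0\delta')^{\xi_0\vee\delta_{0\ast}\xi'}\sigma\gamma^\dagger\,,
\]
which is again of the form prescribed by lemma \ref{lem:cube-fact}; by the uniqueness there the distinguished-injection part of $g$ is $\delta_0\delta'=\delta_{0\ast}(\delta')\le\delta_0<1$, so it is non-trivial. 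Conversely, assume the canonical $\delta$ is non-trivial, so $\delta<1$. Since $\mathcal{R}^+_0\!/r$ is a finite Boolean lattice, $\delta$ lies below some coatom $\delta_0$, which has $\operatorname{codeg}\delta_0=2$. Setting $\xi_0:=\xi\wedge\neg\delta_0$ gives $\delta_0\wedge\xi_0=0$, so $\delta_0^{\xi_0}$ is a legitimate morphism, and the relations $\delta\le\delta_0$ and $\xi_0=\xi\wedge\neg\delta_0$ are exactly the conditions of lemma \ref{lem:cube-face-ord} for $\delta^\xi$ to factor through $\delta_0^{\xi_0}$. Composing such a factorization on the right with $\sigma\gamma^\dagger$ shows that $g$ factors through the coatom face $\delta_0^{\xi_0}$, as required.

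The bulk of the argument is this bookkeeping with the labels $\xi$, and the only step needing genuine care is the first one: reading off the image of the coequalizer correctly and translating the numerical condition $\operatorname{codeg}\delta_0=2$ into the lattice-theoretic statement that $\delta_0$ is a coatom. Once these translations are in place, lemma \ref{lem:cube-face-ord} and lemma \ref{lem:cube-r0} handle the two inclusions essentially formally, while the uniqueness in lemma \ref{lem:cube-fact} guarantees that the phrase ``factors as $\delta^\xi\sigma\gamma^\dagger$ with $\delta$ non-trivial'' is unambiguous.
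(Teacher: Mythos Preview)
Your argument is correct. The paper states this corollary without proof, treating it as an immediate consequence of the preceding proposition (that $\partial\square[r]\to\square[r]$ is a monomorphism) together with the definition of $\partial\square[r]$ and the unique factorization of lemma~\ref{lem:cube-fact}; your write-up is exactly the unpacking of that implicit argument, using lemma~\ref{lem:cube-r0}\ref{sublem:cube-r0calc} and lemma~\ref{lem:cube-face-ord} to handle the two inclusions.
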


Finally, we prove the following proposition:

\begin{prop}\label{prop:cube-bound-mono}
If $\mathcal{R}$ is a cubicalizable category and $r\in\mathcal{R}$, then the simplicial map
\[
|\partial\square[r]|\to|\square[r]|
\]
is a monomorphism.
\end{prop}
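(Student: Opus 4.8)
The plan is to compute $|\partial\square[r]|$ as an explicit colimit and then show, working degreewise in the presheaf topos $\mathbf{SSet}$, that it is the union of the facets of the cube $|\square[r]|$. Since the simplicial realization preserves all small colimits and $\partial\square[r]$ is by definition the coequalizer of the parallel pair
\[
\raisedunderop\coprod{\substack{(\delta':s'\to r)\in\mathcal{R}^+_0/r\\\operatorname{codeg}\delta'=4}} \square[s']\times (\mathcal{R}^+_0/r)_{\le\neg\delta'}
\rightrightarrows
\raisedunderop\coprod{\substack{(\delta:s\to r)\in\mathcal{R}^+_0/r\\\operatorname{codeg}\delta=2}} \square[s]\times (\mathcal{R}^+_0/r)_{\le\neg\delta}\,,
\]
I would first apply $|\blankdot|$ to present $|\partial\square[r]|$ as the coequalizer of the realized diagram, whose terms are coproducts of copies of $|\square[s]|=N(\mathcal{R}^+_0/s)$ and $|\square[s']|=N(\mathcal{R}^+_0/s')$. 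Because $\mathbf{SSet}$ is a presheaf category, both this coequalizer and the target $|\square[r]|=N(\mathcal{R}^+_0/r)$ are formed pointwise in $\mathbf{Set}$, so it suffices to prove injectivity of $|\partial\square[r]|_n\to|\square[r]|_n$ in each simplicial degree $n$.

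The decisive point is that, although $|\blankdot|$ preserves neither monomorphisms nor pullbacks in general, on faces it agrees with the nerve functor $N$, which does preserve all limits. Concretely, for a facet $\delta^\xi:s\to r$ with $\delta\in\mathcal{R}^+_0$ a coatom ($\operatorname{codeg}\delta=2$, so $\neg\delta$ is an atom and $\xi\in\{0,\neg\delta\}$), proposition \ref{prop:cube-repn} gives $(\delta^\xi)_\ast(\eta)=\delta_\ast(\eta)\vee\xi$, and meeting with $\delta$ recovers $\delta_\ast(\eta)$ since $\xi\wedge\delta=0$; hence $(\delta^\xi)_\ast$ is an order-embedding of $\mathcal{R}^+_0/s$ onto the sub-poset $P_\delta^\xi:=\{w\in\mathcal{R}^+_0/r\mid w\wedge\neg\delta=\xi\}$. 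As $N$ carries the monomorphism $(\delta^\xi)_\ast$ of posets to a monomorphism of simplicial sets, the facet realization $|\square[s]|\to|\square[r]|$ is the inclusion $N(P_\delta^\xi)\hookrightarrow N(\mathcal{R}^+_0/r)$. Moreover, since $N$ preserves pullbacks, the intersection of two facets inside $|\square[r]|$ is $N(P_{\delta_1}^{\xi_1}\cap P_{\delta_2}^{\xi_2})$, which by lemma \ref{lem:cube-pb} is empty when $\xi_1\wedge\neg\delta_2\neq\xi_2\wedge\neg\delta_1$ and is the nerve of the codimension-$2$ face $(\delta_1\wedge\delta_2)^{\xi_1\vee\xi_2}$ otherwise; together with lemma \ref{lem:cube-face-ord}, which records the factorization of a codimension-$2$ face through each of the two coatoms above it, this identifies the two structure maps out of each codimension-$2$ term of the coequalizer with the two facet inclusions of the corresponding pairwise intersection.

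I would then conclude pointwise. At each degree $n$ the facet sets $N(P_\delta^\xi)_n$ are subsets of $N(\mathcal{R}^+_0/r)_n$, and the codimension-$2$ terms realize exactly their pairwise intersections, so the degree-$n$ part of the coequalizer is the coequalizer in $\mathbf{Set}$ of $\coprod (P_{\delta_1}^{\xi_1}\cap P_{\delta_2}^{\xi_2})_n\rightrightarrows\coprod_{\delta,\xi}(P_\delta^\xi)_n$. Since every two distinct coatoms $\delta_1\neq\delta_2$ produce the unique codimension-$2$ element $\delta_1\wedge\delta_2$ gluing their (non-disjoint) facets, while parallel facets have empty intersection, this coequalizer is precisely the union $\bigcup_{\delta,\xi}N(P_\delta^\xi)_n$ of subsets of $N(\mathcal{R}^+_0/r)_n$, and the induced map into $N(\mathcal{R}^+_0/r)_n$ is therefore injective. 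Hence $|\partial\square[r]|\to|\square[r]|$ is a monomorphism. The main obstacle is exactly the first clause of the middle paragraph: one cannot simply transport the monomorphism $\partial\square[r]\hookrightarrow\square[r]$ across $|\blankdot|$, and the entire argument rests on reducing every facet and every face-intersection occurring in the coequalizer to the nerve of an honest sub-poset of $\mathcal{R}^+_0/r$, where the limit-preservation of $N$ does the work that $|\blankdot|$ cannot.
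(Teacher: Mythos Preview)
Your argument is correct and follows the same overall strategy as the paper: apply $|\blankdot|$ to the defining coequalizer, identify each realized term with a sub-simplicial set of $|\square[r]|$, and conclude that the coequalizer computes the union of the facets. The execution differs in style. The paper chooses an identification $\mathcal{R}^+_0/r\simeq[1]^n$ with $n=\log_2(\deg r)$, writes the realized parallel maps as the explicit face-inclusions $\bar d_i,\bar d'_j:\Delta[1]^{n-2}\times\{0,1\}^2\to\Delta[1]^{n-1}\times\{0,1\}$, and then invokes the familiar description of the boundary of the simplicial $n$-cube as $\bigcup_i \Delta[1]^{i-1}\times\partial\Delta[1]\times\Delta[1]^{n-i}\subset\Delta[1]^n$. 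You instead stay coordinate-free: you observe that each $(\delta^\xi)_\ast$ is an order-embedding onto $P_\delta^\xi=\{w:w\wedge\neg\delta=\xi\}$, use that the nerve preserves limits to identify facet-intersections with nerves of the codimension-$2$ sub-posets, and then argue directly in each simplicial degree that the coequalizer is a union of subsets. Your route is a bit more self-contained (it does not lean on prior intuition about $\partial(\Delta[1]^n)$) and makes the gluing step explicit; the paper's is shorter but leaves that step to the reader. One small remark: your appeals to Lemma~\ref{lem:cube-pb} and Lemma~\ref{lem:cube-face-ord} are slightly indirect, since those lemmas concern pullbacks in $\square(\mathcal{R})$ rather than intersections of sub-posets of $\mathcal{R}^+_0/r$; the Boolean-lattice computation you actually need (that $P_{\delta_1}^{\xi_1}\cap P_{\delta_2}^{\xi_2}$ is empty when $\delta_1=\delta_2,\ \xi_1\neq\xi_2$, and equals $P_{\delta_1\wedge\delta_2}^{\xi_1\vee\xi_2}$ when $\delta_1\neq\delta_2$) is elementary and could be stated directly.
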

\begin{proof}
Since $|\blankdot|:\square(\mathcal{R})^\wedge\to\mathbf{SSet}$ preserves colimits, we have
\begin{equation}
\label{eq:real-bound}
|\partial\square[r]|
\simeq \operatorname{coeq}\left(\coprod |\square[s']|\times(\mathcal{R}^+_0\!/r)_{\le\neg\delta'}\rightrightarrows \coprod|\square[s]|\times(\mathcal{R}^+_0\!/r)_{\le\neg\delta}\right)
\end{equation}
where coproducts run over certain sets as in the definition.
Let $n=\log_2(\deg r)$ and for each pair $1\le i<j\le n$, define simplicial maps
\[
\bar{d}_i,\bar{d}'_j:\Delta[1]^{n-2}\times\{0,1\}^2\to\Delta[1]^{n-1}\times\{0,1\}
\]
by the following:
\[
\begin{split}
\bar{d}_i(x_1,\dots,x_{n-2},\varepsilon_1,\varepsilon_2)
&= (x_1,\dots,x_{i-1},\varepsilon_1,x_i,\dots,x_{n-2},\varepsilon_2) \\
\bar{d}'_j(x_1,\dots,x_{n-2},\varepsilon_1,\varepsilon_2)
&= (x_1,\dots,x_{i-1},\varepsilon_2,x_i,\dots,x_{n-2},\varepsilon_1)
\end{split}
\]
These maps define the simplicial map
\[
\coprod_{1\le i<j\le n} \Delta[1]^{n-2}\times\{0,1\}^2
\rightrightarrows
\coprod_{1\le i\le n} \Delta[1]^{n-1}\times\{0,1\}\,.
\]
These arrows are isomorphic to those in the coequalizer \eqref{eq:real-bound}.
Hence we obtain an isomorphism
\[
|\partial\square[r]|
\simeq \bigcup_{i=1}^n \Delta[1]^{i-1}\times\partial\Delta[1]\times\Delta[1]^{n-i}
\]
and $|\partial\square[r]|\to |\square[r]|\simeq\Delta[1]^n$ is nothing but the inclusion.
\end{proof}

\subsection{The Eilenberg-Zilber condition}
In this section, we consider a very nice situations:

\begin{defin}
A cubicalizable category $\mathcal{R}$ is said to be EZ cubicalizable if it is confluent and every $\mathcal{R}^+_0$-surjection is a split epimorphism.
\end{defin}

It is clear that every EZ cubicalizable category is itself an EZ category (see example \ref{exam:EZ-cat} in section \ref{sec:conflu} for the definition).

Suppose $\mathcal{R}$ is an EZ cubicalizable category and $\sigma:r'\to r$ is a degree-preserving $\mathcal{R}^+_0$-surjection.
Since $\mathcal{R}$ is EZ cubicalizable, $\sigma$ has a section $f:r\to r'$.
Then $f$ also preserves the degree, so $\image(f)$ must be the identity.
This implies that $f$ is an $\mathcal{R}^+_0$-surjection, which is again a split epimorphism.
Hence $f$ is an isomorphism, and $\sigma$ is its inverse.
These argument shows that the poset representation $\mathcal{R}^+_0\!/(\blankdot):\mathcal{R}\to\mathbf{Poset}$ is conservative.

Moreover, as expected from the terminology, we have the following:

\begin{prop}\label{prop:cubeEZ}
If $\mathcal{R}$ is an EZ cubicalizable category, then the cubicalization $\square(\mathcal{R})$ is an EZ category.
\end{prop}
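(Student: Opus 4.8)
The plan is to use the criterion recorded at the end of Example~\ref{exam:EZ-cat}: $\square(\mathcal{R})$ is an EZ category exactly when its wide subcategory of split epimorphisms forms a confluent degeneracy system. Accordingly I would take as degeneracy class
\[
\square(\mathcal{R})^- := \left\{\sigma\gamma^\dagger \;:\; \sigma\in\mathcal{R}^-,\ \gamma\in\mathcal{R}^+_0\right\},
\]
the image of $\mathcal{V}(\mathcal{R})^-$ under the embedding $\mathcal{V}(\mathcal{R})\hookrightarrow\square(\mathcal{R})$, together with the object-wise degree $\deg r:=|\mathcal{R}^+_0\!/r|$ (equivalently $\log_2$ of it), and then verify conditions \ref{cond:CDSisom}--\ref{cond:CDSconf}.

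First I would pin down the two relevant classes of morphisms. By Lemma~\ref{lem:cube-r0}\ref{sublem:cube-r0calc} the face maps $\square(\mathcal{R})^+:=\{\delta^\xi\}$ are closed under composition, and by \ref{sublem:cube-r0mono} each $\delta^\xi$ is a split monomorphism; Lemma~\ref{lem:c0dagger} together with the definition of $\mathcal{R}^-$ shows that every element of $\square(\mathcal{R})^-$ is a composite of split epimorphisms, hence itself a split epimorphism. Combining this with the strict unique factorization $\delta^\xi\sigma\gamma^\dagger$ of Lemma~\ref{lem:cube-fact}, the standard argument deducing an orthogonal factorization system from a strictly unique factorization (using closure under composition and that both classes contain the isomorphisms) shows that $(\square(\mathcal{R})^-,\square(\mathcal{R})^+)$ is such a system; in particular each $\delta^\xi$ is right orthogonal to $\square(\mathcal{R})^-$, i.e.\ non-degenerate. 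A short diagonal-fill computation then shows that a morphism is a split epimorphism if and only if its face part $\delta^\xi$ is invertible (a morphism that is simultaneously a split epimorphism and a split monomorphism is an isomorphism), so that the split epimorphisms of $\square(\mathcal{R})$ are precisely $\square(\mathcal{R})^-$; the same computation shows a monomorphism has invertible degeneracy part, hence agrees up to isomorphism with a single $\delta^\xi$.

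With these identifications the first three conditions fall out. Condition \ref{cond:CDSisom} holds because isomorphisms are split epimorphisms and, through the functor $\square(\mathcal{R})\to\mathbf{Poset}$ of Proposition~\ref{prop:cube-repn}, preserve the cardinality $|\mathcal{R}^+_0\!/r|$. Condition \ref{cond:CDSorth} is degree bookkeeping: for $\delta\colon s\to r$ in $\mathcal{R}^+_0$ the map $\delta^\xi$ runs from degree $|\mathcal{R}^+_0\!/s|$ to degree $|\mathcal{R}^+_0\!/r|$, which strictly increases unless $\delta$ is an identity (and then $\xi=0$, so $\delta^\xi=\mathrm{id}$), whence a non-degenerate monomorphism is either an isomorphism or degree-raising. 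Condition \ref{cond:CDSfact} is the existence part of Lemma~\ref{lem:cube-fact}, its right factor being a non-degenerate monomorphism by the orthogonality just established.

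The remaining point, which I expect to carry the real weight, is the confluence condition \ref{cond:CDSconf} for two degeneracies out of a common object. Here I would note that such a cospan lies entirely in the image of $\mathcal{V}(\mathcal{R})$, and that $\mathcal{V}(\mathcal{R})^-=\mathrm{Span}(\mathcal{R}^-,\mathcal{R}^+_0)$ is already a confluent degeneracy system on $\mathcal{V}(\mathcal{R})$ by the corollary of Section~\ref{sec:rel-span}, whose hypotheses hold since $\mathcal{R}$ is coherent, locally finite and confluent (yielding Lemma~\ref{lem:posplitdagger}) and $\mathcal{R}^-$ is $\mathcal{R}^+_0$-stable with $\mathcal{R}^+_0$ consisting of non-degenerate monomorphisms. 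The confluence there produces an absolute pushout in $\mathcal{V}(\mathcal{R})$ with both new legs in $\mathcal{V}(\mathcal{R})^-$; because absolute pushouts are preserved by every functor, in particular by the embedding into $\square(\mathcal{R})$, this transfers to the required absolute pushout in $\square(\mathcal{R})$ with legs in $\square(\mathcal{R})^-$. The delicate part is checking that the absoluteness genuinely survives the passage and that no face maps enter the diagram, both of which rest on the faithfulness of $\mathcal{V}(\mathcal{R})\hookrightarrow\square(\mathcal{R})$ and on $\square(\mathcal{R})^-$ being exactly the image of $\mathcal{V}(\mathcal{R})^-$.
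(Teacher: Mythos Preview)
Your argument is correct and follows the same skeleton as the paper: identify the split epimorphisms as $\square(\mathcal{R})^-=\{\sigma\gamma^\dagger\}$ and the monomorphisms as the $\delta^\xi$ up to isomorphism via Lemma~\ref{lem:cube-fact} and Lemma~\ref{lem:cube-r0}, then verify the EZ/CDS conditions.

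The one point where you take a different route is the confluence step. The paper does \emph{not} transfer the absolute pushouts from $\mathcal{V}(\mathcal{R})$; instead it proves a fresh lemma (Lemma~\ref{lem:r-rdag-po}) showing directly that the square
\[
\xymatrix@+1pc{
  r_0 \ar[d]_{\sigma} \ar[r]^{\gamma^\dagger} & r_2 \ar[d]^{\coim(\sigma\gamma_0)(\gamma^\ast\gamma_0)^\dagger} \\
  r_1 \ar[r]^{\image(\sigma\gamma_0)^\dagger} & r }
\]
is an absolute pushout \emph{in $\square(\mathcal{R})$}, and the proof exploits the face map $\gamma^{\neg\gamma}$ (via $\delta^\dagger\delta^\xi=\mathrm{id}$ from Lemma~\ref{lem:cube-r0}\ref{sublem:cube-r0mono}) to manufacture the universal arrow. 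Your approach instead invokes the corollary at the end of Section~\ref{sec:rel-span} together with Lemma~\ref{lem:posplitdagger} to get the same square as an absolute pushout in $\mathcal{V}(\mathcal{R})$, and then transfers it along $i:\mathcal{V}(\mathcal{R})\hookrightarrow\square(\mathcal{R})$. Your transfer argument is sound: if $S$ is absolute in $\mathcal{V}(\mathcal{R})$ then for any $G:\square(\mathcal{R})\to\mathcal{D}$ the composite $G\circ i$ is a functor out of $\mathcal{V}(\mathcal{R})$, so $G(i(S))=(G\circ i)(S)$ is a pushout; hence $i(S)$ is absolute in $\square(\mathcal{R})$. Your route is more modular (it recycles the Section~\ref{sec:rel-span} machinery rather than proving a new lemma), while the paper's direct argument in $\square(\mathcal{R})$ via Lemma~\ref{lem:r-rdag-po} is more self-contained and makes visible that the extra face maps of the cubicalization already suffice to split the pushout.
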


An essential part of the proof of proposition \ref{prop:cubeEZ} is to verify the condition \ref{cond:EZconf}.
Since $\mathcal{R}$ is confluent, every pair of morphisms in $\mathcal{R}^-$ with the common codomain has an absolute pushout.
Every pair of morphisms in $(\mathcal{R}^+_0)^\dagger$ with the common codomain also has an absolute pushout diagram by proposition \ref{prop:c0dagpo}.
Moreover, we have the following result:

\begin{lemma}\label{lem:r-rdag-po}
Let $\mathcal{R}$ is an EZ cubicalizable category.
Suppose we have a diagram $r_1\xleftarrow{\sigma}r_0\xrightarrow{\gamma^\dagger}r_2$ with $\sigma\in\mathcal{R}^-$ and $\gamma\in\mathcal{R}^+_0$.
Say $\gamma_0\in\mathcal{R}^+_0\!/r_0$ is the maximum saturated element with respect to $\sigma$ such that $\gamma_0\le\gamma$.
Then the following diagram is an absolute pushout square in $\square(\mathcal{R})$:
\begin{equation}
\label{eq:R-Rdag-po}
\xymatrix@+1pc{
  r_0 \ar[d]_{\sigma} \ar[r]^{\gamma^\dagger} \ar@{}[dr]|(.6){\pocorner} & r_2 \ar[d]^{\coim(\sigma\gamma_0)(\gamma^\ast\gamma_0)^\dagger} \\
  r_1 \ar[r]^{\image(\sigma\gamma_0)^\dagger} & r }
\end{equation}
\end{lemma}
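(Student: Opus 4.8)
The plan is to deduce the statement directly from Lemma~\ref{lem:posplitdagger}, which already exhibits precisely the same square as an absolute pushout inside $\mathcal{V}(\mathcal{R})$, and then to transport it along the canonical embedding $\mathcal{V}(\mathcal{R})\hookrightarrow\square(\mathcal{R})$, $f\gamma^\dagger\mapsto f^0\gamma^\dagger$. Note first that an EZ cubicalizable category is in particular locally finite, confluent and coherent, so that Lemma~\ref{lem:posplitdagger} does apply to the data $r_1\xleftarrow{\sigma}r_0\xrightarrow{\gamma^\dagger}r_2$.

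First I would match the two presentations of the square. Writing $\sigma\gamma_0=\image(\sigma\gamma_0)\,\coim(\sigma\gamma_0)$ for the $(\mathcal{R}^-,\mathcal{R}^+_0)$-factorization produces exactly the morphisms denoted $\gamma'=\image(\sigma\gamma_0)$ and $\sigma'=\coim(\sigma\gamma_0)$ in Lemma~\ref{lem:posplitdagger}. Likewise, since $\gamma_0\le\gamma$ we may write $\gamma_0=\gamma\varepsilon_0$, and then $\gamma^\ast\gamma_0=\gamma^\ast\gamma_\ast(\varepsilon_0)=\varepsilon_0$ by the identity $\gamma^\ast\gamma_\ast=\mathrm{id}$ of Lemma~\ref{lem:thinpow-galois}; hence $(\gamma^\ast\gamma_0)^\dagger=\varepsilon_0^\dagger$. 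Under these identifications, and up to the evident relabelling of the two intermediate objects $r_1,r_2$, the square of the present statement is literally the image under the embedding above of the square of Lemma~\ref{lem:posplitdagger}: each of its four edges $\gamma^\dagger$, $\sigma$, $\coim(\sigma\gamma_0)(\gamma^\ast\gamma_0)^\dagger$ and $\image(\sigma\gamma_0)^\dagger$ is a morphism of $\mathcal{V}(\mathcal{R})$ whose image in $\square(\mathcal{R})$ carries the trivial superscript $0$, and the embedding preserves composition, so the composite edges are transported correctly.

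The second and decisive step is the general observation that absolute colimits are stable under arbitrary functors: if $S$ is an absolute pushout in a category $\mathcal{C}$ and $F\colon\mathcal{C}\to\mathcal{D}$ is any functor, then $F(S)$ is again an absolute pushout, since for every $G\colon\mathcal{D}\to\mathcal{E}$ the composite $G\circ F$ preserves $S$ and hence $G(F(S))=(G\circ F)(S)$ is a pushout. Applying this with $F$ the embedding $\mathcal{V}(\mathcal{R})\hookrightarrow\square(\mathcal{R})$ and $S$ the absolute pushout of Lemma~\ref{lem:posplitdagger} yields the absolute pushout in $\square(\mathcal{R})$ asserted here.

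In truth there is no deep obstacle: the genuine content has already been isolated in Lemma~\ref{lem:posplitdagger}, and what remains is essentially bookkeeping. The only point demanding care is the notational translation of the first step --- in particular confirming that the embedding carries the $\mathcal{V}(\mathcal{R})$-square edge for edge onto the $\square(\mathcal{R})$-square, so that the functoriality argument applies to that diagram itself rather than to a merely isomorphic one.
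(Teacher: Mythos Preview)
Your argument is correct and takes a genuinely different route from the paper. You transport the absolute pushout of Lemma~\ref{lem:posplitdagger} along the embedding $\mathcal{V}(\mathcal{R})\hookrightarrow\square(\mathcal{R})$, invoking the general fact that any functor preserves absolute colimits. The identification of the squares is accurate: with the relabelling $r_1\leftrightarrow r_2$, the edges $\gamma'=\image(\sigma\gamma_0)$, $\sigma'=\coim(\sigma\gamma_0)$, and $\varepsilon_0=\gamma^\ast\gamma_0$ match exactly, and the embedding (being a functor, as the paper notes just after Lemma~\ref{lem:cube-fact}) carries the $\mathcal{V}(\mathcal{R})$-square to the $\square(\mathcal{R})$-square edge for edge. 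The hypothesis check is also right: EZ cubicalizable entails locally finite, confluent, and coherent, so Lemma~\ref{lem:posplitdagger} applies.

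The paper instead argues directly inside $\square(\mathcal{R})$, exploiting structure that is not available in $\mathcal{V}(\mathcal{R})$: the morphism $\gamma^{\neg\gamma}$ is a section of $\gamma^\dagger$ by Lemma~\ref{lem:cube-r0}\ref{sublem:cube-r0mono}, so from any test square with legs $x_1,x_2$ one obtains $x_2=x_1\sigma\gamma^{\neg\gamma}$ immediately. An explicit computation of $\sigma\gamma^{\neg\gamma}$ (via $\Lambda^\complement$ and Lemma~\ref{lem:maxsatur}) then produces the fill-in $x_0:=x_1\,\image(\sigma\gamma_0)^{\sigma_\ast(\neg\gamma)}$ and verifies both triangles, with uniqueness from $\sigma$ being a split epimorphism. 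Your approach is cleaner and reuses prior work; the paper's approach is self-contained and illustrates how the extra face maps $\delta^\xi$ in $\square(\mathcal{R})$ make such pushouts split more visibly than in $\mathcal{V}(\mathcal{R})$.
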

\begin{proof}
First note that since $\gamma(\gamma^\ast\gamma_0)=\gamma\wedge\gamma_0=\gamma_0$ and the diagram
\[
\xymatrix@R+.3pc@C+1pc{
  r'_2 \ar[r]^{\gamma_0} \ar[d]_{\coim(\sigma\gamma_0)} \ar@{}[dr]|(.4){\pbcorner} & r_0 \ar[d]^{\sigma} \\
  r \ar[r]^{\image(\sigma\gamma_0)} & r_1 }
\]
is a pullback, the square \eqref{eq:R-Rdag-po} is in fact commutative.

Now suppose we have a square
\[
\xymatrix{
  r_0 \ar[d]_{\sigma} \ar[r]^{\gamma^\dagger} & r_2 \ar[d]^{x_2} \\
  r_1 \ar[r]^{x_1} \ar[r] & X }
\]
in $\square(\mathcal{R})^\wedge$.
By \ref{sublem:cube-r0mono} of lemma \ref{lem:cube-r0}, we have
\[
x_1\sigma\gamma^{\neg\gamma} = x_2\gamma^\dagger\gamma^{\neg\gamma} = x_2\,.
\]
Here
\[
\sigma\gamma^{\neg\gamma}
= (\sigma_\ast(\neg\gamma), \Lambda^\complement(\sigma_\ast(\neg\gamma))\sigma\gamma)
= (\sigma_\ast(\neg\gamma), \sigma\Lambda^\complement(\sigma^\ast\sigma_\ast(\neg\gamma))\gamma)\,.
\]
By lemma \ref{lem:maxsatur}, we have $\gamma_0=\neg\sigma^\ast\sigma_\ast(\neg\gamma)$, so that we obtain
\[
\begin{split}
\sigma\gamma^{\neg\gamma}
&= (\sigma_\ast(\neg\gamma), \sigma\Lambda(\gamma_0)\gamma) \\
&= (\sigma_\ast(\neg\gamma), \sigma\gamma_0(\gamma^\ast\gamma_0)^\dagger) \\
&= \image(\sigma\gamma_0)^{\sigma_\ast(\neg\gamma)}\coim(\sigma\gamma_0)(\gamma^\ast\gamma_0)^\dagger\,.
\end{split}
\]
Put $x_0:=x_1\image(\sigma\gamma_0)^{\sigma_\ast(\neg\gamma)}$, then we have $x_2 = x_0\coim(\sigma\gamma_0)(\gamma^\ast\gamma_0)^\dagger$.
On the other hand, we have
\[
x_0\image(\sigma\gamma_0)^\dagger\sigma
= x_0\coim(\sigma\gamma_0)(\gamma^\ast\gamma_0)^\dagger\gamma^\dagger
= x_2\gamma^\dagger
= x_1\sigma\,.
\]
Since $\sigma$ is a split epimorphism, we obtain $x_1=x_0\image(\sigma\gamma_0)^\dagger$.
Such $x_0$ is unique since $\image(\sigma\gamma_0)^\dagger$ is a split epimorphism.
Therefore, the square \eqref{eq:R-Rdag-po} is an absolute pushout.
\end{proof}

\begin{proof}[Proof of proposition \ref{prop:cubeEZ}.]
Every morphism of $\square(\mathcal{R})$ is represented as $\delta^\xi\sigma\gamma^\dagger$.
Then $\gamma^\dagger$ is a split epimorphism, and $\sigma$ is also a split epimorphism since $\mathcal{R}$ is EZ cubicalizable.
On the other hand, by lemma \ref{lem:cube-fact} $\delta^\xi$ is a monomorphism.
It follows that every morphism factors as a split epimorphism followed by a monomorphism.
Moreover, every monomorphism factors as a morphism of the form $\delta^\xi$ followed by an isomorphism, hence it raises the degree whenever it is not an isomorphism.
Finally the condition \ref{cond:EZconf} follows from lemma \ref{lem:r-rdag-po}.
\end{proof}

\begin{corol}\label{cor:real-mono}
If $\mathcal{R}$ is an EZ cubicalizable category, then the simplicial realization functor
\[
|\blankdot|:\square(\mathcal{R})^\wedge\to\mathbf{SSet}
\]
preserves monomorphisms.
\end{corol}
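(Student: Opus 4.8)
The plan is to reduce the statement to the generating monomorphisms furnished by the Eilenberg--Zilber structure and to verify the claim there. By Proposition~\ref{prop:cubeEZ}, the assumption that $\mathcal{R}$ is EZ cubicalizable makes $\square(\mathcal{R})$ an EZ category, so Corollary~\ref{cor:ez-cellular} applies: every monomorphism in $\square(\mathcal{R})^\wedge$ is a transfinite composition of pushouts of morphisms in the set
\[
S_0 = \left\{\, H\backslash\partial\square[r]\hookrightarrow H\backslash\square[r] \;\middle|\; r\in\square(\mathcal{R}),\ H<\mathrm{Aut}_{\square(\mathcal{R})}(r) \,\right\}.
\]
Since $\mathbf{SSet}$ is a presheaf topos, its monomorphisms form a saturated class, and the realization functor $|\blankdot|$ preserves all small colimits. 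Hence $|\blankdot|$ sends such a transfinite composition of pushouts to a transfinite composition of pushouts of the realized generators, and it suffices to show that $|\blankdot|$ carries each morphism in $S_0$ to a monomorphism of simplicial sets.

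First I would realize a single generator. As $H\backslash\square[r]$ and $H\backslash\partial\square[r]$ are the colimits of the $H$-actions on $\square[r]$ and $\partial\square[r]$, colimit preservation gives canonical isomorphisms $|H\backslash\square[r]|\simeq H\backslash|\square[r]|$ and $|H\backslash\partial\square[r]|\simeq H\backslash|\partial\square[r]|$, where $H$ now acts through the images of the automorphisms under $|\blankdot|$. Under these identifications the realized generator is precisely the map induced on $H$-quotients by the inclusion $|\partial\square[r]|\hookrightarrow|\square[r]|$; the latter is a monomorphism by Proposition~\ref{prop:cube-bound-mono}, and it is $H$-equivariant because $\partial\square[r]$ is stable under $\mathrm{Aut}_{\square(\mathcal{R})}(r)$.

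It then remains to record the elementary fact that the quotient of an $H$-equivariant monomorphism of simplicial sets by $H$ is again a monomorphism. Degreewise this reads as follows: if $A\hookrightarrow X$ is an $H$-invariant inclusion and two simplices $a,a'\in A_n$ are identified in $H\backslash X_n$, say $a' = h\cdot a$, then $h\cdot a\in A_n$ by invariance, so $a$ and $a'$ already lie in one $H$-orbit of $A_n$; thus $H\backslash A_n\to H\backslash X_n$ is injective in each degree. Applying this to $|\partial\square[r]|\hookrightarrow|\square[r]|$ shows that the realization of every morphism in $S_0$ is a monomorphism, which is all that was needed.

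The argument has no deep obstacle; the only point demanding care is the interaction of $|\blankdot|$ with the group quotients, namely confirming that $|\blankdot|$ genuinely commutes with $H\backslash(\blankdot)$ and that the induced action matches the automorphism action on the cube $|\square[r]|\simeq\Delta[1]^{d(r)}$. Once this bookkeeping is settled, the equivariant--quotient observation together with the saturation of monomorphisms in $\mathbf{SSet}$ yields the result.
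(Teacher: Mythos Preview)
Your proof is correct and follows essentially the same route as the paper: reduce via Proposition~\ref{prop:cubeEZ} and Corollary~\ref{cor:ez-cellular} to the generators $H\backslash\partial\square[r]\hookrightarrow H\backslash\square[r]$, commute $|\blankdot|$ past the $H$-quotient, and invoke Proposition~\ref{prop:cube-bound-mono}. You are in fact slightly more explicit than the paper in justifying why the $H$-quotient of the equivariant inclusion $|\partial\square[r]|\hookrightarrow|\square[r]|$ remains injective, which the paper leaves implicit.
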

\begin{proof}
Notice that in the case that $\mathcal{R}$ is EZ cubicalizable, the notion $\partial\square[r]$ agrees with that defined in section \ref{sec:conflu}.
Then since by proposition \ref{prop:cubeEZ}, $\square(\mathcal{R})$ is an EZ category, corollary \ref{cor:ez-cellular} implies that every monomorphism in $\square(\mathcal{R})^\wedge$ is a transfinite composition of pushouts of morphisms in the set
\[
S_0=\{H\backslash\partial\square[r]\hookrightarrow H\backslash\square[r]\mid r\in\mathcal{R},H<\mathrm{Aut}_{\square(\mathcal{R})}(r)\}\,.
\]
Since the simplicial realization preserves colimits and the class of monomorphisms in $\mathbf{SSet}$ is closed under transfinite compositions and pushouts, it suffices to show that each morphism
\[
|H\backslash\partial\square[r]|\to|H\backslash\square[r]|
\]
is a monomorphism.
Now, since $|\blankdot|$ preserves colimits, the morphism is isomorphic to
\[
H\backslash|\partial\square[r]|\to H\backslash|\square[r]|\,,
\]
and by proposition \ref{prop:cube-bound-mono}, $|\partial\square[r]|$ is a simplicial subset of $|\square[r]|$.
Thus the morphism is a monomorphism.
\end{proof}

\section{Test category theory}
\label{sec:test-cat}
In the section, we prove that the cubicalizations are actually test categories.
The notion of test categories was introduced by Grothendieck in \cite{Gro}, where it was pointed out that a test category is a small category on which we can do homotopy theory.
In fact, Cisinski developed a model structure on the category of presheaves over a test category in \cite{Cis} based on the idea.
So we can regard cubicalizations as a model of homotopy theory.
Notice that, as a result of the integrated construction, most of the discussion in the section goes parallel to that in the simplest case $\square$, see \cite{Cis} and \cite{Jar}.

\subsection{Homotopical categories}
\label{sec:H-cats}
Before the discussion about test categories, it is convenient to introduce the notion of homotopical categories.
In \cite{DHKS}, it was pointed out that a homotopy theory given by a model structure on a category is actually determined by the class of weak equivalences, and the cofibrations and fibrations are only additional structures.
So we can isolate the model-categorical arguments involving only weak equivalences from the whole theory.

\begin{defin}
A homotopical category is a bicomplete category $\mathcal{C}$ together with a wide subcategory $\mathcal{W}$ of $\mathcal{C}$ which contains all isomorphisms and satisfies the two-out-of-six property; i.e. if we have a sequence
\[
X\xrightarrow{f} Y\xrightarrow{g} Z\xrightarrow{h} W
\]
of morphisms in $\mathcal{C}$, then $gf,hg\in\mathcal{W}$ imply $f,g,h,hgf\in\mathcal{W}$.
An element of $\mathcal{W}$ is called a weak equivalence.
\end{defin}

We often identify a wide subcategory with a class of morphisms which contains all the identities and is closed under compositions.
In the original article \cite{DHKS}, the assumption of bicompleteness does not appear.
However, in this paper, everything is discussed for bicomplete categories.
This is why we added the assumption to the definition of homotopical categories.

Note that if $\mathcal{C}$ is a category, then the class $\mathrm{Isom}(\mathcal{C})$ of isomorphisms satisfies the two-out-of-six property; indeed, $gf,hg\in\mathrm{Isom}(\mathcal{C})$ imply $g\in\mathrm{Isom}(\mathcal{C})$.
Moreover, suppose $F:\mathcal{C}\to\mathcal{D}$ is a functor and $\mathcal{W}_{\mathcal{D}}\subset\mathcal{D}$ is a class of morphisms which satisfies the two-out-of-six property.
Then the class
\[
F^{-1}\mathcal{W}_{\mathcal{D}}:=\{f:X\to Y\in\mathcal{C}\mid F(f)\in\mathcal{W}_{\mathcal{D}}\}
\]
also satisfies the two-out-of-six property.
In particular, if $\mathcal{C}$ is a model category and $\mathcal{W}$ is the class of weak equivalences, then $\mathcal{W}$ satisfies the two-out-of-six property since it is the inverse image of $\mathrm{isom}(\operatorname{Ho}\mathcal{C})$ by the localization $\mathcal{C}\to\mathrm{isom}(\operatorname{Ho})\mathcal{C}$.

Conversely, it is easily verified that if a wide subcategory $\mathcal{W}\subset\mathcal{C}$ satisfies the two-out-of-six, it also satisfies the two-out-of-three.

\begin{defin}
Let $\mathcal{C}$ be a homotopical catetory.
\begin{enumerate}[label={\rm(\roman*)}]
  \item A cylinder object on $X\in\mathcal{C}$ is a diagram
\[
X\xrightrightarrows[i_1]{i_0} \widehat X\xrightarrow{p} X
\]
such that $pi_0=pi_1=\mathrm{id}_X$ and $p:\widehat X\to X$ is a weak equivalence.
  \item Dually, a path object on $X\in\mathcal{C}$ is a diagram
\[
X\xrightarrow{j} \widetilde X\xrightrightarrows[q_1]{q_0} X
\]
such that $q_0j=q_1j=\mathrm{id}_X$ and $j:X\to\widetilde X$ is a weak equivalence.
  \item Two morphisms $f_0,f_1:X\to Y\in\mathcal{C}$ are said to be left homotopic if there are a cylinder object $\widehat X$ on $X$ and a morphism $h:\widehat X\to Y$ such that $f_0=h i_0$ and $f_1=h i_1$.
  \item Two morphisms $f_0,f_1:X\to Y\in\mathcal{C}$ are said to be right homotopic if there are a path object $\widetilde Y$ on $Y$ and a morphism $k:X\to\widetilde Y$ such that $f_0=p_0k$ and $f_1=p_1k$.
\end{enumerate}
\end{defin}

Notice that the homotopic relations are symmetric and reflexive.
In general, however, it is not transitive, so the relations do not become equivalence relations on morphism sets.

\begin{lemma}\label{lem:homotopic-weq}
Let $\mathcal{C}$ be a homotopical category, and suppose morphisms $f_0,f_1:X\to Y$ are left (resp. right) homotopic.
Then $f_0$ is a weak equivalence if and only if $f_1$ is so.
\end{lemma}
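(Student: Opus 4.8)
The plan is to reduce the entire statement to the two-out-of-three property, which (as remarked just before the lemma) follows from the two-out-of-six property built into the definition of a homotopical category. I will carry out the argument for the left homotopic case in full; the right homotopic case is formally dual, since a path object for $Y$ in $\mathcal{C}$ is exactly a cylinder object for $Y$ in $\mathcal{C}^\opposite$, and $\mathcal{W}$ continues to satisfy two-out-of-six in $\mathcal{C}^\opposite$.

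First I would unpack the hypothesis. Since $f_0$ and $f_1$ are left homotopic, there is a cylinder object $X \xrightrightarrows[i_1]{i_0} \widehat{X} \xrightarrow{p} X$ with $p i_0 = p i_1 = \mathrm{id}_X$ and $p \in \mathcal{W}$, together with a morphism $h : \widehat{X} \to Y$ satisfying $f_0 = h i_0$ and $f_1 = h i_1$. The first key step is to promote the inclusions $i_0, i_1$ to weak equivalences: in the factorization $p i_0 = \mathrm{id}_X$ both $\mathrm{id}_X$ and $p$ lie in $\mathcal{W}$, so two-out-of-three forces $i_0 \in \mathcal{W}$, and the same argument applied to $p i_1 = \mathrm{id}_X$ gives $i_1 \in \mathcal{W}$.

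The second step is a short diagram chase. Suppose $f_0 \in \mathcal{W}$. Writing $f_0 = h i_0$ with $i_0 \in \mathcal{W}$, two-out-of-three now yields $h \in \mathcal{W}$. Then $f_1 = h i_1$ is a composite of the two weak equivalences $h$ and $i_1$, hence lies in $\mathcal{W}$ because $\mathcal{W}$, being a wide subcategory, is closed under composition. The converse implication is obtained by exchanging the roles of $i_0$ and $i_1$, so $f_0 \in \mathcal{W}$ if and only if $f_1 \in \mathcal{W}$.

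I do not expect a genuine obstacle, as the content is purely formal; the only point I would flag is the order in which two-out-of-three is invoked. One must first use $p$ to show $i_0, i_1 \in \mathcal{W}$, and only afterwards deduce $h \in \mathcal{W}$ from $f_0 = h i_0$: applying two-out-of-three to $f_0 = h i_0$ before knowing $i_0 \in \mathcal{W}$ yields nothing. For the right homotopic case I would dualize directly: from a path object $Y \xrightarrow{j} \widetilde{Y} \xrightrightarrows[q_1]{q_0} Y$ with $q_0 j = q_1 j = \mathrm{id}_Y$ and $j \in \mathcal{W}$, the relations $q_0 j = q_1 j = \mathrm{id}_Y$ give $q_0, q_1 \in \mathcal{W}$, and then $f_0 = q_0 k$, $f_1 = q_1 k$ run through the identical chase.
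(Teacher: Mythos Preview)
Your proof is correct and follows essentially the same approach as the paper: use the two-out-of-three property together with $p i_k = \mathrm{id}_X$ to show that $i_0$ and $i_1$ are weak equivalences, and then conclude that $f_0$ is a weak equivalence if and only if $h$ is, if and only if $f_1$ is. The paper's version is slightly terser but the logic is identical.
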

\begin{proof}
By duality, we only show the statement in the case that $f_0$ and $f_1$ are left homotopic.
So we have a cylinder object
\[
X\amalg X\xrightarrow{(i_0,i_1)}\widehat X\xrightarrow{p} X
\]
and a morphism $h:\widehat X\to Y$ such that $f_0=hi_0$ and $f_1=hi_1$.
Since $\widehat X$ is a cylinder object, the morphism $p$ is a weak equivalence.
Moreover, we have $pi_0=pi_1=\mathrm{id}_X$, which implies $i_0$ and $i_1$ are weak equivalences since the class $\mathcal{W}$ of weak equivalences satisfies the two-out-of-three property.
Thus $f_0$ is a weak equivalence if and only if $h$ is so, and the other is the same.
\end{proof}

\begin{defin}
Let $\mathcal{C}$ be a homotopical category.
Let $f:X\to Y$ and $g:Y\to X$ be morphisms in $\mathcal{C}$.
Then $g$ is called a homotopy inverse of $f$ if $gf$ is either left or right homotopic to the identity, and $fg$ is either left or right homotopic to the identity.
\end{defin}

It is clear that $g$ is a homotopy inverse of $f$ if and only if $f$ is a homotopy inverse of $g$.

\begin{corol}\label{cor:H-inverse}
Let $\mathcal{C}$ be a homotopical category.
If $f:X\to Y\in\mathcal{C}$ admits a homotopy inverse, then $f$ is a weak equivalence.
\end{corol}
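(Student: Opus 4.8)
The plan is to reduce the statement to the two-out-of-six property by way of Lemma \ref{lem:homotopic-weq}. Suppose $g:Y\to X$ is a homotopy inverse of $f$. First I would unwind the definition of homotopy inverse: it says precisely that $gf:X\to X$ is either left or right homotopic to $\mathrm{id}_X$, and that $fg:Y\to Y$ is either left or right homotopic to $\mathrm{id}_Y$. Since $\mathcal{W}$ is required to contain all isomorphisms, both identities $\mathrm{id}_X$ and $\mathrm{id}_Y$ are weak equivalences.

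Next I would invoke Lemma \ref{lem:homotopic-weq}, which asserts that two left-homotopic (respectively right-homotopic) morphisms are weak equivalences simultaneously. Applying it to the two homotopies just described—using whichever of the left or right version is relevant in each case—immediately yields $gf\in\mathcal{W}$ and $fg\in\mathcal{W}$. The crucial step is then to feed these two facts into the two-out-of-six property along the four-term sequence
\[
X\xrightarrow{f} Y\xrightarrow{g} X\xrightarrow{f} Y.
\]
Here the two adjacent composites are exactly $gf$ and $fg$, both of which lie in $\mathcal{W}$; the two-out-of-six axiom then forces each constituent arrow, and in particular $f$ itself, to be a weak equivalence, which completes the argument.

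The one subtlety worth flagging is that the weaker two-out-of-three property would not suffice here: a priori we have no information telling us that $g$ is a weak equivalence, so we cannot simply cancel $g$ from $gf\in\mathcal{W}$. It is exactly to handle this situation—so that homotopy equivalences are guaranteed to be weak equivalences—that the definition of a homotopical category imposes two-out-of-six rather than two-out-of-three. The repetition of $f$ in the four-term sequence above is the device that lets the single axiom recover $f\in\mathcal{W}$ from the two composites, so I expect no genuine difficulty beyond arranging the sequence correctly.
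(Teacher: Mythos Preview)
Your proof is correct and follows exactly the same approach as the paper: use Lemma~\ref{lem:homotopic-weq} to deduce that $gf$ and $fg$ are weak equivalences, then apply the two-out-of-six property to the sequence $X\xrightarrow{f}Y\xrightarrow{g}X\xrightarrow{f}Y$. Your additional remarks about why two-out-of-three would not suffice are accurate and helpful, though the paper's version omits them.
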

\begin{proof}
Let $g:Y\to X$ be a homotopy inverse of $f$.
Then lemma \ref{lem:homotopic-weq} implies that $gf$ and $fg$ are weak equivalences.
Hence by the two-out-of-six property, $f$ is a weak equivalence.
\end{proof}

\subsection{Test categories}
The notion of test categories was introduced by Grothendieck in \cite{Gro}.
Further developments are established by Cisinski in \cite{Cis} and Jardine in \cite{Jar}.
Before giving the definition, we begin with an essential proposition:

\begin{prop}
Let $\mathcal{A}$ be a small category.
We define functors $\int^{\mathcal{A}}:\mathcal{A}^\wedge\to\mathbf{Cat}$ and $N_{\mathcal{A}}:\mathbf{Cat}\to\mathcal{A}^{\wedge}$ by
\[
\txtint^{\mathcal{A}}X = \mathcal{A}/X
\]
and
\[
N_{\mathcal{A}}C = \mathbf{Cat}(\mathcal{A}/(\blankdot),C)\,.
\]
Then $\txtint^{\mathcal{A}}\dashv N_{\mathcal{A}}$ is an adjoint pair.
\end{prop}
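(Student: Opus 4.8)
The plan is to produce a bijection
\[
\mathbf{Cat}\!\left(\int^{\mathcal{A}}X,\, C\right)\;\simeq\;\mathcal{A}^\wedge(X,\, N_{\mathcal{A}}C)
\]
natural in $X\in\mathcal{A}^\wedge$ and $C\in\mathbf{Cat}$. The guiding observation is that $\int^{\mathcal{A}}$ is the category-of-elements functor $X\mapsto\mathcal{A}/X=\commacat{\mathcal{A}[\blankdot]}{X}$, while $N_{\mathcal{A}}$ is the nerve attached to the functor $F\colon\mathcal{A}\to\mathbf{Cat}$, $a\mapsto\mathcal{A}/a$; so the statement is an instance of the nerve--realization paradigm. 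Accordingly, I would first recognize $\int^{\mathcal{A}}$ as the left Kan extension of $F$ along the Yoneda embedding $\mathcal{A}[\blankdot]\colon\mathcal{A}\to\mathcal{A}^\wedge$. Since $\mathbf{Cat}$ is cocomplete, this Kan extension exists, and the general nerve--realization adjunction then says it is left adjoint to $C\mapsto\mathbf{Cat}(\mathcal{A}/\blankdot,C)=N_{\mathcal{A}}C$.

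The crux is therefore the colimit decomposition
\[
\mathcal{A}/X\;\simeq\;\colim_{(\mathcal{A}[a]\to X)\in\mathcal{A}/X}\mathcal{A}/a
\]
in $\mathbf{Cat}$. I would derive this from the density (co-Yoneda) formula $X\simeq\colim_{(\mathcal{A}[a]\to X)}\mathcal{A}[a]$ together with the fact that $\mathcal{A}/(\blankdot)\colon\mathcal{A}^\wedge\to\mathbf{Cat}$ preserves colimits and sends $\mathcal{A}[a]$ to $\mathcal{A}/\mathcal{A}[a]\simeq\mathcal{A}/a$. Concretely, the isomorphism is visible on cells: for a cell $x\in X(a)$, an object $g\colon b\to a$ of $\mathcal{A}/a$ corresponds to the element $g^\ast x\in X(b)$, i.e.\ to the object $(b,g^\ast x)$ of $\mathcal{A}/X$, and these identifications are compatible with the transition functors $\mathcal{A}/g$.

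Granting the colimit formula, I would compute both hom-sets as the same end, using the (co)end calculus of Section~\ref{sec:prelim}. Writing $X\simeq\int^{a}\mathcal{A}[a]\times X(a)$ and turning the copowers into powers gives
\[
\mathcal{A}^\wedge(X,N_{\mathcal{A}}C)\;\simeq\;\int_{a\in\mathcal{A}}N_{\mathcal{A}}C(a)^{X(a)}\;=\;\int_{a\in\mathcal{A}}\mathbf{Cat}(\mathcal{A}/a,C)^{X(a)},
\]
which is precisely the end expressing natural transformations. On the other side, applying the continuous functor $\mathbf{Cat}(\blankdot,C)$ to the colimit formula turns the colimit over $\mathcal{A}/X$ into a limit, which the weighted-limit/end formula identifies with the same end:
\[
\mathbf{Cat}\!\left(\int^{\mathcal{A}}X,C\right)\;\simeq\;\lim_{(\mathcal{A}[a]\to X)}\mathbf{Cat}(\mathcal{A}/a,C)\;\simeq\;\int_{a\in\mathcal{A}}\mathbf{Cat}(\mathcal{A}/a,C)^{X(a)}.
\]
Comparing the two and checking that the comparison maps are induced by the same structure yields the adjunction.

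\emph{Main obstacle.} I expect the delicate point to be the colimit formula in $\mathbf{Cat}$ and its naturality: colimits of categories are subtle, so one must verify that the evident object/morphism-level assignment is an isomorphism of categories, not merely a bijection on objects, and that it is natural in $X$, i.e.\ compatible with the action of a presheaf map on cells (naturality in $C$ being immediate). If one wishes to bypass the colimit computation, the bijection can instead be written down by hand: send $\Phi\colon\mathcal{A}/X\to C$ to the natural transformation whose component at $x\in X(a)$ is the functor $(g\colon b\to a)\mapsto\Phi(b,g^\ast x)$, with inverse sending $\theta$ to $(a,x)\mapsto\theta_a(x)(\mathrm{id}_a)$. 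In that case the obstacle shifts to checking that both assignments are functorial and mutually inverse, where the key computation uses naturality of $\theta$ to identify $\theta_{a'}(x')(f)$ with $\theta_a(f^\ast x')(\mathrm{id}_a)$ for a morphism $f\colon(a,x)\to(a',x')$.
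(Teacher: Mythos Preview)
Your proposal is correct but takes a different route from the paper. The paper constructs the unit $\eta$ and counit $\varepsilon$ explicitly and verifies the two triangle identities by hand: $\eta$ sends a cell $x\in X(a)$ to the functor $x_\ast\colon\mathcal{A}/a\to\mathcal{A}/X$, and $\varepsilon$ identifies $\int^{\mathcal{A}}N_{\mathcal{A}}C$ with the comma category $\commacat{j}{C}$ (for $j\colon a\mapsto\mathcal{A}/a$) and then evaluates at the terminal object $1_a\in\mathcal{A}/a$.

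Your primary approach via the nerve--realization paradigm is more conceptual, but the step ``$\mathcal{A}/(\blankdot)\colon\mathcal{A}^\wedge\to\mathbf{Cat}$ preserves colimits'' is precisely what is at stake: invoking it as a known fact risks circularity, and proving it directly requires an explicit computation of the relevant colimit in $\mathbf{Cat}$, which is no lighter than the paper's argument. You rightly flag this as the main obstacle. Your fallback direct-bijection approach, on the other hand, is essentially the hom-set repackaging of the paper's unit/counit: your map $\Phi\mapsto\theta$ with $\theta_a(x)(g)=\Phi(b,g^\ast x)$ is the paper's $\eta$, and your inverse $(a,x)\mapsto\theta_a(x)(\mathrm{id}_a)$ is exactly the paper's $\varepsilon=\mathrm{eval}_1$. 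The paper's packaging has the practical advantage that $\varepsilon$ is isolated and named, which is needed immediately afterwards for the computation $\commacat{\varepsilon}{c}\simeq\int^{\mathcal{A}}N_{\mathcal{A}}(C/c)$.
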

\begin{proof}
For $X\in\mathcal{A}^\wedge$, we defin a morphism $\eta:X\to N_{\mathcal{A}}\txtint^{\mathcal{A}} X$ by the composition
\[
X
\simeq \mathcal{A}^\wedge(\mathcal{A}[\blankdot],X)
\xrightarrow{\txtint^{\mathcal{A}}} \mathbf{Cat}(\mathcal{A}/(\blankdot),\mathcal{A}/X)
= N_{\mathcal{A}}\txtint^{\mathcal{A}}X\,,
\]
where the first isomorphism arises from the Yoneda lemma.
On the other hand, if we write $j:\mathcal{A}\ni a\mapsto \mathcal{A}/a\in\mathbf{Cat}$, then we have a natural isomorphism $\txtint^{\mathcal{A}}N_{\mathcal{A}}C\simeq\commacat{j}{C}$ for $C\in\mathbf{Cat}$.
Indeed, the objects of the category $\txtint^{\mathcal{A}}N_{\mathcal{A}}C$ are diagrams of the form
\[
\mathcal{A}[a]\to N_{\mathcal{A}}C
\]
in $\mathcal{A}^\wedge$, which one-to-one correspond to diagrams
\[
\mathcal{A}/a \to C
\]
in $\mathbf{Cat}$.
This correspondence naturally extends to morphisms.
We now define $\varepsilon:\txtint^{\mathcal{A}}N_{\mathcal{A}}C\to C$ by
\[
\txtint^{\mathcal{A}}N_{\mathcal{A}}C
\simeq \commacat{j}{C}
\xrightarrow{\mathrm{eval}_1} C\,,
\]
where $\mathrm{eval}_1:\commacat{j}{C}\to C$ is the evaluation at the terminal object, that is, the functor
\[
\mathrm{eval}_1:\commacat{j}{C}\ni\left(\mathcal{A}/a\xrightarrow{\gamma}C\right)\mapsto \gamma(1_a)\in C\,.
\]

We show $(\txtint^{\mathcal{A}},N_{\mathcal{A}},\eta,\varepsilon)$ is actually an adjunction.
To do so, we have to show the triangle identities.
For $X\in\mathcal{A}^\wedge$ and each $(\mathcal{A}[a]\xrightarrow{x} X)\in\txtint^{\mathcal{A}}X$, the image of $x$ by the composition
\begin{equation}\label{eq:trig-id-l}
\txtint^{\mathcal{A}} X
\xrightarrow{\txtint^{\mathcal{A}}\eta} \txtint^{\mathcal{A}}N_{\mathcal{A}}\txtint^{\mathcal{A}}X
\xrightarrow{\varepsilon} \txtint^{\mathcal{A}}X
\end{equation}
is computed as follows:
\[
\begin{split}
\varepsilon\circ\txtint^{\mathcal{A}}\eta(x)
&= \varepsilon\left(\mathcal{A}[a]\xrightarrow{x} X \xrightarrow{\eta} N_{\mathcal{A}}\txtint^{\mathcal{A}}X\right) \\
&= \varepsilon\left(\mathcal{A}[a]\xrightarrow{x} X \simeq\mathcal{A}^\wedge(\mathcal{A}[\blankdot],X)\xrightarrow{\txtint^{\mathcal{A}}} \mathbf{Cat}(\mathcal{A}/(\blankdot),\mathcal{A}/X)\right) \\
&= \mathrm{eval}_1\left(\mathcal{A}/a\xrightarrow{x_\ast}\mathcal{A}/X\right) \\
&= x\,.
\end{split}
\]
Thus the composition \eqref{eq:trig-id-l} is the identity.

We finally show the other identity.
Notice that for a small category $C$ and a cell $(\mathcal{A}/a\xrightarrow{\gamma}C)\in N_{\mathcal{A}}C$, we can define a functor $\varphi(\gamma):\mathcal{A}/a\to\commacat{j}{C}$ which sends each object $(a'\xrightarrow{f} a)\in\mathcal{A}/a$ to the composition
\[
j(a')=\mathcal{A}/a'\xrightarrow{f_\ast}\mathcal{A}/a\xrightarrow{\gamma} C\,.
\]
This assignment gives rise to a morphism $\varphi:N_{\mathcal{A}}C\to N_{\mathcal{A}}(\commacat{j}{C})$ in $\mathcal{A}^\wedge$.
Then we obtain the following commutative diagram:
\[
\xymatrix{
  N_{\mathcal{A}}C \ar@{=}[r]^-{\sim} \ar[d]_{\varphi} & \mathcal{A}^\wedge(\mathcal{A}/(\blankdot),N_{\mathcal{A}}C) \ar[d]^{\txtint^{\mathcal{A}}} \\
  N_{\mathcal{A}}(\commacat{j}{C}) \ar@{=}[r]^-{\sim} & N_{\mathcal{A}}\txtint^{\mathcal{A}}N_{\mathcal{A}} C }
\]
Thus, by composing $\mathrm{eval}_1:\commacat{j}{C}\to C$, one can verify that the composition
\[
N_{\mathcal{A}}C
\xrightarrow{\eta} N_{\mathcal{A}}\txtint^{\mathcal{A}} N_{\mathcal{A}} C
\xrightarrow{N_{\mathcal{A}}\varepsilon} N_{\mathcal{A}}C
\]
is the identity.
\end{proof}

To compute the counit $\varepsilon:\txtint^{\mathcal{A}}N_{\mathcal{A}}\to\mathrm{Id}_{\mathbf{Cat}}$ of the adjunction, the following lemma is an essential tool:

\begin{lemma}\label{lem:counit-comma}
Let $\mathcal{A}$ be a small category.
Let $\txtint^{\mathcal{A}}:\mathcal{A}^\wedge\xrightleftarrows{}\mathbf{Cat}:N_{\mathcal{A}}$ be the adjoint pair and $\varepsilon:\txtint^{\mathcal{A}}N_{\mathcal{A}}\to\mathrm{Id}_{\mathbf{Cat}}$ be the counit given above.
Then for every small category $C$ and for every object $c\in C$,there is an isomorphism
\[
\commacat{\varepsilon}{c}\simeq \txtint^{\mathcal{A}}N_{\mathcal{A}}(C/c)
\]
of categories.
\end{lemma}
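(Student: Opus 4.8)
The plan is to reduce everything to the identification $\int^{\mathcal{A}}N_{\mathcal{A}}C\simeq\commacat{j}{C}$ proved above, where $j(a)=\mathcal{A}/a$, under which the counit $\varepsilon$ becomes the evaluation functor $\mathrm{eval}_1:\commacat{j}{C}\to C$ sending $(\mathcal{A}/a\xrightarrow{\gamma}C)$ to $\gamma(1_a)$. Since the same identification applies verbatim with $C/c$ in place of $C$, giving $\int^{\mathcal{A}}N_{\mathcal{A}}(C/c)\simeq\commacat{j}{(C/c)}$, it suffices to produce an isomorphism of categories $\commacat{j}{(C/c)}\simeq\commacat{\mathrm{eval}_1}{c}$, and then transport it along $\varepsilon\cong\mathrm{eval}_1$.

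First I would set up the bijection on objects, which rests on the fact that each slice $\mathcal{A}/a$ has a terminal object $1_a$. For an object $f:a'\to a$ of $\mathcal{A}/a$ let $\iota_f:f\to 1_a$ denote the unique morphism to the terminal object. I claim that a functor $\delta:\mathcal{A}/a\to C/c$ is the same datum as a pair $(\gamma,u)$ consisting of a functor $\gamma:\mathcal{A}/a\to C$ (the composite of $\delta$ with the forgetful functor $C/c\to C$) and a single morphism $u:\gamma(1_a)\to c$ (the structure map of $\delta(1_a)$). Indeed, functoriality of $\delta$ forces the structure map of $\delta(f)$ to be $u\circ\gamma(\iota_f)$; conversely, for any choice of $u$ the assignment $f\mapsto(\gamma(f),u\circ\gamma(\iota_f))$ defines a functor into $C/c$, the compatibility $u\circ\gamma(\iota_g)\circ\gamma(h)=u\circ\gamma(\iota_f)$ for $h:f\to g$ following from $\iota_g\circ h=\iota_f$ by terminality. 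This gives a bijection $(a,\delta)\mapsto((a,\gamma),u)$ between objects of $\commacat{j}{(C/c)}$ and of $\commacat{\mathrm{eval}_1}{c}$.

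Next I would match morphisms, keeping both functors equal to the identity on the underlying arrow of $\mathcal{A}$. A morphism $(a,\delta)\to(a',\delta')$ in $\commacat{j}{(C/c)}$ is a map $\phi:a\to a'$ with $\delta=\delta'\phi_\ast$; composing with $C/c\to C$ yields $\gamma=\gamma'\phi_\ast$, so $\phi$ is a morphism $(a,\gamma)\to(a',\gamma')$ in $\commacat{j}{C}$. The one computation to carry out is the value of $\mathrm{eval}_1$ on $\phi$: as $\phi_\ast$ carries $1_a$ to the object $\phi:a\to a'$ of $\mathcal{A}/a'$, whose canonical morphism $\iota_\phi:\phi\to 1_{a'}$ is $\phi$ itself, the definition of $\mathrm{eval}_1$ gives $\mathrm{eval}_1(\phi)=\gamma'(\iota_\phi):\gamma(1_a)=\gamma'(\phi)\to\gamma'(1_{a'})$. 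Evaluating the relation $\delta=\delta'\phi_\ast$ at $1_a$ and comparing structure maps then yields exactly $u=u'\circ\gamma'(\iota_\phi)=u'\circ\mathrm{eval}_1(\phi)$, which is precisely the comma condition defining a morphism of $\commacat{\mathrm{eval}_1}{c}$. As the correspondence is the identity on $\phi$, it manifestly respects identities and composition, and the inverse construction from the object bijection shows it is invertible.

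I expect the main obstacle to be exactly this determination of $\mathrm{eval}_1$ (equivalently $\varepsilon$) on morphisms together with the verification that the structure map $u$ transforms correctly under $\phi$; everything else is formal bookkeeping driven by the terminality of $1_a$ in $\mathcal{A}/a$. I would therefore write these two checks out in full and leave the routine functoriality of the object correspondence to the reader.
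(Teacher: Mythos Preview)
Your proposal is correct and follows essentially the same route as the paper: both reduce to the identification $\int^{\mathcal{A}}N_{\mathcal{A}}D\simeq\commacat{j}{D}$ (with $\varepsilon$ becoming $\mathrm{eval}_1$) and then argue that a functor $\mathcal{A}/a\to C/c$ is the same datum as a pair $(\gamma:\mathcal{A}/a\to C,\ u:\gamma(1_a)\to c)$ by terminality of $1_a$. The paper dispatches the morphism-level correspondence with the phrase ``clearly extends functorially to morphisms'', whereas you actually compute $\mathrm{eval}_1(\phi)=\gamma'(\iota_\phi)$ and verify the comma condition $u=u'\circ\mathrm{eval}_1(\phi)$; this is a welcome elaboration rather than a different argument.
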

\begin{proof}
Recall that if $D$ is a small category, the category $\int^{\mathcal{A}}N_{\mathcal{A}}D$ can be naturally identified with the category $\commacat{j}{D}$ for $j:\mathcal{A}\ni a\mapsto\mathcal{A}/a\in\mathbf{Cat}$.
Under the identification, $\varepsilon:\txtint^{\mathcal{A}}N_{\mathcal{A}}D\to D$ is identified with the evaluation $\mathrm{eval}_1:\commacat{j}{D}\to D$ at the terminal object.
Hence, the category $\commacat{\varepsilon}{c}$ is the category whose objects are pairs of diagrams
\[
\left(\mathcal{A}/a\xrightarrow{\gamma}C, \gamma(1_a)\xrightarrow{f} c\right)\,.
\]
The morphisms are morphisms $\alpha:a\to a'\in\mathcal{A}$ which commute with structure morphisms.
On the other hand, objects of $\txtint^{\mathcal{A}}N_{\mathcal{A}}(C/c)\simeq\commacat{j}{(C/c)}$ are functors
\[
\mathcal{A}/a \xrightarrow{\widetilde\gamma} C/c\,,
\]
which consist of data $(\mathcal{A}/a\xrightarrow{\gamma} C,\gamma(1_a)\xrightarrow{f} c)$.
Therefore, objects of $\commacat\varepsilon{c}$ correspond in one-to-one to those of $\txtint^{\mathcal{A}}N_{\mathcal{A}}(C/c)$.
This correspondence clearly extends functorially to morphisms, so we obtain the result.
\end{proof}

\begin{defin}
We denote by $N:\mathbf{Cat}\to\mathbf{SSet}$ the usual nerve functor.
Then a functor $\varphi:C\to D$ is called a Thomason equivalence if it induces a weak equivalence $N\varphi:NC\to ND$ in $\mathbf{SSet}$ in the Quillen model structure.
A small category $C$ is said to be aspherical if the unique functor $C\to\ast$ is a Thomason equivalence.
\end{defin}

The terminology comes from the model structure on $\mathbf{Cat}$ introduced by Thomason in \cite{Tho}, see also \cite{Cis2}.
Many people studied about conditions in which a functor becomes a Thomason equivalence.
The most well-known result is the following:

\begin{theo}[Quillen's theorem A]
Let $\varphi:C\to D$ be a functor between small categories
If $\varphi$ is aspherical, i.e. $\commacat{\varphi}{d}$ is aspherical for each $d\in D$, then $\varphi$ is a Thomason equivalence.
\end{theo}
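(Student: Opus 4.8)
The plan is to factor $N\varphi$ through the nerve of a comma category and reduce the statement to two facts about nerves of Grothendieck constructions. Write $E:=\commacat{\varphi}{\mathrm{Id}_D}$ for the comma category whose objects are triples $(c,d,u\colon\varphi(c)\to d)$; this is precisely the Grothendieck construction of the functor $D\to\mathbf{Cat}$, $d\mapsto\commacat{\varphi}{d}$. It carries two projections $\mathrm{pr}_C\colon E\to C$ and $\mathrm{pr}_D\colon E\to D$, and the structure morphisms $u$ assemble into a natural transformation $\tau\colon\varphi\circ\mathrm{pr}_C\Rightarrow\mathrm{pr}_D$. First I would record the standard fact that a natural transformation induces a simplicial homotopy between the nerves of its source and target, so that the two maps $N\varphi\circ N\mathrm{pr}_C$ and $N\mathrm{pr}_D$ from $NE$ to $ND$ are homotopic.

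Next I would show that $N\mathrm{pr}_C$ is a weak equivalence, a step that does not use the hypothesis. The assignment $s\colon c\mapsto(c,\varphi(c),\mathrm{id})$ is a section of $\mathrm{pr}_C$, and the morphisms $u$ furnish a natural transformation $s\circ\mathrm{pr}_C\Rightarrow\mathrm{Id}_E$. Hence $Ns$ is a homotopy inverse of $N\mathrm{pr}_C$ in the sense of section \ref{sec:H-cats}, so $N\mathrm{pr}_C$ is a weak equivalence by corollary \ref{cor:H-inverse}.

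The key step, and the only place the hypothesis enters, is that $N\mathrm{pr}_D$ is a weak equivalence. Here I would invoke the standard identification of the nerve of a Grothendieck construction with the diagonal of the bisimplicial set
\[
V_{p,q}=\coprod_{d_0\to\dots\to d_p}N(\commacat{\varphi}{d_0})_q,
\]
under which $NE\cong\mathrm{diag}V$ and $N\mathrm{pr}_D$ becomes the map $\mathrm{diag}V\to ND$ induced by collapsing each factor $N(\commacat{\varphi}{d_0})$ to a point. For each fixed $p$ this collapse is a coproduct of the maps $N(\commacat{\varphi}{d_0})\to\ast$, every one of which is a weak equivalence precisely because $\commacat{\varphi}{d_0}$ is aspherical by assumption. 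By the realization lemma for bisimplicial sets — a levelwise weak equivalence induces a weak equivalence on diagonals — the induced map on diagonals is a weak equivalence, that is, $N\mathrm{pr}_D$ is a weak equivalence.

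Finally I would combine the three facts. Since $N\mathrm{pr}_D$ is a weak equivalence and $N\varphi\circ N\mathrm{pr}_C$ is homotopic to it, $N\varphi\circ N\mathrm{pr}_C$ is a weak equivalence by lemma \ref{lem:homotopic-weq}; as $N\mathrm{pr}_C$ is itself a weak equivalence, the two-out-of-three property of weak equivalences in $\mathbf{SSet}$ (which follows from two-out-of-six) forces $N\varphi$ to be a weak equivalence, proving that $\varphi$ is a Thomason equivalence. The main obstacle is the third step: setting up the bisimplicial replacement $V$, identifying $\mathrm{diag}V$ with $NE$ and the collapse map with $N\mathrm{pr}_D$, and applying the realization lemma; the remaining steps are formal manipulations with nerves and natural transformations.
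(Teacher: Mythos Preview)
Your argument is correct and is essentially Quillen's classical proof via the comma category $\commacat{\varphi}{D}$ and the bisimplicial realization lemma. The paper, however, does not supply its own proof of this theorem at all: immediately after the statement it simply records that the result was originally proved by Quillen and that Jardine gave an alternative argument using homotopy colimits in model categories, and it refers the reader to those sources. So there is nothing to compare against beyond noting that your sketch matches the first of the two cited approaches; the homotopy-colimit route alluded to in the paper would instead interpret $NE$ as $\hocolim_{d\in D} N(\commacat{\varphi}{d})$ directly and compare it with $\hocolim_{d\in D}\ast\simeq ND$, which amounts to the same content packaged in model-categorical language.
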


The proof was originally done by Quillen in \cite{Qui2}.
Jardine gave another proof in \cite{Jar2}, where he used the discussion about homotopy colimits in model categories.

Now we define test categories.

\begin{defin}
Let $\mathcal{A}$ be a small category.
\begin{enumerate}[label={\rm(\arabic*)}]
  \item $\mathcal{A}$ is called a weak test category if the counit functor
\[
\varepsilon:\txtint^{\mathcal{A}}N_{\mathcal{A}}C\to C
\]
is a Thomason equivalence.
  \item $\mathcal{A}$ is called a local test category if $\mathcal{A}/a$ is a weak test category for each $a\in\mathcal{A}$.
  \item $\mathcal{A}$ is called a test category if it is both a weak test category and a local test category.
\end{enumerate}
\end{defin}

\begin{prop}\label{prop:wk-loc-test}
Let $\mathcal{A}$ be a small category.
Then the following hold:
\begin{enumerate}[label={\rm(\arabic*)}]
  \item\label{subprop:wktest} $\mathcal{A}$ is a weak test category if and only if for each small category $D$ with terminal object, the category $\txtint^{\mathcal{A}}N_{\mathcal{A}}D$ is aspherical.
  \item\label{subprop:loctest} $\mathcal{A}$ is a local test category if and only if for each small category $D$ with terminal object, the forgetful functor $\txtint^{\mathcal{A}}N_{\mathcal{A}}D\to\mathcal{A}$ is aspherical.
\end{enumerate}
\end{prop}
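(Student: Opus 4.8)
The plan is to reduce both statements to Quillen's theorem A together with the comma-category identification of Lemma~\ref{lem:counit-comma}, using the elementary fact that any small category with a terminal object is aspherical (the cone natural transformation to the terminal object induces a simplicial homotopy contracting the nerve).

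For part~\ref{subprop:wktest} I would argue both implications directly. For the forward direction, suppose $\mathcal{A}$ is a weak test category and let $D$ be a small category with terminal object. Then $\varepsilon:\int^{\mathcal{A}}N_{\mathcal{A}}D\to D$ is a Thomason equivalence by definition, while $D\to\ast$ is a Thomason equivalence because $D$ has contractible nerve; since the composite of two Thomason equivalences is again one, the composite $\int^{\mathcal{A}}N_{\mathcal{A}}D\to\ast$ is a Thomason equivalence, i.e. $\int^{\mathcal{A}}N_{\mathcal{A}}D$ is aspherical. For the converse, let $C$ be any small category; to see that $\varepsilon:\int^{\mathcal{A}}N_{\mathcal{A}}C\to C$ is a Thomason equivalence it suffices, by Quillen's theorem A, to check that $\commacat{\varepsilon}{c}$ is aspherical for every $c\in C$. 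By Lemma~\ref{lem:counit-comma} there is an isomorphism $\commacat{\varepsilon}{c}\simeq\int^{\mathcal{A}}N_{\mathcal{A}}(C/c)$, and $C/c$ carries the terminal object $\mathrm{id}_c$; hence the hypothesis applies and $\commacat{\varepsilon}{c}$ is aspherical, as needed.

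For part~\ref{subprop:loctest} the strategy is to transfer part~\ref{subprop:wktest} to the slices $\mathcal{A}/a$. The crucial step is to identify, for the forgetful functor $u:\int^{\mathcal{A}}N_{\mathcal{A}}D\to\mathcal{A}$ and each $a\in\mathcal{A}$, the comma category $\commacat{u}{a}$ with $\int^{\mathcal{A}/a}N_{\mathcal{A}/a}D$. I would build this using the slice-of-slice isomorphism $(\mathcal{A}/a)/(b,h)\simeq\mathcal{A}/b$: an object of $\commacat{u}{a}$ is a triple consisting of $b\in\mathcal{A}$, a cell $\mathcal{A}[b]\to N_{\mathcal{A}}D$ (equivalently a functor $\mathcal{A}/b\to D$), and a morphism $h:b\to a$, whereas an object of $\int^{\mathcal{A}/a}N_{\mathcal{A}/a}D$ is a pair of $(b,h)\in\mathcal{A}/a$ with a functor $(\mathcal{A}/a)/(b,h)\to D$; the slice-of-slice isomorphism makes these correspond functorially. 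Granting this, $u$ is aspherical for every $D$ with terminal object if and only if $\int^{\mathcal{A}/a}N_{\mathcal{A}/a}D$ is aspherical for every such $D$ and every $a$, which by part~\ref{subprop:wktest} applied to $\mathcal{A}/a$ is equivalent to each $\mathcal{A}/a$ being a weak test category, i.e. to $\mathcal{A}$ being a local test category. The two universal quantifiers over $a$ and over $D$ commute, so the rearrangement is harmless.

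The main obstacle I expect is exactly the comma-category identification $\commacat{u}{a}\simeq\int^{\mathcal{A}/a}N_{\mathcal{A}/a}D$ of part~\ref{subprop:loctest}: one must verify not only the bijection on objects via the slice-of-slice isomorphism but also that the morphisms correspond, that the $N_{\mathcal{A}/a}$- and $N_{\mathcal{A}}$-structure maps are matched compatibly, and that the whole identification is natural in $D$. Everything else—stability of Thomason equivalences under composition, contractibility of categories with a terminal object, and the application of Quillen's theorem A—is formal once Lemma~\ref{lem:counit-comma} is available.
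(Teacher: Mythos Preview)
Your proposal is correct and follows essentially the same route as the paper: part~\ref{subprop:wktest} is proved exactly as you describe via Lemma~\ref{lem:counit-comma} and Quillen's theorem~A, and part~\ref{subprop:loctest} is reduced to part~\ref{subprop:wktest} applied to each slice $\mathcal{A}/a$ through the identification $\commacat{u}{a}\simeq\int^{\mathcal{A}/a}N_{\mathcal{A}/a}D$ coming from $(\mathcal{A}/a)/(b,h)\simeq\mathcal{A}/b$. The paper in fact only sketches the object-level bijection for this identification, so your explicit flagging of the morphism-level verification is a welcome caution rather than a gap.
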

\begin{proof}
If $\mathcal{A}$ is a weak test category and $D$ is a small category with terminal object, then we have a sequence $\txtint^{\mathcal{A}}N_{\mathcal{A}}D\xrightarrow{\varepsilon}D\to\ast$ of Thomason equivalences.
Conversely, suppose $\txtint^{\mathcal{A}}N_{\mathcal{A}}D$ is aspherical for each small category $D$ with terminal object.
Let $C$ be an arbitrary small category.
For each $c\in C$, by lemma \ref{lem:counit-comma}, we have an isomorphism $\commacat\varepsilon{c}\simeq\txtint^{\mathcal{A}}N_{\mathcal{A}}(C/c)$.
The assumption implies it is aspherical, so the counit $\varepsilon:\txtint^{\mathcal{A}}N_{\mathcal{A}}C\to C$ is aspherical.
Hence by Quillen's theorem A, $\varepsilon$ is a Thomason equivalence, and we obtain the part \ref{subprop:wktest}.

We finally show the part \ref{subprop:loctest}.
For each $a\in\mathcal{A}$ and a small category $C$, objects of the category $\txtint^{\mathcal{A}/a}N_{\mathcal{A}/a}C$ are pairs
\[
\left( a'\xrightarrow{\mu} a, (\mathcal{A}/a)/\mu\xrightarrow{\gamma} C\right)\,.
\]
However, since for $\mu:a'\to a\in\mathcal{A}$, $(\mathcal{A}/a)/\mu$ is isomorphic to $\mathcal{A}/a'$, the category $\txtint^{\mathcal{A}/a}N_{\mathcal{A}/a}C$ is isomorphic to the category $\commacat{(\txtint^{\mathcal{A}}N_{\mathcal{A}}C)}{a}$ whose objects are pairs
\[
\left( a'\xrightarrow{\mu} a, \mathcal{A}/a'\xrightarrow{\gamma} C\right)\,.
\]
Therefore, by the part \ref{subprop:wktest}, $\mathcal{A}/a$ is a weak test category if and only if the category $\commacat{(\txtint^{\mathcal{A}}N_{\mathcal{A}}D)}{a}$ is aspherical for every small category $D$ with terminal object, which says that the forgetful functor $\txtint^{\mathcal{A}}N_{\mathcal{A}}D\to\mathcal{A}$ is aspherical.
\end{proof}

\begin{corol}\label{cor:test-aspherical}
A small category $\mathcal{A}$ is a test category if and only if it is aspherical and a local test category.
\end{corol}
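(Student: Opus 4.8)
The plan is to reduce both implications of the corollary to the two reformulations furnished by Proposition~\ref{prop:wk-loc-test}, which re-express the weak-test and local-test conditions as asphericity statements about the categories $\int^{\mathcal{A}}N_{\mathcal{A}}D$, where $D$ ranges over small categories admitting a terminal object. The guiding observation is that a weak test category is \emph{automatically} aspherical: specializing part~\ref{subprop:wktest} of Proposition~\ref{prop:wk-loc-test} to the one-point category $D=\ast$ yields the asphericity of $\int^{\mathcal{A}}N_{\mathcal{A}}\ast$, and a direct computation identifies this category with $\mathcal{A}$ itself. Once this is in place, the forward implication is immediate and the reverse one becomes a matter of composing Thomason equivalences.

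For the forward direction, I would argue as follows. If $\mathcal{A}$ is a test category, then by definition it is a local test category, so only asphericity remains to be checked. Since $\mathcal{A}$ is in particular a weak test category, Proposition~\ref{prop:wk-loc-test}\ref{subprop:wktest} applies with $D=\ast$. Here $N_{\mathcal{A}}\ast$ is the terminal presheaf, because $\mathbf{Cat}(\mathcal{A}/a,\ast)$ is a one-point set for every $a$; consequently $\int^{\mathcal{A}}N_{\mathcal{A}}\ast\simeq\int^{\mathcal{A}}\ast=\mathcal{A}/\ast\simeq\mathcal{A}$, using the identification $\int^{\mathcal{A}}N_{\mathcal{A}}D\simeq\commacat{j}{D}$ from the proof of Proposition~\ref{prop:wk-loc-test}, with $j\colon a\mapsto\mathcal{A}/a$. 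The asphericity of $\int^{\mathcal{A}}N_{\mathcal{A}}\ast$ therefore reads precisely as the asphericity of $\mathcal{A}$.

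For the reverse direction, suppose $\mathcal{A}$ is aspherical and a local test category; I must produce the weak-test property. Fix a small category $D$ with a terminal object. By Proposition~\ref{prop:wk-loc-test}\ref{subprop:loctest}, the forgetful functor $\int^{\mathcal{A}}N_{\mathcal{A}}D\to\mathcal{A}$ is aspherical, hence a Thomason equivalence by Quillen's theorem~A. Asphericity of $\mathcal{A}$ means exactly that $\mathcal{A}\to\ast$ is a Thomason equivalence, so the composite $\int^{\mathcal{A}}N_{\mathcal{A}}D\to\mathcal{A}\to\ast$ is a Thomason equivalence as well (Thomason equivalences, being weak equivalences, are closed under composition), whence $\int^{\mathcal{A}}N_{\mathcal{A}}D$ is aspherical. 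As $D$ was arbitrary, Proposition~\ref{prop:wk-loc-test}\ref{subprop:wktest} shows $\mathcal{A}$ is a weak test category, and together with the local-test hypothesis this makes $\mathcal{A}$ a test category. Because Proposition~\ref{prop:wk-loc-test} already carries all the analytic weight, I expect no serious obstacle; the only points needing care are the bookkeeping of which asphericity statement matches which half of the test-category definition, and the small computation identifying $\int^{\mathcal{A}}N_{\mathcal{A}}\ast$ with $\mathcal{A}$.
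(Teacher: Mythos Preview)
Your proposal is correct and follows essentially the same approach as the paper: both directions are reduced to Proposition~\ref{prop:wk-loc-test}, the forward one via the identification $\int^{\mathcal{A}}N_{\mathcal{A}}\ast\simeq\mathcal{A}$, and the reverse one by composing the Thomason equivalence $\int^{\mathcal{A}}N_{\mathcal{A}}D\to\mathcal{A}$ (from the local-test hypothesis and Quillen's theorem~A) with $\mathcal{A}\to\ast$. The only cosmetic difference is that you invoke part~\ref{subprop:loctest} directly to obtain asphericity of the forgetful functor, whereas the paper re-derives that asphericity from the isomorphism $\commacat{(\int^{\mathcal{A}}N_{\mathcal{A}}D)}{a}\simeq\int^{\mathcal{A}/a}N_{\mathcal{A}/a}D$ established in the proof of Proposition~\ref{prop:wk-loc-test}; your shortcut is entirely legitimate.
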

\begin{proof}
First notice that every weak test categories are aspherical; indeed, we have a sequence
\[
\mathcal{A} \simeq \mathcal{A}/\ast \simeq \txtint^{\mathcal{A}}N_{\mathcal{A}}\ast\xrightarrow{\varepsilon}\ast
\]
of Thomason equivalences.

We show the converse.
Suppose $\mathcal{A}$ is aspherical and a local test category.
It suffices to show that $\mathcal{A}$ is a weak test category.
Notice that as shown in the proof of proposition \ref{prop:wk-loc-test}, for every small category $D$ with terminal object and each $a\in\mathcal{A}$, there is an isomorphism
\[
\commacat{(\txtint^{\mathcal{A}}N_{\mathcal{A}}D)}{a} \simeq \txtint^{\mathcal{A}/a}N_{\mathcal{A}/a}D\,.
\]
Since $\mathcal{A}$ is a local test category, by \ref{subprop:wktest} in proposition \ref{prop:wk-loc-test}, they are aspherical.
Then by Quillen's theorem A, the forgetful functor $\txtint^{\mathcal{A}}N_{\mathcal{A}}D\to\mathcal{A}$ is a Thomason equivalence.
In particular, since $\mathcal{A}$ is aspherical, $\txtint^{\mathcal{A}}N_{\mathcal{A}}D$ is aspherical.
Thus by \ref{subprop:wktest} in proposition \ref{prop:wk-loc-test}, $\mathcal{A}$ is a weak test category.
\end{proof}

Notice that if $\mathcal{A}$ has a terminal object, then it is aspherical.
In fact, most of examples of test categories have terminal objects.

Finally, we state the most important theorem about test categories:

\begin{theo}[Corollaire 4.2.18 in \cite{Cis}]
\label{theo:test-model}
If $\mathcal{A}$ is a test category, then there is a cofibrantly generated model structure on $\mathcal{A}^\wedge$ such that
\begin{itemize}
  \item $f:X\to Y$ is a weak equivalence precisely if it is an $\infty$-equivalence.
  \item $f:X\to Y$ is a cofibration precisely if it is a monomorphism.
\end{itemize}
\end{theo}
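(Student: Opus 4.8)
This is a theorem of Cisinski, and the plan is to obtain it as a special case of his general theory of Cisinski model structures on presheaf topoi. The engine is his fundamental existence theorem: on any presheaf category $\mathcal{A}^\wedge$, every \emph{accessible localizer} $\mathcal{W}$ underlies a unique cofibrantly generated model structure whose cofibrations are exactly the monomorphisms and whose weak equivalences are exactly the elements of $\mathcal{W}$, the fibrations being forced as the maps with the right lifting property against the trivial cofibrations. Here a localizer is a class closed under two-out-of-three, containing every trivial fibration, and such that the trivial cofibrations $\mathcal{W}\cap\mathrm{Mono}$ are stable under pushout and transfinite composition; accessibility means it is generated by a set. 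So the whole problem reduces to exhibiting the class of $\infty$-equivalences as an accessible localizer, after which the lifting and factorization axioms follow formally from the small object argument applied to a cellular model of the monomorphisms.

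First I would set up the two pieces of input. For the cofibrations, any presheaf topos admits a \emph{cellular model}: the monomorphisms form the saturation of a small set $I$ of boundary inclusions, explicitly the set $S_0$ of Corollary~\ref{cor:ez-cellular} when $\mathcal{A}$ is an EZ category, and in general by Cisinski's existence of cellular models. For the weak equivalences I would transport the minimal basic localizer $\mathcal{W}_\infty\subset\operatorname{Mor}(\mathbf{Cat})$ of Thomason equivalences to $\mathcal{A}^\wedge$ along the adjunction $\int^{\mathcal{A}}\dashv N_{\mathcal{A}}$, setting
\[
\mathcal{W}_{\mathcal{A}} := \left(\textstyle\int^{\mathcal{A}}\right)^{-1}\mathcal{W}_\infty = \left\{\,f:X\to Y \mid \mathcal{A}/X\to\mathcal{A}/Y \text{ is a Thomason equivalence}\,\right\},
\]
which is the class of $\infty$-equivalences. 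Two-out-of-six for $\mathcal{W}_{\mathcal{A}}$ is inherited from the description of $\mathcal{W}_\infty$ as the preimage of the isomorphisms of $\operatorname{Ho}\mathbf{SSet}$, and accessibility follows from the accessibility of $\mathcal{W}_\infty$ together with the colimit-preservation and accessibility of $\int^{\mathcal{A}}$.

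The delicate localizer axioms — that trivial fibrations are $\infty$-equivalences and that trivial cofibrations are stable under pushout and transfinite composition — are best not verified directly. Instead I would choose an exact cylinder, namely the Lawvere interval $L$ with the cylinder $X\mapsto X\times L$, form the associated saturated class of anodyne extensions $\mathrm{An}$ generated over $I$ by the pushout-products with the endpoint inclusions, and let $\mathcal{W}_L$ be the localizer it generates; by construction $\mathcal{W}_L$ satisfies all the required axioms and is accessible. The theorem then amounts to the single identity $\mathcal{W}_L=\mathcal{W}_{\mathcal{A}}$.

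The hard part will be precisely this identification, and it is exactly where the test-category hypothesis is indispensable. For $\mathcal{W}_L\subseteq\mathcal{W}_{\mathcal{A}}$ one must show that $X\mapsto\mathcal{A}/X$ carries the interval homotopies, hence all anodyne extensions, to Thomason equivalences; this rests on $\mathcal{A}$ being aspherical and on each slice $\mathcal{A}/a$ being a weak test category, exploited through Proposition~\ref{prop:wk-loc-test} and Quillen's Theorem~A. For the reverse inclusion $\mathcal{W}_{\mathcal{A}}\subseteq\mathcal{W}_L$ one uses that the representables $\mathcal{A}[a]$ are $\infty$-aspherical together with the skeletal decomposition of an arbitrary presheaf into cells, reducing a general $\infty$-equivalence to comparisons between representables. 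Assembling both inclusions is equivalent to proving that $\int^{\mathcal{A}}\dashv N_{\mathcal{A}}$ is a Quillen equivalence onto the Thomason model structure on $\mathbf{Cat}$, which is the genuine mathematical content hidden behind the formal model-category machinery.
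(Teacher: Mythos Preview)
The paper does not give a proof of this statement at all: immediately after the theorem it writes ``The proof is omitted here (see \cite{Cis} or \cite{Jar}).'' So there is nothing to compare your argument against; the author is simply quoting Cisinski's result as a black box.

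Your outline is a faithful sketch of Cisinski's own strategy (accessible localizers on presheaf topoi, cellular models for the monomorphisms, a Lawvere-interval cylinder generating the anodyne extensions, and the identification $\mathcal{W}_L=\mathcal{W}_{\mathcal{A}}$ via the test-category hypothesis). That is exactly the content of the cited Corollaire~4.2.18 and the surrounding chapters of \cite{Cis}, so your proposal is not so much an alternative as a summary of the reference the paper defers to. If you were to flesh it out, the point that would require the most care is the inclusion $\mathcal{W}_{\mathcal{A}}\subseteq\mathcal{W}_L$: this is where Cisinski's machinery of \emph{localisateurs fondamentaux} and the characterization of the minimal one really does work, and a self-contained account is substantially longer than your paragraph suggests. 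But as a roadmap it is correct, and it goes well beyond what the present paper attempts.
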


The proof is omitted here (see \cite{Cis} or \cite{Jar}).

\subsection{Homotopy theory on presheaves on aspherical categories}
Next, we give a sufficient condition for aspherical categories to be a test categories.
The proof is really related to the interval-based homotopy theory.

Let $\mathcal{A}$ be an aspherical category.
We call a morphism $f:X\to Y\in\mathcal{A}^\wedge$ an $\infty$-equivalence if it induces a Thomason equivalence $f_\ast:\mathcal{A}/X\to\mathcal{A}/Y$.
Hence by \ref{subprop:loctest} in proposition \ref{prop:wk-loc-test} and corollary \ref{cor:test-aspherical}, $\mathcal{A}$ is a test category if and only if the morphism $N_{\mathcal{A}}D\to\ast\in\mathcal{A}^\wedge$ is an $\infty$-equivalence for every small category $D$ with terminal object.
We regard $\mathcal{A}^\wedge$ as a homotopical category with the class of $\infty$-equivalences.

We also say a morphism $f:X\to Y\in\mathcal{A}^\wedge$ is aspherical if it induces an aspherical morphism $f_\ast:\mathcal{A}/X\to\mathcal{A}/Y$.
An object $X\in\mathcal{A}^\wedge$ is said to be aspherical if the unique morphism $X\to\ast$ is aspherical.
By Quillen's theorem A, that $f$ is aspherical implies $f$ is an $\infty$-equivalence.

\begin{lemma}\label{lem:aspheric-prod-slice}
Let $\mathcal{A}$ be an aspherical category.
Then a morphism $f:X\to Y\in\mathcal{A}$ is aspherical if and only if for each cell $y:\mathcal{A}[a]\to Y$, the category $\mathcal{A}/(X\times_Y\mathcal{A}[a])$ is aspherical.
In particular, $X\in\mathcal{A}$ is aspherical if and only if $\mathcal{A}/(X\times\mathcal{A}[a])$ is aspherical for each $a\in\mathcal{A}$.
\end{lemma}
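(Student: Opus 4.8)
The plan is to reduce the statement to pure bookkeeping with comma categories, after which it becomes an immediate consequence of the definition of an aspherical functor (and, in spirit, of Quillen's theorem A). First I would recall what the hypothesis means: by definition $f\colon X\to Y$ is aspherical precisely when the induced functor $f_\ast\colon\mathcal{A}/X\to\mathcal{A}/Y$ is aspherical, that is, when the comma category $\commacat{f_\ast}{y}$ is aspherical for every object of $\mathcal{A}/Y$. Since the objects of $\mathcal{A}/Y$ are exactly the cells $y\colon\mathcal{A}[a]\to Y$, it suffices to analyse these particular comma categories.

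The key step is the identification $\commacat{f_\ast}{y}\simeq\mathcal{A}/(X\times_Y\mathcal{A}[a])$. An object of $\commacat{f_\ast}{y}$ is a cell $x\colon\mathcal{A}[b]\to X$ together with a morphism $f_\ast(x)\to y$ in $\mathcal{A}/Y$; the latter is a morphism $\alpha\colon\mathcal{A}[b]\to\mathcal{A}[a]$ with $y\alpha=fx$, which by the Yoneda lemma is the same as a morphism $b\to a$ of $\mathcal{A}$ making the square
\[
\xymatrix{ \mathcal{A}[b]\ar[r]^{x}\ar[d]_{\alpha} & X\ar[d]^{f}\\ \mathcal{A}[a]\ar[r]^{y} & Y }
\]
commute. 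By the universal property of the pullback in the topos $\mathcal{A}^\wedge$, such a commutative square is precisely a cell $\mathcal{A}[b]\to X\times_Y\mathcal{A}[a]$. I would then check that this assignment is natural in $b$ and carries morphisms of $\commacat{f_\ast}{y}$ bijectively to morphisms of $\mathcal{A}/(X\times_Y\mathcal{A}[a])$, yielding an honest isomorphism of categories. Consequently $\commacat{f_\ast}{y}$ is aspherical if and only if $\mathcal{A}/(X\times_Y\mathcal{A}[a])$ is, and the first assertion follows directly from the definition of asphericity of $f_\ast$.

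The ``in particular'' clause I would obtain by specialising to $Y=\ast$ the terminal presheaf: then $X\times_\ast\mathcal{A}[a]\simeq X\times\mathcal{A}[a]$, the slice $\mathcal{A}/\ast$ is isomorphic to $\mathcal{A}$ so that $f\colon X\to\ast$ being aspherical is exactly $X$ being aspherical, and for each $a$ there is a unique cell $\mathcal{A}[a]\to\ast$, so the quantifier ``for each cell $y$'' collapses to ``for each $a\in\mathcal{A}$''.

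I do not expect any serious obstacle here, since the entire content is the comma-category identification. The only point genuinely requiring care is verifying that the bijection on objects respects composition, so that one really has an isomorphism of categories rather than a mere bijection on object sets; this matters because asphericity is a property of the nerve, and only a genuine equivalence (a fortiori isomorphism) of categories is guaranteed to transport it. Everything else is a routine unwinding of the definitions already established above.
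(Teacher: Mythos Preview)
Your proposal is correct and follows essentially the same approach as the paper: both arguments identify $\commacat{f_\ast}{y}$ with $\mathcal{A}/(X\times_Y\mathcal{A}[a])$ by observing that an object of the comma category is a commutative square, which by the universal property of the pullback corresponds to a cell into $X\times_Y\mathcal{A}[a]$. Your write-up is slightly more careful (explicitly noting that one must check the bijection is a genuine isomorphism of categories, and spelling out the specialisation $Y=\ast$), but the mathematical content is identical.
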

\begin{proof}
The last part follows from the first one.
Let $f:X\to Y\in\mathcal{A}^\wedge$ be a morphism.
Then the induced functor $f_\ast:\mathcal{A}/X\to\mathcal{A}/Y$ is aspherical if and only if for each cell $y:\mathcal{A}[a]\to Y$, the category $\commacat{f_\ast}{y}$ is aspherical.
Objects of $\commacat{f_\ast}{y}$ are written as commutative squares of the form:
\[
\xymatrix{
  \mathcal{A}[a'] \ar[r] \ar[d] & \mathcal{A}[a] \ar[d]^{y} \\
  X \ar[r]^{f} & Y }
\]
Clearly they correspond one-to-one to morphism $\mathcal{A}[a']\to X\times_Y\mathcal{A}[a]\in\mathcal{A}^\wedge$.
Thus, we obtain an isomorphism $\commacat{f_\ast}{y}\simeq\mathcal{A}/(X\times_Y\mathcal{A}[a])$, and the first part follows.

The last part follows from the first one.
\end{proof}

\begin{corol}\label{cor:aspheric-vs-wkequiv}
Let $\mathcal{A}$ be an aspherical category.
Then a morphism $f:X\to Y\in\mathcal{A}^\wedge$ is aspherical if and only if for each cell $y:\mathcal{A}[a]\to Y$, the projection $X\times_Y\mathcal{A}[a]\to\mathcal{A}[a]$ is an $\infty$-equivalence.
In particular, an object $X\in\mathcal{A}^\wedge$ is aspherical if and only if the projection $X\times\mathcal{A}[a]\to\mathcal{A}[a]$ is an $\infty$-equivalence for each $a\in\mathcal{A}$.
\end{corol}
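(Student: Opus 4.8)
The plan is to deduce everything from Lemma~\ref{lem:aspheric-prod-slice}. That lemma already expresses asphericity of $f\colon X\to Y$ as the condition that each slice $\mathcal{A}/(X\times_Y\mathcal{A}[a])$ be aspherical, as $y\colon\mathcal{A}[a]\to Y$ ranges over the cells of $Y$. So it suffices to reinterpret asphericity of the single category $\mathcal{A}/(X\times_Y\mathcal{A}[a])$ as the assertion that the projection $p\colon X\times_Y\mathcal{A}[a]\to\mathcal{A}[a]$ is an $\infty$-equivalence; the corollary then follows by quantifying over all $y$, and the ``in particular'' clause is the special case $Y=\ast$, for which $X\times_\ast\mathcal{A}[a]=X\times\mathcal{A}[a]$.

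The key point I would isolate first is that each representable presheaf $\mathcal{A}[a]$ is aspherical. Indeed, by the Yoneda lemma the slice $\mathcal{A}/\mathcal{A}[a]$ is isomorphic to the ordinary slice category $\mathcal{A}/a$, which has the terminal object $\mathrm{id}_a$. A small category with a terminal object has contractible nerve, so the unique functor $\mathcal{A}/\mathcal{A}[a]\to\ast$ is a Thomason equivalence; this is exactly asphericity of $\mathcal{A}[a]$.

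With this in hand, I would factor the functor induced on slices by $p$ as the composite $\mathcal{A}/(X\times_Y\mathcal{A}[a])\xrightarrow{p_\ast}\mathcal{A}/\mathcal{A}[a]\to\ast$, whose second arrow is the Thomason equivalence just produced. Thomason equivalences are by definition the preimages of the weak equivalences of $\mathbf{SSet}$ under the nerve, so they inherit the two-out-of-three property (indeed the two-out-of-six property, as noted for preimage classes in Section~\ref{sec:H-cats}). Hence $p_\ast$ is a Thomason equivalence if and only if the composite is. By definition $p_\ast$ being a Thomason equivalence says precisely that $p$ is an $\infty$-equivalence, while the composite being a Thomason equivalence says precisely that $\mathcal{A}/(X\times_Y\mathcal{A}[a])$ is aspherical. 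Combining this equivalence with Lemma~\ref{lem:aspheric-prod-slice} yields the claim.

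I do not expect a serious obstacle: the argument is essentially a bookkeeping of definitions glued together by two-out-of-three. The only genuinely substantive input is the asphericity of representables, which rests on the contractibility of the nerve of a category with a terminal object; all the remaining homotopical content has already been absorbed into Lemma~\ref{lem:aspheric-prod-slice} and Quillen's theorem A.
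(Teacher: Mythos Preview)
Your proof is correct and follows essentially the same route as the paper: reduce via Lemma~\ref{lem:aspheric-prod-slice} to a single cell, observe that $\mathcal{A}/\mathcal{A}[a]\simeq\mathcal{A}/a$ has a terminal object and is therefore aspherical, and conclude by two-out-of-three for Thomason equivalences. Your write-up is in fact slightly more explicit than the paper's about the two-out-of-three step and the identification $\mathcal{A}/\mathcal{A}[a]\simeq\mathcal{A}/a$, but the argument is the same.
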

\begin{proof}
The condition that $X\times_Y\mathcal{A}[a]\to\mathcal{A}[a]$ is an $\infty$-equivalence says that the induced functor
\[
\mathcal{A}/(X\times_Y\mathcal{A}[a])\to\mathcal{A}/a
\]
is a Thomason equivalence.
Since $\mathcal{A}[a]/a$ has a terminal object, hence aspherical, this holds if and only if $\mathcal{A}/(X\times_Y\mathcal{A}[a])$ is aspherical.
Thus the result follows from lemma \ref{lem:aspheric-prod-slice}.
\end{proof}

\begin{corol}\label{cor:product-aspherical}
Let $\mathcal{A}$ be an aspherical category.
If $f:X\to Y\in\mathcal{A}^\wedge$ is aspherical, then for every $W\in\mathcal{A}^\wedge$, the morphism $f\times W:X\times W\to Y\times W$ is aspherical.
In particular, if an object $X\in\mathcal{A}^\wedge$ is aspherical, the projection $X\times W\twoheadrightarrow W$ is aspherical for every $W\in\mathcal{A}^\wedge$.
\end{corol}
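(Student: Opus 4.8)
The plan is to reduce the statement to the slice-category criterion of lemma \ref{lem:aspheric-prod-slice}, combined with the elementary observation that $f\times W$ is itself a base change of $f$. Recall that lemma \ref{lem:aspheric-prod-slice} says a morphism in $\mathcal{A}^\wedge$ is aspherical precisely when the slice category over each of its fibres over representables is aspherical, so the whole argument will amount to identifying those fibres.

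First I would record the purely formal fact that, since finite limits in the presheaf topos $\mathcal{A}^\wedge$ are computed pointwise, the square
\[
\xymatrix{
  X\times W \ar[r]^{\mathrm{pr}_X} \ar[d]_{f\times W} & X \ar[d]^{f} \\
  Y\times W \ar[r]^{\mathrm{pr}_Y} & Y }
\]
is cartesian; that is, $f\times W$ is the pullback of $f$ along the projection $\mathrm{pr}_Y\colon Y\times W\to Y$. Now, by lemma \ref{lem:aspheric-prod-slice}, to prove $f\times W$ aspherical it suffices to show that for every cell $z\colon\mathcal{A}[a]\to Y\times W$ the slice category $\mathcal{A}/\bigl((X\times W)\times_{Y\times W}\mathcal{A}[a]\bigr)$ is aspherical. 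Writing $y:=\mathrm{pr}_Y\,z\colon\mathcal{A}[a]\to Y$ and pasting the square above beneath the cartesian square defining the fibre of $f\times W$ over $z$, the pasting law for pullbacks yields a canonical isomorphism
\[
(X\times W)\times_{Y\times W}\mathcal{A}[a]\;\simeq\;X\times_Y\mathcal{A}[a]
\]
over $\mathcal{A}[a]$, where on the right the structure map $\mathcal{A}[a]\to Y$ is $y$.

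Since $f$ is assumed aspherical, the forward direction of lemma \ref{lem:aspheric-prod-slice} applied to the cell $y$ gives that $\mathcal{A}/(X\times_Y\mathcal{A}[a])$ is aspherical; as asphericity of a slice depends only on the isomorphism class of the indexing presheaf, the isomorphism above transports this to $\mathcal{A}/\bigl((X\times W)\times_{Y\times W}\mathcal{A}[a]\bigr)$. This holds for every cell $z$, so the backward direction of lemma \ref{lem:aspheric-prod-slice} shows that $f\times W$ is aspherical. For the final assertion I would apply the result just established to the unique morphism $f\colon X\to\ast$, where $\ast$ is the terminal object of $\mathcal{A}^\wedge$ and the asphericity of $f$ is exactly the hypothesis that $X$ is aspherical; then $f\times W\colon X\times W\to\ast\times W$ is aspherical, and under the canonical identification $\ast\times W\simeq W$ this is precisely the projection $X\times W\to W$.

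I do not anticipate a genuine obstacle here: the content is concentrated in the base-change identification of the fibres, and once $f\times W$ is recognized as the pullback of $f$ along $\mathrm{pr}_Y$, the criterion of lemma \ref{lem:aspheric-prod-slice} applies essentially verbatim. The only point requiring minor care is that the isomorphism of fibres respects the projections to $\mathcal{A}[a]$, which is automatic from the pasting law.
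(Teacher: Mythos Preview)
Your proof is correct and essentially identical to the paper's: both reduce to lemma \ref{lem:aspheric-prod-slice} via the identification $(X\times W)\times_{Y\times W}\mathcal{A}[a]\simeq X\times_Y\mathcal{A}[a]$. The paper writes this pullback square directly rather than invoking the pasting law through the intermediate square $f\times W = \mathrm{pr}_Y^\ast f$, but this is a cosmetic difference only.
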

\begin{proof}
By lemma \ref{lem:aspheric-prod-slice}, it suffices to show that the category
\[
\mathcal{A}/\left((X\times W)\times_{(Y\times W)}\mathcal{A}[a]\right)
\]
is aspherical for each cell $(y,w):\mathcal{A}[a]\to Y\times W$.
Notice that we have the following pullback square:
\[
\xymatrix{
  X\times_Y\mathcal{A}[a] \ar[r] \ar[d] \ar@{}[dr]|(.4){\pbcorner} & \mathcal{A}[a] \ar[d]^{(y,w)} \\
  X\times W \ar[r]^{f\times W} & Y\times W }
\]
Thus we obtain $(X\times W)\times_{(Y\times W)}\mathcal{A}[a]\simeq X\times_Y\mathcal{A}[a]$, which induces an isomorphism
\[
\mathcal{A}/\left((X\times W)\times_{(Y\times W)}\mathcal{A}[a]\right)\simeq\mathcal{A}/(X\times_Y\mathcal{A}[a])\,.
\]
Since $f:X\to Y$ is aspherical, the right hand side is aspherical, so we obtain the result.
\end{proof}

Now fix an aspherical category $\mathcal{A}$.
Suppose we have a functor $\theta:\mathcal{A}\to\mathbf{Cat}$.
We denote by $\theta^\ast:\mathbf{Cat}\to\mathcal{A}^\wedge$ the functor defined by
\[
\theta^\ast(C):=\mathbf{Cat}(\theta(\blankdot),C)\,.
\]
The functor $\theta^\ast$ is right adjoint to the Yoneda extension $\theta_!:\mathcal{A}^\wedge\to\mathbf{Cat}$ of $\theta$.
Hence $\theta^\ast$ preserves limits, in particular it preserves cartesian products and the terminal object.
It follows that under certain assumptions, the functor $\theta^\ast:\mathbf{Cat}\to\mathcal{A}^\wedge$ derives an interval theory on $\mathcal{A}^\wedge$ from $\mathbf{Cat}$.
This is the key observation to obtain the following result.

\begin{prop}\label{prop:presentation-test}
Let $\mathcal{A}$ be an aspherical category.
Suppose we have a functor $\theta:\mathcal{A}\to\mathbf{Cat}$ such that
\begin{enumerate}[label={\rm(\alph*)}]
  \item for each $a\in\mathcal{A}$, the category $\theta(a)$ has a terminal object;
  \item if a small category $D$ has a terminal object, then the object $\theta^\ast D\in\mathcal{A}^\wedge$ is aspherical.
\end{enumerate}
Then $\mathcal{A}$ is a test category.
\end{prop}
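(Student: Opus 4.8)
The plan is to deduce the statement from the recognition principle of Corollary~\ref{cor:test-aspherical}. Since $\mathcal{A}$ is aspherical by hypothesis, it suffices to prove that $\mathcal{A}$ is a local test category, and by part~\ref{subprop:loctest} of Proposition~\ref{prop:wk-loc-test} this amounts to showing that for every small category $D$ with a terminal object the forgetful functor $\int^{\mathcal{A}}N_{\mathcal{A}}D\to\mathcal{A}$ is aspherical; equivalently, that $N_{\mathcal{A}}D$ is an aspherical object of $\mathcal{A}^\wedge$. By Corollary~\ref{cor:aspheric-vs-wkequiv} this will follow once I show that the projection $N_{\mathcal{A}}D\times\mathcal{A}[a]\to\mathcal{A}[a]$ is an $\infty$-equivalence for each $a\in\mathcal{A}$.

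The engine is an interval-based contraction. I would set $I:=\theta^\ast[1]$. Since $[1]$ has a terminal object, hypothesis~(b) makes $I$ aspherical, and because $\theta^\ast$ preserves finite products and the terminal object, the two objects and the projection of $[1]$ yield endpoints $\partial_0,\partial_1\colon\ast\to I$ and $\pi\colon I\to\ast$ with $\pi\partial_k=\mathrm{id}$. By Corollary~\ref{cor:product-aspherical} the projection $X\times I\to X$ is aspherical, hence an $\infty$-equivalence, for every $X$; thus $X\times I$ is a cylinder object on $X$ in the homotopical category $(\mathcal{A}^\wedge,\infty\text{-equivalences})$. Consequently an $I$-homotopy is a left homotopy, so Lemma~\ref{lem:homotopic-weq} and Corollary~\ref{cor:H-inverse} apply: any map admitting an $I$-homotopy inverse is an $\infty$-equivalence.

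The key point — and the step I expect to be the main obstacle — is to feed the terminal object of $D$ into the interval $I$ rather than into the canonical interval $N_{\mathcal{A}}[1]$. For this I would first build, using hypothesis~(a), a natural transformation $\psi\colon(a\mapsto\mathcal{A}/a)\Rightarrow\theta$ of functors $\mathcal{A}\to\mathbf{Cat}$: writing $\tau_{a'}$ for the terminal object of $\theta(a')$, set $\psi_a(a'\xrightarrow{u}a):=\theta(u)(\tau_{a'})$, and define the action on a morphism $h$ of $\mathcal{A}/a$ to be $\theta(u')$ applied to the unique map $\theta(h)(\tau_{a'})\to\tau_{a''}$ into a terminal object. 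The verification that each $\psi_a$ is a functor and that $\psi$ is natural in $a$ is the delicate part; it is routine but relies essentially on the uniqueness of maps into terminal objects, and it is exactly where condition~(a) enters. Precomposition with $\psi$ then induces a natural transformation $\Psi\colon\theta^\ast\Rightarrow N_{\mathcal{A}}$ of functors $\mathbf{Cat}\to\mathcal{A}^\wedge$; in particular I obtain $\Psi_{[1]}\colon I=\theta^\ast[1]\to N_{\mathcal{A}}[1]$, and naturality of $\Psi$ forces $\Psi_{[1]}\partial_k$ to be the $k$-th endpoint of $N_{\mathcal{A}}[1]$.

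Finally I would carry out the contraction. The terminal object $t$ of $D$ gives the functor $c\colon D\times[1]\to D$ realizing the natural transformation $\mathrm{id}_D\Rightarrow\mathrm{const}_t$. As $N_{\mathcal{A}}$ preserves products, $N_{\mathcal{A}}c$ is a map $N_{\mathcal{A}}D\times N_{\mathcal{A}}[1]\to N_{\mathcal{A}}D$; composing with $\mathrm{id}\times\Psi_{[1]}$ produces an $I$-homotopy $H\colon N_{\mathcal{A}}D\times I\to N_{\mathcal{A}}D$ whose endpoints, by the endpoint-compatibility of $\Psi_{[1]}$, are $\mathrm{id}_{N_{\mathcal{A}}D}$ and the constant map $s_tp$ through the point $s_t:=N_{\mathcal{A}}(t)\colon\ast\to N_{\mathcal{A}}D$. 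Crossing $H$ with $\mathcal{A}[a]$ then exhibits $s_t\times\mathrm{id}$ as an $I$-homotopy inverse of the projection $N_{\mathcal{A}}D\times\mathcal{A}[a]\to\mathcal{A}[a]$, so the latter is an $\infty$-equivalence by Corollary~\ref{cor:H-inverse}. By Corollary~\ref{cor:aspheric-vs-wkequiv} this makes $N_{\mathcal{A}}D$ aspherical, completing the reduction and proving that $\mathcal{A}$ is a test category.
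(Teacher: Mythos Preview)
Your proof is correct and follows essentially the same route as the paper's: the same reduction via Corollary~\ref{cor:test-aspherical}, Proposition~\ref{prop:wk-loc-test}, and Corollary~\ref{cor:aspheric-vs-wkequiv}; the same interval $\theta^\ast[1]$ giving cylinder objects; the same natural transformation $\theta^\ast\Rightarrow N_{\mathcal{A}}$ built from terminal objects of the $\theta(a)$ (the paper calls it $\beta$ rather than $\Psi$); and the same contraction of $N_{\mathcal{A}}D$ via the natural transformation $\mathrm{id}_D\Rightarrow\mathrm{const}_t$. Your writeup is in fact a bit more careful about verifying that $\psi$ is well-defined and that the endpoints of $\Psi_{[1]}$ match those of $N_{\mathcal{A}}[1]$.
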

\begin{proof}
By corollary \ref{cor:test-aspherical} and \ref{subprop:loctest} in proposition \ref{prop:wk-loc-test}, it suffices to show that for each small category $D$ with terminal object, $N_{\mathcal{A}}D$ is aspherical.
By corollary \ref{cor:aspheric-vs-wkequiv}, it also suffices to show that for each $a\in\mathcal{A}$, the projection $N_{\mathcal{A}}D\times\mathcal{A}[a]\twoheadrightarrow\mathcal{A}[a]$ is an $\infty$-equivalence.

We denote by $i_0,i_1:\ast\to[1]\in\mathbf{Cat}$ the two inclusions whose images are $0$ and $1$ respectively.
The assumptions and corollary \ref{cor:product-aspherical} imply that for each $X\in\mathcal{A}^\wedge$, the sequence
\[
X\amalg X
\simeq (X\times\theta^\ast\ast)\amalg(X\times\theta^\ast\ast)
\xrightarrow{(1\times\theta^\ast i_0,1\times\theta^\ast i_1)} X\times\theta^\ast[1]
\xrightarrow{\text{proj.}} X
\]
gives rise to a cylinder object $X\times\theta^\ast[1]$ on $X$ in the sense of the section \ref{sec:H-cats}.
Thus to see the projection $N_{\mathcal{A}}D\times\mathcal{A}[a]\twoheadrightarrow\mathcal{A}[a]$ is an $\infty$-equivalence, by corollary \ref{cor:H-inverse}, it suffices to show that for each $a\in\mathcal{A}$, it admits a homotopy inverse with respect to the cylinder object $N_{\mathcal{A}}D\times\theta^\ast[1]$.

We define $t:\ast\to N_{\mathcal{A}}D$ as the adjoint morphism to the functor $\mathcal{A}\to D$ which sends all objects to a terminal object.
We show the morphism $t$ gives rise to required homotopy inverses.
Since $D$ has a terminal object, there is the obvious homotopy $H:D\times[1]\to D$ from the identity functor to the contraction to a terminal object.
Then we obtain the sequence
\[
\begin{split}
\mathcal{A}[a]\times N_{\mathcal{A}}D\times\theta^\ast[1]
&\xrightarrow{1\times 1\times\beta} \mathcal{A}[a]\times N_{\mathcal{A}}D\times N_{\mathcal{A}}[1]
\simeq \mathcal{A}[a]\times N_{\mathcal{A}}(D\times[1]) \\
&\xrightarrow{1\times N_{\mathcal{A}}(H)} \mathcal{A}[a]\times N_{\mathcal{A}}D\,,
\end{split}
\]
where $\beta:\theta^\ast\to N_{\mathcal{A}}$ is the natural transformation induced by the functor $\mathcal{A}/a\to\theta(a)$ which sends each $(a'\xrightarrow{f} a)\in\mathcal{A}/a$ to the image of the terminal object of $\theta(a')$ by $\theta(f):\theta(a')\to\theta(a)$.
Since $\beta:\theta^\ast\ast\to N_{\mathcal{A}}\ast$ is naturally isormophic to the identity on the terminal object of $\mathcal{A}^\wedge$, the composition above gives rise to a homotopy on $\mathcal{A}[a]\times N_{\mathcal{A}}D\times\theta^\ast[1]$ from the identity to the contraction
\[
\mathcal{A}[a]\times N_{\mathcal{A}}D
\twoheadrightarrow\mathcal{A}[a]
\xrightarrow{1\times t}\mathcal{A}[a]\times N_{\mathcal{A}}D\,.
\]
Hence, by corollary \ref{cor:H-inverse}, the projection $N_{\mathcal{A}}D\times\mathcal{A}[a]\twoheadrightarrow\mathcal{A}[a]$ is an $\infty$-equivalence.
The result now follows from corollary \ref{cor:aspheric-vs-wkequiv}.
\end{proof}

\begin{exam}
Let $\Delta$ be the simplex category.
Then we have an embedding $\theta:\Delta\hookrightarrow\mathbf{Cat}$.
The induced functor $\theta^\ast:\mathbf{Cat}\to\Delta\mathbf{Set}=\mathbf{SSet}$ is nothing but the usual nerve functor.
Hence the conditions in proposition \ref{prop:presentation-test} are obviously satisfied, so $\Delta$ is a test category.
\end{exam}

\subsection{Cubicalizations and test categories}
Finally, we show that some cubicalizations are test categories.
Before that, we have to consider a structure on a cubicalizable category which induces a cylinder-like structure on its cubicalization.

\begin{defin}
Let $\mathcal{R}$ be a cubicalizable category.
Then an enlargement on $\mathcal{R}$ is an endofunctor $c:\mathcal{R}\to\mathcal{R}$ together with a natural transformation $\iota:\mathrm{Id}\to c$ such that
\begin{enumerate}[label={\rm(\arabic*)}]
  \item for each $r\in\mathcal{R}$, $\iota:r\to c(r)\in\mathcal{R}^+_0$ and $\operatorname{codeg}\iota_r=2$;
  \item $c$ preserves $(\mathcal{R}^+_0,\mathcal{R}^-)$-factorization; i.e. if $\sigma\in\mathcal{R}^-$ and $\delta\in\mathcal{R}^+_0$, then $c(\sigma)\in\mathcal{R}^-$ and $c(\delta)\in\mathcal{R}^+_0$;
  \item for each $f:r\to r'\in\mathcal{R}$, the square
\[
\xymatrix{
  r \ar[r]^{f} \ar[d]_{\iota} \ar@{}[dr]|(.4){\pbcorner} & r' \ar[d]^{\iota} \\
  c(r) \ar[r]^{c(f)} & c(r') }
\]
is a pullback.
\end{enumerate}
\end{defin}

Notice that the last condition implies that for each $r\in\mathcal{R}$, $c:\mathcal{R}^+_0\!/r\to\mathcal{R}^+_0\!/c(r)$ is a section of $\iota^\ast:\mathcal{R}^+_0\!/c(r)\to\mathcal{R}^+_0\!/r$.
Actually, we can write this map $c:\mathcal{R}^+_0\!/r\to\mathcal{R}^+_0\!c(r)$ explicitly as follows:

\begin{lemma}\label{lem:enlarge-explicit}
Let $\mathcal{R}$ be a cubicalizable category, and let $c:\mathcal{R}\to\mathcal{R}$ be an enlargement.
Then for each $\delta\in\mathcal{R}^+_0\!/r$, we have
\[
c(\delta) = \iota_\ast\delta\vee\neg\iota\,.
\]
\end{lemma}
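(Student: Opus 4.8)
The plan is to carry out the entire computation inside the finite Boolean lattice $\mathcal{R}^+_0\!/c(r)$, in which both sides of the asserted identity live: $c(\delta)\in\mathcal{R}^+_0\!/c(r)$ because $c$ preserves distinguished injections (condition (2) of an enlargement), while $\iota_\ast\delta\vee\neg\iota$ is built from $\iota=\iota_r\in\mathcal{R}^+_0\!/c(r)$ and its complement. Since $\iota$ and $\neg\iota$ form a complementary pair, the idea is to determine $c(\delta)$ by computing its two components $c(\delta)\wedge\iota$ and $c(\delta)\wedge\neg\iota$ separately and then recombining them by distributivity, using $c(\delta)=(c(\delta)\wedge\iota)\vee(c(\delta)\wedge\neg\iota)$.

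First I would pin down the $\iota$-component. The pullback square in condition (3), applied to $f=\delta$, exhibits $\delta$ as the pullback of $c(\delta)$ along $\iota$; by the uniqueness of such pullbacks (lemma \ref{lem:thinpow-pb}) this says exactly that $\iota^\ast c(\delta)=\delta$. As $\iota$ is a distinguished injection it is stable and satisfies $\image(\iota)=\iota$, so corollary \ref{cor:ffmeet} gives $\iota_\ast\iota^\ast(\gamma')=\gamma'\wedge\iota$ for every $\gamma'$; taking $\gamma'=c(\delta)$ yields $c(\delta)\wedge\iota=\iota_\ast\iota^\ast c(\delta)=\iota_\ast\delta$. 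Thus $c(\delta)=\iota_\ast\delta\vee w$ with $w:=c(\delta)\wedge\neg\iota\le\neg\iota$, and this join is disjoint because $\iota_\ast\delta\le\iota$.

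The remaining, and main, step is to show $w=\neg\iota$. Here I would argue by counting, using that a distinguished injection $\gamma\colon a\to b$ induces an order isomorphism of $\mathcal{R}^+_0\!/a$ onto the down-set $(\mathcal{R}^+_0\!/b)_{\le\gamma}$, an immediate consequence of lemma \ref{lem:thinpow-galois} together with corollary \ref{cor:ffmeet}. Applied to $c(\delta)\colon c(s)\to c(r)$ and to $\iota_\ast\delta=\iota\delta\colon s\to c(r)$, this gives $|(\mathcal{R}^+_0\!/c(r))_{\le c(\delta)}|=\deg c(s)$ and $|(\mathcal{R}^+_0\!/c(r))_{\le\iota_\ast\delta}|=\deg s$. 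The condition $\operatorname{codeg}\iota=2$ (equivalently $\mathcal{R}^+_0\!/c(\blankdot)\simeq\mathcal{R}^+_0\!/(\blankdot)\times[1]$) forces $\deg c(s)=2\deg s$. Since $c(\delta)=\iota_\ast\delta\vee w$ is a disjoint join in a Boolean lattice, its down-set splits as a product $(\mathcal{R}^+_0\!/c(r))_{\le c(\delta)}\simeq(\mathcal{R}^+_0\!/c(r))_{\le\iota_\ast\delta}\times(\mathcal{R}^+_0\!/c(r))_{\le w}$, whence $|(\mathcal{R}^+_0\!/c(r))_{\le w}|=2\deg s/\deg s=2$. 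So $w$ is an atom lying below $\neg\iota$, which is itself an atom precisely because $\operatorname{codeg}\iota=2$; therefore $w=\neg\iota$, and combining with the previous step gives $c(\delta)=\iota_\ast\delta\vee\neg\iota$.

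I expect this last step to be the genuine obstacle, and the difficulty is conceptual rather than computational. The map $\iota^\ast$ is two-to-one on $\mathcal{R}^+_0\!/c(r)$, so the pullback identity $\iota^\ast c(\delta)=\delta$ determines $c(\delta)$ only up to its $\neg\iota$-summand and cannot by itself separate $c(\delta)$ from the mere push-forward $\iota_\ast\delta$. What genuinely distinguishes the enlargement $c$ is that it adjoins a full extra dimension, and the only axiom encoding this fact is the degree doubling hidden in $\operatorname{codeg}\iota=2$; the cardinality count above is exactly the device that converts that axiom into the equality $w=\neg\iota$.
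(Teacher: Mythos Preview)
Your proof is correct, but the paper's argument for the second step is shorter and avoids the cardinality count entirely. Both proofs share the first step: from condition~(3) applied to $f=\delta$ one reads off $\iota^\ast c(\delta)=\delta$, hence $c(\delta)\wedge\iota=\iota_\ast\delta$ and $c(\delta)=\iota_\ast\delta\vee w$ with $w:=c(\delta)\wedge\neg\iota\le\neg\iota$.

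Where you then invoke the degree doubling $\deg c(s)=2\deg s$ and the product decomposition of down-sets to deduce $|(\mathcal{R}^+_0\!/c(r))_{\le w}|=2$, the paper instead argues directly from the dichotomy that $\neg\iota$ being an atom forces: either $\neg\iota\le c(\delta)$ or $c(\delta)\le\iota$. The second alternative would make the pullback of $\iota_r$ along $c(\delta)$ equal to $\mathrm{id}_{c(s)}$, contradicting condition~(3), which says this pullback is $\iota_s\ne\mathrm{id}$. So $\neg\iota\le c(\delta)$, i.e.\ $w=\neg\iota$, and the formula follows.

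Your route has the virtue of making explicit how the degree axiom $\operatorname{codeg}\iota=2$ enters quantitatively, and your closing remark correctly identifies that $\iota^\ast c(\delta)=\delta$ alone cannot distinguish $c(\delta)$ from $\iota_\ast\delta$. The paper's route trades that insight for economy: it uses condition~(3) a second time (pulling back $\iota$ along $c(\delta)$ rather than $c(\delta)$ along $\iota$) to rule out $w=0$ in one stroke, bypassing any appeal to cardinalities or to the structure of $\mathcal{R}^+_0\!/c(s)$.
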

\begin{proof}
Since $\operatorname{codeg}\iota=2$, we have either $\neg\iota\le c(\delta)$ or $c(\delta)\le\iota$.
If $c(\delta)\le\iota$, then we have $c(\delta)\wedge\iota=c(\delta)$; i.e. the following square is a pullback:
\[
\xymatrix{
  c(s) \ar[r] \ar@{=}[d] \ar@{}[dr]|(.4){\pbcorner} & r \ar[d]^{\iota} \\
  c(s) \ar[r]^{c(\delta)} & c(r) }
\]
This contradicts to the definition of enlargements.
So we have $\neg\iota\le c(\delta)$.
Then we have
\[
c(\delta)
= (\iota\wedge c(\delta)) \vee (\neg\iota\wedge c(\delta))
= (\iota_\ast\iota^\ast c(\delta)) \vee \neg\iota
= \iota_\ast\delta \vee \neg\iota\,.
\]
\end{proof}

\begin{corol}\label{cor:enlarge-pb}
In the above situation, for every morphism $f:r'\to r\in\mathcal{R}$ and $\delta\in\mathcal{R}^+_0\!/r$, we have
\[
c(f)^\ast c(\delta)
= \iota_\ast f^\ast(\delta)\vee\neg\iota
= c(f^\ast\delta)\,.
\]
Consequently, the functor $c:\mathcal{R}\to\mathcal{R}$ preserves pullback squares of morphisms in $\mathcal{R}^+_0$.
\end{corol}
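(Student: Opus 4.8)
The plan is to reduce the statement to the single computation of $c(f)^\ast c(\delta)$ and then read off both displayed equalities. The second equality $\iota_\ast f^\ast(\delta)\vee\neg\iota = c(f^\ast\delta)$ is nothing but Lemma \ref{lem:enlarge-explicit} applied to the element $f^\ast\delta\in\mathcal{R}^+_0\!/r'$, so all the content lies in the first equality $c(f)^\ast c(\delta)=\iota_\ast f^\ast(\delta)\vee\neg\iota$. First I would rewrite the left-hand side using Lemma \ref{lem:enlarge-explicit}, namely $c(\delta)=\iota_{r,\ast}\delta\vee\neg\iota_r$, and then apply $c(f)^\ast$ to this join.

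The first key step is to observe that $c(f)^\ast:\mathcal{R}^+_0\!/c(r)\to\mathcal{R}^+_0\!/c(r')$ is a homomorphism of Boolean lattices. Since $\mathcal{R}$ is cubicalizable it is coherent, so the morphism $c(f)\in\mathcal{R}$ is coherent and hence $c(f)^\ast$ preserves joins and the least element; being a pullback map it preserves meets, as the right adjoint in the Galois connection of Lemma \ref{lem:galois}, and it sends the greatest element $\mathrm{id}_{c(r)}$ to $\mathrm{id}_{c(r')}$. A map between Boolean lattices preserving $0$, $1$, $\wedge$ and $\vee$ automatically preserves negation, because $\neg x$ is characterised by $x\wedge\neg x=0$ and $x\vee\neg x=1$; thus $c(f)^\ast(\neg\xi)=\neg\,c(f)^\ast(\xi)$ for every $\xi$. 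Distributing $c(f)^\ast$ over the join then gives $c(f)^\ast c(\delta)=c(f)^\ast(\iota_{r,\ast}\delta)\vee\neg\,c(f)^\ast(\iota_r)$, and it remains to evaluate the two pulled-back pieces.

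For the boundary term, condition (3) in the definition of an enlargement says precisely that the square with vertical legs $\iota_{r'}$, $\iota_r$ and horizontal legs $f$, $c(f)$ is a pullback, so by the uniqueness of distinguished-injection pullbacks (Lemma \ref{lem:thinpow-pb}) we have $c(f)^\ast(\iota_r)=\iota_{r'}$, whence $\neg\,c(f)^\ast(\iota_r)=\neg\iota_{r'}$. For the main term I would use a Beck--Chevalley argument: writing $\iota_{r,\ast}\delta=\iota_r\delta$ as a composite of distinguished injections, I pull it back along $c(f)$ by pasting the pullback square of condition (3) underneath the pullback of $\delta$ along $f$; the pasting law for pullbacks identifies the outer rectangle and gives $c(f)^\ast(\iota_{r,\ast}\delta)=\iota_{r',\ast}(f^\ast\delta)$. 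Combining the two evaluations yields $c(f)^\ast c(\delta)=\iota_{r',\ast}(f^\ast\delta)\vee\neg\iota_{r'}$, which is the first equality, and Lemma \ref{lem:enlarge-explicit} then supplies the second. Finally, for the ``consequently'' clause I would apply the established identity with $f$ and $\delta$ both distinguished injections: if a square with legs $\eta_1,\eta_2,\delta_1,\delta_2$ in $\mathcal{R}^+_0$ is a pullback then $\delta_2^\ast\delta_1=\eta_2$, so $c(\delta_2)^\ast c(\delta_1)=c(\eta_2)$, and functoriality of $c$ together with the uniqueness of distinguished-injection pullbacks exhibits the image square as the pullback of $c(\delta_1)$ along $c(\delta_2)$. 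The main obstacle I anticipate is the Beck--Chevalley identification $c(f)^\ast\iota_{r,\ast}=\iota_{r',\ast}f^\ast$: it is the only place where condition (3) is used in an essential, compositional way, and care is needed to invoke the pasting law with the correctly oriented pullback squares and to match the outcome with the canonically chosen distinguished-injection pullbacks.
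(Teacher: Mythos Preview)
Your proof is correct and uses the same essential ingredients as the paper: Lemma~\ref{lem:enlarge-explicit} for the shape of $c(\delta)$, condition~(3) of an enlargement for $c(f)^\ast(\iota)=\iota$, the Beck--Chevalley identity coming from that pullback square, and the fact that $c(f)^\ast$ is a Boolean-lattice homomorphism (via coherence). The only organizational difference is that you decompose $c(\delta)$ in the source and distribute $c(f)^\ast$ over the join, invoking $c(f)^\ast\iota_\ast=\iota_\ast f^\ast$, whereas the paper decomposes $c(f)^\ast(\delta')$ in the target along $\iota\vee\neg\iota=1$ and uses the dual form $\iota^\ast c(f)^\ast=f^\ast\iota^\ast$; these are two readings of the same pasting argument, so the approaches are essentially the same.
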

\begin{proof}
First notice that, by definition, we have
\[
\sigma(f)^\ast(\neg\iota) = \neg\sigma(f)^\ast(\iota) = \neg\iota\,.
\]
Thus for $\delta'\in\mathcal{R}^+_0\!/c(r)$, we have
\[
\begin{multlined}
c(f)^\ast(\delta')
= (c(f)^\ast(\delta')\wedge\iota)\vee(c(f)^\ast(\delta')\wedge\neg\iota) \\
= (\iota_\ast\iota^\ast c(f)^\ast(\delta'))\vee c(f)^\ast(\delta'\wedge\neg\iota)
= (\iota_\ast f^\ast\iota^\ast(\delta'))\vee c(f)^\ast(\delta'\wedge\neg\iota)\,.
\end{multlined}
\]
In particular, the case $\delta'=c(\delta)$, by lemma \ref{lem:enlarge-explicit}, $\neg\iota\le c(\delta)$, and we obtain
\[
c(f)^\ast(c(\delta))
= (\iota_\ast f^\ast\iota^\ast(\iota_\ast\delta\vee\neg\iota))\vee c(f)^\ast(\neg\iota)
= (\iota_\ast f^\ast(\delta))\vee\neg\iota\,.
\]
This is the first part.

Now setting $f'$ to be the pullback of $f$ by $\delta$, we have the following commutative square:
\[
\xymatrix{
  c(s') \ar[r]^{c(f')} \ar[d]_{c(f^\ast(\delta))} & c(s) \ar[d]^{c(\delta)} \\
  c(r') \ar[r]^{f} & c(r) }
\]
Since $c(\delta)$ is a monomorphism, the first part and the uniqueness of the pullback imply that the square is actually a pullback.
Thus the last part follows.
\end{proof}

\begin{exam}
Recall that $\widetilde\Delta$ is the category of finite ordinals and order-preservin maps.
For $\underline{n}\in\widetilde\Delta$, set $c(\underline{n}):=\underline{n+1}$ and $\iota:\underline{n}\to c(\underline{n})$ to be the inclusion.
Then $c$ and $\iota$ form an enlargement on $\widetilde\Delta$.
\end{exam}

We see that enlargements naturally induce a structure on the cubicalization.
The following two lemmas are key results:

\begin{lemma}\label{lem:cube-cyl-c}
Let $\mathcal{R}$ be a cubicalizable category and $c:\mathcal{R}\to\mathcal{R}$ be an enlargement.
Then $c$ extends to $\square(\mathcal{R})\to\square(\mathcal{R})$ by setting
\[
c(\gamma^\dagger) = c(\gamma)^\dagger
\quad,\qquad
c(\delta^\xi) = c(\delta)^{\iota_\ast\xi}\,.
\]
for $\gamma,\delta,\xi\in\mathcal{R}^+_0$ such that the above makes sense.
\end{lemma}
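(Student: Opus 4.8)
The plan is to exploit the description $\square(\mathcal{R})=M\mathcal{V}(\mathcal{R})$ from Corollary~\ref{cor:crsmod-inv-cat}, where $M=(\mathcal{R}^+_0\!/(\blankdot),\Lambda^\complement)$ is the crossed $\mathcal{V}(\mathcal{R})$-module of Proposition~\ref{prop:cube-crsmod}. A morphism of $\square(\mathcal{R})$ is a pair $(\xi,f\gamma^\dagger)$, and the two formulas of the lemma say precisely that the extension should send $(\xi,\phi)$ to $(\iota_\ast\xi,c(\phi))$, where $c:\mathcal{V}(\mathcal{R})\to\mathcal{V}(\mathcal{R})$ is given by $c(f\gamma^\dagger)=c(f)c(\gamma)^\dagger$. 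So I would first check that $c$ really extends to an endofunctor of $\mathcal{V}(\mathcal{R})$: it preserves $\mathcal{R}^+_0$ and the $(\mathcal{R}^-,\mathcal{R}^+_0)$-factorization by the definition of an enlargement, and it preserves the pullbacks of $\mathcal{R}^+_0$-morphisms along arbitrary morphisms that are used to compose spans, by Corollary~\ref{cor:enlarge-pb}; hence $c$ respects span composition, and by Corollary~\ref{cor:V-repn} we get $(c(\gamma^\dagger))_\ast=c(\gamma)^\ast$.

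The heart of the argument is then to verify that the family $\iota_\ast:\mathcal{R}^+_0\!/r\to\mathcal{R}^+_0\!/c(r)$ is compatible with $c$ as a ``morphism of crossed modules.'' I would isolate three facts. First, each $\iota_\ast$ is a homomorphism for the join monoid, i.e. it preserves $\vee$ and $0$; this is Lemma~\ref{lem:semilat-repn} together with Corollary~\ref{cor:supremum}. Second, $\iota_\ast$ is natural over $\mathcal{V}(\mathcal{R})$: for every $\phi:r\to r'$ one has $\iota_\ast\phi_\ast=c(\phi)_\ast\iota_\ast$. For $\phi$ a morphism of $\mathcal{R}$ this is immediate from naturality of $\iota:\mathrm{Id}\to c$ and functoriality of $(\blankdot)_\ast$; for $\phi=\gamma^\dagger$ it becomes the base-change identity $\iota_\ast\gamma^\ast=c(\gamma)^\ast\iota_\ast$, which I would derive from the decomposition formula appearing in the proof of Corollary~\ref{cor:enlarge-pb}, using that the enlargement square is a pullback (so $c(\gamma)^\ast\iota=\iota$) and that $\iota^\ast\iota_\ast=\mathrm{id}$ by Lemma~\ref{lem:thinpow-galois}. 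Third, $c$ intertwines the module structure maps, $c\circ\Lambda^\complement=\Lambda^\complement\circ\iota_\ast$; since $\Lambda^\complement(\xi)=\Lambda(\neg\xi)$ and $c(\Lambda(\eta))=c(\eta)c(\eta)^\dagger=\Lambda(c(\eta))$, this reduces to the Boolean identity $\neg\,\iota_\ast\xi=\iota_\ast(\neg\xi)\vee\neg\iota=c(\neg\xi)$, which I would check directly from Lemma~\ref{lem:enlarge-explicit} using $\iota_\ast\xi\le\iota$ and that $\iota_\ast$ preserves meets (Corollary~\ref{cor:thinpow-meets}).

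With these in hand the remaining steps are formal. Well-definedness of $\tilde c(\xi,\delta)=(\iota_\ast\xi,c(\delta))$ follows because $\Lambda^\complement(\iota_\ast\xi)\,c(\delta)=c(\Lambda^\complement(\xi)\delta)=c(\delta)$, so the pair lies in $\square(\mathcal{R})$, while the side condition $c(\delta)\wedge\iota_\ast\xi=0$ follows from $\delta\wedge\xi=0$ by the same complement computation. Preservation of identities is clear since $\iota_\ast 0=0$. Preservation of composition is then a direct substitution into the composition law \eqref{eq:crsmod-inv-composition}: applying $\tilde c$ to $(b,g)\circ(a,f)=(b\vee g_\ast a,\ \Lambda^\complement(g_\ast a)\,gf)$ and rewriting $\iota_\ast(b\vee g_\ast a)=\iota_\ast b\vee c(g)_\ast\iota_\ast a$ and $c(\Lambda^\complement(g_\ast a)gf)=\Lambda^\complement(c(g)_\ast\iota_\ast a)\,c(g)c(f)$ via facts (I)--(III) yields exactly $\tilde c(b,g)\circ\tilde c(a,f)$, and the restriction to the generators $\gamma^\dagger$ and $\delta^\xi$ reproduces the stated formulas (the uniqueness of the normal form in Lemma~\ref{lem:cube-fact} and the calculus of Lemma~\ref{lem:cube-r0} guarantee there is nothing further to check). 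I expect the main obstacle to be fact (II) for the dagger morphisms, namely the Beck--Chevalley-style identity $\iota_\ast\gamma^\ast=c(\gamma)^\ast\iota_\ast$: this is the one place where the pullback hypothesis on the enlargement squares is genuinely used, and it is also precisely what makes the extension of $c$ to $\mathcal{V}(\mathcal{R})$ functorial in the first place.
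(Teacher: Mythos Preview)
Your proposal is correct and takes a more conceptual route than the paper. The paper argues by direct computation on generators: it verifies $c(\gamma^\dagger)c(f^\xi)=c(\gamma^\dagger f^\xi)$ via \ref{sublem:cube-r0span} of Lemma~\ref{lem:cube-r0} together with the identity $c(\gamma)^\ast\iota_\ast=\iota_\ast\gamma^\ast$ (extracted from Corollary~\ref{cor:enlarge-pb}), and separately checks $c(\delta^\xi)c(\varepsilon^\zeta)=c(\delta^\xi\varepsilon^\zeta)$ using $c(\delta)_\ast\iota_\ast=\iota_\ast\delta_\ast$. You instead package the same identities as the statement that $(c,\iota_\ast)$ is a morphism of crossed $\mathcal{V}(\mathcal{R})$-modules, and then invoke the functoriality of the $M\!\mathcal{J}$ construction once. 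The three facts you isolate---$\iota_\ast$ is a $\vee$-homomorphism, the Beck--Chevalley naturality $\iota_\ast\phi_\ast=c(\phi)_\ast\iota_\ast$ over all of $\mathcal{V}(\mathcal{R})$, and the intertwining $c\circ\Lambda^\complement=\Lambda^\complement\circ\iota_\ast$ via $\neg\iota_\ast\xi=\iota_\ast(\neg\xi)\vee\neg\iota=c(\neg\xi)$---are exactly the ingredients the paper's computations rely on, but you make them explicit and reusable. Your approach buys a cleaner verification of \emph{all} compositions at once (the paper's proof only spells out two cases and leaves the rest implicit), and it clarifies that the extension is really an instance of a general principle about crossed-module morphisms; the paper's approach is shorter on the page and avoids introducing the extra abstraction. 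You are also right that the dagger case of fact~(II) is where the pullback axiom on the enlargement square does real work.
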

\begin{proof}
First, notice that since $\iota$ is preserved by the pullback by $c(\gamma)$, we have
\[
c(\gamma)^\ast\iota_\ast = \iota_\ast\gamma^\ast\,,
\]
and by corollary \ref{cor:enlarge-pb}, we have
\[
c(\gamma)^\dagger c(\delta)
= c(\gamma^\dagger\delta)\,.
\]
Thus by \ref{sublem:cube-r0span} of lemma \ref{lem:cube-r0}, for every $f:r'\to r\in\mathcal{R}$ and $\xi\in\mathcal{R}^+_0\!/r$ with $\xi\wedge\image(f)=0$, we obtain
\[
\begin{multlined}
c(\gamma^\dagger)c(\xi,f)
= c(\gamma)^\dagger (\iota_\ast\xi,c(f))
= (c(\gamma)^\ast\iota_\ast\xi, c(\gamma)^\dagger c(f)) \\
= (\iota_\ast\gamma^\ast\xi, c(\gamma^\dagger f))
= c(\gamma^\ast\xi,\gamma^\dagger f)
= c(\gamma^\dagger (\xi,f))\,.
\end{multlined}
\]

On the other hand, we have
\[
\begin{multlined}
c(\delta^\xi)c(\varepsilon^\zeta)
= c(\delta)^{\iota_\ast\xi} c(\varepsilon)^{\iota_\ast\zeta}
= (\iota_\ast\xi\vee c(\delta)_\ast\iota_\ast\zeta,c(\delta)c(\varepsilon)) \\
= (\iota_\ast(\xi\vee\delta_\ast\zeta),c(\delta)c(\varepsilon))
= c(\delta^\xi\varepsilon^\zeta)\,.
\end{multlined}
\]
Hence the result follows.
\end{proof}

\begin{lemma}\label{lem:cube-cyl}
In the situation above, the following hold:
\begin{enumerate}[label={\rm(\arabic*)}]
  \item\label{sublem:enlarge-span-pb} For every $\gamma:r'\to r\in\mathcal{R}^+_0$, the diagram
\[
\xymatrix{
  r \ar[r]^{\gamma^\dagger} \ar[d]_{\iota} & r' \ar[d]^{\iota} \\
  c(r) \ar[r]^{c(\gamma^\dagger)} & c(r') }
\]
is a pullback in $\mathcal{V}(\mathcal{R})$.
  \item\label{sublem:enlarge-natcube} The diagram
\[
r \xrightrightarrows[\iota^{\neg\iota}]{\iota} c(r) \xrightarrow{\iota^\dagger} r
\]
in $\square(\mathcal{R})$ is natural with respect to $r\in\square(\mathcal{R})$.
\end{enumerate}
\end{lemma}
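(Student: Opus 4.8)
The plan is to handle the two parts separately, reducing each to explicit computations in the Boolean semilattices $\mathcal{R}^+_0\!/c(r)$.

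For part \ref{sublem:enlarge-span-pb}, I would appeal directly to the construction of pullbacks in $\mathcal{V}(\mathcal{R})$ carried out in the proof of proposition \ref{prop:span-thinpow}: there the pullback of a cospan $s\xrightarrow{\delta}r\xleftarrow{\gamma^\dagger}r'$ with $\delta,\gamma\in\mathcal{R}^+_0$ is realized by the distinguished injection $\delta':=(\gamma\delta)\vee(\neg\gamma)$. I apply this to the cospan $r'\xrightarrow{\iota}c(r')\xleftarrow{c(\gamma)^\dagger}c(r)$, so that the roles are played by $\delta=\iota_{r'}$ and $c(\gamma)\colon c(r')\to c(r)$. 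The only point to check is that the resulting distinguished injection equals $\iota_r$. Using the naturality square $c(\gamma)\iota_{r'}=\iota_r\gamma$ I get $c(\gamma)\cdot\iota_{r'}=\iota_\ast\gamma$, while lemma \ref{lem:enlarge-explicit} gives $\neg c(\gamma)=\neg\iota_\ast\gamma\wedge\iota_r$. Distributivity then yields
\[
(c(\gamma)\cdot\iota_{r'})\vee\neg c(\gamma)=\iota_\ast\gamma\vee(\neg\iota_\ast\gamma\wedge\iota_r)=\iota_\ast\gamma\vee\iota_r=\iota_r,
\]
the last step because $\gamma\le\mathrm{id}_r$ forces $\iota_\ast\gamma\le\iota_r$. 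Tracing back the construction of proposition \ref{prop:span-thinpow} identifies the remaining leg as $\gamma^\dagger$ itself, so the square is exactly the claimed pullback.

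For part \ref{sublem:enlarge-natcube}, I would show that each of $\iota$, $\iota^{\neg\iota}\colon\mathrm{Id}\to c$ and $\iota^\dagger\colon c\to\mathrm{Id}$ is a natural transformation on $\square(\mathcal{R})$. Since naturality can be checked on generators and, by lemma \ref{lem:cube-fact}, every morphism of $\square(\mathcal{R})$ factors as $\delta^\xi\sigma\gamma^\dagger$, it suffices to verify the naturality squares for the three families $\sigma\in\mathcal{R}^-$, $\gamma^\dagger$ and $\delta^\xi$. For $\iota$, the $\gamma^\dagger$-square is the commuting square of part \ref{sublem:enlarge-span-pb}, while the $\sigma$- and $\delta^\xi$-squares reduce, via lemma \ref{lem:cube-cyl-c} and the composition rule \ref{sublem:cube-r0calc} of lemma \ref{lem:cube-r0}, to the naturality of $\iota$ on $\mathcal{R}$ together with $c(\delta)\iota=\iota\delta$. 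For $\iota^\dagger$, the $\gamma^\dagger$-square is the contravariant image of the same naturality square, and the $\sigma$- and $\delta^\xi$-squares follow by composing spans: the pullback axiom in the definition of an enlargement says precisely that pulling $c(f)$ back along $\iota$ returns $\iota$ with second leg $f$, which gives $\iota^\dagger c(f)=f\iota^\dagger$ (and its marked refinement through \ref{sublem:cube-r0span} of lemma \ref{lem:cube-r0}).

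The delicate case is the naturality of $\iota^{\neg\iota}$. Expanding both composites with lemma \ref{lem:cube-r0} leaves me to prove the two identities $c(f)_\ast(\neg\iota_r)=\neg\iota_{r'}$ (matching the markings) and $\Lambda^\complement(c(f)_\ast\neg\iota_r)\,c(f)\iota_r=\iota_{r'}f$ (matching the span parts). The first is where the real work lies: I would observe $\neg\iota_r=c(f)^\ast(\neg\iota_{r'})$ by corollary \ref{cor:enlarge-pb}, note that $c(f)$ is stable because $c$ preserves the $(\mathcal{R}^-,\mathcal{R}^+_0)$-factorization and $\mathcal{R}$ is stable, and then invoke corollary \ref{cor:ffmeet} to get $c(f)_\ast c(f)^\ast(\neg\iota_{r'})=\neg\iota_{r'}\wedge\image(c(f))$; since $\image(c(f))=c(\image f)=\iota_\ast\image(f)\vee\neg\iota_{r'}\ge\neg\iota_{r'}$ by lemma \ref{lem:enlarge-explicit}, this meet is $\neg\iota_{r'}$. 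The span identity then follows from corollary \ref{cor:disjointlambda}, as $\image(\iota_{r'}f)=\iota_\ast\image(f)\le\iota_{r'}=\neg(c(f)_\ast\neg\iota_r)$. I expect this marking computation, together with keeping the De Morgan manipulations uniform across the three generator families, to be the main obstacle; the $\gamma^\dagger$- and $\delta^\xi$-cases for $\iota^{\neg\iota}$ are then routine applications of \ref{sublem:cube-r0calc} and \ref{sublem:cube-r0span} of lemma \ref{lem:cube-r0}.
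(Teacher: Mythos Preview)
Your proposal is correct and follows essentially the same route as the paper. Part \ref{sublem:enlarge-span-pb} is handled identically, via the explicit pullback construction from proposition \ref{prop:span-thinpow} and the identity $c(\gamma)=\iota_\ast\gamma\vee\neg\iota$ from lemma \ref{lem:enlarge-explicit}.

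For part \ref{sublem:enlarge-natcube} there is only an organizational difference. You check the naturality squares generator by generator and isolate the identity $c(f)_\ast(\neg\iota)=\neg\iota$ for $f\in\mathcal{R}$ as the crux, proving it via corollary \ref{cor:ffmeet} together with $\image(c(f))=c(\image f)\ge\neg\iota$. The paper instead treats the full composite $\delta^\xi\sigma\gamma^\dagger$ at once: it sets $\eta=c(\delta\sigma\gamma^\dagger)_\ast\eta'$, argues $\iota\wedge\eta=0$ from the pullback squares (the enlargement axiom for $\mathcal{R}$-morphisms, part \ref{sublem:enlarge-span-pb} for $\gamma^\dagger$), and then uses that $\neg\iota$ is an atom to conclude $\eta\in\{0,\neg\iota\}$. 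Your approach is slightly more laborious but also more explicit; in particular your invocation of corollary \ref{cor:ffmeet} makes the step $\eta'=\neg\iota\Rightarrow\eta=\neg\iota$ fully transparent, whereas the paper asserts this without spelling out why $\eta\neq 0$.
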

\begin{proof}
Suppose $\gamma:r'\to r\in\mathcal{R}^+_0$.
By the proof of proposition \ref{prop:span-thinpow}, if we set $\delta:=c(\gamma)\iota\vee\neg c(\gamma)$, then the pullback square is given as
\begin{equation}
\label{eq:enlarge-span-pb}
\xymatrix{
  s' \ar[r]^{(\delta^\ast(c(\gamma)\iota))^\dagger} \ar[d]_{\delta} \ar@{}[dr]|(.4){\pbcorner} & r \ar[d]^{\iota} \\
  c(r') \ar[r]^{c(\gamma^\dagger)} & c(r) }
\end{equation}
By lemma \ref{lem:enlarge-explicit}, we have
\[
\delta
= c(\gamma)\iota\vee\neg c(\gamma)
= (\iota\gamma) \vee (\neg(\iota_\ast\gamma)\wedge\iota)
= \iota_\ast(\gamma\vee\iota^\ast(\neg\iota_\ast\gamma))
= \iota_\ast(\gamma\vee\neg\gamma)
= \iota\,.
\]
Hence
\[
(\delta^\ast(c(\gamma)\iota))^\dagger
= (\iota^\ast\iota_\ast\gamma)^\dagger
= \gamma^\dagger\,,
\]
so that the square \eqref{eq:enlarge-span-pb} is in fact the required one in \ref{sublem:enlarge-span-pb}.

Now we show \ref{sublem:enlarge-natcube}.
Let $\delta^\xi\sigma\gamma^\dagger:r'\to r\in\square(\mathcal{R})$, and let $\eta'=0$ or $\neg\iota\in\mathcal{R}^+_0\!/c(r')$.
Put $\eta=c(\delta^\xi\sigma\gamma^\dagger)_\ast\eta'$.
Then by part \ref{sublem:enlarge-span-pb}, we have
\[
\iota\wedge\eta
= \iota\wedge c(\delta\sigma\gamma^\dagger)_\ast\eta'
= \iota\wedge\eta'
= 0\,.
\]
Hence $\eta\le\neg\iota$, so that $\eta$ values $0$ and $\neg\iota$ precisely when $\eta$ is $0$ and $\neg\iota$ respectively.
We obtain
\[
c(\delta^\xi\sigma\gamma^\dagger)\iota^{\eta'}
= ((\iota_\ast\xi)\vee\eta, \Lambda^\complement(\eta) c(\delta\sigma\gamma^\dagger)\iota)
= ((\iota_\ast\xi)\vee\eta, \Lambda^\complement(\eta) \iota\delta\sigma\gamma^\dagger)
= \iota^\eta\delta^\xi\sigma\gamma^\dagger\,.
\]
Thus $\iota^\eta$ is natural.

On the other hand, by part \ref{sublem:enlarge-span-pb}, we have
\[
\iota^\dagger c(\delta\sigma\gamma^\dagger)
= \delta\sigma\gamma^\dagger\iota^\dagger\,,
\]
in $\mathcal{V}(\mathcal{R})$, hence
\[
\iota^\dagger c(\delta^\xi\sigma\gamma^\dagger)
= \iota^\dagger c(\delta)^{\iota_\ast\xi}c(\sigma\gamma^\dagger)
= (\iota^\ast\iota_\ast\xi,\iota^\dagger c(\delta)) c(\sigma\gamma^\dagger)
= \delta^\xi\iota^\dagger c(\sigma\gamma^\dagger)
= \delta^\xi\sigma\gamma^\dagger\iota^\dagger\,.
\]
This implies that $\iota^\dagger$ is also natural.
\end{proof}

We are now ready to state and prove one of the main results:

\begin{theo}\label{theo:cube-is-test}
Suppose $\mathcal{R}$ is a cubicalizable category with an enlargement $c:\mathcal{R}\to\mathcal{R}$.
Then the cubicalization $\square(\mathcal{R})$ is a test category.
\end{theo}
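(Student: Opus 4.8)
The plan is to apply Proposition \ref{prop:presentation-test} to $\mathcal{A}=\square(\mathcal{R})$, taking for $\theta$ the Boolean-lattice representation $\theta:=\mathcal{R}^+_0\!/(\blankdot):\square(\mathcal{R})\to\mathbf{Poset}\hookrightarrow\mathbf{Cat}$ supplied by Proposition \ref{prop:cube-repn}. First I would dispose of the two easy hypotheses. By Lemma \ref{lem:cube-r0}\ref{sublem:cube-initial} the initial object $0$ of $\mathcal{R}$ is a terminal object of $\square(\mathcal{R})$, so $\square(\mathcal{R})$ is aspherical. Each $\theta(r)=\mathcal{R}^+_0\!/r$ is a finite Boolean lattice, whose greatest element $\mathrm{id}_r$ is a terminal object, which is exactly hypothesis (a) of Proposition \ref{prop:presentation-test}. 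Everything therefore reduces to hypothesis (b): for each small category $D$ with a terminal object, the presheaf $\theta^\ast D$ is aspherical.

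Next I would reduce (b) to the single interval $D=[1]$. Fix $D$ with terminal object $t$, together with the canonical contraction $H:D\times[1]\to D$ satisfying $H|_0=\mathrm{id}_D$ and $H|_1=\mathrm{const}_t$. By Corollary \ref{cor:aspheric-vs-wkequiv} it suffices to show that each projection $\theta^\ast D\times\square[a]\to\square[a]$ is an $\infty$-equivalence. Since $\theta^\ast$ is a right adjoint it preserves finite products and the terminal object, so $\theta^\ast H$ furnishes a homotopy on $\square[a]\times\theta^\ast D\times\theta^\ast[1]$ from the identity to the contraction $\square[a]\times\theta^\ast D\to\square[a]\xrightarrow{1\times\theta^\ast t}\square[a]\times\theta^\ast D$. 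Provided $\theta^\ast[1]$ is aspherical, Corollary \ref{cor:product-aspherical} makes the projection off the factor $\theta^\ast[1]$ an $\infty$-equivalence, so $\square[a]\times\theta^\ast D\times\theta^\ast[1]$ is a genuine cylinder object; Corollary \ref{cor:H-inverse} then shows the projection $\square[a]\times\theta^\ast D\to\square[a]$ is an $\infty$-equivalence. Hence all of (b) follows once the base case, that $\theta^\ast[1]$ is aspherical, is in hand.

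The base case is where the enlargement enters and is the real content. The endofunctor $c$ of $\square(\mathcal{R})$ of Lemma \ref{lem:cube-cyl-c}, together with the natural diagram $r\rightrightarrows c(r)\to r$ of Lemma \ref{lem:cube-cyl}\ref{sublem:enlarge-natcube}, extends cocontinuously to an endofunctor of $\square(\mathcal{R})^\wedge$ with natural transformations $\mathrm{Id}\rightrightarrows c\to\mathrm{Id}$, whose two endpoint sections and projection satisfy $\iota^\dagger\iota=\iota^\dagger\iota^{\neg\iota}=\mathrm{id}$ by Lemma \ref{lem:cube-r0}\ref{sublem:cube-r0mono}. Using the enlargement axioms (Lemma \ref{lem:enlarge-explicit} and Corollary \ref{cor:enlarge-pb}, together with $\operatorname{codeg}\iota=2$) one obtains the natural identification $\theta\circ c\cong\theta\times[1]$, equivalently $\theta_!\circ c\cong\theta_!(\blankdot)\times[1]$ for the Yoneda extension $\theta_!\dashv\theta^\ast$. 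I would ground the homotopy in the representable interval $\mathbf{I}:=\square[c(0)]$, which satisfies $\theta_!\mathbf{I}=[1]$ and carries the endpoints and projection of Lemma \ref{lem:cube-cyl} specialized at $r=0$; crucially $\mathbf{I}$ is \emph{aspherical}, since it is representable and $\square(\mathcal{R})/c(0)$ has the terminal object $\mathrm{id}_{c(0)}$. Transporting the contractibility of the poset $[1]$ (which has a terminal object) through the adjunction $\theta_!\dashv\theta^\ast$ and the isomorphism $\theta_!\circ c\cong\theta_!(\blankdot)\times[1]$ produces a contraction of $\theta^\ast[1]$ relative to a cylinder built from the aspherical interval $\mathbf{I}$, and Corollaries \ref{cor:product-aspherical}, \ref{cor:H-inverse} and \ref{cor:aspheric-vs-wkequiv} then yield that $\theta^\ast[1]$ is aspherical.

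The hard part will be executing this last step without circularity. The naive cylinder for contracting $\theta^\ast[1]$ would be $\theta^\ast[1]$ itself, which presupposes what we want; the point of the enlargement is precisely to replace it by the representable interval $\mathbf{I}=\square[c(0)]$, whose asphericity is free, and then to convert the enlargement interval into the interval $[1]$ seen by $\theta^\ast$ by means of the compatibility $\theta_!\circ c\cong\theta_!(\blankdot)\times[1]$ and the naturality of Lemma \ref{lem:cube-cyl}. Making that conversion precise — in particular matching the endpoint inclusions and projection of $\mathbf{I}$ with those coming from the terminal object of $[1]$ — is the crux of the whole argument. Once $\theta^\ast[1]$ is known to be aspherical, the reduction of the second paragraph delivers hypothesis (b) for every $D$ with terminal object, Proposition \ref{prop:presentation-test} applies, and by Corollary \ref{cor:test-aspherical} we conclude that $\square(\mathcal{R})$ is a test category.
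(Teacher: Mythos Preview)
Your overall plan—apply Proposition~\ref{prop:presentation-test} with $\theta=\mathcal{R}^+_0\!/(\blankdot)$, check hypothesis~(a) and the terminal object, and reduce everything to hypothesis~(b)—matches the paper exactly, and your reduction of (b) for general $D$ to the base case $D=[1]$ via a $\theta^\ast[1]$-cylinder is valid. The problem is the base case itself.

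You claim that $\mathbf I=\square[c(0)]$ is aspherical ``since it is representable and $\square(\mathcal R)/c(0)$ has the terminal object $\mathrm{id}_{c(0)}$.'' That argument only shows that the \emph{category} $\square(\mathcal R)/\mathbf I$ is aspherical. But in the paper's terminology an \emph{object} $X\in\mathcal A^\wedge$ is aspherical when the \emph{morphism} $X\to\ast$ is aspherical, i.e.\ when $\mathcal A/(X\times\mathcal A[a])$ is aspherical for \emph{every} $a$ (Lemma~\ref{lem:aspheric-prod-slice}). Representability gives you no control over $\square(\mathcal R)/(\square[c(0)]\times\square[a])$; indeed, products of representables having the right homotopy type is exactly the strict–test–category condition, which is not assumed here and fails already for $\square=\square(\widetilde\Delta^+)$. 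So Corollary~\ref{cor:product-aspherical} is unavailable, $(\blankdot)\times\mathbf I$ is not known to be a cylinder, and your contraction of $\theta^\ast[1]$ doesn't go through. The same obstruction blocks the cocontinuous extension $c$ of the enlargement: you would need $cX\to X$ to be an $\infty$-equivalence for $X=\theta^\ast[1]\times\square[a]$, which is again not free.

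The paper avoids this trap by never asking for an aspherical interval in $\square(\mathcal R)^\wedge$. It passes immediately to the slice category $\square(\mathcal R)/(\theta^\ast D\times\square[r])\simeq(\theta/D)\times_{\square(\mathcal R)}(\square(\mathcal R)/r)$ and uses the enlargement to build an endofunctor $h$ on that category (sending $(\rho:\theta(s)\to D,\,s\to r)$ to $(\theta(c(s))\cong\theta(s)\times[1]\xrightarrow{H(\rho,\blankdot)}D,\,c(s)\xrightarrow{\iota^\dagger}s\to r)$) together with natural transformations $\mathrm{Id}\Rightarrow h\Leftarrow(t\times\square[r])_\ast$ coming from $\iota^0,\iota^1$. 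A natural transformation between functors of small categories already furnishes a homotopy on nerves via the standard interval $\Delta[1]$ in $\mathbf{SSet}$; no asphericity hypothesis on anything in $\square(\mathcal R)^\wedge$ is required. That is precisely the step that replaces your use of $\mathbf I$, and it handles all $D$ with terminal object at once rather than reducing to $D=[1]$.
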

\begin{proof}
Recall that every cubicalizable category $\mathcal{R}$ is canonically equipped with a canonical poset representation which extends to
\[
\mathcal{R}^+_0\!/(\blankdot):\square(\mathcal{R})\to\mathbf{Poset}\hookrightarrow\mathbf{Cat}\,.
\]
We put $\theta:=\mathcal{R}^+_0\!/(\blankdot)$.
Then since $\square(\mathcal{R})$ has a terminal object by \ref{sublem:cube-initial} in lemma \ref{lem:cube-r0}, to see $\square(\mathcal{R})$ is a test category, it suffices to show that $\theta$ satisfies the conditions in proposition \ref{prop:presentation-test}.

Since for each $r\in\square(\mathcal{R})$, the category $\theta(r)$ is a finite lattice so that it has a terminal object.
So we verify the other condition.
Suppose $D$ is a small category with terminal object.
To see $\theta^\ast D$ is aspherical, by corollary \ref{cor:aspheric-vs-wkequiv}, it suffices to show that the projection
\[
\theta^\ast_{\mathcal{R}}D\times\square[r] \twoheadrightarrow \square[r]
\]
is an $\infty$-equivalence.
More precisely, we shall directly show that the induced functor
\[
\square(\mathcal{R})/(\theta^\ast_{\mathcal{R}}D\times\square[r]) \to \square(\mathcal{R})/r
\]
is a Thomason equivalence.
Notice that we have an isomorphism
\[
\square(\mathcal{R})/(\theta^\ast_{\mathcal{R}}D\times\square[r])
\simeq (\commacat{\theta}{D})\times_{\square(\mathcal{R})}(\square(\mathcal{R})/r)\,.
\]
We define a functor $h_D:\commacat{\theta}{D}\to\commacat{\theta}{D}$ as follows:
Fix a terminal object $t\in D$.
Then for each $(\theta(r)\xrightarrow{\rho} D)\in\commacat\theta{D}$, define $h_D(\rho)$ to be the obvious homotopy
\[
\theta(c(r))\simeq\mathcal{R}^+_0\!/c(r)\simeq \mathcal{R}^+_0\!/r\times[1]\to D
\]
from $\rho$ to the constant functor at $t\in D$.
We then obtain the following commutative diagram of functors
\[
\xymatrix{
  \commacat{\theta}{D} \ar[r] \ar[d]_{h_D} & \square(\mathcal{R}) \ar[d]_{c} & \square(\mathcal{R})/r \ar[l] \ar[d]^{(\iota^\dagger)^\ast} \\
  \commacat{\theta}{D} \ar[r] & \square(\mathcal{R}) & \square(\mathcal{R})/r \ar[l] }
\]
which induces a functor $h:\square(\mathcal{R})/(\theta^\ast_{\mathcal{R}}D\times\square[r])\to\square(\mathcal{R})/(\theta^\ast_{\mathcal{R}}D\times\square[r])$.

Since an enlargenemt gives rise to an interval theory on the cubicalization, we have a diagram
\[
\mathrm{Id}_{\square(\mathcal{R})}
\xrightrightarrows[\iota^1]{\iota^0} c
\xrightarrow{\iota^\dagger} \mathrm{Id}_{\square(\mathcal{R})}
\]
of natural transformations.
They also induce natural transformations
\[
\begin{gathered}
\mathrm{Id}_{\commacat\theta{D}}\xrightarrow{\iota^0_\ast} h_D \xleftarrow{\iota^1_\ast} t \\
\mathrm{Id}_{\square(\mathcal{R})/r} \xrightarrow{\iota^0} (\iota^\dagger)^\ast \xleftarrow{\iota^1_\ast} \mathrm{Id}_{\square(\mathcal{R})/r}
\end{gathered}
\]
where we identify $t\in D$ with the constant functor $\theta\to D$.
Hence we obtain natural transformations
\begin{equation}\label{eq:proj-H-inv}
\mathrm{Id} \xrightarrow{} h \xleftarrow{} (t\times\square[r])_\ast
\end{equation}
between functors $\square(\mathcal{R})/(\theta^\ast_{\mathcal{R}}D\times\square[r])\to\square(\mathcal{R})/(\theta^\ast_{\mathcal{R}}D\times\square[r])$, here we denote by $t\times\square[r]$ the composition
\[
\square(\mathcal{R})/(\theta^\ast_{\mathcal{R}}D\times\square[r])
\xrightarrow{\text{proj}_\ast} \square(\mathcal{R})/r
\xrightarrow{(t\times\square[r])_\ast} \square(\mathcal{R})/(\theta^\ast_{\mathcal{R}}D\times\square[r])\,.
\]
Thus the natural transformations in \eqref{eq:proj-H-inv} imply that $\square(\mathcal{R})/(\theta^\ast_{\mathcal{R}}D\times\square[r])\to\square(\mathcal{R})/r$ is a Thomason equivalence by lemma \ref{lem:homotopic-weq} and the two-out-of-six property.

Now, we have a functor $\theta=\theta:\square(\mathcal{R})\to\mathbf{Cat}$ satisfying the conditions in proposition \ref{prop:presentation-test}, so the result follows.
\end{proof}

As a result, we obtain a model structure:

\begin{corol}
\label{cor:test-model-cube}
Suppose $\mathcal{R}$ is a cubicalizable category with an enlargement $c:\mathcal{R}\to\mathcal{R}$.
Then there is a cofibrantly generated model structure on $\square(\mathcal{R})^\wedge$ such that
\begin{itemize}
  \item $f:X\to Y$ is a weak equivalence precisely if it is an $\infty$-equivalence.
  \item $f:X\to Y$ is a cofibration precisely if it is a monomorphism.
\end{itemize}
We call it the standard model structure.
\end{corol}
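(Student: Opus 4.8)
The plan is to deduce the statement immediately by chaining together the two theorems already established. First I would check that the hypotheses match: here $\mathcal{R}$ is assumed to be a cubicalizable category equipped with an enlargement $c:\mathcal{R}\to\mathcal{R}$, which is exactly the hypothesis of Theorem \ref{theo:cube-is-test}. Applying that theorem therefore gives at once that the cubicalization $\square(\mathcal{R})$ is a test category.

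Next I would invoke Theorem \ref{theo:test-model}, the cited result of Cisinski, taking $\mathcal{A}=\square(\mathcal{R})$. Since $\square(\mathcal{R})$ is a test category by the previous step, that theorem furnishes a cofibrantly generated model structure on $\square(\mathcal{R})^\wedge$ whose weak equivalences are precisely the $\infty$-equivalences and whose cofibrations are precisely the monomorphisms. This is exactly the structure claimed in the statement, and I would simply record that we name it the standard model structure.

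There is essentially no obstacle to surmount at this point, since all of the genuine difficulty has been absorbed into the proof of Theorem \ref{theo:cube-is-test}: the construction of the cylinder-like interval theory from the enlargement (via Lemmas \ref{lem:cube-cyl-c} and \ref{lem:cube-cyl}) and the verification, through Proposition \ref{prop:presentation-test}, that the poset representation $\theta=\mathcal{R}^+_0\!/(\blankdot)$ witnesses $\square(\mathcal{R})$ as a test category. The remaining passage from a test category to its model structure is a direct citation. Thus the proof is a two-line formal combination, and the only care required is to confirm that the two weak-equivalence/cofibration descriptions in Theorem \ref{theo:test-model} transcribe verbatim to the present situation, which they do.
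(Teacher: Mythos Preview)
Your proposal is correct and matches the paper's approach exactly: the paper states this corollary immediately after Theorem~\ref{theo:cube-is-test} with the phrase ``As a result, we obtain a model structure,'' treating it as a direct combination of that theorem with Cisinski's Theorem~\ref{theo:test-model}, just as you do.
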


\section{Homotpy theory on cubicalized categories}
\label{sec:htp-cube}
In the previous sections, we saw that if $\mathcal{R}$ is a cubicalizable category with an enlargement, the cubicalization $\square(\mathcal{R})$ is a test category.
It follows that the category $\square(\mathcal{R})^\wedge$ canonically admits a model structure.
In this section, we give another model structure on it in the case $\mathcal{R}$ is EZ cubicalizable.
Recall that if $\mathcal{R}$ is a cubicalizable category, then we have a simplicial realization functor $|\blankdot|:\square(\mathcal{R})^\wedge\to\mathbf{SSet}$.
We see that the functor induces a model structure.
It is non-trivial that the model structure coincides with the standard model structure, but we can actually prove it in some cases containing $\square(\widetilde\Delta)=\square^{\mathsf{c}}$ the cubical category with connections.

\subsection{Homotopy theory based on cylinders}
\label{subsec:homotopy-cyl}
First, we discuss cylinders on $\square(\mathcal{R})^\wedge$.
Fix a cubicalizable category $\mathcal{R}$ and an enlargement $c:\mathcal{R}\to\mathcal{R}$ on it.
Recall that by lemma \ref{lem:cube-cyl}, we obtain a functor $c:\square(\mathcal{R})\to\square(\mathcal{R})$ and a diagram
\[
\mathrm{Id} \xrightrightarrows[\iota^1]{\iota^0} c \xrightarrow{\iota^\dagger} \mathrm{Id}
\]
of functors $\square(\mathcal{R})\to\square(\mathcal{R})$, where we write $\iota^1=\iota^{\neg\iota}$ and $\iota^0=\iota$.
Define a functor $\mathrm{Cyl}:\square(\mathcal{R})^\wedge\to\square(\mathcal{R})^\wedge$ to be the left Kan extension of $c$.
We call $\mathrm{Cyl}$ the cylinder functor induced from the enlargement $c$.
The above diagram induces
\begin{equation}
\label{eq:cylcyl}
\mathrm{Id} \xrightrightarrows[\iota^1]{\iota^0} \mathrm{Cyl} \xrightarrow{\iota^\dagger} \mathrm{Id}\,.
\end{equation}
Notice that the proof of lemma \ref{lem:cube-cyl} also implies that
\[
(\mathcal{R}^+_0\!/r)\times[1]\ni(\xi,k)\mapsto(\iota^k)_\ast\xi\in\mathcal{R}^+_0\!/c(r)
\]
is a natural isomorphism with respect to $r\in\square(\mathcal{R})$.
It gives rise to a natural isomorphism $|\square[c(r)]|\simeq|\square[r]|\times\Delta[1]$, and moreover for $X\in\square(\mathcal{R})^\wedge$, we obtain
\[
\begin{split}
|\operatorname{Cyl}X|
&\simeq \int^{r\in\square(\mathcal{R})} X(r)\times |\square[c(r)]| \\
&\simeq \int^{r\in\square(\mathcal{R})} X(r)\times |\square[r]|\times\Delta[1] \\
&\simeq |X|\times\Delta[1]\,.
\end{split}
\]
Under this isomorphism, the simplicial realization of diagram \eqref{eq:cylcyl} is isomorphic to the diagram
\[
|X| \xrightrightarrows[(\mathrm{id},1)]{(\mathrm{id},0)} |X|\times\Delta[1] \overset{\text{proj}}{\twoheadrightarrow} |X|\,.
\]

The discussion above implies the following result:

\begin{prop}\label{prop:cyl-homotopy}
Let $\mathcal{R}$ be a cubicalizable category together with a fixed enlargement $c:\mathcal{R}\to\mathcal{R}$.
Suppose $f_0,f_1:X\to Y\in\square(\mathcal{R})^\wedge$ are morphisms such that there is a morphism $h:\operatorname{Cyl}X\to Y$ with $h\iota^k = f_k$ for $k=0,1$.
Then $|f_0|,|f_1|:|X|\to |Y|$ are homotopic in $\mathbf{SSet}$.
\end{prop}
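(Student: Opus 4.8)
The plan is to apply the simplicial realization functor $|\blankdot|$ to the given homotopy $h$ and to transport the result across the natural isomorphism $|\operatorname{Cyl}X|\simeq|X|\times\Delta[1]$ established in the discussion preceding the statement. Concretely, since $|\blankdot|:\square(\mathcal{R})^\wedge\to\mathbf{SSet}$ is a functor, the map $h:\operatorname{Cyl}X\to Y$ gives a simplicial map $|h|:|\operatorname{Cyl}X|\to|Y|$. Precomposing with the inverse of the isomorphism $|\operatorname{Cyl}X|\simeq|X|\times\Delta[1]$ yields a map
\[
H:|X|\times\Delta[1]\to |Y|,
\]
and I claim that $H$ is the desired simplicial homotopy from $|f_0|$ to $|f_1|$.

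First I would recall, from the computation carried out just above the statement, that under the isomorphism $|\operatorname{Cyl}X|\simeq|X|\times\Delta[1]$ the realization of the diagram $\mathrm{Id}\xrightrightarrows[\iota^1]{\iota^0}\operatorname{Cyl}\xrightarrow{\iota^\dagger}\mathrm{Id}$ becomes the standard cylinder diagram
\[
|X|\xrightrightarrows[(\mathrm{id},1)]{(\mathrm{id},0)} |X|\times\Delta[1]\overset{\text{proj}}{\twoheadrightarrow}|X|.
\]
In particular $|\iota^k|$ is identified with the endpoint inclusion $(\mathrm{id},k):|X|\to|X|\times\Delta[1]$ for $k=0,1$. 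Applying $|\blankdot|$ to the hypotheses $h\iota^k=f_k$ and using functoriality then gives
\[
H\circ(\mathrm{id},k)=|h|\circ|\iota^k|=|h\iota^k|=|f_k|,
\]
so $H$ restricts to $|f_0|$ and $|f_1|$ on the two ends of the cylinder $|X|\times\Delta[1]$. This is precisely the data of a simplicial homotopy from $|f_0|$ to $|f_1|$, which proves the proposition.

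The substantive content of the argument is entirely concentrated in the naturality of the isomorphism $|\operatorname{Cyl}X|\simeq|X|\times\Delta[1]$ and in its compatibility with $\iota^0$ and $\iota^1$; both were verified in the preamble, resting on lemma \ref{lem:cube-cyl} and on the fact that $|\blankdot|$ preserves colimits. Thus the only step requiring genuine care is confirming that the endpoint inclusions $|\iota^0|$ and $|\iota^1|$ are honestly identified with $(\mathrm{id},0)$ and $(\mathrm{id},1)$, rather than being interchanged or collapsed; this I would check by tracking the natural isomorphism $(\mathcal{R}^+_0\!/r)\times[1]\simeq\mathcal{R}^+_0\!/c(r)$ sending $(\xi,k)$ to $(\iota^k)_\ast\xi$. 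Given that identification, the proposition is a formal consequence of functoriality and needs no further computation, so I do not expect any real obstacle beyond this bookkeeping.
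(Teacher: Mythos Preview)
Your proof is correct and is exactly the argument the paper intends: the paper does not give a separate proof but simply writes ``The discussion above implies the following result,'' referring to the identification $|\operatorname{Cyl}X|\simeq|X|\times\Delta[1]$ under which $|\iota^k|$ becomes the endpoint inclusion $(\mathrm{id},k)$. You have spelled out precisely that implication.
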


\begin{defin}
Let $\mathcal{R}$ be a cubicalizable category, and let $c:\mathcal{R}\to\mathcal{R}$ be an enlargement.
Then two morphisms $f_0,f_1:X\to Y\in\square(\mathcal{R})^\wedge$ are said to be homotopic with respect to $c$ if there is a morphism $h:\operatorname{Cyl}X\to Y$ with $h\iota^k=f_k$ for $k=0,1$.
A morphism $f:X\to Y\in\square(\mathcal{R})^\wedge$ is called a homotopy equivalence if there is a morphism $g:Y\to X$ such that $gf$ and $fg$ are homotopic to the identities $\mathrm{id}_X$ and $\mathrm{id}_Y$ respectively.
\end{defin}

Thanks to the two-out-of-six property, proposition \ref{prop:cyl-homotopy} also implies the following:

\begin{corol}
Let $\mathcal{R}$ be a cubicalizable category together with a fixed enlargement $c:\mathcal{R}\to\mathcal{R}$.
Then every homotopy equivalence $f:X\to Y\in\square(\mathcal{R})^\wedge$ induces a weak equivalence $|f|:|X|\to |Y|$ in $\mathbf{SSet}$.
\end{corol}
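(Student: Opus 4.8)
The plan is to transport the homotopy-equivalence data along the simplicial realization functor and then appeal to the homotopical-category machinery of Section \ref{sec:H-cats}. First I would unwind the definition of a homotopy equivalence: given $f:X\to Y$ there is $g:Y\to X$ together with homotopies $\operatorname{Cyl}X\to X$ and $\operatorname{Cyl}Y\to Y$ witnessing that $gf$ is homotopic to $\mathrm{id}_X$ and that $fg$ is homotopic to $\mathrm{id}_Y$, both with respect to $c$. Since $|\blankdot|:\square(\mathcal{R})^\wedge\to\mathbf{SSet}$ is a functor, applying it yields morphisms $|f|:|X|\to|Y|$ and $|g|:|Y|\to|X|$ with $|g||f|=|gf|$ and $|f||g|=|fg|$.

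Next I would invoke Proposition \ref{prop:cyl-homotopy}. Because $gf$ and $\mathrm{id}_X$ are homotopic with respect to $c$, their realizations $|g||f|$ and $\mathrm{id}_{|X|}$ are homotopic in $\mathbf{SSet}$, and likewise $|f||g|$ and $\mathrm{id}_{|Y|}$. Concretely, the natural isomorphism $|\operatorname{Cyl}X|\simeq|X|\times\Delta[1]$ established just before Proposition \ref{prop:cyl-homotopy} realizes the witnessing homotopy as a simplicial map $|X|\times\Delta[1]\to|X|$ that restricts to $|g||f|$ and to $\mathrm{id}_{|X|}$ along the two endpoint inclusions $(\mathrm{id},0)$ and $(\mathrm{id},1)$.

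The remaining point is to read these simplicial homotopies as left homotopies in the homotopical category $(\mathbf{SSet},\mathcal{W})$, where $\mathcal{W}$ is the class of Quillen weak equivalences; recall this is a homotopical category since model categories have weak equivalences satisfying two-out-of-six and $\mathbf{SSet}$ is bicomplete. For this I would verify that $|X|\times\Delta[1]$ is a cylinder object on $|X|$ in the sense of Section \ref{sec:H-cats}: the two inclusions $(\mathrm{id},0),(\mathrm{id},1):|X|\to|X|\times\Delta[1]$ are sections of the projection $p:|X|\times\Delta[1]\to|X|$, and $p$ is a weak equivalence because projecting off a contractible factor is a weak equivalence in $\mathbf{SSet}$. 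This is the sole genuinely homotopy-theoretic input, and hence the step I expect to be the main (albeit standard) obstacle, as it is where simplicial homotopy theory is used rather than formal manipulation. With the cylinder object in place, $|g||f|$ is left homotopic to $\mathrm{id}_{|X|}$ and $|f||g|$ is left homotopic to $\mathrm{id}_{|Y|}$, so $|g|$ is a homotopy inverse of $|f|$. Corollary \ref{cor:H-inverse} then yields that $|f|$ is a weak equivalence, which completes the argument.
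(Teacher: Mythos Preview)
Your proposal is correct and follows essentially the same approach as the paper, which simply remarks that the corollary follows from Proposition~\ref{prop:cyl-homotopy} together with the two-out-of-six property. Your argument is just the unpacked version: you make explicit that the simplicial homotopies produced by Proposition~\ref{prop:cyl-homotopy} are left homotopies in the homotopical category $(\mathbf{SSet},\mathcal{W})$ (via the cylinder object $|X|\times\Delta[1]$), and then invoke Corollary~\ref{cor:H-inverse}, whose proof is precisely the two-out-of-six step the paper alludes to.
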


Recall that if $\mathcal{R}$ is a cubicalizable category with a thin-powered structure $\mathcal{R}^+_0$, then $\mathcal{R}^+_0$ is itself cubicalizable, and $\square(\mathcal{R}^+_0)$ is a wide subcategory of $\square(\mathcal{R})$.
We denote by $i:\square(\mathcal{R}^+_0)\hookrightarrow\square(\mathcal{R})$ the inclusion.
Moreover, if $c:\mathcal{R}\to\mathcal{R}$ is an enlargement, then it restricts to an enlargement on $\mathcal{R}^+_0$, which we also denote by $c$.
Then we have $i c=c i$ and $i(\iota)=\iota$.
We denote by $i^\ast:\square(\mathcal{R})\to\square(\mathcal{R}^+_0)$ the restriction functor.
Then we can compare the homotopy theories on $\square(\mathcal{R})^\wedge$ and $\square(\mathcal{R}^+_0)^\wedge$ in the following sense.

\begin{prop}
Let $\mathcal{R}$ be a cubicalizable category, and let $c:\mathcal{R}\to\mathcal{R}$ be an enlargement.
Suppose morphisms $f_0,f_1:X\to Y\in\square(\mathcal{R})^\wedge$ are homotopic with respect to $c$.
Then $i^\ast f_0,i^\ast f_1$ are homotopic with respect to the restricted enlargement.
\end{prop}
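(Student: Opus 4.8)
The plan is to transport the given homotopy $h\colon\operatorname{Cyl}X\to Y$ through the restriction functor $i^{\ast}$. Write $\operatorname{Cyl}_{0}\colon\square(\mathcal{R}^{+}_{0})^{\wedge}\to\square(\mathcal{R}^{+}_{0})^{\wedge}$ for the cylinder functor attached to the restricted enlargement, and let $(\iota^{k})_{0}$ denote its endpoint inclusions. Being a restriction (precomposition) functor, $i^{\ast}$ has both adjoints and is in particular cocontinuous; moreover $\operatorname{Cyl}$ and $\operatorname{Cyl}_{0}$ are left Kan extensions of the respective enlargements along the Yoneda embeddings, so they too commute with all colimits. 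The key step will be to produce a natural transformation $\phi\colon\operatorname{Cyl}_{0}\,i^{\ast}\Rightarrow i^{\ast}\operatorname{Cyl}$ compatible with the inclusions $\iota^{0},\iota^{1}$. Granting this, I would set
\[
h_{0}:=i^{\ast}(h)\circ\phi_{X}\colon\operatorname{Cyl}_{0}\,i^{\ast}X\to i^{\ast}\operatorname{Cyl}X\to i^{\ast}Y,
\]
and then compute $h_{0}\circ(\iota^{k})_{0}=i^{\ast}(h)\circ\phi_{X}\circ(\iota^{k})_{0}=i^{\ast}(h)\circ i^{\ast}(\iota^{k})=i^{\ast}(h\iota^{k})=i^{\ast}f_{k}$, which exhibits $i^{\ast}f_{0}$ and $i^{\ast}f_{1}$ as homotopic with respect to the restricted enlargement.

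To build $\phi$ I would use the pointwise (coend) description of the Kan extensions. For $X\in\square(\mathcal{R})^{\wedge}$ and $d\in\square(\mathcal{R}^{+}_{0})$ one has
\[
(\operatorname{Cyl}_{0}\,i^{\ast}X)(d)\simeq\int^{b\in\square(\mathcal{R}^{+}_{0})}X(b)\times\square(\mathcal{R}^{+}_{0})(d,c(b)),\qquad(i^{\ast}\operatorname{Cyl}X)(d)\simeq\int^{a\in\square(\mathcal{R})}X(a)\times\square(\mathcal{R})(d,c(a)),
\]
where I use that $i$ is the identity on objects and that $ci=ic$, so that $c(b)$ is unambiguous and $b\mapsto X(b)\times\square(\mathcal{R})(d,c(b))$ is the restriction along $i$ of $a\mapsto X(a)\times\square(\mathcal{R})(d,c(a))$. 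The inclusion of hom-sets $\square(\mathcal{R}^{+}_{0})(d,c(b))\hookrightarrow\square(\mathcal{R})(d,c(b))$, followed by the canonical comparison from a coend indexed by the subcategory $\square(\mathcal{R}^{+}_{0})$ to the one indexed by $\square(\mathcal{R})$, assembles into a map from the left-hand side to the right-hand side; dinaturality of both comparisons gives naturality in $d$ and in $X$. This defines $\phi$.

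For the compatibility with $\iota^{k}$ I would pass to the co-Yoneda presentation, writing $\square_{0}[\,\cdot\,]$ for the representables on $\square(\mathcal{R}^{+}_{0})$. Under it, $\iota^{k}_{X}\colon X\to\operatorname{Cyl}X$ is induced cell-wise by the representable maps $\square[\iota^{k}_{a}]\colon\square[a]\to\square[c(a)]$, and $(\iota^{k})_{0}$ on $i^{\ast}X$ is induced by $\square_{0}[\iota^{k}_{b}]$. Because $i(\iota)=\iota$, the functor $i$ carries both $\iota^{0}=\iota$ and $\iota^{1}=\iota^{\neg\iota}$ in $\square(\mathcal{R}^{+}_{0})$ to the corresponding morphisms $\iota^{0},\iota^{1}$ of $\square(\mathcal{R})$; consequently the squares relating $\square_{0}[\iota^{k}_{b}]$ to $i^{\ast}\square[\iota^{k}_{b}]$ through the inclusions $\square_{0}[b]\hookrightarrow i^{\ast}\square[b]$ commute. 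Feeding these commuting squares into the coend comparison yields $\phi_{X}\circ(\iota^{k})_{0}=i^{\ast}(\iota^{k}_{X})$, which is exactly the identity required in the first paragraph.

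The coend bookkeeping and the naturality of $\phi$ are routine; the genuine content, and the step I expect to be the main obstacle, is the identity $\phi\circ(\iota^{k})_{0}=i^{\ast}\iota^{k}$. This is the only place where one must invoke $i(\iota)=\iota$ together with the fact that the restricted enlargement's interval maps are literally the $i$-images of those on $\square(\mathcal{R})$; in particular one should verify at the outset that both endpoint inclusions $\iota^{0}$ and $\iota^{1}$ lie in the wide subcategory $\square(\mathcal{R}^{+}_{0})$, so that $(\iota^{k})_{0}$ is defined and matches $\iota^{k}$ under $i$. Once this triangle is established, the conclusion follows formally as above.
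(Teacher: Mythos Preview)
Your proof is correct and follows essentially the same strategy as the paper: construct a natural comparison $\phi\colon\operatorname{Cyl}_0\,i^\ast\Rightarrow i^\ast\operatorname{Cyl}$ compatible with the endpoint inclusions, and push the given homotopy through it. The only difference is in packaging. You build $\phi$ explicitly via the coend formulas and check $\phi\circ(\iota^{k})_{0}=i^\ast\iota^{k}$ by inspecting representables, whereas the paper obtains $\phi$ as the mate of the isomorphism $i_!\operatorname{Cyl}_0\simeq\operatorname{Cyl}\,i_!$ under the adjunction $i_!\dashv i^\ast$, then invokes the triangle identities to see that the outer vertical composites in the resulting three-row diagram are identities, which forces the compatibility with $\iota^{0},\iota^{1}$ (and, for free, with $\iota^\dagger$) all at once. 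The two constructions give the same map; your approach is more hands-on, the paper's slightly more conceptual, but neither contains an idea the other lacks.
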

\begin{proof}
Let $i_!:\square(\mathcal{R}^+_0)^\wedge\to\square(\mathcal{R})^\wedge$ be the left Kan extension of $i$.
Notice first that since $ic=ci$, we have a natural isomorphism $i_!\operatorname{Cyl}\simeq\operatorname{Cyl}i_!$ and the following diagram of natural transformations:
\[
\xymatrix{
  i_! \ar@<-.5ex>[r]_{i_!(\iota^1)} \ar@<.5ex>[r]^{i_!(\iota^0)} \ar@{=}[d] & i_!\operatorname{Cyl} \ar[r]^{i_!(\iota^\dagger)} \ar[d]^{\simeq} & i_! \ar@{=}[d] \\
  i_! \ar@<-.5ex>[r]_{\iota^1} \ar@<.5ex>[r]^{\iota^0} & \operatorname{Cyl}i_! \ar[r]^{\iota^\dagger} & i_! }
\]
Taking the adjoint diagram, we obtain the following diagram:
\[
\xymatrix{
  i^\ast \ar@<-.5ex>[r] \ar@<.5ex>[r] \ar[d]_{\eta} & \operatorname{Cyl}i^\ast \ar[r] \ar[d] & i^\ast \ar[d]^{\eta} \\
  i^\ast i_! i^\ast \ar@<-.5ex>[r] \ar@<.5ex>[r] \ar[d]_{i^\ast\varepsilon} & i^\ast \operatorname{Cyl} i_! i^\ast \ar[r] \ar[d]^{i^\ast\operatorname{Cyl}\eta} & i^\ast i_! i^\ast \ar[d]^{i^\ast\varepsilon} \\
  i^\ast \ar@<-.5ex>[r] \ar@<.5ex>[r] & i^\ast \operatorname{Cyl} \ar[r] & i^\ast }
\]
where $\eta:\mathrm{Id}\to i^\ast i_!$ and $\varepsilon:i_!i^\ast\to\mathrm{Id}$ are the unit and counit of the adjunction $i_!\dashv i^\ast$ respectively.
Then, by the triangle identities, the compositions of left and right edges are the identities.
Hence the result follows.
\end{proof}

\begin{corol}\label{cor:res-h-pres}
In the situation above, the functor $i^\ast:\square(\mathcal{R})^\wedge\to\square(\mathcal{R}^+_0)^\wedge$ preserves homotopy equivalence.
\end{corol}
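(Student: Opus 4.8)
The plan is to deduce this directly from the preceding proposition together with the functoriality of $i^\ast$. First I would unwind the definition: a homotopy equivalence $f:X\to Y\in\square(\mathcal{R})^\wedge$ comes equipped with a morphism $g:Y\to X$ such that $gf$ is homotopic to $\mathrm{id}_X$ and $fg$ is homotopic to $\mathrm{id}_Y$, both with respect to the enlargement $c$. My goal is to show that $i^\ast f$ inherits this property, with $i^\ast g$ serving as its homotopy inverse.

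Next I would apply the functor $i^\ast$. Since $i^\ast$ is an honest functor between presheaf categories, it sends the composites $gf$ and $fg$ to $i^\ast(g)\,i^\ast(f)$ and $i^\ast(f)\,i^\ast(g)$ respectively, and it sends $\mathrm{id}_X$ and $\mathrm{id}_Y$ to $\mathrm{id}_{i^\ast X}$ and $\mathrm{id}_{i^\ast Y}$. By the preceding proposition, $i^\ast$ carries any pair of morphisms homotopic with respect to $c$ to a pair homotopic with respect to the restricted enlargement; applying this to the two homotopies above yields that $i^\ast(g)\,i^\ast(f)$ is homotopic to $\mathrm{id}_{i^\ast X}$ and that $i^\ast(f)\,i^\ast(g)$ is homotopic to $\mathrm{id}_{i^\ast Y}$.

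Finally I would observe that these two relations are precisely the statement that $i^\ast(g)$ is a homotopy inverse to $i^\ast(f)$, so that $i^\ast(f)$ is a homotopy equivalence, completing the argument. There is no real obstacle here: the content is entirely contained in the preceding proposition, and the corollary is a purely formal consequence of the fact that being a homotopy equivalence is defined solely in terms of the homotopy relation, which $i^\ast$ is already known to preserve. The only point requiring a moment of care is confirming that $i^\ast$ respects composition and preserves identities on the nose, but this is immediate from its being a functor.
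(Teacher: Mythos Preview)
Your proposal is correct and matches the paper's intent: the corollary is stated without proof immediately after the proposition, and your argument---applying $i^\ast$ to a homotopy inverse $g$ and invoking the proposition to transport the two homotopies $gf\sim\mathrm{id}_X$, $fg\sim\mathrm{id}_Y$ across $i^\ast$---is exactly the formal deduction the paper leaves implicit.
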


\subsection{Model structure from the realization}
Next, we see that a cylinder functor actually induces a homotopy theory.
To do this, we need some preliminary results.
Most of them appear in the appendices of \cite{Lur}.

First of all, we need the notion of accessibilities:

\begin{defin}
Let $\mathcal{C}$ be a locally presentable category, and let $\kappa$ be an infinite regular cardinal.
Then a full subcategory $\mathcal{C}_0\subset\mathcal{C}$ is called a $\kappa$-accessible subcategory if it satisfies the following:
\begin{enumerate}[label={\rm(\arabic*)}]
  \item $\mathcal{C}_0$ is closed under $\kappa$-filtered colimits in $\mathcal{C}$.
  \item There is a (small) set $S\subset\objof\mathcal{C}_0$ of $\kappa$-small objects in $\mathcal{C}$ which generates $\mathcal{C}_0$ under $\kappa$-filtered colimits.
\end{enumerate}
We say $\mathcal{C}_0$ is an accessible subcategory if it is $\kappa$-accessible for some inifinite regular cardinal $\kappa$.
\end{defin}

Note that the accesibility implies retraction closed condition.
Indeed, suppose $\mathcal{C}_0\subset\mathcal{C}$ is a $\kappa$-accessible subcategory, and we have a retraction sequence $A\xrightarrow{i}X\xrightarrow{r}A$ in $\mathcal{C}$ with $X\in\mathcal{C}_0$.
Choose a $\kappa$-filtered cardinal $\lambda$, and define $A_\bullet:\lambda\to\mathcal{C}_0$ as follows:
For each $\alpha\le\lambda$, we set $A_\alpha=X$, and for each $\alpha\le\beta<\lambda$, we consider $ir:A_\alpha\to A_\beta$.
Then $A_\bullet:\lambda\to\mathcal{C}_0$ is a well-defined functor because of the equation $ri=\mathrm{id}$.
Now, it is easily verifies that
\[
A \simeq \colim_{\alpha<\lambda} A_\alpha\,.
\]
Since $\mathcal{C}_0$ is closed under $\kappa$-filtered colimits, we obtain $A\in\mathcal{C}_0$.

To verify the accessibility, the following result is most essential:

\begin{prop}[Corollary A.2.6.5 in\cite{Lur}]
Let $F:\mathcal{C}\to\mathcal{D}$ is a functor between locally presentable categories which preserves $\kappa$-filtered colimits for an inifinite regular cardinal $\kappa$.
Then for every $\kappa$-accessible subcategory $\mathcal{D}_0\subset\mathcal{D}$, the full subcategory $F^{-1}\mathcal{D}_0\subset\mathcal{C}$ spanned by objects $X\in\mathcal{C}$ with $F(X)\in\mathcal{D}_0$ is also a $\kappa$-accessible subcategory of $\mathcal{C}$.
\end{prop}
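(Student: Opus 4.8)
The plan is to check the two defining clauses of a $\kappa$-accessible subcategory for $F^{-1}\mathcal{D}_0$: that it is closed under $\kappa$-filtered colimits in $\mathcal{C}$, and that it is generated under $\kappa$-filtered colimits by a small set of $\kappa$-small (i.e.\ $\kappa$-presentable) objects of $\mathcal{C}$. The first clause is immediate from the hypotheses: if $X_\bullet\colon I\to F^{-1}\mathcal{D}_0$ is a $\kappa$-filtered diagram with colimit $X=\colim_i X_i$ taken in $\mathcal{C}$, then $F(X)\simeq\colim_i F(X_i)$ because $F$ preserves $\kappa$-filtered colimits; each $F(X_i)$ lies in $\mathcal{D}_0$, and $\mathcal{D}_0$ is closed under $\kappa$-filtered colimits by assumption, so $F(X)\in\mathcal{D}_0$ and hence $X\in F^{-1}\mathcal{D}_0$. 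The retraction-closure observation recorded just before the statement then applies to $F^{-1}\mathcal{D}_0$ verbatim.

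For the second clause I would take as candidate generating set $S$ a set of representatives for the isomorphism classes of $\kappa$-small objects $X$ of $\mathcal{C}$ with $F(X)\in\mathcal{D}_0$. This $S$ is indeed a small set: since $\mathcal{C}$ is locally presentable it is locally $\lambda$-presentable for some regular $\lambda\ge\kappa$, every $\kappa$-presentable object is a fortiori $\lambda$-presentable, and the $\lambda$-presentable objects form an essentially small category; thus there are only a set of $\kappa$-presentable objects up to isomorphism. Every object of $S$ is $\kappa$-small in $\mathcal{C}$ and lies in $F^{-1}\mathcal{D}_0$ by construction, so what remains is to show that each $Y\in F^{-1}\mathcal{D}_0$ is a $\kappa$-filtered colimit of objects of $S$.

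Here is where the real work lies. After arranging (by the choice of $\kappa$) that $\mathcal{C}$ and $\mathcal{D}$ are locally $\kappa$-presentable, one presents $Y$ as the canonical $\kappa$-filtered colimit $Y\simeq\colim_{(X_i\to Y)}X_i$ indexed by the $\kappa$-filtered comma category $I$ of maps into $Y$ from $\kappa$-small objects, and then passes to the full subcategory $I'\subset I$ of those indices $X_i\to Y$ for which $F(X_i)\in\mathcal{D}_0$. If $I'$ is cofinal in $I$ it is again $\kappa$-filtered with the same colimit, and the restricted diagram exhibits $Y$ as a $\kappa$-filtered colimit of objects of $S$, finishing the argument. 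To establish cofinality one compares two presentations of $F(Y)\in\mathcal{D}_0$: the image $\colim_i F(X_i)$ of the presentation of $Y$, and a presentation $\colim_l D_l$ of $F(Y)$ by the $\kappa$-small generators $D_l$ of $\mathcal{D}_0$ furnished by the $\kappa$-accessibility of $\mathcal{D}_0$; a factorization (solution-set) argument then shows that every $X_i\to Y$ extends to some $X_j\to Y$ in $I'$.

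The main obstacle is exactly this cofinality statement, since it requires controlling the interaction of the two colimit presentations across the functor $F$. The standard way around it is to take $\kappa$ sharply larger than the presentability ranks of $\mathcal{C}$ and $\mathcal{D}$ and than the data witnessing membership in $\mathcal{D}_0$ (the generators $D_l$ and the comparison maps), so that each such witness is itself $\kappa$-small; this is precisely the uniformization afforded by the theory of accessible categories that underlies the cited corollary. Granting that $\kappa$ is so chosen, the good indices $I'$ are cofinal, $Y$ is a $\kappa$-filtered colimit of objects of $S$, and therefore $F^{-1}\mathcal{D}_0$ is a $\kappa$-accessible subcategory of $\mathcal{C}$, as asserted.
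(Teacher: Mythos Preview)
The paper does not supply its own proof of this proposition: it is quoted verbatim as Corollary A.2.6.5 of \cite{Lur} and used as a black box. There is therefore nothing in the paper to compare your argument against.

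That said, your sketch has a genuine gap. The statement asserts that $F^{-1}\mathcal{D}_0$ is $\kappa$-accessible for the \emph{same} $\kappa$ appearing in the hypotheses, yet your proposed resolution of the cofinality problem is to ``take $\kappa$ sharply larger than the presentability ranks of $\mathcal{C}$ and $\mathcal{D}$''. This is not a move you are allowed to make: $\kappa$ is handed to you, and you must produce a generating set of $\kappa$-small objects for exactly that $\kappa$. Enlarging $\kappa$ would only establish that $F^{-1}\mathcal{D}_0$ is accessible for some larger cardinal, which is a weaker conclusion (and indeed is the form in which the result is usually stated in the accessible-categories literature). The cofinality of $I'\subset I$ that you need is not automatic without such an enlargement: there is no reason, for a general $\kappa$-small $X_i\to Y$, that one can extend to some $X_j\to Y$ with $F(X_j)\in\mathcal{D}_0$, because $F$ need not send $\kappa$-small objects to $\kappa$-small objects and the generators of $\mathcal{D}_0$ need not interact well with the image of $F$ at level $\kappa$.

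If you want to pursue a self-contained argument, you should either weaken the conclusion to plain accessibility (which is what your enlargement actually proves, and is all the paper ever uses via Corollary~\ref{cor:perfect-1}), or else consult Lurie's proof, which proceeds through a more delicate analysis of the interplay between the accessibility data on both sides.
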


\begin{defin}
Let $\mathcal{C}$ be a locally presentable category.
Then a class $\mathcal{W}$ of morphisms of $\mathcal{C}$, which we will often identify with a full subcategory of $\mathcal{C}^{[1]}$, is said to be perfect if it satisfies the following:
\begin{enumerate}[label={\rm(\arabic*)}]
  \item $\mathcal{W}$ contains all isomorphisms.
  \item $\mathcal{W}$ satisfies the two-out-of-three axiom.
  \item $\mathcal{W}$ is an accessible subcategory of $\mathcal{C}^{[1]}$.
\end{enumerate}
\end{defin}

Notice that by the discussion above, a perfect class is closed under retracts.

\begin{corol}\label{cor:perfect-1}
If $F:\mathcal{C}\to\mathcal{D}$ is a functor between locally presentable category which preserves filtered colimit, then for every perfect class $\mathcal{W}_{\mathcal{D}}\subset\mathcal{D}^{[1]}$, the class $F^{-1}\mathcal{W}_{\mathcal{D}}\subset\mathcal{C}^{[1]}$ is also perfect.
\end{corol}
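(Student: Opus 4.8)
The plan is to realize $F^{-1}\mathcal{W}_{\mathcal{D}}$ as the preimage of $\mathcal{W}_{\mathcal{D}}$ under the functor induced by $F$ on arrow categories, and then to deduce each of the three defining properties of a perfect class from the corresponding property of $\mathcal{W}_{\mathcal{D}}$, invoking the accessibility proposition cited above (Corollary A.2.6.5 of \cite{Lur}) only for the last one.

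First I would introduce the functor $F^{[1]}:\mathcal{C}^{[1]}\to\mathcal{D}^{[1]}$ obtained by applying $F$ objectwise, so that $F^{[1]}$ sends a morphism $f:X\to Y$ of $\mathcal{C}$ to $F(f):F(X)\to F(Y)$ and a commutative square to its image. By construction, an object $f$ of $\mathcal{C}^{[1]}$ lies in $F^{-1}\mathcal{W}_{\mathcal{D}}$ if and only if $F^{[1]}(f)\in\mathcal{W}_{\mathcal{D}}$; that is, $F^{-1}\mathcal{W}_{\mathcal{D}}=(F^{[1]})^{-1}\mathcal{W}_{\mathcal{D}}$ as full subcategories of $\mathcal{C}^{[1]}$. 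Conditions (1) and (2) of perfectness are then immediate from functoriality: since functors preserve isomorphisms and $\mathcal{W}_{\mathcal{D}}$ contains all isomorphisms, every isomorphism of $\mathcal{C}$ lies in $F^{-1}\mathcal{W}_{\mathcal{D}}$; and for the two-out-of-three axiom, given composable $f,g$ with $F(gf)=F(g)F(f)$, if two of $f,g,gf$ lie in $F^{-1}\mathcal{W}_{\mathcal{D}}$ then two of $F(f),F(g),F(gf)$ lie in $\mathcal{W}_{\mathcal{D}}$, so the third does by two-out-of-three for $\mathcal{W}_{\mathcal{D}}$, and applying $F^{-1}$ gives the claim.

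For condition (3) I would appeal to the cited proposition applied to $F^{[1]}$. This requires three ingredients: that $\mathcal{C}^{[1]}$ and $\mathcal{D}^{[1]}$ are again locally presentable, which holds because $[1]$ is small and functor categories out of a small category into a locally presentable category are locally presentable; that $F^{[1]}$ preserves $\kappa$-filtered colimits, which follows because colimits in $\mathcal{C}^{[1]}$ and $\mathcal{D}^{[1]}$ are computed pointwise and $F$ preserves filtered colimits by hypothesis, so $F^{[1]}$ preserves all filtered—hence all $\kappa$-filtered—colimits; and that $\mathcal{W}_{\mathcal{D}}$ is $\kappa$-accessible for some infinite regular cardinal $\kappa$, which is precisely the accessibility clause in the definition of a perfect class. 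With these in hand, the cited proposition, applied to the functor $F^{[1]}$ and the $\kappa$-accessible subcategory $\mathcal{W}_{\mathcal{D}}\subset\mathcal{D}^{[1]}$, shows that $(F^{[1]})^{-1}\mathcal{W}_{\mathcal{D}}=F^{-1}\mathcal{W}_{\mathcal{D}}$ is a $\kappa$-accessible subcategory of $\mathcal{C}^{[1]}$, hence accessible.

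The three conditions together yield perfectness of $F^{-1}\mathcal{W}_{\mathcal{D}}$. The only point demanding any care is the bookkeeping around which regular cardinal to use and the passage from \emph{filtered} to \emph{$\kappa$-filtered} colimits; this is routine once one observes that every $\kappa$-filtered diagram is in particular filtered, so preservation of all filtered colimits automatically supplies preservation of the $\kappa$-filtered colimits demanded by the cited proposition. Thus I do not expect a genuine obstacle here—the corollary is essentially a formal consequence of the accessibility result together with the pointwise computation of colimits in arrow categories.
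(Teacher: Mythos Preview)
Your proposal is correct and is exactly the argument the paper intends: the corollary is stated without proof immediately after the accessibility proposition (Corollary A.2.6.5 in \cite{Lur}) and the definition of perfect class, and your passage to the arrow category $F^{[1]}:\mathcal{C}^{[1]}\to\mathcal{D}^{[1]}$ together with the pointwise computation of colimits there is precisely how one unpacks it. Your remark that preservation of filtered colimits implies preservation of $\kappa$-filtered colimits (since $\kappa$-filtered diagrams are filtered) is the one small point worth making explicit, and you handle it correctly.
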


\begin{exam}
The class $\mathrm{isom}(\mathbf{Set})$ of all bijections in the category $\mathbf{Set}$ is perfect.
Indeed, $\mathrm{isom}(\mathbf{Set})$ is an $\omega$-accessible subcategory of $\mathbf{Set}^{[1]}$ since it is generated by $\{n=n\mid n\in\mathbb{N}\}\subset\mathrm{isom}(\mathbf{Set})$ under $\omega$-filtered colimits, where $\omega$ is the smallest infinite cardinal.
The other conditions are obvious.

Note that this argument works for every locally presentable category.
\end{exam}

\begin{exam}
Define a functor $\Pi_\bullet:\mathbf{SSet}\to\mathbf{Set}$ by
\[
\Pi_\bullet(X) := \coprod_{n=0}^\infty \mathbf{Top}(S^n,|X|)/\text{homotopy}\,,
\]
where $\mathbf{Top}$ is the category of compactly generated spaces and $|\blankdot|:\mathbf{SSet}\to\mathbf{Top}$ is the geometric realization.
Since $S^n$ and $S^n\times I$ are compact, and every simplicial map $X\to Y$ induces a relative cell complex $|X|\to |Y|$, the functor $\Pi_\bullet$ preserves filtered colimits.
Hence the class $\Pi_\bullet^{-1}(\mathrm{isom}(\mathbf{Set}))\subset\mathbf{SSet}^{[1]}$ is perfect.
Of course, $\Pi_\bullet^{-1}(\mathrm{isom}(\mathbf{Set}))$ is the class of weak equivalences in the Quillen model structure.
\end{exam}

\begin{theo}[Proposition A.2.6.13 in \cite{Lur}]
Let $\mathcal{C}$ be a locally presentable category.
Suppose we have a perfect class $\mathcal{W}$ and a set $C_0$ of morphisms of $\mathcal{C}$ such that
\begin{enumerate}[label={\rm(\alph*)}]
  \item $\mathcal{W}$ is closed under pushouts by pushouts of morphisms in $C_0$;
  \item Every morphism $f$ of $\mathcal{C}$ with $C_0\pitchfork f$ belongs to $\mathcal{W}$.
\end{enumerate}
Then there is a left proper cofibrantly generated model structure on $\mathcal{C}$ such that
\begin{itemize}
  \item A morphism $f$ is a weak equivalence if and only if $f\in\mathcal{W}$.
  \item A morphism $f$ is a cofibration if and only if $f\in\mathrm{Sat}(C_0)$, where $\mathrm{Sta}(C_0)$ is the saturated class generated by $C_0$.
  \item A morphism $f$ is a fibration if and only if $(\mathrm{Sat}(C_0)\cap\mathcal{W})\pitchfork f$.
\end{itemize}
\end{theo}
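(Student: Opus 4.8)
The plan is to realize the desired structure as a cofibrantly generated model structure via Jeff Smith's recognition theorem, so the whole argument reduces to producing two weak factorization systems and, crucially, a \emph{set} of generating trivial cofibrations. I would set $\mathrm{cof}:=\mathrm{Sat}(C_0)$, take $\mathcal{W}$ as the weak equivalences, and define $\mathrm{fib}$ to be the class of morphisms $g$ with $(\mathrm{cof}\cap\mathcal{W})\pitchfork g$. Since $\mathcal{C}$ is locally presentable it is bicomplete, and all three classes are automatically closed under retracts: $\mathrm{cof}$ because saturated classes are, $\mathrm{fib}$ because it is defined by a right lifting property, and $\mathcal{W}$ because perfect classes are retract-closed (as recorded just after the definition of perfect class). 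Two-out-of-three for $\mathcal{W}$ is part of the hypothesis.

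First I would build the factorization system associated to $C_0$. Because every object of a locally presentable category is small, the small object argument applies to the set $C_0$ and produces factorizations of every morphism as a map in $\mathrm{cof}$ followed by a map in $\mathrm{inj}(C_0):=\{f\mid C_0\pitchfork f\}$. Hypothesis (b) gives $\mathrm{inj}(C_0)\subseteq\mathcal{W}$, and since every cofibration lifts against $\mathrm{inj}(C_0)$ while $\mathrm{cof}\cap\mathcal{W}\subseteq\mathrm{cof}$, we also get $\mathrm{inj}(C_0)\subseteq\mathrm{fib}$, hence $\mathrm{inj}(C_0)\subseteq\mathrm{fib}\cap\mathcal{W}$. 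The reverse inclusion is the usual retract argument: factor $f\in\mathrm{fib}\cap\mathcal{W}$ as $f=p\,i$ with $i\in\mathrm{cof}$ and $p\in\mathrm{inj}(C_0)$; then $p\in\mathcal{W}$, so two-out-of-three forces $i\in\mathcal{W}$, making $i$ a trivial cofibration; as $f\in\mathrm{fib}$ we have $i\pitchfork f$, and the resulting lift exhibits $f$ as a retract of $p$. Thus $\mathrm{inj}(C_0)=\mathrm{fib}\cap\mathcal{W}$, which is the trivial-fibration half of the structure.

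The heart of the proof, and the step I expect to be the main obstacle, is producing a set $C_1\subseteq\mathrm{cof}\cap\mathcal{W}$ generating the second factorization system $(\mathrm{cof}\cap\mathcal{W},\mathrm{fib})$. Here I would exploit the accessibility packaged into perfectness. Choose a regular cardinal $\kappa$ for which the domains and codomains of $C_0$ are $\kappa$-compact, the subcategory $\mathcal{W}\subseteq\mathcal{C}^{[1]}$ is $\kappa$-accessible, and $\kappa$-compact objects are stable under the pullbacks used below; the relevant stability of accessible classes under filtered-colimit-preserving functors is the content of the cited preservation corollary and of Corollary~\ref{cor:perfect-1}. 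The bounding lemma to establish is that every $f\in\mathrm{cof}\cap\mathcal{W}$ is a retract of a transfinite composite of pushouts of trivial cofibrations with $\kappa$-compact domain and codomain; equivalently, letting $C_1$ be a set of representatives of all such $\kappa$-compact trivial cofibrations, $\mathrm{Sat}(C_1)=\mathrm{cof}\cap\mathcal{W}$. The delicate point is that the $\kappa$-compact approximations of a given trivial cofibration $X\to Y$ must be chosen to lie \emph{simultaneously} in $\mathrm{cof}$ and in $\mathcal{W}$; this is where one uses that $\mathcal{W}$ is closed under $\kappa$-filtered colimits and generated under them by a set, together with hypothesis (a) to guarantee that the intermediate pushout stages remain weak equivalences.

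Granting $C_1$, the small object argument on $C_1$ yields the second factorization, into $\mathrm{Sat}(C_1)$ followed by $\mathrm{inj}(C_1)$. I would then verify $\mathrm{Sat}(C_1)=\mathrm{cof}\cap\mathcal{W}$: the inclusion $\subseteq$ holds because $C_1\subseteq\mathrm{cof}\cap\mathcal{W}$ and, by hypothesis (a) together with two-out-of-three and the colimit-stability of $\mathcal{W}$, the class $\mathrm{cof}\cap\mathcal{W}$ is itself saturated; the inclusion $\supseteq$ is the bounding lemma. Consequently $\mathrm{inj}(C_1)=\mathrm{fib}$, so the second weak factorization system is complete and fibrations are exactly $\mathrm{inj}(C_1)$. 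Assembling the two factorization systems delivers the lifting and factorization axioms; two-out-of-three and retract closure were recorded in the first paragraph; and the identifications $\mathrm{cof}\cap\mathcal{W}=\mathrm{Sat}(C_1)$, $\mathrm{fib}\cap\mathcal{W}=\mathrm{inj}(C_0)$ fix the weak equivalences correctly, with cofibrant generation witnessed by the pair $(C_0,C_1)$. Left properness is then immediate from hypothesis (a), since a pushout of a weak equivalence along a cofibration is assembled from pushouts along pushouts of $C_0$-morphisms and hence stays in $\mathcal{W}$.
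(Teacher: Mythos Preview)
The paper does not prove this theorem at all: it is stated with the attribution ``Proposition A.2.6.13 in \cite{Lur}'' and then immediately applied to construct the model structure in Theorem~\ref{theo:cube-model}. So there is no proof in the paper to compare your proposal against.

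That said, your sketch is essentially the standard Jeff Smith argument that Lurie records in \emph{Higher Topos Theory}, and the outline is sound. The one place where you are somewhat vague is exactly the place you flag as ``the heart of the proof'': the bounding/solution-set argument producing the set $C_1$ of generating trivial cofibrations. You correctly identify that this requires the accessibility of $\mathcal{W}$ (packaged into the definition of ``perfect''), and that hypothesis (a) is what keeps intermediate stages inside $\mathcal{W}$. In Lurie's treatment this step is handled by showing that $\mathrm{cof}\cap\mathcal{W}$, as a full subcategory of the arrow category, is accessible and accessibly embedded, so that one can take $C_1$ to be a set of $\kappa$-compact objects generating it under $\kappa$-filtered colimits; the actual work is an accessibility argument rather than a hands-on cell-by-cell approximation. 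Your phrasing ``every $f\in\mathrm{cof}\cap\mathcal{W}$ is a retract of a transfinite composite of pushouts of trivial cofibrations with $\kappa$-compact domain and codomain'' is the right target, but proving it directly in the way you suggest is more delicate than you indicate, and one typically routes through the accessibility machinery instead. If you were writing this up in full, that is the step that would need the most care.
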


Now we obtain the model structure:

\begin{theo}\label{theo:cube-model}
Let $\mathcal{R}$ be an EZ cubicalizable category equipped with an enlargement $c:\mathcal{R}\to\mathcal{R}$.
Let $\square(\mathcal{R})$ denote the cubicalization, and let $|\blankdot|:\square(\mathcal{R})^\wedge\to\mathbf{SSet}$ be the geometric realization.
Then there is a left proper cofibrantly generated model structure on the category $\square(\mathcal{R})^\wedge$ such that
\begin{itemize}
  \item the class of weak equivalences is that of morphisms $f$ such that $|f|$ is a weak equivalence in $\mathbf{SSet}$;
  \item the class of cofibrations is that of monomorphisms;
  \item the class of fibrations is the right orthogonal class of trivial cofibrations.
\end{itemize}
Moreover, the functors $|\blankdot|:\square(\mathcal{R})^\wedge\to\mathbf{SSet}$ is a left Quillen functor.
\end{theo}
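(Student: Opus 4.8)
The plan is to invoke the criterion recalled above (Proposition A.2.6.13 of \cite{Lur}), applied to the locally presentable category $\mathcal{C}=\square(\mathcal{R})^\wedge$, the class $\mathcal{W}$ of morphisms $f$ with $|f|$ a weak equivalence in $\mathbf{SSet}$, and the set $C_0=S_0$ of Corollary \ref{cor:ez-cellular}. First I would check that $\mathcal{W}$ is perfect. The realization $|\blankdot|:\square(\mathcal{R})^\wedge\to\mathbf{SSet}$ is defined as a left Kan extension, hence cocontinuous, so in particular it preserves filtered colimits; since both categories are presheaf topoi (thus locally presentable) and the class $\mathcal{W}_{\mathbf{SSet}}$ of weak equivalences in $\mathbf{SSet}$ is perfect, Corollary \ref{cor:perfect-1} yields that $\mathcal{W}=|\blankdot|^{-1}(\mathcal{W}_{\mathbf{SSet}})$ is perfect. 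Moreover, since $\square(\mathcal{R})$ is an EZ category by Proposition \ref{prop:cubeEZ}, Corollary \ref{cor:ez-cellular} shows that the saturated class $\mathrm{Sat}(S_0)$ is exactly the class of monomorphisms, which will give the advertised description of cofibrations.

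Next I would verify hypothesis (a), namely that $\mathcal{W}$ is stable under pushout along pushouts of morphisms in $C_0$; this is left properness for $\mathcal{W}$. Given a pushout square with one leg a cofibration and the opposite leg in $\mathcal{W}$, I apply $|\blankdot|$: it preserves the pushout by cocontinuity, and it carries the cofibration to a monomorphism of $\mathbf{SSet}$ by Corollary \ref{cor:real-mono}. Since the Quillen model structure on $\mathbf{SSet}$ is left proper and all its cofibrations are monomorphisms, the realized square exhibits a pushout of a weak equivalence along a cofibration, hence a weak equivalence; therefore the original pushout lies in $\mathcal{W}$.

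The heart of the argument, and the step I expect to be the main obstacle, is hypothesis (b): every $f\colon X\to Y$ with $C_0\pitchfork f$ lies in $\mathcal{W}$. As $\mathrm{Sat}(C_0)$ is the class of all monomorphisms, such an $f$ is right orthogonal to every monomorphism, and I would show it is then a homotopy equivalence with respect to the cylinder $\operatorname{Cyl}$ of Section \ref{subsec:homotopy-cyl}. Lifting $\mathrm{id}_Y$ against $f$ along the monomorphism $\varnothing\hookrightarrow Y$ produces a section $s$ with $fs=\mathrm{id}_Y$. To homotope $sf$ to $\mathrm{id}_X$, the key point is that the endpoint inclusion $\iota^0\amalg\iota^1\colon X\amalg X\to\operatorname{Cyl}X$ is a monomorphism; this can be checked on representables, where $\operatorname{Cyl}\square[r]=\square[c(r)]$ and $\iota^0,\iota^1$ are the two disjoint faces $\iota$ and $\neg\iota$ of codegree $2$. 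Lifting $(\mathrm{id}_X,sf)\colon X\amalg X\to X$ against $f$ over the edge $f\iota^\dagger\colon\operatorname{Cyl}X\to Y$ (the square commutes because $fs=\mathrm{id}_Y$ and $\iota^\dagger\iota^k=\mathrm{id}_X$ for $k=0,1$) yields a homotopy $H\colon\operatorname{Cyl}X\to X$ from $\mathrm{id}_X$ to $sf$. Thus $f$ is a homotopy equivalence, and by the corollary following Proposition \ref{prop:cyl-homotopy} its realization $|f|$ is a weak equivalence, so $f\in\mathcal{W}$.

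With (a) and (b) established, the criterion furnishes the left proper cofibrantly generated model structure whose weak equivalences are $\mathcal{W}$, whose cofibrations form $\mathrm{Sat}(C_0)$, i.e. the monomorphisms, and whose fibrations are the right orthogonal class of trivial cofibrations, as claimed. Finally, $|\blankdot|$ is a left Quillen functor: it is a left adjoint (its right adjoint is the singular functor $Y\mapsto\mathbf{SSet}(|\square[\blankdot]|,Y)$), it sends cofibrations to monomorphisms by Corollary \ref{cor:real-mono}, and by the very definition of $\mathcal{W}$ it sends trivial cofibrations to monomorphisms that are weak equivalences in $\mathbf{SSet}$, hence to trivial cofibrations.
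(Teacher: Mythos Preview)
Your overall strategy coincides with the paper's: both invoke the Lurie criterion with $\mathcal{W}=|\blankdot|^{-1}(\mathcal{W}_{\mathbf{SSet}})$ and $C_0=S_0$, verify perfectness of $\mathcal{W}$ via Corollary~\ref{cor:perfect-1}, and prove condition~(a) exactly as you do, using Corollary~\ref{cor:real-mono} together with left properness of $\mathbf{SSet}$.

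Where you diverge is in condition~(b). The paper does not produce a section and a homotopy directly. Instead it forms a mapping-cylinder object $Q_f$ as the pushout of $\mathrm{id}\amalg f\colon X\amalg X\to X\amalg Y$ along $(\iota^0,\iota^1)\colon X\amalg X\to\operatorname{Cyl}X$, checks that $|Q_f|$ is the ordinary mapping cylinder of $|f|$ so that the retraction $\rho\colon Q_f\to Y$ lies in $\mathcal{W}$, factors $f=\rho i$ with $i\colon X\to Q_f$ a monomorphism, and then lifts $i$ against $f$ to exhibit $f$ as a retract of $\rho$; since perfect classes are retract-closed, $f\in\mathcal{W}$. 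Your route is a bit more direct and avoids the retract trick, while the paper's route avoids appealing to the corollary that realizations of homotopy equivalences are weak equivalences. Both are legitimate.

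There is, however, a gap in your argument. You claim that $(\iota^0,\iota^1)\colon X\amalg X\to\operatorname{Cyl}X$ is a monomorphism because ``this can be checked on representables''. That inference is not valid as stated: $\operatorname{Cyl}=c_!$ is a left Kan extension, so for general $X$ the map in question is a colimit over $\square(\mathcal{R})/X$ of the representable maps $\square[r]\amalg\square[r]\to\square[c(r)]$, and colimits in a presheaf topos do not preserve monomorphisms. A correct argument runs as follows: each $\iota^k$ is a split monomorphism with retraction $\iota^\dagger$, and the two images are disjoint because the natural morphism $\operatorname{Cyl}X\to\operatorname{Cyl}(\ast)=\square[c(0)]$ induced by $X\to\ast$ separates them, sending $\iota^0(X)$ and $\iota^1(X)$ to the two distinct $0$-cells $\iota$ and $\iota^{\neg\iota}$ of $\square[c(0)]$. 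Once this is established, your lifting argument goes through. (Incidentally, the paper's proof that $i\colon X\to Q_f$ is a monomorphism also rests on $(\iota^0,\iota^1)$ being monic, so the two approaches share this hidden step.)
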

\begin{proof}
Let $\mathcal{W}$ be the class of morphisms $f:X\to Y\in\square(\mathcal{R})^\wedge$ such that $|f|$ is a weak equivalence in $\mathbf{SSet}$.
By corollary \ref{cor:perfect-1}, $\mathcal{W}$ is perfect.
We set
\[
C_0:=\{H\backslash\partial\square[r]\hookrightarrow H\backslash\square[r]\mid r\in\square(\mathcal{R}), H<\mathrm{Aut}_{\square(\mathcal{R})}(r)\}\,.
\]
Then the class of monomorphism is generated by $C_0$ as a saturated class.
Moreover, by corollary \ref{cor:real-mono}, $|\blankdot|:\square(\mathcal{R})^\wedge\to\mathbf{SSet}$ preserves monomorphism and the (Quillen) model structure on $\mathbf{SSet}$ is left proper, so that the class $\mathcal{W}$ is closed under pushouts by monomorphisms.
Hence by theorem \ref{theo:cube-model}, it suffices to show that if $f:X\to Y\in\square(\mathcal{R})^\wedge$ is right orghogonal to all monomorphisms, it belongs to $\mathcal{W}$.

Suppose $f:X\to Y$ is a morphism such that $C_0\pitchfork f$.
We define $Q_f\in\square(\mathcal{R})^\wedge$ by the following pushout:
\[
\xymatrix{
  X\amalg X \ar[r]^{\mathrm{id}\amalg f} \ar[d]_{(\iota^0,\iota^1)} \ar@{}[dr]|(.6){\pocorner} & X\amalg Y \ar[d] \\
  \operatorname{Cyl} X \ar[r] & Q_f }
\]
The morphisms
\[
\begin{gathered}
\operatorname{Cyl}X \xrightarrow{\iota^\dagger} X \xrightarrow{f} Y \\
X\amalg Y \xrightarrow{(f,\mathrm{id})} Y
\end{gathered}
\]
induce a morphism $\rho:Q_f\to Y$.
Notice that we have $|\operatorname{Cyl}X|\simeq |X|\times\Delta[1]$, so that the simplicial realization of the pushout square is the following:
\[
\xymatrix{
  |X|\amalg |X| \ar[r]^{\mathrm{id}\amalg |f|} \ar[d]_{(0,1)} \ar@{}[dr]|(.6){\pocorner} & |X|\amalg |Y| \ar[d] \\
  |X|\times\Delta[1] \ar[r] & |Q_f| }
\]
Thus $|Q_f|$ is nothing but the mapping cylinder of $|f|$ and $|\rho|:|Q_f|\to |Y|$ is the retraction, which is a weak equivalence.
It follows that $\rho\in\mathcal{W}$.

Now, write $i:X\xrightarrow{\text{inj}}X\amalg Y\to Q_f$.
Then we obtain a factorization $f=\rho i$.
Since $i$ is a monomorphism, there is a morphism $r:Q_f\to X$ such that the following diagram is commutative:
\[
\xymatrix{
  X \ar@{=}[r] \ar[d]_{i} & X \ar[d]^{f} \\
  Q_f \ar[r]^{\rho} \ar[ur]^{r} & Y }
\]
In other words, we have the following retraction diagram:
\[
\xymatrix{
  X \ar[r]^{i} \ar[d]_{f} & Q_f \ar[r]^{r} \ar[d]_{\rho} & X \ar[d]^{f} \\
  Y \ar@{=}[r] & Y \ar@{=}[r] & Y }
\]
Since $\mathcal{W}$ is closed under retract, we obtain $f\in\mathcal{W}$ as required.
\end{proof}

\subsection{Monoidal structures and thin-powered structure}
\label{subsec:monoidal-cube}
We next consider the monoidal structure on cubicalizations.
Recall that most of important examples of cubical categories admit monoidal structures.
We first see that some of the monoidal structures on cubicalizations $\square(\mathcal{R})$ come from monoidal structures on the original categories $\mathcal{R}$.

\begin{defin}
A cubicalizable category $\mathcal{R}$ is said to be monoidal if it is equipped with a strict monoidal structure $\otimes:\mathcal{R}\times\mathcal{R}\to\mathcal{R}$ such that
\begin{enumerate}[label={\rm(\arabic*)}]
  \item $\mathcal{R}$ has an initial object, which is the unit of the monoidal structure;
  \item if $\delta_1,\delta_2\in\mathcal{R}^+_0$ are morphisms, we have $\delta_1\otimes\delta_2\in\mathcal{R}^+_0$;
  \item the poset representation $\mathcal{R}\to\mathbf{Poset}$ is monoidal; indeed, there is a natural isomorphism $(\mathcal{R}^+_0\!/r_1)\times(\mathcal{R}^+_0\!/r_2)\simeq\mathcal{R}^+_0\!/(r_1\otimes r_2)$ which satisfies the coherent conditions.
\end{enumerate}
\end{defin}

\begin{exam}
Recall that $\widetilde\Delta$ admits a monoidal structure so called ``sum'' of ordinals.
Clearly the monoidal structure restricts to $\widetilde\Delta^+$ and satisfies the conditions above.
Hence $\widetilde\Delta$ is a monoidal cubicalizable category.
If $G$ is a group operad, $\widetilde\Delta G$ also admits a similar monoidal structure, and it is monoidal cubicalizable.

Notice that the thin-powered structure $\widetilde\Delta^+$ is not only a cubicalizable category, but also monoidal with the restricted monoidal structure.
More generally, if a cubicalizable category $\mathcal{R}$ is monoidal, then the monoidal structure restricts to $\mathcal{R}^+_0$ and $\mathcal{R}^+_0$ is also monoidal.
\end{exam}

As expected, if $\otimes$ is a monoidal structure as above, then it preserves $\mathcal{R}^-$ as well as $\mathcal{R}^+_0$.
Suppose $\sigma_1:r_1\to r'_1$ and $\sigma_2:r_2\to r'_2$ are $\mathcal{R}^+_0$-surjections.
Since the poset representation is monoidal, we have $(\sigma_1\otimes\sigma_2)_\ast\simeq (\sigma_1)_\ast\otimes(\sigma_2)_\ast$.
Notice that a morphism $f:r\to r'\in\mathcal{R}$ is $\mathcal{R}^+_0$-surjective if and only if the image of $f_\ast:\mathcal{R}^+_0\!/r\to\mathcal{R}^+_0\!/r'$ contains the greatest element $1:r'\to r'$.
Now we have $1\simeq 1\otimes 1 = (\sigma_1)_\ast(1)\otimes(\sigma_2)_\ast(1)$, so that $\sigma_1\otimes\sigma_2$ is $\mathcal{R}^+_0$-surjective.

Note also that if we write $m:\mathcal{R}^+_0\!/r_1\otimes\mathcal{R}^+_0\!/r_2\simeq\mathcal{R}^+_0\!/(r_1\otimes r_2)$ to be the monoidal isomorphism, for $\delta_1\in\mathcal{R}^+_0\!/r_1$ and $\delta_2\in\mathcal{R}^+_0\!/r_2$, we have
\[
m(\delta_1,\delta_2)
= m((\delta_1)_\ast1,(\delta_2)_\ast 1)
= (\delta_1\otimes\delta_2)_\ast m(1,1)
= (\delta_1\otimes\delta_2)_\ast 1
= \delta_1\otimes\delta_2\,,
\]
where $1$ denotes the greatest element.
Thus joins and meets are componentwisely computed; i.e. $(\delta_1\otimes\delta_2)\vee(\delta'_1\otimes\delta'_2)=(\delta_1\vee\delta'_1)\otimes(\delta_2\vee\delta'_2)$ and so on.
In particular, we have $\neg(\delta_1\otimes\delta_2)=(\neg\delta_1)\otimes(\neg\delta_2)$.

Moreover the isomorphism $(\mathcal{R}^+_0\!/r_1)\times(\mathcal{R}^+_0\!/r_2)\simeq\mathcal{R}^+_0\!/(r_1\otimes r_2)$ implies that every $\delta\in\mathcal{R}^+_0\!/(r_1\otimes r_2)$ is uniquely written as $\delta=\delta_1\otimes\delta_2$ for $\delta_1\in\mathcal{R}^+_0\!/r_1$ and $\delta_2\in\mathcal{R}^+_0\!/r_2$.
More precisely, denote by $0$ the initial object of $\mathcal{R}$.
We also denote by $0:0\to r$ the unique morphism in $\mathcal{R}$ for $r\in\mathcal{R}$.
Then for each $r_1,r_2\in\mathcal{R}$, we can defin the ``inclusions'' $j_i:r_i\to r_1\otimes r_2$ as
\[
\begin{gathered}
j_1:r_1 \simeq r_1\otimes 0 \xrightarrow{\mathrm{id}\otimes 0} r_1\otimes r_2\,,\\
j_2:r_2 \simeq 0\otimes r_2 \xrightarrow{0\otimes \mathrm{id}} r_1\otimes r_2\,.
\end{gathered}
\]
Since monoidal structure preserves $\mathcal{R}^+_0$, $j_1,j_2\in\mathcal{R}^+_0\!/(r_1\otimes r_2)$.
Moreover, by the natural isomorphism $(\mathcal{R}^+_0\!/r_1)\times(\mathcal{R}^+_0\!/r_2)\simeq\mathcal{R}^+_0\!/(r_1\otimes r_2)$, we have $\neg j_1=j_2$ and $\neg j_2=j_1$.
Therefore, for $\delta\in\mathcal{R}^+_0\!/(r_1\otimes r_2)$, we have
\[
\delta
= (\delta\wedge j_1) \vee (\delta\wedge j_2)
= (j_1)_\ast(j_1^\ast\delta)\vee (j_2)_\ast(j_2^\ast\delta)
= ((j_1^\ast\delta)\otimes 0) \vee (0\otimes(j_2^\ast\delta))
= (j_1^\ast\delta)\otimes(j_2^\ast\delta)\,.
\]

The monoidal structure is also functorial for pullbacks:

\begin{lemma}\label{lem:monoidal-pb}
Let $\mathcal{R}$ be a monoidal cubicalizable category.
Then for morphisms $f_i:r'_i\to r_i$ and $\delta_i\in\mathcal{R}^+_0\!/r_i$ for $i=1,2$, we have the formula:
\[
(f_1\otimes f_2)^\ast(\delta_1\otimes\delta_2) = (f_1^\ast\delta_1)\otimes (f_2^\ast\delta_2)\,.
\]
\end{lemma}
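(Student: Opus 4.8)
The plan is to identify $(f_1\otimes f_2)^\ast$ as a right adjoint and exploit uniqueness of adjoints, rather than manipulating pullback squares by hand. By Lemma \ref{lem:galois}, for any morphism $g$ the map $g^\ast$ is the unique right adjoint of $g_\ast$ in the Galois connection $g_\ast\dashv g^\ast$. Hence it suffices to show that the componentwise formula
\[
\delta_1\otimes\delta_2 \mapsto (f_1^\ast\delta_1)\otimes(f_2^\ast\delta_2)
\]
defines a right adjoint to $(f_1\otimes f_2)_\ast$; the asserted identity then follows at once.

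First I would record the two structural facts supplied by the monoidality hypothesis. The natural isomorphism $m\colon(\mathcal{R}^+_0\!/r_1)\times(\mathcal{R}^+_0\!/r_2)\xrightarrow{\sim}\mathcal{R}^+_0\!/(r_1\otimes r_2)$ is an isomorphism in $\mathbf{Poset}$, so the order on the right-hand side is computed componentwise; in particular $\delta_1\otimes\delta_2\le\delta_1'\otimes\delta_2'$ holds precisely when $\delta_1\le\delta_1'$ and $\delta_2\le\delta_2'$. Moreover, since the poset representation $\mathcal{R}\to\mathbf{Poset}$ is monoidal, the map $(f_1\otimes f_2)_\ast$ is identified, under $m$, with the product map $(f_1)_\ast\times(f_2)_\ast$; this is exactly the componentwise description already used for $\mathcal{R}^+_0$-surjections earlier in this subsection, and the same reasoning applies to arbitrary morphisms. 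I would also note that every element of $\mathcal{R}^+_0\!/(r_1\otimes r_2)$ is uniquely of the form $\delta_1\otimes\delta_2$, so the candidate map is well defined.

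With these in hand, the adjunction is a one-line verification. Writing an arbitrary element of $\mathcal{R}^+_0\!/(r_1'\otimes r_2')$ as $\eta_1\otimes\eta_2$, the componentwise order together with the factorization of $(f_1\otimes f_2)_\ast$ gives
\[
(f_1\otimes f_2)_\ast(\eta_1\otimes\eta_2)\le\delta_1\otimes\delta_2
\iff (f_1)_\ast\eta_1\le\delta_1 \ \text{and}\ (f_2)_\ast\eta_2\le\delta_2 ,
\]
and applying the individual Galois connections $(f_i)_\ast\dashv f_i^\ast$ turns the right-hand condition into $\eta_1\le f_1^\ast\delta_1$ and $\eta_2\le f_2^\ast\delta_2$, i.e. into $\eta_1\otimes\eta_2\le(f_1^\ast\delta_1)\otimes(f_2^\ast\delta_2)$. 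Thus the componentwise formula is right adjoint to $(f_1\otimes f_2)_\ast$, and uniqueness of adjoints yields the claim.

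I do not expect a genuine obstacle here: the content is carried entirely by the monoidality of the poset representation, which forces the orders, joins, meets, and the induced maps $f_\ast$ to decompose componentwise. The only point requiring mild care is justifying that $(f_1\otimes f_2)_\ast$ really corresponds to $(f_1)_\ast\times(f_2)_\ast$ for non-surjective $f_i$ as well, but this is immediate from naturality of $m$ combined with the definition $f_\ast(\delta)=\image(f\delta)$, so no new input is needed. An alternative, more hands-on route would be to tensor the two defining pullback squares and check the universal property of Lemma \ref{lem:pbposet} directly, but the adjoint argument is cleaner and avoids verifying that $\otimes$ preserves the relevant pullbacks.
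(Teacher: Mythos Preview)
Your proposal is correct and follows essentially the same route as the paper: both identify $(f_1\otimes f_2)_\ast$ with $(f_1)_\ast\times(f_2)_\ast$ via the monoidality of the poset representation, verify the componentwise Galois connection using the individual adjunctions $(f_i)_\ast\dashv f_i^\ast$ and the componentwise order, and then invoke uniqueness of right adjoints. The paper's proof is just a terser version of your argument.
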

\begin{proof}
Since the functor $\mathcal{R}^+_0\!/(\blankdot)$ is monoidal, for $\delta'_i\in\mathcal{R}^+_0\!/r_i$ for $i=1,2$, the following equivalences hold:
\[
\begin{split}
&(f_1)_\ast(\delta'_1)\otimes (f_2)_\ast(\delta'_2) \le \delta_1\otimes\delta_2 \\
&\iff (f_1)_\ast(\delta'_1)\le \delta_1 \text{ and } (f_2)_\ast(\delta'_2)\le\delta_2 \\
&\iff \delta'_1\le f_1^\ast\delta_1 \text{ and } \delta'_2\le f_2^\ast\delta_2 \\
&\iff \delta'_1\otimes\delta'_2 \le (f_1^\ast\delta_1)\otimes(f_2^\ast\delta_2)\,.
\end{split}
\]
This implies that we have the following Galois connection:
\[
(f_1)_\ast\otimes(f_2)_\ast:\mathcal{R}^+_0\!/(r'_1\otimes r'_2) \xleftrightarrow{} \mathcal{R}^+_0\!/(r_1\otimes r_2):f_1^\ast\otimes f_2^\ast\,.
\]
Now the equation $f_1^\ast\otimes f_2^\ast=(f_1\otimes f_2)^\ast$ follows from the equation $(f_1)_\ast\otimes(f_2)_\ast=(f_1\otimes f_2)_\ast$ and the uniqueness of right adjoinction.
Hence the result follows.
\end{proof}

As a consequence, we have a monoidal structure on the cubicalization:

\begin{corol}\label{cor:cube-monoid-ext}
If $\mathcal{R}$ is a monoidal cubicalizable category, then the monoidal structure extends to the cubicalization $\square(\mathcal{R})$ by setting
\[
(\delta_1^{\xi_1}\sigma_1\gamma_1^\dagger)\otimes(\delta_2^{\xi_2}\sigma_2\gamma_2^\dagger)
= (\delta_1\otimes\delta_2)^{\xi_1\otimes\xi_2}(\sigma_1\otimes\sigma_2)(\gamma_1\otimes\gamma_2)^\dagger\,.
\]
The unit object is the terminal object.
\end{corol}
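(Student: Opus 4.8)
The plan is to exploit the factorization $\square(\mathcal{R}) = M\mathcal{V}(\mathcal{R})$ from corollary \ref{cor:crsmod-inv-cat}, reducing the claim to two compatibility statements: that $\otimes$ extends to a strict monoidal functor on the span category $\mathcal{V}(\mathcal{R})$, and that this extension is compatible with the crossed module $\Lambda^\complement$ of proposition \ref{prop:cube-crsmod}. Granting these, every morphism of $\square(\mathcal{R})$ is a pair $(\xi, f\gamma^\dagger)$ consisting of an element $\xi \in \mathcal{R}^+_0\!/r$ and a $\mathcal{V}(\mathcal{R})$-morphism $f\gamma^\dagger$ subject to $\Lambda^\complement(\xi) f\gamma^\dagger = f\gamma^\dagger$, and I would define $\otimes$ on such pairs by $(\xi_1, \alpha_1)\otimes(\xi_2,\alpha_2) := (\xi_1\otimes\xi_2, \alpha_1\otimes\alpha_2)$, where $\xi_1\otimes\xi_2$ denotes the image of $(\xi_1,\xi_2)$ under the monoidal isomorphism $(\mathcal{R}^+_0\!/r_1)\times(\mathcal{R}^+_0\!/r_2)\simeq\mathcal{R}^+_0\!/(r_1\otimes r_2)$. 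Unwinding the normal form $\delta^\xi\sigma\gamma^\dagger$ of lemma \ref{lem:cube-fact}, this is exactly the formula in the statement.

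First I would extend $\otimes$ to $\mathcal{V}(\mathcal{R})$ by $f_1\gamma_1^\dagger\otimes f_2\gamma_2^\dagger := (f_1\otimes f_2)(\gamma_1\otimes\gamma_2)^\dagger$. Well-definedness and preservation of identities are immediate from strictness of $\otimes$ on $\mathcal{R}$ together with the fact that $\otimes$ preserves $\mathcal{R}^+_0$. The one substantive point is functoriality: composition of spans is computed by the pullbacks of lemma \ref{lem:thinpow-pb}, and lemma \ref{lem:monoidal-pb} together with the uniqueness of those pullbacks shows that the pullback of $f_1\otimes f_2$ along $\delta_1\otimes\delta_2$ is the tensor of the two pullbacks; hence span composition is preserved and $\otimes$ is a strict monoidal functor on $\mathcal{V}(\mathcal{R})$. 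Compatibility with $\Lambda^\complement$ then reduces to the componentwise identities recorded before the statement, namely $\neg(\xi_1\otimes\xi_2) = (\neg\xi_1)\otimes(\neg\xi_2)$ and $(\gamma_1\otimes\gamma_2)^\dagger = \gamma_1^\dagger\otimes\gamma_2^\dagger$, which give $\Lambda^\complement(\xi_1\otimes\xi_2) = \Lambda^\complement(\xi_1)\otimes\Lambda^\complement(\xi_2)$ inside $\mathrm{End}_{\mathcal{V}(\mathcal{R})}(r_1\otimes r_2)$. I would also check that the defining condition survives the tensor: since images and meets are computed componentwise, $\image(\alpha_1\otimes\alpha_2)\wedge(\xi_1\otimes\xi_2) = (\image(\alpha_1)\wedge\xi_1)\otimes(\image(\alpha_2)\wedge\xi_2) = 0$, so $(\xi_1\otimes\xi_2,\alpha_1\otimes\alpha_2)$ is a genuine morphism of $\square(\mathcal{R})$ by corollary \ref{cor:disjointlambda}.

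The main obstacle is functoriality of the extended $\otimes$ on $\square(\mathcal{R})$ itself, since the composition law \eqref{eq:crsmod-inv-composition} in $M\mathcal{V}(\mathcal{R})$ mixes the $\mathcal{V}(\mathcal{R})$-composition, the action $(\blankdot)_\ast$ of the poset representation, the join $\vee$, and $\Lambda^\complement$. The plan is to verify that each of these four ingredients is monoidal: the first by the previous paragraph, the second because the representation $\mathcal{R}^+_0\!/(\blankdot)$ is monoidal on $\mathcal{V}(\mathcal{R})$ (so $(\beta_1\otimes\beta_2)_\ast(\xi_1\otimes\xi_2) = (\beta_1)_\ast\xi_1\otimes(\beta_2)_\ast\xi_2$, using corollary \ref{cor:V-repn} on the $\mathcal{R}$-part and lemma \ref{lem:monoidal-pb} on the $(\mathcal{R}^+_0)^\dagger$-part), the third because joins in $\mathcal{R}^+_0\!/(r_1\otimes r_2)$ are componentwise, and the fourth as just established. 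Substituting into \eqref{eq:crsmod-inv-composition} then shows $(\beta_1\otimes\beta_2)\circ(\alpha_1\otimes\alpha_2) = (\beta_1\circ\alpha_1)\otimes(\beta_2\circ\alpha_2)$. Finally, strict associativity of the extension follows from strict associativity of $\otimes$ on $\mathcal{R}$ together with the componentwise nature of all auxiliary data, and the unit is the initial object $0$ of $\mathcal{R}$: it is the monoidal unit of $\mathcal{R}$ by hypothesis and the terminal object of $\square(\mathcal{R})$ by \ref{sublem:cube-initial} of lemma \ref{lem:cube-r0}, while $\mathcal{R}^+_0\!/0$ is the one-point poset, so tensoring with $\mathrm{id}_0$ acts as the identity on both the span part and the $\xi$-part. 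This completes the verification that $\otimes$ is a strict monoidal structure on $\square(\mathcal{R})$.
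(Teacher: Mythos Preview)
Your proof is correct and follows the same approach the paper implicitly relies on: the paper states this corollary without an explicit proof, presenting it as an immediate consequence of lemma~\ref{lem:monoidal-pb} together with the preceding observations that joins, meets, negations, and $\mathcal{R}^-$ are all computed componentwise under the monoidal isomorphism $\mathcal{R}^+_0\!/(r_1\otimes r_2)\simeq(\mathcal{R}^+_0\!/r_1)\times(\mathcal{R}^+_0\!/r_2)$. Your argument is precisely the unpacking of that ``consequence'': extending $\otimes$ first to $\mathcal{V}(\mathcal{R})$ via the componentwise pullback formula of lemma~\ref{lem:monoidal-pb}, then checking that the crossed-module data $\Lambda^\complement$, the action $(\blankdot)_\ast$, and the join are each monoidal, so that the composition law~\eqref{eq:crsmod-inv-composition} is preserved by the componentwise tensor.
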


Notice that if $0$ is the initial object of $\mathcal{R}$, then it is the terminal object in the cubicalization $\square(\mathcal{R})$; for the unique $0:0\to r\in\mathcal{R}$, $0^\dagger:r\to 0\in\square(\mathcal{R})$ is the unique morphism onto the terminal object.

\begin{exam}
Consider the cubicalizable categories $\widetilde\Delta$, $\widetilde\Delta^+$ and $\widetilde\Delta G$ for a group operad $G$.
They are monoidal, and the induced monoidal structures on cubicalizations coincide with the canonical monoidal structures on them; defined by $\square[m]\otimes\square[n]:=\square[m+n]$.
\end{exam}

In general, it is known that a monoidal structure on a presite induces a monoidal structure on the category of presheaves on it.
Precisely, if $(\mathcal{A},\otimes)$ is a small monoidal category, then for $X,Y\in\mathcal{A}^\wedge$, we set
\[
X\otimes Y
:= \int^{p,q\in\mathcal{A}} \mathcal{A}[p\otimes q]\times X(p)\times Y(q)\,.
\]
We call it the convolution product.
By the co-Yoneda lemma, it is in fact a monoidal structure which is an extension of that on $\mathcal{A}$.
The unit object on $\mathcal{A}^\wedge$ is represented by that of $\mathcal{A}$.

Moreover, the convolution product is a part of adjuncions of two variables.
Indeed, we have
\[
\begin{split}
\mathcal{A}^\wedge(X\otimes Y,Z)
&\simeq \int_{a\in\mathcal{A}}\int_{p,q\in\mathcal{A}} \mathbf{Set}(\mathcal{A}(a,p\otimes q)\times X(p)\times Y(q),Z(a)) \\
&\simeq \int_{p,q\in\mathcal{A}} \mathbf{Set}(X(p)\times Y(q),Z(p\otimes q))\,.
\end{split}
\]
Hence if we set
\[
\begin{split}
\mathrm{Map}_{\mathrm{l}}(X,Z) &:= \int_{p\in\mathcal{A}} Z(p\otimes(\blankdot))^{X(p)}\,, \\
\mathrm{Map}_{\mathrm{r}}(Y,Z) &:= \int_{q\in\mathcal{A}} Z((\blankdot)\otimes q)^{Y(q)}\,,
\end{split}
\]
then we obtain the adjunctions
\[
\mathcal{A}^\wedge(Y,\mathrm{Map}_{\mathrm{l}}(X,Z))
\simeq \mathcal{A}^\wedge(X\otimes Y,Z)
\simeq \mathcal{A}^\wedge(X, \mathrm{Map}_{\mathrm{r}}(Y,Z))\,.
\]
Therefore, $\mathcal{A}^\wedge$ is a closed monoidal category.
To summarise, we obtain the following:

\begin{prop}
If $\mathcal{R}$ is a monoidal cubicalizable category, then the category $\square(\mathcal{R})^\wedge$ admits a closed monoidal structure
\[
X\otimes Y
:= \int^{p,q\in\square(\mathcal{R})} \square[p\otimes q]\times X(p)\times Y(q)
\]
with terminal unit.
Moreover, the simplicial realization functor $\square(\mathcal{R})^\wedge\to\mathbf{SSet}$ is (strongly) monoidal.
\end{prop}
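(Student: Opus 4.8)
The plan is to split the statement into its two halves and to obtain each by reduction to structure that the paper has already assembled. For the existence of the closed monoidal structure, I would simply specialize the general convolution-product construction described just above to the small monoidal category $\square(\mathcal{R})$. By Corollary \ref{cor:cube-monoid-ext} the tensor $\otimes$ of $\mathcal{R}$ extends to a strict monoidal structure on $\square(\mathcal{R})$, so the convolution product
\[
X\otimes Y := \int^{p,q\in\square(\mathcal{R})}\square[p\otimes q]\times X(p)\times Y(q)
\]
is a closed monoidal structure whose unit is the presheaf represented by the monoidal unit of $\square(\mathcal{R})$. Since that unit is the terminal object $0$ and $\square[0]=\square(\mathcal{R})(\blankdot,0)$ is the terminal presheaf (each hom-set to a terminal object being a singleton), the unit of the convolution product is the terminal object of $\square(\mathcal{R})^\wedge$, as claimed.

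For the monoidal comparison, first note that the relevant monoidal structure on $\mathbf{SSet}$ is the cartesian product: the realization sends $\square[r]$ to $I^{d(r)}$ with $I=\Delta[1]$, and the defining isomorphism $(\mathcal{R}^+_0\!/r_1)\times(\mathcal{R}^+_0\!/r_2)\simeq\mathcal{R}^+_0\!/(r_1\otimes r_2)$ of monoidal cubicalizable categories gives $\deg(r_1\otimes r_2)=\deg r_1\cdot\deg r_2$, hence $d(r_1\otimes r_2)=d(r_1)+d(r_2)$. I would therefore produce a natural isomorphism $|X\otimes Y|\simeq |X|\times|Y|$ and reduce its verification to representables. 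Indeed $|\blankdot|$ preserves all small colimits by construction; the convolution product preserves colimits in each variable because $\square(\mathcal{R})^\wedge$ is closed monoidal; and $\times$ preserves colimits in each variable because $\mathbf{SSet}$ is a presheaf topos. Hence both $(X,Y)\mapsto|X\otimes Y|$ and $(X,Y)\mapsto|X|\times|Y|$ are separately cocontinuous in each argument, and a comparison transformation that is invertible when both $X$ and $Y$ are representable is invertible in general.

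On representables the computation is immediate: by the co-Yoneda lemma $\square[p]\otimes\square[q]\simeq\square[p\otimes q]$, so
\[
|\square[p]\otimes\square[q]|\simeq|\square[p\otimes q]|\simeq I^{d(p\otimes q)}=I^{d(p)+d(q)}\simeq I^{d(p)}\times I^{d(q)}\simeq|\square[p]|\times|\square[q]|,
\]
the middle identification being the canonical decomposition $I^{m+n}\simeq I^m\times I^n$. For the unit, $\deg 0=|\mathcal{R}^+_0\!/0|=1$ since $0$ is initial in $\mathcal{R}$, whence $d(0)=0$ and $|\square[0]|=I^0=\Delta[0]$, the unit for $\times$. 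These assemble into the lax structure maps of $|\blankdot|$, and the above shows they are isomorphisms, so the functor is strong monoidal.

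The only genuinely nontrivial point is coherence: one must check that the comparison isomorphism is a monoidal natural transformation, i.e. compatible with the associativity and unit constraints on both sides. I expect this to be the main (though still routine) obstacle, and I would handle it conceptually rather than by diagram chase: the functor $\phi\colon\square(\mathcal{R})\to\mathbf{SSet}$, $r\mapsto I^{d(r)}$, is strong monoidal from $(\square(\mathcal{R}),\otimes)$ to $(\mathbf{SSet},\times)$ precisely because $d$ is additive and the decomposition $I^{m+n}\simeq I^m\times I^n$ is coherent; its colimit-preserving left Kan extension along the Yoneda embedding is exactly $|\blankdot|$, and by the universal property of the convolution product this extension inherits a canonical strong monoidal structure. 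The representable computation above merely exhibits its comparison maps explicitly, so the required coherence is automatic.
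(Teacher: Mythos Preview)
Your proof is correct and follows essentially the same approach as the paper: both invoke the general Day convolution construction for the closed monoidal structure with terminal unit, and both deduce strong monoidality of the realization from cocontinuity of $|\blankdot|$ together with the monoidality of $r\mapsto N(\mathcal{R}^+_0\!/r)\simeq I^{d(r)}$ on $\square(\mathcal{R})$. The paper carries out the second step by a direct coend manipulation rather than your reduction-to-representables argument, and it omits the coherence discussion you supply, but these are differences of presentation rather than substance.
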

\begin{proof}
The first statement follows from the discussion above.
So we show the last one.
Notice that the simplicial realization $|\blankdot|:\square(\mathcal{R})^\wedge\to\mathbf{SSet}$ is a left adjoint functor, so preserves colimits.
Notice also that the composition
\[
\square(\mathcal{R})
\xrightarrow{\mathcal{R}^+_0\!/(\blankdot)} \mathbf{Poset}
\xrightarrow{N} \mathbf{SSet}
\]
is monoidal.
Hence the required result is verified by the direct calculus as follows:
\[
\begin{split}
|X\otimes Y|
&\simeq \left| \int^{p,q\in\square(\mathcal{R})} \square[p\otimes q]\times X(p)\times Y(q) \right| \\
&\simeq \int^{p,q\in\square(\mathcal{R})} |\square[p\otimes q]| \times X(p)\times Y(q) \\
&\simeq \int^{p,q\in\square(\mathcal{R})} |\square[p]|\times |\square[q]| \times X(p) \times Y(q) \\
&\simeq \left(\int^{p\in\square(\mathcal{R})} |\square[p]|\times X(p)\right)\times \left(\int^{q\in\square(\mathcal{R})} |\square[q]|\times Y(q)\right) \\
&\simeq |X|\times |Y|\,.
\end{split}
\]
\end{proof}

Next, we observe that a monoidal cubicalizable category admits enlargements.
Let $\mathcal{R}$ be a monoidal cubicalizable category, and fix an object $t\in\mathcal{R}$ of degree $2$.
We define a functor $c_t:\mathcal{R}\to\mathcal{R}$ by $c_t(r):=r\otimes t$, and a natural transformation $\iota:\mathrm{Id}\to c_t$ by $\iota:r \simeq r\otimes0 \to r\otimes t$.

\begin{lemma}
In the situation above, $c_t$ and $\iota$ form an enlargement on $\mathcal{R}$.
\end{lemma}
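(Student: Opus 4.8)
The plan is to verify the three defining conditions of an enlargement one at a time, using throughout the monoidal isomorphism $\mathcal{R}^+_0\!/(r\otimes t)\simeq(\mathcal{R}^+_0\!/r)\times(\mathcal{R}^+_0\!/t)$ together with the hypothesis $\deg t=2$, which forces $\mathcal{R}^+_0\!/t=\{0\prec 1\}$ to be the two-element Boolean lattice. First I would record that $\iota$ really is a natural transformation: writing $0_t\colon 0\to t$ for the unique map out of the initial object, we have $\iota_r=\mathrm{id}_r\otimes 0_t$ under the identification $r\simeq r\otimes 0$, and functoriality of $\otimes$ gives $c_t(f)\circ\iota_r=f\otimes 0_t=\iota_{r'}\circ f$ for every $f\colon r\to r'$, so the square in condition (3) at least commutes. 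All subsequent arguments rest on the componentwise behaviour of $\le$, $\neg$, and the pushforward/pullback maps with respect to this isomorphism, which were established earlier in the present subsection.

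For condition (1), since $\mathcal{R}$ is cubicalizable the map $0_t$ is a distinguished injection and $\mathrm{id}_r\in\mathcal{R}^+_0$; as $\otimes$ preserves $\mathcal{R}^+_0$ we get $\iota_r\in\mathcal{R}^+_0$. Under the monoidal isomorphism $\iota_r$ corresponds to the pair $1\otimes 0$ (greatest in the $r$-factor, least in the $t$-factor), so, negation being computed componentwise, $\neg\iota_r$ corresponds to $0\otimes 1$. The lower set $(\mathcal{R}^+_0\!/(r\otimes t))_{\le\neg\iota_r}$ is then $\{0\}\times(\mathcal{R}^+_0\!/t)$, of cardinality $\deg t=2$, whence $\operatorname{codeg}\iota_r=2$. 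Condition (2) is immediate from the monoidal axioms: $c_t(\delta)=\delta\otimes\mathrm{id}_t$ and $c_t(\sigma)=\sigma\otimes\mathrm{id}_t$, and since $\otimes$ preserves $\mathcal{R}^+_0$ and (as noted above) also preserves $\mathcal{R}^-$, while $\mathrm{id}_t$ lies in both classes, $c_t$ sends $\mathcal{R}^+_0$ into $\mathcal{R}^+_0$ and $\mathcal{R}^-$ into $\mathcal{R}^-$.

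The main work is condition (3), and I would handle it as follows. Because both vertical legs $\iota_r,\iota_{r'}$ lie in $\mathcal{R}^+_0$, Lemma \ref{lem:thinpow-pb} tells us the commuting square is the unique pullback of the cospan $c_t(r)\xrightarrow{c_t(f)}c_t(r')\xleftarrow{\iota_{r'}}r'$ exactly when its left leg agrees with the pulled-back injection, i.e. when $\iota_r=(c_t(f))^\ast\iota_{r'}$ in $\mathcal{R}^+_0\!/(r\otimes t)$. Writing $\iota_{r'}=1\otimes 0$ and applying Lemma \ref{lem:monoidal-pb} to $f\otimes\mathrm{id}_t$ gives $(c_t(f))^\ast\iota_{r'}=(f^\ast 1)\otimes(\mathrm{id}_t^\ast 0)=1\otimes 0=\iota_r$, since $f^\ast$ preserves the top element and $\mathrm{id}_t^\ast$ is the identity; the top legs then match automatically because $\iota_{r'}$ is monic. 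I expect this to be the only step requiring genuine care, and the two delicate points are orienting the square correctly before invoking Lemma \ref{lem:thinpow-pb} (the distinguished injections are the $\iota$'s, not $f$ and $c_t(f)$) and keeping the coordinate computation consistent with the componentwise formulas for $\neg$ and $(\cdot)^\ast$; neither is hard, but together they are where a careless error would hide.
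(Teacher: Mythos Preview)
Your proof is correct and follows essentially the same route as the paper's, which simply records $\operatorname{codeg}\iota=\deg t=2$ and defers the remaining conditions to the componentwise discussion at the start of the subsection together with Lemma~\ref{lem:monoidal-pb}. You have spelled out in full the computations the paper leaves implicit, including the naturality check and the pullback verification via $(c_t(f))^\ast\iota_{r'}=(f^\ast 1)\otimes(\mathrm{id}_t^\ast 0)=\iota_r$.
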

\begin{proof}
We have $\operatorname{codeg}\iota = \deg t = 2$.
The other conditions follow from the discussion on the beginning of this subsection and lemma \ref{lem:monoidal-pb}.
\end{proof}

Hence by lemma \ref{lem:cube-cyl-c} and lemma \ref{lem:cube-cyl}, we obtain an extension $c_t:\square(\mathcal{R})\to\square(\mathcal{R})$ and the diagram
\[
\mathrm{Id}\xrightrightarrows[\iota^{\neg\iota}]{\iota} c_t \xrightarrow{\iota^\dagger} \mathrm{Id}
\]
on $\square(\mathcal{R})$.
However, by the formula of the extionsion, we have
\[
c_t(\delta^\xi\sigma\gamma^\dagger)
= c_t(\delta)^{\iota_\ast\xi} c_t(\sigma) c_t(\gamma)^\dagger
= (\delta\otimes t)^{\xi\otimes t} (\sigma\otimes t) (\gamma\otimes t)^\dagger
= (\delta^\xi\sigma\gamma^\dagger)\otimes t\,.
\]
In other words, the extension of $c_t$ is nothing but the extension of the functor $(\blankdot)\otimes t$ in corollary \ref{cor:cube-monoid-ext}.
The diagram is isomorphic to the following:
\[
\mathrm{Id}\xrightrightarrows[\mathrm{id}\otimes 0^t]{\mathrm{id}\otimes 0} (\blankdot)\otimes t \xrightarrow{\mathrm{id}\otimes 0^\dagger} \mathrm{Id}\,.
\]
Moreover, the functor $\operatorname{Cyl}:\square(\mathcal{R})^\wedge\to\square(\mathcal{R})^\wedge$ can be comupted as
\[
\operatorname{Cyl} X \simeq X\otimes\square[t]\,.
\]

Finally, we prove the following result:

\begin{prop}\label{prop:cube-model-monoidal}
Let $\mathcal{R}$ be a monoidal EZ cubicalizable category.
Then the model structure on $\square(\mathcal{R})^\wedge$ defined in thorem \ref{theo:cube-model} is compatible with the convolution product.
Consequently, $\square(\mathcal{R})^\wedge$ is a monoidal model category.
\end{prop}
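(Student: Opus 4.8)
The plan is to verify the two defining conditions of a monoidal model category: the pushout-product (Leibniz) axiom and the unit axiom. Write $\mathbb{1}=\square[0]$ for the monoidal unit, which by corollary \ref{cor:cube-monoid-ext} is the terminal object of $\square(\mathcal{R})$ viewed as a representable presheaf, and for $f\colon A\to B$ and $g\colon C\to D$ in $\square(\mathcal{R})^\wedge$ denote by
\[
f\mathbin{\widehat{\otimes}}g\colon (B\otimes C)\amalg_{A\otimes C}(A\otimes D)\to B\otimes D
\]
the induced pushout-product. The unit axiom is immediate: since the cofibrations are exactly the monomorphisms, every object is cofibrant (the map out of the initial presheaf is always a pointwise injection), so $\mathbb{1}$ is cofibrant and the unit axiom holds automatically. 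Thus all the content lies in the pushout-product axiom.

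For the cofibration part I would show that $f\mathbin{\widehat{\otimes}}g$ is a monomorphism whenever $f,g$ are. Because $\otimes$ is cocontinuous in each variable (it is closed, hence a left adjoint in each slot) and the class of monomorphisms is saturated and generated by $C_0=\{H\backslash\partial\square[r]\hookrightarrow H\backslash\square[r]\}$ (corollary \ref{cor:ez-cellular}), a standard reduction lets me treat only generating cofibrations. Cocontinuity of $\otimes$ makes passage to orbit presheaves commute with $\mathbin{\widehat{\otimes}}$, so for $u\colon H\backslash\partial\square[r]\hookrightarrow H\backslash\square[r]$ and $v\colon K\backslash\partial\square[s]\hookrightarrow K\backslash\square[s]$ one gets
\[
u\mathbin{\widehat{\otimes}}v\;\simeq\;(H\times K)\backslash\bigl((\partial\square[r]\hookrightarrow\square[r])\mathbin{\widehat{\otimes}}(\partial\square[s]\hookrightarrow\square[s])\bigr),
\]
where $H\times K$ acts through its image under $\otimes\colon\operatorname{Aut}(r)\times\operatorname{Aut}(s)\to\operatorname{Aut}(r\otimes s)$. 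It then remains to identify the boundary pushout-product. Using corollary \ref{cor:cube-monoid-ext} and the monoidal isomorphism $\mathcal{R}^+_0/(r\otimes s)\simeq(\mathcal{R}^+_0/r)\times(\mathcal{R}^+_0/s)$, a face $\delta=\delta_1\otimes\delta_2$ of $r\otimes s$ is proper exactly when $\delta_1$ or $\delta_2$ is proper, and lemma \ref{lem:cube-face-ord} with lemma \ref{lem:cube-pb} shows that $\partial\square[r]\otimes\square[s]$ and $\square[r]\otimes\partial\square[s]$ meet in $\partial\square[r]\otimes\partial\square[s]$ and jointly exhaust the proper faces. Hence the boundary pushout-product is the inclusion $\partial\square[r\otimes s]\hookrightarrow\square[r\otimes s]$, so $u\mathbin{\widehat{\otimes}}v$ is (an orbit presheaf of) a generating cofibration, in particular a monomorphism.

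For the acyclicity part I would argue directly through the realization, avoiding any further reduction. If $f$ is a trivial cofibration and $g$ a cofibration, the previous step already gives that $f\mathbin{\widehat{\otimes}}g$ is a monomorphism, so it remains to see it is a weak equivalence, i.e. that $|f\mathbin{\widehat{\otimes}}g|$ is a weak equivalence in $\mathbf{SSet}$. Since the realization is strong monoidal and cocontinuous, there is a natural identification $|f\mathbin{\widehat{\otimes}}g|\simeq|f|\mathbin{\widehat{\times}}|g|$, the pushout-product for the cartesian product on $\mathbf{SSet}$. By corollary \ref{cor:real-mono} the realization preserves monomorphisms and, by definition of the model structure, detects weak equivalences, so $|f|$ is a trivial cofibration and $|g|$ a cofibration in the Quillen structure. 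As $\mathbf{SSet}$ is a cartesian monoidal model category, $|f|\mathbin{\widehat{\times}}|g|$ is a trivial cofibration, hence a weak equivalence; therefore $f\mathbin{\widehat{\otimes}}g$ is a weak equivalence, and thus a trivial cofibration. Together with the cofibration part and the unit axiom this exhibits $\square(\mathcal{R})^\wedge$ as a monoidal model category.

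I expect the main obstacle to be the purely combinatorial identification of the boundary pushout-product with $\partial\square[r\otimes s]\hookrightarrow\square[r\otimes s]$: one must check, using the face pullbacks of lemma \ref{lem:cube-pb}, that $\partial\square[r]\otimes\square[s]$ and $\square[r]\otimes\partial\square[s]$ are honest subpresheaves of $\square[r\otimes s]$ whose union is the boundary and whose intersection is $\partial\square[r]\otimes\partial\square[s]$, so that the relevant square is genuinely a pushout of monomorphisms. The orbit bookkeeping---that $H\times K$ acts through $\operatorname{Aut}(r\otimes s)$ and that forming orbits commutes with $\mathbin{\widehat{\otimes}}$---is a secondary point requiring care but no new ideas.
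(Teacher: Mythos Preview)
Your proposal is correct and follows essentially the same route as the paper: the combinatorial identification of the boundary pushout-product with $\partial\square[r\otimes s]\hookrightarrow\square[r\otimes s]$ is exactly the paper's lemma~\ref{lem:bound-tensor}, the orbit compatibility is its corollary~\ref{cor:bound-act-tensor}, and the acyclicity argument via the monoidal realization and the cartesian monoidal model structure on $\mathbf{SSet}$ matches the paper's proof verbatim. The paper does not state the unit axiom separately (since it is automatic with all objects cofibrant), and it proves the boundary identification by rewriting the defining coequalizer rather than by the face-intersection argument you sketch, but these are cosmetic differences.
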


Before the proof, we need some preliminary results.

\begin{lemma}\label{lem:bound-tensor}
Let $\mathcal{R}$ be a monoidal EZ cubicalizable category.
Then for each $r_1,r_2\in\mathcal{R}$, the inclusion $\partial\square[r_1\otimes r_2]\hookrightarrow\square[r_1\otimes r_2]$ is naturally isomorphic to the morphism
\[
\partial\square[r_1]\otimes\square[r_2] \mathop\amalg_{\partial\square[r_1]\otimes\partial\square[r_2]} \square[r_1]\otimes\partial\square[r_2]
\to \square[r_1]\otimes\square[r_2]\,.
\]
\end{lemma}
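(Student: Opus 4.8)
The plan is to realise all four presheaves as subobjects of $\square[r_1\otimes r_2]$ and then recognise the pushout-product as the union of two explicitly described families of walls. First I would use $\square[r_1]\otimes\square[r_2]\simeq\square[r_1\otimes r_2]$ (convolution of representables, via the co-Yoneda lemma) together with the monoidal poset isomorphism $\mathcal{R}^+_0\!/r_1\times\mathcal{R}^+_0\!/r_2\simeq\mathcal{R}^+_0\!/(r_1\otimes r_2)$ to fix notation: every $\delta\in\mathcal{R}^+_0\!/(r_1\otimes r_2)$ splits uniquely as $\delta=\delta_1\otimes\delta_2$, with $\delta$ the identity precisely when both $\delta_i$ are, and $\neg(\delta_1\otimes\delta_2)=\neg\delta_1\otimes\neg\delta_2$. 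Recalling that $\partial\square[r]$ is the subpresheaf of cells $f=\delta^\xi\sigma\gamma^\dagger$ with $\delta$ non-trivial, and that any non-top element of a Boolean lattice lies below a coatom, $\partial\square[r]$ is the union of the images of the codimension-one faces $\delta^\xi$ with $\operatorname{codeg}\delta=2$. Since the coatoms of $\mathcal{R}^+_0\!/(r_1\otimes r_2)\simeq 2^{d(r_1)}\times 2^{d(r_2)}$ are exactly the elements $\delta_1\otimes 1$ (with $\delta_1$ a coatom of $\mathcal{R}^+_0\!/r_1$) and $1\otimes\delta_2$ (with $\delta_2$ a coatom of $\mathcal{R}^+_0\!/r_2$), the walls of $\square[r_1\otimes r_2]$ partition into an ``$r_1$-family'' $A_1$ and an ``$r_2$-family'' $A_2$, and $\partial\square[r_1\otimes r_2]=A_1\cup A_2$.

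Next I would identify the two legs of the pushout-product with $A_1$ and $A_2$. The convolution product is cocontinuous in each variable (it is part of the two-variable adjunction of the preceding subsection) and sends representables to representables, so $-\otimes\square[r_2]$ carries the presentation of $\partial\square[r_1]$ as a union of its codimension-one faces to the corresponding union of the images $\square[s_1\otimes r_2]\to\square[r_1\otimes r_2]$ induced by $\delta_1^{\xi_1}\otimes\mathrm{id}_{r_2}$. By corollary \ref{cor:cube-monoid-ext} this map equals $(\delta_1\otimes 1_{r_2})^{\xi_1\otimes 0}$, and a direct $\operatorname{codeg}$ computation (using $\neg(\delta_1\otimes 1)=\neg\delta_1\otimes 0$) shows $\delta_1\otimes 1_{r_2}$ is again a coatom, with its two ends $\xi_1\in\{0,\neg\delta_1\}$ sent to the two ends $\{0,\neg(\delta_1\otimes 1)\}$ of the corresponding wall. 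Hence $\partial\square[r_1]\otimes\square[r_2]\to\square[r_1\otimes r_2]$ is a monomorphism with image exactly $A_1$, and symmetrically $\square[r_1]\otimes\partial\square[r_2]$ has image $A_2$.

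Finally I would compute the corner. Lemma \ref{lem:cube-pb} describes the intersection of two faces: an $r_1$-wall and an $r_2$-wall meet in the face $(\delta_1\otimes 1)\wedge(1\otimes\delta_2)=\delta_1\otimes\delta_2$, which has $\operatorname{codeg}=4$; so $A_1\cap A_2$ consists of the cells whose leading distinguished injection is $\delta_1\otimes\delta_2$ with both $\delta_i$ non-trivial, and this is precisely the image of $\partial\square[r_1]\otimes\partial\square[r_2]$ (again by cocontinuity and corollary \ref{cor:cube-monoid-ext}). Since $\square(\mathcal{R})^\wedge$ is a topos, the union of two subobjects is their pushout over the intersection, so
\[
\partial\square[r_1]\otimes\square[r_2]\mathop\amalg_{\partial\square[r_1]\otimes\partial\square[r_2]}\square[r_1]\otimes\partial\square[r_2]\;\simeq\;A_1\cup A_2\;=\;\partial\square[r_1\otimes r_2]
\]
compatibly with the maps to $\square[r_1\otimes r_2]$, which is the assertion. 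The main obstacle is the middle step: one must verify carefully that tensoring the boundary inclusion with a representable stays a monomorphism and that its image is the $r_1$-wall family, which means unwinding the convolution coend as the appropriate colimit and checking the face-by-face correspondence, rather than invoking a general (and false) claim that left adjoints preserve monomorphisms.
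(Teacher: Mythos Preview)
Your argument is correct and rests on the same combinatorial core as the paper's proof---namely, that under the isomorphism $\mathcal{R}^+_0\!/(r_1\otimes r_2)\simeq(\mathcal{R}^+_0\!/r_1)\times(\mathcal{R}^+_0\!/r_2)$ the coatoms split into an $r_1$-family and an $r_2$-family, and more generally $\operatorname{codeg}(\delta_1\otimes\delta_2)=\operatorname{codeg}(\delta_1)\cdot\operatorname{codeg}(\delta_2)$. The organisation, however, is different. The paper never passes through the subobject lattice: it takes the coequalizer definition of $\partial\square[r_1\otimes r_2]$, uses the multiplicativity of $\operatorname{codeg}$ to reindex both the $\operatorname{codeg}=2$ and $\operatorname{codeg}=4$ coproducts as products over the two factors, and then expands the resulting coequalizer into (the solid part of) a $3\times 3$ grid whose rows and columns are the coequalizer presentations of $\partial\square[r_i]$ and $\square[r_i]$ tensored together. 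A finality argument and cocontinuity of $\otimes$ collapse this grid to the desired pushout square.

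Your route via $A_1\cup A_2$ and the topos identity ``union $=$ pushout over intersection'' is cleaner to read, but as you note it front-loads one nontrivial check: that $\partial\square[r_1]\otimes\square[r_2]\to\square[r_1\otimes r_2]$ is a monomorphism with the predicted image. The paper's approach sidesteps that issue entirely by staying at the level of colimit presentations, so the monomorphism statement is a \emph{consequence} rather than an ingredient. Conversely, your approach makes the geometric content (walls, intersections of walls) transparent and immediately generalises the picture one has for classical cubes. Either way the essential input is the same bookkeeping of coatoms in a product Boolean lattice, together with cocontinuity of the convolution product.
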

\begin{proof}
By the definition of $\partial\square[r_1\otimes r_2]$, it is given as the coequalizer of the sequence
\[
\resizebox{\hsize}{!}{$\displaystyle%
\raisedunderop\coprod{\substack{(\delta':s'\to r_1\otimes r_2)\in\mathcal{R}^+_0\!/(r_1\otimes r_2)\\\operatorname{codeg}\delta'=4}} \square[s']\times(\mathcal{R}^+_0\!/(r_1\otimes r_2))_{\le\neg\delta'} \\
\rightrightarrows
\raisedunderop\coprod{\substack{(\delta:s\to r_1\otimes r_2)\in\mathcal{R}^+_0\!/(r_1\otimes r_2)\\\operatorname{codeg}\delta=2}} \square[s]\times(\mathcal{R}^+_0\!/(r_1\otimes r_2))_{\le\neg\delta}\,.
$}
\]
We have a natural isomorphism $\mathcal{R}^+_0\!/(r_1\otimes r_2)\simeq (\mathcal{R}^+_0\!/r_1)\times(\mathcal{R}^+_0\!/r_2)$.
Moreover, for $\gamma:t\to r_1\otimes r_2$, there uniquely exists $\gamma_1:t_1\to r_1$ and $\gamma_2:t_2\to r_2$ such that $\gamma=\gamma_1\otimes\gamma_2$.
In this case, we have $\operatorname{codeg}\gamma = (\operatorname{codeg}\gamma_1)(\operatorname{codeg}\gamma_2)$.
Thus $\partial\square[r_1\otimes r_2]$ is also given as the colimit of the solid part of the following diagram:
\[
\let\objectstyle=\displaystyle
\resizebox{\hsize}{!}{$\displaystyle%
\xymatrix{
  \left(\coprod_{\delta'_1:s'_1\to r_1} \square[s'_1]\times[1]^2\right)\otimes\left(\coprod_{\delta'_2:s'_2\to r_2} \square[s'_2]\times[1]^2\right) \ar@<.5ex>@{.>}[r] \ar@<-.5ex>@{.>}[r] \ar@<.5ex>@{.>}[d] \ar@<-.5ex>@{.>}[d] &
  \left(\coprod_{\delta_1:s_1\to r_1} \square[s_1]\times[1]\right)\otimes\left(\coprod_{\delta'_2:s'_2\to r_2} \square[s'_2]\times[1]^2\right) \ar@{.>}[r] \ar@<.5ex>@{.>}[d] \ar@<-.5ex>@{.>}[d] &
  \square[r_1]\otimes\left(\coprod_{\delta'_2:s'_2\to r_2}\square[s'_2]\times[1]^2\right) \ar@<.5ex>[d] \ar@<-.5ex>[d] \\
  \left(\coprod_{\delta'_1:s'_1\to r_1} \square[s'_1]\times[1]^2\right)\otimes\left(\coprod_{\delta_2:s_2\to r_2} \square[s_2]\times[1]\right) \ar@<.5ex>@{.>}[r] \ar@<-.5ex>@{.>}[r] \ar@{.>}[d] &
  \left(\coprod_{\delta_1:s_1\to r_1} \square[s_1]\times[1]\right)\otimes\left(\coprod_{\delta_2:s_2\to r_2} \square[s_2]\times[1]\right) \ar[r] \ar[d] &
  \square[r_1]\otimes\left(\coprod_{\delta_2:s_2\to r_2}\square[s_2]\times[1]\right) \\
  \left(\coprod_{\delta'_1:s'_1\to r_1} \square[s'_1]\times[1]^2\right)\otimes\square[r_2] \ar@<.5ex>[r] \ar@<-.5ex>[r] &
  \left(\coprod_{\delta_1:s_1\to r_1} \square[s_1]\times[1]\right)\otimes\square[r_2] &
  {} }
$}
\]
It is final in the whole diagram.
However, since $\otimes$ commutes with colimits, the colimit of the whole diagram is clearly isomorphic to the pushout of the square
\[
\xymatrix{
  \partial\square[r_1]\otimes\partial\square[r_2] \ar[r] \ar[d] & \square[r_1]\otimes\partial\square[r_2] \\
  \partial\square[r_1]\otimes\square[r_2] & {} }
\]
Finally, we obtain the required isomorphism.
\end{proof}

For $f_1:A_1\to B_1,f_2:A_2\to B_2\in\square(\mathcal{R})^\wedge$, we write
\[
f_1\odot f_2:B_1\otimes A_2\mathop{\amalg}_{A_1\otimes A_2} A_1\otimes B_2 \to B_1\otimes B_2\,.
\]
Notice that $\odot$ induces a closed monoidal structure on the category $(\square(\mathcal{R})^\wedge)^{[1]}$.
We denote by $\beta_r:\partial\square[r]\hookrightarrow\square[r]$ the natural inclusion.
Then lemma \ref{lem:bound-tensor} asserts that $\beta_{r_1}\odot\beta_{r_2}\simeq \beta_{r_1\otimes r_2}$.
Moreover, for $r\in\mathcal{R}$ and $H<\mathrm{Aut}_{\square(\mathcal{R})}(r)$, we write
\[
\beta^H_r:H\backslash\partial\square[r] \hookrightarrow H\backslash\square[r]\,.
\]

\begin{corol}\label{cor:bound-act-tensor}
In the above situation, we have an isomorphism
\[
\beta^{H_1}_{r_1}\odot \beta^{H_2}_{r_2}
\simeq \beta^{H_1\times H_2}_{r_1\otimes r_2}\,.
\]
\end{corol}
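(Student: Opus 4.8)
The plan is to deduce the corollary from Lemma \ref{lem:bound-tensor} by quotienting out the group actions, using that both the convolution product $\otimes$ and hence the induced pushout-product $\odot$ preserve colimits in each variable.

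First I would record the key observation that taking orbits commutes with $\otimes$. Concretely, for presheaves $X,Y\in\square(\mathcal{R})^\wedge$ carrying left actions of groups $H_1,H_2$ respectively, there is a natural isomorphism
\[
(H_1\backslash X)\otimes(H_2\backslash Y)\simeq (H_1\times H_2)\backslash(X\otimes Y),
\]
where $H_1\times H_2$ acts factorwise. This holds because $H_i\backslash(\blankdot)$ is the colimit over the one-object groupoid $BH_i$, and the convolution product preserves colimits in each variable (being a left adjoint of two variables, as established above): quotienting the first factor gives $H_1\backslash(X\otimes(H_2\backslash Y))$, quotienting the second gives $H_1\backslash H_2\backslash(X\otimes Y)$, and the two commuting actions assemble into the $H_1\times H_2$-action.

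Second I would note that the same argument applies verbatim in the arrow category $(\square(\mathcal{R})^\wedge)^{[1]}$. Since $\odot$ is built from $\otimes$ and pushouts, it preserves colimits in each variable, and $\beta^H_r$ is precisely the orbit arrow $H\backslash\beta_r$: the action of $H<\mathrm{Aut}_{\square(\mathcal{R})}(r)$ on $\square[r]$ by post-composition restricts to $\partial\square[r]$ by the stability recorded just before Corollary \ref{cor:ez-cellular}, compatibly with the inclusion. Hence
\[
\beta^{H_1}_{r_1}\odot\beta^{H_2}_{r_2}\simeq (H_1\times H_2)\backslash(\beta_{r_1}\odot\beta_{r_2}).
\]
I would then invoke Lemma \ref{lem:bound-tensor} to identify $\beta_{r_1}\odot\beta_{r_2}\simeq\beta_{r_1\otimes r_2}$, and check that this isomorphism is $(H_1\times H_2)$-equivariant with respect to the homomorphism $\mathrm{Aut}(r_1)\times\mathrm{Aut}(r_2)\to\mathrm{Aut}(r_1\otimes r_2)$, $(\pi_1,\pi_2)\mapsto\pi_1\otimes\pi_2$, induced by the strict monoidal functor $\otimes$. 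Granting equivariance, quotienting yields $(H_1\times H_2)\backslash\beta_{r_1\otimes r_2}=\beta^{H_1\times H_2}_{r_1\otimes r_2}$, which is the claim.

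The hard part will be the equivariance in the last step: one must verify that the isomorphism produced in Lemma \ref{lem:bound-tensor} — assembled from the natural identifications $\square[r_1]\otimes\square[r_2]\simeq\square[r_1\otimes r_2]$ and $\mathcal{R}^+_0\!/(r_1\otimes r_2)\simeq(\mathcal{R}^+_0\!/r_1)\times(\mathcal{R}^+_0\!/r_2)$ — intertwines the factorwise action of $\mathrm{Aut}(r_1)\times\mathrm{Aut}(r_2)$ with the single action of $\pi_1\otimes\pi_2$ on $r_1\otimes r_2$. This reduces to the naturality of those two identifications in $r_1$ and $r_2$, together with the compatibility of the $\partial\square$-construction with automorphisms; since every ingredient is defined functorially from the monoidal poset representation $\mathcal{R}^+_0\!/(\blankdot)$, the required equivariance follows, but it is the one point that genuinely uses more than the formal colimit-preservation of $\otimes$ and $\odot$.
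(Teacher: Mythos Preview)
Your proposal is correct and follows essentially the same approach as the paper: both identify $\beta^{H_i}_{r_i}$ with $H_i\backslash\beta_{r_i}$, use that $\odot$ preserves colimits in each variable to obtain $(H_1\backslash\beta_{r_1})\odot(H_2\backslash\beta_{r_2})\simeq(H_1\times H_2)\backslash(\beta_{r_1}\odot\beta_{r_2})$, and then invoke Lemma~\ref{lem:bound-tensor} together with the embedding $\mathrm{Aut}(r_1)\times\mathrm{Aut}(r_2)\hookrightarrow\mathrm{Aut}(r_1\otimes r_2)$ to pass to quotients. You are more explicit about the equivariance of the isomorphism in Lemma~\ref{lem:bound-tensor}, which the paper leaves implicit in the phrase ``$H_1\times H_2$ acts on $\beta_1\odot\beta_2$ componentwisely''; this is a fair point to spell out, but it is not a different route.
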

\begin{proof}
First notice that $H_i$ acts on $\beta_{r_i}$ in the category $(\square(\mathcal{R})^\wedge)^{[1]}$ for $i=1,2$.
Hence we obtain an isomorphism $\beta^{H_i}_{r_i}\simeq H_i\backslash\beta_{r_i}$.
Now we have an embedding $\mathrm{Aut}_{\square(\mathcal{R})}(r_1)\times\mathrm{Aut}_{\square(\mathcal{R})}(r_2)\to \mathrm{Aut}_{\square(\mathcal{R})}(r_1\otimes r_2)$, so that we can identify $H_1\times H_2$ with a subgroup of $\mathrm{Aut}_{\square(\mathcal{R})}(r_1\otimes r_2)$.
In the case, $H_1\times H_2$ acts on $\beta_1\odot\beta_2$ componentwisely.
Hence, by lemma \ref{lem:bound-tensor}, we have isomorphisms
\[
\begin{split}
\beta^{H_1}_{r_1}\odot \beta^{H_2}_{r_2}
&\simeq (H_1\backslash\beta_{r_1})\odot(H_2\backslash\beta_{r_2}) \\
&\simeq (H_1\times H_2)\backslash(\beta_{r_1}\odot\beta_{r_2}) \\
&\simeq (H_1\times H_2)\backslash\beta_{r_1\otimes r_2} \\
&\simeq \beta_{r_1\otimes r_2}^{H_1\times H_2}\,.
\end{split}
\]
Composing them, and we obtain the required isomorphism.
\end{proof}

\begin{proof}[Proof of proposition \ref{prop:cube-model-monoidal}]
Recall that a monoidal structure on $\square(\mathcal{R})$ is said to be compatible with the model structure if for every cofibrations $f_1,f_2$, the morphism $f_1\odot f_2$ is also a cofibration which is moreover trivial if either $f_1$ or $f_2$ is so.

We first show that if $f_1,f_2$ are cofibrations, or monomorphisms, then so is $f_1\odot f_2$.
It follows from corollary \ref{cor:ez-cellular} and corollary \ref{cor:bound-act-tensor}.
Indeed, by corollary \ref{cor:ez-cellular}, if we set
\[
S_0:=\{\beta^H_r:H\backslash\partial\square[r]\hookrightarrow H\backslash\square[r]\mid r\in\square(\mathcal{R}),\,H<\mathrm{Aut}_{\square(\mathcal{R})}(r)\}\,,
\]
then the class of monomorphisms in $\square(\mathcal{R})^\wedge$ coincides with the saturated class $\mathrm{Sat}(S_0)$ generated by $S_0$.
Since the category $\square(\mathcal{R})^\wedge$ is locally presentable, the small object argument implies that $\mathrm{Sat}(S_0)={}^\pitchfork(S_0^\pitchfork)$.
Now $\odot$ is a closed monoidal structure on $(\square(\mathcal{R}))^\wedge$, for a class $\mathcal{M}$ of morphisms, we have
\[
\begin{split}
&(S_0\odot S_0)\pitchfork\mathcal{M} \\
&\iff S_0 \pitchfork \mathrm{Map}^\odot_{\mathrm{r}}(S_0,\mathcal{M}) \\
&\iff \mathrm{Sat}(S_0) \pitchfork \mathrm{Map}^\odot_{\mathrm{r}}(S_0,\mathcal{M}) \\
&\iff (\mathrm{Sat}(S_0)\odot S_0) \pitchfork \mathcal{M} \\
&\iff S_0 \pitchfork \mathrm{Map}^\odot_{\mathrm{l}}(S_0,\mathcal{M}) \\
&\iff \mathrm{Sat}(S_0) \pitchfork \mathrm{Map}^\odot_{\mathrm{l}}(S_0,\mathcal{M}) \\
&\iff (\mathrm{Sat}(S_0)\odot\mathrm{Sat}(S_0))\pitchfork\mathcal{M}\,.
\end{split}
\]
One can verify $\mathrm{Sat}(S_0)\odot\mathrm{Sat}(S_0)\subset\mathrm{Sat}(S_0)$ by setting $\mathcal{M}$ to be the class of trivial fibrations, and using corollary \ref{cor:bound-act-tensor}.

Finally, suppose $f_1:A_1\to B_1$ and $f_2:A_2\to B_2$ are monomorphisms such that either $f_1$ or $f_2$ is trivial.
Since the simplicial realization is monoidal and preserves colimits, we have
\[
|f_1\odot f_2|\simeq |f_1|\odot |f_2|:(|B_1|\times|A_2|)\mathop\amalg_{|A_1|\times |A_2|} (|A_1|\times |B_2|) \to |B_1|\times|B_2|\,.
\]
By corollary \ref{cor:real-mono}, $|f_1|$ and $|f_2|$ are cofibrations and either $|f_1|$ or $|f_2|$ is trivial in $\mathbf{SSet}$.
Then since the cartesian product on $\mathbf{SSet}$ is compatible with the Quillen model structure, $|f_1|\odot|f_2|$ is a trivial cofibration.
Thus we conclude that $f_1\odot f_2$ is a trivial cofibration in $\square(\mathcal{R})^\wedge$ in the model structure defined in theorem \ref{theo:cube-model}.
\end{proof}

\subsection{Regularity}
The notion of regularity is really important when one consider model structures on a presheaf category.
We here recall the definition:

\begin{defin}
Let $\mathcal{A}$ be a small category.
Then a model structure on $\mathcal{A}^\wedge$ is said to be regular if for each object $X\in\mathcal{A}^\wedge$, the natural morphism
\[
\hocolim_{(\mathcal{A}[a]\to X)\in\mathcal{A}/X} \mathcal{A}[a] \,\to X
\]
is a weak equivalence.
\end{defin}

Recall that, in the case that $\mathcal{C}$ is a cofibrantly generated model category, a homotopy colimit on $\mathcal{C}$ over a small category $\mathcal{I}$ is defined to be the derived functor of the left Quillen functor $\colim:\mathcal{C}^{\mathcal{I}}_{\text{proj}}\to\mathcal{C}$, where $\mathcal{C}^{\mathcal{I}}_{\text{proj}}$ is the functor category together with the projective model structure.
However, the homotopy colimit of a small diagram $D:\mathcal{I}\to\mathcal{C}$ is also computed as follows:
First, choose a model structure on $\mathcal{C}^I$ such that the adjunction
\[
\colim:\mathcal{C}^I\xrightleftarrows{} \mathcal{C}:\mathrm{const}
\]
forms a Quillen adjunction; e.g. consider the projective model structure.
Choose also a cofibrant replacement $\widetilde D\to D$ of $D$ in the model structure.
Then we set
\[
\hocolim_{i\in I}D(i) := \colim_{i\in I} \widetilde D(i)\,.
\]
It is a consequence of Ken Brown's lemma and the two-out-of-six property that the homotopy types of the homotopy colimit is independent of the choice of the model structure and the fibrant replacement.
Notice that although homotopy colimits are classically defined in a functorial way (e.g. \cite{BK} or \cite{Hir}), the homotopy colimits computed in the way above may not be functorial, that is, the above computation of homotopy colimits may not define any functor $\hocolim:\mathbf{Cat}/\mathcal{C}\to\mathcal{C}$.
Indeed, the homotopy colimits above are actually defined in the homotopy category.
For the equivalence to the classical one, see \cite{Hir} or \cite{Jar}.

For example, let $\mathcal{C}$ be a cofibrantly generated model category and $C$ be a small category.
Regard an object $K\in\mathcal{C}$ as a constant functor $K:C\to\mathcal{C}$.
Then we define an object $\mathcal{N}_{\mathsf{H}}(C;K)\in\mathcal{C}$ by the following homotopy colimit:
\[
\mathcal{N}_{\mathsf{H}}(C;K) := \hocolim_{a\in C} K\,.
\]
We call it the internal nerve of $C$ in $\mathcal{C}$ at $K$.
For a terminal object $\ast\in\mathcal{C}$, we write $\mathcal{N}_{\mathsf{H}}(C):=\mathcal{N}_{\mathsf{H}}(C;\ast)$.

The internal nerve catches a homotopical information of a category in some sense.
In fact, we have

\begin{lemma}\label{lem:intnerve-contract}
Let $\mathcal{C}$ be a cofibrantly generated model category and $K\in\mathcal{C}$.
If $D$ is a small category with a terminal object, then the natural morphism
\[
\mathcal{N}_{\mathsf{H}}(D;K) \to K
\]
is a weak equivalence in $\mathcal{C}$.
In particular, the unique morphism $\mathcal{N}_{\mathsf{H}}(D)\to\ast$ is a weak equivalence.
\end{lemma}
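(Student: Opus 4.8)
The plan is to exhibit the canonical morphism $\mathcal{N}_{\mathsf{H}}(D;K)\to K$ as one half of a homotopy equivalence, its inverse being induced by the terminal object, and then to conclude via the homotopical-category tools of section \ref{sec:H-cats}. First I would unwind the definition. Regarding $K$ as the constant diagram $\underline{K}:D\to\mathcal{C}$, we have $\mathcal{N}_{\mathsf{H}}(D;K)=\hocolim_{a\in D}\underline{K}$, and the natural morphism to $K$ is the map $p_\ast$ induced on homotopy colimits by the unique functor $p:D\to\ast$, under the identification $\hocolim_\ast\underline{K}\simeq K$ (the homotopy colimit over the one-point category being the identity on the homotopy category). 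Dually, the terminal object $t\in D$ gives a functor $j:\ast\to D$ with $j(\ast)=t$, hence a morphism $j_\ast:K\simeq\hocolim_\ast\underline{K}\to\mathcal{N}_{\mathsf{H}}(D;K)$.

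Next I would show that $p_\ast$ is a weak equivalence by producing a homotopy inverse. Since $pj=\mathrm{id}_\ast$, functoriality of the homotopy colimit gives $p_\ast j_\ast=\mathrm{id}_K$. For the other composite, note that $jp=c_t:D\to D$ is the constant functor at $t$, and that the unique arrows $a\to t$ assemble into a natural transformation $\alpha:\mathrm{Id}_D\Rightarrow c_t$. Encoding $\alpha$ as a functor $D\times[1]\to D$ and passing to the homotopy colimit of the constant diagram yields a homotopy, in the sense of section \ref{sec:H-cats}, between $(\mathrm{Id}_D)_\ast=\mathrm{id}$ and $(c_t)_\ast=j_\ast p_\ast$ on $\mathcal{N}_{\mathsf{H}}(D;K)$. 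Thus $j_\ast$ is a homotopy inverse of $p_\ast$, and corollary \ref{cor:H-inverse} gives that $p_\ast:\mathcal{N}_{\mathsf{H}}(D;K)\to K$ is a weak equivalence; lemma \ref{lem:homotopic-weq} together with the two-out-of-six property could be used in place of corollary \ref{cor:H-inverse}. The ``in particular'' statement is the special case $K=\ast$.

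The main obstacle is the middle step: turning the natural transformation $\alpha$ into an honest cylinder-homotopy at the level of homotopy colimits. Concretely one must check that $\hocolim_{D\times[1]}\underline{K}$ serves as a cylinder object on $\mathcal{N}_{\mathsf{H}}(D;K)$, with the endpoint inclusions $D\times\{0\},D\times\{1\}\hookrightarrow D\times[1]$ inducing $\mathrm{id}$ and $j_\ast p_\ast$ respectively; this rests on the homotopy invariance of homotopy colimits and on the compatibility $\hocolim_{D\times[1]}\simeq\hocolim_D\hocolim_{[1]}$, for which I would appeal to the standard theory in \cite{Hir}.

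An alternative that avoids cylinders altogether is to observe that $j$ is homotopy cofinal: for each $a\in D$ the comma category $a\downarrow j$ has a single object, namely the unique arrow $a\to t$, and hence contractible nerve. Quoting the homotopy-cofinality invariance of $\hocolim$ from \cite{Hir} then shows directly that $j_\ast:K\simeq\hocolim_\ast\underline{K}\to\mathcal{N}_{\mathsf{H}}(D;K)$ is a weak equivalence, after which the identity $p_\ast j_\ast=\mathrm{id}_K$ and the two-out-of-three consequence of the two-out-of-six property finish the proof. I would likely present this cofinality route as the clean argument and relegate the cylinder computation to a remark, since it fits the surrounding discussion of regularity and internal nerves.
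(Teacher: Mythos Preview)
Your proposal is correct, but the paper's proof is much shorter and more direct. The paper simply unfolds its own definition of $\hocolim$: choose a projective-cofibrant replacement $\widetilde K\to K$ of the constant diagram in $\mathcal{C}^D$, so that $\mathcal{N}_{\mathsf{H}}(D;K)=\colim_{d\in D}\widetilde K(d)$; since $t\in D$ is terminal, this ordinary colimit is just $\widetilde K(t)$; and since the replacement is a pointwise weak equivalence, $\widetilde K(t)\to K(t)=K$ is a weak equivalence. That's the whole argument.

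Your cofinality route and the paper's argument are really the same idea at different levels of abstraction: the inclusion $\{t\}\hookrightarrow D$ is (homotopy) final, so the colimit is computed at $t$. The paper exploits this at the level of \emph{ordinary} colimits applied to a cofibrant replacement, which avoids having to cite homotopy-cofinality theorems or build cylinder objects out of $\hocolim_{D\times[1]}$. Your cylinder approach, while workable, brings in Fubini for homotopy colimits and the machinery of section~\ref{sec:H-cats}, none of which is needed here; the cofinality alternative is cleaner but still imports a theorem where a two-line computation suffices. Given that the paper has just set up $\hocolim$ via cofibrant replacements precisely so that such computations become trivial, the direct argument is the natural one in context.
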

\begin{proof}
Let $\widetilde K\to K$ be a cofibrant replacement in $\mathcal{C}^D$ in the projective model structure.
If $t\in D$ is a terminal object, then we have
\[
\hocolim_{D} K \simeq \colim_{d\in D} \widetilde K(d) \simeq \widetilde K(t)\,.
\]
Since the natural transformation $\widetilde K\to K$ is a pointwise weak equivalence, we obtain a sequence of weak equivalences:
\[
\mathcal{N}_{\mathsf{H}}(D;K)
\simeq \hocolim_D K
\simeq \widetilde K(t)
\simeq K(t) = K\,.
\]
\end{proof}

Notice that since the homotopy colimit is only defined up to homotopy, $\mathcal{N}_{\mathsf{H}}(C;K)$ is not functorial with respect $C\in\mathbf{Cat}$ in general.
However, if we have a functor $\Phi:\mathcal{A}\to\mathbf{Cat}$ together with a pointwise fibration $\varphi:\Phi\to C$, see \cite{Bor} for the definition, to a constant functor at $C\in\mathbf{Cat}$, then we can assume $\mathcal{N}_{\mathsf{H}}(\Phi(\blankdot);K):\mathcal{A}\to\mathcal{C}$ is a functor; indeed, since $\varphi_a:\Phi(a)\to C$ is a fibration, the adjoint pair
\[
\varphi_a^\ast:\mathcal{C}^{C} \rightleftarrows \mathcal{C}^{\Phi(a)}:(\varphi_a)_\ast
\]
arising from the right Kan extension is a Quillen pair.
It follows that if one choose a cofibrant replacement $\widetilde K\to K$ in $\mathcal{C}^C$, for each $a\in\mathcal{A}$, $\varphi_a^\ast\widetilde K\to K$ is also a cofibrant replacement in $\mathcal{C}^{\Phi(a)}$.
Hence we have
\[
\hocolim_{\Phi(a)} K \simeq \colim_{c\in\Phi(a)} \widetilde K(\varphi_a(c))
\]
which is clearly functorial with respect to $a\in\mathcal{A}$.
In particular, thanks to the forgetful functor $\mathcal{A}/X\to\mathcal{A}$, we have a functor
\[
\mathcal{N}_{\mathsf{H}}(\mathcal{A}/(\blankdot);K):\mathcal{A}^\wedge\to\mathcal{C}\,.
\]
It is a special case of Grothendieck construction.

Let go back to the discussion about regularity.
Generally, for a fixed model structure on $\mathcal{A}$, we will say $X\in\mathcal{A}^\wedge$ is regular if the natural morphism
\[
\hocolim_{(\mathcal{A}[a]\xrightarrow{x}X)\in\mathcal{A}/X}\mathcal{A}[a] \,\to X
\]
is a weak equivalence.
Then some calculations of homotopy colimits gives rise to good criteria:

\begin{prop}\label{prop:regular-criteria}
Let $\mathcal{A}$ be a small category.
Suppose $\mathcal{A}^\wedge$ admits a model structure such that
\begin{enumerate}[label={\rm(\alph*)}]
  \item every object in $\mathcal{A}^\wedge$ is cofibrant;
  \item for each $a\in\mathcal{A}$, the morphism $\mathcal{A}[a]\to\ast$ is a weak equivalence.
\end{enumerate}
Then the following hold:
\begin{enumerate}[label={\rm(\arabic*)}]
  \item An arbitrary coproduct of regular objects is regular.
  \item If $X_0\leftarrow X_1\xrightarrow{i} X_2$ is a diagram of regular objects in $\mathcal{A}^\wedge$ with $i$ a cofibration, then $X_0\amalg_{X_1}X_2$ is regular.
  \item If $X_0\to X_1\to\dots$ is a $\lambda$-sequence of cofibrations for an ordinal $\lambda$ such that each term $X_\alpha$ is regular, then its colimit $\colim_{\alpha<\lambda} X_\alpha$ is also regular.
\end{enumerate}
\end{prop}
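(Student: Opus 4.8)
The plan is to isolate a single functorial gadget and reduce all three assertions to the homotopy invariance of homotopy colimits. Write
\[
LX := \hocolim_{(\mathcal{A}[a]\to X)\in\mathcal{A}/X}\mathcal{A}[a]
\]
for the homotopy colimit appearing in the definition of regularity, together with its comparison map $\pi_X\colon LX\to X$ to the strict colimit; by the co-Yoneda lemma the strict colimit of this diagram is exactly $X$, so that $X$ is regular precisely when $\pi_X$ is a weak equivalence. The first observation is that hypotheses (a) and (b) let me replace the representable diagram $(\mathcal{A}[a]\to X)\mapsto\mathcal{A}[a]$ by the constant diagram at the terminal object: by (b) the canonical natural transformation to the constant diagram at $\ast$ is objectwise a weak equivalence, and by (a) every term is cofibrant, so homotopy invariance of the homotopy colimit gives $LX\simeq\mathcal{N}_{\mathsf{H}}(\mathcal{A}/X)$. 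Thus I may compute the homotopy type of $LX$ through the functorial internal nerve recalled before lemma \ref{lem:intnerve-contract}, while keeping $\pi_X$ on the representable side.

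The two structural inputs are as follows. First, $\txtint^{\mathcal{A}}=\mathcal{A}/(\blankdot)$ is a left adjoint (to $N_{\mathcal{A}}$), hence preserves all colimits; so in each of the three situations the index category $\mathcal{A}/X$ is the corresponding colimit in $\mathbf{Cat}$ of the $\mathcal{A}/X_j$. Second, whenever a structure map $X_1\to X_2$ is a cofibration, that is, a monomorphism, the induced functor $\mathcal{A}/X_1\to\mathcal{A}/X_2$ is a sieve inclusion: it is fully faithful, injective on objects, and closed under precomposition, since if a cell of $X_2$ factors through $X_1$ then so does any of its restrictions. On the target side, hypothesis (a) guarantees that $\coprod_i X_i$ is a homotopy coproduct, that a pushout along a cofibration is a homotopy pushout, and that a transfinite composite of cofibrations is a homotopy colimit.

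With these in hand, the three parts follow from one gluing lemma: the assignment $X\mapsto LX$ carries coproducts of slice categories to coproducts, and pushouts (resp. transfinite composites) of slice categories along sieve inclusions to the corresponding homotopy pushouts (resp. homotopy colimits) in $\mathcal{A}^\wedge$, compatibly with the comparison maps $\pi$. For (1) this is immediate: $\mathcal{A}/\coprod_i X_i=\coprod_i\mathcal{A}/X_i$, a homotopy colimit over a disjoint union is the coproduct of the homotopy colimits, and $\coprod_i\pi_{X_i}$ is a weak equivalence because the coproduct functor is left Quillen and every object is cofibrant. For (2) and (3), once the gluing lemma identifies $L(\colim)$ with the homotopy colimit of the $LX_j$ over the same shape, and the target is identified with the homotopy colimit of the $X_j$ over that shape, the map $\pi$ becomes the induced map of homotopy colimits of the objectwise weak equivalences $\pi_{X_j}$, hence a weak equivalence.

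The main obstacle is the gluing lemma itself, since $L$ is a homotopy colimit over a base category that varies with $X$, and I must show it converts the pushouts and transfinite composites of slice categories produced above into genuine homotopy pushouts and homotopy colimits, functorially and respecting $\pi$. The sieve condition is exactly what makes this work: restricting a projectively cofibrant replacement of the representable diagram on $\mathcal{A}/X_2$ along the sieve $\mathcal{A}/X_1$ again yields a cofibrant replacement, and left Kan extension along a sieve inclusion is a projective cofibration, so that the ordinary colimit over the pushout category $\mathcal{A}/X_0\amalg_{\mathcal{A}/X_1}\mathcal{A}/X_2$ computes the homotopy pushout. Rather than carry out this diagram-cofibrancy bookkeeping by hand, I would invoke the standard theory of homotopy colimits over Grothendieck constructions (as in the sources cited for homotopy colimits before lemma \ref{lem:intnerve-contract}) to obtain the decomposition, and then only check the elementary facts that the relevant slice functors are sieve inclusions and that the targets are homotopy colimits.
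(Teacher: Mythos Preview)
The paper does not actually prove this proposition: its entire proof reads ``For the proof, see for example \cite{Jar}.'' So there is no argument in the paper to compare against, and your proposal supplies substantially more detail than the paper does.

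Your outline follows the standard route one finds in the cited literature: identify $LX$ with the internal nerve $\mathcal{N}_{\mathsf{H}}(\mathcal{A}/X)$ using hypotheses (a) and (b), use that $\txtint^{\mathcal{A}}=\mathcal{A}/(\blankdot)$ is a left adjoint to control the index categories, and then invoke a Thomason-style decomposition of homotopy colimits over glued index categories. This is essentially the argument Jardine gives, so your approach is correct and aligned with what the paper intends.

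One point worth flagging: you write ``whenever a structure map $X_1\to X_2$ is a cofibration, that is, a monomorphism,'' and your sieve argument depends on this identification. The proposition as stated does not assume that cofibrations are monomorphisms; hypotheses (a) and (b) alone do not force it. In every application the paper makes of this proposition the cofibrations \emph{are} the monomorphisms (both the standard test-category model structure and the realization model structure of theorem~\ref{theo:cube-model} have this property), so the gap is harmless in context, but you should either add this as an explicit hypothesis or note that the argument as written uses it. The cited source works in the same setting, so this is likely an unstated standing assumption rather than an oversight on your part.
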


For the proof, see for example \cite{Jar}.

We now ready to prove the result:

\begin{theo}
Let $\mathcal{R}$ be an EZ cubicalizable category with an enlargement $c:\mathcal{R}\to\mathcal{R}$.
Suppose $X\in\square(\mathcal{R})^\wedge$ is an object such that each natural morphism $\operatorname{sk}_{n-1}X\to\operatorname{sk}_n X$ is a pushouts of a coproduct of morphisms of the form $\partial\square[r]\hookrightarrow\square[r]$ for $r\in\square(\mathcal{R})$.
Then $X$ is regular with respect to the model structure given in theorem \ref{theo:cube-model}.
In particular, if $\mathcal{R}$ has no non-trivial isomorphism, the model structure is regular.
\end{theo}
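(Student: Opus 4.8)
The plan is to deduce regularity from the abstract criteria of proposition~\ref{prop:regular-criteria}, so the first task is to verify its two standing hypotheses for the model structure of theorem~\ref{theo:cube-model}. Since the cofibrations are exactly the monomorphisms, $\varnothing\hookrightarrow Y$ is a cofibration for every $Y$, so every object is cofibrant, giving hypothesis (a). For hypothesis (b) I would use that the simplicial realization gives $|\square[r]|\simeq\Delta[1]^{d(r)}$, which is contractible, so each $\square[r]\to\ast$ realizes to a weak equivalence and is therefore a weak equivalence in our model structure. With (a) and (b) in hand, proposition~\ref{prop:regular-criteria} tells us that regular objects are closed under coproducts, under pushouts along cofibrations of regular objects, and under sequential colimits of cofibrations; these are precisely the operations occurring in the skeletal filtration of $X$.

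Next I would establish the base case: every representable $\square[r]$ is regular. The slice $\square(\mathcal{R})/\square[r]$ is, by Yoneda, isomorphic to $\square(\mathcal{R})/r$, which has a terminal object, namely $\mathrm{id}_r$. The tautological diagram $(\square[s]\to\square[r])\mapsto\square[s]$ sends this terminal object to $\square[r]$ itself, and the inclusion of a terminal object into its ambient category is a final (cofinal) functor; since final functors preserve homotopy colimits (see \cite{Hir}), the homotopy colimit defining regularity is weakly equivalent to the value $\square[r]$ at the terminal object, and the comparison with the ordinary colimit $\square[r]$ is a weak equivalence. Hence $\square[r]$ is regular.

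The key structural point, which I expect to require the most care, is that the cells of a representable are themselves genuine representables, so that the boundary inclusions remain ``honest'' throughout the induction. By proposition~\ref{prop:cubeEZ} the category $\square(\mathcal{R})$ is an EZ category, so a nondegenerate cell $m$ of $\square[r]$ is a monomorphism; by lemma~\ref{lem:cube-fact} and \ref{sublem:cube-r0mono} of lemma~\ref{lem:cube-r0} it is in fact a split monomorphism, whence $m\pi=m$ forces $\pi=\mathrm{id}$ and its stabilizer in $\mathrm{Aut}_{\square(\mathcal{R})}(a)$ is trivial. By proposition~\ref{prop:cell-stab} the cell object $\mathcal{A}[\alpha]$ (with $\mathcal{A}=\square(\mathcal{R})$) is then $\mathrm{Stab}(x)\backslash\square[a]\simeq\square[a]$, a genuine representable, and $\partial\mathcal{A}[\alpha]\simeq\partial\square[a]$. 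Applying proposition~\ref{prop:sk-cell-extend} to $\square[r]$ therefore exhibits $\partial\square[r]=\operatorname{sk}_{n-1}\square[r]$ as a presheaf whose skeletal filtration is again built by pushouts of coproducts of genuine boundary inclusions $\partial\square[r']\hookrightarrow\square[r']$ with $\deg r'<n$, i.e.\ a presheaf of the same type as $X$ but of strictly smaller top degree.

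With these ingredients I would run an induction on the top degree $n$, proving that every presheaf whose cells have degree $\le n$ and whose skeletal filtration is built from genuine boundary inclusions is regular. The inductive step uses the pushout square relating $\operatorname{sk}_{n-1}Y$ to $\operatorname{sk}_n Y=Y$ along $\coprod_\beta\partial\square[r_\beta]\hookrightarrow\coprod_\beta\square[r_\beta]$: the codomain is regular by the base case and closure under coproducts; the source is regular because each $\partial\square[r_\beta]$ has cells of degree $<n$ and satisfies the hypothesis (previous paragraph); and $\operatorname{sk}_{n-1}Y$ is regular by the inductive hypothesis. Then \ref{sublem:injsat}-free application of proposition~\ref{prop:regular-criteria}(2) yields regularity of $Y$. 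Finally $X=\colim_n\operatorname{sk}_n X$ is a sequential colimit of cofibrations (lemma~\ref{lem:sk-injective}) of regular objects, so proposition~\ref{prop:regular-criteria}(3) gives that $X$ is regular. For the last assertion, if $\mathcal{R}$ has no non-trivial isomorphism then, arguing with the unique factorization of lemma~\ref{lem:cube-fact} together with the degree function, every automorphism in $\square(\mathcal{R})$ is an identity; hence all cell stabilizers vanish, every $\mathcal{A}[\alpha]$ is representable by proposition~\ref{prop:cell-stab}, and proposition~\ref{prop:sk-cell-extend} shows that \emph{every} presheaf satisfies the hypothesis of the theorem. Thus every object is regular and the model structure is regular. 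The main obstacle is exactly the stabilizer-triviality bookkeeping of the third paragraph: it is what keeps the abstractly constructed skeletal cells genuine representables, without which proposition~\ref{prop:regular-criteria} could not be supplied with the representable boundary inclusions the induction needs.
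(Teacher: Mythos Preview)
Your proposal is correct and follows essentially the same route as the paper: verify the hypotheses of proposition~\ref{prop:regular-criteria}, show representables are regular via the terminal object in $\square(\mathcal{R})/r$, and then climb the skeletal filtration by induction. Your cofinality argument for representables is a minor variant of the paper's use of lemma~\ref{lem:intnerve-contract}, and your third paragraph (triviality of stabilizers of nondegenerate cells of $\square[r]$, hence $\partial\square[r]$ again satisfies the hypothesis) makes explicit a point the paper's induction silently relies on.
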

\begin{proof}
Since $\mathcal{R}$ is an EZ cubicalizable, the category $\square(\mathcal{R})$ is an EZ category.
Then by corollary \ref{cor:ez-cellular}, every monomorphism is a transfinite composition of pushouts of morphism of the form
\[
H\backslash\partial\square[r] \hookrightarrow H\backslash\square[r]
\]
for $r\in\square(\mathcal{R})$ and $H<\mathrm{Aut}_{\square(\mathcal{R})}(r)$.
In particular, the natural morphism $\operatorname{sk}_{n-1}X\to\operatorname{sk}_nX$ is a pushout of a coproduct of such morphisms with $\deg r=n$.
Notice that every isomorphism in $\square(\mathcal{R})$ is an image of an isomorphism in $\mathcal{R}$ by the canonical embedding $\mathcal{R}\hookrightarrow\square(\mathcal{R})$, so if $\mathcal{R}$ has no non-trivial isomorphism, so does $\square(\mathcal{R})$.
Thus, the last statement follows from the first.

We show the first statement.
First, we show that every representable object is regular; i.e. the natural morphism
\[
\hocolim_{(s\xrightarrow{f}r)\in\square(\mathcal{R})/r} \square[s]\,\to\square[r]
\]
is a weak equivalence.
Notice that the simplicial realizations of representables are products of the interval $\Delta[1]$ in $\mathbf{SSet}$ so that the unique morphism $\square[s]\to\ast$ is a weak equivalence.
Since homotopy colimits preserve weak equivalence, the morphism above is a weak equivalence if and only if the natural morphism
\[
\hocolim_{(s\xrightarrow{f}r)\in\square(\mathcal{R})/r}\square[s] \simeq \hocolim_{\square(\mathcal{R})/r} \ast = \mathcal{N}_{\mathsf{H}}(\square(\mathcal{R})/r) \to \ast
\]
is a weak equivalence.
Since $\square(\mathcal{R})/r$ has a terminal object, this follows from lemma \ref{lem:intnerve-contract}.

Next, we show it for $n$-skeletal $X\in\square(\mathcal{R})^\wedge$ by induction on $n$.
Suppose the statement holds for $k$-skeletal $K\in\square(\mathcal{R})^\wedge$ for $k<n$, and suppose $X\in\square(\mathcal{R})^\wedge$ is an $n$-skeletal object such that there is a pushout square below:
\[
\renewcommand{\objectstyle}{\displaystyle}
\xymatrix{
  \raisedunderop\coprod{\substack{\square[r]\to X\cr\deg r=n}} \partial\square[r] \ar[r] \ar@{^(->}[d] \ar@{}[dr]|(.6){\pocorner}& \operatorname{sk}_{n-1} X \ar[d] \\
  \raisedunderop\coprod{\substack{\square[r]\to X\cr\deg r=n}} \square[r] \ar[r] & X }
\]
By the induction hypothesis and proposition \ref{prop:regular-criteria}, the top two objects are regular.
We have already shown that the bottom left corner is regular.
Since moreover the left edge is a cofibration, by proposition \ref{prop:regular-criteria}, we conclude that $X$ is a regular.

Finally, suppose $X\in\square(\mathcal{R})^\wedge$ is an object satisfying the assumption.
Notice that $X$ is a sequencial colimit $\operatorname{sk}_0X\hookrightarrow\operatorname{sk}_1X\hookrightarrow\dots$.
As shown above, each skeleton $\operatorname{sk}_nX$ is regular.
Since $\operatorname{sk}_{n-1}X\to\operatorname{sk}_nX$ is a cofibration, by proposition \ref{prop:regular-criteria}, it follows that $X$ is regular as required.
\end{proof}

\begin{corol}
A category $\square(\widetilde\Delta)^\wedge$ is regular with respect to the model structure given in theorem \ref{theo:cube-model}.
\end{corol}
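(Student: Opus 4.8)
The plan is to recognize this corollary as a direct instance of the preceding theorem. That theorem asserts that if $\mathcal{R}$ is an EZ cubicalizable category equipped with an enlargement and having no non-trivial isomorphism, then the model structure of Theorem~\ref{theo:cube-model} on $\square(\mathcal{R})^\wedge$ is regular. So the whole argument reduces to checking three hypotheses for $\mathcal{R}=\widetilde\Delta$: that $\widetilde\Delta$ is EZ cubicalizable, that it admits an enlargement, and that it has no non-trivial isomorphism. None of these requires new machinery; most are already recorded in the examples of Sections~\ref{sec:thin-pow-cat} and~\ref{sec:pos-repn}.

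First I would confirm that $\widetilde\Delta$ is cubicalizable. Recall that $\widetilde\Delta$ carries the thin-powered structure $\widetilde\Delta^+$ of its monomorphisms, and that $\widetilde\Delta^+/\underline{n}$ is canonically identified with the powerset $2^{\underline{n}}\simeq[1]^n$, so the structure is locally finite and Boolean. The empty ordinal $\underline{0}$ is an initial object, and the unique map $\underline{0}\to\underline{n}$ is vacuously monic, hence a distinguished injection; stability is noted in the relevant example. The one point meriting an explicit word is coherence: for an order-preserving $f\colon\underline{m}\to\underline{n}$ the induced $f^\ast$ on powersets is the preimage $B\mapsto f^{-1}(B)$, which preserves unions and the empty set, so $f^\ast$ preserves joins and the least element and every morphism of $\widetilde\Delta$ is coherent. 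Next I would check the two extra Eilenberg--Zilber conditions: the class of $\widetilde\Delta^+$-surjections is exactly the surjective order-preserving maps, and each such $\sigma$ splits via $i\mapsto\min\sigma^{-1}(i)$, so every $\widetilde\Delta^+$-surjection is a split epimorphism; and confluence is the classical Eilenberg--Zilber property that a cospan of surjections of finite ordinals fits into an absolute pushout of surjections. Together these give that $\widetilde\Delta$ is EZ cubicalizable.

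The enlargement is already exhibited in the example $c(\underline{n})=\underline{n+1}$ with $\iota$ the inclusion, so the second hypothesis holds; and since any order-preserving bijection of finite ordinals is the identity, $\widetilde\Delta$ has no non-trivial isomorphism. With all three hypotheses in place, the ``in particular'' clause of the preceding theorem yields that the model structure of Theorem~\ref{theo:cube-model} on $\square(\widetilde\Delta)^\wedge=(\square^{\mathsf{c}})^\wedge$ is regular. I do not anticipate a genuine obstacle: the only non-formal step is the confluence of $\widetilde\Delta$, and that is the familiar absolute-pushout argument for degeneracies in the (augmented) simplex category, exactly the condition already isolated in the definition of EZ cubicalizable categories.
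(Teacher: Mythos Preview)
Your proposal is correct and matches the paper's approach: the corollary is stated without proof in the paper, as an immediate instance of the ``in particular'' clause of the preceding theorem, and you have simply spelled out the verification that $\widetilde\Delta$ meets its hypotheses (EZ cubicalizable, admits an enlargement, no non-trivial isomorphisms). All three checks are handled in the paper's earlier examples, so your write-up is a faithful expansion of what the paper leaves implicit.
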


Note that the category $\square(\widetilde\Delta)$ is known by the name of the cubical category with connections and often denoted by $\square^{\mathsf{c}}$.

\subsection{Equivalence of model structures}
As a result of the regularity, we can verify the equivalence of model structures.
To do this, we use Thomason's homotopy colimit theorem, which is proved in \cite{Tho79}.
Recall that for a small category $C$ and a functor $\Phi:C\to\mathbf{Cat}$, the Grothendieck construction $C\int F$ on $\Phi$ is the category whose morphisms are pairs $(c,x)$ of $c\in C$ and $x\in\Phi(c)$, and whose morphisms $(c,x)\to(c',x')$ are $2$-cells of the form
\[
\xymatrix{
  \ast \ar[r]^{x} \ar[dr]_{x'} |-{}="b" & \Phi(c) \ar@{=>} "b"+(1,1)  \ar[d]^{\Phi(f)} \\
  {} & \Phi(c') }
\]
with $f:c\to c'\in C$.
The composition is given by the obvious composition of $2$-cells.

\begin{theo}[Thomason's homotopy colimit theorem]
\label{theo:Thomason-hocolim}
If $C$ is a small category and $\Phi:C\to\mathbf{Cat}$ is a functor, then there is a weak equivalence
\[
\hocolim_{c\in C} N\Phi(c) \to N\left(C\int \Phi\right)
\]
in the category $\mathbf{SSet}$, where $N:\mathbf{Cat}\to\mathbf{SSet}$ is the usual nerve functor.
\end{theo}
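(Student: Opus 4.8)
The plan is to model both sides by bisimplicial sets and compare their diagonals through two applications of the realization lemma for bisimplicial sets (a map which is a levelwise weak equivalence in one simplicial variable induces a weak equivalence on diagonals; see \cite{BK}). Throughout I write $P\colon C\int\Phi\to C$ for the projection $(c,x)\mapsto c$. First I would recall, invoking the comparison between the model-categorical homotopy colimit used in the previous subsection and the Bousfield--Kan formula (\cite{BK}, \cite{Hir}), that $\hocolim_{c\in C}N\Phi(c)$ is computed by the diagonal of the simplicial replacement
\[
A_{p,q}:=\coprod_{c_0\to\dots\to c_p}N_q\Phi(c_0),
\]
where the coproduct runs over the $p$-simplices $c_0\to\dots\to c_p$ of $NC$ and the external face maps are induced by the functoriality of $\Phi$.

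Next I would introduce the auxiliary bisimplicial set $A'_{p,q}:=\coprod_{c_0\to\dots\to c_p}N_q(P\downarrow c_0)$, where $P\downarrow c_0$ is the comma category of objects $((c',x'),\,g\colon c'\to c_0)$, together with the map $A\to A'$ induced for each $c_0$ by the inclusion $\Phi(c_0)\hookrightarrow P\downarrow c_0$, $x\mapsto((c_0,x),\mathrm{id}_{c_0})$. The key observation is that this inclusion is a homotopy equivalence of categories: the functor $r_{c_0}\colon P\downarrow c_0\to\Phi(c_0)$ sending $((c',x'),g)$ to $\Phi(g)(x')$ is a retraction, and the morphisms $(g,\mathrm{id})$ assemble into a natural transformation from the identity of $P\downarrow c_0$ to $\iota_{c_0}r_{c_0}$, so that $N\Phi(c_0)\simeq N(P\downarrow c_0)$. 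For each fixed $p$ the map $A_{p,\bullet}\to A'_{p,\bullet}$ is therefore a coproduct of such equivalences, and the realization lemma yields $\operatorname{diag}A\simeq\operatorname{diag}A'$.

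The heart of the argument is the identification $\operatorname{diag}A'\simeq N(C\int\Phi)$, which I would obtain by reindexing in the other direction. A $q$-simplex of $P\downarrow c_0$ is exactly a $q$-chain $e_0\to\dots\to e_q$ in $C\int\Phi$ equipped with a structure map $P(e_0)\to c_0$; hence a $C$-chain $c_0\to\dots\to c_p$ together with such data is the same as a $q$-chain in $C\int\Phi$ together with a $C$-chain $P(e_0)\to c_0\to\dots\to c_p$. This gives a natural isomorphism
\[
A'_{p,q}\cong\coprod_{e_0\to\dots\to e_q}N_p\bigl(P(e_0)\downarrow C\bigr),
\]
and the coslice (under) category $P(e_0)\downarrow C$ has the initial object $(P(e_0),\mathrm{id})$, so its nerve is contractible. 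Consequently, for each fixed $q$ the projection $A'_{\bullet,q}\to N_q(C\int\Phi)$ that forgets the $C$-data and retains only the $q$-chain $e_0\to\dots\to e_q$ is a weak equivalence of simplicial sets in the $p$-direction. Regarding $N(C\int\Phi)$ as the diagonal of the bisimplicial set that is constant in $p$ with value $N_q(C\int\Phi)$, a second application of the realization lemma gives $\operatorname{diag}A'\simeq N(C\int\Phi)$, and composing the three equivalences produces the asserted weak equivalence.

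I expect the main difficulty to be organizational rather than conceptual: keeping the two simplicial directions straight and verifying that the reindexing isomorphism, the deformation retraction $r_{c_0}$, and the contraction of the coslice nerves are genuinely compatible with the surviving face and degeneracy maps, so that the realization lemma legitimately applies in each of the two steps, and finally checking that the resulting composite agrees with the natural comparison map $\hocolim_{c\in C}N\Phi(c)\to N(C\int\Phi)$. The two geometric inputs — that the comma categories of $P$ deform onto the strict fibers $\Phi(c)$, and that under categories have contractible nerves — are elementary once the bookkeeping is in place.
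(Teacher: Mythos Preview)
The paper does not supply a proof of this theorem at all: immediately after the statement it reads ``For the proof, see the original article \cite{Tho79}.'' So there is nothing in the paper to compare your argument against.

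That said, your sketch is the standard bisimplicial proof, essentially the one Thomason gives (and the one reproduced in \cite{Hir} and elsewhere): model the homotopy colimit by the diagonal of the simplicial replacement, thicken the fibres $\Phi(c_0)$ to the comma categories $P\downarrow c_0$ via a deformation retraction, then reindex by chains in $C\int\Phi$ and contract the remaining coslice nerves. The strategy is sound and would go through. One bookkeeping point worth double-checking when you write it out in full: with the convention $A_{p,q}=\coprod_{c_0\to\cdots\to c_p}N_q\Phi(c_0)$, a $q$-chain in $P\downarrow c_0$ is determined by the structure map out of $P(e_q)$ (the \emph{last} object), not $P(e_0)$, so the reindexed bisimplicial set is $\coprod_{e_0\to\cdots\to e_q}N_p\bigl(P(e_q)\backslash C\bigr)$. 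This does not affect the argument---the under category still has an initial object---but it is exactly the kind of orientation slip you yourself flagged as the main hazard, and getting it right is what makes the simplicial identities line up.
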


For the proof, see the original article \cite{Tho79}.
The following lemma is a special case of the theorem:

\begin{lemma}\label{lem:slice-hocolim}
Let $\mathcal{A}$ be a small category.
Then for each $X\in\mathcal{A}^\wedge$, there is a weak equivalence
\[
\hocolim_{(\mathcal{A}[a]\to X)\in\mathcal{A}/X} N(\mathcal{A}/a) \,\to N(\mathcal{A}/X)
\]
in the category $\mathbf{SSet}$.
\end{lemma}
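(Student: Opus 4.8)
The plan is to deduce the statement directly from Thomason's homotopy colimit theorem (Theorem \ref{theo:Thomason-hocolim}), applied to a carefully chosen functor into $\mathbf{Cat}$. I would take $C=\mathcal{A}/X$ and let $\Phi\colon\mathcal{A}/X\to\mathbf{Cat}$ be the composite of the forgetful functor $\mathcal{A}/X\to\mathcal{A}$ with $\mathcal{A}/(\blankdot)\colon\mathcal{A}\to\mathbf{Cat}$; thus $\Phi$ sends a cell $x\colon\mathcal{A}[a]\to X$ to the slice $\mathcal{A}/a$ and a morphism $h$ to post-composition $h_\ast$. Since $\Phi$ is covariant, Theorem \ref{theo:Thomason-hocolim} yields a weak equivalence
\[
\hocolim_{(\mathcal{A}[a]\to X)\in\mathcal{A}/X} N(\mathcal{A}/a)\;\xrightarrow{\sim}\;N\!\left((\mathcal{A}/X)\textstyle\int\Phi\right)\,.
\]
It then remains to produce a weak equivalence $N((\mathcal{A}/X)\int\Phi)\to N(\mathcal{A}/X)$ whose composite with the above is the natural morphism in the statement.

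The heart of the argument is to realise this second weak equivalence as coming from an adjunction. Spelling out the Grothendieck construction, an object of $\mathcal{G}:=(\mathcal{A}/X)\int\Phi$ is a pair consisting of a cell $x\colon\mathcal{A}[a]\to X$ and a morphism $g\colon a'\to a$ of $\mathcal{A}$, while a morphism $(x,g)\to(y,g')$ is a pair $(h,\alpha)$ with $h\colon a\to b$ satisfying $y\cdot h=x$ and $\alpha\colon a'\to b'$ satisfying $g'\alpha=hg$. I would define $\Psi\colon\mathcal{G}\to\mathcal{A}/X$ on objects by $(x,g)\mapsto x\cdot g$, the cell $\mathcal{A}[a']\xrightarrow{g}\mathcal{A}[a]\xrightarrow{x}X$, and on morphisms by $(h,\alpha)\mapsto\alpha$; the relation $y\cdot h=x$ makes $\alpha$ a well-defined morphism of $\mathcal{A}/X$. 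In the other direction, let $R\colon\mathcal{A}/X\to\mathcal{G}$ send $x$ to $(x,\mathrm{id}_a)$ and $h$ to $(h,h)$. A short computation gives $\Psi R=\mathrm{id}$, a counit $\varepsilon\colon R\Psi\to\mathrm{id}_{\mathcal{G}}$ given at $(x,g)$ by the pair $(g,\mathrm{id}_{a'})$, and the two triangle identities, so that $R\dashv\Psi$. Since an adjunction induces mutually inverse homotopy equivalences on nerves, $N\Psi$ is a weak equivalence.

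Finally I would check compatibility with the canonical maps. The Thomason equivalence is induced by the inclusions $\Phi(x)=\mathcal{A}/a\hookrightarrow\mathcal{G}$, $g\mapsto(x,g)$; composing with $\Psi$ gives $g\mapsto x\cdot g$, which is exactly the cocone leg $x_\ast\colon\mathcal{A}/a\to\mathcal{A}/X$ used to define the natural morphism in the statement. Hence $N\Psi$ carries the Thomason equivalence onto the asserted natural morphism, which is therefore a weak equivalence.

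I expect the only genuinely delicate points to be bookkeeping: fixing the variances in the Grothendieck construction so that $\Phi$ is covariant and its morphism data are precisely the $2$-cells of Theorem \ref{theo:Thomason-hocolim}, and confirming that the cocone implicit in the homotopy colimit agrees on the nose with the legs $x_\ast$. The adjunction itself, once $\Psi$ and $R$ are written down, reduces to routine diagram checks, and the passage from an adjunction to a homotopy equivalence of nerves is standard (a unit or counit natural transformation furnishes a simplicial homotopy), so I would invoke it rather than reprove it.
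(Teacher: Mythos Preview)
Your proposal is correct and follows essentially the same line as the paper: apply Thomason's theorem to the functor $\Phi\colon\mathcal{A}/X\to\mathbf{Cat}$ obtained by composing the forgetful functor with $\mathcal{A}/(\blankdot)$, and then exhibit a homotopy equivalence between the Grothendieck construction and $\mathcal{A}/X$. The only noteworthy difference is the choice of comparison functor $\mathcal{G}\to\mathcal{A}/X$: the paper uses the projection $\beta\colon(x,g)\mapsto x$ and provides a section $\gamma$ (your $R$) together with a natural transformation $\mathrm{Id}\to\gamma\beta$, whereas you use the composition $\Psi\colon(x,g)\mapsto x\cdot g$ and package $R$ and $\Psi$ into a genuine adjunction $R\dashv\Psi$. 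Your choice has the pleasant side effect that the resulting weak equivalence is visibly the canonical cocone morphism induced by the legs $x_\ast$, a point the paper leaves implicit; otherwise the two arguments are interchangeable.
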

\begin{proof}
Set $\Phi:\mathcal{A}/X\to\mathbf{Cat}$ to be the composition
\[
\mathcal{A}/X\xrightarrow{\text{forget}} \mathcal{A} \xrightarrow{\mathcal{A}/(\blankdot)} \mathbf{Cat}
\]
and apply theorem \ref{theo:Thomason-hocolim}.
Then we have a weak equivalence
\[
\hocolim_{(\mathcal{A}[a]\to X)\in\mathcal{A}/X} N(\mathcal{A}/a)\,\to N\left((\mathcal{A}/X)\int\Phi\right)\,.
\]
Hence, to obtain a required weak equivalence, it suffices to show that the forgetful functor
\[
\beta:\left((\mathcal{A}/X)\int\Phi\right) \to \mathcal{A}/X
\]
is a homotopy equivalence in $\mathbf{Cat}$.

Notice that the category $(\mathcal{A}/X)\int\Phi$ is the category whose objects are sequences of the form
\[
\mathcal{A}[a_0] \xrightarrow{f} \mathcal{A}[a] \xrightarrow{x} X
\]
in $\mathcal{A}^\wedge$, and whose morphisms are diagrams
\[
\xymatrix@R=2ex{
  \mathcal{A}[a_0] \ar[r]^{f} \ar[dd] & \mathcal{A}[a] \ar[dr]^{x} \ar[dd] & {} \\
  && X \\
  \mathcal{A}[a'_0] \ar[r]^{f'} & \mathcal{A}[a'] \ar[ur]_{x'} & {} }
\]
in $\mathcal{A}^\wedge$.
We define a functor $\gamma:\mathcal{A}/X\to\left((\mathcal{A}/X)\int\Phi\right)$ which sends each object $(\mathcal{A}[a]\xrightarrow{x} X)\in\mathcal{A}/X$ to the sequence
\[
\mathcal{A}[a] =\joinrel= \mathcal{A}[a] \xrightarrow{x} X
\]
in $(\mathcal{A}/X)\int\Phi$.
Then we clearly have $\beta\gamma=\mathrm{Id}$.
Moreover, there is a natural transformation $\mathrm{Id}\to\gamma\beta$ depicted as follows:
\[
\xymatrix@R=2ex{
  \mathcal{A}[a_0] \ar[r]^{f} \ar[dd]_{f} & \mathcal{A}[a] \ar[dr]^{x} \ar@{=}[dd] & {} \\
  && X \\
  \mathcal{A}[a] \ar@{=}[r] & \mathcal{A}[a] \ar[ur]_{x} & {} }
\]
Therefore $\gamma$ is a homotopy inverse of $\beta$, and the result follows.
\end{proof}

\begin{theo}\label{theo:regular-infty-eq}
Let $\mathcal{R}$ be an EZ cubicalizable category with an enlargement.
Suppose moreover that the model structure on $\square(\mathcal{R})^\wedge$ defined in theorem \ref{theo:cube-model} is regular.
Then a morphism $f:X\to Y\in\square(\mathcal{R})^\wedge$ is an $\infty$-equivalence if and only if the simplicial realization $|f|:|X|\to |Y|$ is a weak equivalence in $\mathbf{SSet}$.
In other words, the model structure coincides with the standard model structure on the test category $\square(\mathcal{R})$.
\end{theo}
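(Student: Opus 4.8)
The plan is to construct, naturally in $X\in\square(\mathcal{R})^\wedge$, a zigzag of weak equivalences in $\mathbf{SSet}$ connecting $|X|$ to $N(\square(\mathcal{R})/X)$. Once such a zigzag is available, the theorem follows formally: for a morphism $f\colon X\to Y$, naturality yields a commutative ladder of these zigzags, so the two-out-of-three property of weak equivalences in $\mathbf{SSet}$ shows that $N(\square(\mathcal{R})/X)\to N(\square(\mathcal{R})/Y)$ is a weak equivalence if and only if $|X|\to|Y|$ is. The former is exactly the assertion that $f$ is an $\infty$-equivalence, and the latter is the condition on the simplicial realization, giving the stated equivalence. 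Since the model structure of theorem \ref{theo:cube-model} and the standard model structure of corollary \ref{cor:test-model-cube} both have the monomorphisms as cofibrations, coincidence of their weak equivalences forces the two model structures to agree, which is the final clause.

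To build the zigzag I would use as a common hub the functorial homotopy colimit $\mathcal{N}_{\mathsf{H}}(\square(\mathcal{R})/X)=\hocolim_{(\square[a]\to X)\in\square(\mathcal{R})/X}\ast$ over the slice category; functoriality in $X$ comes from the forgetful functor $\square(\mathcal{R})/X\to\square(\mathcal{R})$ together with the Grothendieck-construction formalism recorded just before Lemma \ref{lem:slice-hocolim}. On the realization side, each cube $|\square[a]|\simeq I^{d(a)}$ is contractible, so $|\square[a]|\to\ast$ is a pointwise weak equivalence of diagrams over $\square(\mathcal{R})/X$ and hence $\hocolim|\square[a]|\xrightarrow{\sim}\hocolim\ast$; because $|\blankdot|$ is left Quillen by theorem \ref{theo:cube-model} and every object of $\square(\mathcal{R})^\wedge$ is cofibrant, its derived functor commutes with homotopy colimits, giving $\hocolim|\square[a]|\simeq\bigl|\hocolim\square[a]\bigr|$; and regularity of the model structure says the structural map $\hocolim\square[a]\to X$ is a weak equivalence, which by definition of that model structure means $\bigl|\hocolim\square[a]\bigr|\to|X|$ is a weak equivalence in $\mathbf{SSet}$. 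On the nerve side, each slice $\square(\mathcal{R})/a$ has a terminal object, so $N(\square(\mathcal{R})/a)\to\ast$ is a pointwise weak equivalence (Lemma \ref{lem:intnerve-contract}), whence $\hocolim N(\square(\mathcal{R})/a)\xrightarrow{\sim}\hocolim\ast$, while Lemma \ref{lem:slice-hocolim} supplies $\hocolim N(\square(\mathcal{R})/a)\xrightarrow{\sim}N(\square(\mathcal{R})/X)$. Splicing these produces the natural zigzag
\[
|X|\;\xleftarrow{\sim}\;\bigl|\hocolim\square[a]\bigr|\;\simeq\;\hocolim|\square[a]|\;\xrightarrow{\sim}\;\hocolim\ast\;\xleftarrow{\sim}\;\hocolim N(\square(\mathcal{R})/a)\;\xrightarrow{\sim}\;N(\square(\mathcal{R})/X).
\]

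I expect the main obstacle to be the naturality, and hence coherence, of this entire construction in $X$: homotopy colimits are a priori defined only up to weak equivalence and are not strictly functorial, so each equivalence above must be upgraded to a natural transformation of functors $\square(\mathcal{R})^\wedge\to\mathbf{SSet}$. The key is that every diagram in play is pulled back from an honest functor on $\square(\mathcal{R})$ along the forgetful functor $\square(\mathcal{R})/X\to\square(\mathcal{R})$, so the functorial homotopy colimits of the preceding subsection apply uniformly, the pointwise maps $|\square[a]|\to\ast$ and $N(\square(\mathcal{R})/a)\to\ast$ are genuine natural transformations of such diagrams, and Lemma \ref{lem:slice-hocolim} is itself obtained naturally through Thomason's theorem. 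A secondary technical point to verify carefully is the commutation of the derived realization with homotopy colimits, which I would justify by noting that $|\blankdot|$ preserves colimits and that the induced functor $(\square(\mathcal{R})^\wedge)^I_{\mathrm{proj}}\to\mathbf{SSet}^I_{\mathrm{proj}}$ is left Quillen, so that it carries a projectively cofibrant replacement of the diagram $a\mapsto\square[a]$ to one computing $\hocolim|\square[a]|$.
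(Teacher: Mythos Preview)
Your proposal is correct and follows essentially the same strategy as the paper: regularity plus left-Quillenness of $|\blankdot|$ on one side, Lemma~\ref{lem:slice-hocolim} on the other, with contractibility of the individual terms bridging the two. The only cosmetic difference is that where you route both $\hocolim|\square[a]|$ and $\hocolim N(\square(\mathcal{R})/a)$ through the common hub $\hocolim\ast$, the paper instead builds an explicit natural transformation $\beta\colon\square(\mathcal{R})/(\blankdot)\to\mathcal{R}^+_0/(\blankdot)$, $f\mapsto f_\ast(1_s)$, and uses $N(\mathcal{R}^+_0/r)=|\square[r]|$ to get a direct arrow $\hocolim N(\square(\mathcal{R})/r)\to\hocolim|\square[r]|$, shortening the zigzag by one node.
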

\begin{proof}
Recall that we have two functors $\square(\mathcal{R})/(\blankdot),\mathcal{R}^+_0\!/(\blankdot):\square(\mathcal{R})\to\mathbf{Cat}$.
We define a natural transformation $\beta:\square(\mathcal{R})/(\blankdot)\to\mathcal{R}^+_0\!/(\blankdot)$ as follows:
We denote by $1_r$ the greatest element of the Boolean lattice $\mathcal{R}^+_0\!/r$ for each $r\in\square(\mathcal{R})$.
Then for $(s\xrightarrow{f} r)\in\square(\mathcal{R})/r$, we set
\[
\beta(f) = f_\ast(1_s) \in \mathcal{R}^+_0\!/r\,.
\]
Clearly it extends to a functor and is natural with respect to $r\in\square(\mathcal{R})$.
Moreover, since both categories $\square(\mathcal{R})/r$ and $\mathcal{R}^+_0\!/r$ have a terminal object so contractible, $\beta$ is a pointwise Thomason equivalence.
Hence for each $X\in\square(\mathcal{R})^\wedge$, we obtain a weak equivalence
\begin{equation}
\label{eq:slice-trans}
\hocolim_{(\square[r]\to X)\in\square(\mathcal{R})/X} N(\square(\mathcal{R})/r)
\to \hocolim_{(\square[r]\to X)\in\square(\mathcal{R})/X} N(\mathcal{R}^+_0\!/r)\,.
\end{equation}
On the other hand, since the functor $|\blankdot|:\square(\mathcal{R})^\wedge\to\mathbf{SSet}$ is a left Quillen functor, it commutes with homotopy colimits.
Thus, we have weak equivalences
\[
\hocolim_{(\square[r]\to X)\in\square(\mathcal{R})/X} N(\mathcal{R}^+_0\!/r)
\simeq\hocolim_{(\square[r]\to X)\in\square(\mathcal{R})/X} |\square[r]|
\to\left| \hocolim_{(\square[r]\to X)\in\square(\mathcal{R})/X} \square[r]\right|\,.
\]
By the definition of the model structure, the simplicial realization preserves weak equivalences.
Since we assumed the model structure is regular, we obtain a weak equivalence
\begin{equation}
\label{eq:hocolim-realization}
\hocolim_{(\square[r]\to X)\in\square(\mathcal{R})/X} N(\mathcal{R}^+_0\!/r) \to |X|\,.
\end{equation}
Combining \eqref{eq:slice-trans}, \eqref{eq:hocolim-realization} and lemma \ref{lem:slice-hocolim}, we finally obtain the following zig-zag of weak equivalences:
\[
\begin{split}
|X|
&\leftarrow \hocolim_{(\square[r]\to X)\in\square(\mathcal{R})/X} N(\mathcal{R}^+_0\!/r) \\
&\leftarrow \hocolim_{(\square[r]\to X)\in\square(\mathcal{R})/X} N(\square(\mathcal{R})/r) \\
&\rightarrow N(\square(\mathcal{R})/X)\,.
\end{split}
\]
Therefore the result follows.
\end{proof}

\begin{corol}\label{cor:skeletal-unique-homotopy}
Let $\mathcal{R}$ be a EZ cubicalizable category with an enlargement.
If $\mathcal{R}$ has no non-trivial isomorphism, then the model structure on $\square(\mathcal{R})^\wedge$ defined in theorem \ref{theo:cube-model} coincides with the standard model structure.
\end{corol}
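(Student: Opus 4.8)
The plan is to derive this corollary by combining two results already established in this section: the regularity theorem of the \emph{Regularity} subsection and theorem \ref{theo:regular-infty-eq}. Observe first that both the realization-induced model structure of theorem \ref{theo:cube-model} and the standard model structure of corollary \ref{cor:test-model-cube} declare the monomorphisms to be exactly the cofibrations. Hence, to prove that the two model structures coincide, it suffices to prove that their classes of weak equivalences agree; the fibrations are then automatically identified, being the maps right orthogonal to the common class of trivial cofibrations. So the entire task reduces to matching the two notions of weak equivalence, and the bridge between them is theorem \ref{theo:regular-infty-eq}, whose single standing hypothesis is that the model structure of theorem \ref{theo:cube-model} be regular.

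The first step I would carry out is to verify that regularity hypothesis. This is precisely the ``in particular'' clause of the regularity theorem: when $\mathcal{R}$ has no non-trivial isomorphism, neither does $\square(\mathcal{R})$, since every isomorphism of $\square(\mathcal{R})$ is the image of an isomorphism of $\mathcal{R}$ under the canonical embedding $\mathcal{R}\hookrightarrow\square(\mathcal{R})$. Consequently, in the cellular presentation furnished by corollary \ref{cor:ez-cellular} the automorphism subgroups $H$ are all trivial, so each transition map $\operatorname{sk}_{n-1}X\to\operatorname{sk}_n X$ is a pushout of a coproduct of bare boundary inclusions $\partial\square[r]\hookrightarrow\square[r]$. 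The hypothesis of the regularity theorem is therefore met for \emph{every} $X\in\square(\mathcal{R})^\wedge$, and I would conclude that the model structure of theorem \ref{theo:cube-model} is regular.

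The second step is then immediate: with regularity in hand, theorem \ref{theo:regular-infty-eq} applies verbatim and tells us that a morphism $f:X\to Y\in\square(\mathcal{R})^\wedge$ is an $\infty$-equivalence if and only if its simplicial realization $|f|:|X|\to|Y|$ is a weak equivalence in $\mathbf{SSet}$. Thus the weak equivalences of theorem \ref{theo:cube-model} (the maps $f$ with $|f|$ a weak equivalence) are exactly the $\infty$-equivalences, which are the weak equivalences of the standard model structure. Together with the matching cofibrations noted above, this gives the asserted coincidence of the two model structures.

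Since both ingredients are already proved in the excerpt, there is no genuine analytic obstacle here; the corollary is a bookkeeping consequence. The only point demanding care is the transfer of the ``no non-trivial isomorphism'' property from $\mathcal{R}$ to $\square(\mathcal{R})$, because it is exactly this transfer that removes the nontrivial stabilizer groups and lets the regularity theorem's conclusion hold unconditionally in $X$, which in turn is what licenses the invocation of theorem \ref{theo:regular-infty-eq} without any further hypothesis.
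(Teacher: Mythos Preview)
Your proposal is correct and follows exactly the route the paper intends: the corollary is an immediate consequence of combining the ``in particular'' clause of the regularity theorem with theorem \ref{theo:regular-infty-eq}, and you have identified the one point requiring care (that the absence of non-trivial isomorphisms passes from $\mathcal{R}$ to $\square(\mathcal{R})$, trivializing the stabilizer groups in corollary \ref{cor:ez-cellular}). The paper states the corollary without proof precisely because it is this direct bookkeeping consequence.
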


For example, in the category $\square(\widetilde\Delta)^\wedge$ of cubical sets with connections, a morphism $f:X\to Y$ is an $\infty$-equivalence if and only if its simplicial realization $|f|:|X|\to |Y|$ is a weak equivalence.
Moreover, since the monoidal structure on $\square(\widetilde\Delta)^\wedge$ is compatible with the model structure, for any object $X\in\square(\widetilde\Delta)^\wedge$, the functor $X\otimes(\blankdot)$ preserves $\infty$-equivalences by Ken Brown's lemma.

\subsection{Comparison of $\square(\mathcal{R})$ with $\square(\mathcal{R}^+_0)$}
Let $\mathcal{R}$ be an EZ cubicalizable category with thin-powered structure $\mathcal{R}^+_0$ and an enlargement.
In this section, to distinguish the notations, we denote by $\square_{\mathcal{R}}[r]$ the representable presheaf over $\square(\mathcal{R})$ and by $\square_{\mathcal{R}^+_0}[r]$ the representable presheaf over $\square(\mathcal{R}^+_0)$.
We consider the following condition to $\mathcal{R}$:
\begin{enumerate}[label={\rm$(\clubsuit)$}]
  \item\label{cond:club} For each $r\in\square(\mathcal{R})$, there is a zigzag of homotopy equivalences connecting $\square_{\mathcal{R}}[r]$ to the terminal object.
\end{enumerate}
In section \ref{subsec:homotopy-cyl}, we proved that, for the inclusion $i:\square(\mathcal{R}^+_0)\hookrightarrow\square(\mathcal{R})$, the restriction functor $i^\ast:\square(\mathcal{R})\to\square(\mathcal{R}^+_0)$ preserves homotopy equivalence (corollary \ref{cor:res-h-pres}).
On the other hand, by corollary \ref{cor:skeletal-unique-homotopy}, a homotopy equivalence $f:X\to Y$ in $\square(\mathcal{R}^+_0)$ is an $\infty$-equivalence.
Combining these result, we obtain the following result (see also section 5 in \cite{Isa}):

\begin{prop}\label{prop:res-Thom}
Let $\mathcal{R}$ be an EZ cubicalizable category with thin-powered structure $\mathcal{R}^+_0$ and an enlargement.
Suppose $\mathcal{R}$ satisfies the condition \ref{cond:club}.
Then for each object $X\in\square(\mathcal{R})^\wedge$, the canonical functor
\[
\square(\mathcal{R}^+_0)/i^\ast X \to \square(\mathcal{R})/X
\]
is a Thomason equivalence.
\end{prop}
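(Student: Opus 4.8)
The plan is to apply Quillen's Theorem A to the canonical functor $F\colon\square(\mathcal{R}^+_0)/i^\ast X\to\square(\mathcal{R})/X$. So I would fix an object $c=(x\colon\square_{\mathcal{R}}[r]\to X)\in\square(\mathcal{R})/X$ and analyze the comma category $\commacat{F}{c}$. Unwinding the definitions, an object of $\commacat{F}{c}$ consists of a cell $\square_{\mathcal{R}^+_0}[s]\to i^\ast X$ together with a morphism in $\square(\mathcal{R})/X$ from its image under $F$ to $c$, while a morphism of $\commacat{F}{c}$ is a $\square(\mathcal{R}^+_0)$-morphism compatible with these data. Using the adjunction $i_!\dashv i^\ast$ and the identity $i_!\square_{\mathcal{R}^+_0}[s]\simeq\square_{\mathcal{R}}[s]$, a cell $\square_{\mathcal{R}^+_0}[s]\to i^\ast X$ corresponds to a cell $\square_{\mathcal{R}}[s]\to X$, and a morphism $F(s,y)\to c$ in $\square(\mathcal{R})/X$ is precisely a morphism $g\colon s\to r$ of $\square(\mathcal{R})$ whose composite with $x$ recovers that cell. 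Since the cell is then forced to be $xg$, the data reduce to $g$ alone, and the morphisms of $\commacat{F}{c}$ reduce to $\square(\mathcal{R}^+_0)$-morphisms commuting with the structure maps to $r$. I would therefore establish an isomorphism of categories
\[
\commacat{F}{c}\;\simeq\;\square(\mathcal{R}^+_0)/i^\ast\square_{\mathcal{R}}[r],
\]
which is the central bookkeeping step of the argument.

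It then remains to show that $\square(\mathcal{R}^+_0)/i^\ast\square_{\mathcal{R}}[r]$ is aspherical for every $r$, and this is exactly where the hypothesis $(\clubsuit)$ enters. By $(\clubsuit)$ the object $\square_{\mathcal{R}}[r]$ is connected to the terminal object by a zigzag of homotopy equivalences in $\square(\mathcal{R})^\wedge$. Applying $i^\ast$, and using corollary \ref{cor:res-h-pres} together with the fact that $i^\ast$ preserves the terminal object, I obtain a zigzag of homotopy equivalences connecting $i^\ast\square_{\mathcal{R}}[r]$ to the terminal object in $\square(\mathcal{R}^+_0)^\wedge$. Since $\mathcal{R}^+_0$ is an EZ cubicalizable category with (restricted) enlargement and has no non-trivial isomorphism, corollary \ref{cor:skeletal-unique-homotopy} (via theorem \ref{theo:regular-infty-eq}) identifies $\infty$-equivalences in $\square(\mathcal{R}^+_0)^\wedge$ with simplicial-realization weak equivalences; as homotopy equivalences induce weak equivalences of realizations, each map in the zigzag is an $\infty$-equivalence. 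Applying the two-out-of-three property repeatedly along the zigzag, the unique morphism $i^\ast\square_{\mathcal{R}}[r]\to\ast$ is an $\infty$-equivalence.

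Finally I would translate this into asphericity. That $i^\ast\square_{\mathcal{R}}[r]\to\ast$ is an $\infty$-equivalence means precisely that the induced functor $\square(\mathcal{R}^+_0)/i^\ast\square_{\mathcal{R}}[r]\to\square(\mathcal{R}^+_0)/\ast\simeq\square(\mathcal{R}^+_0)$ is a Thomason equivalence. Since $\square(\mathcal{R}^+_0)$ has a terminal object (the initial object of $\mathcal{R}$, by lemma \ref{lem:cube-r0}) its nerve is contractible, so $\square(\mathcal{R}^+_0)$ is aspherical; composing, $\square(\mathcal{R}^+_0)/i^\ast\square_{\mathcal{R}}[r]$ is aspherical. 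Hence every comma category $\commacat{F}{c}$ is aspherical, and Quillen's Theorem A yields that $F$ is a Thomason equivalence. The main obstacle is the first step: correctly identifying $\commacat{F}{c}$ with $\square(\mathcal{R}^+_0)/i^\ast\square_{\mathcal{R}}[r]$ through the adjunction $i_!\dashv i^\ast$ and the variance conventions of the comma category, since it is this identification that makes the hypothesis $(\clubsuit)$ directly applicable.
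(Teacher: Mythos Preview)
Your proposal is correct and follows essentially the same route as the paper's own proof: apply Quillen's Theorem A, identify the comma category over a cell $x:\square_{\mathcal{R}}[r]\to X$ with $\square(\mathcal{R}^+_0)/i^\ast\square_{\mathcal{R}}[r]$ via the adjunction $i_!\dashv i^\ast$, then use condition $(\clubsuit)$ together with corollary \ref{cor:res-h-pres} and corollary \ref{cor:skeletal-unique-homotopy} to show $i^\ast\square_{\mathcal{R}}[r]\to\ast$ is an $\infty$-equivalence, and finish using that $\square(\mathcal{R}^+_0)$ has a terminal object. Your treatment of the zigzag via two-out-of-three and your unpacking of the comma-category identification are slightly more explicit than the paper's, but the argument is the same.
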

\begin{proof}
By Quillen's theorem A, it suffices to show that for each cell $x:\square_{\mathcal{R}}[r]\to X$, the comma category $\commacat{(\square(\mathcal{R}^+_0)/i^\ast X)}{x}$ is aspherical.
By the adjunction $i_!\dashv i^\ast$, the objects of the category $\commacat{(\square(\mathcal{R}^+_0)/i^\ast X)}{x}$ are identified with diagrams in $\square(\mathcal{R}^+_0)^\wedge$ of the form
\[
\xymatrix@C=1.5em{
  \square_{\mathcal{R}^+_0}[r'] \ar[rr] \ar[dr] && i^\ast\square_{\mathcal{R}}[r] \ar[dl]^{i^\ast(x)} \\
  & i^\ast X & }
\]
Hence we have an isomorphism $\commacat{(\square(\mathcal{R}^+_0)/i^\ast X)}{x}\simeq \square(\mathcal{R}^+_0)/i^\ast\square_{\mathcal{R}}[r]$ of categories.
The condition \ref{cond:club} and corollary \ref{cor:res-h-pres} imply that there is a zigzag of homotopy equivalences in $\square(\mathcal{R}^+_0)^\wedge$ connecting $i^\ast \square_{\mathcal{R}}[r]$ to the terminal object $\ast$.
It follows from corollary \ref{cor:skeletal-unique-homotopy} that $i^\ast\square_{\mathcal{R}}[r]\to\ast$ is an $\infty$-equivalence.
Since the category $\square(\mathcal{R}^+_0)$ has the terminal object by \ref{sublem:cube-initial} in lemma \ref{lem:cube-r0}, we obtain the following sequence of Thomason equivalences:
\[
\commacat{(\square(\mathcal{R}^+_0)/i^\ast X)}{x}
\simeq \square(\mathcal{R}^+_0)/i^\ast\square_{\mathcal{R}}[r]
\to \square(\mathcal{R}^+_0)
\to \ast\,.
\]
\end{proof}

\begin{corol}\label{cor:res-infty-eq}
In the situation above, the restriction $i^\ast:\square(\mathcal{R})^\wedge\to\square(\mathcal{R}^+_0)$ preserves and reflects $\infty$-equivalence.
\end{corol}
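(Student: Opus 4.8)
The plan is to reduce the statement to a formal two-out-of-three argument layered on top of Proposition \ref{prop:res-Thom}. Recall that, for an aspherical category, a morphism $f$ of presheaves is an $\infty$-equivalence precisely when the induced functor on slices is a Thomason equivalence, i.e. when its nerve is a weak equivalence in $\mathbf{SSet}$. Thus $f:X\to Y$ in $\square(\mathcal{R})^\wedge$ is an $\infty$-equivalence iff $f_\ast:\square(\mathcal{R})/X\to\square(\mathcal{R})/Y$ is a Thomason equivalence, and $i^\ast f$ is an $\infty$-equivalence iff $(i^\ast f)_\ast:\square(\mathcal{R}^+_0)/i^\ast X\to\square(\mathcal{R}^+_0)/i^\ast Y$ is a Thomason equivalence. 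The goal is to relate these two slice functors through the comparison maps supplied by Proposition \ref{prop:res-Thom}.

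First I would check that the canonical functor $\square(\mathcal{R}^+_0)/i^\ast X\to\square(\mathcal{R})/X$ is natural in $X$. Using the adjunction $i_!\dashv i^\ast$ together with the identity $i_!\square_{\mathcal{R}^+_0}[r']\simeq\square_{\mathcal{R}}[r']$ (the left Kan extension carries representables to representables, since $i_!\circ y=y\circ i$), an object $\square_{\mathcal{R}^+_0}[r']\to i^\ast X$ corresponds under the hom-set bijection to an object $\square_{\mathcal{R}}[r']\to X$, and this correspondence is natural with respect to postcomposition by $f$. Hence for every $f:X\to Y$ we obtain a commutative square of functors
\[
\xymatrix{
  \square(\mathcal{R}^+_0)/i^\ast X \ar[r] \ar[d]_{(i^\ast f)_\ast} & \square(\mathcal{R})/X \ar[d]^{f_\ast} \\
  \square(\mathcal{R}^+_0)/i^\ast Y \ar[r] & \square(\mathcal{R})/Y }
\]
whose horizontal arrows are Thomason equivalences by Proposition \ref{prop:res-Thom}.

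Applying the nerve functor $N$ produces a commutative square in $\mathbf{SSet}$ with both horizontal edges weak equivalences. Since weak equivalences in the Quillen model structure on $\mathbf{SSet}$ satisfy the two-out-of-three property, $N(f_\ast)$ is a weak equivalence if and only if $N((i^\ast f)_\ast)$ is a weak equivalence: if one of the vertical maps is a weak equivalence, then so is the corresponding diagonal composite, whence the other vertical map is a weak equivalence by cancellation against a horizontal one. Unwinding the definitions, this says exactly that $f$ is an $\infty$-equivalence iff $i^\ast f$ is an $\infty$-equivalence, which yields preservation and reflection simultaneously.

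The argument is essentially formal once Proposition \ref{prop:res-Thom} is available, so I expect no genuine obstacle. The only point demanding care is the naturality of the comparison functor in $X$, which is precisely where the adjunction $i_!\dashv i^\ast$ and the computation $i_!\square_{\mathcal{R}^+_0}[r']\simeq\square_{\mathcal{R}}[r']$ enter; the hypotheses $(\clubsuit)$, the EZ structure, and the enlargement have already been fully absorbed into Proposition \ref{prop:res-Thom} and need not be revisited here.
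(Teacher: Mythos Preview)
Your argument is correct and is precisely the formal deduction the paper has in mind: the corollary is stated without proof as an immediate consequence of Proposition~\ref{prop:res-Thom}, and the naturality square plus two-out-of-three for Thomason equivalences is exactly how one extracts it.
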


An example of $\mathcal{R}$ satisfying the condition in proposition \ref{prop:res-Thom} is the category $\widetilde\Delta$.
Indeed, define $\delta_n:\underline{n-1}\to\underline{n}\in\widetilde\Delta$ and $\sigma_n:\underline{n+1}\to\underline{n}\in\widetilde\Delta$ by
\[
\delta_n(k) = k\,,
\]
and
\[
\sigma_n(k) = \begin{cases} k &\quad\text{if $0\le k\le n-1$} \\ n-1 &\quad\text{if $k=n$} \end{cases}\,.
\]
Note that we have an enlargement $c:\widetilde\Delta\to\widetilde\Delta$ defined as $c(\underline{n})=\underline{n+1}$ and $\iota = \delta_{n+1}$.
Then the morphism
\[
\sigma_n:c(\underline{n}) = \underline{n+1}\to \underline{n}
\]
in $\square(\widetilde\Delta)$ gives rise to a homotopy from $\mathrm{id}_{\underline{n}}$ to $\delta_n\delta_n^\dagger$.
Hence $\delta_n^\dagger:\square_{\widetilde\Delta}[n]\to\square_{\widetilde\Delta}[n-1]$ is a homotopy equivalence, and we obtain the sequence
\[
\square_{\widetilde\Delta}[n]
\xrightarrow{\delta_n^\dagger}\square_{\widetilde\Delta}[n-1]
\xrightarrow{\delta_{n-1}^\dagger}\dots
\xrightarrow{\delta_1^\dagger}\square_{\widetilde\Delta}[0] = \ast
\]
of homotopy equivalences.
More generally, for any crossed $\widetilde\Delta$-group $G$ compatible with $\widetilde\Delta^+$ and such that $G(0)=\{\mathrm{pt}\}$, the discussion above makes sense, so that $\widetilde\Delta G$ satisfies the condition \ref{cond:club}.
It follows from proposition \ref{prop:res-Thom} that the restriction functor $i^\ast:\square(\widetilde\Delta G)\to\square(\widetilde\Delta^+)=\square$ preserves and reflects $\infty$-equivalences.

\begin{prop}\label{prop:res-unit}
Let $\mathcal{R}$ be an EZ cubicalizable category with thin-powered structure $\mathcal{R}^+_0$ and an enlargement.
Suppose $\mathcal{R}$ satisfies the condition \ref{cond:club}.
Then the unit $\eta:\mathrm{Id}\to i^\ast i_!$ of the adjunction $i_!:\square(\mathcal{R}^+_0)^\wedge\leftrightarrows\square(\mathcal{R})^\wedge:i^\ast$ is an $\infty$-equivalence.
\end{prop}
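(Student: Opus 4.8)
The plan is to unwind the definition of an $\infty$-equivalence and then reduce, via Proposition \ref{prop:res-Thom} together with a triangle identity, to the assertion that the functor induced by $i_!$ on cells is a Thomason equivalence; this last point carries the real content, and I would extract it from the regularity of $\square(\mathcal{R}^+_0)^\wedge$ by transporting it across $i_!$.

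First I would fix $X\in\square(\mathcal{R}^+_0)^\wedge$ and recall that $\eta_X\colon X\to i^\ast i_! X$ is an $\infty$-equivalence precisely when the induced functor $(\eta_X)_\ast\colon\square(\mathcal{R}^+_0)/X\to\square(\mathcal{R}^+_0)/i^\ast i_! X$ is a Thomason equivalence. Applying Proposition \ref{prop:res-Thom} to the object $i_! X\in\square(\mathcal{R})^\wedge$ gives a Thomason equivalence $\square(\mathcal{R}^+_0)/i^\ast i_! X\to\square(\mathcal{R})/i_! X$, and a direct check using the triangle identity $\varepsilon_{i_! X}\circ i_!\eta_X=\mathrm{id}$ shows that the composite
\[
\Phi\colon\square(\mathcal{R}^+_0)/X\to\square(\mathcal{R})/i_! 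X
\]
sends a cell $x\colon\square_{\mathcal{R}^+_0}[s]\to X$ to $i_!(x)\colon\square_{\mathcal{R}}[s]\to i_! X$, using $i_!\square_{\mathcal{R}^+_0}[s]\simeq\square_{\mathcal{R}}[s]$. Since Thomason equivalences satisfy the two-out-of-three property, it then suffices to prove that $\Phi$ is a Thomason equivalence.

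The key device is that $i_!$ preserves simplicial realizations: because $i_!$ commutes with colimits and carries $\square_{\mathcal{R}^+_0}[r]$ to $\square_{\mathcal{R}}[r]$, both of which realize to $I^{d(r)}$, one obtains a natural isomorphism $|i_!(\blankdot)|\simeq|\blankdot|$. Consequently $i_!$ preserves all weak equivalences for the model structures of Theorem \ref{theo:cube-model}, and as a left Quillen functor (it preserves monomorphisms and weak equivalences) it preserves homotopy colimits. Now $\mathcal{R}^+_0$, being itself an EZ cubicalizable category with the restricted enlargement and carrying no non-trivial isomorphism, makes $\square(\mathcal{R}^+_0)^\wedge$ regular by the regularity theorem proved above; applying $i_!$ to the regularity equivalence $\hocolim_{(\square_{\mathcal{R}^+_0}[s]\to X)}\square_{\mathcal{R}^+_0}[s]\xrightarrow{\ \sim\ }X$ yields a weak equivalence
\[
\hocolim_{(\square_{\mathcal{R}^+_0}[s]\to X)\in\square(\mathcal{R}^+_0)/X}\square_{\mathcal{R}}[s]\xrightarrow{\ \sim\ } i_! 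X
\]
in $\square(\mathcal{R})^\wedge$.

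Finally I would assemble these pieces at the level of nerves. Realizing and using $|\square_{\mathcal{R}}[s]|\simeq N(\mathcal{R}^+_0/s)\simeq N(\square(\mathcal{R})/s)$ (both slices having terminal objects, hence contractible nerves), together with Thomason's homotopy colimit theorem \ref{theo:Thomason-hocolim} and Lemma \ref{lem:slice-hocolim}, identifies $N(\square(\mathcal{R})/i_! X)$ with $\hocolim_{\square(\mathcal{R}^+_0)/X}N(\square(\mathcal{R})/s)$, whose diagram of values is pointwise contractible and whose homotopy colimit is therefore $N(\square(\mathcal{R}^+_0)/X)$; tracing the maps shows the resulting equivalence is exactly $N(\Phi)$. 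The main obstacle is precisely this last bookkeeping: one must verify that the comparison produced by Thomason's theorem and Lemma \ref{lem:slice-hocolim} is \emph{induced by} $\Phi$ rather than merely abstractly equivalent to it. Equivalently, one may attack $\Phi$ directly through Quillen's Theorem A, in which case the difficulty migrates to showing that for every cell $y\colon\square_{\mathcal{R}}[r]\to i_! X$ the comma category $\commacat{\Phi}{y}$ is aspherical; this requires controlling the cells of $i_! X$ through the coend relation defining $i_!$, and that factorization analysis is the technical heart of the argument.
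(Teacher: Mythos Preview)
Your reduction to showing that $\Phi\colon\square(\mathcal{R}^+_0)/X\to\square(\mathcal{R})/i_!X$ is a Thomason equivalence is correct and clean. The gap is in how you propose to establish this. Your chain produces a weak equivalence $N(\square(\mathcal{R}^+_0)/X)\simeq |i_!X|$ in $\mathbf{SSet}$, but to pass from $|i_!X|$ to $N(\square(\mathcal{R})/i_!X)$ you would need the argument of Theorem~\ref{theo:regular-infty-eq}, which rests on regularity of the realization model structure on $\square(\mathcal{R})^\wedge$; that regularity is only proved when $\mathcal{R}$ has no non-trivial isomorphism and is not available under condition~\ref{cond:club} alone. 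Lemma~\ref{lem:slice-hocolim} yields a homotopy colimit indexed over $\square(\mathcal{R})/i_!X$, not over $\square(\mathcal{R}^+_0)/X$, and reindexing along $\Phi$ is precisely the statement you are trying to prove. Your fallback via Quillen's Theorem~A is the honest formulation of the remaining work, but analysing the comma categories $\Phi/y$ requires understanding arbitrary cells $y$ of $i_!X$ through the coend defining $i_!$, and you have not indicated how to carry this out; it is the content of the proposition, not bookkeeping.

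The paper sidesteps this entirely by a skeletal induction in the \emph{standard} model structure on $\square(\mathcal{R}^+_0)^\wedge$. Both $i_!$ and $i^\ast$ preserve colimits and monomorphisms, so $i^\ast i_!$ preserves homotopy pushouts and sequential homotopy colimits. The base case is that $\eta\colon\square_{\mathcal{R}^+_0}[r]\to i^\ast\square_{\mathcal{R}}[r]$ is an $\infty$-equivalence, which follows from condition~\ref{cond:club} via Corollary~\ref{cor:res-h-pres} and the two-out-of-three property (both sides are $\infty$-equivalent to the terminal object). One then propagates along the pushout squares of Proposition~\ref{prop:sk-cell-extend} and finally along the sequential colimit $X\simeq\colim_n\operatorname{sk}_nX$. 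This never touches the realization model structure on $\square(\mathcal{R})^\wedge$ or its regularity, and so avoids exactly the obstruction your approach runs into.
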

\begin{proof}
We first prove that $\eta:X\to i^\ast i_! X$ is an $\infty$-equivalence for every $n$-skeletal object $X\in\square(\mathcal{R}^+_0)^\wedge$ by induction on $n$.
Notice that since every $0$-skeletal object is a coproduct of the terminal object $\ast$, $\eta$ is an isomorpshim on $0$-skeletal objects.
Suppose $\eta$ is an $\infty$-equivalence on $k$-skeletal objects for $0\le k\le n$, and $X\in\square(\mathcal{R}^+_0)^\wedge$ is $(n+1)$-skeletal.
By proposition \ref{prop:sk-cell-extend} and proposition \ref{prop:cell-stab}, we have the following pushout square:
\[
\renewcommand{\objectstyle}{\displaystyle}
\xymatrix@R-1ex{
  \raisedunderop{\coprod}{\substack{x:\square_{\mathcal{R}^+_0}[r]\to X\\ \deg r=n}} \partial\square_{\mathcal{R}^+_0}[r] \ar[r] \ar[d] \ar@{}[dr]|(.6){\pocorner} & \operatorname{sk}_nX \ar[d] \\
  \raisedunderop{\coprod}{\substack{x:\square_{\mathcal{R}^+_0}[r]\to X\\ \deg r=n}} \square_{\mathcal{R}^+_0}[r] \ar[r] & X }
\]
Since each objects in the square is cofibrant and each $\partial\square_{\mathcal{R}^+_0}[r]\hookrightarrow\square_{\mathcal{R}^+_0}[r]$ is a cofibration in the standard model structure, the square is also a homotopy pushout.
Moreover, since functors $i^\ast$ and $i_!$ preserve colimits and cofibrations, the functor $i^\ast i_!$ preserves homotopy colimits.
Now, $\eta:\square_{\mathcal{R}^+_0}[r]\to i^\ast i_!\square_{\mathcal{R}^+_0}[r]\simeq i^\ast\square_{\mathcal{R}}[r]$ is an $\infty$-equivalence by the assumption, and $\eta:\partial\square_{\mathcal{R}^+_0}[r]\to i^\ast i_!\partial\square_{\mathcal{R}^+_0}[r]$ and $\eta:\operatorname{sk}_n X\to i^\ast i_!\operatorname{sk}_n X$ are $\infty$-equivalence by the induction hypothesis.
It follows that the unit $\eta:X\to i^\ast i_! X$ is a homotopy pushout of $\infty$-equivalences, so $\eta:X\to i^\ast i_!X$ is itself $\infty$-equivalence.

Finally, we show $\eta:X\to i^\ast i_! X$ is an $\infty$-equivalence for an arbitrary $X\in\square(\mathcal{R}^+_0)^\wedge$.
Since $\operatorname{sk}_n X\to\operatorname{sk}_{n+1} X$ is a cofibration in the standard model structure, the sequence
\[
\operatorname{sk}_0 X \to \operatorname{sk}_1 X \to \dots
\]
is projective cofibrant in $(\square(\mathcal{R}^+_0)^\wedge)^{\mathbb{N}}$.
Hence the natural morphism
\[
\hocolim_{n\to\infty} \operatorname{sk}_n X \to X
\]
is an $\infty$-equivalence.
Similarly, the natural morphism
\[
\hocolim_{n\to\infty} i^\ast i_!\operatorname{sk}_n X \to i^\ast i_! X
\]
is also an $\infty$-equivalence.
As shown above, each $\eta:\operatorname{sk}_n X\to i^\ast i_!\operatorname{sk}_n X$ is an $\infty$-equivalence.
It follows that $\eta:X\to i^\ast i_!X$ is an $\infty$-equivalence as required.
\end{proof}

\begin{corol}\label{cor:leftKan-leftQuil}
In the situation above, $i_!:\square(\mathcal{R}^+_0)^\wedge\to\square(\mathcal{R})$ preserves $\infty$-equivalence.
\end{corol}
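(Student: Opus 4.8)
The plan is to deduce the corollary directly from the two facts already established for the adjunction $i_!\dashv i^\ast$: that the unit $\eta:\mathrm{Id}\to i^\ast i_!$ is an $\infty$-equivalence (proposition \ref{prop:res-unit}), and that $i^\ast$ both preserves and reflects $\infty$-equivalences (corollary \ref{cor:res-infty-eq}). The entire argument is a two-out-of-three manipulation inside the homotopical category $\square(\mathcal{R}^+_0)^\wedge$, whose weak equivalences are the $\infty$-equivalences.

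First I would fix an $\infty$-equivalence $f:X\to Y$ in $\square(\mathcal{R}^+_0)^\wedge$ and consider the naturality square of the unit at $f$, whose commutativity reads
\[
\eta_Y\circ f = i^\ast i_!(f)\circ\eta_X .
\]
By proposition \ref{prop:res-unit}, both $\eta_X$ and $\eta_Y$ are $\infty$-equivalences. Hence the left-hand composite $\eta_Y\circ f$ is an $\infty$-equivalence, being a composition of the two $\infty$-equivalences $f$ and $\eta_Y$; therefore so is the right-hand composite $i^\ast i_!(f)\circ\eta_X$. Since $\eta_X$ is itself an $\infty$-equivalence, the two-out-of-three property of weak equivalences forces $i^\ast i_!(f)$ to be an $\infty$-equivalence.

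Finally I would invoke the reflection part of corollary \ref{cor:res-infty-eq}: as $i^\ast\bigl(i_!(f)\bigr) = i^\ast i_!(f)$ is an $\infty$-equivalence and $i^\ast$ reflects $\infty$-equivalences, the morphism $i_!(f):i_!X\to i_!Y$ is itself an $\infty$-equivalence. This is precisely the assertion that $i_!$ preserves $\infty$-equivalences.

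There is no genuine obstacle here, since all the substantial work is carried out in proposition \ref{prop:res-unit} and corollary \ref{cor:res-infty-eq}. The only point requiring a word of justification is the appeal to two-out-of-three, which is legitimate because the $\infty$-equivalences form the class of weak equivalences of the homotopical category $\square(\mathcal{R}^+_0)^\wedge$ (indeed $\square(\mathcal{R}^+_0)$ is aspherical, having a terminal object by \ref{sublem:cube-initial} in lemma \ref{lem:cube-r0}), and any such class satisfies two-out-of-six and hence two-out-of-three.
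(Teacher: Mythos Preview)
Your proof is correct and follows essentially the same approach as the paper's own proof: use the naturality square of the unit together with proposition~\ref{prop:res-unit} and two-out-of-three to conclude that $i^\ast i_!(f)$ is an $\infty$-equivalence, then invoke the reflection statement of corollary~\ref{cor:res-infty-eq}. The paper phrases both steps as biconditionals (so it actually obtains that $f$ is an $\infty$-equivalence if and only if $i_!f$ is), but the underlying argument is identical.
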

\begin{proof}
For a morphism $f:X\to Y\in\square(\mathcal{R}^+_0)^\wedge$, by corollary \ref{cor:res-infty-eq}, the morphism $i_! f:i_! X\to i_!Y$ is an $\infty$-equivalence in $\square(\mathcal{R})^\wedge$ if and only if $i^\ast i_!f:i^\ast i_!X\to i^\ast i_!Y$ is an $\infty$-equivalence in $\square(\mathcal{R}^+_0)^\wedge$.
Since the unit $\eta:X\to i^\ast i_!X$ is an $\infty$-equivalence by proposition \ref{prop:res-unit}, $f$ is an $\infty$-equivalence if and only if $i^\ast i_!f$ is so.
Hence the result follows.
\end{proof}

\begin{corol}
In the situation above, the counit $\varepsilon:i_!i^\ast\to\mathrm{Id}$ of the adjunction $i_!\dashv i^\ast$ is an $\infty$-equivalence.
\end{corol}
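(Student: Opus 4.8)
The plan is to deduce this purely formally from the two facts already established about the adjunction $i_!\dashv i^\ast$: that the unit $\eta:\mathrm{Id}\to i^\ast i_!$ is an $\infty$-equivalence (proposition \ref{prop:res-unit}), and that $i^\ast$ both preserves and reflects $\infty$-equivalences (corollary \ref{cor:res-infty-eq}). Fix an object $X\in\square(\mathcal{R})^\wedge$; the goal is to show that $\varepsilon_X:i_!i^\ast X\to X$ is an $\infty$-equivalence. Since $i^\ast$ reflects $\infty$-equivalences, it suffices to prove that $i^\ast\varepsilon_X:i^\ast i_!i^\ast X\to i^\ast X$ is one.

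First I would invoke the triangle identity $i^\ast\varepsilon\circ\eta i^\ast=\mathrm{id}_{i^\ast}$ of the adjunction, evaluated at $X$, which gives the factorization of the identity
\[
\mathrm{id}_{i^\ast X}=\left(i^\ast X\xrightarrow{\eta_{i^\ast X}} i^\ast i_!i^\ast X\xrightarrow{i^\ast\varepsilon_X} i^\ast X\right).
\]
Here the first arrow is the unit evaluated at the object $i^\ast X\in\square(\mathcal{R}^+_0)^\wedge$, so by proposition \ref{prop:res-unit} it is an $\infty$-equivalence, while the composite is the identity and hence certainly an $\infty$-equivalence.

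The key step is then an application of the two-out-of-three property. Since the class of $\infty$-equivalences is the class of weak equivalences making $\square(\mathcal{R}^+_0)^\wedge$ a homotopical category, it satisfies the two-out-of-six axiom, and therefore also the two-out-of-three property. Applying the latter to the displayed factorization, with both $\eta_{i^\ast X}$ and the composite being $\infty$-equivalences, I conclude that $i^\ast\varepsilon_X$ is an $\infty$-equivalence. Reflecting this back along $i^\ast$ by corollary \ref{cor:res-infty-eq} yields that $\varepsilon_X$ itself is an $\infty$-equivalence, which is the assertion.

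There is essentially no analytic obstacle here: once proposition \ref{prop:res-unit} and corollary \ref{cor:res-infty-eq} are in hand, the statement is a formal consequence of the triangle identities. The only points requiring care are to ensure one uses the unit evaluated precisely at $i^\ast X$ (so that proposition \ref{prop:res-unit} applies verbatim, rather than the unit at some auxiliary object), and to confirm that the two-out-of-three property is legitimately available in $\square(\mathcal{R}^+_0)^\wedge$, which was recorded in the discussion of homotopical categories. I would also note in passing that this argument does not require corollary \ref{cor:leftKan-leftQuil}; the preservation of $\infty$-equivalences by $i_!$ is not needed for the counit statement.
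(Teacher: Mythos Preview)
Your proof is correct and follows essentially the same route as the paper: use the triangle identity $(i^\ast\varepsilon)\circ(\eta i^\ast)=\mathrm{id}_{i^\ast}$, apply proposition~\ref{prop:res-unit} to see that $\eta i^\ast$ is an $\infty$-equivalence, and conclude by two-out-of-three. If anything, your write-up is more explicit than the paper's, which leaves the final reflection step along $i^\ast$ (via corollary~\ref{cor:res-infty-eq}) implicit.
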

\begin{proof}
The triangle identity of the adjunction $i_!\dashv i^\ast$ implies $(i^\ast\varepsilon)\circ(\eta i^\ast) = \mathrm{id}_{i^\ast}$.
Hence the result follows from proposition \ref{prop:res-unit} and the two-out-of-three property of $\infty$-equivalences.
\end{proof}

We finally obtain a Quillen equivalence of standard model structures:

\begin{corol}
Let $\mathcal{R}$ be an EZ cubicalizable category with thin-powered structure $\mathcal{R}^+_0$ and an enlargement.
Then if $\mathcal{R}$ satisfies the condition \ref{cond:club}, the adjunction
\[
i_!:\square(\mathcal{R}^+_0)^\wedge \leftrightarrows \square(\mathcal{R})^\wedge:i^\ast
\]
gives rise to a Quillen equivalence between standard model structures.
\end{corol}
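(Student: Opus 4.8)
The plan is to first check that $i_!\dashv i^\ast$ is a Quillen adjunction for the two standard model structures, and then to upgrade it to a Quillen equivalence using the homotopical facts already assembled, namely that the unit and counit are $\infty$-equivalences and that both $i_!$ and $i^\ast$ preserve $\infty$-equivalences. Since in either standard model structure the cofibrations are exactly the monomorphisms, every object is cofibrant; this observation will streamline the equivalence criterion at the end.

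First I would show that $i_!$ is a left Quillen functor. For trivial cofibrations this is immediate: a trivial cofibration is a monomorphism that is an $\infty$-equivalence, and $i_!$ preserves $\infty$-equivalences by Corollary \ref{cor:leftKan-leftQuil}, so it suffices to see that $i_!$ preserves cofibrations. Here I would use that $\square(\mathcal{R}^+_0)$ is an EZ category (the category $\mathcal{R}^+_0$ is itself cubicalizable, and its $\mathcal{R}^+_0$-surjections are isomorphisms, so confluence is automatic and Proposition \ref{prop:cubeEZ} applies), whence by Corollary \ref{cor:ez-cellular} the monomorphisms form the saturated class generated by the boundary inclusions $H\backslash\partial\square_{\mathcal{R}^+_0}[r]\hookrightarrow H\backslash\square_{\mathcal{R}^+_0}[r]$. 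Because the boundary $\partial\square[r]$ is built as a colimit of representables indexed by data living entirely in the shared poset $\mathcal{R}^+_0/r$, and $i_!$ preserves colimits and sends representables to representables, $i_!$ carries each such generator to the corresponding inclusion $H\backslash\partial\square_{\mathcal{R}}[r]\hookrightarrow H\backslash\square_{\mathcal{R}}[r]$, a monomorphism. As $i_!$ preserves colimits and retracts and the monomorphisms in $\square(\mathcal{R})^\wedge$ form a saturated class, it follows that $i_!$ sends the whole saturation, i.e. every monomorphism, to a monomorphism. Thus $i_!$ preserves cofibrations, and $i_!\dashv i^\ast$ is a Quillen adjunction.

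For the equivalence, I would argue directly on homotopy categories. Since every object of $\square(\mathcal{R}^+_0)^\wedge$ is cofibrant and $i_!$ preserves $\infty$-equivalences, the total left derived functor $\mathbf{L}i_!$ is simply the functor induced by $i_!$; dually, since $i^\ast$ preserves $\infty$-equivalences (Corollary \ref{cor:res-infty-eq}), $\mathbf{R}i^\ast$ is the functor induced by $i^\ast$. Under these identifications the unit and counit of the derived adjunction $\mathbf{L}i_!\dashv\mathbf{R}i^\ast$ are the images of the ordinary unit $\eta$ and counit $\varepsilon$. By Proposition \ref{prop:res-unit} the unit $\eta$ is an $\infty$-equivalence, and by the preceding corollary the counit $\varepsilon$ is an $\infty$-equivalence; hence both descend to isomorphisms in the respective homotopy categories. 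Therefore $\mathbf{L}i_!\dashv\mathbf{R}i^\ast$ is an adjoint equivalence, which is exactly the assertion that $i_!\dashv i^\ast$ is a Quillen equivalence. Equivalently, one may invoke the standard criterion: $i^\ast$ reflects weak equivalences between fibrant objects by Corollary \ref{cor:res-infty-eq}, and the derived unit $X\to i^\ast((i_!X)^{\mathrm{fib}})$ is an $\infty$-equivalence by two-out-of-three from Proposition \ref{prop:res-unit} together with the fact that $i^\ast$ preserves $\infty$-equivalences.

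The substantive point of this corollary, and the only genuinely new verification, is the preservation of cofibrations by $i_!$ — concretely the identification $i_!\partial\square_{\mathcal{R}^+_0}[r]\simeq\partial\square_{\mathcal{R}}[r]$ and the ensuing saturated-class bookkeeping. Everything homotopical has already been carried out in the preceding results, so once the Quillen adjunction is in place the equivalence is a formal consequence of the unit and counit being $\infty$-equivalences. I do not expect any real obstacle beyond being careful that the boundary construction is insensitive to replacing $\mathcal{R}$ by $\mathcal{R}^+_0$, which holds because it refers only to the thin-powered structure $\mathcal{R}^+_0$ that the two categories share.
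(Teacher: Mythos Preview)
Your proof is correct and matches the paper's argument: you verify that $i_!$ preserves the generating boundary inclusions (hence all cofibrations) and preserves $\infty$-equivalences by Corollary~\ref{cor:leftKan-leftQuil}, then deduce the Quillen equivalence from the unit and counit being $\infty$-equivalences; your alternative formulation via the Hovey criterion ($i^\ast$ reflects weak equivalences and the derived unit is an equivalence) is exactly the route the paper takes. Your treatment of the cofibration step is in fact more detailed than the paper's, which simply asserts that $i_!$ sends the generators of Corollary~\ref{cor:ez-cellular} to cofibrations.
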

\begin{proof}
First of all, notice that $i_!$ preserves cofibrations since it preserves cofibrations $\partial\square_{\mathcal{R}^+_0}[r]\hookrightarrow\square_{\mathcal{R}^+_0}[r]$ in the generating set given in corollary \ref{cor:ez-cellular}.
By corollary \ref{cor:leftKan-leftQuil}, $i_!$ also preserves $\infty$-equivalences, so that $i_!$ is a left Quillen functor.

Now, recall that a Quillen adjunction $F:\mathcal{C}\leftrightarrows\mathcal{D}:U$ between model categories is a Quillen equivalence if and only if $U$ reflects weak equivalences between fibrant objects, and the composition
\[
X \xrightarrow{\eta} UFX \xrightarrow{Uj} UPFX
\]
is a weak equivalence for every cofibrant $X\in\mathcal{C}$ (see \cite{Hov}), where $\eta$ is the unit of the adjunction and $j:\mathrm{Id}\to P$ is a fibrant replacement in $\mathcal{D}$.
By corollary \ref{cor:res-infty-eq}, the restriction $i^\ast$ reflects all $\infty$-equivalences.
Moreover, for a fibrant replacement $j:\mathrm{Id}\to P$ in $\square(\mathcal{R})^\wedge$, by proposition \ref{prop:res-unit} and corollary \ref{cor:res-infty-eq}, the morphism
\[
X \xrightarrow{\eta} i^\ast i_!X \xrightarrow{i^\ast(j)} i^\ast Pi_! X
\]
is a composition of $\infty$-equivalences.
Thus, the Quillen adjunction $i_!\dashv i^\ast$ is a Quillen equivalence.
\end{proof}

\subsection{The spatial model structure}
Using the results in the previous section, we can give another model structure on $\square(\mathcal{R})^\wedge$.

\begin{theo}\label{theo:model-spatial}
Let $\mathcal{R}$ be an EZ cubicalizable category with an enlargement.
If $\mathcal{R}$ satisfies the condition \ref{cond:club}, then there is a cofibrantly generated model structure on $\square(\mathcal{R})^\wedge$ such that
\begin{itemize}
  \item $f:X\to Y$ is a weak equivalence if and only if it is an $\infty$-equivalence;
  \item the class of cofibrations is the saturated class generated by the set
\[
S_0:=\left\{\partial\square_{\mathcal{R}}[r]\hookrightarrow\square_{\mathcal{R}}[r]\,:\,r\in\square(\mathcal{R})\right\}\,;
\]
  \item the class of fibrations is the right orthogonal class of trivial cofibrations.
\end{itemize}
Moreover, if $\mathcal{R}^+_0$ is the thin-powered structure on $\mathcal{R}$, then the inclusion $i:\square(\mathcal{R}^+_0)\hookrightarrow\square(\mathcal{R})$ induces a Quillen equivalence
\[
i_!:\square(\mathcal{R}^+_0)^\wedge\leftrightarrows\square(\mathcal{R})^\wedge:i^\ast
\]
between the standard model structure on $\square(\mathcal{R}^+_0)^\wedge$ and the model structure on $\square(\mathcal{R})^\wedge$ defined above.
\end{theo}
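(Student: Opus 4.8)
The plan is to produce the spatial model structure by the recognition principle of Lurie (Proposition A.2.6.13 in \cite{Lur}) recalled above, taking $\mathcal{W}$ to be the class of $\infty$-equivalences and $C_0=S_0$, and then to obtain the Quillen equivalence by reusing the comparison results of the previous subsection. Since $S_0$ is indexed by the (small) set of objects of $\square(\mathcal{R})$ it is a genuine set, and $\square(\mathcal{R})^\wedge$ is a presheaf topos, hence locally presentable, so the standing hypotheses of the recognition principle are in place.

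First I would check that $\mathcal{W}$ is perfect. The functor $X\mapsto N(\square(\mathcal{R})/X)$ from $\square(\mathcal{R})^\wedge$ to $\mathbf{SSet}$ preserves filtered colimits (slices over presheaves are computed pointwise, and the nerve commutes with filtered colimits), and by definition $f$ is an $\infty$-equivalence exactly when its image is a Quillen weak equivalence. Thus $\mathcal{W}$ is the preimage of the perfect class of weak equivalences of $\mathbf{SSet}$ under a filtered-colimit-preserving functor, hence perfect by corollary \ref{cor:perfect-1}. Condition (a) of the recognition principle follows from left properness of the standard model structure of corollary \ref{cor:test-model-cube}: pushouts of morphisms in $S_0$ are monomorphisms, hence cofibrations there, and since every object of $\square(\mathcal{R})^\wedge$ is cofibrant that structure is left proper, so $\infty$-equivalences are stable under pushout along such maps.

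The essential point is condition (b): every $f\colon X\to Y$ with $S_0\pitchfork f$ is an $\infty$-equivalence. Here I would pass to $\square(\mathcal{R}^+_0)$, exploiting that the thin-powered structure $\mathcal{R}^+_0$ carries no non-trivial isomorphism, so that $\square(\mathcal{R}^+_0)$ is an iso-free EZ category (itself EZ cubicalizable with the restricted enlargement, hence carrying its standard model structure); by corollary \ref{cor:ez-cellular} the monomorphisms of $\square(\mathcal{R}^+_0)^\wedge$ are then generated by the boundary inclusions $\partial\square_{\mathcal{R}^+_0}[r]\hookrightarrow\square_{\mathcal{R}^+_0}[r]$ \emph{alone}, with no automorphism-quotient generators. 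As noted in the proof of proposition \ref{prop:res-unit}, $i_!$ sends $\square_{\mathcal{R}^+_0}[r]$ to $\square_{\mathcal{R}}[r]$; since $i_!$ preserves colimits and the boundaries are assembled from the same posets $\mathcal{R}^+_0\!/r$, it identifies $i_!(\partial\square_{\mathcal{R}^+_0}[r]\hookrightarrow\square_{\mathcal{R}^+_0}[r])$ with a member of $S_0$. By the adjunction $i_!\dashv i^\ast$, the hypothesis $S_0\pitchfork f$ then translates into $i^\ast f$ being right orthogonal to every generating cofibration of the standard structure on $\square(\mathcal{R}^+_0)^\wedge$; so $i^\ast f$ is a trivial fibration there, in particular an $\infty$-equivalence. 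Finally corollary \ref{cor:res-infty-eq} says $i^\ast$ reflects $\infty$-equivalences, whence $f$ is an $\infty$-equivalence. This verifies (b), and the recognition principle yields the model structure with weak equivalences the $\infty$-equivalences, cofibrations $\mathrm{Sat}(S_0)$, and fibrations the trivial-cofibration orthogonals.

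For the Quillen equivalence I would verify Hovey's criterion for $i_!\dashv i^\ast$ between the standard structure on $\square(\mathcal{R}^+_0)^\wedge$ and the spatial structure just built. That $i_!$ is left Quillen is immediate: it carries the generators $\partial\square_{\mathcal{R}^+_0}[r]\hookrightarrow\square_{\mathcal{R}^+_0}[r]$ into $S_0$ and preserves colimits, hence preserves cofibrations, while it preserves $\infty$-equivalences by corollary \ref{cor:leftKan-leftQuil}, hence preserves trivial cofibrations. The functor $i^\ast$ reflects $\infty$-equivalences (corollary \ref{cor:res-infty-eq}), and for cofibrant $X$ the composite $X\to i^\ast i_!X\to i^\ast P i_!X$ with a fibrant replacement $P$ is an $\infty$-equivalence, because the unit is one (proposition \ref{prop:res-unit}), fibrant replacements are $\infty$-equivalences, and $i^\ast$ preserves them. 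Hence $i_!\dashv i^\ast$ is a Quillen equivalence. I expect the main obstacle to lie in condition (b), precisely in the bookkeeping that identifies $i_!$ of the $\mathcal{R}^+_0$-boundary inclusion with the $\mathcal{R}$-boundary inclusion and confirms that dropping the automorphism-quotient generators is exactly compensated by the absence of isomorphisms in $\mathcal{R}^+_0$; once this identification is pinned down, the reduction of $i^\ast f$ to a trivial fibration makes everything else routine.
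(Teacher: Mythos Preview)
Your argument is correct, and the essential reduction---translating $S_0\pitchfork f$ through the adjunction $i_!\dashv i^\ast$ to conclude that $i^\ast f$ is a trivial fibration in $\square(\mathcal{R}^+_0)^\wedge$, then reflecting the $\infty$-equivalence back---is exactly the heart of the paper's proof as well.

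The difference is in the recognition theorem invoked. The paper does not use Lurie's Proposition~A.2.6.13; instead it lifts the model structure along the adjunction $i_!\dashv i^\ast$ in the sense of Crans (Theorem~3.3 of \cite{Cra95}) and Hovey, taking a generating set $J$ of trivial cofibrations in $\square(\mathcal{R}^+_0)^\wedge$ and checking that the triple $(\mathcal{W}_\infty,S_0,i_!J)$ satisfies Hovey's generation conditions. This yields an explicit generating set $i_!J$ of trivial cofibrations for the spatial structure, which the paper then reuses in the subsequent proofs that the geometric realization is left Quillen and that the convolution product is compatible. Your route via Lurie is cleaner in that it avoids naming $J$ and sidesteps the separate verification that $\operatorname{Sat}(i_!J)\subset\mathcal{W}_\infty\cap\operatorname{Sat}(S_0)$, but it does not hand you this explicit set of generating trivial cofibrations; you would have to recover it afterwards if you wanted to mimic the later arguments. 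For the Quillen equivalence, the paper takes a slightly shorter path: it observes that the identity functor from the spatial to the standard structure on $\square(\mathcal{R})^\wedge$ is a Quillen equivalence and that $i_!$ into the standard structure is already known to be one, then invokes two-out-of-three for Quillen equivalences. Your direct verification of Hovey's criterion is equally valid and uses the same ingredients (proposition~\ref{prop:res-unit} and corollary~\ref{cor:res-infty-eq}).
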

\begin{proof}
The first statement is a consequence of a standard argument on lifting model structures along adjuncitons (see Theorem 3.3 in \cite{Cra95}).
Let $J$ be the generating set of trivial cofibrations in $\square(\mathcal{R}^+_0)^\wedge$, which in fact exists since the standard model structure is cofibrantly generated.
We denote by $\mathcal{W}_\infty$ the class of $\infty$-equivalences in $\square(\mathcal{R})^\wedge$.
Then it suffices to verify that the triple $(\mathcal{W}_\infty,S_0,i_!J)$ satisfies the conditions in \cite{Hov} for generating a model structure.
More precisely, if we denote by $\operatorname{Sat}(T)$ the saturated class of morphisms generated by $T$, we will show the following:
\begin{itemize}
  \item $\operatorname{Sat}(i_!J)\subset\mathcal{W}_\infty\cap\operatorname{Sat}(S_0)$.
  \item $S_0\pitchfork f\iff i_!J\pitchfork f \text{ and } f\in\mathcal{W}_\infty$.
\end{itemize}

Notice that since $\mathcal{R}^+_0$ has no non-trivial isomorphism, the class of cofibration in $\square(\mathcal{R}^+_0)^\wedge$ is generated by morphisms of the form
\[
S'_0=\partial\square_{\mathcal{R}^+_0}[r] \hookrightarrow \square_{\mathcal{R}^+_0}[r]\,,
\]
and we have $S_0=i_!S'_0$.
Since every element of $J$ is a cofibration in $\square(\mathcal{R}^+_0)$ and $i_!$ commutes with colimits, it follows that we have  $i_!J\subset\operatorname{Sat}(S_0)$ so that $\operatorname{Sat}(i_!J)\subset\operatorname{Sat}(S_0)$.

Next, since $S_0=i_! S'_0$, by the adjointness, we have $S_0\pitchfork f\iff S'_0\pitchfork i^\ast f$ for morphisms $f:X\to Y\in\square(\mathcal{R})^\wedge$.
The standard model structure on $\square(\mathcal{R}^+_0)^\wedge$ is generated by $S'_0$ and $J$, we also have $S'_0\pitchfork f'\iff J\pitchfork f' \text{ and } f'\in\mathcal{W}'_\infty$ for the class $\mathcal{W}'_\infty$ of $\infty$-equivalences in $\square(\mathcal{R}^+_0)^\wedge$.
Moreover, by corollary \ref{cor:res-infty-eq}, $i^\ast f\in\mathcal{W}'_\infty$ if and only if $f\in\mathcal{W}_\infty$.
Combining these eqiuvalences, we obtain the required equivalence $S_0\pitchfork f\iff i_!J\pitchfork f \text{ and } f\in\mathcal{W}_\infty$.

Finally, notice that we have an obvious Quillen equivalence
\[
\square(\mathcal{R})^\wedge\xrightarrow{\mathrm{Id}} \square(\mathcal{R})^\wedge_{\mathrm{st}}\,,
\]
where the left $\square(\mathcal{R})^\wedge$ is equipped with the model structure defined above while the right is equipped with the standard model structure.
By corollary \ref{cor:leftKan-leftQuil}, we also have a Quillen equivalence
\[
\square(\mathcal{R}^+_0)^\wedge\xrightarrow{i_!} \square(\mathcal{R})^\wedge_{\mathrm{st}}\,.
\]
Since the Quillen equivalences satisfy the two-out-of-three property, it follows that
\[
\square(\mathcal{R}^+_0)^\wedge\xrightarrow{i_!} \square(\mathcal{R})^\wedge
\]
is a Quillen equivalence.
\end{proof}

We call the model structure define in theorem \ref{theo:model-spatial} the spatial model structure.
It is a generalization of the spatial model structure on $\square_\Sigma=\square(\widetilde\Delta\Sigma)$ discussed in \cite{Isa}.
Note that Berger and Moerdijk pointed out in \cite{BM} that, for an EZ category $\mathcal{S}$, the saturated class of morphisms generated by
\[
\left\{\partial\mathcal{S}[s]\to\mathcal{S}[s]\,:\,s\in\mathcal{S}\right\}
\]
has a really natural action on the generalized Reedy model structure.

\begin{rem}
Let $\mathcal{R}$ be an EZ cubicalizable category with an enlargement.
Suppose $\mathcal{R}$ satisfies the condition \ref{cond:club} and has no non-trivial isomorphism.
Then the set
\[
S_0:=\left\{\partial\square_{\mathcal{R}}[r]\hookrightarrow\square_{\mathcal{R}}[r]\,:\,r\in\square(\mathcal{R})\right\}
\]
generates the whole class of monomorphisms.
Hence, in this case, the spatial model structure coincides with the standard model structure on $\square(\mathcal{R})^\wedge$.
\end{rem}

In the spatial model structure, we obtain the following criterion (cf. theorem \ref{theo:regular-infty-eq} and corollary \ref{cor:skeletal-unique-homotopy}):

\begin{prop}
Let $\mathcal{R}$ be an EZ cubicalizable category with an enlargement.
Suppose $\square(\mathcal{R})$ satisfies the condition \ref{cond:club}.
Then the geometric realization $|\blankdot|:\square(\mathcal{R})^\wedge\to\mathbf{SSet}$ is a left Quillen functor, where we regard $\square(\mathcal{R})$ as equipped with the spatial model structure.
Moreover, if $X,Y\in\square(\mathcal{R})^\wedge$ are cofibrant, $f:X\to Y$ is an $\infty$-equivalence if and only if the geometric realization $|f|:|X|\to |Y|$ is a weak equivalence in $\mathbf{SSet}$.
\end{prop}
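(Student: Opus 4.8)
The plan is to reduce everything to the standard (equivalently, realization-induced) model structure on $\square(\mathcal{R}^+_0)^\wedge$, where the analogous statement is already available, and then transport it across the inclusion $i:\square(\mathcal{R}^+_0)\hookrightarrow\square(\mathcal{R})$. First I would record the relevant facts about $\mathcal{R}^+_0$: it is EZ cubicalizable, it carries the restricted enlargement, and it has no non-trivial isomorphism, so theorem \ref{theo:cube-model} and corollary \ref{cor:skeletal-unique-homotopy} together show that on $\square(\mathcal{R}^+_0)^\wedge$ the realization $|\blankdot|_{\mathcal{R}^+_0}$ is left Quillen and that a morphism $g$ is an $\infty$-equivalence if and only if $|g|$ is a weak equivalence in $\mathbf{SSet}$ (for all $g$, since there the two model structures coincide). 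The one piece of glue I need is a natural isomorphism $|\blankdot|\circ i_!\simeq|\blankdot|_{\mathcal{R}^+_0}$: both sides are cocontinuous functors $\square(\mathcal{R}^+_0)^\wedge\to\mathbf{SSet}$, and on a representable $\square_{\mathcal{R}^+_0}[r]$ they agree because $i_!\square_{\mathcal{R}^+_0}[r]=\square_{\mathcal{R}}[r]$ and the poset representation of $\square(\mathcal{R}^+_0)$ is the restriction of that of $\square(\mathcal{R})$, so the value $d(r)$ is unchanged; the universal property of the presheaf topos then upgrades the agreement to a natural isomorphism.

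For the left Quillen assertion I would use the cofibrantly generated criterion. Since $|\blankdot|$ is cocontinuous and the cofibrations of the spatial structure form the saturation of $S_0=\{\partial\square_{\mathcal{R}}[r]\hookrightarrow\square_{\mathcal{R}}[r]\}$, the functor preserves cofibrations as soon as each $|\partial\square_{\mathcal{R}}[r]|\to|\square_{\mathcal{R}}[r]|$ is a monomorphism, which is exactly proposition \ref{prop:cube-bound-mono}. The generating trivial cofibrations of the spatial structure are the $i_!J$, where $J$ is a generating set of trivial cofibrations for $\square(\mathcal{R}^+_0)^\wedge$ (this is how the structure is produced in theorem \ref{theo:model-spatial}); applying the glueing isomorphism, $|i_!(j)|\simeq|j|$ is a trivial cofibration in $\mathbf{SSet}$ because $|\blankdot|_{\mathcal{R}^+_0}$ is left Quillen. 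Hence $|\blankdot|$ sends generating cofibrations and generating trivial cofibrations to the correct classes, so it is left Quillen.

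For the equivalence of the two notions on cofibrant objects, the key device is a derived comparison $\phi_X\colon|i^\ast X|\xrightarrow{\sim}|X|$. I would build it from the counit $\varepsilon_X\colon i_!i^\ast X\to X$, which is an $\infty$-equivalence for every $X$. The source $i_!i^\ast X$ is always cofibrant in the spatial structure ($i^\ast X$ is cofibrant in the standard structure, where every object is, and $i_!$ is left Quillen), and $X$ is cofibrant by hypothesis, so Ken Brown's lemma applied to the left Quillen functor $|\blankdot|$ makes $|\varepsilon_X|$ a weak equivalence; composing with the glueing isomorphism $|i_!i^\ast X|\simeq|i^\ast X|$ yields the natural weak equivalence $\phi_X$. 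Naturality of $\phi$ in the square attached to $f$, together with two-out-of-three in $\mathbf{SSet}$, then shows that $|f|$ is a weak equivalence if and only if $|i^\ast f|$ is. Chaining this with the biconditional between $\infty$-equivalences and realization weak equivalences on $\square(\mathcal{R}^+_0)^\wedge$, and with corollary \ref{cor:res-infty-eq} (that $i^\ast$ both preserves and reflects $\infty$-equivalences), I obtain: $f$ is an $\infty$-equivalence $\iff$ $i^\ast f$ is one $\iff$ $|i^\ast f|$ is a weak equivalence $\iff$ $|f|$ is a weak equivalence, which settles both directions simultaneously.

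The main obstacle I anticipate is precisely the comparison $\phi_X$: one must see that cofibrancy of $X$ is genuinely necessary, since without regularity—unavailable once $\mathcal{R}$ has non-trivial isomorphisms—the functor $|\blankdot|$ is not known to carry \emph{all} $\infty$-equivalences to weak equivalences, and the two cofibrancy inputs (automatic cofibrancy of $i_!i^\ast X$ and the hypothesised cofibrancy of $X$) are exactly what Ken Brown's lemma consumes. A secondary point requiring care is verifying that $|\blankdot|\circ i_!$ and $|\blankdot|_{\mathcal{R}^+_0}$ coincide not merely on objects but as functors, i.e. on the maps between representables induced by the restricted poset representation.
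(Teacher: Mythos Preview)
Your proposal is correct and follows essentially the same route as the paper: reduce to $\square(\mathcal{R}^+_0)^\wedge$ via the identification $|\blankdot|\circ i_!\simeq|\blankdot|_{\mathcal{R}^+_0}$, use $i_!J$ as generating trivial cofibrations to show $|\blankdot|$ is left Quillen, and for the biconditional exploit that the counit $\varepsilon_X$ is an $\infty$-equivalence between cofibrant objects so that Ken Brown's lemma makes $|\varepsilon_X|$ a weak equivalence, then finish with corollary \ref{cor:res-infty-eq} and corollary \ref{cor:skeletal-unique-homotopy}. The only cosmetic differences are that the paper invokes corollary \ref{cor:real-mono} rather than proposition \ref{prop:cube-bound-mono} for preservation of cofibrations, and it handles the two directions of the biconditional separately rather than as a single chain.
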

\begin{proof}
Let $\mathcal{R}^+_0$ be the thin-powered structure on $\mathcal{R}$, and let $J$ be a generating set of trivial cofibration in the standard model structure on $\square(\mathcal{R}^+_0)^\wedge$.
As in the proof of theorem \ref{theo:model-spatial}, $i_!J$ is a generating set of trivial cofibrations in the spatial model structure on $\square(\mathcal{R})$.
Since $|\blankdot|:\square(\mathcal{R}^+_0)^\wedge\to\mathbf{SSet}$ is a left Quillen functor, $|\gamma|$ is a trivial cofibration in $\mathbf{SSet}$ for each $\gamma\in J$.
Now, by the uniqueness of the left Kan extension, we have $|\gamma|\simeq |i_!\gamma|$, so that $|\blankdot|:\square(\mathcal{R})^\wedge\to\mathbf{SSet}$ sends each generating trivial cofibration to a trivial cofibration.
Hence, by corollary \ref{cor:real-mono}, the geometric realization $|\blankdot|$ is a left Quillen functor.

Next, suppose $f:X\to Y$ is a morphism between cofibrant objects.
If $f$ is an $\infty$-equivalence, then $|f|:|X|\to |Y|$ is a weak equivalence since every left Quillen functor preserves weak equivalences between cofibrant objects.
Conversely, suppose $|f|$ is a weak equivalence.
We have the following diagram:
\[
\xymatrix{
  |X| \ar[r]^{|f|} \ar[d]_{|\varepsilon|} & |Y| \ar[d]^{|\varepsilon|} \\
  |i_!i^\ast X| \ar[r]^{|i_!i^\ast f|} & |i_!i^\ast Y| }
\]
Notice that the counit $\varepsilon:X\to i_!i^\ast X$ is an $\infty$-equivalence between cofibrant objects, so $|\varepsilon|:|X|\to |i_!i^\ast X|$ is a weak equivalence.
Hence, by the two-out-of-three property, $|i_!i^\ast f|$ is a weak equivalence.
The uniqueness of the left Kan extension again implies $|i_!i^\ast f| = |i^\ast f|$.
Thus, it follows from corollary \ref{cor:skeletal-unique-homotopy} and corollary \ref{cor:res-infty-eq} that $f$ is an $\infty$-equivalence.
\end{proof}

We next discuss monoidal structures.
If $\mathcal{R}$ is a monoidal cubicalizable category, then as in section \ref{subsec:monoidal-cube}, for morphisms $f:X\to Y$ and $g:Z\to W\in\square(\mathcal{R})^\wedge$, we denote by
\[
f\odot g:X\otimes W\mathop{\amalg}_{X\otimes Z} Y\otimes Z \to Y\otimes W
\]
the induced morphism.

\begin{theo}
Let $\mathcal{R}$ be a monoidal EZ cubicalizable category satisfying the condition \ref{cond:club}.
Then the convolution product on $\square(\mathcal{R})^\wedge$, which is given by the formula
\[
X\otimes Y \simeq \int^{p,q\in\square(\mathcal{R})} \square_{\mathcal{R}}[p\otimes q]\times X(p)\times Y(q)\,,
\]
is compatible with the spatial model structure.
\end{theo}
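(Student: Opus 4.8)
The plan is to verify the pushout-product axiom for the spatial model structure: for cofibrations $f_1,f_2$ the pushout-product
\[
f_1\odot f_2:B_1\otimes A_2\mathop{\amalg}_{A_1\otimes A_2}A_1\otimes B_2\to B_1\otimes B_2
\]
should again be a cofibration, and trivial whenever one of $f_1,f_2$ is. The strategy is to transport the monoidality already available on $\square(\mathcal{R}^+_0)^\wedge$ across the left Kan extension $i_!:\square(\mathcal{R}^+_0)^\wedge\to\square(\mathcal{R})^\wedge$ induced by the inclusion $i:\square(\mathcal{R}^+_0)\hookrightarrow\square(\mathcal{R})$, rather than redoing the realization bookkeeping of Proposition \ref{prop:cube-model-monoidal} directly upstairs.

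First I would assemble the two structural inputs. The monoidal structure of $\mathcal{R}$ restricts to $\mathcal{R}^+_0$, so by the formula of Corollary \ref{cor:cube-monoid-ext} the functor $i$ is strictly monoidal; a coend computation identical to the one establishing that the simplicial realization is monoidal then shows $i_!$ is strong monoidal for the convolution products, using $i_!\square_{\mathcal{R}^+_0}[p\otimes q]\simeq\square_{\mathcal{R}}[i(p\otimes q)]$ and the cocontinuity of $i_!$. In particular $i_!$ preserves pushout-products, $i_!(f\odot g)\simeq i_!f\odot i_!g$. On the other hand $\mathcal{R}^+_0$ is itself a monoidal EZ cubicalizable category, carrying the restricted enlargement and admitting no non-trivial isomorphism; hence Proposition \ref{prop:cube-model-monoidal} makes the realization model structure on $\square(\mathcal{R}^+_0)^\wedge$ monoidal, and Corollary \ref{cor:skeletal-unique-homotopy} identifies that structure with the standard ($\infty$-equivalence) model structure downstairs. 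Thus the standard model structure on $\square(\mathcal{R}^+_0)^\wedge$ is a monoidal model category.

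For the cofibration part I would mirror Proposition \ref{prop:cube-model-monoidal}, but more simply, since the generating cofibrations of the spatial structure form $S_0=\{\beta_r:\partial\square_{\mathcal{R}}[r]\hookrightarrow\square_{\mathcal{R}}[r]\}$ with no automorphism quotients. Lemma \ref{lem:bound-tensor} gives $\beta_{r_1}\odot\beta_{r_2}\simeq\beta_{r_1\otimes r_2}$, so $S_0\odot S_0\subseteq S_0\subseteq\operatorname{Sat}(S_0)$; since $\odot$ underlies a closed monoidal structure on $(\square(\mathcal{R})^\wedge)^{[1]}$, the same adjunction-and-saturation manipulation promotes this to $\operatorname{Sat}(S_0)\odot\operatorname{Sat}(S_0)\subseteq\operatorname{Sat}(S_0)$. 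For the trivial part it suffices, by the standard reduction to generators (for fixed generating $g$, the class of $f$ with $f\odot g$ a trivial cofibration is saturated, and symmetrically in the other variable), to treat the mixed cases on generators. Recalling from the proof of Theorem \ref{theo:model-spatial} that $i_!J$ generates the trivial cofibrations of the spatial structure, where $J$ generates the trivial cofibrations of the standard structure on $\square(\mathcal{R}^+_0)^\wedge$, and that $S_0=i_!S'_0$ with $S'_0$ generating the cofibrations downstairs, I would write $f_1=i_!\tilde f_1$ with $\tilde f_1\in J$ and $f_2=i_!\tilde f_2$ with $\tilde f_2\in S'_0$; strong monoidality gives $f_1\odot f_2\simeq i_!(\tilde f_1\odot\tilde f_2)$. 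As the standard structure on $\square(\mathcal{R}^+_0)^\wedge$ is monoidal, $\tilde f_1\odot\tilde f_2$ is a trivial cofibration there, and since $i_!$ is left Quillen by Theorem \ref{theo:model-spatial} it carries it to a trivial cofibration upstairs; the opposite order $S_0\odot i_!J$ is handled identically.

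I expect the main obstacle to be the clean identification of the monoidal data: establishing the strong monoidality of $i_!$ and certifying that the monoidal model structure produced by Proposition \ref{prop:cube-model-monoidal} is literally the standard structure that $i_!$ relates, as a left Quillen functor, to the spatial structure. This is precisely the point where the absence of non-trivial isomorphisms in $\mathcal{R}^+_0$, the coincidence result of Corollary \ref{cor:skeletal-unique-homotopy}, and the hypothesis \ref{cond:club} (which furnishes the Quillen data of Theorem \ref{theo:model-spatial}) must be invoked together; once they are in place, the pushout-product axiom reduces to a formal transport along $i_!$.
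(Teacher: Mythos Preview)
Your proposal is correct and follows essentially the same route as the paper: both arguments establish that $i_!$ is strong monoidal, invoke Proposition~\ref{prop:cube-model-monoidal} together with Corollary~\ref{cor:skeletal-unique-homotopy} to get monoidality of the standard model structure on $\square(\mathcal{R}^+_0)^\wedge$, handle the cofibration clause via $\beta_{r_1}\odot\beta_{r_2}\simeq\beta_{r_1\otimes r_2}$ and saturation, and then transport the trivial-cofibration clause along $i_!$ using $S_0=i_!S'_0$ and the generating set $i_!J$ from Theorem~\ref{theo:model-spatial}. The only cosmetic difference is that the paper cites Corollary~\ref{cor:bound-act-tensor} rather than Lemma~\ref{lem:bound-tensor} for the boundary identity, but since no automorphism quotients appear in the spatial generators your citation is equally valid.
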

\begin{proof}
Let $\mathcal{R}^+_0$ be the thin-powered structure on $\mathcal{R}$.
Since the monoidal structure on $\square(\mathcal{R})$ restricts to $\square(\mathcal{R}^+_0)$, $\square(\mathcal{R}^+_0)$ is also a monoidal cubicalizable, and the inclusion $i:\square(\mathcal{R}^+_0)\hookrightarrow\square(\mathcal{R})$ is strongly monoidal.
It follows that $\square(\mathcal{R}^+_0)^\wedge$ the left Kan extension $i_!:\square(\mathcal{R}^+_0)^\wedge\to\square(\mathcal{R})$ is strongly monoidal.
Moreover, by corollary \ref{cor:skeletal-unique-homotopy} and proposition \ref{prop:cube-model-monoidal}, the convolution product on $\square(\mathcal{R}^+_0)^\wedge$ is compatible with the standard model structure.

Let $J$ be a generating set of trivial cofibrations of the standard model structure on $\square(\mathcal{R}^+_0)$.
As in the proof of theorem \ref{theo:model-spatial}, $i_!J$ is a generating set of trivial cofibrations in the spatial model structure.
For each $r\in\square(\mathcal{R})$, we set
\[
\beta_r:\partial\square_{\mathcal{R}}[r]\hookrightarrow\square_{\mathcal{R}}[r]
\]
to be the canonical inclusion in $\square(\mathcal{R})^\wedge$.
We already have $\beta_r\odot\beta_s\simeq\beta_{r\otimes s}$ by corollary \ref{cor:bound-act-tensor}, so that for cofibrations $f,g$ in the spatial model structure, $f\odot g$ is also a cofibration.

On the other hand, we denote by $\beta'_r:\partial\square_{\mathcal{R}^+_0}[r]\hookrightarrow\square_{\mathcal{R}^+_0}[r]$ the canonical inclusion in $\square(\mathcal{R}^+_0)^\wedge$.
Since $i_!:\square(\mathcal{R}^+_0)^\wedge\to\square(\mathcal{R})$ is strongly monoidal, for each $\gamma'\in J$, we have
\[
\beta_r\odot i_!\gamma' \simeq i_!\beta'_r\odot i_!\gamma' \simeq i_!(\beta'_r\odot \gamma')\,.
\]
As mentioned above, the convolution product on $\square(\mathcal{R}^+_0)$ is compatible with the standard model structure, so $\beta'_r\odot \gamma'$ is a trivial cofibration.
By theorem \ref{theo:model-spatial}, $i_!$ is a left Quillen functor.
This implies that $\beta_r\odot i_!\gamma'$ is a trivial cofibration in the spatial model structure.
It follows that if $f$ is a cofibration and $g$ is a trivial cofibration in the spatial model structure, $f\odot g$ is a trivial cofibration.
Similarly, $g\odot f$ is also a trivial cofibration.
\end{proof}

\begin{exam}
Recall that $\widetilde\Delta$ has a canonical model structure defined by 
\[
\underline{m}\otimes\underline{n} = \underline{m+n}\,.
\]
If $G$ is a group operad (see example \ref{ex:group-operad} for the definition), the monoidal structure above extends to $\widetilde\Delta G$ as
\[
g_1\otimes g_2 = \gamma(e_2;g_1,g_2)
\]
for $g_1\in G(m)$ and $g_2\in G(n)$, where $\gamma$ is the composition operator of the operad $G$ and $e_2\in G(2)$ is the unit.
Then since $\widetilde\Delta G$ is a monoidal EZ cubicalizable category satisfying the condition \ref{cond:club}, the category $\square(\widetilde\Delta G)^\wedge$ is a monoidal model category with the spatial model structure.
\end{exam}

\begin{exam}
Let $\mathcal{B}=\{B_n\}_n$ be the group operad consisting of the braid groups $B_n$.
We fix a generator $t\in B_2$, and define a braiding
\[
\tau_{m,n}:\underline{m}\otimes\underline{n}\to\underline{n}\otimes\underline{m}
\]
by
\[
\tau_{m,n} = \gamma(t;e_m,e_n)\,.
\]
Then $\{\tau_{m,n}\}$ defines a natural isomorphism $\tau$; for example, for $g_1\in G(m)$ and $g_2\in G(n)$, we have
\[
\tau_{m,n}\circ(g_1\otimes g_2) = \gamma(t;e_m,e_n)\gamma(e_2,g_1,g_2) = \gamma(t;g_1,g_2)\,.
\]
On the other hand, we also have
\[
(g_2\otimes g_1)\circ\tau_{m,n} = \gamma(e_2;g_2,g_1)\gamma(t;e_m,e_n) = \gamma(t;g_1,g_2)\,.
\]
Hence $\tau_{m,n}\circ(g_1\otimes g_2)=(g_2\otimes g_1)\circ\tau_{m,n}$.
Moreover, by the direct calculus of the braid group $B_3$, we have
\[
\gamma(t;e_1,e_2) = \gamma(e_2;e_1,t)\gamma(e_2;t,e_1)\,.
\]
Hence we obtain the following formula:
\[
\begin{split}
\tau_{l,m+n}
&= \gamma(t;e_l,e_{m+n})
= \gamma(t;e_l,\gamma(e_2;e_m,e_n)) \\
&= \gamma(\gamma(t;e_1,e_2);e_l,e_m,e_n) \\
&= \gamma(\gamma(e_2;e_1,t)\gamma(e_2;t,e_1);e_l,e_m,e_n) \\
&= \gamma(\gamma(e_2;e_1,t);e_m,e_l,e_n)\gamma(\gamma(e_2;t,e_1);e_l,e_m,e_n) \\
&= \gamma(e_2;e_m,\gamma(t;e_l,e_n))\gamma(e_2;\gamma(t;e_l,e_m),e_n) \\
&= (e_m\otimes\tau_{l,n})\circ(\tau_{l,m}\otimes e_n)
\end{split}
\]
Similarly, we also obtain
\[
\tau_{l+m,n} = (\tau_{l,n}\otimes e_m)\circ (e_l\otimes\tau_{m,n})\,.
\]
These imply that $\tau$ is a braiding on the monoidal category $\widetilde\Delta\mathcal{B}$.
It is easily verified that the braiding $\tau$ extends to a braiding of the convolution product on $\square(\widetilde\Delta\mathcal{B})^\wedge$, so that the category $\square(\widetilde\Delta\mathcal{B})^\wedge$ is a braided monoidal model category with the spatial model structure.
\end{exam}

\bibliographystyle{plain}
\bibliography{newcube}
\end{document}